\newcommand{\mybox}{%
    \collectbox{%
        \setlength{\fboxsep}{1pt}%
        \fbox{\BOXCONTENT}%
    }%
}
\let\emptyset\varnothing
\numberwithin{equation}{section}
\newtheorem{thm}{Theorem}[section]
\newtheorem{prop}[thm]{Proposition}
\newtheorem{lem}[thm]{Lemma}
\newtheorem{cor}[thm]{Corollary}
\newtheorem{conj}[thm]{Conjecture}
\newtheorem{alphatheorem}{Theorem}
\theoremstyle{definition} 
\newtheorem{example}[thm]{Example}
\newtheorem{dfn}[thm]{Definition}
\theoremstyle{remark}
\newtheorem{rem}[thm]{Remark}
\newcommand{\beq}{\begin{equation}}
\newcommand{\eeq}{\end{equation}}
\newcommand{\be}{\begin{equation*}}
\newcommand{\ee}{\end{equation*}}
\DeclareFontFamily{OT1}{pzc}{}
\DeclareFontShape{OT1}{pzc}{m}{it}{ <-> s*[1.0] pzcmi7t }{}
\DeclareMathAlphabet{\mathpzc}{OT1}{pzc}{m}{it}
\newcommand{\pf}{\mathpzc{Pf}}
\newcommand{\bC}{\mathbb{C}}
\newcommand{\bZ}{\mathbb{Z}}
\newcommand{\bN}{\mathbb{N}}
\newcommand{\mc}{\mathcal}
\newcommand{\cF}{\mathcal{F}}
\newcommand{\cN}{\mathcal{N}}
\newcommand{\cS}{\mathcal{S}}
\newcommand{\sfh}{\mathsf{h}}
\newcommand{\sfb}{\mathsf{b}}
\newcommand{\sfd}{\mathsf{d}}
\newcommand{\sfe}{\mathsf{e}}
\newcommand{\sff}{\mathsf{f}}
\newcommand{\sfB}{{\overline{\mathsf{B}}}}
\newcommand{\bY}{\mathbf{Y}}
\newcommand{\cb}{\mathsf{b}}
\newcommand{\ch}{\mathsf{h}}
\newcommand{\g}{\mathfrak{g}}
\newcommand{\gl}{\mathfrak{gl}}
\newcommand{\fksp}{\mathfrak{sp}}
\newcommand{\fkso}{\mathfrak{so}}
\newcommand{\fkm}{\mathfrak{m}}
\newcommand{\fkl}{\mathfrak{l}}
\newcommand{\fksl}{\mathfrak{sl}}
\newcommand{\fkS}{\mathfrak{S}}
\newcommand{\fko}{\mathfrak{o}}
\newcommand{\fkc}{\mathfrak{c}}
\newcommand{\fks}{\mathfrak{s}}
\newcommand{\tq}{\tilde{q}}
\newcommand{\tp}{\tilde{p}}
\newcommand{\rU}{\mathrm{U}}
\newcommand{\rY}{\mathrm{Y}}
\newcommand{\rPD}{\operatorname{PD}}
\newcommand{\rQ}{\operatorname{Q}}
\newcommand{\End}{\mathrm{End}}
\newcommand{\Hom}{\mathrm{Hom}}
\newcommand{\id}{{\mathrm{id}}}   
\newcommand{\sdet}{{\mathrm{sdet}}}
\newcommand{\qdet}{{\mathrm{qdet}}}
\newcommand{\gr}{\operatorname{gr}}   
\newcommand{\reg}{{\mathrm{reg}}}
\newcommand{\Aut}{\mathrm{Aut}}
\newcommand{\PAut}{\operatorname{PAut}}
\newcommand{\Out}{\mathrm{Out}}
\newcommand{\Lie}{\operatorname{Lie}}
\newcommand{\ad}{\operatorname{ad}}
\newcommand{\Ad}{\operatorname{Ad}}
\newcommand{\sgn}{\operatorname{sgn}}
\newcommand{\GL}{\operatorname{GL}}
\newcommand{\SL}{\operatorname{SL}}
\newcommand{\SO}{\operatorname{SO}}
\newcommand{\Sp}{\operatorname{Sp}}
\newcommand{\tl}{\tilde}
\newcommand{\wtl}{\widetilde}
\newcommand{\gge}{\geqslant}
\newcommand{\lle}{\leqslant}
\newcommand{\la}{\lambda}
\newcommand{\Y}{{\mathscr{Y}}}
\newcommand{\X}{{\mathscr{X}}}
\newcommand{\arxiv}[1]{\href{http://arxiv.org/abs/#1}{\tt arXiv:\nolinkurl{#1}}}
\begin{document}
\pagestyle{myheadings}
\setcounter{page}{1}
\title[Shifted twisted Yangians and finite $W$-algebras of classical type]{Shifted twisted Yangians and finite $W$-algebras\\ of classical type}

\author[K. Lu]{Kang Lu}
\author[Y.-N. Peng]{Yung-Ning Peng}
\author[L. Tappeiner]{Lukas Tappeiner}
\author[L. Topley]{Lewis Topley}
\author[W. Wang]{Weiqiang Wang}
\address{Department of Mathematics, University of Virginia, Charlottesville, VA 22903, USA}\email{(Lu) kang.lu@virginia.edu, (Wang) ww9c@virginia.edu}
\address{Department of Mathematics, National Cheng Kung University, Tainan City 70101, Taiwan}\email{(Peng) ynp@gs.ncku.edu.tw}
\address{Department of Mathematical Sciences, University of Bath, Claverton Down, Bath BA2 7AY, UK}\email{(Tappeiner) lt862@bath.ac.uk, (Topley) lt803@bath.ac.uk}

\subjclass[2020]{Primary 17B37.}
	\keywords{Drinfeld presentation, parabolic presentation,  twisted Yangians, finite $W$-algebras}

\begin{abstract}
    We introduce parabolic presentations of twisted Yangians of types AI and AII, interpolating between the R-matrix presentation and the Drinfeld presentation. Then we formulate and provide parabolic presentations for the shifted twisted Yangians. We define quotient algebras known as truncated shifted twisted Yangians and equip them with baby comultiplications, generalizing the work of Brundan and Kleshchev. PBW bases for all (truncated) shifted twisted Yangians of type AI and AII are established along the way. Applying the theory of universal equivariant quantizations of conic symplectic singularities we show that the truncated twisted shifted Yangian is isomorphic to the finite $W$-algebra which quantizes a suitable Slodowy slice. This provides a presentation of the finite $W$-algebra associated with every even nilpotent element in type {\sf B} and {\sf C}, as well as every nilpotent element with two Jordan blocks in type {\sf D}. Finally we make a conjecture which would supply presentations in the remaining even cases in type {\sf D}. 
\end{abstract}

	\maketitle
	\tableofcontents

\thispagestyle{empty}
\section{Introduction}

\subsection{History} 

Drinfeld-Jimbo quantum groups of affine type and their degenerate cousin, Yangians, are one of the most ubiquitous themes in contemporary representation theory and mathematical physics. The importance of these algebras is partly due to the existence of several (Serre type, Drinfeld type, or R-matrix type) presentations. For example, for $\g = \gl_N$ or for $\g$ a complex simple Lie algebra, the Yangian $\rY(\g)$ admits an R-matrix presentation as well as a Drinfeld presentation deforming the current algebra $\rU(\g[z])$ \cite{Dr87}. Yangian symmetries appear in various contexts in mathematical physics, most notably the study of the 2-dimensional scattering amplitudes, and quantum integrable spin chain models. 

Twisted Yangians, associated with symmetric pairs of Satake type AI and AII, were introduced by Olshanski \cite{Ol92,MNO96} as coideal subalgebras of the Yangian $\rY(\gl_N)$, and they are intimately related to classical Lie algebras; Molev's book \cite{Mol07} is a standard reference in the subject. A general twisted Yangian associated to (classical) symmetric pairs $(\g, \g^\theta)$ has been introduced in \cite{Ma02} and then in R-matrix form in \cite{GR16}, deforming the twisted current algebras $\rU(\g[z]^\theta)$, where the involution $\theta$ is extended to $\g[z]$ by  $xz^r \mapsto (-1)^r\theta(x)z^r$. In recent years, the $\imath$quantum groups arising from quantum symmetric pairs introduced by G.~Letzter and generalized by S.~Kolb have emerged as a vast generalization of Drinfeld-Jimbo quantum groups. Many fundamental results on quantum groups have been generalized to the $\imath$quantum group setting; especially, the (split) affine $\imath$quantum groups admit Drinfeld presentations \cite{LW21, Z22}. Through degeneration, this has led to a Drinfeld presentation of split twisted Yangians \cite{LWZ25}, and for twisted Yangians of type AI, the same Drinfeld presentation was also obtained via Gauss decomposition in \cite{LWZ23}. Thus, it is natural to view twisted Yangians as a close cousin to affine $\imath$quantum groups and label them in terms of Satake diagrams as well. 

Finite $W$-algebras, on the other hand, come from a rather different lineage. They are constructed by quantum Hamiltonian reduction from nilpotent elements in the Lie algebras of reductive groups \cite{Ko78, Ly79, BT93}. More precisely, if $e \in \g$ is such an element then the Slodowy slice $\cS$ to the adjoint orbit of $e$ is equipped with a Poisson structure via Hamiltonian reduction, and the finite $W$-algebra $\rU(\g,e)$ is a filtered algebra with associated graded Poisson algebra $\bC[\cS]$. For the regular nilpotent elements, Kostant showed that the finite $W$-algebra is isomorphic to the center of the enveloping algebra, whilst for $e = 0$ we have $\rU(\g,e) = \rU(\g)$.

The finite $W$-algebras associated to even nilpotent elements were first studied by Lynch, who noticed that the $W$-algebra could be naturally embedded inside the enveloping algebra of the parabolic subalgebra associated to the nilpotent element \cite{Ly79}.  Finally, Premet introduced finite $W$-algebras in full generality in \cite{Pr02} into mathematics (cf. \cite{BT93}), whilst studying the modular representations of Lie algebras of reductive groups. In the same year, Gan--Ginzburg reinterpreted his work in the language of Hamiltonian reductions \cite{GG02}. The subsequent work of Premet, Losev, Ginzburg and others over the following decade highlights their numerous applications to the classification of primitive ideals in enveloping algebras, as well as the classification of quantizations of nilpotent orbits (see \cite{Gi09, Lo10, Lo22, Pr11, To23} and the references therein). More recent applications include the classification of small modular representations of Lie algebras, as well as the positive energy representations of affine $W$-algebras \cite{GT19, AM21, AvE23}. 

Despite the numerous uses of finite $W$-algebras, explicit presentations have not been forthcoming. The first substantial progress was the work of Ragoucy and Sorba \cite{RS99}, who showed that the finite $W$-algebra associated to a rectangular nilpotent element with $N$ Jordan blocks of size $\ell$ in the general linear Lie algebra, is isomorphic to the level $\ell$ truncation of the Yangian $\rY_N := \rY(\gl_N)$. 
This work was generalized by Brundan and Kleshchev \cite{BK06}, who introduced the shifted Yangians $\rY_N(\sigma)$ and showed that every finite $W$-algebra for $\gl_M$ associated to an element with $N$ Jordan blocks, arises as a truncation of some $\rY_N(\sigma)$. The main technical tools introduced in their proof were parabolic presentations \cite{BK05} and baby comultiplication. We refer to \cite{Pe16, Pe21} for generalizations of these constructions to finite $W$-superalgebras of type A. 

Ragoucy discovered that the level $\ell$ truncation of the semiclassical twisted Yangian of type AI and  AII is isomorphic to a semiclassical finite $W$-algebra for a rectangular nilpotent element in a classical Lie algebra \cite{Ra01}. The quantum version of this result was proven by Brown \cite{Br09} using different methods. In \cite{TT24} two of the authors extended Ragoucy's semiclassical result to many other even nilpotent elements in type {\sf B}{\sf C}{\sf D}, combining their results on Dirac reduction of the semiclassical limit of $\rY_N(\sigma)$ with the new Drinfeld presentation for twisted Yangians of type AI introduced in \cite{LWZ23}.

There has been a long-standing hope that more general finite $W$-algebras in types {\sf B}{\sf C}{\sf D} will appear as quotients of the twisted Yangians, which was rekindled by the recent construction of Drinfeld presentations of twisted Yangians of type AI \cite{LWZ23}. Interesting partial results have been obtained by De Sole--Kac--Valeri \cite{DSKV18} and by Brown \cite{Br09,Br16}. 

In this paper, we make comprehensive progress on this problem by proving isomorphisms between truncated shifted twisted Yangians in general and even finite $W$-algebras in types {\sf B} and {\sf C}, as well as those associated to two-row partitions in type {\sf D}. Modulo a conjecture about the center of certain truncated shifted twisted Yangians, the same result holds for all even nilpotent elements in type {\sf D}.


\subsection{Parabolic presentations} 
Recall $\rY_N = \rY(\gl_N)$. The twisted Yangian $\Y_N^+ \subseteq \rY_N$ of type AI is associated with the symmetric pair $(\gl_N, \fkso_N)$ whilst the twisted Yangian $\Y_N^- \subseteq \rY_N$ of type AII is associated with the symmetric pair $(\gl_N, \fksp_N)$. More specifically, $\Y_N^\pm$ is generated by the coefficients of the $S$-matrix $S(u) = (s_{ij}(u))$ in $\operatorname{Mat}_N(\Y_N^\pm[[u^{-1}]])$, where $s_{ij}(u) = \sum_{r\gge 0} s_{ij}^{(r)} u^{-r}$, subject to the \textit{quaternary} and \textit{symmetry} relations \eqref{qua}--\eqref{sym}. We refer to Molev's book \cite{Mol07} and \S\ref{ss:TwistedYangians} for more detail.


For each (strict) composition $\mu= (\mu_1, \mu_2, ..., \mu_n)$ of $N$, applying a $\mu\times \mu$-block version of Gauss decomposition to $S(u)$ in the spirit of \cite{LWZ23} (also see \cite{BK05}) leads to a parabolic presentation of the twisted Yangian $\Y_N^+$ below; a parabolic presentation for $\Y_N^-$ requires all $\mu_i$ to be even. 

\begin{alphatheorem} [Parabolic presentations, Theorem~\ref{mainthm}, Theorem~\ref{redthm}] 
\label{thm:A}
\setlength{\itemsep}{6pt}
Let $\mu = (\mu_1,...,\mu_n)$ be a strict composition of $N$, with additional assumption that all $\mu_i$ are even in type AII. Then $\Y_N^\pm$ is generated by the elements
    \begin{eqnarray}
    \label{e:parabolicgenerators}
    \begin{array}{rcl}
        & & \{H_{a;i,j}^{(r)},\wtl H_{a;i,j}^{(r)}  \mid 1\lle a\lle n, \ 1\lle i,j,\lle \mu_a, \ r\gge 0\} \ \cup \\ & & \hspace{40pt} \{B_{a;i,j}^{(r)} \mid 1\lle a<n, \ 1\lle i\lle \mu_{a+1}, \ 1\lle j\lle \mu_a, \ r>0\},
        \end{array}
    \end{eqnarray}
    subject to either the relations \eqref{pr1}--\eqref{pr-2} or the relations \eqref{pr1}--\eqref{pr7} and  \eqref{pr-1-2-better}.
\end{alphatheorem}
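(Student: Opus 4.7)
The plan is to exploit the block Gauss decomposition of the $S$-matrix, in the spirit of \cite{LWZ23} and \cite{BK05}. Given the strict composition $\mu$ of $N$, we partition $S(u)$ into blocks of sizes $\mu_a \times \mu_b$ and write $S(u) = F(u)\, D(u)\, E(u)$ where $D(u) = \mathrm{diag}(D_1(u),\ldots,D_n(u))$ is block diagonal, $E(u)$ is block upper-unitriangular, and $F(u)$ is block lower-unitriangular with entries in $\Y_N^\pm[[u^{-1}]]$. The generators $H_{a;i,j}^{(r)}$ are extracted from the matrix entries of $D_a(u)$, the generators $\widetilde H_{a;i,j}^{(r)}$ from a suitably normalised ``twisted'' companion of $D_a(u)$ forced on us by the symmetry relation \eqref{sym}, and the generators $B_{a;i,j}^{(r)}$ from the first sub-diagonal $(a{+}1,a)$-block of $F(u)$; the symmetry relation then determines $E(u)$ in terms of $F(u)$ and $D(u)$, explaining why no separate ``$C$''-type generators appear.

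The first step is to verify that the elements in \eqref{e:parabolicgenerators} generate $\Y_N^\pm$. Since the Gauss factors are obtained from $S(u)$ by rational operations and $S(u)$ generates $\Y_N^\pm$, it suffices to recover the remaining blocks of $F(u)$ and $E(u)$ inductively from the first sub-diagonal blocks via successive commutators, exactly as in the untwisted case of \cite{BK05}. The second step is to derive the listed relations from the quaternary and symmetry relations \eqref{qua}--\eqref{sym}. This is lengthy but essentially algorithmic: one passes to the finest admissible composition (giving the Drinfeld presentation treated in \cite{LWZ23, LWZ25}) and reduces all claims to small-rank situations via embeddings $\Y_{\mu_a+\mu_{a+1}}^\pm \hookrightarrow \Y_N^\pm$ picking out pairs of adjacent blocks, following the template of \cite{BK05}.

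The third and crucial step is completeness of the presentation. Let $\widehat\Y$ denote the abstract algebra defined by the generators \eqref{e:parabolicgenerators} modulo the proposed relations. We equip $\widehat\Y$ with the loop filtration assigning degree $r{-}1$ to each generator of superscript $r$, and show that the relations force the associated graded algebra to surject onto the twisted current algebra $\rU(\gl_N[z]^\theta)$. Combining this with the surjective homomorphism $\widehat\Y \twoheadrightarrow \Y_N^\pm$ built in step two yields an isomorphism, once the known PBW basis of $\Y_N^\pm$ is invoked to compare graded dimensions in each filtered piece. For the reduced presentation \eqref{pr1}--\eqref{pr7} and \eqref{pr-1-2-better}, one must additionally show that the higher-order Serre-type relations are redundant consequences of the lower-order ones, which is a standard but technical induction that is again already visible in the rank-two situation.

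The main obstacle will be the derivation and bookkeeping of the ``twisted'' relations involving $\widetilde H_{a;i,j}^{(r)}$. The symmetry relation \eqref{sym} pairs up the $(a,b)$- and $(b',a')$-blocks of $S(u)$ for appropriate reflected indices $a',b'$, and these blocks typically have distinct sizes; this is precisely where the constraint that each $\mu_a$ be even in type AII enters, ensuring that block sizes are preserved by the folding and that $D_a(u)$ has a well-defined companion. Tracking the signs and parities in this identification, and checking that the resulting identities between $H$-generators and $\widetilde H$-generators are globally self-consistent rather than merely valid on each rank-two window, will require a careful parabolic induction adapted to the twisted setting and constitutes the core technical work behind Theorem~\ref{thm:A}.
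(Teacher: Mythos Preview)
Your overall strategy---Gauss decomposition, verification of relations via reduction to small-rank windows, and a loop-filtration/PBW argument for completeness---matches the paper's approach. However, several points are mischaracterized in ways that would derail the execution.

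First, $\widetilde H_{a;i,j}^{(r)}$ is nothing more than the $(i,j)$-entry of the \emph{matrix inverse} $D_a(u)^{-1}$ (after the spectral shift \eqref{tlHa}); it is not a ``twisted companion'' arising from the symmetry relation \eqref{sym}. The relation \eqref{pr2} you must verify is literally $\sum_p H_{a;i,p}^{(t)}\widetilde H_{a;p,j}^{(r-t)}=\delta_{r0}\delta_{ij}$. The symmetry relation instead manifests as the identity $B_{b,a;l,k}(u)=\theta_k\theta_l\,C_{a,b;k',l'}(-u)$ (Propositions~\ref{prop:B=C} and \ref{a2prop:B=C}), which is what eliminates the upper-triangular generators; your framing of this as ``$E(u)$ determined by $F(u)$ and $D(u)$'' is correct in effect but not in mechanism.

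Second, your ``main obstacle'' paragraph misidentifies the role of the evenness hypothesis in type AII. The prime map $i\mapsto i'$ of \eqref{thetadef} swaps $2k{-}1\leftrightarrow 2k$, so it acts \emph{within} each block precisely when every $\mu_a$ is even; there is no pairing of blocks of ``distinct sizes''. The evenness is what makes each $D_a(u)$ carry its own internal copy of the AII structure (indeed a copy of $\Y_{\mu_a}^-$), not a block-permutation compatibility.

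Third, reducing to ``rank-two windows'' is insufficient: the Serre relations \eqref{pr-1}--\eqref{pr-2} involve $B_a$ and $B_{a+1}$ simultaneously and cannot be seen in a two-block subalgebra. The paper's induction on $n$ takes $n=2$ \emph{and} $n=3$ as separate base cases (Sections~\ref{sec:lower2}--\ref{sec:lower3}); the three-block computations, especially the derivation of \eqref{ser6} and Lemma~\ref{serre-lem1}, are where the genuine difficulty lies.

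Finally, in your completeness step the surjection runs the other way: one constructs elements $\sfB_{a,b;i,j}^{(r)}$ in $\gr\widehat\Y$, verifies the twisted-current relations \eqref{todo} directly (a lengthy seven-case analysis), obtaining $\rU(\gl_N[z]^\theta)\twoheadrightarrow\gr\widehat\Y$, and then concludes by composing with $\gr\widehat\Y\twoheadrightarrow\gr\Y_N^\pm\cong\rU(\gl_N[z]^\theta)$.
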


The formulation and the proof of this theorem are significantly more involved than for $\rY_N$ in \cite{BK05}. Already the current Serre relations which involve lower terms for the Drinfeld presentation of $\Y^+_N$ (which is the special case for $\mu=(1^N)$ here) are highly nontrivial \cite{LWZ23, LWZ25}, and the Serre relations for $\Y_N^\pm$ in Theorem \ref{thm:A} even in this case are formulated quite differently. A complete proof of this theorem requires a careful study of the cases $n = 2,3$, which we carry out in Sections~\ref{sec:lower2} and \ref{sec:lower3}. 

Let us comment on the evenness on the composition $\mu$ for type AII. The symmetric pair $(\gl_N, \gl_N^\theta)$ (with $N=2r$ even) of type AII is not quasi-split, and the corresponding Satake diagram contains a subdiagram $I_\bullet$ of black nodes. The subalgebra corresponding to $I_\bullet$ is a direct sum of $r$ copies of $\gl_2$. Associated to the composition $\mu=(2^r)$, the minimal parabolic presentation for $\Y_N^-$ in Theorem \ref{thm:A} contains $r$ copies of $\Y_2^-$ in R-matrix form. It remains unclear how to construct a Drinfeld presentation for $\Y_N^-$ corresponding to $\mu=(1^N)$ already when $N=2$ (and one can question its existence). Theorem \ref{thm:A} in the generality with $\mu$ even for type AII turns out to be all we need for our application to finite $W$-algebras. 

When we consider the algebra $\Y_N^\pm$ with its parabolic presentation associated with $\mu$ we often use the notation $\Y_\mu^\pm$, although we note that this algebra does not depend on $\mu$.

\subsection{Shifted twisted Yangians}

Now fix $0 < n \lle N$. Following \cite{BK06}, a {\it shift matrix of size $N$} is a matrix $\sigma=(\fks_{i,j})_{1\lle i,j\lle N}$ of nonnegative integers (shifts) such that 
\beq\label{shift1}
\fks_{i,j}+\fks_{j,k}=\fks_{i,k}
\eeq
whenever $|i-k| = |i-j| + |j-k|$. In this paper, all shift matrices are symmetric $\fks_{i,j} = \fks_{j,i}$ for $1\lle i,j \lle N$. We note that $\fks_{i,i} = 0$ for $i=1,...,N$, and that a symmetric shift matrix is determined by the integers $\fks_{1,2}, \fks_{2,3},..., \fks_{N-1,N}$, thanks to \eqref{shift1}.

Let $\mu = (\mu_1,...,\mu_n)$ be a strict composition of length $n$. We say that $\mu$ is an {\it admissible shape for the shift matrix $\sigma$} if
$\fks_{i,j} = 0$ for all $\sum_{l = 1}^{a-1} \mu_l < i,j \lle \sum_{l=1}^a \mu_l$ for all $a = 1,...,n$ (by convention $\sum_{l = 1}^{0} \mu_l = 0$).

Let $\sigma$ be an $N\times N$ symmetric shift matrix and let $\mu$ be an admissible shape for $\sigma$. When we consider a twisted Yangian of type AII, we always assume that all parts of $\mu$ are even. We define the {\it shifted twisted Yangian} $\Y_\mu^\pm(\sigma)$ to be the algebra generated by
    \begin{eqnarray}
       \label{e:shiftedparabolicgenerators}
       \begin{array}{rcl}
           & & \{H_{a;i,j}^{(r)},\wtl H_{a;i,j}^{(r)}  \mid 1\lle a\lle n, \ 1\lle i,j,\lle \mu_a, \ r\gge 0\} \ \cup \\ & & \hspace{40pt} \{B_{a;i,j}^{(r)} \mid 1\lle a<n, \ 1\lle i\lle \mu_{a+1}, \ 1\lle j\lle \mu_a, \ r>\fks_{i,j}\}
           \end{array}
    \end{eqnarray}
subject to relations \eqref{pr1}--\eqref{pr-2} in every type, along with: additional relation \eqref{Zshifted} for $\Y_\mu^+$ and additional relation \eqref{Zshifteda2} for $\Y_\mu^-$, respectively. These relations should be imposed for all admissible indexes. The definition of $\Y_\mu^\pm(\sigma)$ is a little subtle (in contrast to \cite{BK06}) as it includes the relations \eqref{Zshifted} and \eqref{Zshifteda2} (which are derived relations in the twisted Yangians $\Y^\pm_N$ by Lemma \ref{lem:Za2}). 

Recall that the twisted Yangian $\Y_N^\pm$ is associated with a symmetric pair $(\gl_N, \g^\theta)$ where $\g^\theta = \fkso_N$ (for $+$) or $\fksp_N$ (for $-$). The involution we choose is described in \eqref{e:thetainvdefn}, and we extend the involution $\theta$ to $\gl_N[z]$ by the twist $z\mapsto -z$. The {\it twisted current algebra} $\gl_N[z]^\theta$ is spanned by symmetrized matrix units $f_{i,j}z^r := \frac{1}{2}(e_{i,j}z^r + (-1)^r\theta(e_{i,j})z^r)$. This allows us to define a {\it shifted twisted current algebra}, the subalgebra $\gl_N[z]^\theta(\sigma)$ spanned by
$\{ f_{i,j}z^r \mid r\gge \fks_{i,j}  \}.$

We recall that $\Y_N^\pm$ is equipped with the {\it loop filtration} such that $\gr \Y_N^\pm \cong \rU(\gl_N[z]^\theta)$ on the level of associated graded algebras. 

By definition, there is a natural homomorphism
\begin{eqnarray*}
    \Y_\mu^\pm(\sigma) \longrightarrow \Y^\pm_\mu. 
\end{eqnarray*}
Our next main theorem is a Poincar{\'e}--Birkhoff--Witt (PBW) for the shifted twisted Yangians.

\begin{alphatheorem} [PBW basis for shifted twisted Yangians, Theorem~\ref{pbwstw}, Theorem~\ref{a2shPBWgauss}]
\label{thm:B}
     The natural homomorphism $\Y_\mu^\pm(\sigma) \rightarrow \Y^\pm_\mu$ is injective, deforming the inclusion map $\rU(\gl_N[z]^\theta(\sigma))\subseteq \rU(\gl_N[z]^\theta)$ under the identification $\gr \Y_\mu^\pm = \rU(\gl_N[z]^\theta)$.  
\end{alphatheorem}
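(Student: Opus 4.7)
The plan is to adapt the Brundan--Kleshchev filtered argument of \cite{BK06} for shifted Yangians to the twisted setting, with Theorem~\ref{thm:A} as the crucial input. First I would equip $\Y_\mu^\pm(\sigma)$ with the loop-type filtration in which each generator in \eqref{e:shiftedparabolicgenerators} of superscript $(r)$ is assigned degree $r-1$. A direct inspection of the defining relations \eqref{pr1}--\eqref{pr-2} together with the shift relations \eqref{Zshifted}, \eqref{Zshifteda2} shows that they are homogeneous modulo lower degree, so the filtration is well-defined; moreover, by Theorem~\ref{thm:A} the matching generators in $\Y^\pm_\mu$ carry the same loop degrees under the identification $\gr \Y^\pm_\mu \cong \rU(\gl_N[z]^\theta)$, so the natural homomorphism $\Y_\mu^\pm(\sigma)\to \Y^\pm_\mu$ is filtered.

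Using the Gauss-decomposition identifications from Theorem~\ref{thm:A}, one computes the symbols of the allowed generators as explicit elements of $\gl_N[z]^\theta$; the admissibility of $\mu$ for $\sigma$ ensures that the $B_{a;i,j}^{(r)}$ with $r>\fks_{i,j}$, together with all $H_{a;i,j}^{(r)}$ and $\wtl H_{a;i,j}^{(r)}$, have symbols lying in, and in fact spanning, the enveloping subalgebra $\rU(\gl_N[z]^\theta(\sigma))$ inside $\gr \Y^\pm_\mu$. This yields a commutative triangle: there is a homomorphism $\varphi\colon \rU(\gl_N[z]^\theta(\sigma))\to \gr \Y_\mu^\pm(\sigma)$ sending each PBW generator of the left-hand side to the symbol of the corresponding generator of \eqref{e:shiftedparabolicgenerators}, and its composition with $\gr \Y_\mu^\pm(\sigma)\to \gr \Y^\pm_\mu \cong \rU(\gl_N[z]^\theta)$ is exactly the canonical inclusion of enveloping algebras. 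Since this diagonal is injective, it suffices to prove that $\varphi$ is surjective: this forces $\varphi$ to be an isomorphism, after which the original map $\Y^\pm_\mu(\sigma)\to \Y^\pm_\mu$ is injective (and deforms the inclusion of enveloping algebras as claimed) by the usual filtered-to-graded lifting argument.

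The heart of the proof is therefore the spanning step, namely that $\gr \Y_\mu^\pm(\sigma)$ is spanned by ordered monomials in the symbols of the generators \eqref{e:shiftedparabolicgenerators}. This is a PBW-straightening argument: at the level of symbols every defining commutator becomes a Lie bracket in $\gl_N[z]^\theta$ plus lower-degree corrections, so a double induction on monomial length and total filtration degree reduces an arbitrary word to a linear combination of ordered monomials in a fixed order. The relations \eqref{Zshifted} and \eqref{Zshifteda2} play a crucial role here: they allow one to rewrite every ``missing'' $B$-generator with $r\lle \fks_{i,j}$ as a polynomial in the other generators of strictly lower filtration degree, so the spanning family can indeed be restricted to \eqref{e:shiftedparabolicgenerators}.

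The main obstacle is verifying this straightening step, particularly for commutators of $B$-generators whose naive filtration degree exceeds that of their bracket in $\gl_N[z]^\theta$, where the quaternary and symmetry relations of twisted Yangians intervene. For $\Y^\pm_\mu$ itself this is part of Theorem~\ref{thm:A}, but the shifted setting requires deploying the shift relations in tandem with the parabolic relations without producing circular reductions. I expect the verification to reduce by induction on $n$ to the rank-two and rank-three cases worked out in Section~\ref{sec:lower2} and Section~\ref{sec:lower3}. In type AII the additional evenness of the parts of $\mu$ must be preserved throughout so that the block Gauss decomposition remains compatible with the symplectic involution $\theta$; with this bookkeeping in place the argument in type AII runs parallel to that in type AI.
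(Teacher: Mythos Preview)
Your overall architecture---loop filtration, the map $\varphi\colon \rU(\gl_N[z]^\theta(\sigma))\to \gr\Y_\mu^\pm(\sigma)$, the commutative triangle with the inclusion $\rU(\gl_N[z]^\theta(\sigma))\hookrightarrow \rU(\gl_N[z]^\theta)$, and the reduction to surjectivity of $\varphi$---matches the paper's proof almost exactly. However, your account of the role of \eqref{Zshifted} and \eqref{Zshifteda2} is wrong, and this is where the substance of the argument lies.

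The relations \eqref{Zshifted} and \eqref{Zshifteda2} involve only $H$- and $\wtl H$-generators (see \eqref{Zdef}); they contain no $B$'s whatsoever. There are no ``missing $B$-generators with $r\lle \fks_{i,j}$'' in $\Y_\mu^\pm(\sigma)$ to rewrite: such elements are simply absent from the presentation. What these relations actually do is force the \emph{diagonal} symbols $\sfe_{i,i;2r}:=\gr H_{a;i,i}^{(2r+1)}$ to vanish. In the associated graded the top piece of $Z_{a;i,i,j,j}^{(2r-1)}$ is $\sfe_{\mu_{(a)}+j,\mu_{(a)}+j;2r-2}-\sfe_{\mu_{(a-1)}+i,\mu_{(a-1)}+i;2r-2}$, so \eqref{Zshifted} together with \eqref{pr1} inductively gives $\sfe_{i,i;2r}=0$ for the small values $r<\fks_{a+1,a}(\mu)$; for $r\gge \fks_{a+1,a}(\mu)$ the same vanishing follows from \eqref{pr6} (see the paper's equation \eqref{991}). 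Without this, $\gr\Y_\mu^\pm(\sigma)$ would contain spurious nonzero diagonal elements not hit by $\varphi$, and your surjectivity claim would fail. Once the linear relations $\sfe_{i,j;r}=(-1)^{r+1}\sfe_{j,i;r}$ (type AI) or the analogous symmetry \eqref{new-pfn0} (type AII) are in hand, the bracket relations \eqref{todo}/\eqref{new-todo} were already established in the proof of Theorem~\ref{thm:A} and carry over verbatim; the spanning step is then immediate and requires no separate rank-two or rank-three reduction.
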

The PBW bases for $\Y_\mu^\pm(\sigma)$ in Theorem~\ref{pbwstw} and Theorem~\ref{a2shPBWgauss} are expressed in terms of parabolic (Drinfeld-type) generators. For $\Y_\mu^\pm$ (i.e., $\sigma=0$), a PWB basis in terms of $s_{ij}^{(r)}$ was known; cf. \cite{Mol07}. 

For each admissible shape $\mu$ for $\sigma$ we view the shifted twisted Yangian as a subalgebra, $\Y^\pm_\mu(\sigma) \subseteq \Y^\pm_N$. In Proposition~\ref{ind} and Corollary~\ref{a2shapeindcor} we show that this subalgebra does not depend on the choice of admissible shape. For the rest of the introduction, we will use the notation $\Y_N^\pm(\sigma)$ for the shifted twisted Yangian.


\subsection{Truncation, baby comultiplication and the center}

Now we fix an integer $\ell > 2\mathfrak{s}_{1,N}$, which we call the {\it level}. For each level, we define a truncation $\Y^\pm_{N,\ell}(\sigma) := \Y_{N}^\pm(\sigma)/I_{\ell}$ where $I_\ell$ is the two-sided ideal generated by elements in \eqref{trdef} in type AI, and \eqref{trdefA2} in type AII; note that the generators in the ideal can be more sophisticated than those used in defining truncated shifted Yangians in \cite{BK06}. A priori, this truncation depends on the admissible shape $\mu$, and its parabolic presentation. However, we show that these truncations are canonically isomorphic (see Corollaries~\ref{trtwywd} and \ref{trtwywda2}), which justifies the notation.

With our chosen conventions, $\Y_N^\pm \subseteq \rY_N$ is a right coideal subalgebra, and so we have comultiplication
$$\Delta_R : \Y_N^\pm \to \Y_N^{\pm} \otimes \rY_N \twoheadrightarrow \Y_N^{\pm} \otimes \rU(\gl_N)$$
where the second map is the evaluation homomorphism. This structure does not descend to the shifted subalgebras or the truncations, however some residual structure is left behind after shifting and truncating. This residual structure is called the {\it baby comultiplication}, and it generalizes that for $\rY_N$, first appearing in the seminal work of Brundan and Kleshchev \cite{BK06}. 

Assume $\sigma\neq0$ and choose $\mu=(\mu_1,...,\mu_n)$ to be minimal admissible to $\sigma$; here ``minimal" means that $\mu$ has minimal length. Define $\dot\sigma:=(\dot\fks_{i,j})_{1\lle i,j\lle N}$ according to the following rule
\beq\label{1dotsij}
\dot\fks_{i,j}:=\begin{cases}
\fks_{i,j}-1, & \text{ if }i\lle N-\mu_n<j  \text{ or } j\lle N-\mu_n<i,\\
\fks_{i,j}, &\text{ otherwise. }\\
\end{cases}
\eeq

\begin{alphatheorem} [Baby comultiplication, Theorem~\ref{delR}]
\label{thm:C}
For each $N\times N$ shift matrix $\sigma$ there exists an injective homomorphism $\Delta_R : \Y_N^{\pm}(\sigma) \to \Y_N^{\pm}(\dot\sigma) \otimes \rU(\gl_{\mu_n})$ where $\mu = (\mu_1,...,\mu_n)$ is the minimal admissible shape. Moreover, for each $\ell > 2\fks_{1,N}$, this descends to an injective algebra homomorphism
$$\Delta_R : \Y_{N, \ell}^\pm(\sigma) \longrightarrow \Y_{N,\ell-2}^\pm(\dot \sigma) \otimes \rU(\gl_{\mu_n}).$$
\end{alphatheorem}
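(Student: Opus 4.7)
The plan is to adapt the Brundan--Kleshchev strategy for baby comultiplications of shifted Yangians to the twisted setting, exploiting the parabolic presentation from Theorem~\ref{thm:A} and the PBW basis from Theorem~\ref{thm:B}. I would start by defining $\Delta_R : \Y_N^\pm \to \Y_N^\pm \otimes \rU(\gl_N)$ as the composition of the right coideal structure map $\Y_N^\pm \to \Y_N^\pm \otimes \rY_N$ (which exists because $\Y_N^\pm \subset \rY_N$ is a right coideal subalgebra) with the evaluation $\rY_N \twoheadrightarrow \rU(\gl_N)$. At the $S$-matrix level this should send $S(u)$ to a product involving $1 + u^{-1}(e_{i,j})_{i,j}$ on the $\rU(\gl_N)$ side and $S(u)$ on the $\Y_N^\pm$ side, with the signs adjusted to respect the involution $\theta$.

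The first serious task is to compute $\Delta_R$ on the block Gauss-decomposition entries $D_a(u), E_a(u), F_a(u)$ relative to the minimal admissible shape $\mu = (\mu_1,\ldots,\mu_n)$, and thereby on the parabolic generators $H_{a;i,j}^{(r)}, \wtl H_{a;i,j}^{(r)}, B_{a;i,j}^{(r)}$. Since only the last block of size $\mu_n$ is affected by the passage $\sigma \mapsto \dot\sigma$, the generators with $a<n-1$ should map to $X \otimes 1$, while those touching the final block acquire genuine cross terms in $\rU(\gl_{\mu_n})$. This Gauss-decomposition computation is the main obstacle: it is analogous to \cite{BK06} but complicated by the coupled factors $T(u)$ and $T(-u)^t$ in the R-matrix presentation of $\Y_N^\pm$, and in type AII additionally by the evenness of $\mu_n$ and the heavier symmetry relations.

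Once the explicit formulas are in hand, the shift condition reduces to an arithmetic check against \eqref{1dotsij}: for $r > \fks_{i,j}$ and indices crossing the last block, the lowest degree of the image in the $\Y_N^\pm(\dot\sigma)$-factor is precisely $\fks_{i,j} - 1 = \dot\fks_{i,j}$. I would then verify that the defining relations \eqref{pr1}--\eqref{pr-2} together with the additional shifted relations \eqref{Zshifted}/\eqref{Zshifteda2} are preserved by $\Delta_R$, so that it descends to an algebra homomorphism $\Y_N^\pm(\sigma) \to \Y_N^\pm(\dot\sigma) \otimes \rU(\gl_{\mu_n})$. Injectivity will follow by passing to associated graded algebras: under the loop filtration and the identifications from Theorem~\ref{thm:B}, $\gr \Delta_R$ should become the map $\rU(\gl_N[z]^\theta(\sigma)) \hookrightarrow \rU(\gl_N[z]^\theta(\dot\sigma)) \otimes \rU(\gl_{\mu_n})$ induced by stripping the top current coefficient supported on the $\mu_n$-block, and this is manifestly injective by classical PBW.

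For the truncated statement, I would check on the ideal generators displayed in \eqref{trdef} and \eqref{trdefA2} that $\Delta_R(I_\ell) \subseteq I_{\ell-2} \otimes \rU(\gl_{\mu_n})$; the level drop by $2$ (rather than $1$, as in the untwisted Yangian case) reflects that the $S$-matrix contracts twice with the last-block evaluation factor, consistent with the assumption $\ell > 2\fks_{1,N}$. Injectivity of the induced map on truncations would then follow from the same loop-filtration argument, using the induced PBW-type basis on $\Y_{N,\ell}^\pm(\sigma)$ obtained by imposing the truncation degree bounds on the basis of Theorem~\ref{thm:B}. The hardest part overall will remain the initial explicit computation of $\Delta_R$ on parabolic generators, particularly in type AII where the asymmetric appearance of the shift matrix in \eqref{Zshifteda2} and the forced evenness of $\mu$ demand careful bookkeeping.
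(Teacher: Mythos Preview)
Your overall strategy is broadly aligned with the paper's, but there are meaningful differences in execution and one genuine logical gap.

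On the construction: the paper does \emph{not} derive the formulas for $\Delta_R$ on parabolic generators by Gauss-decomposing $\Delta_R(S(u))$. Instead it posits the formulas \eqref{Hdel1}--\eqref{bp:Hm} directly (the natural twisted analogues of \cite[Thm.~4.1]{BK06}) and verifies by hand that every defining relation \eqref{pr1}--\eqref{pr-2}, \eqref{Zshifted}, \eqref{Zshifteda2} of $\Y_\mu^\pm(\sigma)$ is preserved. Your plan to compute on Gauss entries first would ultimately reach the same formulas but adds a layer of quasi-determinant calculus that the paper avoids entirely. The relation-checking is the hard part in either approach; note in particular that the paper handles the Serre relation \eqref{pr-1} in the three-block case not by direct verification but by passing to a coarser two-block shape $\nu$ and invoking the already-established two-block homomorphism (end of \S\ref{subsec:babypfa1})---a trick you should anticipate needing.

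On untruncated injectivity: your loop-filtration argument can be made to work, though your description of $\gr\Delta_R$ as ``stripping the top current coefficient'' is off; with the loop grading the lower-order cross terms vanish and $\gr\Delta_R$ is simply $x\mapsto x\otimes 1$ along the inclusion $\rU(\gl_N[z]^\theta(\sigma))\hookrightarrow\rU(\gl_N[z]^\theta(\dot\sigma))$. The paper's argument (Proposition~\ref{delRinj}) is cleaner: compose $\Delta_R$ with $1\otimes\varepsilon$ for the counit $\varepsilon:\rU(\gl_{\mu_n})\to\bC$ and recover the natural embedding $\Y_\mu^\pm(\sigma)\hookrightarrow\Y_\mu^\pm(\dot\sigma)$.

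The genuine gap is in truncated injectivity. You invoke ``the induced PBW-type basis on $\Y_{N,\ell}^\pm(\sigma)$ obtained by imposing the truncation degree bounds on the basis of Theorem~\ref{thm:B}'', but that basis (Corollaries~\ref{trtwypbw}, \ref{trtwypbw-new}) is not available \emph{a priori}: in the paper it is established precisely \emph{by} proving injectivity of the truncated $\Delta_R$, via induction on $\ell$ (Theorems~\ref{trpbw}, \ref{trpbw-new}). The inductive step works in the \emph{canonical} filtration: a spanning set for $\gr'\Y_{N,\ell}^\pm(\sigma)$ is supplied independently by Dirac reduction (Lemma~\ref{pretrpbw}), and one checks that the images \eqref{grfab}--\eqref{grfnn} of these generators under $\gr'\Delta_R$ are algebraically independent in $\gr'\Y_{N,\ell-2}^\pm(\dot\sigma)\otimes\rU(\gl_{\mu_n})$ using the inductive hypothesis and the base cases $\ell=1,2$ from \cite{Br09}. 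Without incorporating this inductive structure your argument is circular.
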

One finds explicit formulas for $\Delta_R$ acting on generators of $\Y_{N, \ell}^\pm(\sigma)$ in Theorem~\ref{delR}. Showing the baby comultiplication equipped with these formulas is an algebra homomorphism in our twisted Yangian setting requires significantly more effort than the counterpart in \cite{BK06}.
The baby comultiplication allows us to study the truncation $\Y_{N,\ell}^\pm(\sigma)$ inductively, by reducing the level. This allows us to obtain a PBW theorem, and describe the center of $\Y_{N,\ell}^\pm(\sigma)$. These results are pivotal when we relate twisted Yangians to finite $W$-algebras. In order to state these results, we now introduce the combinatorics involved in relating twisted Yangians and finite $W$-algebras.

Let $(\phi, \sigma, \ell)$ be a triple consisting of a sign $\phi \in \{\pm\}$, an $N \times N$ shift matrix $\sigma$ for $\Y_N^\phi$ and a level $\ell > 2\fks_{1,N}$. From this data we construct a pair $(\g, e)$ consisting of a classical Lie algebra $\g$ and a nilpotent element $e \in \g$, via the following recipe. We let $M := N\ell - \sum_{i=1}^N 2 \fks_{i,N}$, and then set
\begin{equation}
\label{liealgdef}
 \g=\begin{cases}
 \mathfrak{so}_M, & \textnormal{ for } \phi = +, \ \ell \textnormal{ odd}, \textnormal{ or } \phi = -, \ \ell \textnormal{ even}. \\
  \mathfrak{sp}_M, & \textnormal{ for } \phi = +, \ \ell \textnormal{ even}, \textnormal{ or } \phi = -, \ \ell \textnormal{ odd}.
 \end{cases}
\end{equation}
Consider the partition $p = (p_1,...,p_N)$ of $M$ given by
\beq\label{partn}
p_i:=\ell-2\fks_{i,N}. 
\eeq
By construction, there exist nilpotent elements in $\g$ such that the sizes of the Jordan blocks are given by $p$. We let $e \in \g$ be any such nilpotent element.

The Sklyanin determinant is the formal power series $\sdet S(u) = \sum_{r \gge 0} c_r u^{-r}$ with coefficients in $\Y_N^\pm$, such that $\{ c_r \mid r \in 2\mathbb Z_{>0}\}$ is a collection of algebraically independent generators for the center $Z \Y_N^\pm$, see \cite{MNO96}.

\begin{alphatheorem} [Truncated PBW theorem, Corollaries~\ref{loopgradedcentralizerAI}, \ref{loopgradedcentralizerAII} and Theorem~\ref{truncatedcenter}]
\label{thm:D}
$\qquad$
\begin{enumerate}
    \item[(i)]  There is a natural surjection $\gl_N[z]^\theta(\sigma) \twoheadrightarrow \g^e$, and the identification $\gr \Y_N^\pm(\sigma) = \rU(\gl_N[z]^\theta(\sigma))$ induces an identification $\gr \Y_{N,\ell}^\pm(\sigma) = \rU(\g^e)$.
    \item[(ii)] If $\g$ in \eqref{liealgdef} has type {\sf B} or {\sf C} then the map $Z \Y_N^\pm(\sigma) \twoheadrightarrow Z \Y_{N,\ell}^\pm(\sigma)$ is surjective and $Z \Y_{N,\ell}^\pm(\sigma)$ is a polynomial ring in rank $\g$ variables. 
\end{enumerate}
\end{alphatheorem}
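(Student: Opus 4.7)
The plan is to establish~(i) by induction on the level $\ell$, using Theorem~\ref{thm:C} as the source of an injective upper bound, and then to reduce~(ii) to classical invariant theory of the centralizer $\g^e$ in types~{\sf B} and~{\sf C} via a symbol computation for the Sklyanin determinant.

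For~(i), I would first construct the surjection $\gl_N[z]^\theta(\sigma)\twoheadrightarrow \g^e$. Fix a Jordan basis of the natural module of $\g$ adapted to the partition $p=(p_1,\dots,p_N)$ of~\eqref{partn}. The classical explicit basis of the centralizer of a nilpotent in a classical Lie algebra identifies $\g^e$ with the span of symmetrized matrix units $f_{i,j}z^r$ with $\fks_{i,j}\lle r<\fks_{i,j}+\min(p_i,p_j)$, subject to a parity restriction depending on the type of $\g$. Sending each generator $f_{i,j}z^r$ of $\gl_N[z]^\theta(\sigma)$ to the corresponding centralizer basis vector, or to zero when $r$ is out of range, gives the required surjection of Lie algebras. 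A direct inspection of leading symbols then shows that the generators of the truncation ideal $I_\ell$ of~\eqref{trdef}/\eqref{trdefA2} all lie in the kernel of the induced enveloping-algebra map $\rU(\gl_N[z]^\theta(\sigma))\twoheadrightarrow \rU(\g^e)$; combined with Theorem~\ref{thm:B}, this produces a filtered surjection $\gr\Y_{N,\ell}^\pm(\sigma)\twoheadrightarrow \rU(\g^e)$.

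For the matching upper bound I would induct on $\ell$, with the base cases $\ell\in\{2\fks_{1,N}+1,\,2\fks_{1,N}+2\}$ handled by a direct calculation. Taking associated graded of the injection $\Delta_R\colon \Y_{N,\ell}^\pm(\sigma)\hookrightarrow \Y_{N,\ell-2}^\pm(\dot\sigma)\otimes \rU(\gl_{\mu_n})$ of Theorem~\ref{thm:C}, and invoking the inductive hypothesis $\gr\Y_{N,\ell-2}^\pm(\dot\sigma)=\rU(\g'^{e'})$, yields a graded injection $\gr\Y_{N,\ell}^\pm(\sigma)\hookrightarrow \rU(\g'^{e'})\otimes \rU(\gl_{\mu_n})$. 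It then suffices to verify that the Hilbert series of $\rU(\g^e)$ and of $\rU(\g'^{e'})\otimes \rU(\gl_{\mu_n})$ coincide, which follows from the explicit basis above: the partition $\dot p$ obtained from $p$ via~\eqref{1dotsij} shrinks the last $\mu_n$ Jordan blocks by $2$ each, and the freed basis vectors reassemble into an auxiliary $\gl_{\mu_n}$. Matching the two bounds finishes~(i).

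For~(ii) I would work with the Sklyanin determinant coefficients $c_r$, $r\in 2\bZ_{>0}$, which freely generate $Z\Y_N^\pm$. Using the Gauss decomposition / quasi-determinant expression for $\sdet S(u)$, one checks that each $c_r$ already lies in the shifted subalgebra $\Y_N^\pm(\sigma)$, so its image $\bar c_r$ is a central element of $\Y_{N,\ell}^\pm(\sigma)$. Passing to $\gr$ via~(i), I would identify the principal symbol $\gr\bar c_r\in \rU(\g^e)$ with the restriction to a Slodowy-type transverse slice of the classical generator of $\bC[\g]^G$ of matching degree. In types~{\sf B} and~{\sf C}, these restrictions freely generate $\bC[\g^e]^{G^e}$ as a polynomial ring in $\mathrm{rank}\,\g$ variables, by standard invariant theory of the classical Lie algebras. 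A graded-to-filtered lifting argument then yields that $Z\Y_{N,\ell}^\pm(\sigma)$ is polynomial in $\mathrm{rank}\,\g$ variables freely generated by the corresponding $\bar c_r$, and that the natural map $Z\Y_N^\pm(\sigma)\twoheadrightarrow Z\Y_{N,\ell}^\pm(\sigma)$ is surjective.

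The main obstacle will be the symbol identification in the last paragraph: matching $\gr\bar c_r$ with a Chevalley invariant of $\g^e$ requires a careful asymptotic expansion of $\sdet S(u)$ modulo the truncation ideal, and is the step most sensitive to the type of $\g$. This is precisely why the conclusion is restricted to types~{\sf B} and~{\sf C}: in type~{\sf D} an additional Pfaffian-type invariant is required to generate $\bC[\g^e]^{G^e}$, and does not arise from the Sklyanin coefficients alone, consistent with the further conjecture alluded to in the introduction.
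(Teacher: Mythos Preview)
Your overall strategy for~(i) matches the paper's in spirit—induction on $\ell$ via the baby comultiplication—but you have routed the induction through the wrong filtration, and this causes a genuine gap. The paper runs the induction on the \emph{canonical} filtration $\gr'$, where $\gr'\Y_{N,\ell}^\pm(\sigma)$ is a commutative (Poisson) algebra; the spanning set comes from the identification with $\bC[\cS_\chi]$ via Dirac reduction (Lemma~\ref{pretrpbw}), and the inductive step (Theorem~\ref{trpbw}) proves algebraic independence by explicitly exhibiting the images of the generators under $\gr'\Delta_R$ and checking they are algebraically independent in $\gr'\Y_{N,\ell-2}^\pm(\dot\sigma)\otimes\rU(\gl_t)$. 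The loop-filtration statement~(i) is then read off from the resulting PBW basis together with the known surjection $\rU(\gl_{N,\ell}[z]^\theta(\sigma))\twoheadrightarrow\gr\Y_{N,\ell}^\pm(\sigma)$ and the identification $\gl_{N,\ell}[z]^\theta(\sigma)\cong\g^e$ from \cite{TT24}; no second induction is needed.

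Your inductive step asserts that the associated graded of the injection $\Delta_R$ is again injective. This does not follow: injectivity of a filtered map does not imply injectivity on associated graded unless the map is strict, and strictness is exactly what you are trying to prove. Concretely, for $\Delta_R$ to be loop-filtered you must place $e_{ij}\in\rU(\gl_{\mu_n})$ in degree~$0$ or~$1$. If $\deg e_{ij}=0$, then $\gr\Delta_R$ kills every $\otimes\, e$ term and factors through the natural map $\gr\Y_{N,\ell}^\pm(\sigma)\to\rU(\g'^{e'})$, which cannot be injective since $\dim\g^e>\dim\g'^{e'}$; the Hilbert-series bound is then vacuous. If $\deg e_{ij}=1$, the target of $\gr\Delta_R$ is $\rU(\g'^{e'})\otimes S(\gl_{\mu_n})$, not $\rU(\g'^{e'})\otimes\rU(\gl_{\mu_n})$, and your Hilbert-series matching fails. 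This is why the paper works with $\gr'$ instead, where $e_{ij}$ naturally has degree~$1$ and the target is already commutative. (Your base cases are also misidentified: the induction $\ell\mapsto\ell-2$, $\sigma\mapsto\dot\sigma$ terminates at $\sigma=0$ with $\ell\in\{1,2\}$, handled by \cite{Br09}, not at $\ell=2\fks_{1,N}+1$ or $2\fks_{1,N}+2$.)

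For~(ii) there is a parallel confusion. You pass to the loop-graded algebra $\rU(\g^e)$ and then speak of ``restrictions to a Slodowy-type transverse slice'' and of ``$\bC[\g^e]^{G^e}$''. These live in different worlds: restrictions of $\bC[\g]^G$ to the Slodowy slice lie in $\bC[\cS_\chi]=\gr'\Y_{N,\ell}^\pm(\sigma)$, not in $\rU(\g^e)=\gr\Y_{N,\ell}^\pm(\sigma)$; and the symmetric invariants $S(\g^e)^{\g^e}$ for the non-reductive centralizer $\g^e$ are not governed by ``standard invariant theory of classical Lie algebras''—their polynomiality is a deep and separate question. The paper avoids all of this by again using the canonical filtration: it shows, via the Miura transform (iterated $\Delta_R$) applied to $\sdet S(u)$, that the even coefficients $z_{2},\dots,z_{2m}$ are algebraically independent central elements of the correct canonical degrees (Proposition~\ref{coeffalgind}); then it invokes Premet's result that $\bC[\g^*]^G\to Z(\bC[\cS_\chi])$ is an isomorphism, together with classical invariant theory of $\g$ itself (not $\g^e$), to conclude that the top symbols $\gr' z_{2i}$ already generate $Z(\gr'\Y_{N,\ell}^\pm(\sigma))$. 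The restriction to types {\sf B} and {\sf C} is precisely the statement that the restricted $q_{2i}$ generate $\bC[\g]^G$, whereas in type {\sf D} a Pfaffian is missing.
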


We also show that Theorem \ref{thm:D}(ii) holds also if $\g$ has type $\sf{D}$ and $e$ is a nilpotent element with 2 Jordan blocks in Theorem \ref{pfr2thm}.

\subsection{Dirac reduction of shifted Yangians and Slodowy slices}   
    Let $(\phi, \sigma, \ell)$ and $(\g,e)$ be related as per \eqref{liealgdef} and \eqref{partn}. The Slodowy slice associated to $e$ will be denoted $\cS$. It admits a natural Poisson structure which is quantized by the finite $W$-algebra \cite{GG02}. In recent work \cite{To23, TT24} two of the authors used Dirac reduction to describe the Poisson structure on the Slodowy slice. If $(\g,e)$ arises from a triple with $\phi = +$, they showed that this Poisson structure actually arises as a quotient of the semiclassical twisted shifted Yangian of type AI. Our next results extend that work to type AII.

    Twisted Yangians are equipped with two filtrations. The loop filtration has been used above to deform twisted current algebras, whilst the canonical filtration has an associated graded algebra which is commutative. We write $y_N^\pm := \gr' \Y_N^\pm$ for the associated canonically graded algebra. As a commutative algebra, it is isomorphic to $S(\gl_N[z]^\theta)$;  however it comes equipped with an interesting Poisson structure, deforming the Lie--Poisson bracket. We introduce similar notation $y_{N,\ell}^\pm(\sigma)$ in the shifted and truncated setting.

    Write $\bC[\cS]$ for the algebra of regular functions on the Slodowy slice associated to $(\g,e)$, together with its graded Poisson structure.

\begin{alphatheorem}[Corollaries~\ref{gryangsliceAI} and \ref{gryangsliceAII}]
\label{thm:E}
    We have a graded Poisson algebra isomorphism
    $$y_{N,\ell}^\phi(\sigma) \overset{\sim}{\longrightarrow} \bC[\cS]$$
    where the doubled canonical grading coincides with the Kazhdan grading.
\end{alphatheorem}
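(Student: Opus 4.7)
The plan is to prove this graded Poisson isomorphism directly via Dirac reduction, independently of the filtered isomorphism with the finite $W$-algebra. Indeed, this semiclassical identification supplies the essential input for the subsequent application of the theory of universal equivariant quantizations, which will then lift it to an isomorphism of filtered (quantum) algebras.

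First, using Theorem~\ref{thm:B} and Theorem~\ref{thm:D}(i), I would identify $y_{N,\ell}^\phi(\sigma)$ as a commutative algebra with an appropriate quotient of $S(\g^e)$, via the canonical surjection $\gl_N[z]^\theta(\sigma) \twoheadrightarrow \g^e$. The Poisson bracket on $y_{N,\ell}^\phi(\sigma)$ can be computed by reading off the quadratic parts of the parabolic relations \eqref{pr1}--\eqref{pr-2} (together with \eqref{Zshifted} or \eqref{Zshifteda2}) and the truncation ideal.

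Next, I would invoke the Dirac reduction framework developed in \cite{To23, TT24}. For type AI ($\phi = +$), the semiclassical result of \cite{TT24} already realizes $\bC[\cS]$ as a Dirac reduction of a semiclassical shifted Yangian of $\gl_M$ along the $\theta$-anti-invariant constraints; combining this with the shifted twisted Yangian PBW theorem of this paper extends the identification to arbitrary admissible shift matrices and shapes, giving Corollary~\ref{gryangsliceAI}. For type AII ($\phi = -$), the argument is structurally parallel but must be adapted to the symplectic involution. The parabolic presentations of Theorem~\ref{thm:A} with $\mu$ having only even parts provide the generators compatible with the $\fksp$-structure, and one verifies that the additional defining relations \eqref{Zshifteda2} become Dirac constraints in the associated graded, so that $y_{N,\ell}^-(\sigma)$ is cut out as the appropriate Dirac reduction; this yields Corollary~\ref{gryangsliceAII}.

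The compatibility of the doubled canonical grading with the Kazhdan grading is a direct degree check on generators: the image of $f_{i,j}z^r$ in $\g^e$ lies in Kazhdan degree $2r + 2$, matching twice the canonical degree of its preimage in $y_{N,\ell}^\phi(\sigma)$ together with the overall shift intrinsic to the Kazhdan grading on the Slodowy slice. The main obstacle is in the type AII case: the symplectic involution introduces new sign patterns, and the parity of the level $\ell$ swaps the ambient Lie algebra between $\fkso_M$ and $\fksp_M$ according to \eqref{liealgdef}. One must carefully track these parity choices, together with the evenness constraint on $\mu$, to verify that the Dirac reduction consistently produces the Slodowy slice for a nilpotent element of the correct Jordan type $p$ prescribed in \eqref{partn}.
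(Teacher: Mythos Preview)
Your proposal is correct and follows essentially the same Dirac-reduction strategy as the paper: type AI via \cite{TT24} directly, type AII via the new analogue established in Section~\ref{sec:DiracII} (Lemma~\ref{grpresAII} and Theorem~\ref{dirredisoAII}), with the truncated PBW theorems supplying bijectivity of the resulting surjection $\bC[\cS_\chi]\twoheadrightarrow y_{N,\ell}^\phi(\sigma)$. One minor slip: Theorems~\ref{thm:B} and~\ref{thm:D}(i) concern the \emph{loop} filtration, whereas $y_{N,\ell}^\phi(\sigma)=\gr'\Y_{N,\ell}^\phi(\sigma)$ is defined via the \emph{canonical} filtration, so the relevant PBW inputs are Theorems~\ref{trpbw} and~\ref{trpbw-new}; relatedly, $y_{N,\ell}^\phi(\sigma)$ is isomorphic to all of $S(\g^e)\cong\bC[\cS_\chi]$ as a graded commutative algebra, not a proper quotient.
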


\subsection{Equivariant quantizations and finite $W$-algebras}

The isomorphism between truncated shifted Yangians and finite W-algebras of type A in \cite{BK06} was constructive via miraculously explicit formulas, then leveraging the powerful baby comultiplications to prove that these formulas satisfy all of the desired properties. We are unable to generalize such a constructive approach in our twisted setting in spite of serious effort, partly due to the use of different bilinear forms for $\g$ in $\rU(\g,e)$ and twisted Yangians $\Y_N^\pm$. This is where our methods differ the most from \cite{BK06}.

Conic symplectic singularities are a class of singular Poisson varieties, introduced by Beauville \cite{Be00}. 
In recent years there has been a huge amount of research activity surrounding the deep relationship between their geometry and the representation theory of their quantizations. 
The key example of a conic symplectic singularity in this paper is the nilpotent Slodowy variety, i.e., the intersection of a nilpotent cone with a Slodowy slice.

Let $X$ be a conic symplectic singularity. Namikawa has shown in \cite{Na10, Na11} that the functor $\rPD_X$ of Poisson deformations is representable, and Losev has shown in \cite{Lo22} that the universal Poisson deformation can be quantized. What is more, Losev's quantization admits a very nice universal property. In \cite{ACET23} the functor $\rQ_X$ of quantizations of flat Poisson deformations of $X$ was introduced, and it was explained that the universal property is equivalent to the representability of the functor of quantizations. The results of Namikawa and Losev were upgraded {\em loc. cit.} to the equivariant setting, with respect to a group of graded Poisson automorphisms of the underlying singularity.

Let $(\phi,\sigma, \ell)$ and $(\g,e)$ be as above.  In most cases, the Slodowy slice $\cS$ to $\g$ at $e$ can be identified with the universal Poisson deformation of the nilpotent part $\cS \cap \cN(\g)$ \cite{LNS12}. Consequently, the finite $W$-algebra $\rU(\g,e)$ is isomorphic to a universal quantization of this symplectic singularity. In the cases where the Slodowy slice is not universal, it turns out that the Slodowy slice is the universal equivariant deformation with respect to a certain group of Poisson automorphisms, unless $(\g,e)$ falls into two special cases: regular orbits or nilpotent elements with two odd blocks in type {\sf C} (see \cite{AT25} and \S\ref{ss:twistedKleinianSubregular} for more detail). 

If the Slodowy slice is the universal equivariant deformation, we prove that $\Y_{N,\ell}^\pm(\sigma)$ and $\rU(\g,e)$ are universal equivariant quantizations of the nilpotent Slodowy variety, and thus we prove our final theorem. Note that one of the key conditions for a filtered algebra to be a filtered quantization of the flat Poisson deformation is that the Poisson center of the deformations lifts to the center of the filtered algebra, and so our proof depends crucially on Theorem~\ref{thm:D}(ii). In the special cases where the Slodowy slice is not known to be the universal equivariant deformation we prove the following theorem directly in one case and use a result of \cite{Br09} for the other case. We can now formulate the $\sf B\sf C\sf D$ analogue of \cite{BK06} and a vast generalization of \cite{Br09}.

\begin{alphatheorem}[Theorem~\ref{t:finalisothm}]
    \label{thm:F}
    If $\g$ has type {\sf B} or {\sf C}, or if $\g$ has type {\sf D} and $e$ has only two Jordan blocks, then there exists an isomorphism of filtered algebras
    $$\Y_{N,\ell}^\pm(\sigma) \overset{\sim}{\longrightarrow} \rU(\g,e)$$
    where the doubled canonical filtration lines up with the Kazhdan filtration.
\end{alphatheorem}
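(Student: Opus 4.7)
The plan is to deduce the isomorphism from Losev's theory of universal equivariant quantizations of conic symplectic singularities rather than constructing an explicit map on generators. The central geometric object is the nilpotent Slodowy variety $\cS \cap \cN(\g)$, which is a conic symplectic singularity in the sense of Beauville, and both sides of the claimed isomorphism are filtered algebras whose associated gradeds recover a flat graded Poisson deformation of this variety.

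First I would verify that both $\Y_{N,\ell}^\pm(\sigma)$ and $\rU(\g,e)$ are filtered quantizations of the same flat graded Poisson deformation of $\cS \cap \cN(\g)$. For $\rU(\g,e)$ this is the Gan--Ginzburg theorem: the associated graded with respect to the Kazhdan filtration is $\bC[\cS]$. For $\Y_{N,\ell}^\pm(\sigma)$ this is precisely Theorem~\ref{thm:E}, which identifies $\gr' \Y_{N,\ell}^\pm(\sigma) \cong \bC[\cS]$ as graded Poisson algebras, with the doubled canonical grading matching the Kazhdan grading.

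Next I would invoke the results of \cite{LNS12} and \cite{ACET23}: in the cases under consideration, the Slodowy slice $\cS$ is (either universally, or equivariantly for a suitable finite group $\Gamma$ of graded Poisson automorphisms arising from the reductive part of the centralizer of $e$) a universal Poisson deformation of $\cS \cap \cN(\g)$, and the functor $\rQ_X$ of quantizations of flat Poisson deformations is representable by Losev's universal equivariant quantization. To apply this machinery I must exhibit compatible $\Gamma$-actions on both $\Y_{N,\ell}^\pm(\sigma)$ and $\rU(\g,e)$, and then check the hypotheses of the representability theorem; the most delicate of these is that the Poisson center of the flat deformation lifts to the center of the filtered algebra. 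For $\rU(\g,e)$ the central lift is standard, while for $\Y_{N,\ell}^\pm(\sigma)$ this is exactly where Theorem~\ref{thm:D}(ii) intervenes: the surjection $Z\Y_N^\pm(\sigma) \twoheadrightarrow Z\Y_{N,\ell}^\pm(\sigma)$ combined with the polynomiality of $Z\Y_{N,\ell}^\pm(\sigma)$ supplies the required central lift. Granting these inputs, the universal property yields a unique filtered homomorphism $\Y_{N,\ell}^\pm(\sigma) \to \rU(\g,e)$ intertwining the identifications of associated gradeds, and since the induced map on associated gradeds is the identity on $\bC[\cS]$, the homomorphism itself is an isomorphism.

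Finally I would handle the exceptional cases where $\cS$ is not known to be the universal equivariant deformation of $\cS \cap \cN(\g)$, namely the regular nilpotent elements and the two-odd-block case in type {\sf C}. For regular $e$, the finite $W$-algebra is Kostant's $Z\rU(\g)$, and a direct comparison of the images of the coefficients of $\sdet S(u)$ in $\Y_{N,\ell}^\pm(\sigma)$ against Kostant's generators, together with Theorem~\ref{thm:D}(ii), produces the isomorphism. For two odd blocks in type {\sf C} I would invoke Brown's explicit construction in \cite{Br09} and match his generators to our parabolic generators.

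I expect the main obstacle to be the verification that the truncated shifted twisted Yangian satisfies the hypotheses of the representability theorem of \cite{ACET23}: in particular, (i) constructing the $\Gamma$-action realising $\Y_{N,\ell}^\pm(\sigma)$ as an equivariant object compatibly with the baby comultiplications from Theorem~\ref{thm:C}, and (ii) pinning down the map between the two parameter spaces of central characters so that the universal property genuinely applies. The central lift provided by Theorem~\ref{thm:D}(ii) is the bridge for (ii), but checking that the induced map of parameter spaces is an isomorphism of graded affine spaces — rather than merely a nonzero morphism — is the most substantial step, and this is where the two-Jordan-block restriction in type {\sf D} enters, since outside this range Theorem~\ref{thm:D}(ii) is presently available only conjecturally.
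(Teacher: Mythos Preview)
Your strategy is correct and matches the paper's approach: exhibit both algebras as filtered quantizations of $\bC[\cS_\chi]$ via Theorem~E and Gan--Ginzburg, invoke the universal (equivariant) quantization theory of \cite{Lo22, ACET23}, use Theorem~D(ii) for the central lift on the Yangian side, and treat the regular orbits and the two-odd-block type {\sf C} case by direct argument and \cite{Br09} respectively.

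One substantive correction, however: the group $\Gamma$ in the cases requiring genuinely equivariant quantization (subregular type {\sf B}, and two even blocks in type {\sf C}) does \emph{not} arise from the reductive part of the centralizer of $e$ in $\g$. In those cases the paper first identifies $\cN_\chi$ with a nilpotent Slodowy variety for a \emph{different} pair $(\g', e')$ in type {\sf A} or {\sf D}, and $\Gamma$ is a lift of an \emph{outer} automorphism of $\g'$; see \cite{AT25} and \S\ref{ss:twistedKleinianSubregular}. The inner automorphisms coming from the centralizer act trivially on the base of the universal deformation (Proposition~\ref{P:identicalsubfunctors}), so they impose no constraint and cannot play the role you assign them. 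The $\Gamma$-action on $\Y_{N,\ell}^\pm(\sigma)$ is then constructed by writing down explicit sign automorphisms on the parabolic generators (e.g.\ \eqref{e:tauautsubregB}) and checking the relations directly; the baby comultiplication plays no role in this step.
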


We expect that Theorem~\ref{thm:F} holds without restrictions in type {\sf D}. To prove this it would suffice to show that the Poisson center of the Slodowy slice lifts to the center of $\Y_{N,\ell}^\pm(\sigma)$ through Theorem~\ref{thm:E}. This would be implied by an affirmative answer to our Conjecture~\ref{pfconj} or Conjecture~\ref{centerconj}.

The results of this article may shed light on several interesting old and new questions. One hopes that our work helps to provide a two-way interaction between the representation theory of twisted Yangians to the representation theory of finite $W$-algebras of type {\sf B}{\sf C}{\sf D} (compare, e.g., \cite{BK08, BG13,Mol07, GRW17}). It remains an open question to give a presentation for the remaining finite $W$-algebras of type {\sf B}{\sf C}{\sf D}. It will be interesting to understand the precise relationship to a wide class of truncated shifted twisted Yangians which are constructed very differently in \cite{LWW25}.

\subsection{Organization}

This paper is organized into two parts. 

Part 1, which consists of Sections \ref{sec:prelim}--\ref{sec:pfAII}, provides the parabolic presentations of twisted Yangians of type AI and AII; this can be viewed as a highly nontrivial twisted counterpart of \cite{BK05}. Part~ 2, which consists of Sections \ref{sec:shiftedI}--\ref{sec:singularities}, formulates the shifted twisted Yangians and establishes the connection of the truncated shifted twisted Yangians to finite $W$-algebras. This part achieves a similar goal as in \cite{BK06} for finite $W$-algebras of types $\sf B\sf C\sf D$; however, we require new techniques coming from universal equivariant quantization of nilpotent Slodowy varieties. 

In Section~\ref{sec:prelim}, we recall the definitions and basic properties of Yangians of type A and twisted Yangians of type AI and AII, including coproduct structures, PBW theorem, associated graded, various homomorphisms, and quantum/Sklyanin determinant and minors.

In Section~\ref{sec:Gauss_decomp}, we obtain new parabolic generators for twisted Yangians by performing the block-wise Gauss decomposition, and establish some simple relations and basic properties for these parabolic generators. Moreover, we describe the
Sklyanin determinant in terms of parabolic generators.

In Section~\ref{sec:parabolicI},
we state our first main result (Theorem \ref{thm:A}), the parabolic presentations of type AI and AII in a uniform way.
Assuming some results in low rank cases in subsequent Sections \ref{sec:lower2}--\ref{sec:lower3}, we provide a detailed proof for type AI. The defining relations in the special case when the composition $\mu$ has only two and three parts are derived in Section~\ref{sec:lower2} and Section~\ref{sec:lower3}, respectively; in particular the Serre relations are deduced.

In Section~\ref{sec:pfAII}, 
we complete the proof of parabolic presentations (Theorem \ref{thm:A}) for type AII, providing the necessary modifications in this type.  

In Section~\ref{sec:shiftedI},
we introduce the shifted twisted Yangians $\Y^{\pm}_{\mu}(\sigma)$ associated to a symmetric shift matrix $\sigma$ and an admissible shape $\mu$. We establish their PBW bases (Theorem \ref{thm:B}) and show that the definition does not depend on $\mu$. We also show that in type AI our definition coincides with a previous definition of the shifted Yangians given in \cite{TT24} in terms of the Drinfeld presentation from \cite{LWZ23}.

In Section~\ref{sec:baby},
we define the baby comultiplications from Theorem \ref{thm:C} and show that they give well defined algebra homomorphisms.

In Section~\ref{sec:DiracII},
we relate the semiclassical shifted twisted Yangian of type AII to a Dirac reduction of its untwisted counterpart. This is a type AII analogue of \cite[\textsection 3]{TT24} where this was done for type AI. This is one of our main technical tools used in the proof of Theorem~ \ref{thm:E}.

In Section~\ref{sec:truncationI}, 
we recall the notion of pyramids and relate a pyramid to a unique pair $(\sigma,\ell)$, where $\sigma$ is a symmetric shift matrix and $\ell$ is an integer called level. With $(\sigma,\ell)$, together with a composition $\mu$ which is admissible to $\sigma$, we then define the truncated shifted twisted Yangian $\Y_{N,\ell}^\pm(\sigma)$ as the quotient of $\Y_N^\pm(\sigma)$ over a certain ideal $I_\ell$. We show that the definition is independent of the choice of admissible shape $\mu$. The PBW bases of $\Y_{N,\ell}^\pm(\sigma)$ are also obtained. 

In Section~\ref{sec:center},
we describe the center of $\Y^{\pm}_{N,\ell}(\sigma)$ explicitly in terms of the Sklyanin determinant whenever the corresponding finite $W$-algebra is of type \textsf{B} or \textsf{C}. We introduce the notion of a Pfaffian generator in $\Y^{\pm}_{N,\ell}(\sigma)$ and conjecture the existence of such an element whenever the corresponding finite $W$-algebra is of type \textsf{D}. We verify this conjecture when $N = 2$.

In Section~\ref{sec:singularities}, we describe the Poisson structures on the Slodowy slices and describe the finite $W$-algebras. We then review the theory of equivariant flat graded Poisson deformations for a conic symplectic singularity $X$, as well as the classification of equivariant quantizations. The key fact we explain is the uniqueness (up to isomorphism) of the universal equivariant quantization (Corollary~\ref{C:uniqueuniversalquantization}). Automorphisms of the truncated shifted twisted Yangians related to subregular finite $W$-algebras in type {\sf B} are also studied. Finally we prove Theorem~ \ref{thm:F}.

\vspace{2mm}

\noindent {\bf Acknowledgements.} KL and WW are partially supported by WW's NSF grant DMS--2401351. YP is partially supported by the NSTC grant 111-2628-M-006-006-MY3 and the National Center of Theoretical Sciences, Taipei, Taiwan. LT is funded by the UKRI Future Leaders Fellowship grant number MR/Z000394/1.

\newpage

\part{Parabolic presentations of twisted Yangians}
\section{Preliminaries}
\label{sec:prelim}

In this section, we review the basics about Yangians and twisted Yangians of type AI and AII; cf. \cite{Mol07}.

\subsection{Yangians $\mathrm{Y}(\gl_N)$}
The Lie algebra $\gl_N$ is generated by elements $e_{ij}$, $1\lle i,j\lle N$, with the commutator relations
\[
[e_{ij},e_{kl}]=\delta_{jk}e_{il}-\delta_{il}e_{kj}.
\]

Introduce the generating series in an indeterminate $u$, for $1\lle i, j \lle N$, 
\[
t_{ij}(u)=  \delta_{ij}+t_{ij}^{(1)}u^{-1}+t_{ij}^{(2)}u^{-2}+\cdots.
\]
The  \textit{Yangian} $\rY(\gl_N)$ is a unital associative algebra with generators $t_{ij}^{(r)}$, where $1\lle i,j\lle N$ and $r\in\bZ_{>0}$, subject to relations 
\beq\label{A-rel}
(u-v)[t_{ij}(u),t_{kl}(v)]=t_{kj}(u)t_{il}(v)-t_{kj}(v)t_{il}(u).
\eeq
By convention, we also set $t_{ij}^{(0)}=\delta_{ij}$.

Let $E_{ij}$, $1\lle i,j\lle N$, be the standard matrix units in $\End(\bC^N)$, and
\[
R(u)=1-u^{-1}P,\qquad P=\sum_{i,j=1}^N E_{ij}\otimes E_{ji},\qquad T(u)=\big(t_{ij}(u)\big)_{i,j=1}^N=\sum_{i,j=1}^NE_{ij}\otimes t_{ij}(u).
\]
Then the relations \eqref{A-rel} can be compactly described by the RTT relations
\[
R(u-v)T_1(u)T_2(v)=T_2(v)T_1(u)R(u-v),
\]
considered as an equality in $\End(\bC^N)\otimes \End(\bC^N)\otimes \rY(\gl_N)[[u^{-1},v^{-1}]]$.

There exists an evaluation homomorphism
\beq\label{eva}
\pi_N:\rY(\gl_N)\longrightarrow \mathrm{U}(\gl_N),\qquad t_{ij}^{(r)}\mapsto \delta_{1r}e_{ij}.
\eeq
The Yangian $\rY(\gl_N)$ is a Hopf algebra with the coproduct given by
\beq\label{comy}
\Delta(t_{ij}(u))=\sum_{k=1}^N t_{ik}(u)\otimes t_{kj}(u).
\eeq

\begin{prop}[{\cite[Cor.~3.2]{BK05}, \cite[Cor.~1.23]{MNO96}}]
The monomials in the generators $\{t_{ij}^{(r)} \mid 1\lle i,j\lle N, r\gge 1\}$ taken in any fixed linear order form a basis of $\rY(\gl_N)$.
\end{prop}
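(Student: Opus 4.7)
The plan is to use the loop filtration and the classical PBW theorem for the current algebra $\gl_N[z]$. Set $\deg t_{ij}^{(r)} := r-1$ and filter $\rY(\gl_N)$ by $F_k$, the span of monomials of total loop-degree $\lle k$. From relation \eqref{A-rel}, telescoping in $u^{-r}v^{-s}$ coefficients yields
\begin{equation*}
    [t_{ij}^{(r)}, t_{kl}^{(s)}] = \sum_{a=1}^{\min(r,s)} \bigl( t_{kj}^{(a-1)} t_{il}^{(r+s-a)} - t_{kj}^{(r+s-a)} t_{il}^{(a-1)} \bigr).
\end{equation*}
Only the $a=1$ term contributes in loop-degree $r+s-2$, with the remaining terms lying in $F_{r+s-3}$. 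Consequently, in the associated graded we have
\begin{equation*}
    [\bar t_{ij}^{(r)}, \bar t_{kl}^{(s)}] = \delta_{jk}\,\bar t_{il}^{(r+s-1)} - \delta_{il}\,\bar t_{kj}^{(r+s-1)},
\end{equation*}
matching the bracket of $e_{ij}z^{r-1}$ and $e_{kl}z^{s-1}$ in $\gl_N[z]$. Assigning $e_{ij}z^{r-1} \mapsto \bar t_{ij}^{(r)}$ thus defines a surjective algebra homomorphism $\psi: \rU(\gl_N[z]) \twoheadrightarrow \gr \rY(\gl_N)$, and the classical PBW for $\gl_N[z]$ immediately implies that ordered monomials in the $t_{ij}^{(r)}$ span $\rY(\gl_N)$.

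The substantive content of the proposition is injectivity of $\psi$ (equivalently, linear independence of these monomials). For this I would invoke the Hopf structure \eqref{comy}, the evaluation $\pi_N$ of \eqref{eva}, and the family of shift automorphisms $\tau_a: t_{ij}(u) \mapsto t_{ij}(u-a)$, $a \in \bC$. A direct calculation gives $(\pi_N \circ \tau_a)(t_{ij}^{(r)}) = a^{r-1} e_{ij}$ for $r\gge 1$. Given a $k$-tuple $\mathbf{a} = (a_1, \ldots, a_k) \in \bC^k$, form
\begin{equation*}
    \Phi_{\mathbf a} := (\pi_N\tau_{a_1} \otimes \cdots \otimes \pi_N\tau_{a_k}) \circ \Delta^{(k-1)} : \rY(\gl_N) \longrightarrow \rU(\gl_N)^{\otimes k},
\end{equation*}
where $\Delta^{(k-1)}$ is the iterated coproduct. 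Expanding $\Delta^{(k-1)}(t_{ij}^{(r)})$ via \eqref{comy}, the image $\Phi_{\mathbf a}(t_{ij}^{(r)})$ splits as a ``principal'' piece $\sum_{m=1}^k a_m^{r-1}\,1^{\otimes(m-1)} \otimes e_{ij} \otimes 1^{\otimes(k-m)}$, plus corrections spreading weight across multiple tensor legs. By a Vandermonde argument in the parameters $a_m$ combined with the classical PBW theorem for $\rU(\gl_N)^{\otimes k}$, for $k$ sufficiently large and $a_1, \ldots, a_k$ generic the maps $\Phi_{\mathbf a}$ jointly separate ordered monomials in the $t_{ij}^{(r)}$ of any bounded loop-degree. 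This forces $\psi$ to be injective.

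Once $\gr \rY(\gl_N) \cong \rU(\gl_N[z])$ is established, the classical PBW basis of the current algebra lifts to an ordered monomial basis of $\rY(\gl_N)$; the particular ordering on generators is immaterial, since reordering only modifies the basis triangularly within each filtered piece. The main obstacle is the Vandermonde separation in the second paragraph: one must carefully match the multi-weight decomposition of $\rU(\gl_N)^{\otimes k}$ with the tensor-leg support of the iterated coproduct, and show that ordered monomials of a given shape contribute pairwise distinct weight components. A complete argument along these lines appears in \cite[Cor.~1.23]{MNO96} and \cite[Cor.~3.2]{BK05}, whose presentations I would follow.
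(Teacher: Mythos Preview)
The paper does not supply its own proof of this proposition: it is stated with citations to \cite[Cor.~3.2]{BK05} and \cite[Cor.~1.23]{MNO96} and used as background. Your outline is the standard argument from those references (loop filtration for spanning, iterated coproduct composed with shifted evaluations for independence), and it is correct in structure; there is nothing to compare against in the paper itself.
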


Define the \emph{loop} filtration on $\rY(\gl_N)$ by setting
\begin{align}  \label{filter:Y}
\deg t_{ij}^{(r)}=r-1
\end{align}
for every $r\gge 1$. Let $\mathcal F_s\rY(\gl_N)$ be the subspace of $\rY(\gl_N)$ spanned by elements of degree $\lle s$,
\[
\mathcal F_0\rY(\gl_N)\subset \mathcal F_1\rY(\gl_N)\subset \mathcal F_2\rY(\gl_N)\subset \cdots,\qquad \rY(\gl_N)=\bigcup_{s\gge 1}\mathcal F_s\rY(\gl_N).
\]
Denote by $\mathrm{gr}\rY(\gl_N)$ the associated graded algebra. We write $\bar t_{ij}^{(r)}$ for the image of $t_{ij}^{(r)}$ in $\mathrm{gr}\rY(\gl_N)$. 

Let $\gl_N[z]$ denote the \textit{current algebra} $\gl_N\otimes \bC[z]$ with the standard basis
\[
\{e_{ij}z^r~|~1\lle i,j\lle N,\ r\gge 0\}.
\]
Let $\mathrm{U}(\gl_N[z])$ be the universal enveloping algebra of $\gl_N[z]$. Then the map
\[
\mathrm{U}(\gl_N[z])\to \mathrm{gr}\rY(\gl_N), \qquad e_{ij}\otimes z^{r}\mapsto \bar t_{ij}^{(r+1)},
\]
induces a Hopf algebra isomorphism.  

Let $\mathfrak S_N$ be the symmetric group on the letters $\{1,2,\dots,N\}$. Define the \textit{quantum determinant} of $T(u)$ by
\beq\label{qdet}
\qdet T(u) = \sum_{\sigma\in\fkS_N}(-1)^\sigma t_{\sigma(1),1}(u)t_{\sigma(2),2}(u-1)\cdots t_{\sigma(N),N}(u-N+1).
\eeq
The coefficients of $\qdet T(u)$ are algebraically independent generators of the center of $\rY(\gl_N)$; see \cite[Thm.~1.7.5]{Mol07}. The quantum determinant is group-like under the coproduct \cite[Cor.~1.6.10]{Mol07}:
\beq\label{qdetcp}
\Delta(\qdet T(u))=\qdet T(u)\otimes \qdet T(u).
\eeq

\subsection{Twisted Yangians}
\label{ss:TwistedYangians}

We shall consider twisted Yangians of type AI and AII in some $N\times N$ matrix forms. We denote a classical Lie algebra by
\begin{align}\label{liealgtype}
    \g_N =
    \begin{cases}
        \fko_N, & \text{ in type AI},
        \\
        \mathfrak{sp}_{N}, & \text{ in type AII},
    \end{cases}
\end{align}
where in the latter case $N$ is assumed to be even. Throughout the article, whenever the signs $\pm$ or $\mp$ show up, the top one corresponds to type AI while the bottom one corresponds to AII. To describe the two cases in a uniform way, we introduce the following convention.

In type AI, we set
\beq\label{a1thetadef}
i'=i \quad \text{ and } \quad  \theta_{i}=1, \quad \forall 1\lle i\lle N.
\eeq
In type AII, we set
\beq\label{thetadef}
i'=\begin{cases}
i-1, &\text{ if }i=2k,\\
i+1, & \text{ if }i=2k-1,
\end{cases}
\qquad\qquad \theta_{i}=(-1)^i.
\eeq
In both cases, we take 
\beq\label{Gdef}
G =(\delta_{ij'}\theta_i)_{1\lle i,j\lle N}.
\eeq
Note that $G =\mathbf I_N$ is the identity matrix in type AI.

Define the generating series, for $1\lle i,j\lle N$,
\beq\label{siju}
s_{ij}(u)=  \delta_{ij}+s_{ij}^{(1)}u^{-1}+s_{ij}^{(2)}u^{-2}+\cdots .
\eeq

\begin{dfn}\label{tydef}
The  \textit{twisted Yangian} $\Y^\pm_N$ of type AI/AII is a unital associative algebra with generators $s_{ij}^{(r)}$, where $1\lle i,j\lle N$ and $r\in\bZ_{>0}$, subject to the \textit{quaternary} relations
\beq\label{qua}
\begin{split}
(u^2-v^2)[s_{ij}(u),s_{kl}(v)]=&\, (u+v)(s_{kj}(u)s_{il}(v)-s_{kj}(v)s_{il}(u))\\
- &\, (u-v)(\theta_k\theta_{j'}s_{ik'}(u)s_{j'l}(v)-\theta_i\theta_{l'}s_{ki'}(v)s_{l'j}(u))\\
&\, \hskip 1.64cm  +  \theta_{i}\theta_{j'}(s_{ki'}(u)s_{j'l}(v)-s_{ki'}(v)s_{j'l}(u)),
\end{split}
\eeq
and the \textit{symmetry} relations
\beq\label{sym}
\theta_i\theta_js_{j'i'}(-u)=  s_{ij}(u)\pm\frac{s_{ij}(u)-s_{ij}(-u)}{2u}.
\eeq
\end{dfn}

Note that our definition corresponds to the one in \cite[\S 2.15]{Mol07} with the matrix $G$ in \eqref{Gdef}. In the orthogonal case, it is the same as the one used in \cite{LWZ23} while in the symplectic case, it is the one used in \cite[\S 4.1]{Mol07} after rearranging the indices.

Define the modified transpose for the R-matrix $R(u)$ and the matrix $S(u)$, respectively, by
\[
R^{\sf T}(u)=1-u^{-1}\sum_{i,j=1}^N \theta_{i}\theta_jE_{ij}\otimes E_{i'j'},\quad S(u)=\big(s_{ij}(u)\big)_{i,j=1}^N=\sum_{i,j=1}^N E_{ij}\otimes s_{ij}(u).
\]
Note that the modified transpose coincides with the ordinary transpose for orthogonal case. The quaternary relations \eqref{qua} can be compactly described by the reflection equations
\[
R(u-v)S_1(u)R^{\sf T}(-u-v)S_2(v)=S_2(v)R^{\sf T}(-u-v)S_1(u)R(u-v).
\]

It follows from \eqref{qua} that
\beq\label{qua0}
[s_{ij}(u),s_{kl}^{(1)}]=\delta_{jk}s_{il}(u)-\delta_{il}s_{kj}(u)+\theta_i\theta_{l'}\delta_{i'k}s_{l'j}(u)-\theta_k\theta_{j'}\delta_{j'l}s_{ik'}(u).
\eeq

From \eqref{sym}, it is immediate that
\[
s_{i'i'}(-u)=s_{ii}(u)\pm \frac{s_{ii}(u)-s_{ii}(-u)}{2u}
\]
for all $1\lle i\lle N$. Indeed, given the quaternary relations, the symmetry relations can be reduced to a single relation as follows.

\begin{lem} [cf. {\cite[Lem. 2.2]{LWZ23}}]
\label{lem:s11}
Suppose $s_{ij}(u)$, $1\lle i,j\lle N$, satisfy the quaternary relations \eqref{qua}. If for any single $1\lle k\lle N$ we have
\beq\label{simplesym}
s_{k'k'}(-u)=s_{kk}(u)\pm \frac{s_{kk}(u)-s_{kk}(-u)}{2u},
\eeq
then $s_{ij}(u)$ also satisfy the symmetry relations \eqref{sym} for arbitrary $1\lle i,j\lle N$.
\end{lem}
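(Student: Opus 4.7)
The plan is to introduce the symmetry defect
\[
F_{ij}(u) := \theta_i\theta_j s_{j'i'}(-u) - s_{ij}(u) \mp \frac{s_{ij}(u)-s_{ij}(-u)}{2u},
\]
so that the symmetry relation \eqref{sym} for the pair $(i,j)$ is exactly $F_{ij}(u)=0$. The hypothesis \eqref{simplesym} asserts that $F_{kk}(u)=0$, and we wish to propagate this to every $(i,j)$ using only the quaternary relations, which are encoded at the lowest order by the commutator \eqref{qua0}.

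The key step is to show that the span of the $F_{ij}(u)$ (over $\bC[[u^{-1}]]$) is stable under the adjoint action of every first-order generator $s_{ab}^{(1)}$. Starting from \eqref{qua0}, one computes $[s_{ab}^{(1)},s_{ij}(u)]$ directly, and then obtains $[s_{ab}^{(1)},s_{j'i'}(-u)]$ by the substitution $(i,j,u)\leftrightarrow(j',i',-u)$. The shift term $\pm\tfrac{s_{ij}(u)-s_{ij}(-u)}{2u}$ behaves additively with respect to the commutator, so the three pieces of $F_{ij}(u)$ can be grouped. After sorting by the Kronecker deltas appearing in \eqref{qua0} and using $\theta_i^2=1$, each grouping reorganizes into a scalar multiple of some $F_{cd}(u)$; no residual terms of the form $s_{cd}(u)$ or $s_{cd}(-u)$ alone survive. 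This identifies $[s_{ab}^{(1)},F_{ij}(u)]$ as a $\bC$-linear combination of the $F_{cd}(u)$'s.

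Having established the invariance, the result follows by a propagation argument. Starting from $F_{kk}(u)=0$ and bracketing with $s^{(1)}_{jk}$ and $s^{(1)}_{kj}$ yields (after identifying the coefficient of the non-already-vanishing $F$ on the right-hand side) relations of the form $F_{jk}(u)=0$ and $F_{kj}(u)=0$; bracketing the latter with $s^{(1)}_{kj}$ then produces $F_{jj}(u)=0$, and iterating gives vanishing on the whole diagonal. A second round of commutators of $F_{jj}(u)=0$ with the off-diagonal $s^{(1)}_{ij}$ exhausts the remaining entries. Conceptually, this uses the fact that the first-order elements $s^{(1)}_{ab}$ realize the adjoint action of $\g^\theta$ (through the leading-order embedding into $\operatorname{gr}\Y_N^\pm$), whose orbit through $e_{kk}+\theta_k\theta_{k'}e_{k'k'}$ generates the full space of symmetry-compatible index pairs in both AI and AII.

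The main obstacle is the bookkeeping in the invariance step, especially in type AII where $\theta_i=(-1)^i$ alternates and $i\ne i'$: one must verify case-by-case that the coefficients of the $s_{cd}(\pm u)$ terms produced by the three pieces of $F_{ij}(u)$ combine into a pure combination of $F_{cd}(u)$'s, with the $\pm\tfrac{1}{2u}$ correction playing a subtle but essential role in cancelling cross terms. Once this stability is pinned down, the propagation is essentially automatic, and the argument proceeds uniformly in the two types with the top/bottom sign convention.
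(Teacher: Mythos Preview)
Your invariance claim is correct: one can check directly that
\[
[s_{ab}^{(1)}, F_{ij}(u)]
= -\delta_{ja}F_{ib}(u) + \delta_{ib}F_{aj}(u) - \theta_i\theta_{b'}\delta_{i'a}F_{b'j}(u) + \theta_a\theta_{j'}\delta_{j'b}F_{ia'}(u),
\]
so the span of the $F_{cd}(u)$'s is indeed stable under the adjoint action of the first-order generators. However, the propagation step fails in type AI. In that case $i'=i$ and $\theta_i=1$, so the formula above makes both the symmetric part $F_{ij}+F_{ji}$ and the antisymmetric part $F_{ij}-F_{ji}$ separately stable under $\mathfrak{so}_N$ (this is just the decomposition $\gl_N\cong\mathfrak{so}_N\oplus\operatorname{Sym}^2\bC^N$). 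Since $F_{kk}$ lies in the symmetric part, bracketing with any collection of $s_{ab}^{(1)}$'s only yields $F_{ij}(u)+F_{ji}(u)=0$; concretely, $[s_{mk}^{(1)},F_{kk}(u)]=F_{mk}(u)+F_{km}(u)$ and iterating never separates the two summands. The identity $F_{ij}+F_{ji}=0$ is equivalent to $s_{ij}^{(2r-1)}+s_{ji}^{(2r-1)}=0$, which is strictly weaker than the full symmetry relation (it misses, for instance, $s_{ji}^{(2)}-s_{ij}^{(2)}=s_{ij}^{(1)}$). In type AII the obstruction disappears because $k\ne k'$, and your propagation does go through there.

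The paper's proof avoids this entirely by working with the full quaternary relation rather than its first-order consequence \eqref{qua0}: specializing $i=k'$ and $u=-v$ in \eqref{qua} yields an identity in which, after substituting \eqref{simplesym} for $s_{k'k'}(-v)$, every term acquires a common left factor $s_{kk}(v)$. Since $s_{kk}(v)\in 1+v^{-1}\bC[[v^{-1}]]$ is invertible, cancelling it produces the symmetry relation \eqref{sym} for \emph{all} $(j,l)$ in one stroke. To repair your argument in type AI you would need an additional input beyond \eqref{qua0}, for example this same $u=-v$ specialization or a higher-order bracket, to access the antisymmetric component.
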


\begin{proof}
For type AI, it is clear that \eqref{simplesym} is equivalent to $s_{kk}(u)=s_{kk}(-u)$ and hence the statement follows from \cite[Lem. 2.2]{LWZ23}. 

For type AII, it is similar to the proof of \cite[Lem. 2.2]{LWZ23}. Setting $i=k'$ and $u=-v$ in \eqref{qua}, we have
\begin{align*}
2v\theta_k\theta_{j'}s_{k'k'}(-v)s_{j'l}(v)-&2v\theta_{k'}\theta_{l'}s_{kk}(v)s_{l'j}(-v)\\
+&\theta_{k'}\theta_j(s_{kk}(-v)s_{j'l}(v)-s_{kk}(v)s_{j'l}(-v))=0.
\end{align*}
Substituting $s_{k'k'}(-v)$ using \eqref{simplesym} and simplifying it, we find a common factor $s_{kk}(v)$ multiplying from the left. Canceling $s_{kk}(v)$, we obtain the symmetry relation \eqref{sym}.
\end{proof}

\begin{rem}\label{varsym}
The following slightly stronger statement holds. If $s_{ij}(u)$, $1\lle i,j\lle N$, satisfy the quaternary relations \eqref{qua} and the equation $s_{kk}^{(2r-1)}+s_{k'k'}^{(2r-1)}=0$ holds for any single $1\lle k\lle N$, then $s_{ij}(u)$ satisfy the symmetry relations \eqref{sym} for any $1\lle i,j\lle N$.
\end{rem}

\begin{prop}[{\cite{MNO96,Mol07}}]\label{prop:PBW}
\quad
\begin{enumerate}
    \item The monomials in the elements
    $$\big\{s_{ij}^{(r)}\big\}_{r\in\bZ_{>0},1\lle j<i\lle N} \cup \big\{s_{kk}^{(2r)} \big\}_{r\in\bZ_{>0},1\lle k\lle N}$$ 
    taken in any fixed linear order form a PBW basis of $\Y_N^+$. 
    \item The monomials in the elements
\begin{align*}
\big\{ s_{2i-1,2i-1}^{(r)} \big\}_{r\in\bZ_{>0}, \, 1\lle i\lle N/2}
&\cup \big\{ s_{2i-1,2i}^{(2r-1)}, \, s_{2i,2i-1}^{(2r-1)} \big\}_{r\in\bZ_{>0}, \, 1\lle i\lle N/2}
\\
& \cup \big\{s_{ij}^{(r)} \big\}_{r\in\bZ_{>0}, \, 1\lle j<i\lle N, \, \lfloor \tfrac{i+1}{2}\rfloor\ne \lfloor \tfrac{j+1}{2}\rfloor}
\end{align*} 
taken in any fixed linear order form a PBW basis of $\Y_N^-$. 
\end{enumerate}
\end{prop}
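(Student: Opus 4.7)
The plan is to pass to the associated graded algebra under the loop filtration $\deg s_{ij}^{(r)}=r-1$. First I would verify that the quaternary relations \eqref{qua} and symmetry relations \eqref{sym} are compatible with this filtration, and identify $\gr \Y_N^\pm$ as a quotient of $\rU(\gl_N[z]^\theta)$, where $\theta$ is the involution for which $\gl_N^\theta = \g_N$ (extended to the current algebra by $xz^r \mapsto (-1)^r\theta(x)z^r$). Reading off the top-filtration-degree piece of \eqref{qua} recovers the Lie bracket on $\gl_N[z]^\theta$ for the symmetrized currents $f_{ij}z^{r-1} := \tfrac12 (e_{ij} + (-1)^{r-1} \theta(e_{ij})) z^{r-1}$, producing a surjective algebra homomorphism
\[
\varphi\colon \rU(\gl_N[z]^\theta)\twoheadrightarrow \gr \Y_N^\pm,\qquad f_{ij} z^{r-1}\longmapsto \bar s_{ij}^{(r)}.
\]

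For the spanning half I would extract explicit reductions from \eqref{sym}. In type AI, comparing coefficients in $s_{kk}(-u) = s_{kk}(u) + \tfrac{s_{kk}(u) - s_{kk}(-u)}{2u}$ forces $s_{kk}^{(r)} = 0$ for all odd $r$, while $s_{ji}(-u) = s_{ij}(u) + \tfrac{s_{ij}(u)-s_{ij}(-u)}{2u}$ expresses each upper-triangular coefficient in terms of the lower-triangular one modulo strictly lower filtration degree. In type AII, parallel calculations express $s_{2a,2a}^{(r)}$ in terms of the $s_{2a-1,2a-1}^{(\bullet)}$, exhibit each even-indexed coefficient $s_{2a-1,2a}^{(2r)}$ as $\tfrac12 s_{2a-1,2a}^{(2r-1)}$ plus lower terms (and similarly for $s_{2a,2a-1}^{(2r)}$), and convert each upper-triangular $s_{ij}^{(r)}$ outside a $\gl_2$-block to the lower-triangular one. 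Feeding these eliminations into the image of $\varphi$, the claimed monomials in (i) respectively (ii) project onto a basis of the twisted current algebra, proving that they span $\Y_N^\pm$.

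For linear independence I would use the Olshanski embedding $\Y_N^\pm \hookrightarrow \rY(\gl_N)$ encoded by the R-matrix definition $S(u) = T(u)\, G\, T(-u)^t$. Computing the leading symbol under the loop filtration in $\gr \rY(\gl_N) = \rU(\gl_N[z])$ shows that $\bar s_{ij}^{(r)}$ lands on a nonzero scalar multiple of the symmetrized element $f_{ij}z^{r-1}$. Since $\gl_N[z]^\theta \subseteq \gl_N[z]$ is a Lie subalgebra, the classical PBW theorem for $\rU(\gl_N[z])$ restricts to PBW for $\rU(\gl_N[z]^\theta)$ on the symmetrized basis; hence $\varphi$ is injective, so bijective, and ordered monomials in the lists of (i) and (ii) are linearly independent in $\Y_N^\pm$.

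The main obstacle I foresee is the type AII bookkeeping: the symmetry relation in each $\gl_2$-block couples odd and even loop coefficients (for instance $s_{2a-1,2a}^{(2r-1)} \equiv 2\, s_{2a-1,2a}^{(2r)}$ modulo lower degree), so one must check that the selection rule in (ii) — keeping odd parity in the off-diagonal entries within each $\gl_2$-block, both parities on the diagonal of each odd row, and all coefficients in the lower-triangular inter-block entries — indeed cuts out a basis of $\gl_N[z]^\theta$ rather than an over- or under-complete family. Lemma \ref{lem:s11} is crucial here, since it guarantees that the symmetry relations impose no additional constraints beyond the ones already used. Once this combinatorial reduction is correctly calibrated, the remaining steps are essentially those of the classical PBW argument for $\rY(\gl_N)$.
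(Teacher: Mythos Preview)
The paper does not supply its own proof of this proposition; it is stated as a citation from \cite{MNO96,Mol07}. Your proposed strategy---pass to the associated graded under the loop filtration, produce a surjection $\rU(\gl_N[z]^\theta)\twoheadrightarrow \gr\Y_N^\pm$ from the top-degree part of the quaternary and symmetry relations, then upgrade it to an isomorphism by factoring the natural inclusion $\rU(\gl_N[z]^\theta)\hookrightarrow \rU(\gl_N[z])$ through the Olshanski embedding---is precisely the standard argument carried out in those references (see in particular \cite[\S2.8 and \S2.16]{Mol07}). Your type~AII bookkeeping is on track: the identity $s_{2a-1,2a}^{(2r)}=\tfrac12 s_{2a-1,2a}^{(2r-1)}$ actually holds exactly, not merely modulo lower filtration, and the selection rule in part~(2) does pick out a basis of $\gl_N[z]^\theta$.

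Two small corrections. First, the embedding in the paper is $S(u)\mapsto T^t(-u)\,G\,T(u)\,G^{-1}$ (or the variant $T(u)\,G\,T^t(-u)\,G^{-1}$); the missing $G^{-1}$ in your formula is harmless in type~AI but matters in type~AII. Second, your invocation of Lemma~\ref{lem:s11} is slightly off: that lemma reduces the full family of symmetry relations to a single one given the quaternary relations, which is useful for checking presentations, but it is not what guarantees that the listed generators are independent in $\gl_N[z]^\theta$---that follows directly from writing down a basis of the fixed-point Lie algebra.
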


It is well known that $\Y^\pm_N$ can be identified as a subalgebra of $\rY(\gl_N)$. Specifically, the map
\beq\label{embed2}
S(u)\mapsto T^t(-u)G T(u)G^{-1} 
\eeq
defines an embedding $\Y^\pm_N\hookrightarrow \rY(\gl_N)$. Here $t$ stands for the regular transpose. Then there is a filtration, which we call the \textit{loop filtration} again, on $\Y^\pm_N$ inherited from the one \eqref{filter:Y} on $\rY(\gl_N)$ such that $\deg s_{ij}^{(r)}=r-1$. Let $\mathcal F_s\Y^\pm_N$ be the subspace of $\Y^\pm_N$ spanned by elements of degree $\lle s$ such that
\begin{align}
\label{filter:tY}
    \mathcal F_0\Y^\pm_N \subset \mathcal F_1\Y^\pm_N \subset \mathcal F_2\Y^\pm_N \subset \ldots, 
    \qquad\qquad \Y^\pm_N =\bigcup_{s\gge 0} \mathcal F_s\Y^\pm_N.
\end{align}
Denote by $\gr \Y^\pm_N$ the associated graded algebra. Let $\bar s_{ij}^{(r)}$ be the image of $s_{ij}^{(r)}$ in the $(r-1)$-st component of $\gr\Y^\pm_N$.

Let $\theta$ be the involution of $\gl_N$ defined by
\begin{eqnarray}
\label{e:thetainvdefn}
  \theta: \gl_N\longrightarrow \gl_N,\quad e_{ij}\mapsto -\theta_i\theta_je_{j'i'}.  
\end{eqnarray}
Let $\gl_N^\theta$ denote the fixed point subalgebra of $\gl_N$ under $\theta$. For $1\lle i,j\lle N$, set 
$$f_{ij}:=e_{ij}-\theta_i\theta_je_{j'i'}.$$
Then $f_{j'i'}=-\theta_i\theta_j f_{ij}$ and $f_{ij}$ are elements in $\gl_N^\theta$. Moreover,  $\gl_N^\theta$ is spanned by $f_{ij}$ and $\g_N=\gl_N^\theta$. It is well known that the twisted Yangians $\Y_N^\pm$ have the evaluation homomorphism
\beq\label{tYev}
\xi^{\pm}: \Y_N^\pm \to \rU(\g_N),\qquad  s_{ij}(u)\mapsto \delta_{ij}+(u+\tfrac12)^{-1}f_{ij}.
\eeq
In addition, the map  $f_{ij}\mapsto s_{ij}^{(1)}$ defines an embedding of the algebra $\rU(\g_N)$ into $\Y_N^\pm$.

Extend $\theta$ to an involution on $\gl_N[z]$ (again denoted $\theta$) by sending $g\otimes z^r$ to $\theta(g)\otimes(-z)^r$ for $g\in \gl_N$ and $r\in\bN$. Let $\gl_N[z]^\theta$ be the fixed point subalgebra of $\gl_N[z]$ under the involution $\theta$. Then it is well known that the map
\[
\mathrm{U}(\gl_N[z]^\theta)\longrightarrow \gr\Y_N^\pm,\qquad e_{ij}\otimes z^{r}-(-1)^r\theta_i\theta_je_{j'i'}\otimes z^{r} \mapsto \bar s_{ij}^{(r+1)}
\]
induces an algebra isomorphism. 

The following result follows from \eqref{comy} and \eqref{embed2}.
\begin{lem}
The twisted Yangian $\Y_N^\pm$ is a right coideal subalgebra of $\rY(\gl_N)$. In particular, we have
\begin{equation}\label{coms}
\Delta \big( s_{ij}(u) \big) = \sum_{a,b=1}^N s_{ab}(u) \otimes \theta_a\theta_it_{a'i'}(-u)t_{bj}(u) \in \Y_N^{\pm}\otimes\rY(\gl_N).
\end{equation}
\end{lem}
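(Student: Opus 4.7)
The plan is to apply the coproduct $\Delta$ of $\rY(\gl_N)$ to the image of $s_{ij}(u)$ under the embedding \eqref{embed2}, and to rearrange the result into the form $\sum_{a,b} s_{ab}(u)\otimes(\cdots)$ so that the first tensor factor lies manifestly in $\Y_N^\pm$. The computation is cleanest performed in matrix form. Writing $T_{[i]}(u)$ for the matrix over $\rY(\gl_N)\otimes\rY(\gl_N)$ with entries $t_{ab}(u)$ embedded into the $i$-th tensor slot, the Hopf structure of $\rY(\gl_N)$ reads $\Delta(T(u))=T_{[1]}(u)T_{[2]}(u)$, and since transposition reverses multiplication we obtain $\Delta(T^t(-u))=T_{[2]}^t(-u)T_{[1]}^t(-u)$.

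Applying $\Delta$ to $S(u)=T^t(-u)GT(u)G^{-1}$, and using that $\Delta$ is multiplicative while $G,G^{-1}$ are scalar matrices, one arrives at
\[
\Delta(S(u))=T_{[2]}^t(-u)\,T_{[1]}^t(-u)\,G\,T_{[1]}(u)\,T_{[2]}(u)\,G^{-1}.
\]
The inner triple product $T_{[1]}^t(-u)\,G\,T_{[1]}(u)$ is purely in the first tensor factor and, by \eqref{embed2}, equals $(S(u)G)_{[1]}$. Extracting the $(i,j)$-entry of the resulting expression and substituting the explicit entries $G_{ab}=\delta_{ab'}\theta_a$ and $(G^{-1})_{ab}=\delta_{ab'}\theta_b$ (which supply the signs $\theta_a\theta_i$ and the primes $a\mapsto a'$, $i\mapsto i'$) produces the identity \eqref{coms}. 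The right-coideal property follows at once, since the first tensor factor of each summand is by construction an element $s_{ab}(u)\in\Y_N^\pm$.

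The main technical delicacy I expect lies in correctly handling matrix multiplication across tensor factors: although entries in different tensor slots commute as scalars in $\rY(\gl_N)\otimes\rY(\gl_N)$, the matrix products indexed by $\{1,\ldots,N\}$ do not commute in general, so the ordering $T_{[2]}^t(-u)\cdots T_{[2]}(u)$ cannot be freely permuted past $(S(u)G)_{[1]}$ even though the tensor entries themselves commute. Care must be taken to preserve this order when regrouping factors so that the middle block collapses cleanly to $(S(u)G)_{[1]}$ and the outer $T_{[2]}$'s assemble into the desired $t_{a'i'}(-u)t_{bj}(u)$ piece in the second slot. All the $\theta$-signs and primed indices appearing in \eqref{coms} arise from the multiplications by $G$ and $G^{-1}$, and verifying they assemble correctly is a bookkeeping exercise with indices.
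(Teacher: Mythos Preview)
Your approach is correct and is precisely the computation the paper has in mind: the paper's entire proof is the single sentence ``follows from \eqref{comy} and \eqref{embed2}'', and your matrix manipulation $\Delta(S(u))=T_{[2]}^t(-u)\,\bigl(T_{[1]}^t(-u)GT_{[1]}(u)\bigr)\,T_{[2]}(u)G^{-1}=T_{[2]}^t(-u)\,S_{[1]}(u)G\,T_{[2]}(u)G^{-1}$ is exactly how one unpacks that. Your caution about non-commuting matrices over commuting tensor slots is valid but slightly over-stated here: no actual commutation of matrices is required, since the regrouping of the middle three factors into $S_{[1]}(u)G$ is a literal equality, and the entry extraction then proceeds termwise without reordering any matrix products.
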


\begin{rem}
There exists another embedding $\iota:\Y_N^\pm\hookrightarrow \rY(\gl_N)$ defined by 
\[
S(u)\mapsto T(u)GT^t(-u)G^{-1},
\] 
see \cite[(2.108)]{Mol07}. With respect to $\iota$, $\Y_N^\pm$ becomes a {\em left} coideal subalgebra of $\rY(\gl_N)$ instead, 
and the restriction of the comultiplication to $\Y_N^\pm$ is given by
\[\Delta \big( s_{ij}(u) \big) = \sum_{a,b=1}^N \theta_j\theta_bt_{ia}(u)t_{j'b'}(-u)\otimes s_{ab}(u).\]
\end{rem}

It is convenient to work with the extended twisted Yangians defined below instead of twisted Yangians. By abuse of notations, we shall keep using the same notations for the twisted Yangian and extended twisted Yangians 
of various elements such as $s_{ij}(u)$ and $S(u)$, etc.

\begin{dfn}\label{etydef}
The \textit{extended twisted Yangian} $\X^\pm_N$ of type AI/AII is the unital associative algebra  with generators $s_{ij}^{(r)}$, where $1\lle i,j\lle N$ and $r\in\bZ_{>0}$, satisfying the quaternary relations \eqref{qua}, where $s_{ij}(u)$ is again given by \eqref{siju}.
\end{dfn}

\subsection{Homomorphisms of (extended) twisted Yangians}\label{sec:hom}

In this subsection, we discuss various (anti)homomorphisms between (extended) twisted Yangians. 

Let $\wtl S(u)$ be the inverse of the generating matrix $S(u)$:
\begin{align} \label{tildeS}
\wtl S(u)=\big(\tl s_{ij}(u)\big):= S(u)^{-1}.
\end{align}

For $1\lle i\lle N$, set 
\[
\bar i=N+1-i. 
\]
Then we have the following (anti)homomorphisms, see e.g. \cite[\S 2]{Mol07}.
\begin{enumerate}
    \item For any $m\in\bN$, we have the natural homomorphism $\jmath:\X_N^\pm\to \X_{m+N}^\pm$ defined by
    \[
    \jmath_m(s_{ij}(u))=s_{ij}(u).
    \]
    Here $m$ is even if $\g_N=\fksp_N$.
    \item The map $\tau:\X_N^\pm\to \X_N^\pm$ defined by
    \[
    \tau(s_{ij}(u))=\theta_i\theta_js_{j'i'}(u)
    \]
    is an anti-automorphism.
    \item The map $\omega_{N}:\X_N^\pm\to \X_N^\pm$ defined by
    $$\omega_{N}(S(u))=\wtl S(-u-\tfrac{N}{2})$$
    is an involution.
    \item The map $\rho_N:\X_N^\pm\to \X_N^\pm$ defined by 
    \[
    \rho_N(s_{ij}(u))=s_{\bar i\, \bar j}(u)
    \]
    is an isomorphism.
    \item The map $\zeta_N:\X_N^\pm\to \X_N^\pm$ defined by 
    \[
    \zeta_N=\rho_N\circ \omega_N,  \qquad 
    s_{ij}(u)\mapsto \tl s_{\bar i\,\bar j}(-u-\tfrac{N}{2})
    \]
    is an isomorphism.
    \item For $m>0$, there is a homomorphism $\varphi_m:\X_N^\pm\to \X_{m+N}^\pm$ given by shifting indices:
    \[
    \varphi_m(s_{ij}(u))=s_{m+i,m+j}(u).
    \]
    Then the map $\psi_m:\X_N^\pm\to \X_{m+N}^\pm$ defined by
    \[
    \psi_m=\omega_{m+N}\circ \varphi_m\circ \omega_N
    \]
    is a homomorphism. Here $m$ is even if $\g_N=\fksp_N$.
\end{enumerate}

\subsection{Sklyanin determinants and minors}\label{sec:skldet}

We recall the definition of Sklyanin determinant and minors from \cite[Chap.~2.5--2.6]{Mol07}.

Let $m\in \bN$ be such that $m\lle N$. We adopt the following abbreviation:
\[
u_i=u-i+1,\qquad S_i=S_i(u_i) ,\qquad i=1,2,\ldots,m.
\]
Let
\[
R_{ij}^{\sf T}=R_{ji}^{\sf T}=R_{ij}^{\sf T}(-u_i-u_j),\qquad 1\lle i<j\lle m.
\]
Here we use the standard convention of subscripts to indicate the copies of $\bC^N$ in $(\bC^N)^{\otimes m}$ it acts on, cf. \cite[Chap.~1.2]{Mol07}. Denote by $A_m$ the image of the anti-symmetrizer $\sum_{\sigma\in \mathfrak{S}_m} \text{sgn }\sigma  \cdot \sigma$ under the natural action of $\mathfrak{S}_m$ on $(\bC^N)^{\otimes m}$.

For an arbitrary permutation $(p_1,p_2,\ldots,p_m)$ of the letters $1,2,\ldots,m$, we denote by
\[
\langle S_{p_1},\ldots,S_{p_m}\rangle =S_{p_1}(R_{p_1p_2}^{\sf T}\ldots R_{p_1p_m}^{\sf T})S_{p_2}(R_{p_2p_3}^{\sf T}\ldots R_{p_2p_m}^{\sf T})\ldots S_{p_m}.
\]

Let $m=N$. It is known that
\beq\label{eq:sky}
A_N\langle S_1,\ldots,S_N\rangle =\langle S_1,\ldots,S_N\rangle A_N.
\eeq
Since the image of the antisymmetrizer on $(\bC^N)^{\otimes N}$ is one-dimensional, the element \eqref{eq:sky} equals $A_N$ times a scalar series with coefficients in $\Y_N^\pm$.

\begin{dfn}
The \emph{Sklyanin determinant} of the matrix $S(u)$ with coefficients in $\Y_N^\pm$ is the formal series
\[
\sdet\,S(u)=1+c_1u^{-1}+c_2u^{-2}+\cdots
\]
such that the element \eqref{eq:sky} equals $A_N\sdet\, S(u)$.
\end{dfn}

If $m\lle N$, then we have 
\[
A_m\langle S_1,\ldots,S_m\rangle =\langle S_1,\ldots,S_m\rangle A_m.
\]
This element of the tensor product $\End(\bC^N)^{\otimes m}\otimes\Y_N^\pm[[u^{-1}]]$ can be written as 
\[
\sum e_{a_1b_1}\otimes\cdots\otimes e_{a_mb_m}\otimes \mathcal S^{a_1\cdots a_m}_{b_1\cdots b_m}(u),
\]
summed over the indices $a_i,b_i\in\{1,\ldots,N\}$, where $\mathcal S^{a_1\cdots a_m}_{b_1\cdots b_m}(u)\in \Y_N^\pm[[u^{-1}]]$ are the \emph{Sklyanin minors} of the matrix $S(u)$. Note that the Sklyanin minors are skew-symmetric with respect to permutations of the upper indices and of the lower indices:
\beq\label{eq:sign-sdet}
\mathcal S^{a_{\sigma(1)}\cdots a_{\sigma(m)}}_{b_{1}\cdots b_{m}}(u)=\mathcal S_{b_{\sigma(1)}\cdots b_{\sigma(m)}}^{a_{1}\cdots a_{m}}(u)=\mathrm{sgn}(\sigma)\mathcal S^{a_1\cdots a_m}_{b_1\cdots b_m}(u)
\eeq
for any $\sigma\in\mathfrak S_m$. Moreover,
\[
\mathcal S_b^a(u)=s_{ab}(u),\quad \mathcal S_{1\cdots N}^{1\cdots N}(u)=\sdet\,S(u).
\]
We shall also use the shorthand notation
\[
\mathcal C_k(u)=\mathcal S_{1\cdots k}^{1\cdots k}(u).
\]

The following is well known.
\begin{thm}[{\cite{MNO96}}]\label{freegen-center}
The coefficients  of the Sklyanin determinant $\sdet\,S(u)$ are central elements in $\Y_N^\pm$ (resp. $\X_N^\pm$). Moreover, $\{c_{2k}\}_{k\gge 1}$ is a set of algebraically independent generators of the center of $\Y_N^\pm$.
\end{thm}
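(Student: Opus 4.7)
The plan is to handle the two assertions in turn: centrality of the $c_r$ via an R-matrix manipulation, and the algebraic independence of $\{c_{2k}\}_{k \gge 1}$ via an associated-graded argument that reduces matters to the twisted current algebra.

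For centrality I would first prove the claim for $\X_N^\pm$ (the assertion for $\Y_N^\pm$ then follows from the embedding $\Y_N^\pm \hookrightarrow \X_N^\pm$). Introduce an extra tensor factor, labelled $0$, and compute $A_N \langle S_1, \ldots, S_N\rangle S_0(v)$ inside $\End(\bC^N)^{\otimes (N+1)} \otimes \X_N^\pm[[u^{-1}, v^{-1}]]$. The key input is the reflection (quaternary) equation together with the ``absorption'' identities $A_N R_{0i}(u_i - v) = \alpha_i(u,v)\,A_N$ and $A_N R_{0i}^{\sf T}(-u_i - v) = \beta_i(u,v)\,A_N$ for explicit rational scalars. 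Pushing $S_0(v)$ successively past $S_1(u_1), \ldots, S_N(u_N)$ by the reflection equation and repeatedly applying these absorption identities on the left, one arrives at $A_N \langle S_1, \ldots, S_N\rangle S_0(v) = S_0(v)\, A_N \langle S_1, \ldots, S_N \rangle$. Since $A_N \langle S_1, \ldots, S_N\rangle = \sdet S(u) \cdot A_N$ and $A_N$ is nonzero, this forces $\sdet S(u)$ to commute with every entry of $S(v)$, and hence with all generators.

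For the generation and algebraic independence claims I would pass to the loop-filtered algebra $\gr \Y_N^\pm \cong \rU(\gl_N[z]^\theta)$. First, using the symmetry relation \eqref{sym} together with the defining identity $A_N \langle S_1,\ldots,S_N\rangle = \sdet S(u) \cdot A_N$, one derives a functional equation for the Sklyanin determinant of the form $\sdet S(-u - \tfrac{N-1}{2}) = \sdet S(u)$ (up to an elementary rational prefactor coming from the scalar term in \eqref{sym}). Comparing coefficients of $u^{-r}$, this expresses each odd-indexed $c_{2k-1}$ as a polynomial in the even $c_{2j}$ with $j \lle k$, so the center is at most generated by $\{c_{2k}\}_{k \gge 1}$. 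Next I would compute the principal loop-symbol $\bar c_{2k} \in \rU(\gl_N[z]^\theta)$: expanding the antisymmetrized product using only the leading terms $\bar s_{ij}^{(r)}$ one obtains, up to scalar, a standard polynomial invariant of the twisted current algebra $\gl_N[z]^\theta$ (a twisted analogue of a Gelfand-type trace invariant). These invariants are known to be algebraically independent in $\rU(\gl_N[z]^\theta)$; since their symbols survive in the associated graded algebra, the $c_{2k}$ are themselves algebraically independent in $\Y_N^\pm$, which combined with the PBW theorem (Proposition~\ref{prop:PBW}) shows that they in fact generate the whole center.

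The main technical obstacle is the scalar bookkeeping in the centrality argument: after each reflection-equation move one produces both an $R$-factor and an $R^{\sf T}$-factor touching the antisymmetrizer, and the resulting rational scalars from $\alpha_i$ and $\beta_i$ must cancel exactly between the two sides of the final identity. The second nontrivial ingredient is matching the principal symbol of $c_{2k}$ with an explicit member of a known generating set for the invariants of $\rU(\gl_N[z]^\theta)$; once that identification is in place the independence is automatic, but computing that leading term cleanly from the combinatorics of $\langle S_1,\ldots,S_N\rangle$ is the calculational heart of the argument.
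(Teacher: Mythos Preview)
The paper does not supply its own proof of this theorem: it is stated with the citation \cite{MNO96} and the preamble ``The following is well known,'' and no argument is given. So there is no paper proof to compare against; I will assess your proposal on its own merits.

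Your overall strategy for both parts is the standard one and is essentially what is carried out in \cite{MNO96} and \cite[\S2.8]{Mol07}. However, the centrality argument as written contains a genuine error. The ``absorption identities'' you invoke, $A_N R_{0i}(u_i-v)=\alpha_i(u,v)\,A_N$ and $A_N R_{0i}^{\sf T}(-u_i-v)=\beta_i(u,v)\,A_N$, are false: the operator $R_{0i}$ acts nontrivially on the auxiliary tensor factor labelled $0$, which is disjoint from the factors $1,\ldots,N$ on which $A_N$ lives, so $A_N R_{0i}$ cannot be a scalar multiple of $A_N$. What the actual proof uses is not an absorption-to-scalar phenomenon but the fact (coming from the Yang--Baxter equation and the fusion procedure) that the ordered products $R_{01}(v-u_1)\cdots R_{0N}(v-u_N)$ and $R^{\sf T}_{01}(-v-u_1)\cdots R^{\sf T}_{0N}(-v-u_N)$ \emph{commute} with $A_N$. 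One inserts these products and their inverses on either side, uses the reflection equation to push $S_0(v)$ through $\langle S_1,\ldots,S_N\rangle$, and then cancels the inserted R-products. No miraculous scalar cancellation is required; the scalars you worry about never arise.

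For the second assertion your outline is closer to correct, though still vague at the crucial point. The functional equation $\sdet S(-u+N-1)=\sdet S(u)$ (up to the explicit rational prefactor $\gamma_N^\pm$) is exactly what reduces to the even coefficients. For independence and generation, the cleanest route in the literature is not to compute the loop symbols of the $c_{2k}$ directly from $\langle S_1,\ldots,S_N\rangle$, but to use the embedding \eqref{embed2} and the factorization \eqref{sdet=qdet}, $\sdet S(u)=\gamma_N^\pm(u)\,\qdet T(u)\,\qdet T(-u+N-1)$, which immediately reduces the problem to the known description of $Z(\rY(\gl_N))$ and of the invariants of $\gl_N[z]^\theta$. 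Your plan to identify the symbols with ``twisted Gelfand-type trace invariants'' would work in principle, but you have not said what those invariants are or why they are independent, and that is precisely the content you are trying to prove.
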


Define $\gamma_{c}^\pm (u)$, for $c\in \mathbb Z$, by
\beq\label{gammau}
\gamma_{c}^\pm (u)=\begin{cases}
    1, & \text{ if }\g_N=\fko_N,\\
    \frac{2u+1}{2u-c+1}, &\text{ if }\g_N=\mathfrak{sp}_{N}.
\end{cases}
\eeq
Identifying $\Y_N^\pm$ as a subalgebra of $\rY(\gl_N)$ by \eqref{embed2}, one has by \cite[Thm 2.5.3]{Mol07} that
\beq\label{sdet=qdet}
\sdet\,S(u) =\gamma^\pm_N(u)\,\qdet\,T(u)\,\qdet\,T(-u+N-1).
\eeq
It is immediate from \eqref{qdetcp} and \eqref{sdet=qdet} that
\beq\label{sdetcp}
\Delta\big(\sdet\,S(u)\big) =\sdet\,S(u)\otimes \qdet\,T(u)\,\qdet\,T(-u+N-1).
\eeq

\section{Gauss decomposition and parabolic generators for type AI}
\label{sec:Gauss_decomp}

In this section we use Gauss decomposition to construct new parabolic generators and establish a PBW basis result for the twisted Yangians $\Y_N^+$ of type AI associated to any composition of $N$. 
Parallel results for twisted Yangians $\Y_N^-$ of type AII will be addressed in Section~\ref{sec:pfAII}.

\subsection{Gauss decomposition and quasideterminants}
We say that $\mu=(\mu_1,\ldots,\mu_n)$ is a \textit{(strict) composition} of $N$ of length $n$ if $\mu_i\in\bZ_{>0}$ and $\mu_1+\ldots+\mu_n=N$. Given a composition $\mu$ of $N$ with length $n$, define 
\beq\label{mua}
\mu_{(i)}:=\mu_1+\ldots+\mu_{i}
\eeq
for $1\lle i\lle n$. Also denote $\mu_{(0)}=0$.

Recall the $N\times N$ matrix $S(u)=\big( s_{ij}(u) \big)_{1\lle i,j\lle N}$ with entries in $\X_N^+[[u^{-1}]]$. Since the leading minors of the matrix $S(u)$ are invertible, for any given composition $\mu$ of $N$, it possesses a Gauss decomposition with respect to $\mu$; that is,
\begin{equation}\label{S=FDE}
S(u) = F(u) D(u) E(u)
\end{equation}
for unique $(\mu \times \mu)$-block matrices $D(u)$, $E(u)$ and $F(u)$ of the form
$$
D(u) = \left(
\begin{array}{cccc}
D_{1}(u) & 0&\cdots&0\\
0 & D_{2}(u) &\cdots&0\\
\vdots&\vdots&\ddots&\vdots\\
0&0 &\cdots&D_{n}(u)
\end{array}
\right),
$$

$$
E(u) =
\left(
\begin{array}{cccc}
I_{\mu_1} & E_{1,2}(u) &\cdots&E_{1,n}(u)\\
0 & I_{\mu_2} &\cdots&E_{2,n}(u)\\
\vdots&\vdots&\ddots&\vdots\\
0&0 &\cdots&I_{\mu_{n}}
\end{array}
\right),\:
$$

$$
F(u) = \left(
\begin{array}{cccc}
I_{\mu_1} & 0 &\cdots&0\\
F_{2,1}(u) & I_{\mu_2} &\cdots&0\\
\vdots&\vdots&\ddots&\vdots\\
F_{n,1}(u)&F_{n,2}(u) &\cdots&I_{\mu_{n}}
\end{array}
\right),
$$
where
\begin{align}
D_a(u) &=\big(D_{a;i,j}(u)\big)_{1 \lle i,j \lle \mu_a},\label{Da}\\
E_{a,b}(u)&=\big(E_{a,b;i,j}(u)\big)_{1 \lle i \lle\mu_a, 1 \lle j \lle\mu_b},\label{Eab}\\
F_{b,a}(u)&=\big(F_{b,a;j,i}(u)\big)_{1 \lle i \lle \mu_a, 1 \lle j \lle \mu_b},\label{Fba}
\end{align}
are $\mu_a \times \mu_a$,
$\mu_a \times \mu_b$
and  $\mu_b \times\mu_a$ matrices, respectively, for all $1\lle a\lle n$ in \eqref{Da}
and all $1\lle a<b\lle n$ in \eqref{Eab}--\eqref{Fba}. For each $a$, the submatrix $D_{a}(u)$ is invertible, and we define the $\mu_a\times\mu_a$ matrix
$\wtl D_a(u)=\big(\wtl D_{a;i,j}(u)\big)_{1\lle i,j\lle \mu_a}$ by
\begin{equation*}
\wtl D_a(u):=\big(D_a(u)\big)^{-1}.
\end{equation*}
The entries of these matrices are  power series in $u^{-1}$,
\begin{align*}
D_{a;i,j}(u) &= \sum_{r \gge 0} D_{a;i,j}^{(r)} u^{-r},\ \ \qquad 
\wtl D_{a;i,j}(u) = \sum_{r \gge 0}\wtl D^{(r)}_{a;i,j} u^{-r},\\
E_{a,b;i,j}(u) &= \sum_{r \gge 1} E_{a,b;i,j}^{(r)} u^{-r},\qquad 
F_{b,a;j,i}(u) = \sum_{r \gge 1} F_{b,a;j,i}^{(r)} u^{-r}.
\end{align*}
Note that $D_{a;i,j}^{(0)}=\wtl D_{a;i,j}^{(0)}=\delta_{i,j}$. In addition, for $1\lle a<n$, we set
\begin{align*}
E_{a;i,j}(u) :=&\, E_{a,a+1;i,j}(u)=\sum_{r \gge 1}  E_{a;i,j}^{(r)} u^{-r},\\
F_{a;j,i}(u) :=&\, F_{a+1,a;j,i}(u)=\sum_{r \gge 1} F_{a;j,i}^{(r)} u^{-r}.
\end{align*}

We describe all these series in terms of the generators $s_{ij}(u)$ explicitly by \textit{quasideterminants} \cite{GGRW:2005}. Here we follow the notation in \cite[(4.3)]{BK05}. Suppose that $A, B, C$ and $D$ are $a \times a$, $a \times b$, $b \times a$ and $b \times b$ matrices respectively with entries in some ring.
Assuming that the matrix $A$ is invertible, we define
\begin{equation*}
\left|
\begin{array}{cc}
A&B\\
C&\mybox{$D$}
\end{array}
\right| := D - C A^{-1} B.
\end{equation*}
Write the matrix $S(u)$ in block form according to $\mu$ as
\beq\label{blockS}
S(u) = \left(
\begin{array}{ccc}
{^\mu}S_{1,1}(u)&\cdots&{^\mu}S_{1,n}(u)\\
\vdots&\ddots&\vdots\\
{^\mu}S_{n,1}(u)&\cdots&{^\mu}S_{n,n}(u)\\
\end{array}
\right),
\eeq
where ${^\mu}S_{a,b}(u)$ is a $\mu_a \times \mu_b$ matrix.

\begin{prop}\cite{GGRW:2005}\label{quasi} We have
\begin{align}\label{quasid}
&D_a(u) =
\left|
\begin{array}{cccc}
{^\mu}S_{1,1}(u) & \cdots & {^\mu}S_{1,a-1}(u)&{^\mu}S_{1,a}(u)\\
\vdots & \ddots &\vdots&\vdots\\
{^\mu}S_{a-1,1}(u)&\cdots&{^\mu}S_{a-1,a-1}(u)&{^\mu}S_{a-1,a}(u)\\
{^\mu}S_{a,1}(u) & \cdots & {^\mu}S_{a,a-1}(u)&
\hbox{\begin{tabular}{|c|}\hline${^\mu}S_{a,a}(u)$\\\hline\end{tabular}}
\end{array}
\right|,\\[4mm]
&E_{a,b}(u) =\label{quasie}
\wtl D_a(u)
\left|\begin{array}{cccc}
{^\mu}S_{1,1}(u) & \cdots &{^\mu}S_{1,a-1}(u)& {^\mu}S_{1,b}(u)\\
\vdots & \ddots &\vdots&\vdots\\
{^\mu}S_{a-1,1}(u) & \cdots & {^\mu}S_{a-1,a-1}(u)&{^\mu}S_{a-1,b}(u)\\
{^\mu}S_{a,1}(u) & \cdots & {^\mu}S_{a,a-1}(u)&
\hbox{\begin{tabular}{|c|}\hline${^\mu}S_{a,b}(u)$\\\hline\end{tabular}}
\end{array}
\right|,\\[4mm]
&F_{b,a}(u) =\label{quasif}
\left|
\begin{array}{cccc}
{^\mu}S_{1,1}(u) & \cdots &{^\mu}S_{1,a-1}(u)& {^\mu}S_{1,a}(u)\\
\vdots & \ddots &\vdots&\vdots\\
{^\mu}S_{a-1,1}(u) & \cdots & {^\mu}S_{a-1,a-1}(u)&{^\mu}S_{a-1,a}(u)\\
{^\mu}S_{b,1}(u) & \cdots & {^\mu}S_{b,a-1}(u)&
\hbox{\begin{tabular}{|c|}\hline${^\mu}S_{b,a}(u)$\\\hline\end{tabular}}
\end{array}
\right|\wtl D_a(u),
\end{align}
for all $1\lle a\lle n$ in \eqref{quasid} and $1\lle a<b\lle n$ in \eqref{quasie}--\eqref{quasif}.
\end{prop}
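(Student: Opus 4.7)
The plan is to deduce this from the general theory of quasideterminants developed in \cite{GGRW:2005}, after reducing to the two-block Schur complement formula.

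First I would verify the base case $n=2$. For a $2\times 2$ block matrix $M = \bigl(\begin{smallmatrix} A & B \\ C & D \end{smallmatrix}\bigr)$ with invertible $A$, one checks directly that
\[
M = \begin{pmatrix} I & 0 \\ CA^{-1} & I \end{pmatrix} \begin{pmatrix} A & 0 \\ 0 & D - CA^{-1}B \end{pmatrix} \begin{pmatrix} I & A^{-1}B \\ 0 & I \end{pmatrix}.
\]
By uniqueness of Gauss decomposition, this identifies $D_2 = D - CA^{-1}B$, and simultaneously reads off $E_{1,2} = A^{-1}B$ and $F_{2,1} = CA^{-1}$. After left-multiplying $B$ by $A^{-1} = \widetilde D_1$, respectively right-multiplying $C$ by $\widetilde D_1$, this matches the quasideterminant formulas \eqref{quasid}--\eqref{quasif} for $a=1$, $b=2$.

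Next, for the general $D_a(u)$: consider the top-left $a\times a$ block submatrix of $S(u)$ (in the $\mu$-block sense), which I denote ${}^{\mu}S^{[a]}(u)$. Because Gauss decomposition is unique and respects the leading principal blocks, the Gauss factors of ${}^{\mu}S^{[a]}(u)$ are obtained by truncating $F(u), D(u), E(u)$ to their first $a$ block-rows and block-columns. In particular, the bottom-right diagonal block of the middle factor of ${}^{\mu}S^{[a]}(u)$ is exactly $D_a(u)$. Now regroup ${}^{\mu}S^{[a]}(u)$ as a $2\times 2$ block matrix with the first $a-1$ blocks playing the role of $A$ and the $a$-th block playing the role of $D$; the base case then yields $D_a(u)$ as the required Schur complement, which by definition is the quasideterminant in \eqref{quasid}.

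For $E_{a,b}(u)$ and $F_{b,a}(u)$ the argument is similar but using auxiliary rectangular submatrices. To get $E_{a,b}(u)$, replace the $a$-th block column of ${}^{\mu}S^{[a]}(u)$ by the $b$-th block column of $S(u)$; reading off the top-right block of the $E$-factor of the resulting matrix via the base case gives $\widetilde D_a(u)$ times the quasideterminant in \eqref{quasie}, and comparing with the product $F(u)D(u)E(u)$ identifies this with $E_{a,b}(u)$. The formula for $F_{b,a}(u)$ is the symmetric statement obtained by replacing the $a$-th block row instead, and reduces to the same calculation. The main obstacle is purely notational: one needs to carefully track which quasideterminant formula picks up a $\widetilde D_a(u)$ on the left versus on the right, since $E$ appears on the right and $F$ on the left of $D$ in the decomposition \eqref{S=FDE}. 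Once this bookkeeping is done the proof reduces to the two-block identity above, and as such follows from the general quasideterminant formalism of \cite{GGRW:2005}.
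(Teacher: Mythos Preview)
The paper does not prove this proposition; it is stated with a citation to \cite{GGRW:2005}. Your Schur-complement approach is the standard one and is correct in outline.

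There is a small bookkeeping slip in the $E_{a,b}$ step: when you regroup the auxiliary matrix $M$ (block rows $1,\ldots,a$ and block columns $1,\ldots,a-1,b$) as $\bigl(\begin{smallmatrix} A & B \\ C & D \end{smallmatrix}\bigr)$, the top-right block of its $2\times 2$ $E$-factor is $A^{-1}B$, which is \emph{not} $\wtl D_a$ times the quasideterminant. The correct identification is that the quasideterminant itself (the Schur complement $D - CA^{-1}B$, i.e.\ the bottom-right block of the diagonal factor) equals $D_a(u)E_{a,b}(u)$. Indeed, from $S = FDE$ one reads off
\[
A = F^{[a-1]}D^{[a-1]}E^{[a-1]},\quad B = F^{[a-1]}D^{[a-1]}(E_{k,b})_{k<a},\quad C = (F_{a,k})_{k<a}D^{[a-1]}E^{[a-1]},
\]
and $D = \sum_{k\lle a} F_{a,k}D_kE_{k,b}$, whence $CA^{-1}B = \sum_{k<a} F_{a,k}D_kE_{k,b}$ and the Schur complement collapses to $F_{a,a}D_aE_{a,b}=D_aE_{a,b}$. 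Left-multiplying by $\wtl D_a$ gives \eqref{quasie}; the case of $F_{b,a}$ is symmetric, with the Schur complement equal to $F_{b,a}D_a$. With this correction your argument goes through.
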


Denote $S_{a,b;i,j}(u)$ the $(i,j)$-th entry of the $\mu_a\times \mu_b$ matrix ${}^\mu S_{a,b}(u)$. Let $S_{a,b;i,j}^{(r)}$ be the coefficient of $u^{-r}$ in $S_{a,b;i,j}(u)$. It is clear from Proposition \ref{quasi} that
\beq
E_{b-1;i,j}^{(1)}=S_{b-1,b;i,j}^{(1)},\qquad F_{b-1;j,i}^{(1)}=S_{b,b-1;j,i}^{(1)},
\eeq
for $1<b\lle n$, $1\lle i\lle \mu_{b-1}$, $1\lle j\lle \mu_b$. Moreover, we have
\beq
D_{1;i,j}^{(r)}=S_{1,1;i,j}^{(r)}=s_{ij}^{(r)},\quad 1\lle i,j\lle \mu_1,\ r\gge 1.
\eeq

\subsection{Parabolic generators}

Recall $\wtl S(u)=\big(\tl s_{ij}(u)\big)= S(u)^{-1}$ from \eqref{tildeS}.

\begin{lem}[{\cite[Prop. 2.12.2]{Mol07}}]
For $1\lle i,j,k,l\lle N$, we have
\beq\label{sts}
\begin{split}
(u^2-v^2)[s_{ij}(u),\tl s_{kl}(v)]= &\, (u+v)\Big(\delta_{jk}\sum_{a=1}^N s_{ia}(u)\tl s_{al}(v)-\delta_{il}\sum_{a=1}^N \tl s_{ka}(v) s_{aj}(u)\Big)\\
 +&\, (u-v)\Big(\delta_{jl}\sum_{a=1}^N \tl s_{ka}(v) s_{ia}(u)-\delta_{ik}\sum_{a=1}^N s_{aj}(u) \tl s_{al}(v)\Big)\\
 &\hskip 0.8cm+\Big(\delta_{ik}\sum_{a=1}^N s_{ja}(u)\tl s_{al}(v)-\delta_{jl}\sum_{a=1}^N \tl s_{ka}(v) s_{ai}(u)\Big).
\end{split}
\eeq
\end{lem}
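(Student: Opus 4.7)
My plan is to derive this identity from the quaternary relation \eqref{qua} by ``conjugating'' it by $\tilde S(v)$. To begin, I take the commutator of $s_{ij}(u)$ with the defining relation $\sum_c s_{kc}(v)\tilde s_{cl}(v) = \delta_{kl}$ of $\tilde S(v)$ and apply the Leibniz rule. Left-multiplying the resulting identity by $\tilde s_{mk}(v)$, summing over $k$, and using $\sum_k \tilde s_{mk}(v) s_{kc}(v) = \delta_{mc}$ isolates the unknown commutator as
\begin{equation*}
[s_{ij}(u), \tilde s_{kl}(v)] \;=\; -\sum_{a,b} \tilde s_{ka}(v)\,[s_{ij}(u), s_{ab}(v)]\,\tilde s_{bl}(v)
\end{equation*}
(after renaming $m \to k$). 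This reduces the problem to plugging in an expression for the inner commutator.

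Next, I multiply through by $u^2-v^2$ and substitute the quaternary relation \eqref{qua} (with $k \mapsto a$, $l \mapsto b$) for the inner commutator. Since the ambient section is devoted to type AI, one has $\theta_a = 1$ and $a' = a$ for every $a$, so every $\theta$-factor collapses to $1$ and every primed index disappears, leaving six summands on the right-hand side, arranged into $(u+v)$-terms, $(u-v)$-terms, and constant terms.

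Finally, each of the six terms contains exactly one adjacent pair of the form $\tilde s(v)s(v)$ or $s(v)\tilde s(v)$ with matching summation index, to which the matrix-inverse identity $\sum_c s_{ic}(v)\tilde s_{cj}(v) = \delta_{ij} = \sum_c \tilde s_{ic}(v) s_{cj}(v)$ applies; this collapses the double sum to a single sum and produces a Kronecker delta. The $(u+v)$-terms yield the first line of \eqref{sts}, the $(u-v)$-terms yield the second line, and the constant terms yield the third line.

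The main obstacle is really just careful bookkeeping: in each of the six terms one must identify on which side the contraction occurs, so that the emerging delta pairs an index from $\{i,j\}$ with one from $\{k,l\}$ in the pattern dictated by \eqref{sts}. No further algebraic input beyond $S(v)\tilde S(v) = I$ is needed, which also explains why the right-hand side of \eqref{sts} is $\theta$-free in type AI even though \eqref{qua} is not.
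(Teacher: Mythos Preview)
Your proof is correct. The paper does not give its own proof of this lemma; it simply cites Molev's book \cite[Prop.~2.12.2]{Mol07}, and your derivation --- conjugating the quaternary relation by $\wtl S(v)$ and contracting via $S(v)\wtl S(v)=I=\wtl S(v)S(v)$ --- is precisely the standard argument (and is essentially how the result is established in Molev's book).

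One minor remark: your closing comment that ``\eqref{qua} is not $\theta$-free'' is slightly off in context, since under the type-AI convention \eqref{a1thetadef} every $\theta_i=1$ and $i'=i$, so \eqref{qua} is already $\theta$-free there; the point you presumably intend is that the \emph{uniform} form of \eqref{qua} carries $\theta$'s while \eqref{sts}, as stated in the paper, does not. This does not affect the correctness of your argument.
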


\begin{cor}\label{stslem}
If $\{i,j\}\cap \{k,l\}=\emptyset$, then $[s_{ij}(u),\tl s_{kl}(v)]=0$.
\end{cor}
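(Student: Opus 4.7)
The plan is to deduce the corollary directly from the identity \eqref{sts}. Under the hypothesis $\{i,j\}\cap\{k,l\}=\emptyset$ we have $i\ne k$, $i\ne l$, $j\ne k$, $j\ne l$, so every one of the six terms on the right-hand side of \eqref{sts}, each of which carries a factor $\delta_{jk}$, $\delta_{il}$, $\delta_{jl}$, or $\delta_{ik}$, vanishes. Consequently
\[
(u^2-v^2)\,[s_{ij}(u),\tl s_{kl}(v)]=0.
\]

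The remaining task is to cancel the factor $u^2-v^2$. Since $s_{ij}(u)-\delta_{ij}$ and $\tl s_{kl}(v)-\delta_{kl}$ have zero constant term, the commutator can be written as $\sum_{r,s\gge 1} C_{r,s}\,u^{-r}v^{-s}$ for some elements $C_{r,s}\in\X_N^+$. Comparing coefficients of $u^{-r}v^{-s}$ in the displayed equation yields the relations $C_{r+2,s}=C_{r,s+2}$ for all $r,s$, together with the boundary conditions $C_{1,s}=0$ and $C_{r,1}=0$ coming from coefficients with $r=-1$ or $s=-1$. A straightforward induction on $r+s$ using these two facts then forces every $C_{r,s}$ to vanish, so $[s_{ij}(u),\tl s_{kl}(v)]=0$ as claimed.

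I do not foresee a serious obstacle; the result is an immediate corollary of Lemma \eqref{sts} together with the standard observation that multiplication by $u^2-v^2$ is injective on formal series in $u^{-1},v^{-1}$ with vanishing ``boundary'' terms. The only minor point to verify carefully is the bookkeeping of the boundary conditions, which is handled by the argument above.
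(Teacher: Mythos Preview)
Your approach is exactly the intended one; the paper states the corollary with no proof, regarding it as immediate from \eqref{sts}. The argument is correct in outline, but the bookkeeping you say is ``handled'' has a small slip: with only the boundary conditions $C_{1,s}=0$ and $C_{r,1}=0$ (coming from the coefficients of $u^{1}v^{-s}$ and $u^{-r}v^{1}$) together with the recursion $C_{r+2,s}=C_{r,s+2}$ for $r,s\gge 1$, the even--even entries such as $C_{2,2}$ are never reached by the induction. You must also read off the coefficients of $u^{0}v^{-s}$ and $u^{-r}v^{0}$, which give $C_{2,s}=0$ and $C_{r,2}=0$; with these in hand the induction on $r+s$ goes through cleanly.
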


\begin{lem}[{cf. \cite[Lem.~3.7]{LWZ23}}]\label{genlem}
For $1<a+1<b<n$ and $1\lle i\lle\mu_a$, $1\lle j\lle \mu_b$, we have
\beq\label{efgen}
E_{a,b;i,j}^{(r)}=[E_{a,b-1;i,k}^{(r)},E_{b-1;k,j}^{(1)}],\qquad 
F_{b,a;j,i}^{(r)}=[F_{b-1;j,k}^{(1)},F_{b-1,a;k,i}^{(r)}],
\eeq
for any $1\lle k\lle \mu_{b-1}$.
\end{lem}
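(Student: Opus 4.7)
The companion identity for $F_{b,a;j,i}^{(r)}$ follows from the identity for $E_{a,b;i,j}^{(r)}$ by applying the anti-automorphism $\tau$ of Section~\ref{sec:hom}: in type AI, $\tau(s_{ij}(u)) = s_{ji}(u)$ corresponds to a block-matrix transposition of the Gauss decomposition $S=FDE$, interchanging the upper- and lower-triangular components and sending $E_{a,b;i,j} \mapsto F_{b,a;j,i}$ and $E_{b-1;k,j}^{(1)} \mapsto F_{b-1;j,k}^{(1)}$, while reversing the order of the commutator bracket. Hence we concentrate on the $E$-identity; since the left-hand side is manifestly independent of $k$, it suffices to prove the generating-series equality
\[
E_{a,b;i,j}(u) \;=\; [E_{a,b-1;i,k}(u),\, E_{b-1;k,j}^{(1)}].
\]

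Two key observations drive the computation. First, by \eqref{quasie} the corrections to ${}^\mu S_{b-1,b}(u)$ in the quasideterminant for $E_{b-1;k,j}(u)$ contribute only at order $u^{-2}$ or lower, so $E_{b-1;k,j}^{(1)} = s_{p,q}^{(1)}$ with $p := \mu_{(b-2)}+k$ and $q := \mu_{(b-1)}+j$. Second, in type AI the commutator \eqref{qua0} specializes to
\[
[s_{x,y}(u),\, s_{p,q}^{(1)}] \;=\; \delta_{y,p}\,s_{x,q}(u) - \delta_{x,q}\,s_{p,y}(u) + \delta_{x,p}\,s_{q,y}(u) - \delta_{y,q}\,s_{x,p}(u).
\]
The quasideterminant expression \eqref{quasie} for $E_{a,b-1;i,k}(u)$ involves entries $s_{x,y}(u)$ with $x \lle \mu_{(a)}$ and $y \in \{1,\ldots,\mu_{(a-1)}\} \cup \{\mu_{(b-2)}+1,\ldots,\mu_{(b-1)}\}$, together with entries of $\wtl D_a(u)$ for which both indices lie in $\{1,\ldots,\mu_{(a)}\}$. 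The hypothesis $a+1 < b$ gives $\mu_{(a)} \lle \mu_{(b-2)} < p$ and $\mu_{(b-1)} < q$, which rules out $\delta_{x,q}$, $\delta_{x,p}$, and $\delta_{y,q}$ throughout the relevant ranges; in particular $\wtl D_a(u)$ commutes with $s_{p,q}^{(1)}$. Only the surviving term $\delta_{y,p}\,s_{x,q}(u)$ contributes, so applying $[-,\,s_{p,q}^{(1)}]$ as a derivation effects the substitution $s_{x,\mu_{(b-2)}+k}(u) \mapsto s_{x,\mu_{(b-1)}+j}(u)$ throughout the quasideterminant, i.e., it replaces the $k$-th column of ${}^\mu S_{\star,b-1}(u)$ by the $j$-th column of ${}^\mu S_{\star,b}(u)$, producing precisely the quasideterminant expression for $E_{a,b;i,j}(u)$. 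Extracting the coefficient of $u^{-r}$ yields the stated identity.

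The principal technical obstacle is the control of the two ``reflection'' terms $\delta_{x,p}s_{q,y}(u)$ and $\delta_{y,q}s_{x,p}(u)$ in \eqref{qua0}, which are absent from the untwisted Yangian commutator of \cite[Lem.~4.2]{BK05} and directly reflect the coideal structure of $\Y_N^+ \subseteq \rY(\gl_N)$. The block-range analysis above shows these terms vanish identically on the subalgebra within which the quasideterminant is computed, but the verification must be carried out carefully for every block appearing in the expansion of \eqref{quasie}. This is a block-wise upgrade of the rank-one case $\mu = (1^N)$ treated in \cite[Lem.~3.7]{LWZ23}, and modulo this bookkeeping the argument is routine.
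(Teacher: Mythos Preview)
Your proposal is correct and follows essentially the same approach as the paper. Both arguments express $E_{a,b-1;i,k}(u)$ via the quasideterminant \eqref{quasie}, identify $E_{b-1;k,j}^{(1)}=s_{p,q}^{(1)}$, and use \eqref{qua0} together with the index constraints $\mu_{(a)}\lle\mu_{(b-2)}<p<q$ to kill all but the $\delta_{y,p}$ term; the $F$-identity is deduced from $\tau$ in both. The only cosmetic difference is that the paper spells out the base case $a=1$, $b=3$ (invoking \eqref{sts} to show $\wtl D_1$ commutes with $s_{p,q}^{(1)}$) and declares the general case ``similar'', whereas you outline the general quasideterminant argument directly using \eqref{qua0} throughout; note that the latter also covers the inverse $A^{-1}$ inside the quasideterminant, since all entries of $A$ have both indices $\lle\mu_{(a-1)}$ and hence commute with $s_{p,q}^{(1)}$.
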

\begin{proof}
The statement can be proved by induction on $b-a>1$. We only prove the case $a=1$ and $b=3$ for the first equality. The general case can be proved in a similar way while the second one follows from the first one by applying the anti-automorphism $\tau$; see Lemma \ref{tauimg} below.

It follows from \eqref{quasie} that
\[
\big[E_{1,2;i,k}^{(r)},E_{2,3;l,j}^{(1)}\big]=\Big[\sum_{p=1}^{\mu_1}\sum_{q=0}^r \wtl D_{1;i,p}^{(q)}S_{1,2;p,k}^{(r-q)},S_{2,3;l,j}^{(1)}\Big].
\]
Since $\wtl D_{1;i,p}^{(q)}$ are expressed in terms of $s_{\alpha,\beta}^{(r)}$ for $1\lle \alpha,\beta\lle \mu_1$ and $$S_{2,3;l,j}^{(1)}= s_{\mu_1+l,\mu_1+\mu_2+j}^{(1)}=-\tl s_{\mu_1+l,\mu_1+\mu_2+j}^{(1)},
$$ 
it follows from \eqref{sts} that $\wtl D_{1;i,p}^{(q)}$ and $S_{2,3;l,j}^{(1)}$ commute. Thus
\begin{align*}
\big[E_{1,2;i,k}^{(r)},E_{2,3;l,j}^{(1)}\big]&=\sum_{p=1}^{\mu_1}\sum_{q=0}^r \wtl D_{1;i,p}^{(q)}\big[S_{1,2;p,k}^{(r-q)},S_{2,3;l,j}^{(1)}\big]\\&=\sum_{p=1}^{\mu_1}\sum_{q=0}^r \wtl D_{1;i,p}^{(q)}\big[s_{p,\mu_1+k}^{(r-q)},s_{\mu_1+l,\mu_1+\mu_2+j}^{(1)}\big]
\\&=\sum_{p=1}^{\mu_1}\sum_{q=0}^r \wtl D_{1;i,p}^{(q)}\delta_{kl} s_{p,\mu_1+\mu_2+j}^{(r-q)}=\delta_{kl} E_{1,3;i,j}^{(r)},
\end{align*}
where  we applied \eqref{qua0} and \eqref{quasie} in the third and last equalities, respectively. The general case is similar, except that the expression of $E_{a,b-1;i,k}^{(r)}$ by \eqref{quasie} is more complicated.
\end{proof}

\begin{prop}\label{gen}
The algebra $\X_N^+$ (or $\Y_N^+$) is generated by the following elements:
\begin{align*}
    D_{a;i,j}^{(r)},\wtl D_{a;i,j}^{(r)},\qquad\qquad &  1\lle a\lle n,\ 1\lle i,j\lle \mu_a,\ r>0,\\
    E_{a;i,j}^{(r)},F_{a;j,i}^{(r)}, \qquad \qquad & 1\lle a< n,\  1\lle i\lle \mu_a,\ 1\lle j\lle \mu_{a+1},\ r>0.
\end{align*}
\end{prop}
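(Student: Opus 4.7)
The plan is to use the block Gauss decomposition \eqref{S=FDE} together with Lemma~\ref{genlem} to reduce to the listed adjacent generators. Let $\X'$ denote the subalgebra of $\X_N^+$ generated by the elements in the statement; the task is to show $\X' = \X_N^+$.

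First, I would observe that $\X' \subseteq \X_N^+$ is immediate from the quasideterminant formulas \eqref{quasid}--\eqref{quasif} of Proposition~\ref{quasi}, which express each $D_{a;i,j}(u), \wtl D_{a;i,j}(u), E_{a;i,j}(u), F_{a;j,i}(u)$ as a polynomial in the $s_{kl}(u)$, their inverses being built out of the same data. So everything reduces to showing $s_{ij}^{(r)} \in \X'$ for all admissible $i,j,r$.

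Second, I would expand the block Gauss decomposition \eqref{S=FDE} to obtain, for each pair $(a,b)$,
$${}^\mu S_{a,b}(u) = \sum_{c=1}^{\min(a,b)} F_{a,c}(u)\, D_c(u)\, E_{c,b}(u),$$
with the conventions $F_{c,c}(u) = E_{c,c}(u) = I_{\mu_c}$ and $F_{a,c} = 0$ (resp.\ $E_{c,b}=0$) outside the lower (resp.\ upper) block triangle. Reading off the $(i,j)$ entry and extracting coefficients of $u^{-r}$ expresses each $s_{ij}^{(r)}$ as a polynomial in the coefficients of entries of the various $D_c(u)$, $F_{a,c}(u)$ and $E_{c,b}(u)$. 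Thus it suffices to prove that every coefficient $E_{a,b;i,j}^{(r)}$ and $F_{b,a;j,i}^{(r)}$ with $a<b$ lies in $\X'$.

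Third, the adjacent cases $b=a+1$ are generators by definition, and the non-adjacent cases follow by induction on $b-a$ from Lemma~\ref{genlem}: choosing any $1 \le k \le \mu_{b-1}$, the identities
$$E_{a,b;i,j}^{(r)} = \bigl[E_{a,b-1;i,k}^{(r)},\, E_{b-1;k,j}^{(1)}\bigr], \qquad F_{b,a;j,i}^{(r)} = \bigl[F_{b-1;j,k}^{(1)},\, F_{b-1,a;k,i}^{(r)}\bigr]$$
express every non-adjacent entry as an iterated commutator of adjacent ones, all of which already lie in $\X'$. Combining with the previous step yields $\X' = \X_N^+$. The same argument transfers verbatim to $\Y_N^+$, since the symmetry relations \eqref{sym} only cut down $\X_N^+$ and do not affect the generation statement.

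The only nontrivial ingredient is Lemma~\ref{genlem}, which has already been proved via the quaternary relations and Corollary~\ref{stslem}. Given that lemma, Proposition~\ref{gen} is essentially a formal consequence of the Gauss decomposition, and there is no further serious obstacle to work through.
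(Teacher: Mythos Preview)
Your proposal is correct and follows essentially the same approach as the paper: use Lemma~\ref{genlem} inductively to obtain all non-adjacent $E_{a,b;i,j}^{(r)}$ and $F_{b,a;j,i}^{(r)}$ from the adjacent ones, then invoke the block Gauss decomposition \eqref{S=FDE} to recover every $s_{ij}^{(r)}$. The paper's proof is simply a two-sentence compression of exactly this argument.
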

\begin{proof}
    By Lemma \ref{genlem}, all elements $E_{a,b;i,j}^{(r)}$ and $F_{b,a;j,i}^{(r)}$, for $r>0$, $1\lle a<b\lle n$, $1\lle i\lle \mu_a$, and $1\lle j\lle \mu_b$, can be generated by these elements. Thus it follows from \eqref{S=FDE} that all $s_{ij}^{(r)}$ for $1\lle i,j\lle N$ and $r>0$ can be generated by them.
\end{proof}

By translation on the spectral parameter, we introduce the following formal series.
Let $\mu=(\mu_1,\ldots,\mu_n)$ be given. For $1\lle a\lle n, 1\lle b<n$, define
\begin{align} 
\label{Ha} H_{a;i,j}(u)&=\sum_{r\gge 0} H_{a;i,j}^{(r)}u^{-r}:= D_{a;i,j}(u-\tfrac{\mu_{(a-1)}}{2}),\\ 
\label{tlHa} \wtl H_{a;i,j}(u)&=\sum_{r\gge 0} \wtl H_{a;i,j}^{(r)}u^{-r}:= \wtl D_{a;i,j}(u-\tfrac{\mu_{(a-1)}}{2}),\\ 
\label{Bb} B_{b;k,l}(u)&=\sum_{r\gge 1} B_{b;k,l}^{(r)}u^{-r}:= F_{b;k,l}(u-\tfrac{\mu_{(b)}}{2}),\\
\label{Cb} C_{b;l,k}(u)&=\sum_{r\gge 1} C_{b;l,k}^{(r)}u^{-r}:= E_{b;l,k}(u-\tfrac{\mu_{(b)}}{2}),\\
\label{tlHH}
Z_{b;i,j,k,l}(u)&=\sum_{r\gge 0}Z_{b;i,j,k,l}^{(r)}u^{-r}=\wtl H_{b;i,j}(u-\tfrac{\mu_{b}}{2})H_{b+1;k,l}(u).
\end{align}
In particular, we have
\beq\label{Zdef}
Z_{b;i,j,k,l}^{(r)}=\sum_{t=0}^r\sum_{m=0}^{r-t}{t+m-1 \choose m }\Big(\frac{\mu_b}{2}\Big)^m \wtl H_{b;i,j}^{(t)} H_{b+1;k,l}^{(r-t-m)}.
\eeq

For $1\lle a<b\lle n$, set 
\begin{align} 
\label{Bab} B_{b,a;k,l}(u)&=\sum_{r\gge 1} B_{b,a;k,l}^{(r)}u^{-r}:= F_{b,a;k,l}(u-\tfrac{\mu_{(a)}}{2}),\\
\label{Cba} C_{a,b;l,k}(u)&=\sum_{r\gge 1} C_{a,b;l,k}^{(r)}u^{-r}:= E_{a,b;l,k}(u-\tfrac{\mu_{(a)}}{2}).
\end{align}
In case $\mu=(1^N)$, we further denote
\begin{align}
&\sfd_a(u)=1+\sum_{r\gge 0}\sfd_{a,r}u^{-r-1}:=D_{a;1,1}(u),\quad ~~~1\lle a\lle N,\label{dr0}\\
&\sfh_0(u)=1+\sum_{r\gge 0}\sfh_{0,r}u^{-r-1}:=H_{1;1,1}(u),\label{dr1}
\end{align}
and
\begin{align}
&\sfh_a(u)=1+\sum_{r\gge 0}\sfh_{a,r}u^{-r-1}:=Z_{a;1,1,1,1}(u),
\label{dr2}\\
&\sfb_a(u)=\sum_{r\gge 0}\sfb_{a,r}u^{-r-1}:=B_{a;1,1}(u),\qquad\quad  ~~\,1\lle a<N.
\label{dr3}
\end{align}

We call the elements listed in Proposition \ref{gen} and \eqref{Ha}--\eqref{Cb} the \emph{parabolic generators}. In case we would like to emphasize the choice of $\mu$, we write $\X_N^+,\Y_N^+$ as $\X_{\mu}^+,\Y_\mu^+$, respectively. The first main goal of this article is to explicitly write down the defining relations of $\Y_{\mu}^+$ in terms of the parabolic generators for any given $\mu$.

\subsection{Properties of parabolic generators}
We shall need the following properties of parabolic generators, and in particular the properties under various (anti)homomorphisms $\jmath$, $\psi_m$, $\tau$, $\zeta_N$, $\varphi_m$, $\omega_N$ introduced in \S\ref{sec:hom}.

\begin{lem}[{\cite[Lemma 2.14.1]{Mol07}}]\label{psilem}
For any $1\lle i,j\lle N$, we have
\[
\psi_m\big(s_{ij}(u+\tfrac{m}{2})\big)=\left\vert  \begin{array}{cccc} s_{11}(u) &\cdots &s_{1m}(u) &s_{1, m+j}(u)\\
\vdots &\ddots &\vdots &\vdots \\
s_{m1}(u) &\cdots &s_{mm}(u) &s_{m, m+j}(u)\\
s_{m+i, 1}(u) &\cdots &s_{m+i,m}(u) &\mybox{$s_{m+i,m+j}(u)$}
\end{array} \right\vert.
\]
\end{lem}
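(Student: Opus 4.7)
The plan is to compose the three (anti)homomorphisms defining $\psi_m = \omega_{m+N}\circ\varphi_m\circ\omega_N$ in turn, interpreting the result at each stage as a piece of a Gauss decomposition, and then invoke the quasideterminant formula \eqref{quasid} at the end.

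First I would apply $\omega_N$ to $s_{ij}(u+\tfrac{m}{2})$. By definition $\omega_N(S(v)) = \wtl S(-v-\tfrac{N}{2})$, so
\[
\omega_N\bigl(s_{ij}(u+\tfrac{m}{2})\bigr) = \tl s_{ij}\bigl(-u-\tfrac{m+N}{2}\bigr),
\]
an element of $\X_N^+$. Next I would apply the homomorphism $\varphi_m$. Write $v=-u-\tfrac{m+N}{2}$ and let $S'(v):=\bigl(s_{m+k,m+l}(v)\bigr)_{1\lle k,l\lle N}$ be the bottom-right $N\times N$ block of $S(v)$ in $\X_{m+N}^+$. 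Since $\varphi_m$ is an algebra homomorphism and $\wtl S(v)=S(v)^{-1}$ in $\X_N^+$, the relation $\sum_k s_{ik}(v)\tl s_{kj}(v)=\delta_{ij}$ is mapped to $\sum_k s_{m+i,m+k}(v)\varphi_m(\tl s_{kj}(v))=\delta_{ij}$, so
\[
\varphi_m\bigl(\tl s_{ij}(v)\bigr) = \bigl(S'(v)^{-1}\bigr)_{ij}.
\]

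Now I would apply $\omega_{m+N}$. Because $\omega_{m+N}$ is an algebra homomorphism, it commutes with taking the inverse of the $N\times N$ matrix $S'(v)$. The image of $S'(v)$ under $\omega_{m+N}$ is the bottom-right $N\times N$ block of $\wtl S(-v-\tfrac{m+N}{2}) = S(u)^{-1}$ (using the substitution $v=-u-\tfrac{m+N}{2}$). By the explicit formula for the inverse of $S(u)=F(u)D(u)E(u)$ with respect to the composition $(m,N)$, namely
\[
S(u)^{-1} = \begin{pmatrix} \wtl D_1 + E_{1,2}\wtl D_2 F_{2,1} & -E_{1,2}\wtl D_2 \\ -\wtl D_2 F_{2,1} & \wtl D_2 \end{pmatrix}(u),
\]
this bottom-right block equals $\wtl D_2(u)$. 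Therefore
\[
\psi_m\bigl(s_{ij}(u+\tfrac{m}{2})\bigr) = \bigl(\wtl D_2(u)^{-1}\bigr)_{ij} = \bigl(D_2(u)\bigr)_{ij},
\]
where the Gauss decomposition is taken in $\X_{m+N}^+$ relative to the composition $(m,N)$.

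Finally, inserting the quasideterminant expression \eqref{quasid} for $D_2(u)$ from Proposition~\ref{quasi} produces precisely the asserted formula. The only real subtlety is bookkeeping of the spectral shifts $\tfrac{m}{2}, \tfrac{N}{2}, \tfrac{m+N}{2}$ across the three maps, together with the fact that $\varphi_m$ inverts the $N\times N$ block $S'(v)$ on its own (not inside the larger matrix), whereas $\omega_{m+N}$ re-interprets the result as the bottom-right block of the inverse of the full $(m+N)\times(m+N)$ matrix; matching these two views is what produces $\wtl D_2$ in the Gauss decomposition of the larger matrix.
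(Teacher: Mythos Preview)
Your argument is correct: tracking $s_{ij}(u+\tfrac{m}{2})$ through $\omega_N$, $\varphi_m$, and $\omega_{m+N}$ and identifying the result with $(D_2(u))_{ij}$ via the bottom-right block of $S(u)^{-1}$ is exactly the right computation, and the quasideterminant formula \eqref{quasid} then yields the asserted expression. The paper does not supply its own proof of this lemma; it simply cites \cite[Lemma~2.14.1]{Mol07}, so there is nothing further to compare.
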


\begin{lem}\label{psi lem}
We have
\begin{align*}
D_{a;i,j}(u)&=\psi_{\mu_{(a-1)}}(D_{1;i,j}(u+\tfrac{\mu_{(a-1)}}{2})),\\
E_{a;i,j}(u)&=\psi_{\mu_{(a-1)}}(E_{1;i,j}(u+\tfrac{\mu_{(a-1)}}{2})),\\
F_{a;j,i}(u)&=\psi_{\mu_{(a-1)}}(F_{1;j,i}(u+\tfrac{\mu_{(a-1)}}{2})).
\end{align*}
\end{lem}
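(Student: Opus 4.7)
The plan is to set $m := \mu_{(a-1)}$ and to view the right-hand sides as arising from the first block of a Gauss decomposition in $\X_{N-m}^+$ equipped with the truncated composition $\nu := (\mu_a, \mu_{a+1}, \ldots, \mu_n)$. In this smaller algebra the first block of $S'(u)$ is literally its top-left $\mu_a \times \mu_a$ corner, so $D'_{1;i,j}(v) = s'_{i,j}(v)$, and the formulas of Proposition~\ref{quasi} degenerate to $E'_{1;i,j}(v) = \bigl[\wtl D'_1(v)\, S'_{1,2}(v)\bigr]_{i,j}$ and $F'_{1;j,i}(v) = \bigl[S'_{2,1}(v)\, \wtl D'_1(v)\bigr]_{j,i}$. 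The two workhorses will be Lemma~\ref{psilem}, which writes $\psi_m(s_{ij}(u + m/2))$ as a quasideterminant with boxed corner $s_{m+i,m+j}(u)$ and A-part the upper-left $m \times m$ block of $S(u)$, together with Proposition~\ref{quasi}, which expresses $D_a$, $E_{a,a+1}$, $F_{a+1,a}$ as quasideterminants of $S(u)$ blocked according to $\mu$.

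First I would dispose of the $D_a$ identity. Applying $\psi_m$ to $D'_{1;i,j}(u+m/2) = s'_{i,j}(u+m/2)$ and invoking Lemma~\ref{psilem} yields a quasideterminant of $S(u)$ with a single $m \times m$ A-part and corner $s_{m+i,m+j}(u)$. Separately, Proposition~\ref{quasi} exhibits $D_{a;i,j}(u)$ as the quasideterminant of the same block matrix of $S(u)$, only with the A-part further subdivided according to $(\mu_1,\ldots,\mu_{a-1})$. Since the value $D - CA^{-1}B$ of a quasideterminant $\bigl| A\ B;\ C\ \boxed{D} \bigr|$ depends only on $A$ as a matrix and not on any sub-blocking of it, the two expressions coincide, proving the first identity.

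For the $E_a$ identity I would expand $E'_{1;i,j}(u+m/2) = \sum_{k=1}^{\mu_a} \wtl D'_{1;i,k}(u+m/2)\, s'_{k,\mu_a+j}(u+m/2)$ and push $\psi_m$ through the sum. The factor $\psi_m(\wtl D'_{1;i,k}(u+m/2))$ equals $\wtl D_{a;i,k}(u)$ by the $D_a$ identity, using that $\psi_m$ is an algebra homomorphism and hence preserves matrix inverses. The factor $\psi_m(s'_{k,\mu_a+j}(u+m/2))$ is, by Lemma~\ref{psilem}, precisely the $(k,j)$-entry of the quasideterminant matrix $Q(u)$ appearing in the formula $E_{a,a+1}(u) = \wtl D_a(u)\, Q(u)$ of Proposition~\ref{quasi}. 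Summing over $k$ gives $\psi_m(E'_{1;i,j}(u+m/2)) = \bigl[\wtl D_a(u)\, Q(u)\bigr]_{i,j} = E_{a;i,j}(u)$. The $F_a$ identity follows by the mirror argument starting from $F'_{1;j,i}(v) = \bigl[S'_{2,1}(v)\, \wtl D'_1(v)\bigr]_{j,i}$, or alternatively by applying the anti-automorphism $\tau$ of Section~\ref{sec:hom} and reducing to the $E_a$ case.

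I do not foresee a genuine obstacle here: the argument is essentially bookkeeping that matches two quasideterminant formulas under the index shift $i \mapsto m+i$ and spectral shift $u \mapsto u + m/2$. The only delicate point will be to keep straight which algebra each symbol lives in (primed symbols in $\X^+_{N-m}$, unprimed in $\X^+_N$) and to verify that the corners, row/column vectors, and A-parts of the two quasideterminants line up precisely; both checks are immediate from the explicit formulas.
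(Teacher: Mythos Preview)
Your proposal is correct and is essentially the argument the paper has in mind: the paper's proof is a one-line reference to \cite[Cor.~3.2]{LWZ23}, whose proof is precisely this quasideterminant matching via Lemma~\ref{psilem} and Proposition~\ref{quasi}. Your write-up spells out the details (including the observation that sub-blocking of the $A$-part does not affect $D-CA^{-1}B$, and the entrywise reduction from the matrix quasideterminant to the scalar one) that are implicit in that reference.
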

\begin{proof}
The proof is similar to that of \cite[Cor.~3.2]{LWZ23}.
\end{proof}

\begin{lem}[{\cite[Lem.~3.3]{LWZ23}}]
The subalgebras $\jmath(\X_m^+)$ and $\psi_m(\X_N^+)$ of $\X_{m+N}^+$ commute with each other.
\end{lem}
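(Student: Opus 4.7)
The plan is to identify both subalgebras with diagonal blocks of the Gauss decomposition of $S(u) \in \X_{m+N}^+$ with respect to the composition $\mu = (m, N)$, and then verify the commutation of those blocks via the quaternary relations.

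First I would observe, via the quasideterminant formula \eqref{quasid} with $a = 1$, that $D_1(u) = {^\mu}S_{1,1}(u)$, so its entries are precisely the generators $s_{ij}(u)$ with $1 \lle i,j \lle m$. Hence $\jmath(\X_m^+)$ coincides with the subalgebra generated by the coefficients of the entries of $D_1(u)$. On the other hand, Lemma \ref{psi lem} applied with $a=2$ gives $D_{2;i,j}(u) = \psi_m(s_{ij}(u+\tfrac{m}{2}))$ for $1 \lle i,j \lle N$, and therefore $\psi_m(\X_N^+)$ is generated by the coefficients of the entries of $D_2(u)$. Consequently, it is enough to prove
\[
[D_{1;i,j}(u),\,D_{2;k,l}(v)] = 0
\]
for all $1 \lle i,j \lle m$ and $1 \lle k,l \lle N$.

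Next I would use the Schur-complement expression $D_2(v) = {^\mu}S_{2,2}(v) - {^\mu}S_{2,1}(v)\,\wtl D_1(v)\,{^\mu}S_{1,2}(v)$ furnished by \eqref{quasid}. Since the entries of $\wtl D_1(v) = D_1(v)^{-1}$ are polynomial expressions in the entries of $D_1(v)$, the commutators $[D_{1;i,j}(u), \wtl D_{1;a,b}(v)]$ are controlled by $[D_{1;i,j}(u), D_{1;a,b}(v)]$, which lies entirely in $\jmath(\X_m^+)$. The quaternary relations \eqref{qua} (with $\theta_i = 1$ and $i' = i$ in type AI) together with \eqref{sts} and Corollary \ref{stslem} then allow one to compute $[s_{ij}(u), s_{kl}(v)]$ and $[s_{ij}(u), \wtl s_{kl}(v)]$ when $i,j \lle m$ and at least one of $k, l$ lies in $\{m+1,\ldots,m+N\}$; each such bracket is a sum of products of off-diagonal-block entries. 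The desired identity then follows by showing that the contribution of $[D_{1;i,j}(u), {^\mu}S_{2,2}(v)]$ telescopes exactly against that of $-[D_{1;i,j}(u), {^\mu}S_{2,1}(v)\,\wtl D_1(v)\,{^\mu}S_{1,2}(v)]$.

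The main obstacle is this telescoping cancellation. Unlike the ordinary Yangian case, where the analogous statement follows cleanly from the RTT relation, the quaternary relation has four summands on the right-hand side, two of which are introduced by the reflection/involution structure; one must check that these extra contributions recombine correctly with the Schur-complement form of $D_2(v)$. A more conceptual route is to restrict the reflection equation $R(u-v)\,S_1(u)\,R^{\sf T}(-u-v)\,S_2(v) = S_2(v)\,R^{\sf T}(-u-v)\,S_1(u)\,R(u-v)$ to the $(m,N)$-block decomposition of $(\bC^{m+N})^{\otimes 2}$, whereupon the desired vanishing emerges as the off-diagonal block identity satisfied by $D_1$ and $D_2$. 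This is in essence the strategy carried out in \cite[Lem.~3.3]{LWZ23}, to which we ultimately defer for the full computation.
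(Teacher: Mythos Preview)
Your reduction to showing $[D_{1;i,j}(u),D_{2;k,l}(v)]=0$ is correct, but the route via the Schur complement $D_2 = {^\mu}S_{2,2} - {^\mu}S_{2,1}\,\wtl D_1\,{^\mu}S_{1,2}$ makes the computation much harder than necessary, and you end up not actually carrying out the telescoping step you identify as the main obstacle. The standard argument bypasses this entirely by switching to the \emph{inverse} block. Since $\omega_N$ is an automorphism of $\X_N^+$, one has
\[
\psi_m(\X_N^+)=\omega_{m+N}\bigl(\varphi_m(\omega_N(\X_N^+))\bigr)=\omega_{m+N}\bigl(\varphi_m(\X_N^+)\bigr),
\]
which is therefore generated by the coefficients of $\tl s_{kl}(v)$ for $m<k,l\lle m+N$. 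Equivalently, in your Gauss-decomposition language, $\wtl D_{2;k,l}(v)=\tl s_{m+k,\,m+l}(v)$ is simply the bottom-right block of $S(v)^{-1}$. Now Corollary~\ref{stslem} (i.e., the identity \eqref{sts} when $\{i,j\}\cap\{k,l\}=\emptyset$) gives $[s_{ij}(u),\tl s_{kl}(v)]=0$ immediately whenever $i,j\lle m<k,l$, and the lemma follows in one line.

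This is exactly the mechanism the paper uses later when establishing the $a=1,b=2$ case of \eqref{dd n2} in Proposition~\ref{prop2block}, and is the argument in \cite[Lem.~3.3]{LWZ23}. Your Schur-complement approach would eventually succeed, but the ``extra'' reflection-type terms you worry about are precisely what the passage to $S^{-1}$ absorbs: working with $\wtl D_2$ rather than $D_2$ turns a delicate cancellation into a disjointness-of-indices triviality.
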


\begin{cor}[{cf. \cite[Cor.~3.4]{LWZ23}}]\label{comcor}
We have $[D_{a;i,j}(u),D_{b;k,l}(v)]=0$ if $a\ne b$ and
\begin{align}
&[E_{a;i,j}(u),E_{b;k,l}(v)]=[E_{a;i,j}(u),F_{b;k,l}(v)]=[F_{a;i,j}(u),F_{b;k,l}(v)]=0, &\text{if }|a-b|>1,\notag\\
&[D_{a;i,j}(u),E_{b;k,l}(v)]=[D_{a;i,j}(u),F_{b;k,l}(v)]=0,&\text{if }a\ne b,b+1.\label{defcom}
\end{align}
\end{cor}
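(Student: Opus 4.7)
The plan is to follow the template of \cite[Cor.~3.4]{LWZ23}, combining Lemma~\ref{psi lem} with the commutation $[\jmath(\X_m^+),\psi_m(\X^+_{N-m})]=0$ recalled just before the corollary. A preliminary observation I shall need is the compositivity of the family $\psi_m$: for any $m_1,m_2\gge 0$,
\[
\psi_{m_1+m_2}=\psi_{m_1}\circ\psi_{m_2}.
\]
Unwinding the definition $\psi_m=\omega_{m+N}\circ\varphi_m\circ\omega_N$, this follows from the evident additivity $\varphi_{m_1}\circ\varphi_{m_2}=\varphi_{m_1+m_2}$ together with $\omega_{m+N}\circ\omega_{m+N}=\id$.

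Next I would pin down the subalgebra containing each parabolic generator. Direct inspection of the quasideterminant formulas \eqref{quasid}--\eqref{quasif} gives
\[
D_{a;i,j}(u)\in \jmath(\X^+_{\mu_{(a)}}), \qquad E_{a;i,j}(u),\; F_{a;j,i}(u)\in \jmath(\X^+_{\mu_{(a+1)}}),
\]
where one checks that the inverse matrix $\wtl D_a(u)$ entering \eqref{quasie}--\eqref{quasif} already lives in $\jmath(\X^+_{\mu_{(a)}})$, being the inverse of the quasideterminant \eqref{quasid}. Dually, Lemma~\ref{psi lem} places $D_{b;k,l}(v)$, $E_{b;k,l}(v)$, $F_{b;l,k}(v)$ inside $\psi_{\mu_{(b-1)}}(\X^+_{N-\mu_{(b-1)}})$, and by the compositivity above they also lie in $\psi_m(\X^+_{N-m})$ for every $0\lle m\lle \mu_{(b-1)}$.

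With these two one-line containments the whole corollary reduces to choosing, in each case, a separator index $m$ such that one generator lives in $\jmath(\X_m^+)$ and the other in $\psi_m(\X^+_{N-m})$; the commutation then follows at once from \cite[Lem.~3.3]{LWZ23}. Without loss of generality $a<b$. For the $(D,D)$, $(D,E)$ and $(D,F)$ vanishings I would take $m=\mu_{(a)}$, valid because $b>a$ forces $\mu_{(b-1)}\gge m$. For the $(E,E)$, $(E,F)$ and $(F,F)$ cases, the hypothesis $|a-b|>1$ yields $b\gge a+2$, and I would take $m=\mu_{(a+1)}$. The remaining range $a>b+1$ for the $(D,E)$ and $(D,F)$ cases is handled symmetrically by swapping the roles of the two generators and choosing $m=\mu_{(b+1)}$, which works since $a-1\gge b+1$.

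I anticipate no substantial obstacle: the argument is a direct analogue of the case $\mu=(1^N)$ already treated in \cite[Cor.~3.4]{LWZ23}, and the only point meriting mild care is confirming that the inverse entries arising in the quasideterminants \eqref{quasie}--\eqref{quasif} do not inflate the ambient subalgebra, which is taken care of by the observation on $\wtl D_a(u)$ noted above.
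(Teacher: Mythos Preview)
Your proposal is correct and follows essentially the same approach as the paper, which simply defers to \cite[Cor.~3.4]{LWZ23}. The argument via choosing a separator index $m$ so that one generator lies in $\jmath(\X_m^+)$ and the other in $\psi_m(\X^+_{N-m})$, then invoking the commutation lemma, is exactly the intended route; your verification of the compositivity $\psi_{m_1+m_2}=\psi_{m_1}\circ\psi_{m_2}$ and of the containments via the quasideterminant formulas fills in the details that the paper leaves implicit.
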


\begin{prop}\label{prop:B=C}
We have $B_{b,a;l,k}(u)=C_{a,b;k,l}(-u)$ for $1\lle a<b\lle n$, $1\lle k\lle \mu_a$, and $1\lle l\lle \mu_b$.
\end{prop}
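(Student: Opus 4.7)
The plan is to prove Proposition~\ref{prop:B=C} by induction on $d := b-a \geq 1$. Unpacking the definitions \eqref{Bab} and \eqref{Cba}, the assertion is equivalent to the identity
\[
F_{b,a;l,k}(v) = E_{a,b;k,l}(-v-\mu_{(a)})
\]
of formal series in $\X_N^+[[v^{-1}]]$, where $v = u-\mu_{(a)}/2$. By Lemma~\ref{psi lem}, both $F_{b,a}$ and $E_{a,b}$ lie in the image of $\psi_{\mu_{(a-1)}}$ applied to the corresponding series for the composition $(\mu_a, \mu_{a+1}, \ldots, \mu_n)$ of $N-\mu_{(a-1)}$, so we may reduce to the case $a=1$. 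Since the quasideterminant formulas \eqref{quasie}--\eqref{quasif} involve only the first $b$ diagonal blocks of $S(u)$, we may further assume $n=b$.

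For the base case $d=1$, after reduction we have $\mu=(\mu_1,\mu_2)$ and $N=\mu_1+\mu_2$. Then \eqref{quasie}--\eqref{quasif} simplify to $F_1(u) = {^\mu}S_{2,1}(u)\,({^\mu}S_{1,1}(u))^{-1}$ and $E_1(u) = ({^\mu}S_{1,1}(u))^{-1}\,{^\mu}S_{1,2}(u)$, so the desired identity $F_1(v)_{l,k} = E_1(-v-\mu_1)_{k,l}$ should follow by clearing matrix inverses and applying the symmetry relation \eqref{sym} entrywise to each $s_{ij}(u)$ appearing in the blocks ${^\mu}S_{1,1}, {^\mu}S_{1,2}, {^\mu}S_{2,1}$. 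The shift $\mu_1$ is precisely calibrated so that the correction term $\tfrac{s_{ij}(u)-s_{ij}(-u)}{2u}$ in \eqref{sym} cancels against the contributions coming from the size-$\mu_1$ matrix inversion. An alternative route uses the embedding $S(u)=T^t(-u)T(u)$ of $\X_N^+$ into $\rY(\gl_N)$ and the Gauss decomposition of $T(u)$, where both factors can be written in closed form.

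For the inductive step $d>1$, Lemma~\ref{genlem} provides the commutator identities
\[
F_{b,a;l,k}^{(r)} = [F_{b-1;l,p}^{(1)}, F_{b-1,a;p,k}^{(r)}], \qquad E_{a,b;k,l}^{(r)} = [E_{a,b-1;k,p}^{(r)}, E_{b-1;p,l}^{(1)}]
\]
for any choice of $1\lle p\lle\mu_{b-1}$. Passing to generating series and applying the shift $u\mapsto u-\mu_{(a)}/2$, and observing that the $u^{-1}$-coefficient is insensitive to such a shift, we obtain
\[
B_{b,a;l,k}(u) = [B_{b-1;l,p}^{(1)}, B_{b-1,a;p,k}(u)], \qquad C_{a,b;k,l}(-u) = [C_{a,b-1;k,p}(-u), C_{b-1;p,l}^{(1)}].
\]
The induction hypothesis on $b-1-a=d-1$ yields $B_{b-1,a;p,k}(u) = C_{a,b-1;k,p}(-u)$, while the base case applied to the pair $(b-1,b)$ gives $B_{b-1;l,p}(u) = C_{b-1;p,l}(-u)$, and hence $B_{b-1;l,p}^{(1)} = -C_{b-1;p,l}^{(1)}$ by extracting the $u^{-1}$-coefficient. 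Substituting into the first commutator identity and using antisymmetry $[X,Y]=-[Y,X]$ of the bracket then recovers exactly the second identity, yielding $B_{b,a;l,k}(u) = C_{a,b;k,l}(-u)$.

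The main technical obstacle is the base case. While \eqref{sym} supplies the raw relation between $s_{ij}$ and $s_{ji}$ at opposite spectral arguments, its correction term has a $u$-dependent denominator and $F_1, E_1$ involve matrix inverses of the $\mu_1\times\mu_1$ block. Verifying that these denominators combine to produce the clean shift $-v-\mu_1$ requires the careful algebraic bookkeeping outlined above; by contrast, the inductive step is essentially formal once Lemma~\ref{genlem} is in place.
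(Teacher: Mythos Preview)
Your inductive architecture is correct and matches the paper's: reduce to $a=1$ via $\psi_{\mu_{(a-1)}}$ and Lemma~\ref{psi lem}, then induct on $d=b-a$ using the commutator identities \eqref{efgen} of Lemma~\ref{genlem}. The manipulation in your inductive step---extracting $B_{b-1;l,p}^{(1)}=-C_{b-1;p,l}^{(1)}$ from the distance-$1$ case and flipping the bracket---is clean and valid. You are also right that only the single base case $d=1$ is strictly needed; the paper's separate treatment of $d=2$ (via Lemma~\ref{e=f gen}) itself reduces to $d=1$ through \eqref{efgen}.

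The gap is in your base case. Your two suggested routes are not carried out, and neither is straightforward. Applying the symmetry relation \eqref{sym} entrywise to $S_{2,1}(u)D_1(u)^{-1}$ versus $D_1(-u-\mu_1)^{-1}S_{1,2}(-u-\mu_1)$ runs into the problem that the entries of $D_1(u)$ do not commute, so matrix transposes and inverses do not interact cleanly; there is no evident mechanism by which the $\tfrac{1}{2u}$ correction terms organize into the single shift by $\mu_1$. The embedding $S(u)=T^t(-u)T(u)$ likewise does not give the Gauss factors of $S(u)$ directly from those of $T(u)$. The paper's actual proof of the base case (equation \eqref{e=f n2} in Proposition~\ref{prop2block}) proceeds quite differently: one first establishes the commutator relation \eqref{d1e n2} for $[D_{1;i,j}(u),E_{1;k,l}(v)]$ from the mixed relation \eqref{sts} between $S(u)$ and $\wtl S(v)$; the right-hand side of \eqref{d1e n2} carries a pole at $v=-u-\mu_1$, and clearing it yields $\sum_p\bigl(F_{1;l,p}(u)-E_{1;p,l}(-u-\mu_1)\bigr)D_{1;p,j}(u)=0$, whence $F_1(u)=E_1(-u-\mu_1)^t$ by invertibility of $D_1(u)$. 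This specialization trick is the missing idea.
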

\begin{proof}
It is proved by induction on $b-a$. By applying \eqref{efgen} we reduce the general case to the cases for $b-a=1,2$. These two initial cases are based on \eqref{b=c n2} and Lemma \ref{e=f gen}, which requires a careful study in low rank cases (we skip the details here to avoid repetition). Then we apply the shift homomorphism and use Lemma \ref{psi lem}. 
\end{proof}

\begin{cor}\label{gen:cor}
The algebra $\X_N^+$ (or $\Y_N^+$) is generated by the following elements:
\begin{align*}
    H_{a;i,j}^{(r)},\qquad\qquad & 1\lle a\lle n,\ 1\lle i,j\lle \mu_a,\ r>0,\\
    B_{a;j,i}^{(r)},\qquad \qquad & 1\lle a< n,\  1\lle i\lle \mu_a,\ 1\lle j\lle \mu_{a+1},\ r>0.
\end{align*}
\end{cor}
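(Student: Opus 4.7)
The plan is to reduce Corollary \ref{gen:cor} directly to Proposition \ref{gen} together with Proposition \ref{prop:B=C}. By Proposition \ref{gen}, the algebra $\X_N^+$ is already generated by the coefficients of $D_{a;i,j}(u)$, $\wtl D_{a;i,j}(u)$, $E_{a;i,j}(u)$, and $F_{a;j,i}(u)$, so it suffices to express each of these four families as polynomials in $\{ H_{a;i,j}^{(r)} \} \cup \{ B_{a;j,i}^{(r)} \}$.

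First, by \eqref{Ha} and \eqref{Bb}, the series $H_{a;i,j}(u)$ and $B_{a;j,i}(u)$ are obtained from $D_{a;i,j}(u)$ and $F_{a;j,i}(u)$, respectively, via a translation $u \mapsto u - c$ of the spectral parameter. Such a translation acts on the coefficients of $u^{-r}$ by a unipotent upper-triangular transformation in the variable $r$, so for each fixed $R$ the $\bC$-span of $\{ H_{a;i,j}^{(r)} \}_{r \lle R}$ coincides with that of $\{ D_{a;i,j}^{(r)} \}_{r \lle R}$, and likewise the span of $\{ B_{a;j,i}^{(r)} \}_{r \lle R}$ coincides with that of $\{ F_{a;j,i}^{(r)} \}_{r \lle R}$.

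Second, I would eliminate the $\wtl D$'s by exploiting the matrix identity $D_a(u) \wtl D_a(u) = I_{\mu_a}$: reading off the coefficient of $u^{-r}$ yields a recursion that expresses each $\wtl D_{a;i,j}^{(r)}$ as a polynomial in the $D_{a;p,q}^{(s)}$ with $s \lle r$. Combined with the first step, this shows that every $\wtl H_{a;i,j}^{(r)}$ lies in the subalgebra generated by the $H_{a;p,q}^{(s)}$.

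Third, to eliminate the $E$'s, I would invoke Proposition \ref{prop:B=C} in the adjacent case $b = a+1$. Unpacking \eqref{Bab} and \eqref{Cba}, which collapse to \eqref{Bb} and \eqref{Cb} when $b = a+1$, the identity $B_{a+1,a;l,k}(u) = C_{a,a+1;k,l}(-u)$ becomes $B_{a;l,k}(u) = C_{a;k,l}(-u)$; hence $C_{a;k,l}^{(r)} = (-1)^r B_{a;l,k}^{(r)}$, and by the translation argument of the first step the coefficients $E_{a;k,l}^{(r)}$ are also polynomial combinations of the $B_{a;j,i}^{(s)}$. Combining the four steps yields the desired generation of $\X_N^+$, and the statement for $\Y_N^+$ follows from the canonical surjection $\X_N^+ \twoheadrightarrow \Y_N^+$ imposing the symmetry relations of Definition \ref{tydef}. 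I foresee no serious obstacle: the corollary is essentially a clean repackaging of Propositions \ref{gen} and \ref{prop:B=C}, the latter being the only substantial input (and already proved by reduction to the $n=2,3$ cases and the shift homomorphism).
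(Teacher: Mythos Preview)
Your proposal is correct and follows exactly the same route as the paper: the paper's proof is the single line ``This follows from Propositions \ref{gen} and \ref{prop:B=C}'', and you have simply unpacked the obvious intermediate steps (spectral-parameter shifts \eqref{Ha}--\eqref{Bb}, the inverse relation $D_a(u)\wtl D_a(u)=I$, and the adjacent case of Proposition \ref{prop:B=C}). Your final remark about the surjection $\X_N^+\twoheadrightarrow\Y_N^+$ is fine but not needed, since Proposition \ref{gen} is already stated for both algebras and the subsequent manipulations hold in either.
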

\begin{proof}
This follows from  Propositions \ref{gen} and \ref{prop:B=C}.
\end{proof}

\begin{rem}\label{a2gen}
With an additional assumption that each $\mu_i$ is even, the Gaussian decomposition \eqref{S=FDE} exists and Propositions \ref{gen} holds for type AII as well.
It then follows from Proposition~\ref{a2prop:B=C} below that Corollary~\ref{gen:cor} also holds for $\X_N^-$ (or $\Y_N^-$).
\end{rem}

\begin{thm}\label{PBWgauss}
The monomials in the elements
\begin{align*}
\{H_{a;i,i}^{(2r)} \}_{1\lle a\lle  n, 1\lle i\lle \mu_a,r\gge 1},
\quad
\{H_{a;i,j}^{(r)}\}_{1\lle a\lle  n, 1\lle j<i\lle \mu_a, r\gge 1},
\quad
\{B_{b,a;i,j}^{(r)} \}_{1\lle a<b\lle n, 1\lle i\lle \mu_b,1\lle j\lle \mu_a, r\gge 1},
\end{align*}
taken in any fixed linear order form a PBW basis of $\Y_N^+$.
\end{thm}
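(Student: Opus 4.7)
The strategy is the standard one for PBW-type results on filtered algebras: exhibit the claimed elements as a filtered lift of a known PBW basis of the associated graded algebra. Recall that the loop filtration on $\Y_N^+$ induced by $\deg s_{ij}^{(r)} = r-1$ satisfies $\gr \Y_N^+ \cong \rU(\gl_N[z]^\theta)$, under which $\bar s_{ij}^{(r)} \mapsto (e_{ij} - (-1)^{r-1} e_{ji}) z^{r-1}$. The classical PBW basis of Proposition~\ref{prop:PBW}(1) has symbols forming a Lie algebra basis of $\gl_N[z]^\theta$, so it suffices to prove the same for the symbols of the elements listed in Theorem~\ref{PBWgauss}.

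\textbf{Step 1: Symbols of the parabolic generators.} The shifts of spectral parameter in \eqref{Ha}--\eqref{Bab} do not affect top filtration degree, so it is enough to analyze $D_{a;i,j}^{(r)}$ and $F_{b,a;i,j}^{(r)}$. Using the quasideterminant expressions \eqref{quasid} and \eqref{quasif}, each of these equals $s_{\mu_{(a-1)}+i,\mu_{(a-1)}+j}^{(r)}$ (respectively $s_{\mu_{(b-1)}+i,\mu_{(a-1)}+j}^{(r)}$) plus a sum of terms each of which is a product of at least two coefficients of entries of $S(u)$ and $\wtl D_a(u)$. Since a product of $k \gge 2$ generators whose spectral indices sum to $r$ lies in filtration degree at most $r-k \lle r-2$, all correction terms are strictly below the top filtration. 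Consequently, in filtration degree $r-1$,
\[
\bar H_{a;i,j}^{(r)} = \bar s_{\mu_{(a-1)}+i,\,\mu_{(a-1)}+j}^{(r)}, \qquad \bar B_{b,a;i,j}^{(r)} = \bar s_{\mu_{(b-1)}+i,\,\mu_{(a-1)}+j}^{(r)}.
\]

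\textbf{Step 2: Matching with the classical PBW basis.} Under the bijections $(a,i,j) \leftrightarrow (\mu_{(a-1)}+i, \mu_{(a-1)}+j)$ for diagonal blocks and $(a,b,i,j) \leftrightarrow (\mu_{(b-1)}+i, \mu_{(a-1)}+j)$ for off-diagonal blocks, the indexing set of the proposed elements maps to $\{s_{pq}^{(r)} : p>q,\, r\gge 1\} \cup \{s_{kk}^{(2r)} : 1\lle k\lle N,\, r\gge 1\}$. For diagonal entries, note that the symmetry relation \eqref{sym} forces $s_{ii}^{(r)}=0$ for $r$ odd (cf.\ the discussion after Lemma~\ref{lem:s11}); hence $\bar H_{a;i,i}^{(r)}=0$ for $r$ odd, which explains why only the even-degree diagonal generators appear in the proposed basis. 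By Step~1, the symbols of the proposed elements are exactly the symbols of the known PBW basis of Proposition~\ref{prop:PBW}(1).

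\textbf{Step 3: Conclusion via filtered/graded argument.} Fix any total order on the proposed generators. Their symbols in $\gr \Y_N^+ \cong \rU(\gl_N[z]^\theta)$ form a basis of the Lie algebra in the appropriate graded pieces, and by the classical PBW theorem, ordered monomials in these symbols form a basis of $\gr \Y_N^+$. By the standard filtered-graded lifting lemma, the corresponding ordered monomials in the lifts themselves form a basis of $\Y_N^+$. This completes the proof.

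\textbf{Main obstacle.} The sole non-trivial input is the symbol computation in Step~1, namely the verification that correction terms coming from the quasideterminant formulas (which involve inverses $\wtl D_a(u)$ of matrices over $\Y_N^+$) indeed drop strictly in filtration. Once the additive/multiplicative bookkeeping of the loop filtration is set up, this is essentially a degree count, but the notational complexity of the quasideterminant expansion is what requires the most care. Everything else reduces to counting and to quoting Proposition~\ref{prop:PBW}(1).
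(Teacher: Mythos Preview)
Your proposal is correct and takes essentially the same approach as the paper: compute the top symbols of $H_{a;i,j}^{(r)}$ and $B_{b,a;i,j}^{(r)}$ in the loop filtration, observe that they coincide with those of the $s_{ij}^{(r)}$, and then invoke Proposition~\ref{prop:PBW}(1) together with the standard filtered-to-graded lifting. The paper's proof records exactly the congruences in your Step~1 and then says ``Hence the statement follows from Proposition~\ref{prop:PBW}'', so your write-up is simply a more explicit version of the same argument.
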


\begin{proof}
Recall the filtration $\{\mathcal F_s\Y_N^+\}_{s\gge 0}$ from \eqref{filter:tY}. By Gauss decomposition and the definitions of these elements from \eqref{Ha}--\eqref{Bb}, we have
\begin{align*}
& H_{a;i,i}^{(2r)}\equiv s_{\mu_{(a-1)}+i,\mu_{(a-1)}+i}^{(2r)} \pmod{\mathcal F_{2r-2}\Y_N^+},\\
& H_{a;i,j}^{(r)}\equiv s_{\mu_{(a-1)}+i,\mu_{(a-1)}+j}^{(r)} \pmod{\mathcal F_{r-2}\Y_N^+},\\
& B_{b,a;i,j}^{(r)}\equiv s_{\mu_{(b-1)}+i,\mu_{(a-1)}+j}^{(r)} \pmod{\mathcal F_{r-2}\Y_N^+}.
\end{align*}
Hence the statement follows from Proposition \ref{prop:PBW}.
\end{proof}

\begin{lem}\label{tauimg}
We have 
\begin{align*}
\tau(D_{a;i,j}(u))&=D_{a;j,i}(u), \qquad\qquad 1\lle   a\lle n ,1\lle i,j\lle \mu_{a},\\
\tau(E_{a,b;i,j}(u))&=F_{b,a;j,i}(u), \qquad\qquad 1\lle a<b\lle n,1\lle i\lle \mu_a,1\lle j\lle \mu_{b},\\
\tau(F_{b,a;j,i}(u))&=E_{a,b;i,j}(u), \qquad\qquad 1\lle a<b\lle n,1\lle i\lle \mu_a,1\lle j\lle \mu_{b}.
\end{align*}
\end{lem}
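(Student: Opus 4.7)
The plan is to deduce all three identities simultaneously from the uniqueness of the Gauss decomposition by applying $\tau$ to the matrix identity $S(u) = F(u)D(u)E(u)$.

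Recall that in type AI the anti-automorphism $\tau$ acts on generators by $\tau(s_{ij}(u))=s_{ji}(u)$. This suggests introducing, for any matrix $M(u)$ with entries in $\X_N^+[[u^{-1}]]$, the matrix $\hat M(u)$ defined by $\hat M_{ij}(u) := \tau(M_{ji}(u))$, i.e., apply $\tau$ entry-wise and then transpose. Two elementary observations will drive the proof. First, since $\tau(s_{ij}(u))=s_{ji}(u)$, we have $\hat S(u)=S(u)$. Second, because $\tau$ is an anti-homomorphism of $\X_N^+$, a direct computation of matrix entries shows that $\hat{(AB)}(u)=\hat B(u)\,\hat A(u)$ for any matrices $A,B$ whose sizes make the products meaningful; indeed,
\begin{equation*}
\hat{(AB)}_{ij}=\tau\bigl((AB)_{ji}\bigr)=\tau\Big(\sum_{k}A_{jk}B_{ki}\Big)=\sum_{k}\tau(B_{ki})\tau(A_{jk})=\sum_{k}\hat B_{ik}\hat A_{kj}=(\hat B\hat A)_{ij}.
\end{equation*}

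Applying $\hat{(\cdot)}$ to $S(u)=F(u)D(u)E(u)$ yields $S(u)=\hat E(u)\hat D(u)\hat F(u)$. The next step is to check that this new product is again a Gauss decomposition of $S(u)$ of the same block shape $\mu$. Since $E(u)$ is block upper unitriangular, taking its transpose produces a block lower unitriangular matrix; applying $\tau$ entry-wise preserves this triangular pattern and leaves the diagonal identity blocks fixed, so $\hat E(u)$ is block lower unitriangular. By the symmetric argument, $\hat F(u)$ is block upper unitriangular, and $\hat D(u)$ is block diagonal.

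Now invoke the uniqueness of Gauss decompositions adapted to the shape $\mu$ to conclude
$F(u)=\hat E(u),\ D(u)=\hat D(u),\ E(u)=\hat F(u).$
Reading these equalities off on the level of matrix entries in each block, one gets directly
\begin{align*}
F_{b,a;k,l}(u)&=\hat E_{b,a;k,l}(u)=\tau\bigl(E_{a,b;l,k}(u)\bigr),\\
D_{a;i,j}(u)&=\hat D_{a;i,j}(u)=\tau\bigl(D_{a;j,i}(u)\bigr),\\
E_{a,b;i,j}(u)&=\hat F_{a,b;i,j}(u)=\tau\bigl(F_{b,a;j,i}(u)\bigr),
\end{align*}
which, after relabeling, are exactly the three desired formulas. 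There is no substantive obstacle here; the only subtlety is bookkeeping, namely keeping track of how the transpose built into $\hat{(\cdot)}$ exchanges the two subscripts in a block and swaps the block indices $(a,b)\leftrightarrow(b,a)$. This is also precisely the mechanism ensuring the statement holds uniformly for $D$, $E$ and $F$, without needing to examine the quasideterminant formulas of Proposition~\ref{quasi} directly.
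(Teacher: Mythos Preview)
Your proof is correct. The paper states this lemma without proof, so there is no argument to compare against; your approach via uniqueness of the block Gauss decomposition under the operation $M\mapsto \hat M$ (apply $\tau$ entry-wise and transpose) is the standard and natural one, and the bookkeeping you flag is indeed the only point requiring care.
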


Finally, it is also convenient to have the following property of the automorphism $\zeta_N$ of $\X_N^+$. For a strict composition $\mu=(\mu_1,\mu_2,\ldots,\mu_n)$ of $N$, define
\[
\bar\mu=(\mu_n, \ldots,\mu_2,\mu_1).
\]
\begin{prop}\label{zeta-pro}
For all $1\lle i,j,k\lle \mu_a$, $1\lle l\lle \mu_{a+1}$, we have $\zeta_N:\X_\mu^+\to \X_{\bar\mu}^+$ with
\begin{align*}
\zeta_N\big(D_{a;i,j}(u)\big)&= \wtl D_{n+1-a;\mu_a+1-i,\mu_a+1-j}(-u-\tfrac{N}2),\hskip 0.97cm 1\lle a\lle n,\\
\zeta_N\big(E_{a;k,l}(u)\big)&= -F_{n-a;\mu_a+1-k,\mu_{a+1}+1-l}(-u-\tfrac{N}2),\qquad 1\lle a< n,\\
\zeta_N\big(F_{a;l,k}(u)\big)&= -E_{n-a;\mu_{a+1}+1-l,\mu_a+1-k}(-u-\tfrac{N}2),\qquad 1\lle a< n.
\end{align*}
In particular, we have
\begin{align*}
\zeta_N\big(B_{a;l,k}(u)\big)&= -B_{n-a;\mu_a+1-k,\mu_{a+1}+1-l}(u),\\
\zeta_N(H_{a;i,j}(u))&=\wtl H_{n+1-a;\mu_a+1-i,\mu_a+1-j}(-u),\\
\zeta_N(\wtl H_{a;i,j}(u))&= H_{n+1-a;\mu_a+1-i,\mu_a+1-j}(-u).
\end{align*}
\end{prop}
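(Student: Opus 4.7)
The plan is to derive all the formulas from the single matrix identity
$$\zeta_N(S(u)) \;=\; J\,\wtl S(-u-\tfrac{N}{2})\,J,$$
where $J$ is the $N \times N$ anti-diagonal matrix, and then invoke uniqueness of Gauss decomposition. This identity is immediate from $\zeta_N = \rho_N \circ \omega_N$ together with $\omega_N(S(u)) = \wtl S(-u-\tfrac{N}{2})$ and the observation that $\rho_N$ acts on any $N \times N$ matrix $M$ by $\rho_N(M)_{ij} = M_{\bar i,\bar j}$, i.e.\ by conjugation with $J$.

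Starting from the $\bar\mu$-Gauss decomposition $S(v) = F^{\bar\mu}(v)\, D^{\bar\mu}(v)\, E^{\bar\mu}(v)$, inversion gives $\wtl S(v) = E^{\bar\mu}(v)^{-1}\,\wtl D^{\bar\mu}(v)\, F^{\bar\mu}(v)^{-1}$, and therefore
$$J\,\wtl S(v)\, J \;=\; \bigl[JE^{\bar\mu}(v)^{-1} J\bigr]\,\bigl[J\,\wtl D^{\bar\mu}(v)\, J\bigr]\,\bigl[JF^{\bar\mu}(v)^{-1} J\bigr].$$
The combinatorial key is that $J$ identifies the $a$-th $\mu$-block with the $(n+1-a)$-th $\bar\mu$-block (both of size $\mu_a$), reversing internal indexing via $i \leftrightarrow \mu_a+1-i$. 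A direct check then shows that the three factors above are respectively block lower unitriangular, block diagonal, and block upper unitriangular with respect to $\mu$. By uniqueness of the $\mu$-Gauss decomposition, they coincide with the $\mu$-Gauss pieces of $J\wtl S(v) J$.

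Because $\zeta_N$ is an algebra homomorphism and the matrices $D^\mu_a, E^\mu_a, F^\mu_a$ are defined by explicit quasideterminant formulas in the entries of $S(u)$ (Proposition~\ref{quasi}), the matrices $\zeta_N(D^\mu_a(u))$, etc., equal the same quasideterminants evaluated in the entries of $\zeta_N(S(u))$. Hence they coincide with the $\mu$-Gauss pieces of $\zeta_N(S(u))$, which by the previous paragraph are the $J$-conjugated $\bar\mu$-Gauss pieces at $v = -u-\tfrac{N}{2}$. Reading off the $(i,j)$-entry in the $a$-th $\mu$-block gives the stated formulas for $\zeta_N(D_{a;i,j}(u))$ and $\zeta_N(E_{a;k,l}(u))$: the minus sign in the $E$-formula arises because the $(c, c+1)$-block of the inverse of a block upper unitriangular matrix equals $-1$ times its own $(c, c+1)$-block on adjacent blocks. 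The $F$-formula (equivalently the $B$-formula) follows either by the analogous calculation using $JE^{\bar\mu}(v)^{-1}J$, or by combining with the anti-automorphism $\tau$ of Lemma~\ref{tauimg} and Proposition~\ref{prop:B=C}. The formulas for $H, \wtl H$ and $B$ then follow from those for $D, \wtl D, F$ by substituting the spectral shifts in definitions \eqref{Ha}--\eqref{Bb} and using the identity $\mu_{(a-1)} + \mu_a + \bar\mu_{(n-a)} = N$ to reconcile the resulting arguments.

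The main obstacle I foresee is careful bookkeeping: tracking block and internal index reversals under the $J$-conjugation simultaneously with the spectral shifts $u \leftrightarrow -u-\tfrac{N}{2}$, and handling the swap of row/column dimensions together with the minus sign in the $E$ and $B$ cases, which relate the $\mu_a \times \mu_{a+1}$ blocks of $E^{\bar\mu}$ to the $\mu_{a+1} \times \mu_a$ blocks of the $\mu$-Gauss $F$-piece of $\zeta_N(S(u))$.
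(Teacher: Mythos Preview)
Your approach is correct and is exactly the argument in the references the paper cites (\cite[Prop.~1]{Go07} and \cite[Lem.~3.6]{LWZ23}): the matrix identity $\zeta_N(S(u)) = J\,\wtl S(-u-\tfrac{N}{2})\,J$, $J$-conjugation swapping the $\mu$- and $\bar\mu$-block structures, and uniqueness of Gauss decomposition. One small clarification: the $F$-formula follows directly from identifying $\zeta_N(F^\mu(u))$ with $J(E^{\bar\mu})^{-1}(-u-\tfrac{N}{2})J$, but the passage from the $F$-formula to the $B$-formula genuinely requires the $E=F$ relation (Proposition~\ref{prop:B=C} or equivalently \eqref{e=f n2}), since substituting the spectral shifts alone leaves you with a $C$-type expression rather than a $B$-type one on the $\bar\mu$ side.
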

\begin{proof}
It is proved similarly to \cite[Prop. 1]{Go07} and \cite[Lem. 3.6]{LWZ23}.
\end{proof}

\subsection{Sklyanin determinants in parabolic generators} 
Recall the notations from \S\ref{sec:skldet}. For $k\in\bZ_{>0}$ and tuples $\bm i=(i_1,\ldots,i_k)$ and $\bm j=(j_1,\ldots,j_k)$ of integers from $\{1,\ldots,N\}$, we denote that  
\[
\mathcal S_{\bm i,\bm j}(u):=\mathcal S^{i_1\cdots i_k}_{j_1\cdots j_k}(u).
\]
Denote by $m+\bm i$ the $k$-tuple $(m+i_1,\ldots,m+i_k)$; similar for $m+\bm j$.
\begin{lem}\label{lem:shift}
The homomorphism $\varphi_m:\X_N^+\to \X_{m+N}^+$ sends
$\mathcal S_{\bm i,\bm j}(u)\mapsto \mathcal S_{m+\bm i,m+\bm j}(u)$.
\end{lem}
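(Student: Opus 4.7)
The plan is to exploit the (non-unital) algebra homomorphism $\Phi\colon \End(\bC^N)\to\End(\bC^{m+N})$, $E_{ab}\mapsto E_{m+a,\,m+b}$, whose image is $p\End(\bC^{m+N})p$ with $p:=\sum_{i=1}^{N}E_{m+i,\,m+i}$, and which sends the identity $I_N$ of $\End(\bC^N)$ to $p$. First I will verify the compatibility of $\Phi$ with the basic operators used in the Sklyanin construction. From the explicit formulas $R(u)=1-u^{-1}P$ and, in type AI, $R^{\sf T}(u)=1-u^{-1}\sum_{i,j}E_{ij}\otimes E_{ij}$, one checks directly that $\Phi^{\otimes 2}(R^{(N)}(u))=p^{\otimes 2}R^{(m+N)}(u)p^{\otimes 2}$, with the analogous identity for $R^{\sf T}$. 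Since permutations preserve the subspace $V:=\iota(\bC^N)^{\otimes k}$, where $\iota(e_i)=e_{m+i}$, we also have $\Phi^{\otimes k}(A_k^{(N)})=p^{\otimes k}A_k^{(m+N)}=A_k^{(m+N)}p^{\otimes k}$. Finally, by the very definition of $\varphi_m$, $(\Phi\otimes\varphi_m)(S^{(N)}(u))=p\,S^{(m+N)}(u)\,p$.

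Next I will apply $\Phi^{\otimes k}\otimes\varphi_m$ to the defining equation
\begin{equation*}
A_k^{(N)}\langle S_1^{(N)},\ldots,S_k^{(N)}\rangle=\sum_{\bm a,\bm b\in\{1,\ldots,N\}^k}E_{a_1b_1}\otimes\cdots\otimes E_{a_kb_k}\otimes \mathcal S_{\bm a,\bm b}(u).
\end{equation*}
The right-hand side immediately becomes $\sum_{\bm a,\bm b}E_{m+a_1,m+b_1}\otimes\cdots\otimes E_{m+a_k,m+b_k}\otimes\varphi_m\!\big(\mathcal S_{\bm a,\bm b}(u)\big)$. On the left, the multiplicativity of $\Phi^{\otimes k}$ together with the compatibilities above express the image as a product of $\Phi$-images of $A_k$, each $S_i$ and each $R^{\sf T}_{ij}$, every factor flanked by the projector $p^{\otimes k}$. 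The key step will be to collapse the intermediate projectors into a single outer pair. I plan to do this by first invoking $A_k^{(m+N)}p^{\otimes k}=p^{\otimes k}A_k^{(m+N)}$ to slide $p^{\otimes k}$ past the antisymmetrizer, and then the intertwining relation $A_k\langle S_1,\ldots,S_k\rangle=\langle S_1,\ldots,S_k\rangle A_k$ from \eqref{eq:sky} to push $A_k$ to the right end of the product; this absorbs every intermediate $p^{\otimes k}$ into the two outer ones, yielding
\begin{equation*}
(\Phi^{\otimes k}\otimes\varphi_m)\!\big(A_k^{(N)}\langle S_1^{(N)},\ldots,S_k^{(N)}\rangle\big)=p^{\otimes k}\,A_k^{(m+N)}\langle S_1^{(m+N)},\ldots,S_k^{(m+N)}\rangle\,p^{\otimes k}.
\end{equation*}

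Finally, expanding the right-hand side using the defining equation for Sklyanin minors of $\X_{m+N}^+$ and observing that conjugation by $p^{\otimes k}$ kills every tensor whose row or column index lies outside $\{m+1,\ldots,m+N\}$, one obtains $\sum_{\bm a,\bm b\in\{1,\ldots,N\}^k}E_{m+a_1,m+b_1}\otimes\cdots\otimes E_{m+a_k,m+b_k}\otimes\mathcal S_{m+\bm a,m+\bm b}(u)$. Matching this with the earlier expansion then forces $\varphi_m(\mathcal S_{\bm i,\bm j}(u))=\mathcal S_{m+\bm i,m+\bm j}(u)$. The main obstacle will be a careful justification of the telescoping step; the intertwining $A_k\langle\cdots\rangle=\langle\cdots\rangle A_k$ is the pivotal input here, because it effectively lets the antisymmetrizer project once onto $\Lambda^k V$ rather than between each pair of consecutive factors in the product.
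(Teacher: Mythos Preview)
Your approach via the corner embedding $\Phi\colon \End(\bC^N)\hookrightarrow\End(\bC^{m+N})$ is natural, and your compatibility checks for $R^{\sf T}$, $A_k$ and $S$ are all correct. The genuine gap is exactly where you flag it: the telescoping step. You assert that the intertwining relation $A_k\langle S_1,\ldots,S_k\rangle=\langle S_1,\ldots,S_k\rangle A_k$ lets you collapse the intermediate copies of $P=p^{\otimes k}$ in
\[
A_k^{(m+N)}P\,S_1\,P\,R^{\sf T}_{12}\,P\,\cdots\,P\,S_k\,P,
\]
but this relation is an identity for the product \emph{without} projectors. Commuting $A_k$ with $P$ and then trying to push $A_k$ to the right only gives back the image under $\Phi^{\otimes k}\otimes\varphi_m$ of the \emph{small-space} intertwining, namely $A_kP\,X=X\,A_kP$ with $X=PS_1P\cdots PS_kP$; the internal $P$'s survive intact. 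Concretely, neither $S_i$ nor $R^{\sf T}_{ij}$ commutes with $P$ (e.g.\ $Q^{(m+N)}(p\otimes p)\ne (p\otimes p)Q^{(m+N)}$), so there is no algebraic mechanism by which moving $A_k$ absorbs them. What your identity really requires is that in the matrix coefficient $\langle e_{m+\bm i}\,|\,A_k\langle S_1,\ldots,S_k\rangle^{(m+N)}\,|\,e_{m+\bm j}\rangle$ every nonzero term in the full expansion already has all intermediate indices $>m$; this is true (and equivalent to the lemma), but it needs a separate argument tracking indices through the specific ordering $S_1R^{\sf T}_{12}\cdots R^{\sf T}_{1k}S_2\cdots S_k$, not just the intertwining.

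The paper bypasses this difficulty entirely. It invokes the recurrence relations for Sklyanin minors established in the proof of \cite[Thm.~2.7.2]{Mol07} (cf.\ also \cite[Prop.~2.13.10]{Mol07}), which express $\mathcal S_{\bm i,\bm j}(u)$ as a polynomial in $s_{a,b}(u')$ and Sklyanin minors of smaller size, with all indices $a,b$ drawn from $\{i_1,\ldots,i_k,j_1,\ldots,j_k\}$. Since $\varphi_m(s_{a,b}(u'))=s_{m+a,m+b}(u')$ and the recurrence has the same form in $\X_N^+$ and $\X_{m+N}^+$, an induction on $k$ gives the claim immediately. If you want to salvage your approach, the cleanest fix is to prove the vanishing of the ``small-index'' contributions directly---for instance by an induction that mirrors exactly this recurrence.
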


\begin{proof}
This follows from the recurrence relations for Sklyanin minors established in \cite[proof of Thm.~2.7.2]{Mol07}, cf. \cite[Prop.~2.13.10]{Mol07}.
\end{proof}

\begin{prop}\label{prop:cind}
If $\g_N=\mathfrak{o}_N$, then we have $\mathcal C_k(u)=\sfd_1(u)\sfd_2(u-1)\cdots \sfd_k(u-k+1)$ in $\Y_N^+$.
\end{prop}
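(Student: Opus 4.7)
My plan is to prove the identity by induction on $k$, reducing the general case to the known formula for the full Sklyanin determinant of a twisted Yangian of type AI.

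The base case $k=1$ is immediate: $\mathcal C_1(u) = \mathcal S^1_1(u) = s_{11}(u) = D_{1;1,1}(u) = \sfd_1(u)$.

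For the inductive step, I would first establish that $\mathcal C_k(u)$ in fact lies in the image of the natural embedding $\jmath : \X_k^+ \hookrightarrow \X_N^+$ (with $m = N-k$), and coincides with the full Sklyanin determinant $\sdet S^{(k)}(u)$ of $\X_k^+$. This is seen from the defining expression $A_k \langle S_1,\ldots, S_k\rangle$: the antisymmetrizer $A_k$ acting on $(\bC^N)^{\otimes k}$, combined with extracting the coefficient of $e_{11}\otimes\cdots\otimes e_{kk}$ (taken antisymmetrically over the $k$ tensor slots), forces all matrix indices that appear in the product to be permutations of $\{1,\ldots,k\}$. Together with the compatibility of the $R^{\sf T}$-matrices restricted to this index range, this shows that only the entries $s_{ij}(u)$ with $i,j\in\{1,\ldots,k\}$ can contribute, which is exactly the content of being in $\jmath(\X_k^+)$.

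Having made this reduction, it suffices to prove the target identity inside $\X_k^+$, i.e.\ that $\sdet S(u) = \sfd_1(u)\sfd_2(u-1)\cdots\sfd_k(u-k+1)$ for the orthogonal twisted Yangian associated with $\mu=(1^k)$. By formula \eqref{sdet=qdet} combined with $\gamma_k^+(u)=1$ from \eqref{gammau}, we may express $\sdet S(u) = \qdet T(u)\,\qdet T(-u+k-1)$ inside $\rY(\gl_k)$ under the embedding \eqref{embed2}, which in type AI takes the form $S(u)=T^t(-u)T(u)$ since $G=I_k$. Invoking the standard factorization $\qdet T(u) = t_1(u)t_2(u-1)\cdots t_k(u-k+1)$ (where $t_i(u)$ are the diagonal Gauss entries of $T$), together with the analogous factorization for $\qdet T(-u+k-1)$, and comparing with the Gauss decomposition $S(u)=F(u)D(u)E(u)$, we extract explicit expressions for the diagonal entries $\sfd_a(u)$ in terms of the diagonal Gauss entries of $T(u)$ and $T^t(-u)$. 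A telescoping collation of these formulas yields the desired product identity.

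The main technical obstacle is the bookkeeping in the last step: one must verify that the Gauss diagonals of the product $T^t(-u)T(u)$ factor as a shifted product of Gauss diagonals of $T$, with exactly the right shift to match $\prod_{a=1}^k \sfd_a(u-a+1)$. This can be done directly by a quasideterminant computation using the block structure of the product, or alternatively via the recurrence for Sklyanin minors (cf.\ Molev \cite{Mol07}) applied inductively so that $\mathcal C_k(u) = \mathcal C_{k-1}(u)\sfd_k(u-k+1)$, from which the closed form follows by induction.
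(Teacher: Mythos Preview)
Your reduction step---observing that $\mathcal C_k(u)$ only involves $s_{ij}(u)$ with $1\lle i,j\lle k$ and hence coincides with the full Sklyanin determinant of $\X_k^+$ under the embedding $\jmath$---is exactly what the paper does (it writes this as the map $\iota:\X_k^+\to\X_N^+$, $s_{ij}(u)\mapsto s_{ij}(u)$). The paper then simply cites \cite[Thm.~2.12.1]{Mol07} for the identity $\sdet S(u)=\sfd_1(u)\cdots\sfd_k(u-k+1)$ inside $\X_k^+$, rather than rederiving it.

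Your proposed rederivation via $\sdet S(u)=\qdet T(u)\,\qdet T(-u+k-1)$ and the factorization of $\qdet$ has a genuine gap at the step ``we extract explicit expressions for the diagonal entries $\sfd_a(u)$ in terms of the diagonal Gauss entries of $T(u)$ and $T^t(-u)$.'' The Gauss diagonals of the product $S(u)=T^t(-u)T(u)$ are \emph{not} obtained by simply pairing up Gauss diagonals of the factors: already $\sfd_1(u)=s_{11}(u)=\sum_p t_{p1}(-u)t_{p1}(u)$ is a sum over all $p$, not the single product $t_1(-u)t_1(u)$. So there is nothing to ``telescope''; establishing how each $\sfd_a$ relates to the $t_a$'s is precisely the nontrivial content of Molev's theorem, and your sketch does not supply it. Your alternative---the recurrence $\mathcal C_k(u)=\mathcal C_{k-1}(u)\,\sfd_k(u-k+1)$ from the Sklyanin-minor expansion---is in fact the route taken in \cite{Mol07}, so citing that result (as the paper does) is the cleanest way to close the argument.
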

\begin{proof}
For $k=N$, the claim follows from \cite[Thm.~2.12.1]{Mol07}. If $k<N$, then it reduces to the previous case by applying the homomorphism $\iota:\X_k^+\to \X_N^+$, $s_{ij}(u)\mapsto s_{ij}(u)$. 
\end{proof}

\begin{prop}\label{prop:anti}
Let $\bm i=(i_1,\ldots,i_k)$ and $\bm j=(j_1,\ldots,j_k)$ be $k$-tuples of distinct integers from $\{1,\ldots,N\}$. Let $\bm i'=(i_{k+1},\ldots,i_N)$ and $\bm j'=(j_{k+1},\ldots,j_N)$ be such that
$
\{i_1,\dots,i_N\}=\{j_1,\dots,j_N\}=\{1,\dots,N\}.
$
Let $\xi$ denote the sign of the permutation $(i_1,\dots,i_N)\mapsto (j_1,\dots,j_N)$. Then we have
\[
\omega_N\big(\mathcal S_{\bm i,\bm j}(u)\big) =\xi\big(\sdet\, S(-u+\tfrac{N}2-1)\big)^{-1}\mathcal S_{\bm j',\bm i'}(-u+\tfrac{N}2-1).
\]
\end{prop}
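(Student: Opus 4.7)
The plan is to view this identity as a noncommutative Cramer's rule for Sklyanin minors of the inverse matrix $\widetilde S(u) = S(u)^{-1}$, analogous to the classical identity $(A^{-1})_{ij} = \pm (\det A)^{-1} (\text{complementary minor of } A)$. Since $\omega_N(S(u)) = \widetilde S(-u - N/2)$, applying $\omega_N$ to the definition of $\mathcal S_{\bm i, \bm j}(u)$ produces a Sklyanin-type $k \times k$ minor of $\widetilde S$ evaluated at a shifted spectral parameter. The heart of the argument is therefore an identity expressing any such minor of $\widetilde S$ as a scalar multiple of the complementary $(N-k) \times (N-k)$ Sklyanin minor of $S$, divided by $\sdet S$.

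Concretely, I would first establish a Laplace-type factorization
\[
A_N \langle S_1, \ldots, S_N \rangle \;=\; (A_k \otimes A_{N-k})\, \langle S_1, \ldots, S_k \rangle\, \langle S_{k+1}, \ldots, S_N \rangle\, (A_k \otimes A_{N-k})
\]
by writing $A_N$ as a shuffle sum of $(A_k \otimes A_{N-k})$ and using the fusion properties of the $R^{\sf T}$-factors (they commute through antisymmetrizers at the relevant spectral points). Reading off tensor components yields a block identity of the shape
\[
\sum_{\bm i, \bm j} \xi\, \mathcal S_{\bm i, \bm j}(u)\, \mathcal S_{\bm i', \bm j'}(u - k) \;\text{acts as}\; (\text{scalar prefactor})\, \sdet S(u)
\]
on the image of $A_k \otimes A_{N-k}$, where $\bm i', \bm j'$ are the complementary tuples and $\xi$ is the shuffle sign. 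Inverting this block identity gives the desired cofactor formula: a $k$-Sklyanin minor of $\widetilde S(v)$ equals $\xi \cdot (\sdet S(v + \text{shift}))^{-1}$ times the complementary $(N-k)$-Sklyanin minor of $S$ evaluated at a shifted argument. Substituting $v = -u - N/2$ and carefully collecting the spectral shifts then produces exactly $-u + N/2 - 1$ on the right-hand side.

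As a sanity check, the case $k = N$ forces $\bm i' = \bm j' = \emptyset$ and $\xi = \operatorname{sgn}(\bm i \mapsto \bm j)$, recovering the skew-symmetry \eqref{eq:sign-sdet} together with the known transformation rule for $\omega_N$ on $\sdet S(u)$; the case $k = 1$ recovers the classical inverse formula for $\widetilde s_{ij}(u)$ in terms of $(N-1) \times (N-1)$ Sklyanin minors. The main obstacle will be the careful bookkeeping of spectral-parameter shifts in the Laplace identity. Unlike in the untwisted case (Molev, Ch.~1.11), each $S_i$ is interleaved with $R^{\sf T}$-factors depending on sums $-u_i - u_j$, so the shifts in the cofactor identity depend on both $k$ and $N$; confirming that they combine into exactly $-u + N/2 - 1$ is the delicate point. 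Should the direct Laplace argument become unwieldy, a fallback is induction on $N - k$ from the known $k = N$ case, using Lemma \ref{lem:shift} together with the compatibility $\zeta_N = \rho_N \circ \omega_N$ to propagate the identity.
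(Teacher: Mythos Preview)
Your overall intuition---that this is a noncommutative Cramer's rule for Sklyanin minors---is exactly right, but there is a genuine gap in the core step. The proposed factorization
\[
A_N \langle S_1,\ldots,S_N\rangle \;=\; (A_k\otimes A_{N-k})\,\langle S_1,\ldots,S_k\rangle\,\langle S_{k+1},\ldots,S_N\rangle\,(A_k\otimes A_{N-k})
\]
is not correct as stated: the left side contains all the cross factors $R^{\sf T}_{i,j}(-u_i-u_j)$ with $i\le k<j$, and these do \emph{not} fuse to scalars through the partial antisymmetrizer. Unlike the untwisted $R(u_i-u_j)$'s (which become projectors at integer-spaced $u_i-u_j$), the $R^{\sf T}$'s depend on the \emph{sums} $u_i+u_j$, which are not controlled by $A_N$. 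Even granting some Laplace-type expansion, your ``inverting this block identity'' is also underspecified: a single expansion of $\sdet S(u)$ does not by itself yield cofactor formulas for minors of $\widetilde S$; you would need orthogonality relations, which in the twisted setting are an additional nontrivial input.

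The paper avoids both issues by using a more specific identity (from the proof of \cite[Thm.~2.13.9]{Mol07}) in which the cross $R^{\sf T}$-factors are kept on one side and the ``complementary'' block is expressed directly in terms of $S^\circ(u):=\omega_N(S(u))=\widetilde S(-u-\tfrac N2)$:
\[
A_N\,\langle S_1,\ldots,S_k\rangle\,\prod_{i=1}^{k}(R^{\sf T}_{i,k+1}\cdots R^{\sf T}_{i,N})
\;=\;
A_N\,\sdet S(u)\; S_N^\circ R^\circ_{N-1,N} S_{N-1}^\circ\cdots S_{k+1}^\circ,
\]
with $u_i^\circ=-u_i-\tfrac N2$. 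Equating the $(e_{i_1 j_1}\otimes\cdots\otimes e_{i_k j_k}\otimes e_{i_N i_N}\otimes\cdots\otimes e_{i_{k+1} i_{k+1}})$-coefficients immediately gives $\mathcal S_{\bm i,\bm j}(u)=\xi\,\sdet S(u)\,\omega_N\big(\mathcal S_{\bm j',\bm i'}(-u+\tfrac N2-1)\big)$, and the proposition follows by substitution. In short, rather than eliminating the cross $R^{\sf T}$'s, the correct move is to absorb them into the $S^\circ$-side; this makes the spectral-shift bookkeeping you worried about automatic and reduces the proof to a one-line coefficient extraction.
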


\begin{proof}
By \cite[Proof of Thm. 2.13.9]{Mol07}, we have
\begin{align*}
A_N\langle S_1,\ldots,S_k\rangle &\mathop{\overrightarrow\prod}\limits_{i=1,\ldots,k}(R^{\sf T}_{i,k+1}\cdots R_{iN}^{\sf T})\\
=&A_N\,\sdet\,S(u)\, S_N^\circ R_{N-1,N}^{\circ}S_{N-1}^\circ\cdots S_{k+2}^\circ R_{k+1,N}^\circ \cdots R_{k+1,k+2}^\circ S_{k+1}^\circ,
\end{align*}
where $S^\circ(u)=\omega_N(S(u))$, $u_i=u-i+1$, and
\[
 u_i^\circ=-u_i-\tfrac{N}2,\qquad S_i^\circ=S_i^\circ(u_i^\circ),\qquad R_{ij}^\circ =R^{\sf T}_{ij}(-u_i^\circ-u_j^\circ).
\]
Equating the coefficients of the element
\[
e_{i_1j_1}\otimes \cdots\otimes e_{i_k,j_k}\otimes e_{i_Ni_N}\otimes \cdots\otimes e_{i_{k+1}i_{k+1}}
\]
in the above identity and using \eqref{eq:sign-sdet}, we deduce that
\[
\mathcal S_{\bm i,\bm j}(u)=\xi\ \sdet\,S(u)\ \omega_N\big(\mathcal S_{\bm j',\bm i'}(-u+\tfrac{N}2-1)\big).
\]
The claim follows by suitable substitutions.
\end{proof}

Now we are able to describe the images of the Sklyanin minors under the homomorphism $\psi_m$.
\begin{lem}\label{lem:shiftt}
Let $\bm i,\bm j$ be $k$-tuples of distinct integers from $\{1,\ldots,N\}$. Then
\[
\psi_m\big(\mathcal S_{\bm i,\bm j}(u)\big) =\mathcal C_m(u+\tfrac{m}2)^{-1}\mathcal S_{m\#\bm i,m\#\bm j}(u+\tfrac{m}2),
\]
where $m\#\bm i$ denotes the $(m+k)$-tuple $(1,\ldots,m,m+i_1,\ldots,m+i_k)$.
\end{lem}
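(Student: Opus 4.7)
The plan is to unwind the defining decomposition $\psi_m = \omega_{m+N}\circ\varphi_m\circ\omega_N$ and apply Proposition~\ref{prop:anti} on each side. Set $v := -u + \tfrac{N}{2} - 1$, and let $\bm i',\bm j'$ denote the complementary $(N-k)$-tuples to $\bm i,\bm j$ in $\{1,\ldots,N\}$, with $\xi$ the sign of the permutation $(i_1,\ldots,i_N)\mapsto (j_1,\ldots,j_N)$. First I would apply Proposition~\ref{prop:anti} in $\X_N^+$ to rewrite
\[
\omega_N\big(\mathcal S_{\bm i,\bm j}(u)\big) = \xi\,\big(\sdet S(v)\big)^{-1}\mathcal S_{\bm j',\bm i'}(v),
\]
and then apply Lemma~\ref{lem:shift} to obtain
\[
\varphi_m\omega_N\big(\mathcal S_{\bm i,\bm j}(u)\big) = \xi\,\mathcal S_{(m+1,\ldots,m+N),(m+1,\ldots,m+N)}(v)^{-1}\,\mathcal S_{m+\bm j',\,m+\bm i'}(v),
\]
since $\varphi_m(\sdet S(u))$ is by definition the Sklyanin minor of $\X_{m+N}^+$ on the block of indices $m+1,\ldots,m+N$ (which in general is not the full $\sdet$ of $\X_{m+N}^+$).

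Next I would apply Proposition~\ref{prop:anti} twice inside $\X_{m+N}^+$: once to $\mathcal S_{(m+1,\ldots,m+N),(m+1,\ldots,m+N)}(v)$, whose complementary tuple is $(1,\ldots,m)$ on both sides with sign $+1$, yielding
\[
\omega_{m+N}\big(\mathcal S_{(m+1,\ldots,m+N),(m+1,\ldots,m+N)}(v)\big) = \big(\sdet S(u+\tfrac{m}{2})\big)^{-1}\mathcal C_m(u+\tfrac{m}{2}),
\]
using $-v + \tfrac{m+N}{2} - 1 = u + \tfrac{m}{2}$; and once to $\mathcal S_{m+\bm j',\,m+\bm i'}(v)$, whose complementary tuples in $\{1,\ldots,m+N\}$ are precisely $m\#\bm i$ and $m\#\bm j$ respectively, yielding
\[
\omega_{m+N}\big(\mathcal S_{m+\bm j',\,m+\bm i'}(v)\big) = \xi''\,\big(\sdet S(u+\tfrac{m}{2})\big)^{-1}\mathcal S_{m\#\bm i,\,m\#\bm j}(u+\tfrac{m}{2}),
\]
where $\xi''$ is the sign of $(m+j_{k+1},\ldots,m+j_N,1,\ldots,m,m+j_1,\ldots,m+j_k) \mapsto (m+i_{k+1},\ldots,m+i_N,1,\ldots,m,m+i_1,\ldots,m+i_k)$. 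Since the middle $m$ entries are fixed and the outer shuffle is obtained by applying one common cyclic rotation to both sides of the original permutation $(i_1,\ldots,i_N)\mapsto (j_1,\ldots,j_N)$, the sign is unchanged, so $\xi'' = \xi$.

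Combining and using that $\sdet S(w)$ is central (Theorem~\ref{freegen-center}) to cancel the two factors of $\sdet S(u+\tfrac{m}{2})^{\pm 1}$ cleanly, together with $\xi^2 = 1$, I obtain
\[
\psi_m\big(\mathcal S_{\bm i,\bm j}(u)\big) = \mathcal C_m(u+\tfrac{m}{2})^{-1}\mathcal S_{m\#\bm i,\,m\#\bm j}(u+\tfrac{m}{2}),
\]
as claimed. The main bookkeeping obstacle is identifying the complementary tuples under $\varphi_m$ correctly (the shift by $m$ and the insertion of $(1,\ldots,m)$) and checking that the two sign factors coming from the two uses of Proposition~\ref{prop:anti} cancel; centrality of the Sklyanin determinant is what makes the intermediate $\sdet$-factors disappear without commutation issues.
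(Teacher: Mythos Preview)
Your proposal is correct and follows exactly the same approach as the paper: decompose $\psi_m = \omega_{m+N}\circ\varphi_m\circ\omega_N$ and apply Proposition~\ref{prop:anti} together with Lemma~\ref{lem:shift}. The paper's proof is a one-line sketch pointing to precisely these two ingredients, and you have correctly filled in the bookkeeping of complementary tuples, the spectral shift $-v + \tfrac{m+N}{2} - 1 = u + \tfrac{m}{2}$, and the sign cancellation $\xi'' = \xi$.
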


\begin{proof}
Note that $\psi_m=\omega_{m+N}\circ \varphi_m\circ \omega_N$. The lemma follows from applying Proposition \ref{prop:anti} along with Lemma \ref{lem:shift}.
\end{proof}

The next result is a generalization of \cite[Thm. 2.12.1]{Mol07} and \cite[Thm. 6.1]{Br16}; also cf. \cite[Thm. 8.6]{BK05}, \cite[Prop. 3.2]{CH23}.

\begin{prop} \label{sdetdecomp}
Let $\mu=(\mu_1,\ldots,\mu_n)$ be a composition of $N$ with length $n$; recall $\mu_{(i)}$ from \eqref{mua}. Then, we have
\[
\sdet\,S(u)=\sdet\, D_1(u-\mu_{(0)})\, \sdet\, D_2(u-\mu_{(1)})\cdots \, \sdet\, D_n(u-\mu_{(n-1)})
\]
and, for $1\lle i\lle n$,
\[
\sdet \, {}^\mu D_i(u- \mu_{(i-1)} )=\prod_{k=\mu_{(i-1)}+1}^{\mu_{(i)}}\sfd_k(u-k+1).
\]
\end{prop}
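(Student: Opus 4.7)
The plan is to derive the second identity first; the first then follows by taking the product over $a=1,\ldots,n$ and applying Proposition~\ref{prop:cind} with $k=N$ to collapse $\prod_{k=1}^N \sfd_k(u-k+1)$ into $\sdet\,S(u)$.

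For the second identity, the approach is to work inside the subalgebra of $\X_N^+$ generated by the entries of $D_a(u)$. By Lemma~\ref{psi lem}, these entries are the images under the shift homomorphism $\psi_{\mu_{(a-1)}}$ of the corresponding entries of the top-left $\mu_a\times\mu_a$ block of the $S$-matrix in $\X_{N-\mu_{(a-1)}}^+$, and hence (via the further inclusion $\jmath$) they satisfy the quaternary relations of the extended twisted Yangian $\X_{\mu_a}^+$. Consequently $\sdet D_a(u)$ is well-defined, and Proposition~\ref{prop:cind} applied in this subalgebra with $k=\mu_a$ yields
\[\sdet D_a(u)=\sfd_1^{D_a}(u)\,\sfd_2^{D_a}(u-1)\cdots\sfd_{\mu_a}^{D_a}(u-\mu_a+1),\]
where $\sfd_k^{D_a}$ denotes the $k$-th Gauss diagonal of the $\mu_a\times\mu_a$ matrix $D_a$ for the fine composition $(1^{\mu_a})$.

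The crucial identification is $\sfd_k^{D_a}=\sfd_{\mu_{(a-1)}+k}$ in $\X_N^+$ for $1\lle k\lle \mu_a$, which I establish via the uniqueness of Gauss decomposition. Starting from $S=FDE$ for the composition $\mu$ and further Gauss-decomposing each block as $D_a=F^{(a)}H^{(a)}E^{(a)}$ for the composition $(1^{\mu_a})$, the substitution produces a decomposition of $S$ whose middle factor is the diagonal matrix $\mathrm{diag}(\sfd_1^{D_1},\ldots,\sfd_{\mu_1}^{D_1},\sfd_1^{D_2},\ldots,\sfd_{\mu_n}^{D_n})$, while the outer factors $F\cdot\mathrm{diag}(F^{(1)},\ldots,F^{(n)})$ and $\mathrm{diag}(E^{(1)},\ldots,E^{(n)})\cdot E$ remain lower/upper unitriangular with respect to the fine composition $(1^N)$ (since a product of a block-lower-unitriangular matrix with a fine-block-diagonal lower-unitriangular matrix is again fine lower unitriangular, and symmetrically on the other side). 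This is therefore the Gauss decomposition of $S$ for $(1^N)$, and uniqueness forces the claimed identification $\sfd_k^{D_a}=\sfd_{\mu_{(a-1)}+k}$. Substituting $u\mapsto u-\mu_{(a-1)}$ and reindexing the product by $j=\mu_{(a-1)}+k$ then delivers the desired second identity.

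The main technical ingredient is the uniqueness-of-Gauss-decomposition argument that identifies $\sfd_k^{D_a}$ with $\sfd_{\mu_{(a-1)}+k}$; all other steps are routine applications of Proposition~\ref{prop:cind} together with elementary manipulations of the spectral parameter.
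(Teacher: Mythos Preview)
Your approach is correct and takes a genuinely different route from the paper. The paper pushes Sklyanin minors through $\psi_{\mu_{(a-1)}}$ via Lemma~\ref{lem:shiftt} (which rests on Proposition~\ref{prop:anti}) to obtain the intermediate identity $\sdet D_a(u-\mu_{(a-1)})=\mathcal{C}_{\mu_{(a-1)}}(u)^{-1}\mathcal{C}_{\mu_{(a)}}(u)$; the first statement then follows by telescoping, and the second from Proposition~\ref{prop:cind} together with the commutativity $[\sfd_i(u),\sfd_j(v)]=0$ imported from \cite{LWZ23}. You instead use the elementary refinement-of-Gauss-decomposition to identify $\sfd_k^{D_a}=\sfd_{\mu_{(a-1)}+k}$ and apply Proposition~\ref{prop:cind} directly inside each diagonal block. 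This bypasses both the Sklyanin-minor machinery and the commutativity input, since your product over $a$ already assembles $\prod_{k=1}^N\sfd_k(u-k+1)$ in the correct increasing order. The paper's route, on the other hand, yields the $\mathcal{C}$-ratio identity as a byproduct, which is what transfers to type AII (cf.\ the remark following the proposition).

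One step needs a little more care than you indicate. By Lemma~\ref{psi lem} it is $H_a(u)=D_a(u-\tfrac{\mu_{(a-1)}}{2})$, not $D_a(u)$, that is the image of $s_{ij}(u)$ under a homomorphism from $\X_{\mu_a}^+$; the entries of $D_a(u)$ themselves satisfy only a \emph{shifted} form of \eqref{qua} (the substitution $u\mapsto u+c$ is not an automorphism of $\X_{\mu_a}^+$ because the quaternary relations involve $u+v$ and not just $u-v$). Hence Proposition~\ref{prop:cind} should be applied to $H_a$, giving $\sdet H_a(u)=\prod_{k=1}^{\mu_a}\sfd_{\mu_{(a-1)}+k}(u-k+1-\tfrac{\mu_{(a-1)}}{2})$, and the correct substitution is $u\mapsto u-\tfrac{\mu_{(a-1)}}{2}$ (using $H_a(u-\tfrac{\mu_{(a-1)}}{2})=D_a(u-\mu_{(a-1)})$) rather than $u\mapsto u-\mu_{(a-1)}$. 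Tracking this shift correctly still delivers the second identity, so the argument stands.
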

\begin{proof}
Fix $1\lle i\lle n$ and let $\bar\mu:=(\mu_{i},\mu_{i+1},\ldots,\mu_n)$. It follows from Lemma \ref{psi lem}, we have
\[
{}^\mu D_i(u-\mu_{(i-1)})=\psi_{\mu_{(i-1)}}({}^{\bar \mu}D_1(u-\tfrac{\mu_{(i-1)}}{2})),
\]
where ${}^{\bar \mu}D_1(u-\mu_{(i-1)})\in \X^+_{N-\mu_{(i-1)}}$. Note that ${}^{\bar \mu}D_1(u)={}^{\bar \mu}S_{1,1}(u)$. Setting $\bm i=\bm j=(1,\dots,\mu_i)$ and applying Lemma \ref{lem:shiftt}, we have
\[
\sdet\,{}^\mu D_i(u-\mu_{(i-1)})= \psi_{\mu_{(i-1)}}(\sdet\,{}^{\bar \mu}S_{1,1}(u-\tfrac{\mu_{(i-1)}}{2}))=\mathcal C_{\mu_{(i-1)}}(u)^{-1}\mathcal C_{\mu_{(i)}}(u).
\]
Now the first statement easily follows from the above as $\sdet S(u)=\mathcal C_N(u)$.

It is known from \cite{LWZ23} that $[d_i(u),d_j(v)]=0$ for all $1\lle i,j\lle N$. Thus, it follows from Proposition \ref{prop:cind} that
\[
\sdet \, {}^\mu D_i(u-\mu_{(i-1)})=\prod_{k=\mu_{(i-1)}+1}^{\mu_{(i)}}\sfd_k(u-k+1).\qedhere
\]
\end{proof}
\begin{rem}
Note that the first equality also holds for type AII with the same proof provided each part $\mu_i$ in the composition $\mu$ is even; cf. \cite[\S7.6]{MNO96}. An analogue of the second equality for type AII can be deduced from \cite[Thm.~4.1.7]{Mol07}.
\end{rem}

\section{Parabolic presentations of twisted Yangians}
\label{sec:parabolicI}

In this section, we formulate the parabolic presentation of $\Y_N^\pm$ associated to an arbitrary (assuming even for AII) composition of $N$. We outline the proof of the main result for type AI, and postpone some lengthy 2-block and 3-block verification to the subsequent Sections \ref{sec:lower2} and \ref{sec:lower3}. The proof for type AII is similar, but some modifications are required which are discussed in Section \ref{sec:pfAII}.

\subsection{The formulation}

Let $\mu=(\mu_1,\ldots,\mu_n)$ be any composition of $N$, for type AII we assume that each $\mu_i$ is even. Recall from Corollary~\ref{gen:cor} and Remark~\ref{a2gen} that we have a generating set $\{H_{a;i,j}^{(r)},\wtl H_{a;i,j}^{(r)}, B_{a;i,j}^{(r)}\}$ for the twisted Yangian $\Y_N^\pm$.

\begin{thm}\label{mainthm}
Let $\mu=(\mu_1,\ldots,\mu_n)$ be a composition of $N$, where for type AII we assume that each $\mu_i$ is even. The twisted Yangian $\Y_N^\pm$ is generated by elements 
$$
\{H_{a;i,j}^{(r)},\wtl H_{a;i,j}^{(r)}\}_{1\lle a\lle n,1\lle i,j,\lle \mu_a,r\gge 0},
\qquad 
\{B_{a;i,j}^{(r)}\}_{1\lle a<n,1\lle i\lle \mu_{a+1},1\lle j\lle \mu_a,r>0}
$$  
subject to only the following relations,
\begin{align}
H_{a;i,j}^{(0)}&=\delta_{ij},\qquad H_{1;1,1}^{(2r-1)}+H_{1;1',1'}^{(2r-1)}=0,\label{pr1}\\
\sum_{p=1}^{\mu_a}\sum_{t=0}^r H_{a;i,p}^{(t)}\wtl H_{a;p,j}^{(r-t)}&=\delta_{r0}\delta_{ij},\label{pr2}\\
[H_{a;i,j}^{(r)},H_{b;k,l}^{(s)}] &= \delta_{ab}\Big(\sum_{t=0}^{r-1}\big(H_{a;k,j}^{(r-1-t)}H_{a;i,l}^{(s+t)}-H_{a;k,j}^{(s+t)}H_{a;i,l}^{(r-1-t)}\big) \label{pr3}\\
&\quad -\sum_{t=0}^{r-1}(-1)^t\big(\theta_{k}\theta_{j'} H_{a;i,k'}^{(r-1-t)}H_{a;j',l}^{(s+t)}-\theta_i\theta_{l'}H_{a;k,i'}^{(s+t)}H_{a;l',j}^{(r-1-t)} \big) \notag\\
&\quad +\sum_{t=0}^{\lfloor r/2\rfloor-1}\theta_i\theta_{j'}\big(H_{a;k,i'}^{(r-2-2t)}H_{a;j',l}^{(s+2t)}- H_{a;k,i'}^{(s+2t)}H_{a;j',l}^{(r-2-2t)}\big)\Big), \notag\\
[H_{a;i,j}^{(r)}, B_{b;k,l}^{(s)}] & = \delta_{ab}\Big(\sum_{p=1}^{\mu_a}\sum_{m=0}^{r-1}\sum_{t=0}^m (-1)^m\delta_{jl'}\theta_{l'}\theta_p{m \choose t}\Big(\frac{\mu_a}{2}\Big)^{m-t} H_{a;i,p}^{(r-m-1)}B_{a;k,p'}^{(s+t)}  \label{pr4}\\
&\qquad ~~ -\sum_{p=1}^{\mu_a}\sum_{m=0}^{r-1}\sum_{t=0}^m (-1)^{m-t}\delta_{il}{m \choose t}\Big(\frac{\mu_a}{2}\Big)^{m-t}B_{a;k,p}^{(s+t)}H_{a;p,j}^{(r-m-1)}\Big)\notag\\
&\quad  +\delta_{a,b+1}\Big(\sum_{m=0}^{r-1} (-1)^{m+1}\theta_{j'}\theta_k H_{a;i,k'}^{(r-1-m)}B_{b;j',l}^{(s+m)} + \sum_{m=0}^{r-1} B_{b;i,l}^{(s+m)}H_{a;k,j}^{(r-1-m)}\Big),\notag\\
 [B_{a;i,j}^{(r)},B_{b;k,l}^{(s)}]&=0 ,\qquad \text{ if } |a-b|>1 \text{ or if }b=a+1, \text{ and } i\ne l,\label{pr5}\\
  [B_{a;i,j}^{(r)},B_{a;k,l}^{(s)}]
 &=\sum_{t=1}^{r-1} B_{a;k,j}^{(r+s-1-t)} B_{a;i,l}^{(t)}  -  \sum_{t=1}^{s-1} B_{a;k,j}^{(r+s-1-t)} B_{a;i,l}^{(t)}  
-(-1)^{r}\theta_{i}\theta_j Z_{a;j',l,k,i'}^{(r+s-1)}, \label{pr6}\\
[B_{a;i,j}^{(r+1)},B_{a+1;k,l}^{(s)}]&= [B_{a;i,j}^{(r)},B_{a+1;k,l}^{(s+1)}] + \frac{\mu_{a+1}}{2}[B_{a;i,j}^{(r)},B_{a+1;k,l}^{(s)}]-\delta_{il}\sum_{q=1}^{\mu_{a+1}} B_{a;q,j}^{(r)}B_{a+1;k,q}^{(s)},\label{pr7}
\end{align}
and the Serre relations
\begin{align}
&\big[[B_{a;i,j}^{(s)},B_{a+1;k,l}^{(r_1)}],B_{a+1;f,g}^{(r_2)}\big]+\big[[B_{a;i,j}^{(s)},B_{a+1;k,l}^{(r_2)}],B_{a+1;f,g}^{(r_1)}\big]\label{pr-1}\\
=&\begin{cases}
\delta_{il}((-1)^{r_1}+(-1)^{r_2})\theta_{k}\theta_l[Z_{a+1;l',g,f,k'}^{(r_1+r_2-1)},B_{a;i,j}^{(s)}], \quad \text{ if }\mu_{a+1}>1,\\[2mm]
((-1)^{r_1}+(-1)^{r_2})\sum\limits_{p=1}^{r_1+r_2-1}\sum\limits_{q=0}^{r_1+r_2-1-p}(-1)^{p-1} 2^{-q}{p+q-1\choose q} B_{a;i,j}^{(p+s-1)}Z_{a+1;l,l,f,k}^{(r_1+r_2-1-p-q)},~\text{otherwise},
\end{cases}\notag\\[2mm]
&\big[[B_{a+1;i,j}^{(s)},B_{a;k,l}^{(r_1)}],B_{a;f,g}^{(r_2)}\big]+\big[[B_{a+1;i,j}^{(s)},B_{a;k,l}^{(r_2)}],B_{a;f,g}^{(r_1)}\big]\label{pr-2}\\
=&\begin{cases}
\delta_{jk}((-1)^{r_1}+(-1)^{r_2})\theta_{k}\theta_l[Z_{a;l',g,f,k'}^{(r_1+r_2-1)},B_{a+1;i,j}^{(s)}], \quad \text{ if }\mu_{a+1}>1,\\[2mm]
((-1)^{r_1}+(-1)^{r_2})\sum\limits_{p=1}^{r_1+r_2-1}\sum\limits_{q=0}^{r_1+r_2-1-p}(-1)^{q} 2^{-q}{p+q-1\choose q} B_{a+1;i,j}^{(p+s-1)}Z_{a;g,l,j,j}^{(r_1+r_2-1-p-q)},~\text{ otherwise, }
\end{cases}\notag
\end{align}
for all possible admissible indices. 
For $\Y_N^+$ we follow the convention \eqref{a1thetadef}, while for $\Y_N^-$ we follow \eqref{thetadef}.
Here the elements $Z_{a;i,j,k,l}^{(r)}$ are defined in \eqref{Zdef}. Moreover, in \eqref{pr-1} for $\mu_{a+1}>1$, the left hand side does not depend on $i$ and $l$ but on $\delta_{il}$. Thus we assume further that $i\ne g$; see the proof of the relation in \S \ref{ssec:serre} for further clarification. Similarly, we assume that $j\ne f$ in \eqref{pr-2}. 
\end{thm}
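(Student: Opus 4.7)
The plan is to introduce an auxiliary algebra $\widehat{\Y}^\pm_\mu$ generated abstractly by symbols $\{H_{a;i,j}^{(r)},\wtl H_{a;i,j}^{(r)},B_{a;i,j}^{(r)}\}$ subject to the relations \eqref{pr1}--\eqref{pr-2}, and then establish an isomorphism $\widehat{\Y}^\pm_\mu \xrightarrow{\sim} \Y_N^\pm$. The overall strategy splits into two tasks: (a) verify that the images of the parabolic generators in $\Y_N^\pm$ (defined via Gauss decomposition as in Section~\ref{sec:Gauss_decomp}) actually satisfy all relations \eqref{pr1}--\eqref{pr-2}; this yields a well-defined surjective homomorphism $\widehat{\Y}^\pm_\mu \twoheadrightarrow \Y_N^\pm$ in view of Corollary~\ref{gen:cor}; and (b) show that $\widehat{\Y}^\pm_\mu$ admits an ordered monomial spanning set whose image in $\Y_N^\pm$ is precisely the PBW basis of Theorem~\ref{PBWgauss}, so the surjection is forced to be an isomorphism.

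For task (a), the essential observation is that every relation in the list only ``sees'' at most three consecutive blocks of $\mu$. First, relations involving only non-adjacent blocks (such as the $|a-b|>1$ case of \eqref{pr5}, or the $b\neq a,a+1$ part of \eqref{pr4}) are handled by the commutativity statements in Corollary~\ref{comcor}. Next, those relations whose labels lie in a single block $a$ (the intra-block pieces of \eqref{pr1}--\eqref{pr3} and the $a=b$ part of \eqref{pr4}) are obtained by applying the shift homomorphism $\psi_{\mu_{(a-1)}}$ of Lemma~\ref{psi lem} to the corresponding identity in the ``$n=1$'' algebra $\X^\pm_{\mu_a}$, which is a quotient of an extended twisted Yangian in R-matrix form. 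Similarly, relations linking two adjacent blocks $a,a+1$ (the $b=a\pm 1$ cases of \eqref{pr4}, relation \eqref{pr7}, and the $|a-b|\le 1$ cases of \eqref{pr5}--\eqref{pr6}) reduce to the $n=2$ assertions that will be proved in Section~\ref{sec:lower2}. Finally, the Serre-type relations \eqref{pr-1}--\eqref{pr-2} mix three consecutive blocks and must be derived in the $n=3$ setting of Section~\ref{sec:lower3}; once established there, they propagate to arbitrary $n$ again by $\psi_{\mu_{(a-1)}}$. The anti-automorphism $\tau$ of Lemma~\ref{tauimg} and the isomorphism $\zeta_N$ of Proposition~\ref{zeta-pro} cut the verification workload roughly in half by interchanging the roles of $E$'s and $F$'s (equivalently of the two Serre relations).

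For task (b), I will argue that in $\widehat{\Y}^\pm_\mu$ every monomial in the generators can be rewritten, modulo the relations, as a linear combination of ordered monomials in the reduced set of generators appearing in Theorem~\ref{PBWgauss}. Relation \eqref{pr2} eliminates the $\wtl H$'s in favor of the $H$'s; relation \eqref{pr1} (together with the symmetry-type identity it encodes) eliminates the odd-indexed diagonal $H$'s in the first block, which then propagates via \eqref{pr3} and \eqref{pr4}; relations \eqref{pr5}--\eqref{pr7} together with Lemma~\ref{genlem} permit one to express each non-adjacent-block generator $B_{b,a;k,l}^{(r)}$ as an iterated commutator of the adjacent-block $B_{a;i,j}^{(r)}$'s, and then to order all such generators. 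A standard straightening induction on the total weight shows these ordered monomials span $\widehat{\Y}^\pm_\mu$. Composing with the surjection $\widehat{\Y}^\pm_\mu \twoheadrightarrow \Y_N^\pm$ and applying Theorem~\ref{PBWgauss} forces these spanning monomials to be linearly independent, hence to be a basis, and forces the surjection to be injective.

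The main obstacle will be task (a) in the cases $n=2$ and $n=3$, in particular the low-rank verification of the Serre relations \eqref{pr-1}--\eqref{pr-2}. Unlike in the untwisted setting of \cite{BK05}, the presence of the symmetry relation \eqref{sym} and the non-trivial quadratic terms involving $Z_{a;i,j,k,l}^{(r)}$ make the bookkeeping significantly more delicate, and the two subcases $\mu_{a+1}>1$ versus $\mu_{a+1}=1$ must be treated separately with genuinely different right-hand sides. The fine-grained verification of these identities will be carried out block-by-block by combining the RTT/reflection-equation calculation of $[E,E]$, $[F,F]$, $[E,F]$ in terms of Gauss coordinates with the recursion \eqref{efgen}, and will occupy Sections~\ref{sec:lower2}--\ref{sec:lower3}; Section~\ref{sec:pfAII} then supplies the (mostly sign-tracking) modifications needed in type AII under the convention \eqref{thetadef}.
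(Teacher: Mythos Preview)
Your overall architecture matches the paper's proof closely: construct an abstract algebra $\widehat{\Y}^\pm_\mu$, produce a surjection onto $\Y_N^\pm$ by verifying the relations (reducing to the $n=2,3$ cases via $\psi_{\mu_{(a-1)}}$ and Corollary~\ref{comcor}), and then prove injectivity by a PBW/spanning argument against Theorem~\ref{PBWgauss}. Task (a) is described accurately and is indeed deferred to Sections~\ref{sec:lower2}--\ref{sec:lower3} in the paper as well.

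However, your treatment of task (b) understates a substantial portion of the actual work. The phrase ``a standard straightening induction on the total weight'' glosses over what is in fact the core of the injectivity proof. The paper does not straighten monomials directly in $\widehat{\Y}^\pm_\mu$; instead it introduces the \emph{loop filtration} (placing $H_{a;i,j}^{(r)}$ and $B_{a;i,j}^{(r)}$ in degree $r-1$), passes to the associated graded $\gr\widehat{\Y}^\pm_\mu$, defines elements $\sfB_{a,b;i,j}^{(r)}$ there (extended by the symmetry $\sfB_{b,a;l,k}^{(r)}=(-1)^{r+1}\theta_k\theta_l\sfB_{a,b;k',l'}^{(r)}$), and then establishes the single uniform commutator identity
\[
[\sfB_{a,b;i,j}^{(r)},\sfB_{c,d;k,l}^{(s)}]=\delta_{bc}\delta_{jk}\sfB_{a,d;i,l}^{(r+s)}-\delta_{ad}\delta_{il}\sfB_{c,b;k,j}^{(r+s)}-(-1)^r\theta_i\theta_j\big(\delta_{ac}\delta_{i'k}\sfB_{b,d;j',l}^{(r+s)}-\delta_{bd}\delta_{jl'}\sfB_{c,a;k,i'}^{(r+s)}\big)
\]
by a seven-case analysis on the relative positions of $a,b,c,d$. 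This identity is what makes the spanning argument go through; the Serre relations \eqref{pr-1}--\eqref{pr-2} enter precisely here (cases involving overlapping but non-nested index intervals), not only in task (a). Without passing to the associated graded, the lower-order terms in \eqref{pr3}--\eqref{pr7} and in the Serre relations make a direct straightening argument intractable. You also need to verify separately that $\sfB_{a,a;i,j}^{(r)}=(-1)^{r+1}\theta_i\theta_j\sfB_{a,a;j',i'}^{(r)}$ in $\gr\widehat{\Y}^\pm_\mu$, which relies on \eqref{pr1} together with the $Z$-relations implicit in \eqref{pr6} and is not automatic. In short, the spanning step is not ``standard'' and is where roughly half the content of the paper's proof lives; you should expect it to require comparable effort to the $n=2,3$ verifications.
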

\begin{rem}\label{rem-Z}
In the case of $\Y_N^+$, it follows by setting $i=k$, $j=l$, and $r=s$ in  \eqref{pr6} 
that 
\beq\label{Z's}
Z_{a;i,i,j,j}^{(2r-1)}=0,\qquad 1\lle a<n,~1\lle i\lle \mu_a,~1\lle j\lle \mu_{a+1},~r\gge 1.
\eeq
\end{rem}
\begin{rem}\label{remext}
When $\mu=(1^N)$, then the Serre relations \eqref{pr-1} and \eqref{pr-2} correspond to an analogue of \cite[(6.18)]{LWZ25} instead of \cite[(5.5)]{LWZ23}. A detailed proof of equivalence between the Serre relations here and in \cite{LWZ23} can be found in \cite[\S2.4]{Lu25}. Specifically, the component-wise formula \eqref{pr-1} corresponds to the expansion of the RHS of \cite[(2.25)]{Lu25} while \cite[(6.18)]{LWZ25} corresponds to the RHS of \cite[(2.26)]{Lu25}.
\end{rem}

The following reduced presentation is helpful, since it can be used to avoid the verification of the sophisticated Serre relations, cf. \cite[Thm. 4.14]{LWZ25}.

\begin{thm}\label{redthm}
The twisted Yangian $\Y_N^\pm$ is generated by elements 
$$\{H_{a;i,j}^{(r)},\wtl H_{a;i,j}^{(r)}\}_{1\lle a\lle n,1\lle i,j,\lle \mu_a,r\gge 0},\qquad \{B_{a;i,j}^{(r)}\}_{1\lle a<n,1\lle i\lle \mu_{a+1},1\lle j\lle \mu_a,r>0}$$ subject to the relations \eqref{pr1}--\eqref{pr7} plus the finite type Serre relations:
\beq
\label{pr-1-2-better}
\begin{split}
\big[[B_{a+1;i,j}^{(1)},B_{a;k,l}^{(1)}],B_{a;f,g}^{(1)}\big]=-\delta_{jk}\delta_{g'l}\theta_f\theta_gB_{a+1;i,f'}^{(1)},\\
\big[[B_{a;i,j}^{(1)},B_{a+1;k,l}^{(1)}],B_{a+1;f,g}^{(1)}\big]=-\delta_{il}\delta_{f'k}\theta_f\theta_gB_{a;g',j}^{(1)}.
\end{split}
\eeq
\end{thm}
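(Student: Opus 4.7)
The strategy is to show that in the algebra $\widetilde{\Y}$ defined by the generators listed in Theorem~\ref{mainthm} subject only to the reduced set of relations \eqref{pr1}--\eqref{pr7} together with \eqref{pr-1-2-better}, the full Serre relations \eqref{pr-1}--\eqref{pr-2} of Theorem~\ref{mainthm} automatically hold. Combined with the obvious surjection $\widetilde{\Y}\twoheadrightarrow\Y_N^\pm$ arising from Theorem~\ref{mainthm} (which is clearly compatible since \eqref{pr-1-2-better} is the $s=r_1=r_2=1$ specialization of \eqref{pr-1}--\eqref{pr-2}, as one checks directly by computing the $Z^{(1)}$ terms via \eqref{Zdef}), this will give the desired isomorphism.

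The core of the argument is a Levendorskii-type trick: derive the Serre relations at arbitrary multi-degree $(s,r_1,r_2)$ from the cubic degree-one identities in \eqref{pr-1-2-better} by induction on the total weight $r_1+r_2+s$. The main inductive tools are the shift relation \eqref{pr7}, which allows us to trade one unit of degree between a $B_a$-generator and a $B_{a+1}$-generator at the cost of an explicit correction of lower total weight, and the mixed commutators \eqref{pr4}, which let us raise the degree of any $B$-generator by bracketing against an appropriate $H$. Starting from \eqref{pr-1-2-better}, I would raise one index at a time: for instance, to increase $r_1$, apply \eqref{pr7} to rewrite $B_{a+1;k,l}^{(r_1+1)}$ in terms of $[B_{a;i,j}^{(r+1)},B_{a+1;k,l}^{(r_1)}]$ up to known lower-degree terms, then use the Jacobi identity to transfer this onto the existing cubic expression; for $s$, the analogous step uses \eqref{pr4}. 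The symmetry of the left-hand side of \eqref{pr-1}--\eqref{pr-2} under $r_1\leftrightarrow r_2$ is essential here for closing the induction on pairs.

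The main obstacle will be the bookkeeping for the correction terms which, after the iterated commutators, must assemble into precisely the prescribed right-hand sides of \eqref{pr-1}--\eqref{pr-2}. Concretely, each application of \eqref{pr7} or \eqref{pr4} introduces contributions involving products of the form $B\cdot H$ or $B\cdot B$, and these must be reorganized using \eqref{pr3} and \eqref{pr6} into the quadratic combinations $Z_{a;\ldots}^{(r_1+r_2-1)}$ appearing on the right-hand sides. I expect the combinatorial identities with binomial coefficients $\binom{p+q-1}{q}2^{-q}$ in the $\mu_{a+1}=1$ case of \eqref{pr-1}--\eqref{pr-2} to be the most delicate; these arise from repeatedly expanding the translation defining $Z_{a;i,j,k,l}(u)$ in \eqref{tlHH} in powers of $\tfrac{\mu_a}{2}$, and matching the expansion with the binomial shifts produced by successive applications of \eqref{pr7}. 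I would handle the case $\mu_{a+1}>1$ first, where the right-hand side of \eqref{pr-1}--\eqref{pr-2} is a single commutator with $Z^{(r_1+r_2-1)}$, since the inductive pattern is cleaner there.

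For the case $\mu_{a+1}=1$, I would invoke the equivalence noted in Remark~\ref{remext} between the current formulation of the Serre relations and the formulation via Drinfeld generators established in \cite{LWZ23, LWZ25}, together with the detailed expansion carried out in \cite{Lu25}: once we know the degree-one Serre relation and the minimal presentation for the rank-two subalgebra generated by $B_a$, $B_{a+1}$ and the relevant $H$'s, the higher Serre identities follow from those in the Drinfeld case by specialization $\mu=(1^N)$. Finally, for $\Y_N^-$, the same argument applies verbatim with the signs $\theta_i=(-1)^i$ inserted throughout; the evenness of the parts $\mu_i$ is used precisely to ensure that the shift homomorphisms $\psi_{\mu_{(a-1)}}$ used implicitly via Lemma~\ref{psi lem} land inside $\X_N^-$, so no additional complications arise.
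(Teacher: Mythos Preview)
Your plan is viable but takes a harder route than the paper. You aim to derive the \emph{full} Serre relations \eqref{pr-1}--\eqref{pr-2} in $\widetilde{\Y}$ from the degree-one case \eqref{pr-1-2-better}, which requires tracking all lower-order terms and matching the binomial combinatorics on the right-hand sides. The paper sidesteps this entirely: it observes that in the proof of Theorem~\ref{mainthm} the Serre relations are only ever used through their images \eqref{pfN7}--\eqref{pfN7.5} in the associated graded $\gr\bY_\mu$, where all lower-order corrections vanish. So it suffices to derive these \emph{associated graded} Serre identities for general $r,s,p$ from the $r=s=p=0$ case (which is exactly \eqref{pr-1-2-better}), and this is done by repeatedly bracketing with the diagonal elements $\sfB_{a,a;i,i}^{(2q-1)}$ (respectively $\sfB_{a,a;i,j}^{(q)}$ in type AII). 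The rest of the PBW spanning argument from Theorem~\ref{mainthm} then goes through verbatim.

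What each approach buys: yours would establish the full Serre relations as genuine identities in $\widetilde{\Y}$, which is a stronger intermediate statement; the paper's approach avoids the delicate binomial bookkeeping you flag as the main obstacle by never needing the full relations, only their top filtered piece. Your invocation of \cite{LWZ25} and \cite{Lu25} for the $\mu_{a+1}=1$ case is reasonable, but note that the paper also cites the strategy of Lemma~\ref{serre-lem1} and \cite[Prop.~2.9]{LWZ25}, so the degree-raising mechanism is the same in spirit; the difference is whether you apply it before or after passing to $\gr$.
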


Theorem \ref{mainthm} and Theorem \ref{redthm} will be proved in the next subsection (modulo some lengthy 2-block and 3-block verification which can be found in Sections \ref{sec:lower2}--\ref{sec:lower3}). 

\subsection{Proof of main theorems for type AI}

In this subsection, we give detailed proofs of Theorem \ref{mainthm} and Theorem~\ref{redthm} for type AI. Additional details for the proofs of these theorems for type AII will appear in 
Section \ref{sec:pfAII}. 

\begin{proof}[Proof of Theorem \ref{mainthm} for type AI]

The most complicated situation in type AI is when some $\mu_i=1$, which does not occur in type AII.

Following the convention \eqref{a1thetadef}. We first show that the twisted Yangian $\Y_N^+$ has generators $H_{a;i,j}^{(r)}$, $B_{b;k,l}^{(r)}$ and these generators satisfy the relations stated in the theorem. Define $H_{a;i,j}^{(r)}$, $B_{b;k,l}^{(r)}$, and $C_{a;l,k}^{(r)}$ (also the generating series) via Gauss decomposition as in \eqref{Ha}--\eqref{Cb} for $1\lle a\lle n$, $1\lle b<n$, and $r\in\bZ_{>0}$. By Corollary \ref{gen:cor}, the elements $H_{a;i,j}^{(r)}$, $\wtl H_{a;i,j}^{(r)}$, $B_{b;k,l}^{(r)}$ for $1\lle i\lle n$, $1\lle b<n$, $r\in\bZ_{>0}$, generate the twisted Yangian $\Y_N^+$.

We prove that $H_{a;i,j}^{(r)}$, $\wtl H_{a;i,j}^{(r)}$, and $B_{b;k,l}^{(r)}$ satisfy the listed relations by induction on $n$. The proofs for the base cases for $n=2,3$ are nontrivial and long; they will be presented in Sections \ref{sec:lower2}--\ref{sec:lower3}.  

The condition $H_{1;1,1}^{(2r-1)}=0$ follows from Lemma~\ref{lem:s11} as $H_{1;1,1}(u)=D_{1;1,1}(u)=s_{11}(u)$ is even. Suppose now the relations hold for the twisted Yangians $\Y_{\nu}^+$ for all $\nu$ of length $n$. We would like to prove it for the twisted Yangians $\Y_{\mu}^+$ with $\mu =(\mu_1, \ldots, \mu_{n+1})$ of length $n+1$ case. Set 
\[
\mu^+=(\mu_1,\dots,\mu_{n}),\qquad \mu^-=(\mu_2,\dots,\mu_{n+1}).
\]
Recall that we have the natural embedding $\Y_{\mu^+}^+\to \Y_{\mu}^+$, $s_{ij}(u)\to s_{ij}(u)$ which sends $H_{a;i,j}(u)$, $\wtl H_{a;i,j}(u)$, $B_j(u)$ of $\Y_{\mu^+}^+$ to the series in $ \Y_{\mu}^+$ of the same name. Note that we also have the homomorphism $\vartheta_{\mu_1}:\X_{\mu^-}^+\to \X_{\mu}^+$ with the properties in Lemma \ref{psi lem}. Hence the above relations hold for the case when ($a\lle n$, $b<n$) or $a,b\gge 2$. It remains to show the relations for the case when ($a=1$ and $b=n$) or ($a=n+1$ and $b=1$). This follows immediately from Corollary \ref{comcor} when $n\gge 3$.

Denote by $\bY_\mu$ the algebra generated by the above generators and relations as in the statement of the theorem. The above argument implies that there is a surjective homomorphism
\beq\label{surjm}
\bY_\mu \twoheadrightarrow \Y_N^+,
\eeq
which sends the generators $H_{a;i,j}^{(r)}$, $B_{b;k,l}^{(r)}$ of $\bY_\mu$ to the elements of $\Y_N^+$ denoted by the same symbols. Hence to complete the proof, we need to prove the homomorphism is injective. By Theorem \ref{PBWgauss}, the set of monomials in
\begin{align*}
H_{a;i,i}^{(2r)},\qquad & 1\lle a\lle  n, 1\lle i\lle \mu_a,\quad r\gge 1,\\
H_{a;i,j}^{(r)},\qquad & 1\lle a\lle  n, 1\lle j<i\lle \mu_a,\quad r\gge 1,\\
B_{a,b;i,j}^{(r)} \qquad & 1\lle b<a\lle n, 1\lle i\lle \mu_a,1\lle j\lle \mu_b,\quad r\gge 1,
\end{align*}
taken in some fixed linear order is linearly independent in the twisted Yangian $\Y_N^+$. Theorem~ \ref{mainthm} will follow if we can prove that the algebra $\bY_\mu$ is spanned by the monomials in the elements above with the same symbols, taken in some fixed linear order.

For $1\lle b<a\lle n$, $1\lle i\lle \mu_a$, and $1\lle j\lle \mu_b$, define elements $B_{a,b;i,j}^{(r)}$ of $\bY_\mu$ inductively by 
\beq\label{pfN0}
B_{b+1,b;i,j}^{(r)}:=B_{b;i,j}^{(r)},\quad B_{a,b;i,j}^{(r)}:=[B_{a-1;i,k}^{(1)},B_{a-1,b;k,j}^{(r)}],
\eeq
cf. Lemma \ref{genlem}. It follows by induction that the definition is  independent of the choice of $k$. Indeed, by \eqref{pr5} we have $[B_{a-1;i,k}^{(1)},B_{a-1,b;k',j}^{(r)}]=0$ if $k\ne k'$. Bracketing with $H_{a-1;k,k'}^{(1)}$, it follows from \eqref{pr4} that 
$[B_{a-1;i,k}^{(1)},B_{a-1,b;k,j}^{(r)}]=[B_{a-1;i,k'}^{(1)},B_{a-1,b;k',j}^{(r)}].$

Define a filtration on $\bY_\mu$ by setting $\deg H_{a;i,j}^{(r)}=\deg B_{b;k,l}^{(r)}=r-1$. Let $\gr\bY_\mu$ be the associated graded algebra. Let $\sfB_{a,a;i,j}^{(r)}$ and $\sfB_{a,b;k,l}^{(r)}$ be the images of $H_{a;i,j}^{(r+1)}$ and $B_{a,b;,k,l}^{(r+1)}$, respectively, in the $r$-th component of the graded algebra $\gr\bY_\mu$. 

We prepare several useful relations in the associated graded algebra $\gr\bY_\mu$. The relation \eqref{pr5} implies that for $|a-c|>1$, we have
\beq\label{pfN4}
[\sfB_{a+1,a;i,j}^{(r)},\sfB_{c+1,c;k,l}^{(s)}]=0.
\eeq
Due to \eqref{pr6} and \eqref{Zdef}, we have
\beq\label{pfN5}
[\sfB_{a+1,a;i,j}^{(r)},\sfB_{a+1,a;k,l}^{(s)}]= (-1)^r(\delta_{jl}\sfB_{a+1,a+1;k,i}^{(r+s)}-\delta_{ik}\sfB_{a,a;j,l}^{(r+s)}).
\eeq

If $|a-c|=1$, then using \eqref{pr7} we obtain that
\beq\label{pfN6}
[\sfB_{a+1,a;i,j}^{(r+1)},\sfB_{c+1,c;k,l}^{(s)}]=[\sfB_{a+1,a;i,j}^{(r)},\sfB_{c+1,c;k,l}^{(s+1)}].
\eeq

If $a-b>1$, then we prove that
\beq\label{pfN8}
\sfB_{a,b;i,j}^{(r)}=[\sfB_{a,a-1;i,k}^{(0)},\sfB_{a-1,b;k,j}^{(r)}]=[\sfB_{a,b+1;i,l}^{(r)},\sfB_{b+1,b;l,j}^{(0)}],
\eeq
where the first equality holds by definition \eqref{pfN0}. Indeed, for the second equality, we argue by induction on $a-b$ with the following computation for the inductive step:
\begin{align*}
\sfB_{a+1,b;i,j}^{(r)}&\stackrel{\eqref{pfN0}}{=}[\sfB_{a+1,a;i,k}^{(0)},\sfB_{a,b;k,j}^{(r)}]=[\sfB_{a+1,a;i,k}^{(0)},[\sfB_{a,b+1;k,l}^{(r)},\sfB_{b+1,b;l,j}^{(0)}]]\\&\stackrel{\eqref{pfN4}}{=}[[\sfB_{a+1,a;i,k}^{(0)},\sfB_{a,b+1;k,l}^{(r)}],\sfB_{b+1,b;l,j}^{(0)}]\stackrel{\eqref{pfN0}}{=}[\sfB_{a+1,b+1;i,l}^{(r)},\sfB_{b+1,b;l,j}^{(0)}].
\end{align*}
Repeating this argument with the help of \eqref{pfN8}, we prove that for $a-b>1$ and $r,s\in\bN$, we have
\beq\label{1pfn}
\sfB_{a,b;i,j}^{(r+s)}=[\sfB_{a,a-1;i,k}^{(r)},\sfB_{a-1,b;k,j}^{(s)}]
\eeq
with any $1\lle k\lle \mu_{a-1}$. 

We first show that $\sfB_{a,a;j,l}^{(r)}=(-1)^{r+1}\sfB_{a,a;l,j}^{(r)}$. We prove it by induction on $a$. For the base case $a=1$, setting $i=k=1$ and $s=0$ in \eqref{pr3}, we find that
\beq\label{pfn1}
[\sfB_{a,a;1,j}^{(r)},\sfB_{a,a;1,l}^{(0)}]=\delta_{1j}\sfB_{a,a;1,l}^{(r)}-\delta_{1l}\sfB_{a,a;1,j}^{(r)}-(-1)^r\sfB_{a,a;j,l}^{(r)}+(-1)^r\delta_{jl}\sfB_{a,a;1,1}^{(r)}.
\eeq
Switching $r\leftrightarrow 0$ and $j\leftrightarrow l$, we obtain
\beq\label{pfn2}
[\sfB_{a,a;1,l}^{(0)},\sfB_{a,a;1,j}^{(r)}]=\delta_{1l}\sfB_{a,a;1,j}^{(r)}-\delta_{1j}\sfB_{a,a;1,l}^{(r)}-\sfB_{a,a;l,j}^{(r)}+ \delta_{jl}\sfB_{a,a;1,1}^{(r)}.
\eeq
Note that the relation \eqref{pr1} is equivalent to $\sfB_{a,a;1,1}^{(r)}=(-1)^{r+1}\sfB_{a,a;1,1}^{(r)}$. Comparing \eqref{pfn1} and \eqref{pfn2}, we conclude that $\sfB_{a,a;j,l}^{(r)}=(-1)^{r+1}\sfB_{a,a;l,j}^{(r)}$. Thus, we have proved it for the case $a=1$. For the general case, note that $Z_{a;i,i,j,j}^{(2r-1)}=0$ implies that $\sfB_{a+1,a+1;1,1}^{(2r)}=\sfB_{a,a;1,1}^{(2r)}$. Hence for induction step, one only needs to repeat the above calculations.

For $1\lle b<a\lle n$, $1\lle k\lle \mu_a$, and $1\lle l\lle \mu_b$, define $\sfB_{b,a;l,k}^{(r)}:=(-1)^{r+1}\sfB_{a,b;k,l}^{(r)}$. Note that this relation is consistent with the case $a=b$, i.e. $\sfB_{a,a;i,j}^{(r)}=(-1)^{r+1}\sfB_{a,a;j,i}^{(r)}$. 

We claim that the following relations hold:
\begin{align}
&[\sfB_{a,b;i,j}^{(r)},\sfB_{c,d;k,l}^{(s)}]=\delta_{bc}\delta_{jk}\sfB_{a,d;i,l}^{(r+s)}-\delta_{ad}\delta_{il}\sfB_{c,b;k,j}^{(r+s)}-(-1)^r\delta_{ac}\delta_{ik}\sfB_{b,d;j,l}^{(r+s)}+(-1)^r\delta_{bd}\delta_{jl}\sfB_{c,a;k,i}^{(r+s)}.\label{todo}
\end{align}
By the convention $\sfB_{b,a;l,k}^{(r)}:=(-1)^{r+1}\sfB_{a,b;k,l}^{(r)}$, it suffices to establish \eqref{todo} for the case when $a\gge b$ and $c\gge d$; the remainder of this proof is devoted to this task.

We first consider the case $a=b$ or $c=d$. Assume that $a=b$. If $c=d$, then \eqref{todo} follows immediately from \eqref{pr3}. If $c>d$, then one first proves it for the particular case $c-d=1$ using \eqref{pr4}. For the general case, it suffices to use an induction on $c-d$, \eqref{pfN0}, and \eqref{1pfn} along with Jacobi identity; see e.g. the proof of \cite[Thm. 5.1]{LWZ23}. Now it remains to prove \eqref{todo} for the cases when $a>b$ and $c>d$.

If $c=a+1$, then we get from \eqref{pr-2}, \eqref{Zdef}, and \eqref{todo} for the case $a=b$ that
\beq\label{pfN7}
\begin{split}
[\sfB_{a+1,a;i,j}^{(r)},&[\sfB_{a+1,a;k,l}^{(s)},\sfB_{c+1,c;f,g}^{(p)}]]\\
=-&[\sfB_{a+1,a;i,j}^{(s)},[\sfB_{a+1,a;k,l}^{(r)},\sfB_{c+1,c;f,g}^{(p)}]]-((-1)^r+(-1)^s)\delta_{kg}\delta_{jl}\sfB_{c+1,c;f,i}^{(r+s+p)}.
\end{split}
\eeq
Similarly, if $c=a-1$, then we get 
\beq\label{pfN7.5}
\begin{split}
[\sfB_{a+1,a;i,j}^{(r)},&[\sfB_{a+1,a;k,l}^{(s)},\sfB_{c+1,c;f,g}^{(p)}]]\\
=-&[\sfB_{a+1,a;i,j}^{(s)},[\sfB_{a+1,a;k,l}^{(r)},\sfB_{c+1,c;f,g}^{(p)}]]-((-1)^r+(-1)^s)\delta_{ik}\delta_{lf}\sfB_{c+1,c;j,g}^{(r+s+p)}.
\end{split}
\eeq

We shall verify \eqref{todo} case by case, considering all relations between pairs of indices $a>b$ and  $c>d$. There will be seven cases.

(1) If $b>c$, then \eqref{todo} is straightforward by \eqref{pfN4} and \eqref{pfN8}.

(2) If $b=c$, then using \eqref{pfN6} and \eqref{pfN8} we find that
\[
[\sfB_{b+1,b;i_1,j}^{(r)},\sfB_{b,b-1;k,l_1}^{(s)}]=\delta_{jk}\sfB_{b+1,b-1;i_1,l_1}^{(r+s)}.
\]
Taking the commutator with $\sfB_{b+2,b+1;i_2,i_1}^{(0)},\ldots,\sfB_{a,a-1;i,i_{a-b-1}}^{(0)}$ and using \eqref{pfN4}, we obtain
\[
[\sfB_{a,b;i,j}^{(r)},\sfB_{b,b-1;k,l_1}^{(s)}]=\delta_{jk}\sfB_{a,b-1;i,l_1}^{(r+s)}
\]
Similarly, apply the commutator with $\sfB_{b-1,b-2;l_1,l_2}^{(0)},\ldots,\sfB_{d+1,d;l_{b-d-1},l}^{(0)}$ to obtain \eqref{todo} for $b=c$.

(3) Now we prove \eqref{todo} with $a=c$ and $b>d$. Let us first consider the commutator $[\sfB_{a+3,a+1;i,j}^{(r)},\sfB_{a+2,a;k,l}^{(s)}]$. It suffices to do the calculation for the case $a=1$ as the same argument works for all $a$. Clearly, it follows from \eqref{pfN4}, \eqref{pfN7}, and \eqref{pfN7.5} that
\beq\label{pfN9}
[\sfB_{43;i,k}^{(r)},[\sfB_{32;k,j}^{(0)},[\sfB_{32;k,j}^{(0)},\sfB_{21;j,l}^{(s)}]]]=[[[\sfB_{43;i,k}^{(r)},\sfB_{32;k,j}^{(0)}],\sfB_{32;k,j}^{(0)}],\sfB_{21;j,l}^{(s)}]=0.
\eeq
We have
\begin{align*}
 [\sfB_{42;i,j}^{(r)},\sfB_{31;k,l}^{(s)}] &\stackrel{\eqref{pfN8}}{=} [[\sfB_{43;i,k}^{(r)},\sfB_{32;k,j}^{(0)}],[\sfB_{32;k,j}^{(0)},\sfB_{21;j,l}^{(s)}]]\\
 &\stackrel{\eqref{pfN9}}{=} [[\sfB_{43;i,k}^{(r)},[\sfB_{32;k,j}^{(0)},\sfB_{21;j,l}^{(s)}]],\sfB_{32;k,j}^{(0)}]\\
&\stackrel{\eqref{pfN4}}{=}  [[[\sfB_{43;i,k}^{(r)},\sfB_{32;k,j}^{(0)}],\sfB_{21;j,l}^{(s)}],\sfB_{32;k,j}^{(0)}]\\
&\stackrel{\eqref{pfN9}}{=} [[\sfB_{43;i,k}^{(r)},\sfB_{32;k,j}^{(0)}],[\sfB_{21;j,l}^{(s)},\sfB_{32;k,j}^{(0)}]]\stackrel{\eqref{pfN8}}{=}  - [\sfB_{42;i,j}^{(r)},\sfB_{31;k,l}^{(s)}].
\end{align*}
Therefore,
\beq\label{pfN10}
[\sfB_{a+3,a+1;\tl i ,\tl j }^{(r)},\sfB_{a+2,a;\tl k ,\tl l}^{(s)}]=[\sfB_{42:i,j}^{(r)},\sfB_{31;k,l}^{(s)}]=0.
\eeq

Then we look at a particular case,
\begin{align*}
[\sfB_{32;i,j}^{(r)},\sfB_{31;k,l}^{(s)}]&\stackrel{\eqref{pfN0}}{=}[\sfB_{32;i,j}^{(r)},[\sfB_{32;k,j}^{(0)},\sfB_{21;j,l}^{(s)}]] \\
& \stackrel{\eqref{pfN7.5}}{=}-[\sfB_{32;i,j}^{(0)},[\sfB_{32;k,j}^{(r)},\sfB_{21;j,l}^{(s)}]]-(1+(-1)^r)\delta_{ik} \sfB_{21;j,l}^{(r+s)}\\
&\stackrel{\eqref{pfN6}}{=}-[\sfB_{32;i,j}^{(0)},[\sfB_{32;k,j}^{(0)},\sfB_{21;j,l}^{(r+s)}]]-(1+(-1)^r)\delta_{ik}\sfB_{21;j,l}^{(r+s)}
\\
&\stackrel{\eqref{pfN7.5}}{=}\delta_{ik}\sfB_{21;j,l}^{(r+s)}-(1+(-1)^r)\delta_{ik}\sfB_{21;j,l}^{(r+s)}=(-1)^{r+1}\delta_{ik}\sfB_{21;j,l}^{(r+s)}.
\end{align*}
This implies further that
\begin{align*}
[\sfB_{32;i,j}^{(r)},\sfB_{41;g,l}^{(s)}]&\stackrel{\eqref{pfN0}}{=}[\sfB_{32;i,j}^{(r)},[\sfB_{43;g,k}^{(0)},\sfB_{31;k,l}^{(s)}]]\\
&~~=~~[[\sfB_{32;i,j}^{(r)},\sfB_{43;g,k}^{(0)}],\sfB_{31;k,l}^{(s)}]-[[\sfB_{32;i,j}^{(r)},\sfB_{31;k,l}^{(s)}],\sfB_{43;g,k}^{(0)}]\\
& \stackrel{\eqref{pfN0}}{=}-\delta_{ik}[\sfB_{42;g,j}^{(r)},\sfB_{31;k,l}^{(s)}]-[(-1)^{r+1}\delta_{ik}\sfB_{21;j,l}^{(r+s)},\sfB_{43;g,k}^{(0)}]
\overset{\eqref{pfN4}}{\underset{\eqref{pfN10}}{=}}
0-0=0.
\end{align*} 

Similarly, we have $[\sfB_{d+2,d-1;i,j}^{(r)},\sfB_{d+1,d;k,l}^{(s)}]=0$. Applying \eqref{pfN8} to it, we get
\beq\label{pfN11}
[\sfB_{a,b;i,j}^{(r)},\sfB_{d+1,d;k,l}^{(s)}]=0
\eeq
for any $a>d+1$ and $b<d$. 

Finally, to show \eqref{todo} for the case $a=c$ and $b>d$, namely 
\beq\label{pfN12}
[\sfB_{a,b;i,j}^{(r)},\sfB_{a,d;k,l}^{(s)}]=(-1)^{r+1}\delta_{ik}\sfB_{b,d;j,l}^{(r+s)},
\eeq
it essentially reduces to verifying for the case $[\sfB_{a,a-1;i,j}^{(r)},\sfB_{a,a-2;k,l}^{(s)}]$ by using \eqref{pfN8} and \eqref{pfN11}. This case is the same for all $a$, and for $a=3$ it has been done above.

(4) The case $a>c$ and $b=d$ is similar the case (3). Again, the application of \eqref{pfN8} and \eqref{pfN11} reduces the calculation to prove that $[\sfB_{a,a-2;i,j}^{(r)},\sfB_{a-1,a-2;k,l}^{(s)}]=(-1)^{s+1}\delta_{jl}\sfB_{a,a-1;i,k}^{(r+s)}$.

(5) For the case $a=c$ and $b=d$, we show by induction on $a-b$ that
\[
[\sfB_{a,b;i,j}^{(r)},\sfB_{a,b;k,l}^{(s)}]=(-1)^r(\delta_{jl}\sfB_{a,a;k,i}^{(r+s)}-\delta_{ik}\sfB_{b,b;j,l}^{(r+s)}).
\]
The base case $a-b=1$ is known as \eqref{pfN5}. By the induction hypothesis, we have
\begin{align*}
[\sfB_{a,b;i,j}^{(r)},\sfB_{a,b;k,l}^{(s)}]&\stackrel{\eqref{pfN0}}{=}[\sfB_{a,b;i,j}^{(r)},[\sfB_{a,a-1;k,f}^{(0)},\sfB_{a-1,b;f,l}^{(s)}]]\\
&~~=~~[[\sfB_{a,b;i,j}^{(r)},\sfB_{a,a-1;k,f}^{(0)}],\sfB_{a-1,b;f,l}^{(s)}]+[\sfB_{a,a-1;k,f}^{(0)},[\sfB_{a,b;i,j}^{(r)},\sfB_{a-1,b;f,l}^{(s)}]]\\&
\stackrel{\eqref{pfN12}}{=}[\delta_{ik}\sfB_{a-1,b;f,j}^{(r)},\sfB_{a-1,b;f,l}^{(s)}]+[\sfB_{a,a-1;k,f}^{(0)},(-1)^{(s+1)}\delta_{jl}\sfB_{a,a-1;i,f}^{(s)}]\\
&~~=~~(-1)^{r}\delta_{ik}(\delta_{jl}\sfB_{a-1,a-1;f,f}^{(r+s)}-\sfB_{b,b;j,l}^{(r+s)})+(-1)^{s+1}\delta_{jl}(\sfB_{a,a;i,k}^{(r+s)}-\delta_{ik}\sfB_{a-1,a-1;f,f}^{(r+s)})\\
&~~=~~(-1)^{r}(\delta_{jl}\sfB_{a,a;k,i}^{(r+s)}-\delta_{i,k}\sfB_{b,b;j,l}^{(r+s)}).
\end{align*}

(6) If $a>c>b>d$, then due to \eqref{pfN8} and \eqref{pfN11}, it suffices to verify that $$[\sfB_{a+3,a+1;i,j}^{(r)},\sfB_{a+2,a;k,l}^{(s)}]=0$$ which is done in \eqref{pfN10}.

(7) If $a>c>d>b$, then similarly it is reduced to \eqref{pfN11}.

Thus, we have completed the verification of the relations \eqref{todo}. 

Therefore we have shown that the associated graded algebra $\gr\bY_\mu$ is spanned by the set of monomials in the elements $\sfB_{a,b;i,j}^{(r)}$, $1\lle b<a\lle n$, $1\lle i\lle \mu_a$, $1\lle j\lle \mu_b$, $\sfB_{a,a;i,j}^{(r)}$, $1\lle a\lle n$, $1\lle j<i\lle \mu_a$, and $\sfB_{a,a;i,i}^{(2r+1)}$, $1\lle a\lle n$, $1\lle i\lle \mu_a$, $r\in\bN$ taken in some fixed linear order. This proves the injectivity of the homomorphism \eqref{surjm} and completes the proof of the theorem.
\end{proof}

\begin{proof}[Proof of Theorem \ref{redthm}]
The proof for type AI is almost the same as that of Theorem \ref{mainthm}. The main difference is that one needs to show that \eqref{pfN7} and \eqref{pfN7.5} hold for general $r,s,p$ assuming that \eqref{pfN7} and \eqref{pfN7.5} hold for $r=s=p=0$ (corresponding to \eqref{pr-1-2-better}) along with other relations whose proof does not involve Serre relations. This can be proved by bracketing $\sfB_{a,a;i,i}^{(2q-1)}$ to \eqref{pfN7} and \eqref{pfN7.5} for $r=s=p=0$. The strategy is very similar to the proof of Lemma \ref{serre-lem1}; see also \cite[Prop. 2.9]{LWZ25}. We omit the details.

The proof for type AII is similar with suitable modifications by bracketing $\sfB_{a,a;i,j}^{(q)}$ to the image of \eqref{pr-1-2-better} in the associated graded, see \S\ref{sec:pf-AII-parabolic} for more detail.
\end{proof}

\section{Relations in $2$-block parabolic presentations}
\label{sec:lower2}

In this section, we verify the $2$-block relations among the parabolic generators for twisted Yangians of type AI. This is one of the main steps in the proof of the parabolic presentation of $\Y_N^+$ formulated in Theorem~\ref{mainthm}, following the convention \eqref{a1thetadef}.
In this section, we 
set $n=2$ and $\mu=(\mu_1,\mu_2)$. 

\subsection{A recap for $n=2$}

Let $\mu=(\mu_1,\mu_2)$ be a strict composition of $N$. The parabolic generators are given as follows: 
\begin{align*}
    D_{a;i,j}^{(r)},\wtl D_{a;i,j}^{(r)},\qquad & a=1,2,\ 1\lle i,j\lle \mu_a,\ r\in\bN,\\
    E_{1;i,j}^{(r)}, F_{1;j,i}^{(r)}, \qquad & 1\lle i\lle \mu_1,\ 1\lle j\lle \mu_2,\ r>0.
\end{align*}
Comparing both sides of the matrix equation (\ref{S=FDE}) and its inverse, we have
\begin{align}
&s_{ij}(u)=D_{1;i,j}(u), &1\lle i,j\lle \mu_1,\\
&s_{i,\mu_1+j}(u)=\sum_{p=1}^{\mu_1}D_{1;i,p}(u)E_{1;p,j}(u),&1\lle i\lle \mu_1, 1\lle j\lle \mu_2,\\
&s_{\mu_1+i,j}(u)=\sum_{p=1}^{\mu_1}F_{1;i,p}(u)D_{1;p,j}(u),&1\lle i\lle \mu_2, 1\lle j\lle \mu_1, \label{sparabolic} \\
&s_{\mu_1+i,\mu_1+j}(u)=D_{2;i,j}(u)+\sum_{p,r=1}^{\mu_1}F_{1;i,p}(u)D_{1;p,r}(u)E_{1;r,j}(u),&1\lle i,j\lle \mu_2,\\
&\tl s_{ij}(u)=\wtl D_{1;i,j}(u)+\sum_{q,t=1}^{\mu_2}E_{1;i,q}(u)\wtl D_{2;q,t}(u)F_{1;t,j}(u), &1\lle i,j\lle \mu_1,\\
&\tl s_{i,\mu_1+j}(u)=-\sum_{q=1}^{\mu_2}E_{1;i,q}(u)\wtl D_{2;q,j}(u),&1\lle i\lle \mu_1, 1\lle j\lle \mu_2,\\
&\tl s_{\mu_1+i,j}(u)=-\sum_{q=1}^{\mu_2}\wtl D_{2;i,q}(u)F_{1;q,j}(u),&1\lle i\lle \mu_2, 1\lle j\lle \mu_1,\\
&\tl s_{\mu_1+i,\mu_1+j}(u)=\wtl D_{2;i,j}(u),&1\lle i,j\lle \mu_2.
\end{align}

\subsection{Relations among $D, E, F$ generators}

\begin{prop} \label{prop2block}
Let $\mu=(\mu_1,\mu_2)$ be a strict composition of $N$. Then the following relations hold in $\Y_\mu^+$: 
\begin{align}
[D_{a;i,j}(u),D_{b;k,l}(v)]=&\,\delta_{ab}\Big(\frac{1}{u-v}\big(D_{a;k,j}(u)D_{a;i,l}(v)-D_{a;k,j}(v)D_{a;i,l}(u)\big)\notag\\&-\frac{1}{u+v+\mu_{(a-1)}}\big(D_{a;i,k}(u)D_{a;j,l}(v)-D_{a;k,i}(v)D_{a;l,j}(u)\big)\label{dd n2}\\&+\frac{1}{(u-v)(u+v+\mu_{(a-1)})}\big(D_{a;k,i}(u)D_{a;j,l}(v)-D_{a;k,i}(v)D_{a;j,l}(u)\big)\Big),\notag
\\
[D_{1;i,j}(u),E_{1;k,l}(v)]=&\,\frac{\delta_{jk}}{u-v}\sum_{p=1}^{\mu_1}D_{1;i,p}(u)\big(E_{1;p,l}(v)-E_{1;p,l}(u)\big)\label{d1e n2}
\\&\, +\frac{\delta_{ik}}{u+v+\mu_1}\sum_{p=1}^{\mu_1}\big(F_{1;l,p}(u)-E_{1;p,l}(v)\big)D_{1;p,j}(u),\notag
\\
[D_{1;i,j}(u),F_{1;k,l}(v)]=&\,\frac{\delta_{il}}{u-v}\sum_{p=1}^{\mu_1}\big(F_{1;k,p}(u)-F_{1;k,p}(v)\big)D_{1;p,j}(u)\label{d1f n2}\\&\, +\frac{\delta_{jl}}{u+v+\mu_1}\sum_{p=1}^{\mu_1}D_{1;i,p}(u)\big(F_{1;k,p}(v)-E_{1;p,k}(u)\big),\notag
\\
[D_{2;i,j}(u),E_{1;k,l}(v)]=&\,\frac{1}{u-v}D_{2;i,l}(u)\big(E_{1;k,j}(u)-E_{1;k,j}(v)\big)\label{d2e n2}\\&\qquad +\frac{1}{u+v+\mu_1} \big(E_{1;k,i}(v)-F_{1;i,k}(u)\big)D_{2;l,j}(u),\notag
\\
[E_{1;i,j}(u),F_{1;k,l}(v)]=&\, \frac{1}{u-v}\big(\wtl D_{1;i,l}(u)D_{2;k,j}(u)-\wtl D_{1;i,l}(v)D_{2;k,j}(v)\big)\label{ef n2}\\
&\qquad +\frac{1}{u+v+\mu_1}\big(E_{1;i,k}(u)-F_{1;k,i}(v)\big)\big(E_{1;l,j}(u)-F_{1;j,l}(v)\big),\notag
\\
[E_{1;i,j}(u),E_{1;k,l}(v)]=&\,\frac{1}{u-v}\big(E_{1;i,l}(u)-E_{1;i,l}(v)\big)\big(E_{1;k,j}(u)-E_{1;k,j}(v)\big)\label{ee n2}\\
&+\frac{1}{u+v+\mu_1}\big(\wtl D_{1;i,k}(u) D_{2;l,j}(u)-\wtl D_{1;i,k}(-v-\mu_1) D_{2;l,j}(-v-\mu_1)\big),\notag
\\
\label{e=f n2}
F_{1;j,i}(u)=&\, E_{1;i,j}(-u-\mu_1).\hskip 7cm
\end{align}
\end{prop}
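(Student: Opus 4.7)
The plan is to derive each of \eqref{dd n2}--\eqref{e=f n2} from the quaternary relations \eqref{qua} and the symmetry relation \eqref{sym}, via the block Gauss decomposition \eqref{S=FDE}. Having expressed each matrix entry $s_{ab}(u)$ in the four index blocks through the Gauss formulas displayed just before the statement, I would evaluate \eqref{qua} on a pair $(s_{ab}(u), s_{cd}(v))$ whose indices are chosen so as to isolate one type of Gauss commutator. Substituting the Gauss expressions on both sides and clearing the invertible $D_1$-factors then produces the desired identity. Throughout, the already established consequences---in particular Corollary~\ref{comcor} (which gives $[D_1, D_2] = 0$) and Lemma~\ref{psi lem} (which reduces second-block identities to first-block ones via $\psi_{\mu_1}$)---are used to commute $D$'s past $E$'s and $F$'s.

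First, the identity \eqref{dd n2} at $a=1$ is immediate: restrict \eqref{qua} to $i,j,k,l \in \{1,\ldots,\mu_1\}$ (with $i'=i$ and $\theta_i = 1$ in type AI), divide by $u^2-v^2$, and substitute $s_{ij} = D_{1;i,j}$. The case $a=2$ is obtained by applying the shift homomorphism $\psi_{\mu_1}$ of Lemma~\ref{psi lem}, which identifies $D_{2;i,j}(u)$ with the image of $D_{1;i,j}(u+\mu_1/2)$ from the rank-$\mu_2$ twisted Yangian, thereby reducing to the $a=1$ case with the spectral shift $\mu_{(a-1)} = \mu_1$. The cross-block relations \eqref{d1e n2}, \eqref{d1f n2}, \eqref{d2e n2} follow by evaluating \eqref{qua} on pairs with one index in $\{1,\ldots,\mu_1\}$ and the other in $\{\mu_1+1,\ldots,N\}$: after substituting the Gauss forms, I would push the $D_1$-factors out of the way using the already-derived $[D_1, D_1]$ relation, then multiply on the appropriate side by $\wtl D_1$ to isolate the stated commutator. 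The main bookkeeping task here is to track both denominators $(u-v)^{-1}$ and $(u+v+\mu_1)^{-1}$ which arise from the partial-fraction decomposition of the prefactor $u^2-v^2$ together with the twist terms on the RHS of \eqref{qua}.

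The main effort lies in \eqref{ee n2} and \eqref{ef n2}. Starting from \eqref{qua} for $[s_{i,\mu_1+j}(u), s_{k,\mu_1+l}(v)]$ and $[s_{i,\mu_1+j}(u), s_{\mu_1+k,l}(v)]$ respectively, I would substitute the Gauss expressions $s_{i,\mu_1+j} = \sum_p D_{1;i,p} E_{1;p,j}$ and $s_{\mu_1+k,l} = \sum_p F_{1;k,p} D_{1;p,l}$, and---where it appears on the right-hand side of \eqref{qua}---the full expansion $s_{\mu_1+i,\mu_1+j} = D_{2;i,j} + \sum_{p,r} F_{1;i,p} D_{1;p,r} E_{1;r,j}$. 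Systematically pushing all $D_1$-factors to the left via \eqref{dd n2}, \eqref{d1e n2} and \eqref{d1f n2} yields on the RHS the $\wtl D_{1;i,k}(u)\, D_{2;l,j}(u)$-type terms of \eqref{ee n2} and the $\wtl D_{1;i,l}(u)\, D_{2;k,j}(u)$-type terms of \eqref{ef n2} (arising from the $D_2$-part of $s_{\mu_1+\cdot,\mu_1+\cdot}$), together with $FDE$-type corrections which should telescope. The hardest part will be the careful bookkeeping of the poles at $u=v$ and $u+v = -\mu_1$ which arise simultaneously after dividing by $u^2-v^2$, and verifying that the ``unwanted'' mixed terms cancel using the $DD$ relation \eqref{dd n2} and the defining equations of $\wtl D_1$ from \eqref{pr2}.

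Finally, \eqref{e=f n2} is the $n=2$ case of Proposition~\ref{prop:B=C}. I would apply the symmetry relation \eqref{sym} to $s_{\mu_1+j,i}(-u)$ (with $i'=i$, $\theta_i=1$ in type AI), substitute the Gauss expressions $s_{i,\mu_1+j}(u) = \sum_p D_{1;i,p}(u) E_{1;p,j}(u)$ and $s_{\mu_1+j,i}(u) = \sum_p F_{1;j,p}(u) D_{1;p,i}(u)$, and cancel the common $D_1$-factors using the $[D_1, E_1]$ and $[D_1, F_1]$ relations already derived. After absorbing the translation by $\mu_1/2$ inherent to the definitions \eqref{Bb}--\eqref{Cb}, this yields $F_{1;j,i}(u) = E_{1;i,j}(-u-\mu_1)$, consistent with the action of $\zeta_N$ from Proposition~\ref{zeta-pro}.
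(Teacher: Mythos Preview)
Your overall strategy---evaluate \eqref{qua} on suitable pairs of $s$-entries, substitute Gauss expressions, clear $D_1$---is sound and close to the paper's, but two methodological choices diverge in ways worth flagging.

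First, for the cross-block relations \eqref{d1e n2}, \eqref{d2e n2}, \eqref{ef n2}, the paper works not with $[s,s]$ via \eqref{qua} but with $[s,\tilde s]$ via \eqref{sts}, where $\widetilde S(u)=S(u)^{-1}$. This is a genuine simplification: the inverse Gauss decomposition gives $\tilde s_{\mu_1+i,\mu_1+j}=\widetilde D_{2;i,j}$ and $\tilde s_{k,\mu_1+l}=-\sum_q E_{1;k,q}\widetilde D_{2;q,l}$, so after bracketing with $s_{ij}=D_{1;i,j}$ one can immediately strip off the $\widetilde D_2$-factor using $[D_1,\widetilde D_2]=0$. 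Your direct use of \eqref{qua} would instead produce terms like $s_{\mu_1+l,j}=\sum_p F_{1;l,p}D_{1;p,j}$ on the right, mixing $E$ and $F$ contributions in a way that needs additional untangling. It can be made to work, but the $\tilde s$ route is cleaner and explains why the denominators $u-v$ and $u+v+\mu_1$ separate so neatly.

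Second, your plan for \eqref{e=f n2} via the symmetry relation \eqref{sym} has a gap: \eqref{sym} is a reflection $u\mapsto -u$, whereas the claimed identity involves $u\mapsto -u-\mu_1$. The $\mu_1$-shift does not come from \eqref{sym}, and your remark about ``absorbing the translation by $\mu_1/2$ inherent to \eqref{Bb}--\eqref{Cb}'' does not account for it (those shifts are by $\mu_{(1)}/2$ in each of $B$ and $C$ separately, not by $\mu_1$ relating $E$ to $F$). The paper's argument is a one-liner: multiply \eqref{d1e n2} through by $u+v+\mu_1$ and set $v=-u-\mu_1$; the surviving term is $\sum_p(F_{1;l,p}(u)-E_{1;p,l}(-u-\mu_1))D_{1;p,j}(u)=0$, and invertibility of $D_1$ finishes it. With \eqref{e=f n2} in hand, the paper then obtains \eqref{ee n2} by substituting into \eqref{ef n2}, rather than deriving \eqref{ee n2} directly from \eqref{qua} as you propose---again avoiding a messier computation.
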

\begin{proof}
\mybox{Equations \eqref{dd n2}}. If $a=b=1$, it is directly from \eqref{qua}. If $a=b=2$, then it follows from the case $a=b=1$ by applying the shift homomorphism $\psi_{\mu_1}$. If $a=1$ and $b=2$, then $D_{1;i,j}(u)=s_{i,j}(u)$ and $\wtl D_{2;k,l}(v)=\tl s_{\mu_1+k,\mu_1+l}(v)$. By \eqref{sts}, we have $[D_{1;i,j}(u),\wtl D_{2;k,l}(v)]=0$ for all $1\lle i,j\lle \mu_1$ and $1\lle k,l\lle \mu_2$. Thus we also have $[D_{1;i,j}(u), D_{2;k,l}(v)]=0$ for all $1\lle i,j\lle \mu_1$ and $1\lle k,l\lle \mu_2$.

\mybox{Equations \eqref{d1e n2} and \eqref{d1f n2}}. 
By \eqref{sts}, we have
\begin{align*}
(u^2-v^2)[s_{ij}(u),&\,\tl s_{k,\mu_1+l}(v)]=\delta_{jk}(u+v)\sum_{a=1}^Ns_{ia}(u)\tl s_{a,\mu_1+l}(v)\\-&\,\delta_{ik}(u-v)\sum_{a=1}^Ns_{aj}(u)\tl s_{a,\mu_1+l}(v)+\delta_{ik}\sum_{a=1}^Ns_{ja}(u)\tl s_{a,\mu_1+l}(v).
\end{align*}
Transforming it in terms of parabolic generators, we have
\begin{align*}
(u^2-v^2)&\big[D_{1;i,j}(u),\sum_{q=1}^{\mu_2}E_{1;k,q}(v)\wtl D_{2;q,l}(v)\big]\\
&=\delta_{kj}(u+v)\sum_{p=1}^{\mu_1}\sum_{q=1}^{\mu_2}D_{1;i,p}(u)\big(E_{1;p,q}(v)-E_{1;p,q}(u)\big)\wtl D_{2;q,l}(v)\\&\ -\delta_{ik}(u-v)\sum_{p=1}^{\mu_1}\sum_{q=1}^{\mu_2}\big(D_{1;p,j}(u)E_{1;p,q}(v)-F_{1;q,p}(u)D_{1;p,j}(u)\big)\wtl D_{2;q,l}(v)\\
&\ +\delta_{ik}\sum_{p=1}^{\mu_1}\sum_{q=1}^{\mu_2}D_{1;p,j}(u)(E_{1;p,q}(v)-E_{1;p,q}(u)\big)\wtl D_{2;q,l}(v).
\end{align*}
Therefore, using the fact that $[D_{1;i,j}(u),\tl D_{2;q,l}(v)]=0$, we deduce that 
\begin{align}
\big[D_{1;i,j}(u),E_{1;k,l}(v)\big]&=\frac{\delta_{kj}}{u-v}\sum_{p=1}^{\mu_1}D_{1;i,p}(u)\big(E_{1;p,l}(v)-E_{1;p,l}(u)\big)\notag\\
&-\frac{\delta_{ik}}{u+v}\sum_{p=1}^{\mu_1}\Big(\big(E_{1;p,l}(v)-F_{1;l,p}(u)\big)D_{1;p,j}(u)+[D_{1;p,j}(u),E_{1;p,l}(v)]\Big)\label{n2pf1}\\&+\frac{\delta_{ik}}{u^2-v^2}\sum_{p=1}^{\mu_1}D_{1;j,p}(u)\big(E_{1;p,l}(v)-E_{1;p,l}(u)\big).\notag
\end{align}
Thus
\begin{align*}
\sum_{p=1}^{\mu_1}\big[D_{1;p,j}(u),E_{1;p,l}(v)\big]&=\frac{1}{u-v}\sum_{p=1}^{\mu_1}D_{1;j,p}(u)\big(E_{1;p,l}(v)-E_{1;p,l}(u)\big)\\
&-\frac{\mu_1}{u+v}\sum_{p=1}^{\mu_1}\Big(\big(E_{1;p,l}(v)-F_{1;l,p}(u)\big)D_{1;p,j}(u)+[D_{1;p,j}(u),E_{1;p,l}(v)]\Big)\\&+\frac{\mu_1}{u^2-v^2}\sum_{p=1}^{\mu_1}D_{1;j,p}(u)\big(E_{1;p,l}(v)-E_{1;p,l}(u)\big).
\end{align*}
Solving for $\sum\limits_{p=1}^{\mu_1}\big[D_{1;p,j}(u),E_{1;p,l}(v)\big]$, we have
\begin{align*}
\sum_{p=1}^{\mu_1}\big[D_{1;p,j}(u),E_{1;p,l}(v)\big]&=\frac{1}{u-v}\sum_{p=1}^{\mu_1}D_{1;j,p}(u)\big(E_{1;p,l}(v)-E_{1;p,l}(u)\big)\\
&-\frac{\mu_1}{u+v+\mu_1}\sum_{p=1}^{\mu_1}\big(E_{1;p,l}(v)-F_{1;l,p}(u)\big)D_{1;p,j}(u).
\end{align*}
Plugging this into \eqref{n2pf1} and simplifying, one obtains \eqref{d1e n2}. The relation \eqref{d1f n2} follows from \eqref{d1e n2} by applying the anti-automorphism $\tau$.

\mybox{Equation \eqref{d2e n2}}. It follows from \eqref{d1f n2} that
\beq\label{n2pf2}
\begin{split}
\sum_{p=1}^{\mu_1}\big[D_{1;i,p}(u),F_{1;q,p}(v)\big]&=\frac{1}{u-v}\sum_{p=1}^{\mu_1}\big(F_{1;q,p}(u)-F_{1;q,p}(v)\big)D_{1;p,i}(u)\\&+\frac{\mu_1}{u+v+\mu_1}\sum_{p=1}^{\mu_1}D_{1;i,p}(u)\big(F_{1;q,p}(v)-E_{1;p,q}(u)\big).
\end{split}
\eeq

By \eqref{sts}, we have
\begin{align*}
(u^2-v^2)[s_{i,\mu_1+j}(u),&\,\tl s_{\mu_1+k,\mu_1+l}(v)]=\delta_{jk}(u+v)\sum_{a=1}^Ns_{ia}(u)\tl s_{a,\mu_1+l}(v)\\+&\,\delta_{jl}(u-v)\sum_{a=1}^N\tl s_{\mu_1+k,a}(v)s_{ia}(u)-\delta_{jl}\sum_{a=1}^N\tl s_{\mu_1+k,a}(v)s_{ai}(u).
\end{align*}
Rewriting it using parabolic generators, we get
\begin{align*}
(u^2-v^2)&\Big[\sum_{p=1}^{\mu_1}D_{1;i,p}(u)E_{1;p,j}(u),\wtl D_{2;k,l}(v)\Big]\\
=&\,\delta_{jk}(u+v)\sum_{p=1}^{\mu_1}\sum_{q=1}^{\mu_2}D_{1;i,p}(u)(-E_{1;p,q}(v)+E_{1;p,q}(u))\wtl D_{2;q,l}(v)\\
+&\, \delta_{jl}(u-v)\sum_{p=1}^{\mu_1}\sum_{q=1}^{\mu_2}\wtl D_{2;k,q}(v)(-\boxed{F_{1;q,p}(v)D_{1;i,p}(u)}+D_{1;i,p}(u)E_{1;p,q}(u))\\
+&\, \delta_{jl}\sum_{p=1}^{\mu_1}\sum_{q=1}^{\mu_2}\wtl D_{2;k,q}(v)(F_{1;q,p}(v)-F_{1;q,p}(u))D_{1;p,i}(u).
\end{align*}
Commuting $F_{1;q,p}(v)$ and $D_{1;i,p}(u)$ in the above box via \eqref{n2pf2} and using $[D_{1;i,j}(u),\tl D_{2;q,l}(v)]=0$, we find that
\beq\label{n2pf3}
\begin{split}
\big[E_{1;i,j}(u),\wtl D_{2;k,l}(v)\big]
&=\frac{\delta_{jk}}{u-v}\sum_{q=1}^{\mu_2}(E_{1;i,q}(u)-E_{1;i,q}(v))\wtl D_{2;q,l}(v)\\
&+\frac{\delta_{jl}}{u+v+\mu_1}\sum_{q=1}^{\mu_2}\wtl D_{2;k,q}(v)(E_{1;i,q}(u)-F_{1;q,i}(v)).
\end{split}
\eeq

Then one easily deduces \eqref{d2e n2} from \eqref{n2pf3}.

\mybox{Equation \eqref{ef n2}}. We start with the relation from \eqref{qua},
\begin{align*}
(u-v)[s_{i,\mu_1+j}(u),\tl s_{\mu_1+k,l}(v)]=\delta_{jk}\sum_{a=1}^N s_{ia}(u)\tl s_{al}(v)-\delta_{il}\sum_{a=1}^N \tl s_{\mu_1+k,a}(v)s_{a,\mu_1+j}(u),
\end{align*}
which gives
\begin{align*}
    &\,(u-v)\Big[\sum_{p=1}^{\mu_1}D_{1;i,p}(u)E_{1;p,j}(u),-\sum_{q=1}^{\mu_2} \wtl D_{2;k,q}(v)F_{1;q,l}(v)\Big]\\
    =&\,\delta_{jk}\Big(\sum_{p=1}^{\mu_1}D_{1;i,p}(u)\wtl D_{1;p,l}(v)+\sum_{p=1}^{\mu_1}\sum_{q,t=1}^{\mu_2}D_{1;i,p}(u)(E_{1;p,t}(v)-E_{1;p,t}(u))\wtl D_{2;t,q}(v)F_{1;q,l}(v)\Big)\\
    -&\, \delta_{il}\Big(\sum_{q=1}^{\mu_2}\wtl D_{2;k,q}(v) D_{2;q,j}(u)+\sum_{p,r=1}^{\mu_1}\sum_{q=1}^{\mu_2}\wtl D_{2;k,q}(v)(F_{1;q,r}(u)-F_{1;q,r}(v))D_{1;r,p}(u)E_{1;p,j}(u)\Big).
\end{align*}
We further rewrite it as
\begin{align*}
&\,\sum_{p=1}^{\mu_1}\sum_{q=1}^{\mu_2}\wtl D_{2;k,q}(v)\Big(\boxed{(u-v)F_{1;q,l}(v)D_{1;i,p}(u)-\delta_{il}\sum_{r=1}^{\mu_1}\big(F_{1;q,r}(v)-F_{1;q,r}(u)\big)D_{1;r,p}(u)}\Big)E_{1;p,j}(u)\\
=&\,\sum_{p=1}^{\mu_1}\sum_{q=1}^{\mu_2}D_{1;i,p}(u)\Big(\boxed{(u-v)E_{1;p,j}(u)\wtl D_{2;k,q}(v)+\delta_{jk}\sum_{t=1}^{\mu_2}\big(E_{1;p,t}(v)-E_{1;p,t}(u)\big)\wtl D_{2;t,q}(v)}\Big) F_{1;q,l}(v)\\
&\hskip6.8cm +\delta_{jk}\sum_{p=1}^{\mu_1}D_{1;i,p}(u)\wtl D_{1;p,l}(v)-\delta_{il}\sum_{q=1}^{\mu_2}\wtl D_{2;k,q}(v)D_{2;q,j}(u).
\end{align*}
Using \eqref{d1f n2} and \eqref{n2pf3} to rewrite the formulas in boxes, we have
\begin{align*}
&(u-v)\sum_{p=1}^{\mu_1}\sum_{q=1}^{\mu_2}\wtl D_{2;k,q}(v)D_{1;i,p}(u)\Big(F_{1;q,l}(v)E_{1;p,j}(u)-\frac{1}{u+v+\mu_1}(F_{1;q,p}(v)-E_{1;p,q}(u))E_{1;l,j}(u)\Big)\\
&=(u-v)\sum_{p=1}^{\mu_1}\sum_{q=1}^{\mu_2}D_{1;i,p}(u)\wtl D_{2;k,q}(v)\Big(E_{1;p,j}(u)F_{1;q,l}(v)+\frac{1}{u+v+\mu_1}(E_{1;p,q}(u)-F_{1;q,p}(v))F_{1;j,l}(v)\Big)\\
&\hskip7.2cm +\delta_{jk}\sum_{p=1}^{\mu_1}D_{1;i,p}(u)\wtl D_{1;p,l}(v)-\delta_{il}\sum_{q=1}^{\mu_2}\wtl D_{2;k,q}(v)D_{2;q,j}(u).
\end{align*}
Multiplying it from left by $D_{2;\alpha,k}(v)\wtl D_{1;\beta,i}(u)$ and summing over $i=1,\ldots,\mu_1$ and $k=1,\dots,\mu_2$, one finds that
\begin{align*}
& (u-v)F_{1;\alpha,l}(v)E_{1;\beta,j}(u)-\frac{u-v}{u+v+\mu_1}(F_{1;\alpha,\beta}(v)-E_{1;\beta,\alpha}(u))E_{1;l,j}(u)\\
=&\, (u-v)E_{1;\beta,j}(u) F_{1;\alpha,l}(v)-\frac{u-v}{u+v+\mu_1}(F_{1;\alpha,\beta}(v)-E_{1;\beta,\alpha}(u))F_{1;j,l}(v)\\
&\hskip 6cm +\wtl D_{1;\beta,l}(v)D_{2;\alpha,j}(v)-\wtl D_{1;\beta,l}(u)D_{2;\alpha,j}(u),
\end{align*}
completing the proof of \eqref{ef n2}.

\mybox{Equations \eqref{ee n2} and  \eqref{e=f n2}}. Multiplying \eqref{d1e n2} by $u+v+\mu_1$ and setting $v=-u-\mu_1$, one finds that 
\[
\sum_{p=1}^{\mu_1}\big(F_{1;l,p}(u)-E_{1;p,l}(-u-\mu_1)\big)D_{1;p,j}(u)=0,
\]
which further implies \eqref{e=f n2}. Using \eqref{ef n2} and \eqref{e=f n2}, one obtains \eqref{ee n2}.
\end{proof}

The relations \eqref{dd n2}--\eqref{d2e n2} and \eqref{e=f n2} appeared in \cite{Br16} for the special case $N=3$ and $\mu=(2,1)$.

\begin{rem}
Interchanging $u$ and $v$ in \eqref{ee n2} and relabeling the subscripts, we have
\beq\label{n2pf5}
\begin{split}
[E_{1;i,j}(u),E_{1;k,l}(v)]&=\frac{1}{u-v}\big(E_{1;k,j}(u)-E_{1;k,j}(v)\big)\big(E_{1;i,l}(u)-E_{1;i,l}(v)\big)\\
&+\frac{1}{u+v+\mu_1}\big(\wtl D_{1;k,i}(-u-\mu_1) D_{2;j,l}(-u-\mu_1)-\wtl D_{1;k,i}(v) D_{2;j,l}(v)\big).
\end{split}
\eeq
Setting $u=v$ in \eqref{ee n2} and \eqref{n2pf5}, we have
\beq\label{n2pf6}
\begin{split}
\wtl D_{1;i,k}(u) D_{2;l,j}(u)-&\,\wtl D_{1;i,k}(-u-\mu_1) D_{2;l,j}(-u-\mu_1)\\=&\,\wtl D_{1;k,i}(-u-\mu_1) D_{2;j,l}(-u-\mu_1)-\wtl D_{1;k,i}(u) D_{2;j,l}(u),
\end{split}
\eeq
and
\beq\label{n2pf7}
[E_{1;i,j}(u),E_{1;k,l}(u)]=\frac{1}{2u+\mu_1}\big(\wtl D_{1;k,i}(-u-\mu_1) D_{2;j,l}(-u-\mu_1)-\wtl D_{1;k,i}(u) D_{2;j,l}(u)\big).
\eeq

On the other hand, it follows from \eqref{ee n2} and \eqref{n2pf5}, respectively,
\beq\label{n2pf8}
\begin{split}
[E_{1;i,j}^{(1)},E_{1;k,l}(v)]&=E_{1;i,l}(v)E_{1;k,j}(v)-\wtl D_{1;i,k}(-v-\mu_1)D_{2;l,j}(-v-\mu_1)+\delta_{ik}\delta_{jl},\\    
[E_{1;i,j}^{(1)},E_{1;k,l}(v)]&=E_{1;k,j}(v)E_{1;i,l}(v)-\wtl D_{1;k,i}(v)D_{2;j,l}(v)+\delta_{ik}\delta_{jl}.   
\end{split}
\eeq
Thus, we conclude that
\beq\label{n2pf10}
[E_{1;i,j}(u),E_{1;k,l}(u)]= \wtl D_{1;i,k}(-u-\mu_1) D_{2;j,l}(-u-\mu_1)-\wtl D_{1;k,i}(u) D_{2;l,j}(u).
\eeq
\end{rem}

\subsection{Relations among $H, B$ generators}

The following results are direct consequences of Proposition \ref{prop2block} together with (\ref{Ha})--(\ref{tlHH}).
\begin{prop}\label{prop2blockhb}
Let $\mu=(\mu_1,\mu_2)$ be a strict composition of $N$. Then the following relations hold in $\Y_\mu^+$:
\begin{align}
[H_{a;i,j}(u),H_{b;k,l}(v)]=&\,\delta_{ab}\Big(\frac{1}{u-v}\big(H_{a;k,j}(u)H_{a;i,l}(v)-H_{a;k,j}(v)H_{a;i,l}(u)\big)\notag\\&-\frac{1}{u+v}\big(H_{a;i,k}(u)H_{a;j,l}(v)-H_{a;k,i}(v)H_{a;l,j}(u)\big)\label{hahb n2}\\&+\frac{1}{u^2-v^2}\big(H_{a;k,i}(u)H_{a;j,l}(v)-H_{a;k,i}(v)H_{a;j,l}(u)\big)\Big),\notag
\\
[H_{1;i,j}(u),B_{1;k,l}(v)]=&\,\frac{\delta_{il}}{u-v+\tfrac{\mu_1}{2}}\sum_{p=1}^{\mu_1}\big(B_{1;k,p}(u+\tfrac{\mu_1}{2})-B_{1;k,p}(v)\big)H_{1;p,j}(u)\label{h1b n2}\\&\, +\frac{\delta_{jl}}{u+v+\tfrac{\mu_1}{2}}\sum_{p=1}^{\mu_1}H_{1;i,p}(u)\big(B_{1;k,p}(v)-B_{1;k,p}(-u-\tfrac{\mu_1}{2})\big),\notag
\\
[H_{2;i,j}(u),B_{1;k,l}(v)]=&\,\frac{1}{u+v}H_{2;i,k}(u)\big(B_{1;j,l}(-u)-B_{1;j,l}(v)\big)\label{h2b n2}\\&\qquad +\frac{1}{u-v} \big(B_{1;i,l}(v)-B_{1;i,l}(u)\big)H_{2;k,j}(u),\notag
\\
[B_{1;i,j}(u),B_{1;k,l}(v)]=&\,\frac{1}{u-v}\big(B_{1;k,j}(v)-B_{1;k,j}(u)\big)\big(B_{1;i,l}(u)-B_{1;i,l}(v)\big)\label{bb n2}\\
&+\frac{1}{u+v}\big(Z_{1;j,l,k,i}(v)-Z_{1;j,l,k,i}(-u)\big),\notag
\\
\label{b=c n2}
B_{1;i,j}(-u)=&\, C_{1;j,i}(u),\hskip 7cm
\end{align}
where $Z_{1;j,l,k,i}(u)=\wtl H_{1;j,l}(u-\tfrac{\mu_1}{2})H_{2;k,i}(u)$ is defined in \eqref{tlHH}.
\end{prop}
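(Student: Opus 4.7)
Proposition \ref{prop2blockhb} is a mechanical translation of Proposition \ref{prop2block} under the shifts \eqref{Ha}--\eqref{tlHH}, combined with the symmetry \eqref{e=f n2} to convert $E$- and $F$-generators into $B$- and $C$-generators. The key book-keeping is the substitution dictionary
\[
H_a(u)=D_a\!\left(u-\tfrac{\mu_{(a-1)}}{2}\right),\quad B_1(u)=F_1\!\left(u-\tfrac{\mu_1}{2}\right),\quad C_1(u)=E_1\!\left(u-\tfrac{\mu_1}{2}\right),
\]
together with the derived identity $C_{1;l,k}(u)=B_{1;k,l}(-u)$ that will establish \eqref{b=c n2}. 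A crucial simplification is that $(u-\tfrac{\mu_{(a-1)}}{2})+(v-\tfrac{\mu_{(a-1)}}{2})+\mu_{(a-1)}=u+v$, which explains why the denominators $u+v+\mu_{(a-1)}$ appearing in Proposition \ref{prop2block} collapse to $u+v$ in the $H,B$-presentation.

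I proceed one relation at a time. Equation \eqref{hahb n2} follows by substituting $u\mapsto u-\tfrac{\mu_{(a-1)}}{2}$, $v\mapsto v-\tfrac{\mu_{(b-1)}}{2}$ into \eqref{dd n2}; the case $a\ne b$ gives the trivial commutator noted after \eqref{dd n2}. For \eqref{h1b n2} I specialize \eqref{d1f n2} at $a=1$ (so $H_1=D_1$), shift $v\mapsto v-\tfrac{\mu_1}{2}$, and translate $F_{1;k,p}(u)=B_{1;k,p}(u+\tfrac{\mu_1}{2})$ and $E_{1;p,k}(u)=B_{1;k,p}(-u-\tfrac{\mu_1}{2})$ via $B_1=C_1$. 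Equation \eqref{h2b n2} is slightly longer because Proposition \ref{prop2block} only supplies $[D_2,E_1]$ in \eqref{d2e n2}; I first apply the anti-automorphism $\tau$ of Lemma \ref{tauimg} (which reverses commutators and swaps matrix indices) to obtain the corresponding formula for $[D_2,F_1]$, then shift $u\mapsto u-\tfrac{\mu_1}{2}$, $v\mapsto v-\tfrac{\mu_1}{2}$ and use $B_1=C_1$ to clean up the $E$-term.

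Equation \eqref{b=c n2} is immediate from \eqref{e=f n2} after the shift. The most delicate relation is \eqref{bb n2}: I use \eqref{e=f n2} to rewrite $[F_1,F_1]$ as a commutator of $E_1$'s by substituting $(i,j,k,l,u,v)\mapsto(j,i,l,k,-u-\mu_1,-v-\mu_1)$ in \eqref{ee n2}, then apply the parabolic shift $u\mapsto u-\tfrac{\mu_1}{2}$, $v\mapsto v-\tfrac{\mu_1}{2}$. The quadratic remainder $\wtl D_{1;i,k}(\cdot)D_{2;l,j}(\cdot)$ in \eqref{ee n2} transforms, after index relabelling, into $\wtl H_{1;j,l}(\cdot-\tfrac{\mu_1}{2})H_{2;k,i}(\cdot)=Z_{1;j,l,k,i}(\cdot)$ evaluated at $v$ and $-u$, giving precisely the right-hand side of \eqref{bb n2}.

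\textbf{Main obstacle.} There is no conceptual difficulty: the proof is bookkeeping. The only point demanding real care is keeping track of index transpositions through two mechanisms that both swap indices — the anti-automorphism $\tau$ and the $B\leftrightarrow C$ symmetry $C_{1;l,k}(u)=B_{1;k,l}(-u)$ — together with the simultaneous sign flip of the spectral parameter in the latter. In particular, for \eqref{bb n2} one must verify that the composite substitution converts $\wtl D_{1;i,k}(u)D_{2;l,j}(u)$ into $Z_{1;j,l,k,i}$ (with those exact index positions), rather than a transposed or oppositely-shifted variant; this is the step that is easiest to botch and deserves careful verification.
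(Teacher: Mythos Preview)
Your proposal is correct and follows exactly the approach the paper takes: the paper's proof is the single sentence ``The following results are direct consequences of Proposition~\ref{prop2block} together with (\ref{Ha})--(\ref{tlHH}),'' and you have carefully spelled out that translation. Your treatment of \eqref{h2b n2} via $\tau$ and of \eqref{bb n2} via the substitution $F_1(w)=E_1(-w-\mu_1)$ in \eqref{ee n2} is accurate, including the index and sign bookkeeping that lands the $\wtl D_1 D_2$ term on $Z_{1;j,l,k,i}$.
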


\begin{rem}
Using the same approach toward proving (\ref{n2pf5}), one can deduce the equivalent form of (\ref{bb n2}):
\begin{multline}\label{nbb n2}
[B_{1;i,j}(u),B_{1;k,l}(v)]=\frac{1}{u-v}\big(B_{1;i,l}(u)-B_{1;i,l}(v)\big)\big(B_{1;k,j}(v)-B_{1;k,j}(u)\big)\\
+\frac{1}{u+v}\big(Z_{1;l,j,i,k}(-v)-Z_{1;l,j,i,k}(u)\big).
\end{multline}
\end{rem}

\begin{prop}
The following relations hold in $\Y_\mu^+$:
\begin{align}
\label{hahbcoef}[H_{a;i,j}^{(r)},H_{b;k,l}^{(s)}] &= \delta_{ab}\Big(\sum_{t=0}^{r-1}\big(H_{a;k,j}^{(r-1-t)}H_{a;i,l}^{(s+t)}-H_{a;k,j}^{(s+t)}H_{a;j,l}^{(r-1-t)}\big)\\
&\quad -\sum_{t=0}^{r-1}(-1)^t\big( H_{a;i,k}^{(r-1-t)}H_{a;j,l}^{(s+t)}-H_{a;k,i}^{(s+t)}H_{a;l,j}^{(r-1-t)} \big)\notag\\
&\quad +\sum_{t=0}^{\lfloor r/2\rfloor-1}\big(H_{a;k,i}^{(r-2-2t)}H_{a;j,l}^{(s+2t)}-H_{a;k,i}^{(s+2t)}H_{a;j,l}^{(r-2-2t)}\big)\Big),\notag\\
\label{hbcoef}
[H_{1;i,j}^{(r)}, B_{1;k,l}^{(s)}] & = \sum_{p=1}^{\mu_1}\sum_{m=0}^{r-1}\sum_{t=0}^m (-1)^m\delta_{jl}{m \choose t}\Big(\frac{\mu_1}{2}\Big)^{m-t} H_{1;i,p}^{(r-m-1)}B_{1;k,p}^{(s+t)},\\
&\quad -\sum_{p=1}^{\mu_1}\sum_{m=0}^{r-1}\sum_{t=0}^m (-1)^{m-t}\delta_{il}{m \choose t}\Big(\frac{\mu_1}{2}\Big)^{m-t}B_{1;k,p}^{(s+t)}H_{1;p,j}^{(r-m-1)},\notag\\
\label{h2bcoef}
[H_{2;i,j}^{(r)}, B_{1;k,l}^{(s)}] & = \sum_{m=0}^{r-1} (-1)^{m+1}H_{2;i,k}^{(r-1-m)}B_{1;j,l}^{(s+m)} + \sum_{m=0}^{r-1} B_{1;i,l}^{(s+m)}H_{2;k,j}^{(r-1-m)},\\
 [B_{1;i,j}^{(r)},B_{1;k,l}^{(s)}]
 &=\sum_{t=1}^{r-1} B_{1;k,j}^{(r+s-1-t)} B_{1;i,l}^{(t)}  -  \sum_{t=1}^{s-1} B_{1;k,j}^{(r+s-1-t)} B_{1;i,l}^{(t)}  
-(-1)^{r} Z_{1;j,l,k,i}^{(r+s-1)} \label{bbcoef1} \\  \label{bbcoef2}
&=\sum_{t=1}^{s-1} B_{1;i,l}^{(t)} B_{1;k,j}^{(r+s-1-t)}  -  \sum_{t=1}^{r-1} B_{1;i,l}^{(t)} B_{1;k,j}^{(r+s-1-t)}  
 +  (-1)^{s} Z_{1;l,j;i,k}^{(r+s-1)}.
\end{align}
\end{prop}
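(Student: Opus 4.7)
The plan is to derive each of the coefficient identities \eqref{hahbcoef}--\eqref{bbcoef2} directly from the generating-series relations proven in Proposition~\ref{prop2blockhb}, by extracting the coefficient of $u^{-r}v^{-s}$ after clearing denominators. The key algebraic input is the pair of expansions
\[
\frac{1}{u-v}=\sum_{t\gge 0}v^{t}u^{-t-1},\qquad \frac{1}{u+v}=\sum_{t\gge 0}(-1)^{t}v^{t}u^{-t-1},
\]
together with the shift identity
\[
\frac{1}{u\pm v+\tfrac{\mu_1}{2}}=\sum_{m\gge 0}(\mp 1)^m \sum_{t=0}^{m}\binom{m}{t}\Big(\frac{\mu_1}{2}\Big)^{m-t}v^{t}u^{-m-1},
\]
which accounts for the binomial factors $\binom{m}{t}(\mu_1/2)^{m-t}$ that appear in \eqref{hbcoef} and \eqref{h2bcoef}. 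For the $\frac{1}{u^2-v^2}$ summand, we use $\frac{1}{u^2-v^2}=\sum_{t\gge 0}v^{2t}u^{-2t-2}$, which explains the stride-$2$ summation $\sum_{t=0}^{\lfloor r/2\rfloor-1}$ and the shifted index $r-2-2t$ in the last line of \eqref{hahbcoef}.

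For \eqref{hahbcoef}, I would apply these expansions to each of the three summands on the right-hand side of \eqref{hahb n2} separately, read off the coefficient of $u^{-r}v^{-s}$ from both sides, and verify that the resulting expression agrees termwise with the stated formula. The first summand, with prefactor $(u-v)^{-1}$, contributes $\sum_{t=0}^{r-1}(H_{a;k,j}^{(r-1-t)}H_{a;i,l}^{(s+t)}-H_{a;k,j}^{(s+t)}H_{a;i,l}^{(r-1-t)})$, using both the expansion of $(u-v)^{-1}$ and the antisymmetric structure $A(u)B(v)-A(v)B(u)$ to collapse the infinite sum into a finite one. The second summand, with prefactor $-(u+v)^{-1}$, yields the alternating-sign middle row of \eqref{hahbcoef}, and the third summand gives the even-indexed sum in the last row. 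For \eqref{hbcoef}, \eqref{h2bcoef}, \eqref{bbcoef1}, \eqref{bbcoef2} the same strategy applies, starting from \eqref{h1b n2}, \eqref{h2b n2}, \eqref{bb n2}, and \eqref{nbb n2} respectively; note that the two forms \eqref{bbcoef1} and \eqref{bbcoef2} are obtained by expanding the $B$--$B$ commutator from \eqref{bb n2} and from its equivalent rewriting \eqref{nbb n2}, which explains the sign swap on the $Z$-term and the interchange of the two sums of $B$-products.

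The main technical obstacle is the combinatorial bookkeeping for \eqref{hbcoef}: the right-hand side of \eqref{h1b n2} involves the product of $(u\pm v+\tfrac{\mu_1}{2})^{-1}$ with a difference $B_{1;k,p}(u+\tfrac{\mu_1}{2})-B_{1;k,p}(v)$ (or $B_{1;k,p}(v)-B_{1;k,p}(-u-\tfrac{\mu_1}{2})$), so one must track the binomial expansions of both the Cauchy kernel $(u+v+\tfrac{\mu_1}{2})^{-1}$ and of the shifted series $B_{1;k,p}(u+\tfrac{\mu_1}{2})=\sum_{r,j}\binom{r+j-1}{j}(\mu_1/2)^{j}B_{1;k,p}^{(r+j)}u^{-r}$ simultaneously. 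After reorganizing the double summation, the binomial identity
\[
\sum_{j=0}^{m}\binom{m}{j}(-1)^{m-j}\Big(\frac{\mu_1}{2}\Big)^{m-j}\binom{j+\ell-1}{\ell}\Big(\frac{\mu_1}{2}\Big)^{\ell}
\]
telescopes correctly to produce the stated coefficient $(-1)^{m}\binom{m}{t}(\mu_1/2)^{m-t}$ in the first row of \eqref{hbcoef} and $(-1)^{m-t}\binom{m}{t}(\mu_1/2)^{m-t}$ in the second row. Finally, for \eqref{bbcoef1}--\eqref{bbcoef2}, the relation $B_{1;i,j}(-u)=C_{1;j,i}(u)$ from \eqref{b=c n2} together with the expansion of $Z_{1;j,l,k,i}(u)$ from \eqref{tlHH} ensures that the $(-1)^r$ (respectively $(-1)^s$) prefactors on the $Z$-coefficients emerge naturally from evaluating $Z_{1;j,l,k,i}(-u)$ at the appropriate variable.
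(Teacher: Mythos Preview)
Your approach—extracting the coefficient of $u^{-r}v^{-s}$ from the generating-series relations of Proposition~\ref{prop2blockhb} via the geometric expansions of $(u\pm v)^{-1}$—is exactly the paper's, which only writes out the details for \eqref{bbcoef1}. One simplification you miss, and which the paper exploits: any summand on the right-hand side that depends on $u$ alone (e.g.\ $B_{1;k,p}(\pm u\pm\tfrac{\mu_1}{2})$ in \eqref{h1b n2}, or $B_{1;k,j}(u)B_{1;i,l}(u)$ and $Z_{1;j,l,k,i}(-u)$ in \eqref{bb n2}) contributes nothing to the coefficient of $v^{-s}$ for $s\gge 1$, since the kernel has only non-negative $v$-powers; this removes the need for your ``binomial telescope'' in \eqref{hbcoef} entirely and shows that the sign $(-1)^{r}$ on the $Z$-term comes from $\tfrac{1}{u+v}\,Z_{1;j,l,k,i}(v)$ rather than from $Z_{1;j,l,k,i}(-u)$.
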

\begin{proof}
The relations (\ref{hahbcoef}), (\ref{hbcoef}) and (\ref{h2bcoef}) follow from (\ref{hahb n2}), (\ref{h1b n2}) and (\ref{h2b n2}), respectively.
To show (\ref{bbcoef1}), we collect and compare the coefficient of $u^{-r}v^{-s}$ on both sides of (\ref{bb n2}). 
Use the following expansion for the right-hand-side,
\begin{align}\label{u-v}
\frac{1}{u-v} =  \sum_{a\gge 0}u^{-1-a}v^{a},\qquad 
\frac{1}{u+v}=  \sum_{a\gge 0}(-1)^au^{-1-a}v^{a}.
\end{align}
Since the terms 
$B_{1;k,j}(u) B_{1;i,l}(u)$ and $Z_{1;j,l,k,i}(-u)$ contribute nothing, we may ignore them. Therefore, we have
\begin{align*}
 [B_{1;i,j}^{(r)},B_{1;k,l}^{(s)}] &=\sum_{a=0}^{r-2} B_{1;k,j}^{(s+a)} B_{1;i,l}^{(r-1-a)} + \sum_{a=0}^{r-2} B_{1;k,j}^{(r-1-a)} B_{1;i,l}^{(s+a)}    \\
& \qquad - \sum_{x,y\gge 1}^{x+y=s+r-1}B_{1;k,j}^{(x)} B_{1;i,l}^{(y)}  + (-1)^{r-1} Z_{1;j,l;k,i}^{(r+s-1)}\\
& = \sum_{t=1}^{r-1} B_{1;k,j}^{(r+s-1-t)} B_{1;i,l}^{(t)} + \sum_{t=1}^{r-1} B_{1;k,j}^{(t)} B_{1;i,l}^{(r+s-1-t)}    \\
& \qquad - \sum_{t=1}^{r+s-2}B_{1;k,j}^{(t)} B_{1;i,l}^{(r+s-1-t)}  -  (-1)^{r} Z_{1;j,l;k,i}^{(r+s-1)}\\
&= \boxed{\sum_{t=1}^{r-1} B_{1;k,j}^{(r+s-1-t)} B_{1;i,l}^{(t)}  -  \sum_{t=1}^{s-1}B_{1;k,j}^{(r+s-1-t)} B_{1;i,l}^{(t)}} -  (-1)^{r} Z_{1;j,l;k,i}^{(r+s-1)},
\end{align*}
which is (\ref{bbcoef1}). Starting from (\ref{nbb n2}) with exactly the same argument, one deduces (\ref{bbcoef2}).
\end{proof}

Note that the boxed part above can be rewritten as
\[
\sum_{t=\min\{r,s\}}^{r-1} B_{1;k,j}^{(r+s-1-t)} B_{1;i,l}^{(t)}  -  \sum_{t=\min\{r,s\}}^{s-1}B_{1;k,j}^{(r+s-1-t)} B_{1;i,l}^{(t)}
\]
and hence does not contain the $B_{1;k,j}^{(t)}$ for small $t$. This observation will be used to define the shifted twisted Yangian in \S \ref{paraAI}.

\section{The $3$-block parabolic presentations}
  \label{sec:lower3}

In this section, we establish the $3$-block relations among parabolic generators for twisted Yangians of type AI. This completes the proof of the parabolic presentation of $\Y_N^+$ formulated in Theorem~\ref{mainthm}, following the convention \eqref{a1thetadef}.
In this section, we set $n=3$ and $\mu=(\mu_1,\mu_2,\mu_3)$.

\subsection{The case for $n=3$}

Let $\mu=(\mu_1,\mu_2,\mu_3)$ be a strict composition of $N$.
The parabolic generators associated to $\mu$ are given as follows,
\begin{align*}
    &D_{a;i,j}^{(r)},\wtl D_{a;i,j}^{(r)},\qquad\ \   a=1,2,3,\ 1\lle i,j\lle \mu_a,\ r\in\bN,\\
    &E_{a;i,j}^{(r)}, F_{a;j,i}^{(r)},\qquad\ \ \,  a=1,2, \ \ 1\lle i\lle \mu_a,\ 1\lle j\lle \mu_{a+1},\ r>0.
    \end{align*}
Comparing both sides of the matrix equation (\ref{S=FDE}) and its inverse, we have
\begin{align*}
&s_{ij}(u)=D_{1;i,j}(u), &1\lle i,j\lle \mu_1,\\
&s_{i,\mu_1+j}(u)=\sum_{p=1}^{\mu_1}D_{1;i,p}(u)E_{1;p,j}(u),&1\lle i\lle \mu_1, 1\lle j\lle \mu_2,\\
&s_{i,\mu_1+\mu_2+j}(u)=\sum_{p=1}^{\mu_1}D_{1;i,p}(u)E_{1,3;p,j}(u),&1\lle i\lle \mu_1, 1\lle j\lle \mu_3,\\
&\tl s_{i,\mu_1+j}(u)=-\sum_{q=1}^{\mu_2} E_{1;i,q}(u) \wtl D_{2;q,j}(u) - \sum_{r,t=1}^{\mu_3}\wtl E_{1,3;i,r}(u) \wtl D_{3;r,t}(u)F_{2;t,j}(u)  ,&1\lle i\lle \mu_1, 1\lle j\lle \mu_2,\\
&\tl s_{i,\mu_1+\mu_2+j}(u)=\sum_{r=1}^{\mu_3}\wtl E_{1,3;i,r}(u)\wtl D_{3;r,j}(u),&1\lle i\lle \mu_1, 1\lle j\lle \mu_3,\\
&\tl s_{\mu_1+i,\mu_1+j}(u)=\wtl D_{2;i,j}(u) + \sum_{r,t=1}^{\mu_3} E_{2;i,r}(u)\wtl D_{3;r,t}(u) F_{2;t,j}(u),&1\lle i,j\lle \mu_2,\\
&\tl s_{\mu_1+i,\mu_1+\mu_2+j}(u)=-\sum_{r=1}^{\mu_3}E_{2;i,r}(u)\wtl D_{3;r,j}(u),&1\lle i\lle \mu_2, 1\lle j\lle \mu_3,\\
&\tl s_{\mu_1+\mu_2+i,\mu_1+j}(u)=-\sum_{r=1}^{\mu_3}\wtl D_{3;i,r}(u)F_{2;r,j}(u),&1\lle i\lle \mu_3, 1\lle j\lle \mu_2,
\end{align*}
where $\wtl E_{1,3;i,r}(u)=\sum\limits_{q=1}^{\mu_2}E_{1;i,q}(u)E_{2;q,r}(u)-E_{1,3;i,r}(u)$.

\begin{lem}\label{e=f gen}
We have $E_{a,b;i,j}(u)=F_{b,a;j,i}(-u-\mu_{(a)})$ for $1\lle a<b\lle 3$. In particular, we have $B_{b,a;j,i}(u)=C_{a,b;i,j}(-u)$.
\end{lem}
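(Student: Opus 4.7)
The identity $B_{b,a;j,i}(u)=C_{a,b;i,j}(-u)$ is immediate from the first claim, since by \eqref{Bab} and \eqref{Cba} one has $B_{b,a;j,i}(u)=F_{b,a;j,i}(u-\tfrac{\mu_{(a)}}{2})$ and $C_{a,b;i,j}(-u)=E_{a,b;i,j}(-u-\tfrac{\mu_{(a)}}{2})$, so substituting $E_{a,b;i,j}(v)=F_{b,a;j,i}(-v-\mu_{(a)})$ at $v=-u-\tfrac{\mu_{(a)}}{2}$ yields equality. Hence I focus on proving
\[
E_{a,b;i,j}(u)=F_{b,a;j,i}(-u-\mu_{(a)}), \qquad 1\lle a<b\lle 3.
\]
I split into the adjacent case $b=a+1$ and the single non-adjacent case $(a,b)=(1,3)$.

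\emph{Adjacent case.} For $a=1$, $b=2$ this is literally \eqref{e=f n2} applied in the two-block subalgebra $\Y^+_{(\mu_1,\mu_2+\mu_3)}$, or equivalently read off directly from Proposition \ref{prop2block} using $\mu_{(1)}=\mu_1$. For $a=2$, $b=3$ I would apply the shift homomorphism $\psi_{\mu_1}$ to the two-block identity $F^{(2,3)}_{1;j,i}(v)=E^{(2,3)}_{1;i,j}(-v-\mu_2)$ inside $\X^+_{\mu_2+\mu_3}$ and translate spectral parameters using Lemma \ref{psi lem}: the latter gives $E_{2;i,j}(u)=\psi_{\mu_1}(E^{(2,3)}_{1;i,j}(u+\tfrac{\mu_1}{2}))$ and likewise for $F_{2;j,i}$, so substituting $v=u+\tfrac{\mu_1}{2}$ yields $F_{2;j,i}(u)=E_{2;i,j}(-u-\mu_1-\mu_2)=E_{2;i,j}(-u-\mu_{(2)})$, as required.

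\emph{Non-adjacent case.} Fix $k\in\{1,\ldots,\mu_2\}$. Lemma \ref{genlem}, summed into generating series, gives
\[
E_{1,3;i,j}(u)=[E_{1;i,k}(u),E_{2;k,j}^{(1)}],\qquad F_{3,1;j,i}(u)=[F_{2;j,k}^{(1)},F_{1;k,i}(u)].
\]
Substituting $u\mapsto -u-\mu_1$ in the second identity and applying the adjacent case for $a=1$ in the form $F_{1;k,i}(-u-\mu_1)=E_{1;i,k}(u)$, I obtain
\[
F_{3,1;j,i}(-u-\mu_1)=[F_{2;j,k}^{(1)},E_{1;i,k}(u)].
\]
Extracting the $u^{-1}$ coefficient of the adjacent identity $F_{2;j,k}(u)=E_{2;k,j}(-u-\mu_{(2)})$ gives $F_{2;j,k}^{(1)}=-E_{2;k,j}^{(1)}$, so
\[
F_{3,1;j,i}(-u-\mu_1)=-[E_{2;k,j}^{(1)},E_{1;i,k}(u)]=[E_{1;i,k}(u),E_{2;k,j}^{(1)}]=E_{1,3;i,j}(u),
\]
completing the proof.

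\emph{Anticipated obstacle.} There is no serious obstruction here; the content is entirely bookkeeping around the $\psi_m$-translations of spectral parameters and the sign arising from $(-u-c)^{-1}=-u^{-1}+O(u^{-2})$. The only delicate point is verifying that the independence of the right-hand sides from the auxiliary index $k$ (already guaranteed by Lemma \ref{genlem}) is compatible with the commutator manipulations, which it is, since both formulas hold for every $k$.
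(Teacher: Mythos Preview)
Your proof is correct and follows essentially the same route as the paper: the adjacent cases come from \eqref{e=f n2} (directly for $(1,2)$, via $\psi_{\mu_1}$ for $(2,3)$), and the $(1,3)$ case is deduced from \eqref{efgen} together with the two adjacent identities. Your write-up supplies the explicit sign and spectral-shift bookkeeping (in particular $F_{2;j,k}^{(1)}=-E_{2;k,j}^{(1)}$) that the paper leaves implicit, but the argument is the same.
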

\begin{proof}
The case when $a=1$ and $b=2$ follows from the previous Section \ref{sec:lower2}. For $a=2$ and $b=3$, this is obtained from the case $a=1$ and $b=2$ by taking the homomorphism $\psi_{\mu_1}$. As for the case $a=1$ and $b=3$, this follows from  a further application of \eqref{efgen}.
\end{proof}

\begin{lem}
We have
\begin{align}
(u-v)\big[E_{1;i,j}(u),E_{2;k,l}(v)\big]&=\delta_{jk}\Big(\sum_{q=1}^{\mu_2}\big(E_{1;i,q}(u)-E_{1;i,q}(v)\big)E_{2;q,l}(v)\nonumber\\&\qquad \qquad\qquad\qquad +E_{1,3;i,l}(v)-E_{1,3;i,l}(u)\Big),\label{e1e2 n3}\\
[E_{2;i,j}^{(1)},\wtl D_{3;k,l}(v)]&=-\sum_{r=1}^{\mu_3}\big(\delta_{jk}E_{2;i,r}(v)\wtl D_{3;r,l}(v)+\delta_{jl}\wtl D_{3;k,r}(v)F_{2;r,i}(v)\big).\label{e20d3}
\end{align}
\end{lem}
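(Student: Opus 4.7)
The proof of \eqref{e20d3} is essentially a one-step calculation from the $s$-$\tilde s$ relation \eqref{sts}. Under Gauss decomposition, $E_{2;i,j}^{(1)} = s_{\mu_1+i,\mu_1+\mu_2+j}^{(1)}$ and $\wtl D_{3;k,l}(v) = \tilde s_{\mu_1+\mu_2+k,\mu_1+\mu_2+l}(v)$. I would substitute these indices into \eqref{sts} and extract the coefficient of $u^1$: on the left-hand side this coefficient is exactly $[s^{(1)}_{\mu_1+i,\mu_1+\mu_2+j}, \tilde s_{\mu_1+\mu_2+k,\mu_1+\mu_2+l}(v)]$, since each $s_{ij}(u)$ starts with $\delta_{ij}$. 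With these specific indices the Kronecker factors $\delta_{il}$ and $\delta_{ik}$ in \eqref{sts} vanish (their arguments lie in different blocks), while $\delta_{jk}$ and $\delta_{jl}$ survive in their natural form. Rewriting the two surviving $\tilde s$-entries using the decomposition formulas $\tilde s_{\mu_1+i,\mu_1+\mu_2+l}(v) = -\sum_r E_{2;i,r}(v)\wtl D_{3;r,l}(v)$ and $\tilde s_{\mu_1+\mu_2+k,\mu_1+i}(v) = -\sum_r \wtl D_{3;k,r}(v) F_{2;r,i}(v)$ listed at the start of this section produces precisely the right-hand side of \eqref{e20d3}.

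For \eqref{e1e2 n3} the strategy is to apply the quaternary relation \eqref{qua} to $(u^2-v^2)[s_{i,\mu_1+j}(u), s_{\mu_1+k,\mu_1+\mu_2+l}(v)]$ and then re-express everything via Gauss decomposition. On the left-hand side I will substitute $s_{i,\mu_1+j}(u) = \sum_p D_{1;i,p}(u) E_{1;p,j}(u)$ together with the decomposition of $s_{\mu_1+k,\mu_1+\mu_2+l}(v)$ as $\sum_{p,q} F_{2,1;k,p}(v)D_{1;p,q}(v)E_{1,3;q,l}(v) + \sum_p D_{2;k,p}(v) E_{2;p,l}(v)$. The vanishing commutator $[D_{1;i,p}(u), D_{2;k,l}(v)] = 0$ (from Corollary~\ref{comcor} applied through the shift homomorphism $\psi_{\mu_1}$) shows that $D_{1;i,p}(u)$ commutes past both $D_{2;k,l}(v)$ and $E_{2;k,l}(v)$, while the commutators $[D_{1;i,p}(u), E_{1;p',j}(u)]$ are controlled by the 2-block relations of Proposition~\ref{prop2block}. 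Multiplying on the left by $\wtl D_{1;\alpha,i}(u)$ and summing over $i$, I can then strip the $D_1$ factors and isolate a commutator purely between $E_{1;\alpha,j}(u)$ and the relevant pieces of $s_{\mu_1+k,\mu_1+\mu_2+l}(v)$; the $E_{1,3}$ terms in \eqref{e1e2 n3} emerge naturally from expanding $s_{i,\mu_1+\mu_2+l}(v) = \sum_p D_{1;i,p}(v) E_{1,3;p,l}(v)$ in the quaternary output.

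The principal obstacle is the bookkeeping. The quaternary relation produces several mixed products of the form $s_{\mu_1+k,p}(u) s_{i,\mu_1+\mu_2+l}(v)$ and $s_{i,\mu_1+k}(u) s_{p,\mu_1+\mu_2+l}(v)$, each of which must be re-expressed using the Gauss decomposition formulas; for instance, $s_{\mu_1+k,p}(u) = \sum_q F_{2,1;k,q}(u)D_{1;q,p}(u)$ must be substituted in and the $F_{2,1}$ factors commuted past $D_1$ and $E_1$ via the 2-block relations (together with Lemma~\ref{e=f gen}). As an alternative that sidesteps much of this algebra, one could instead use the identity $E_{1,3;i,l}^{(r)} = [E_{1;i,k}^{(r)}, E_{2;k,l}^{(1)}]$ from Lemma~\ref{genlem} together with the Jacobi identity and the already-established 2-block relations to derive \eqref{e1e2 n3} inductively in the $u$-degree. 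Either route delivers the stated formula after clearing denominators and comparing coefficients.
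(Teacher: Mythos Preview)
Your treatment of \eqref{e20d3} is correct, and in fact cleaner than the paper's. The paper does not compute this directly from \eqref{sts}; instead it first extracts the $u^{-1}$--coefficient of the $2$-block identity \eqref{n2pf3} to obtain $[E_{1;i,j}^{(1)},\wtl D_{2;k,l}(v)]$ in $\Y_{(\mu_2,\mu_3)}^+$, and then applies the shift homomorphism $\psi_{\mu_1}$ via Lemma~\ref{psi lem}. Your direct route---reading off the $u^{1}$--coefficient of \eqref{sts} at the relevant indices and rewriting the two surviving $\tilde s$-entries---gives the same result in one step.

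For \eqref{e1e2 n3}, however, you are working with the wrong relation. The paper does \emph{not} use the quaternary relation \eqref{qua} on $[s_{i,\mu_1+j}(u),s_{\mu_1+k,\mu_1+\mu_2+l}(v)]$; it uses \eqref{sts} on $[s_{i,\mu_1+j}(u),\tilde s_{\mu_1+k,\mu_1+\mu_2+l}(v)]$. This choice is the whole point: $\tilde s_{\mu_1+k,\mu_1+\mu_2+l}(v)=-\sum_{r}E_{2;k,r}(v)\wtl D_{3;r,l}(v)$ factors through only $E_2$ and $\wtl D_3$, whereas $s_{\mu_1+k,\mu_1+\mu_2+l}(v)$ involves $F_{1}D_{1}E_{1,3}+D_{2}E_{2}$ and drags in all the $(2,1)$- and $(3,2)$-block terms on the right-hand side of \eqref{qua}. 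With the $\tilde s$-version only the single factor $\delta_{jk}$ survives, and since $[D_{1},E_{2}]=[E_{1},\wtl D_{3}]=0$ by Corollary~\ref{comcor}, one cancels $D_{1}(u)$ on the left and $\wtl D_{3}(v)$ on the right to land immediately on \eqref{e1e2 n3}. Your proposed route via \eqref{qua} may be salvageable, but the ``bookkeeping'' you flag is not incidental---it is a sign that the efficient identity has been missed. The inductive alternative you sketch is too vague: knowing $[E_{1;i,k}^{(r)},E_{2;k,l}^{(1)}]=E_{1,3;i,l}^{(r)}$ from Lemma~\ref{genlem} does not obviously determine $[E_{1;i,j}(u),E_{2;k,l}(v)]$ for general $v$.
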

\begin{proof}
By \eqref{sts}, we have
\[
(u-v)[s_{i,\mu_1+j}(u),\tl s_{\mu_1+k,\mu_1+\mu_2+j}(v)]=\delta_{jk}\sum_{a=1}^N s_{ia}(u)\tl s_{a,\mu_1+\mu_2+l}(v).
\]
Using parabolic generators, we have
\begin{align}
&(u-v)\Big[\sum_{p=1}^{\mu_1}D_{1;i,p}(u)E_{1;p,j}(u),-\sum_{r=1}^{\mu_3}E_{2;k,r}(v)\wtl D_{3;r,l}(v)\Big]\label{n3pf1}\\
&=\delta_{jk}\sum_{p=1}^{\mu_1}\sum_{r=1}^{\mu_3}D_{1;i,p}(u)\Big(\sum_{q=1}^{\mu_2}\big(E_{1;p,q}(v)-E_{1;p,q}(u)\big)E_{2;q,r}(v)-E_{1,3;p,r}(v)+E_{1,3;p,r}(u)\Big)\wtl D_{3;r,l}(v).\nonumber
\end{align}
Note that by Corollary \ref{comcor}, we have
\[
[E_{1;p,j}(u),\wtl D_{3;r,l}(v)]=[D_{1;i,p}(u),E_{2;k,r}(v)]=0.
\]
Therefore, we have
\begin{align*}
(u-v)\Big[\sum_{p=1}^{\mu_1}D_{1;i,p}(u)E_{1;p,j}(u),&-\sum_{r=1}^{\mu_3}E_{2;k,r}(v)\wtl D_{3;r,l}(v)\Big]\\
&=(u-v)\sum_{p=1}^{\mu_1}\sum_{r=1}^{\mu_3}D_{1;i,p}(u)\big[E_{1;p,j}(u),-E_{2;k,r}(v)\big]\wtl D_{3;r,l}(v).
\end{align*}
Comparing  it with \eqref{n3pf1} and canceling $D_1(u)$ from left and $D_{3}(v)$ from right, one immediately obtains \eqref{e1e2 n3}.

Expanding \eqref{n2pf3} in the region $\{(u,v)~|~ u\gg v\}$ and taking the coefficients of $u^{-1}$, we find that
\[
[E_{1;i,j}^{(1)},\wtl D_{2;k,l}(v)]=-\sum_{q=1}^{\mu_2}\big(\delta_{jk}E_{1;i,q}(v)\wtl D_{2;q,l}(v)+\delta_{jl}\wtl D_{2;k,q}(v)F_{1;q,i}(v)\big).
\]
Applying the homomorphism $\psi_{\mu_1}$ to the above formula in the algebra $\Y_{(\mu_2,\mu_3)}^+$, the relation \eqref{e20d3} follows from Lemma \ref{psi lem}.
\end{proof}

Thus we have the following component-wise commutator relations for $B_{1;i,j}^{(r)}$ and $B_{2;k,l}^{(s)}$.
\begin{cor}
We have
\[
[B_{1;i,j}^{(r+1)},B_{2;k,l}^{(s)}]- [B_{1;i,j}^{(r)},B_{2;k,l}^{(s+1)}] - \frac{\mu_2}{2}[B_{1;i,j}^{(r)},B_{2;k,l}^{(s)}]=-\delta_{il}\sum_{q=1}^{\mu_2} B_{1;q,j}^{(r)}B_{2;k,q}^{(s)}.
\]
\end{cor}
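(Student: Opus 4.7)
The plan is to derive the corollary as the component-wise form of a $B$-series identity, which itself comes from translating the $E$-series identity \eqref{e1e2 n3} into $F$-language and then into the shifted $B$-language. Concretely, first I would apply Lemma \ref{e=f gen} to \eqref{e1e2 n3}, substituting $E_{1;i,j}(u) = F_{1;j,i}(-u-\mu_1)$, $E_{2;k,l}(v) = F_{2;l,k}(-v-\mu_1-\mu_2)$, and $E_{1,3;i,l}(u) = F_{3,1;l,i}(-u-\mu_1)$. Setting $u' = -u-\mu_1$ and $v' = -v-\mu_1-\mu_2$, a short check gives $u - v = v' - u' + \mu_2$ and $-v-\mu_1 = v' + \mu_2$. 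After a swap of indices (to match the first index of $F_1$ being in $\mu_2$ and of $F_2$ in $\mu_3$, as for the corresponding $B$'s), \eqref{e1e2 n3} rewrites as
\begin{equation*}
(u' - v' - \mu_2)[F_{1;i,j}(u'), F_{2;k,l}(v')] = -\delta_{il}\Bigl(\sum_{q=1}^{\mu_2} \bigl(F_{1;q,j}(u') - F_{1;q,j}(v'+\mu_2)\bigr) F_{2;k,q}(v') + F_{3,1;k,j}(v'+\mu_2) - F_{3,1;k,j}(u')\Bigr).
\end{equation*}

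Next, I would translate to the $B$-series using the definition \eqref{Bb}: $B_{1;\alpha,\beta}(u) = F_{1;\alpha,\beta}(u-\mu_1/2)$ and $B_{2;\gamma,\delta}(u) = F_{2;\gamma,\delta}(u-(\mu_1+\mu_2)/2)$. Setting $\tilde u = u' + \mu_1/2$ and $\tilde v = v' + (\mu_1+\mu_2)/2$, one computes $u' - v' - \mu_2 = \tilde u - \tilde v - \mu_2/2$ and $F_{1;q,j}(v'+\mu_2) = B_{1;q,j}(\tilde v + \mu_2/2)$, so the identity becomes
\begin{equation*}
(\tilde u - \tilde v - \tfrac{\mu_2}{2})[B_{1;i,j}(\tilde u), B_{2;k,l}(\tilde v)] = -\delta_{il}\Bigl(\sum_{q=1}^{\mu_2}\bigl(B_{1;q,j}(\tilde u) - B_{1;q,j}(\tilde v + \tfrac{\mu_2}{2})\bigr)B_{2;k,q}(\tilde v) + \Psi(\tilde u,\tilde v)\Bigr),
\end{equation*}
where $\Psi$ collects the two $F_{3,1;k,j}$ terms, each a function of only one of $\tilde u$, $\tilde v$.

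Finally, I would extract the coefficient of $\tilde u^{-r}\tilde v^{-s}$ for $r,s\geq 1$. The left side yields exactly $[B_{1;i,j}^{(r+1)}, B_{2;k,l}^{(s)}] - [B_{1;i,j}^{(r)}, B_{2;k,l}^{(s+1)}] - \tfrac{\mu_2}{2}[B_{1;i,j}^{(r)}, B_{2;k,l}^{(s)}]$. On the right side, the term $B_{1;q,j}(\tilde v+\mu_2/2)B_{2;k,q}(\tilde v)$ and the $F_{3,1}$ corrections each depend on only one of $\tilde u, \tilde v$, so their coefficients at $\tilde u^{-r}\tilde v^{-s}$ vanish when $r,s\geq 1$, leaving $-\delta_{il}\sum_q B_{1;q,j}^{(r)}B_{2;k,q}^{(s)}$. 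This proves the corollary. The main obstacle is purely bookkeeping: composing the substitutions $(u,v)\mapsto (u',v')\mapsto (\tilde u,\tilde v)$ and the two index swaps without losing a sign on the $\mu_2/2$ term, which is where the derivation is most easily spoiled.
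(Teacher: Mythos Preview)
Your proof is correct and follows exactly the route the paper intends: the corollary is stated right after \eqref{e1e2 n3} with no proof, as a direct component-wise consequence of that identity after passing from $E$- to $F$- to $B$-generators via Lemma~\ref{e=f gen} and the shift \eqref{Bb}. Your writeup simply makes explicit the substitutions and coefficient extraction that the paper leaves to the reader.
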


\begin{lem}
We have
\begin{multline}
\big[E_{1,3;i,j}(u),E_{2;k,l}(v)\big]=\\ 
E_{2;k,j}(v)[E_{1;i,g}(u), E_{2;g,l}(v)]+\frac{1}{u+v+\mu_1}\sum_{q=1}^{\mu_2}\wtl D_{2;k,q}(v)(F_{1;q,i}(v)-E_{1;i,q}(u))D_{3;j,l}(v).
\label{e13e2 n3}
\end{multline}
\end{lem}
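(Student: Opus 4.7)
The plan is to derive \eqref{e13e2 n3} by applying the quaternary relation \eqref{sts} to the pair $(s_{i,\mu_1+\mu_2+j}(u),\tl s_{\mu_1+k,\mu_1+\mu_2+l}(v))$ and translating into parabolic form using the Gauss decomposition identities
\[
s_{i,\mu_1+\mu_2+j}(u)=\sum_{p=1}^{\mu_1}D_{1;i,p}(u)E_{1,3;p,j}(u), \qquad \tl s_{\mu_1+k,\mu_1+\mu_2+l}(v)=-\sum_{r=1}^{\mu_3}E_{2;k,r}(v)\wtl D_{3;r,l}(v)
\]
listed at the start of \S\ref{sec:lower3}. Expanding the LHS of \eqref{sts} via the Leibniz rule and using Corollary~\ref{comcor} (specifically $[D_1,E_2]=0=[D_1,\wtl D_3]$) produces two contributions: $\sum_{p,r}D_{1;i,p}(u)[E_{1,3;p,j}(u),E_{2;k,r}(v)]\wtl D_{3;r,l}(v)$, which is what we wish to isolate, and $\sum_{p,r}D_{1;i,p}(u)E_{2;k,r}(v)[E_{1,3;p,j}(u),\wtl D_{3;r,l}(v)]$, which must be treated as an auxiliary term.

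The auxiliary commutator $[E_{1,3;p,j}(u),\wtl D_{3;r,l}(v)]$ is obtained by running the same scheme on the pair $(s_{i,\mu_1+\mu_2+j}(u),\tl s_{\mu_1+\mu_2+r,\mu_1+\mu_2+l}(v))$; the resulting identity is a $(\mu_1+\mu_2,\mu_3)$-block analogue of \eqref{n2pf3}, which can alternatively be obtained by applying the shift homomorphism of Lemma~\ref{psi lem} to a 2-block formula. Substituting it back into the Leibniz expansion and combining with the RHS of \eqref{sts}, whose index specialisation forces only the $\delta_{jl}$-terms to survive and whose internal sum $\sum_{a=1}^N$ is split over the three blocks $[1,\mu_1]\cup[\mu_1+1,\mu_1+\mu_2]\cup[\mu_1+\mu_2+1,N]$, all cross-block $s$ and $\tl s$ products are rewritten in parabolic form via the decompositions at the start of the section. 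Multiplying both sides from the left by the appropriate entries of $\wtl D_1(u)$ and from the right by those of $D_3(v)$ then strips the outer $D_1$ and $\wtl D_3$ factors and isolates $[E_{1,3;i,j}(u),E_{2;k,l}(v)]$ in closed form.

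Finally, the two terms on the RHS of the target are recognised as follows. The $(u-v)^{-1}$-contribution collapses via \eqref{e1e2 n3} to $E_{2;k,j}(v)[E_{1;i,g}(u),E_{2;g,l}(v)]$, where the independence of this expression on the chosen index $g$ (required for the RHS to make sense as written) is guaranteed precisely by \eqref{e1e2 n3} with matching middle indices $j=k=g$. The $(u+v+\mu_1)^{-1}$-contribution arises from the reflection term in \eqref{sts}; after simplification via the 2-block formula \eqref{ef n2} for $[E_1,F_1]$ restricted to the $(\mu_1,\mu_2)$-subalgebra, it yields $(u+v+\mu_1)^{-1}\sum_q \wtl D_{2;k,q}(v)(F_{1;q,i}(v)-E_{1;i,q}(u))D_{3;j,l}(v)$. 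The main obstacle is the bookkeeping of nested index sums, and in particular the accurate computation and cancellation of the auxiliary $[E_{1,3},\wtl D_3]$-commutator, which is what makes it possible to disentangle the parabolic commutator of interest from the raw $[s,\tl s]$ expression produced by the quaternary relation.
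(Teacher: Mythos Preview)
Your strategy differs substantially from the paper's. Rather than attacking $[E_{1,3},E_2]$ directly, the paper applies \eqref{sts} to the pair $(s_{i,\mu_1+\mu_2+j}(u),\tl s_{\mu_1+\mu_2+k,\mu_1+l}(v))$ --- note the transposed block placement in the second argument --- for which $\tl s_{\mu_1+\mu_2+k,\mu_1+l}(v)=-\sum_r\wtl D_{3;k,r}(v)F_{2;r,l}(v)$ and only the $\delta_{jk}$-term survives. After the analogous Leibniz manipulation (with the auxiliary commutator $[E_{1,3},\wtl D_3]$ handled via $E_{1,3}=[E_1,E_2^{(1)}]$ from \eqref{efgen} together with \eqref{e20d3}; see \eqref{n3pf5}), one obtains the closed formula \eqref{n3pf6} for $[E_{1,3;i,j}(u),F_{2;k,l}(v)]$. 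The target \eqref{e13e2 n3} then follows in one stroke by applying the anti-automorphism $\tau$ of Lemma~\ref{tauimg} together with the substitution $u\to -u-\mu_1$ and Lemma~\ref{e=f gen}; the factor $(u+v+\mu_1)^{-1}$ emerges automatically from $(u-v)^{-1}$ under this substitution, with no appeal to \eqref{ef n2} needed.

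Your direct route can in principle be made to work, but there is a genuine gap in the treatment of the auxiliary commutator. The quantity $[E_{1,3;p,j}(u),\wtl D_{3;r,l}(v)]$ is \emph{not} a $(\mu_1+\mu_2,\mu_3)$-block analogue of \eqref{n2pf3}, and it is not obtainable from a 2-block identity via the shift homomorphism: the map $\psi_{\mu_1}$ of Lemma~\ref{psi lem} sends 2-block $E_1$ to 3-block $E_2$, not to $E_{1,3}$, and the 2-block $E_1$ in the $(\mu_1+\mu_2,\mu_3)$-decomposition is a genuinely different quasideterminant from the 3-block $E_{1,3}$. The correct mechanism --- and exactly the one the paper uses --- is the Jacobi identity on $E_{1,3}=[E_1,E_2^{(1)}]$ combined with $[E_1,\wtl D_3]=0$ and \eqref{e20d3}. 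With that correction your scheme becomes viable but considerably heavier: under your index specialisation only the $\delta_{jl}(u-v)$ and constant $\delta_{jl}$ terms of \eqref{sts} survive (so there is no ``$(u-v)^{-1}$-contribution'' in the sense you describe), the block expansion of the RHS produces $\tl s_{\mu_1+k,a}(v)$ for $a\lle\mu_1$, whose parabolic form (involving $\wtl D_2F_1$ and further products) is not among the identities recorded at the start of the section, and the residual $[E_1,F_2]$ term left over from the auxiliary step must still be matched against this expansion before the $(u+v+\mu_1)^{-1}$ structure emerges. The paper's $\tau$-trick sidesteps all of this bookkeeping.
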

\begin{proof}
Start with (\ref{sts})
\begin{multline*}
(u-v)[s_{i,\mu_1+\mu_2+j}(u), \tl s_{\mu_1+\mu_2+k,\mu_1+l}(v)]= \\
\delta_{jk}
\Big( \sum_{a=1}^{\mu_1} s_{i,a}(u) \tl s_{a,\mu_1+l}(v)+\sum_{b=\mu_{(1)}+1}^{\mu_{(2)}} s_{i,b}(u) \tl s_{b,\mu_1+l}(v)
+\sum_{c=\mu_{(2)}+1}^{\mu_{(3)}} s_{i,c}(u) \tl s_{c,\mu_1+l}(v)\Big).
\end{multline*}
Replacing everything by parabolic generators, we obtain
\begin{multline*}
(u-v)\Big[ \sum_{p=1}^{\mu_1} D_{1;i,p}(u)E_{1,3;p,j}(u), -\sum_{r=1}^{\mu_3} \wtl D_{3;k,r}(v) F_{2;r,l}(v) \Big]=
\delta_{jk} \times \bigg\{\\
\sum_{p=1}^{\mu_1}\sum_{q=1}^{\mu_2} D_{1;i,p}(u) \Big( -  E_{1;p,q}(v)\wtl D_{2;q,l}(v)+ \sum_{r,t=1}^{\mu_3}(E_{1,3;p,r}(v)-E_{1;p,q}(v)E_{2;q,r}(v))\wtl D_{3;r,t}(v)F_{2;t,l}(v) \Big) \\
+\sum_{q=1}^{\mu_2} \Big(\sum_{p=1}^{\mu_1} D_{1;i,p}(u)E_{1;p,q}(u) \Big) \Big( \wtl D_{2;q,l}(v) + \sum_{r,t=1}^{\mu_3} E_{2;q,r}(v)\wtl D_{3;r,t}(v)F_{2;t,l}(v)\Big)  \\
+\sum_{r=1}^{\mu_3} \Big( \sum_{a=1}^{\mu_1} D_{1;i,p}(u)E_{1,3;p,r}(u) \Big) \Big(-\sum_{t=1}^{\mu_3}\wtl D_{3;r,t}(v)F_{2;t,l}(v)  \Big)\bigg\}.
\end{multline*}
Since $D_{1;i,j}(u)=s_{ij}(u)$ and $\tl s_{\mu_1+\mu_2+k,\mu_1+l}(v)$ commute, we may cancel $D_{1}(u)$ from the left and deduce that 
\begin{multline}
(u-v)\Big[ E_{1,3;i,j}(u), \sum_{r=1}^{\mu_3} \wtl D_{3;k,r}(v) F_{2;r,l}(v) \Big]=
\delta_{jk} \times \bigg\{ \sum_{q=1}^{\mu_2} \big(E_{1;i,q}(v)-E_{1;i,q}(u)\big)\wtl D_{2;q,l}(v) \\
+ \sum_{r,t=1}^{\mu_3}\sum_{q=1}^{\mu_2}\Big(E_{1,3;i,r}(u)-E_{1,3;i,r}(v)+\big(E_{1;i,q}(v)-E_{1;i,q}(u)\big)E_{2;q,r}(v)\Big)\wtl D_{3;r,t}(v)F_{2;t,l}(v) \bigg\}.\label{n3pf2}
\end{multline}
Using \eqref{e1e2 n3}, the RHS of \eqref{n3pf2} can be written as
\beq\label{n3pf3}
\delta_{jk}\sum_{q=1}^{\mu_2}\Big(\big(E_{1;i,q}(v)-E_{1;i,q}(u)\big)\wtl D_{2;q,l}(v)-(u-v)\sum_{r,t=1}^{\mu_3}[E_{1;i,j}(u),E_{2;k,r}(v)]\wtl D_{3;r,t}(v)F_{2;t,l}(v)\Big).
\eeq
For the LHS of \eqref{n3pf2}, we have
\begin{multline}
(u-v)\Big[ E_{1,3;i,j}(u), \sum_{r=1}^{\mu_3} \wtl D_{3;k,r}(v) F_{2;r,l}(v) \Big]\\
=(u-v)\sum_{r=1}^{\mu_3}\Big(\big[ E_{1,3;i,j}(u),  \wtl D_{3;k,r}(v)  \big]F_{2;r,l}(v)+\wtl D_{3;k,r}(v)\big[ E_{1,3;i,j}(u),   F_{2;r,l}(v) \big]\Big).\label{n3pf4}
\end{multline}
We shall work on the first term in the RHS of \eqref{n3pf4},
\begin{align}
\sum_{r=1}^{\mu_3}&\big[ E_{1,3;i,j}(u),  \wtl D_{3;k,r}(v)  \big]F_{2;r,l}(v)\notag\\
\stackrel{\eqref{efgen}}{=}&\sum_{r=1}^{\mu_3}\big[ [E_{1;i,g}(u),E_{2;g,l}^{(1)}], \wtl D_{3;k,r}(v)  \big]F_{2;r,l}(v)\notag\\
\stackrel{\eqref{defcom}}{=}&\sum_{r=1}^{\mu_3}\big[ E_{1;i,g}(u),[E_{2;g,l}^{(1)}, \wtl D_{3;k,r}(v)]  \big]F_{2;r,l}(v)\notag\\
\stackrel{\eqref{e20d3}}{=}&-\sum_{r,t=1}^{\mu_3}\big[ E_{1;i,g}(u),\delta_{jk}E_{2;g,t}(v)\wtl D_{3;t,r}(v)+\delta_{jr}\wtl D_{3;k,t}(v)F_{2;t,g}(v)  \big]F_{2;r,l}(v)\notag\\
\stackrel{\eqref{defcom}}{=}&-\delta_{jk}\sum_{r,t=1}^{\mu_3}\big[E_{1;i,g}(u), E_{2;g,t}(v)\big]\wtl D_{3;t,r}(v)F_{2;r,l}(v)\notag\\
&\hskip5cm-\sum_{r=1}^{\mu_3}\wtl D_{3;k,r}(v)\big[E_{1;i,g}(u),F_{2;r,g}(v)\big]F_{2;j,l}(v).\label{n3pf5}
\end{align}
Plugging \eqref{n3pf5} into \eqref{n3pf4} and comparing the resulting \eqref{n3pf4} with \eqref{n3pf3}, we conclude that
\begin{multline*}
    \delta_{jk}\sum_{q=1}^{\mu_2} \big(E_{1;i,q}(v)-E_{1;i,q}(u)\big)\wtl D_{2;q,l}(v)\\=-(u-v)\sum_{r=1}^{\mu_3}\Big(\wtl D_{3;k,r}(v)\big[E_{1;i,g}(u),F_{2;r,g}(v)\big]F_{2;j,l}(v)-\wtl D_{3;k,r}(v)\big[ E_{1,3;i,j}(u),   F_{2;r,l}(v) \big]\Big).
\end{multline*}
Multiplying $D_3(v)$ from left, we have
\beq\label{n3pf6}
\begin{split}
[E_{1,3;i,j}(u),F_{2;k,l}(v)]&=[E_{1;i,g}(u),F_{2;k,g}(v)]F_{2;j,l}(v)\\&+\frac{1}{u-v}\sum_{q=1}^{\mu_2}\big(E_{1;i,q}(v)-E_{1;i,q}(u)\big)\wtl D_{2;q,l}(v)D_{3;k,j}(v).
\end{split}
\eeq
Finally, applying the anti-automorphism $\tau$ to \eqref{n3pf6} with the substitution $u\to -u-\mu_1$, the relation \eqref{e13e2 n3} follows from Lemmas \ref{tauimg} and \ref{e=f gen}.
\end{proof}

\subsection{Preparation for Serre relations}
We prepare several identities that will be used to establish the Serre relations. By \eqref{e1e2 n3}, we have
\begin{align}
[E_{1;i,j}(u),E_{2;k,l}^{(1)}]&=\delta_{jk}E_{13;i,l}(u),\label{ser1}\\
[E_{1;i,j}^{(1)},E_{2;k,l}(u)]&=\delta_{jk}\Big(E_{13;i,l}(u)-\sum_{q=1}^{\mu_2}E_{1;i,q}(u)E_{2;q,l}(u)\Big).\label{ser2}
\end{align}
It follows from \eqref{e13e2 n3} that
\beq\label{ser3}
[E_{1,3;i,j}(u),E_{2;k,l}^{(1)}]=-\delta_{jl}E_{1;i,k}(u).
\eeq
Using Lemma \ref{e=f gen} and \eqref{n3pf6}, we obtain
\beq\label{ser4}
\begin{split}
&[E_{1,3;i,j}^{(1)},E_{2;lk}(u)]\\
&=[E_{1;i,g}^{(1)},E_{2;g,k}(u)]E_{2;l,j}(u)+\sum_{q=1}^{\mu_2}E_{1;i,q}(-u-\mu_{(2)})\wtl D_{2;q,l}(-u-\mu_{(2)}) D_{3;k,j}(-u-\mu_{(2)}).
\end{split}
\eeq
Applying the homomorphism $\psi_{\mu_1}$ to \eqref{n2pf8} and using Lemma \ref{psi lem}, we get
\beq\label{ser5}
[E_{2;i,j}(u),E_{2;k,l}^{(1)}]=\wtl D_{2;i,k}(u)D_{3;l,j}(u)-\delta_{ik}\delta_{lj}-E_{2;i,l}(u)E_{2;k,j}(u).
\eeq
\begin{lem} We have
\beq\label{ser6}
\begin{split}
&\big[[E_{1;i,j}^{(1)},E_{2;j,k}(u)],E_{2;l,g}^{(1)}\big]+\big[[E_{1;i,j}^{(1)},E_{2;j,k}^{(1)}],E_{2;l,g}(u)\big]\\
=&\sum_{q=1}^{\mu_2}\Big(E_{1;i,q}(-u-\mu_{(2)})\wtl D_{2;q,l}(-u-\mu_{(2)}) D_{3;g,k}(-u-\mu_{(2)})-E_{1;i,q}(u)\wtl D_{2;q,l}(u) D_{3;g,k}(u)\Big).
\end{split}
\eeq
\end{lem}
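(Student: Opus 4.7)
The plan is to expand each of the two double commutators on the left of \eqref{ser6} using the identities \eqref{ser1}--\eqref{ser5} collected just above, and to observe that a certain auxiliary series $\wtl E_{1,3;i,g}(u)$ appears with opposite signs in the two expansions and so cancels out.

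First I would apply \eqref{ser2} to obtain
\[
[E_{1;i,j}^{(1)},E_{2;j,k}(u)] = E_{1,3;i,k}(u) - \sum_{q=1}^{\mu_2}E_{1;i,q}(u)E_{2;q,k}(u),
\]
and extract the $u^{-1}$ coefficient to get $[E_{1;i,j}^{(1)},E_{2;j,k}^{(1)}] = E_{1,3;i,k}^{(1)}$ (note $E_{2;q,k}^{(0)}=0$). Next I would bracket the first line with $E_{2;l,g}^{(1)}$. The term $[E_{1,3;i,k}(u),E_{2;l,g}^{(1)}]$ contributes $-\delta_{kg}E_{1;i,l}(u)$ by \eqref{ser3}. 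For the product $E_{1;i,q}(u)E_{2;q,k}(u)$ I would use Leibniz together with \eqref{ser1} (for the left factor) and \eqref{ser5} (for the right factor). The $\delta_{kg}E_{1;i,l}(u)$ terms cancel, leaving
\[
\big[[E_{1;i,j}^{(1)},E_{2;j,k}(u)],E_{2;l,g}^{(1)}\big] = -\sum_{q} E_{1;i,q}(u)\wtl D_{2;q,l}(u)D_{3;g,k}(u) + \wtl E_{1,3;i,g}(u)E_{2;l,k}(u),
\]
where $\wtl E_{1,3;i,g}(u):=\sum_q E_{1;i,q}(u)E_{2;q,g}(u)-E_{1,3;i,g}(u)$ is the series recalled earlier in this section.

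For the second summand I would apply \eqref{ser4} directly to $[E_{1,3;i,k}^{(1)},E_{2;l,g}(u)]$, specialising its indices accordingly. Since \eqref{ser2} gives $[E_{1;i,p}^{(1)},E_{2;p,g}(u)] = -\wtl E_{1,3;i,g}(u)$, this produces
\[
\big[[E_{1;i,j}^{(1)},E_{2;j,k}^{(1)}],E_{2;l,g}(u)\big] = -\wtl E_{1,3;i,g}(u)E_{2;l,k}(u) + \sum_q E_{1;i,q}(-u-\mu_{(2)})\wtl D_{2;q,l}(-u-\mu_{(2)}) D_{3;g,k}(-u-\mu_{(2)}).
\]
Adding the two expressions, the $\wtl E_{1,3;i,g}(u)E_{2;l,k}(u)$ terms cancel and one recovers precisely the right-hand side of \eqref{ser6}.

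There is no serious obstacle: the identities \eqref{ser1}--\eqref{ser5} reduce the argument to bookkeeping. The one subtlety to watch for is the distinction between $E_{1,3;i,k}(u)$ and $\sum_q E_{1;i,q}(u)E_{2;q,k}(u)$, whose difference is exactly $-\wtl E_{1,3;i,g}(u)$; this is the piece which must cancel between the two summands for the identity to close up.
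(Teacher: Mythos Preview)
Your proposal is correct and follows essentially the same route as the paper: apply \eqref{ser1}--\eqref{ser2} to the inner brackets, then \eqref{ser3}, \eqref{ser5} and \eqref{ser4} to the outer ones, and check that everything except the two $D$-terms cancels. Your use of the shorthand $\wtl E_{1,3;i,g}(u)$ (already introduced earlier in the section) packages the cancellation more cleanly than the paper, which simply expands everything and cancels term by term; apart from that cosmetic difference, the arguments are identical. (Minor slip: in your closing sentence the difference $E_{1,3;i,k}(u)-\sum_q E_{1;i,q}(u)E_{2;q,k}(u)$ is $-\wtl E_{1,3;i,k}(u)$, not $-\wtl E_{1,3;i,g}(u)$.)
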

\begin{proof}
We have
\begin{align*}
&\big[[E_{1;i,j}^{(1)},E_{2;j,k}(u)],E_{2;l,g}^{(1)}\big]+\big[[E_{1;i,j}^{(1)},E_{2;j,k}^{(1)}],E_{2;l,g}(u)\big]
\\
\overset{\eqref{ser1}}{\underset{\eqref{ser2}}{=}}
&\Big[E_{1,3;i,k}(u)-\sum_{q=1}^{\mu_2}E_{1;i,q}(u)E_{2;q,k}(u),E_{2;l,g}^{(1)}\Big]+[E_{1,3;i,k}^{(1)},E_{2;l,g}(u)]\\
\overset{\eqref{ser3}}{\underset{\eqref{ser4}}{=}}&-\delta_{kg}E_{1;i,l}(u)-\sum_{q=1}^{\mu_2}[E_{1;i,q}(u),E_{2;l,g}^{(1)}]E_{2;q,k}(u)-\sum_{q=1}^{\mu_2}E_{1;i,q}(u)[E_{2;q,k}(u),E_{2;l,g}^{(1)}]\\
&+[E_{1;i,p}^{(1)},E_{2;p,g}(u)]E_{2;l,k}(u)+\sum_{q=1}^{\mu_2}E_{1;i,q}(-u-\mu_{(2)})\wtl D_{2;q,l}(-u-\mu_{(2)}) D_{3;g,k}(-u-\mu_{(2)})\\
\overset{\eqref{ser1}}{\underset{\eqref{ser5}}{=}}&-\delta_{kg}E_{1;i,l}(u)-E_{13;i,g}(u)E_{2;l,k}(u)\\
&-\sum_{q=1}^{\mu_2}E_{1;i,q}(u)\Big(\wtl D_{2;q,l}(u)D_{3;g,k}(u)-\delta_{kg}\delta_{ql}-E_{2;q,g}(u)E_{2;l,k}(u)\Big)\\
&+E_{13;i,g}(u)E_{2;l,k}(u)-\sum_{q=1}^{\mu_2}E_{1;i,q}(u)E_{2;q,g}(u)E_{2;l,k}(u)\\
&+\sum_{q=1}^{\mu_2}E_{1;i,q}(-u-\mu_{(2)})\wtl D_{2;q,l}(-u-\mu_{(2)}) D_{3;g,k}(-u-\mu_{(2)})\\
=&-\sum_{q=1}^{\mu_2}E_{1;i,q}(u)\wtl D_{2;q,l}(u) D_{3;g,k}(u)\\
&\qquad +\sum_{q=1}^{\mu_2}E_{1;i,q}(-u-\mu_{(2)})\wtl D_{2;q,l}(-u-\mu_{(2)}) D_{3;g,k}(-u-\mu_{(2)}),
\end{align*}
completing the proof.
\end{proof}
Recall that by Lemma \ref{e=f gen}, we have
\beq
B_{a;j,i}(u)=E_{a;i,j}(-u-\tfrac{\mu_{(a)}}{2}).
\eeq
\begin{lem}\label{+deg}
We have
\begin{align*}
\Big[\wtl D_{1;i,i}^{(2)},E_{1;i,j}(u)\Big]&=-(2u+\mu_1)E_{1;i,j}(u)+2E_{1;i,j}^{(1)},\\
\Big[D_{2;j,j}^{(2)},E_{1;i,j}(u)\Big]&=-(2u+\mu_1)E_{1;i,j}(u)+2E_{1;i,j}^{(1)}.
\end{align*}
In particular,
\begin{align*}
\Big[\wtl D_{1;i,i}^{(2)},B_{1;j,i}^{(r)}\Big] =2B_{1;j,i}^{(r+1)},\qquad 
\Big[D_{2;j,j}^{(2)},B_{1;j,i}^{(r)}\Big] =2B_{1;j,i}^{(r+1)}.
\end{align*}
\end{lem}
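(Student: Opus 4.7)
The plan is to extract the stated identities as coefficients in suitable generating-series commutator formulas derived from the two-block relations already established in Proposition~\ref{prop2block}. Throughout, I will use the convention \eqref{a1thetadef} and exploit that $\wtl D_{1;i,i}(u)$ and $D_{2;j,j}(u)$ are specific entries of the inverse of $D_1(u)$ and of $D_2(u)$.

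First I would derive a generating-series formula for $[\wtl D_{1;i,j}(u),E_{1;k,l}(v)]$ from \eqref{d1e n2}. Starting from the identity $\sum_p D_{1;a,p}(u)\wtl D_{1;p,b}(u)=\delta_{ab}$, taking the commutator with $E_{1;k,l}(v)$, and conjugating by $\wtl D_1(u)$ on the left, one obtains
\begin{equation*}
[\wtl D_{1;i,j}(u),E_{1;k,l}(v)]= -\frac{1}{u-v}\bigl(E_{1;i,l}(v)-E_{1;i,l}(u)\bigr)\wtl D_{1;k,j}(u)-\frac{1}{u+v+\mu_1}\wtl D_{1;i,k}(u)\bigl(F_{1;l,j}(u)-E_{1;j,l}(v)\bigr).
\end{equation*}
A completely analogous computation starting with \eqref{d2e n2} is not required, as $D_{2;j,j}(u)$ is directly handled by \eqref{d2e n2} with $i=j$ and $l=j$.

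Next I would specialize the first formula to $i=j$, $k=i$, $l=j$ and extract coefficients in $u$. The coefficient of $u^{-1}$ gives $[\wtl D_{1;i,i}^{(1)},E_{1;i,j}(v)]=0$, using that $E_{1;p,l}^{(0)}=F_{1;l,p}^{(0)}=0$ and that the two terms cancel. Extracting the coefficient of $u^{-2}$, using the expansions
\begin{equation*}
\frac{1}{u-v}=\sum_{k\gge 0}v^ku^{-k-1},\qquad\frac{1}{u+v+\mu_1}=\sum_{k\gge 0}(-1)^k(v+\mu_1)^ku^{-k-1},
\end{equation*}
and keeping only the nonvanishing contributions, one finds
\begin{equation*}
[\wtl D_{1;i,i}^{(2)},E_{1;i,j}(v)]=[\wtl D_{1;i,i}^{(1)},E_{1;i,j}(v)]+2E_{1;i,j}^{(1)}-F_{1;j,i}^{(1)}-E_{1;i,j}^{(1)}-(2v+\mu_1)E_{1;i,j}(v),
\end{equation*}
which collapses to the first identity after invoking $F_{1;j,i}^{(1)}=-E_{1;i,j}^{(1)}$ (an instance of Lemma \ref{e=f gen}) and the vanishing of the commutator with $\wtl D_{1;i,i}^{(1)}$. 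The same style of computation applied directly to \eqref{d2e n2}, specialized to $i=j$ and $l=j$, yields the second identity.

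Finally, to obtain the $B$-versions I would substitute $v\mapsto -u-\mu_1/2$ and use Lemma~\ref{e=f gen} in the form $E_{1;i,j}(-u-\mu_1/2)=B_{1;j,i}(u)$, so that
\begin{equation*}
[\wtl D_{1;i,i}^{(2)},B_{1;j,i}(u)]=2E_{1;i,j}^{(1)}+2u\,B_{1;j,i}(u),
\end{equation*}
and similarly for $D_{2;j,j}^{(2)}$. Since $B_{1;j,i}^{(0)}=0$ and the constant term on the right vanishes because $E_{1;i,j}^{(1)}+B_{1;j,i}^{(1)}=0$, extracting the coefficient of $u^{-r}$ for $r\gge 1$ yields $[\wtl D_{1;i,i}^{(2)},B_{1;j,i}^{(r)}]=2B_{1;j,i}^{(r+1)}$, as required. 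The main mild obstacle is simply bookkeeping of the series expansions and making sure that the commutators involving $\wtl D_{1;i,i}^{(1)}$ (respectively $D_{2;j,j}^{(1)}$) vanish so that the coefficient-of-$u^{-2}$ formulas are clean; this is the content of the $u^{-1}$ coefficient identity verified as a by-product of the same computation.
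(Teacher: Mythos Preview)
Your proposal is correct and follows essentially the same route as the paper. Both arguments derive a closed formula for $[\wtl D_{1;i,j}(u),E_{1;k,l}(v)]$ from \eqref{d1e n2} (the paper records it as \eqref{n5pf1} after clearing denominators, you keep the rational form), then extract the $u^{-1}$ coefficient to see $[\wtl D_{1;i,i}^{(1)},E_{1;i,j}(v)]=0$ and the $u^{-2}$ coefficient to obtain the claim, using $F_{1;j,i}^{(1)}=-E_{1;i,j}^{(1)}$ from \eqref{e=f n2}. Your explicit passage to the $B$-identities via the substitution $u\mapsto -u-\mu_1/2$ and \eqref{b=c n2} spells out what the paper leaves implicit; the intermediate display of your $u^{-2}$ coefficient has some redundant terms ($2E^{(1)}-E^{(1)}$), but the final identity is correct.
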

\begin{proof}
We only prove the first one as the second one is similar.

It follows from \eqref{d1e n2} that
\beq\label{n5pf1}
\begin{split}
&(u-v)(u+v+\mu_1)[\wtl D_{1;i,j}(u),E_{1;k,l}(v)]\\&=(u+v+\mu_1)(E_{1;i,l}(u)-E_{1;i,l}(v))\wtl D_{1;k,j}(u)+(u-v)\wtl D_{1;i,k}(u)(E_{1;j,l}(v)-F_{1;l,j}(u)).    
\end{split}
\eeq
Taking the coefficients of $u$ in \eqref{n5pf1}, we find that
\beq\label{n5pf2}
[\wtl D_{1;i,j}^{(1)},E_{1;k,l}(v)]=-\delta_{jk}E_{1;i,l}(v)+\delta_{ik}E_{1;j,l}(v).
\eeq
In particular, we have 
\begin{align}
[\wtl D_{1;i,i}^{(1)},E_{1;k,l}(v)]&=0.\label{n5pf3}
\end{align}
Taking the coefficients of $u^0$ and applying \eqref{n5pf3}, we obtain
\beq\label{n5pf5}
[\wtl D_{1;i,i}^{(2)},E_{1;k,l}(v)]=-(2v+\mu_1)\delta_{ik}E_{1;i,l}(v)+2\delta_{ik}E_{1;i,l}^{(1)}+\wtl D_{1;i,k}^{(1)}E_{1;i,l}(v)-E_{1;i,l}(v)\wtl D_{1;k,i}^{(1)}.
\eeq
Thus the first identity follows from \eqref{n5pf5} and \eqref{n5pf3}.
\end{proof}

\subsection{Serre relations}\label{ssec:serre}
In this subsection, we shall deduce the Serre relations involving
\[
\big[[E_{1;i,j}^{(s)},E_{2;j,k}^{(r_1)}],E_{2;l,g}^{(r_2)}\big]+\big[[E_{1;i,j}^{(s)},E_{2;j,k}^{(r_2)}],E_{2;l,g}^{(r_1)}\big].
\]

\subsubsection{The case $\mu_2>1$} 
Using \eqref{ee n2}, we find that
\beq\label{ss}
\begin{split}
&[E_{2;j,k}(v),E_{2;l,g}(w)]+[E_{2;j,k}(w),E_{2;l,g}(v)]\\
=&\frac{1}{v+w+\mu_{(2)}}\Big(\wtl D_{2;j,l}(v)D_{3;g,k}(v)-\wtl D_{2;j,l}(-w-\mu_{(2)})D_{3;g,k}(-w-\mu_{(2)})\\
&\hskip3cm +\wtl D_{2;j,l}(w)D_{3;g,k}(w)-\wtl D_{2;j,l}(-v-\mu_{(2)})D_{3;g,k}(-v-\mu_{(2)})\Big).
\end{split}
\eeq

In this case, it follows from \eqref{e1e2 n3} that $[E_{1;i,j}(u),E_{2;j,k}(v)]$ is independent of $j$ and hence we can assume further that $j\ne l$. By the Jacobi identity and \eqref{e13e2 n3}, we have
\begin{eqnarray*}
& & \big[[E_{1;i,j}(u),E_{2;j,k}(v)],E_{2;l,g}(w)\big]+\big[[E_{1;i,j}(u),E_{2;j,k}(w)],E_{2;l,g}(v)\big]\\
&=& \big[E_{1;i,j}(u),[E_{2;j,k}(v),E_{2;l,g}(w)]\big]+\big[E_{1;i,j}(u),[E_{2;j,k}(w),E_{2;l,g}(v)]\big]\\
& =&\big[E_{1;i,j}(u),[E_{2;j,k}(v),E_{2;l,g}(w)]+[E_{2;j,k}(w),E_{2;l,g}(v)]\big]\\
& =&\frac{1}{v+w+\mu_{(2)}}\Big[E_{1;i,j}(u),\wtl D_{2;j,l}(v)D_{3;g,k}(v)-\wtl D_{2;j,l}(-w-\mu_{(2)})D_{3;g,k}(-w-\mu_{(2)})\\
& &\hskip3.5cm +\wtl D_{2;j,l}(w)D_{3;g,k}(w)-\wtl D_{2;j,l}(-v-\mu_{(2)})D_{3;g,k}(-v-\mu_{(2)})\Big].
\end{eqnarray*}

Therefore, we have
\begin{align*}
\big[[B_{1;j,i}(u),&B_{2;k,j}(v)],B_{2;g,l}(w)\big]+\big[[B_{1;j,i}(u),B_{2;k,j}(w)],B_{2;g,l}(v)\big]\\
=&\frac{1}{v+w}\big[B_{1;j,i}(u),Z_{2;j,l,g,k}(v)+Z_{2;j,l,g,k}(w)-Z_{2;j,l,g,k}(-v)-Z_{2;j,l,g,k}(-w)\big].
\end{align*}
This gives rise to the following component-wise formula,
\beq\label{ss>1}
\big[[B_{1;j,i}^{(q)},B_{2;k,j}^{(r_1)}],B_{2;g,l}^{(r_2)}\big]+\big[[B_{1;j,i}^{(q)},B_{2;k,j}^{(r_2)}],B_{2;g,l}^{(r_1)}\big]=((-1)^{r_1}+(-1)^{r_2})[Z_{2;j,l,g,k}^{(r_1+r_2-1)},B_{1;j,i}^{(q)}].
\eeq

One can obtain a different version of Serre relations by using \eqref{n2pf5} instead of \eqref{ee n2},
\beq\label{serrepr}
\begin{split}
\big[[B_{1;j,i}(u),&B_{2;k,j}(v)],B_{2;g,l}(w)\big]+\big[[B_{1;j,i}(u),B_{2;k,j}(w)],B_{2;g,l}(v)\big]\\
=&\frac{1}{v+w}\big[B_{1;j,i}(u),Z_{2;l,j,k,g}(-v)+Z_{2;l,j,k,g}(-w)-Z_{2;l,j,k,g}(v)-Z_{2;l,j,k,g}(w)\big].
\end{split}
\eeq
Applying the automorphism $\zeta_N$ and using Proposition \ref{zeta-pro}, one obtains the other Serre relation
\[
\big[[B_{2;i,j}^{(q)},B_{1;k,l}^{(r_1)}],B_{1;f,g}^{(r_2)}\big]+\big[[B_{2;i,j}^{(q)},B_{1;k,l}^{(r_2)}],B_{1;f,g}^{(r_1)}\big]=\delta_{jk}((-1)^{r_1}+(-1)^{r_2})[Z_{1;l,g,f,k}^{(r_1+r_2-1)},B_{2;i,j}^{(q)}].
\]

\subsubsection{The case $\mu_2=1$}
In this case, \eqref{ser6} simplifies as
\beq\label{BBB}
\begin{split}
\big[[B_{1;j,i}^{(1)}&,B_{2;k,j}(u)],B_{2;g,j}^{(1)}\big]+\big[[B_{1;j,i}^{(1)},B_{2;k,j}^{(1)}],B_{2;g,j}(u)\big]\\
&=B_{1;j,i}(-u+\tfrac{1}{2})Z_{2;j,j,g,k}(u)-B_{1;j,i}(u+\tfrac{1}{2})Z_{2;j,j,g,k}(-u)
\end{split}
\eeq
Fix $i,j,g,k$. We shall use the following shorthand notation:
\beq\label{SymSerr}
\mathbb S(q;r_1,r_2):=\big[[B_{1;j,i}^{(q)},B_{2;k,j}^{(r_1)}],B_{2;g,j}^{(r_2)}\big]+\big[[B_{1;j,i}^{(q)},B_{2;k,j}^{(r_2)}],B_{2;g,j}^{(r_1)}\big],
\eeq
\beq\label{Serre-rhs}
\Omega(q;r)=\sum_{p_1=1}^{r-1}\sum_{p_2=0}^{r-1-p_1}(-1)^{p_1-1} 2^{-p_2}{p_1+p_2-1\choose p_2} B_{1;j,i}^{(p_1+q-1)}Z_{2;j,j,g,k}^{(r-1-p_1-p_2)}.
\eeq
Since $\mathbb S(q;r_1,r_2)$ is symmetric in $r_1$ and $r_2$, we shall assume further that $r_1\gge r_2$. 

\begin{lem}\label{serre-lem1}
Under the above assumption, we have 
\beq\label{serr-form}
\mathbb S(q;r_1,r_2)=\big((-1)^{r_1}+(-1)^{r_2}\big)\Omega(q;r_1+r_2).
\eeq
\end{lem}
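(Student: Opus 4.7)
The plan is to prove \eqref{serr-form} by double induction: first on $r_2$ and then on $q$, taking \eqref{BBB} as the generating-series base case $q=r_2=1$. The evident symmetry $\mathbb S(q; r_1, r_2) = \mathbb S(q; r_2, r_1)$ built into \eqref{SymSerr} allows us to assume $r_1 \geq r_2$ throughout.

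I would first verify the base case by extracting the coefficient of $u^{-r_1}$ from both sides of \eqref{BBB}. The left-hand side directly yields $\mathbb S(1; r_1, 1)$. For the right-hand side, the shifted generating series expand as
\begin{equation*}
    (-u + \tfrac12)^{-p} = (-1)^p \sum_{m\geq 0} 2^{-m}\binom{p+m-1}{m} u^{-p-m}, \qquad (u + \tfrac12)^{-p} = \sum_{m\geq 0}(-1)^m 2^{-m}\binom{p+m-1}{m} u^{-p-m},
\end{equation*}
and combining these with $Z_{2;j,j,g,k}(\pm u) = \sum_q (\pm 1)^q Z_{2;j,j,g,k}^{(q)} u^{-q}$ produces a double sum in $B_{1;j,i}^{(p_1)} Z_{2;j,j,g,k}^{(r_1 - p_1 - p_2)}$ whose $(p_1, p_2)$-coefficient is $(-1)^{p_1}(1 - (-1)^{r_1}) \cdot 2^{-p_2} \binom{p_1+p_2-1}{p_2}$. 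This equals $((-1)^{r_1} + (-1)) \cdot (-1)^{p_1-1} 2^{-p_2} \binom{p_1+p_2-1}{p_2}$, matching the right-hand side of \eqref{serr-form} at $(q, r_1, r_2) = (1, r_1, 1)$ for all $r_1 \geq 1$.

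Next I would raise $q$. Lemma \ref{+deg} gives $[\wtl D_{1;i,i}^{(2)}, B_{1;j,i}^{(q)}] = 2 B_{1;j,i}^{(q+1)}$, and by Corollary \ref{comcor} together with \eqref{dd n2}, the element $\wtl D_{1;i,i}^{(2)}$ commutes with $B_{2;k,j}^{(r)}$, $B_{2;g,j}^{(r)}$, and $Z_{2;j,j,g,k}^{(r)}$ (the last of these because $Z_2$ is a polynomial in $\wtl H_2, H_3$-entries, neither of which is adjacent to block $1$). Bracketing both sides of \eqref{serr-form} with $\wtl D_{1;i,i}^{(2)}$ scales them uniformly by $2$ and shifts $q \mapsto q+1$, so the induction on $q$ completes once the base case is established for all $r_1$.

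The main obstacle is the induction on $r_2$. For this step I would introduce the two-variable generating series $\widetilde{\mathbb S}(q; u, v)$ whose $u^{-r_1} v^{-r_2}$-coefficient is $\mathbb S(q; r_1, r_2)$, and expand it using the Jacobi identity. The crucial input is the symmetrized block-$2$ relation
\begin{equation*}
    [B_{2;k,j}(u), B_{2;g,j}(v)] + [B_{2;k,j}(v), B_{2;g,j}(u)] = \tfrac{1}{u+v}\bigl(Z_{2;j,j,g,k}(u) + Z_{2;j,j,g,k}(v) - Z_{2;j,j,g,k}(-u) - Z_{2;j,j,g,k}(-v)\bigr),
\end{equation*}
obtained by adding \eqref{bb n2} (transported to block $2$ via the shift homomorphism $\psi_{\mu_1}$ and Lemma \ref{psi lem}) to its $u \leftrightarrow v$ swap. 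The Jacobi expansion then decomposes $\widetilde{\mathbb S}(q; u, v)$ into $(u+v)^{-1}[B_{1;j,i}^{(q)}, \cdot]$, which yields the $\Omega$-side of \eqref{serr-form} upon coefficient extraction, plus the symmetrized correction $[B_{2;k,j}(u), [B_{1;j,i}^{(q)}, B_{2;g,j}(v)]] + (u \leftrightarrow v)$. Using \eqref{pr7} to recursively reduce $[B_{1;j,i}^{(r)}, B_{2;g,j}^{(s)}]$ to products of $B_1, B_2$ together with lower-degree commutators, this correction reassembles into terms controlled by the inductive hypothesis. The principal technical difficulty is the combinatorial verification that the final assembly matches the exact binomial prefactor $(-1)^{p_1-1} 2^{-p_2} \binom{p_1+p_2-1}{p_2}$ in \eqref{Serre-rhs}, via the Vandermonde-type identity $\sum_m (-1)^m 2^{-m} \binom{p+m-1}{m}$ that emerges from the expansion of $(u \pm \tfrac12)^{-p}$.
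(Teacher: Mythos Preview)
Your first two steps—extracting the base case $q=r_2=1$ from \eqref{BBB}, and then raising $q$ by bracketing with $\wtl D_{1;i,i}^{(2)}$ via Lemma~\ref{+deg}—are correct and coincide with the paper's argument.

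The gap is in your induction on $r_2$. The Jacobi identity gives
\[
\mathbb S(q;r_1,r_2) = \big[B_{1;j,i}^{(q)},\, [B_{2;k,j}^{(r_1)}, B_{2;g,j}^{(r_2)}] + [B_{2;k,j}^{(r_2)}, B_{2;g,j}^{(r_1)}]\big] - [B_{2;k,j}^{(r_1)}, [B_{1;j,i}^{(q)}, B_{2;g,j}^{(r_2)}]] - [B_{2;k,j}^{(r_2)}, [B_{1;j,i}^{(q)}, B_{2;g,j}^{(r_1)}]],
\]
and your ``correction'' is the last two terms. But this correction is precisely the Serre symmetrizer with the roles of $k$ and $g$ swapped, at the \emph{same} superscripts $(q;r_1,r_2)$: there is no drop in $r_2$, so the inductive hypothesis has nothing to grip. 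In effect Jacobi only computes the difference $\mathbb S_{k,g}-\mathbb S_{g,k}$, not either term separately. This is exactly why the $\mu_{a+1}>1$ argument preceding \eqref{ss>1} needs the freedom to choose $j\ne l$ so that $[B_1,B_{2;l,g}]=0$; with $\mu_2=1$ that escape route is closed. Unwinding $[B_1^{(q)},B_{2;g}^{(r_2)}]$ via \eqref{pr7} does not help either: the resulting products $B_1^{(p)}B_2^{(s)}$ bracket with $B_{2;k}^{(r_1)}$ to produce new $[B_1,B_2]$ and $[B_2,B_2]$ terms at the same total degree, so nothing closes up.

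The paper bypasses this by bracketing with $D_{2;j,j}^{(2)}$ instead of attempting Jacobi. Since $\mu_2=1$, one has $[D_{2;j,j}^{(2)},B_{1;j,i}^{(r)}]=2B_{1;j,i}^{(r+1)}$ (Lemma~\ref{+deg}), $[D_{2;j,j}^{(2)},B_{2;*,j}^{(r)}]=-2B_{2;*,j}^{(r+1)}$ (the $\psi_{\mu_1}$-transport of Lemma~\ref{+deg}, using $[D_{2;j,j}^{(1)},B_2^{(r)}]=0$), and $[D_{2;j,j}^{(2)}, Z_{2;j,j,g,k}^{(r)}]=0$. Applying $\ad D_{2;j,j}^{(2)}$ to the identity at level $(q;r_1,r_2)$ and cancelling against the already-known identity at $(q{+}1;r_1,r_2)$ yields the clean recursion
\[
\mathbb S(q;r_1+1,r_2)+\mathbb S(q;r_1,r_2+1)=0,
\]
whence $\mathbb S(q;r_1,r_2+1)=(-1)^{r_2}\mathbb S(q;r_1+r_2,1)$, reducing everything to the $r_2=1$ base case. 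This replaces your unfinished combinatorial reassembly with a one-line telescoping.
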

\begin{proof}
Our proof follows the strategy of \cite[\S4.3]{LWZ23}.

By \eqref{BBB} we know that \eqref{serr-form} holds for the case $q=r_2=1$. Now we recursively apply the map $X \mapsto [\wtl D_{1;i,i}^{(2)}, X]$ to the equality $\mathbb S(1;r_1,1)=\big((-1)^{r_1}-1\big)\Omega(1;r_1+1)$. 
Note that by Corollary \ref{comcor}, we have 
$$
[\wtl D_{1;i,i}^{(2)},B_{2;l,j}(u)]=[\wtl D_{1;i,i}^{(2)},Z_{2;j,j,g,k}(u)]=0. 
$$
Thus it follows from Lemma \ref{+deg} that the equality \eqref{serr-form} also holds for the case $r_2=1$ (namely $q$ can be arbitrary). 

Note that $[s_{11}(u),s_{11}(v)]=0$ by \eqref{qua}, and hence $[D_{2;j,j}^{(2)},D_{2,j,j}(u)]=0$ by Lemma \ref{psi lem}. Further, we have $[D_{2;j,j}^{(2)},Z_{2;j,j,g,k}(u)]=0$.

Finally, we prove \eqref{serr-form} by induction on $r_1+r_2$. The base case has been established above. Suppose we have $\mathbb S(q;r_1,r_2)=\big((-1)^{r_1}+(-1)^{r_2}\big)\Omega(q;r_1+r_2)$. We apply the commutator by $D_{2;j,j}^{(2)}$ to obtain
\[
2\mathbb S(q+1;r_1,r_2)-2\mathbb S(q;r_1+1,r_2)-2\mathbb S(q;r_1,r_2+1)=2\big((-1)^{r_1}+(-1)^{r_2}\big)\Omega(q+1;r_1+r_2).
\]
By induction hypothesis, we conclude that $\mathbb S(q;r_1,r_2+1)=- \mathbb S(q;r_1+1,r_2)$. Using this equality multiple times, we obtain
\begin{align*}
\mathbb S(q;r_1,r_2+1)&=(-1)^{r_2}\mathbb S(q;r_1+r_2,1)\\
&=(-1)^{r_2}((-1)^{r_1+r_2}-1)\Omega(q;r_1+r_2+1)\\
&=((-1)^{r_1}+(-1)^{r_2+1})\Omega(q;r_1+r_2+1).
\end{align*}
This completes the proof.
\end{proof}

Applying the automorphism $\zeta_N$ and using Proposition~ \ref{zeta-pro}, one obtains the other Serre relation:
\begin{align*}
&\big[[B_{2;i,j}^{(q)},B_{1;j,k}^{(r_1)}],B_{1;j,g}^{(r_2)}\big]+\big[[B_{2;i,j}^{(q)},B_{1;j,k}^{(r_2)}],B_{1;j,g}^{(r_1)}\big]\\
=& 
((-1)^{r_1}+(-1)^{r_2})\sum_{p_1=1}^{r_1+r_2-1}\sum_{p_2=0}^{r_1+r_2-1-p_1}(-1)^{p_2} 2^{-p_2}{p_1+p_2-1\choose p_2} B_{2;i,j}^{(p_1+q-1)}Z_{1;g,k,j,j}^{(r_1+r_2-1-p_1-p_2)}. 
\end{align*}

\section{Parabolic presentations for type AII: the proof} \label{sec:pfAII}

The proof of Theorem \ref{mainthm} for AII type is very similar to AI type, but in some places different treatments are required. In this section we provide the proof by explaining the differences in detail.
We assume $\mu=(\mu_1,\ldots,\mu_n)$ is an arbitrary even composition of $N$ and follow the convention \eqref{thetadef} throughout this section.

\subsection{PBW bases for twisted Yangians of type AII} 
The assumption that $\mu$ is even ensures that the Gauss decomposition \eqref{S=FDE} works equally well with respect to $\mu$. 
As a consequence, we obtain the associated parabolic generators $D_{a;i,j}^{(r)}$, $E_{a;i,j}^{(r)}$, $E_{a,b;i,j}^{(r)}$, $F_{a;j,i}^{(r)}$, $F_{b,a;j,i}^{(r)}$, $H_{a;i,j}^{(r)}$, $B_{a,b;i,j}^{(r)}$ in $\Y_N^-$ by following the same routine.
The next result is the analogue of Theorem \ref{PBWgauss} for type AII, which can be deduced from Proposition \ref{prop:PBW}(2) by exactly the same way.

\begin{prop}\label{PBWgauss-new}
The monomials in the elements
\begin{align*}
\big\{H_{a;2i-1,2i-1}^{(r)},\ H_{a;2i-1,2i}^{(2r-1)}, &\ H_{a;2i,2i-1}^{(2r-1)}\big\}_{1\lle a\lle  n, 1\lle i\lle \tfrac{\mu_a}{2},r\gge 1},
\\
\big\{H_{a;i,j}^{(r)}\big\}_{1\lle a\lle  n, 1\lle j<i\lle \mu_a,\lfloor \tfrac{i+1}{2}\rfloor\ne \lfloor \tfrac{j+1}{2}\rfloor,r\gge 1},
&\quad
\big\{B_{b,a;i,j}^{(r)} \big\}_{1\lle a<b\lle n, 1\lle i\lle \mu_b,1\lle j\lle \mu_a,r\gge 1},
\end{align*}
taken in any fixed linear order form a PBW basis of $\Y_N^-$.
\end{prop}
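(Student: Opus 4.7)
The plan is to mirror the proof of Theorem \ref{PBWgauss}, now invoking Proposition \ref{prop:PBW}(2) in place of Proposition \ref{prop:PBW}(1). The loop filtration $\{\mathcal F_s \Y_N^-\}_{s \gge 0}$ from \eqref{filter:tY} is inherited on $\Y_N^-$, and the Gauss decomposition \eqref{S=FDE} is well-defined with respect to any even composition $\mu$. From the quasideterminant formulas of Proposition \ref{quasi} combined with the shift definitions \eqref{Ha}--\eqref{Bab}, the parabolic generators satisfy the congruences
\begin{align*}
H_{a;i,j}^{(r)} &\equiv s_{\mu_{(a-1)}+i,\,\mu_{(a-1)}+j}^{(r)} \pmod{\mathcal F_{r-2}\Y_N^-}, \\
B_{b,a;i,j}^{(r)} &\equiv s_{\mu_{(b-1)}+i,\,\mu_{(a-1)}+j}^{(r)} \pmod{\mathcal F_{r-2}\Y_N^-},
\end{align*}
which are purely formal consequences of the block structure of \eqref{S=FDE} and are the same identifications used in the proof of Theorem \ref{PBWgauss}.

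The crucial verification is that the combinatorial restrictions on indices in our list correspond bijectively to those in Proposition \ref{prop:PBW}(2). The evenness of each $\mu_a$ is essential here: since every partial sum $\mu_{(a-1)}$ is even, the shift $i \mapsto \mu_{(a-1)} + i$ preserves the decomposition of $\{1,\ldots,N\}$ into symplectic pairs $\{2k-1, 2k\}$. Consequently the within-pair indices $(2i-1,2i-1)$, $(2i-1,2i)$, $(2i,2i-1)$ inside block $a$ translate to within-pair $s$-indices, the off-diagonal restriction $\lfloor (i+1)/2\rfloor \ne \lfloor (j+1)/2\rfloor$ is preserved by the shift, and for $B_{b,a;i,j}^{(r)}$ with $a<b$ the resulting $s$-indices lie in distinct blocks and hence automatically in distinct symplectic pairs. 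The parity restrictions on the exponent $r$ in the within-pair cases also match exactly between the two sets of generators.

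The rest is a standard filtered-to-graded argument: since the ordered $s$-monomials of Proposition \ref{prop:PBW}(2) form a PBW basis of $\Y_N^-$, their leading symbols form a basis of $\gr \Y_N^- \cong \rU(\gl_N[z]^\theta)$; by the congruences above, the leading symbols of our parabolic monomials coincide with these (after the same index relabelling), and hence the parabolic monomials themselves form a basis of $\Y_N^-$. That the listed elements, together with the $\wtl H_{a;i,j}^{(r)}$ (which are determined by the $H_{a;i,j}^{(r)}$ through \eqref{pr2}), generate $\Y_N^-$ is guaranteed by Remark \ref{a2gen}. I anticipate no conceptual obstacle; the only care required is in tracking the correspondence between index conditions, where the evenness of $\mu$ is what makes the translation between parabolic and $s$-generator bases go through cleanly.
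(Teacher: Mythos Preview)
Your proposal is correct and follows essentially the same approach as the paper, which simply states that the result ``can be deduced from Proposition~\ref{prop:PBW}(2) by exactly the same way'' as Theorem~\ref{PBWgauss}. Your explicit verification that the evenness of each $\mu_a$ makes the shift $i \mapsto \mu_{(a-1)}+i$ preserve the symplectic-pair structure is the one substantive point the paper leaves implicit, and your treatment of it is accurate.
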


\subsection{Relations in 2-block case}
The relations in the cases of 2-block and 3-block are essential for the proof of Theorem \ref{mainthm}.
It turns out that the relations in the 3-block case are exactly the same as the AI type, and hence we list only the relations in the 2-block case explicitly. 
\begin{prop}\label{prop2blocka2}
Let $\mu=(\mu_1,\mu_2)$ be an even strict composition of $N$. Then the following relations hold in $\Y^-_\mu$:
\begin{align}
[H_{a;i,j}(u),H_{b;k,l}(v)]=&\,\delta_{ab}\Big(\frac{1}{u-v}\big(H_{a;k,j}(u)H_{a;i,l}(v)-H_{a;k,j}(v)H_{a;i,l}(u)\big)\notag\\&-\frac{1}{u+v}\big(\theta_k\theta_{j'}H_{a;i,k'}(u)H_{a;j',l}(v)-\theta_i\theta_{l'}H_{a;k,i'}(v)H_{a;l',j}(u)\big)\label{a2hahb n2}\\&+\frac{\theta_i\theta_{j'}}{u^2-v^2}\big(H_{a;k,i'}(u)H_{a;j',l}(v)-H_{a;k,i'}(v)H_{a;j',l}(u)\big)\Big),\notag
\\
[H_{1;i,j}(u),B_{1;k,l}(v)]=&\,\frac{\delta_{il}}{u-v+\tfrac{\mu_1}{2}}\sum_{p=1}^{\mu_1}\big(B_{1;k,p}(u+\tfrac{\mu_1}{2})-B_{1;k,p}(v)\big)H_{1;p,j}(u)\label{a2h1b n2}\\&\, +\frac{\delta_{jl'}\theta_{l'}}{u+v+\tfrac{\mu_1}{2}}\sum_{p=1}^{\mu_1}\theta_p H_{1;i,p}(u)\big(B_{1;k,p'}(v)-B_{1;k,p'}(-u-\tfrac{\mu_1}{2})\big),\notag
\\
[H_{2;i,j}(u),B_{1;k,l}(v)]=&\,\frac{\theta_{j'}\theta_k}{u+v}H_{2;i,k'}(u)\big(B_{1;j',l}(-u)-B_{1;j',l}(v)\big)\label{a2h2b n2}\\&\qquad +\frac{1}{u-v} \big(B_{1;i,l}(v)-B_{1;i,l}(u)\big)H_{2;k,j}(u),\notag
\\
[B_{1;i,j}(u),B_{1;k,l}(v)]=&\,\frac{1}{u-v}\big(B_{1;k,j}(v)-B_{1;k,j}(u)\big)\big(B_{1;i,l}(u)-B_{1;i,l}(v)\big)\label{a2bb n2}\\
&+\frac{\theta_i\theta_j}{u+v}\big(Z_{1;j',l,k,i'}(v)-Z_{1;j',l,k,i'}(-u)\big),\notag
\\
\label{a2b=c n2}
B_{1;i,j}(-u)=&\, \theta_i\theta_jC_{1;j',i'}(u),\hskip 7cm
\end{align}
where $Z_{1;j,l,k,i}(u)=\wtl H_{1;j,l}(u-\tfrac{\mu_1}{2})H_{2;k,i}(u)$.
\end{prop}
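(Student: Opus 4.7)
The plan is to mirror the proof of Proposition \ref{prop2block} in the type AI case, carefully tracking the sign factors $\theta_i$ that appear in the quaternary relations \eqref{qua} and the fact that the index involution $i \mapsto i'$ in \eqref{thetadef} is a nontrivial swap. Each identity in \eqref{a2hahb n2}--\eqref{a2bb n2} will be first established for the $D,E,F$-generators (from Gauss decomposition) and then translated to $H,B$ via the spectral shifts \eqref{Ha}--\eqref{Bb}. The toolkit throughout is the same as in type AI: the quaternary relations \eqref{qua}, the inverse-matrix relation \eqref{sts}, block commutativity from Corollary \ref{stslem}, and the shift homomorphism $\psi_{\mu_1}$ from Lemma \ref{psi lem}.

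For the $H_a$--$H_b$ relations \eqref{a2hahb n2}, the case $a=b=1$ is a direct rewrite of \eqref{qua} via $H_{1;i,j}(u)=s_{ij}(u)$; the three groups of terms on the right of \eqref{a2hahb n2} match the three groups of terms on the right of \eqref{qua} (with the $\theta$-factors simply carried over). The case $a=b=2$ follows by applying $\psi_{\mu_1}$ to the $a=b=1$ case, using that $\psi_{\mu_1}$ intertwines $D_{1;i,j}(u-\mu_1/2)$ with $D_{2;i,j}(u)$ up to the required shift. For $a\ne b$, Corollary \ref{stslem} gives $[D_{1;i,j}(u),\wtl D_{2;k,l}(v)]=0$, whence $[D_{1;i,j}(u),D_{2;k,l}(v)]=0$, which is \eqref{a2hahb n2} in the off-diagonal case.

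For the mixed $H$--$B$ relations \eqref{a2h1b n2} and \eqref{a2h2b n2}, I would begin from the relevant specializations of \eqref{sts}: namely $[s_{ij}(u),\wtl s_{k,\mu_1+l}(v)]$ for \eqref{a2h1b n2} and $[s_{i,\mu_1+j}(u),\wtl s_{\mu_1+k,\mu_1+l}(v)]$ for \eqref{a2h2b n2}. Substituting the Gauss expressions for the $s$- and $\wtl s$-entries, using $[D_{1;i,j}(u),\wtl D_{2;q,l}(v)]=0$ to cancel boundary terms, and then solving a linear system (of the type used to pass from \eqref{n2pf1} to \eqref{d1e n2}) yields the corresponding $D$--$E$ and $D_2$--$E$ identities. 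Translating via \eqref{Ha}--\eqref{Bb} gives \eqref{a2h1b n2}--\eqref{a2h2b n2}; the $\theta$-prefactors on the second terms are precisely those that appear in \eqref{qua}.

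For the $B=C$ relation \eqref{a2b=c n2} and the $B$--$B$ relation \eqref{a2bb n2}, the key step is the AII analogue of the reduction leading to \eqref{e=f n2}. I would multiply the $D_1$--$E$ relation (the type AII analogue of \eqref{d1e n2}) by $u+v+\mu_1$ and set $v=-u-\mu_1$; the surviving terms, after multiplying by the invertible matrix $\wtl D_1$, must give $F_{1;l,p}(u)=\theta_l\theta_p E_{1;p',l'}(-u-\mu_1)$, from which \eqref{a2b=c n2} follows by the shifts \eqref{Bb}--\eqref{Cb}. Finally, the $E$--$F$ relation (the AII analogue of \eqref{ef n2}) is derived from $[s_{i,\mu_1+j}(u),\wtl s_{\mu_1+k,l}(v)]$ as in Proposition \ref{prop2block}, and combining it with \eqref{a2b=c n2} yields the $E$--$E$ identity and hence \eqref{a2bb n2}. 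The $\theta_i\theta_j$ prefactor on the $Z$-term in \eqref{a2bb n2} is inherited directly from \eqref{a2b=c n2}. I expect the $B=C$ derivation to be the main obstacle, since it is the step where the sign conventions of \eqref{sym} and \eqref{qua} interact most nontrivially; all other steps are essentially bookkeeping on top of the type AI proof.
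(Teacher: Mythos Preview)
Your proposal is correct and matches the paper's approach exactly: the paper's entire proof of Proposition~\ref{prop2blocka2} is the single sentence ``The proof is similar to that of Proposition~\ref{prop2block}'', and your outline is precisely that similar proof carried out with the $\theta$-factors and the $i\mapsto i'$ involution tracked. The only caveat is that the auxiliary results you cite (\eqref{sts} and Corollary~\ref{stslem}) are stated in Section~\ref{sec:Gauss_decomp} for type AI, so you must use their type AII analogues (with primes and $\theta$'s inserted via the modified transpose $R^{\sf T}$); since $\mu_1$ is even, the involution preserves the blocks and the block-commutativity argument goes through unchanged.
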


\begin{proof}
The proof is similar to that of Proposition \ref{prop2block}.
\end{proof}

\subsection{Some symmetry relations}

The following derived relations will be used in Definition \ref{def:shiftedII} of shifted twisted Yangians of type AII.
\begin{lem}
\label{lem:Za2}
The following $Z$-relations hold in $\Y_\mu^\pm$, for $1\lle a\lle n-1$: 
\beq\label{Za2}
\theta_{i}\theta_jZ_{a;j',l,k,i'}^{(2r-1)}+\theta_k\theta_l Z_{a;l',j,i,k'}^{(2r-1)}=0.
\eeq
\end{lem}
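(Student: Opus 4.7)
The plan is to derive \eqref{Za2} as a short consequence of the already established $B$-$B$ commutator relation \eqref{pr6}, combined with the antisymmetry of the Lie bracket. The key observation is that the two summation terms on the right-hand side of \eqref{pr6} become identical in the diagonal case, leaving only the $Z$-term.

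First I would specialize \eqref{pr6} to $s = r$. The two summations indexed by $t = 1,\dots,r-1$ then coincide termwise and cancel exactly, yielding the clean identity
\begin{equation*}
[B_{a;i,j}^{(r)}, B_{a;k,l}^{(r)}] \;=\; -(-1)^{r}\,\theta_i \theta_j \, Z_{a;j',l,k,i'}^{(2r-1)}.
\end{equation*}
Next I would apply the same specialization with the roles of $(i,j)$ and $(k,l)$ interchanged (still taking $s = r$), which produces
\begin{equation*}
[B_{a;k,l}^{(r)}, B_{a;i,j}^{(r)}] \;=\; -(-1)^{r}\,\theta_k \theta_l \, Z_{a;l',j,i,k'}^{(2r-1)}.
\end{equation*}

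Since the two left-hand sides sum to zero by antisymmetry of the commutator, adding the two right-hand sides and dividing through by the common nonzero factor $-(-1)^r$ gives \eqref{Za2} immediately, for every admissible choice of $a,i,j,k,l,r$. There is essentially no obstacle here: the argument is a one-step algebraic manipulation that works uniformly in both types AI and AII, since the $\theta$-factors are already built into \eqref{pr6}. The point of recording this lemma separately is precisely that the derivation relies on \eqref{pr6} in its full unshifted form, which will no longer be directly available in the shifted algebra $\Y_\mu^\pm(\sigma)$; as the paper remarks, this is why the identity \eqref{Za2} must then be imposed by hand as part of the defining relations \eqref{Zshifted} and \eqref{Zshifteda2}.
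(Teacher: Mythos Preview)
Your proof is correct and is essentially the same argument as the paper's: both specialize the $B$--$B$ commutator to the diagonal and use antisymmetry of the bracket. The paper carries this out at the generating-series level, setting $u=v$ in \eqref{a2bb n2} and in its reindexed form \eqref{a2nbb n2} and comparing, whereas you work directly with the component relation \eqref{pr6} at $s=r$; the two are equivalent formulations of the same manipulation.
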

\begin{proof}
We carry out the proof in the case $a=1$, but the argument works well for any $a$.
To begin with, one can deduce the equivalent form of (\ref{a2bb n2}):
\beq\label{a2nbb n2}
\begin{split}
[B_{1;i,j}(u),B_{1;k,l}(v)]=\frac{1}{u-v}\big(B_{1;i,l}(u)-B_{1;i,l}(v)\big)\big(B_{1;k,j}(v)-B_{1;k,j}(u)\big)\\
+\frac{\theta_k\theta_l}{u+v}\big(Z_{1;l',j,i,k'}(-v)-Z_{1;l',j,i,k'}(u)\big).
\end{split}
\eeq
Setting $u=v$ in \eqref{a2bb n2}, we have
\[
[B_{1;i,j}(u),B_{1;k,l}(u)]=\frac{\theta_i\theta_j}{2u}(Z_{1;j',l,k,i'}(u)-Z_{1;j',l,k,i'}(-u)).
\]
Similarly, setting $u=v$ in \eqref{a2nbb n2}, we have
\[
[B_{1;i,j}(u),B_{1;k,l}(u)]=\frac{\theta_k\theta_l}{2u}(Z_{1;l',j,i,k'}(-u)-Z_{1;l',j,i,k'}(u)).
\]
Thus the desired relation \eqref{Za2} follows by taking the coefficients of $u^{-2r}$ on the RHS. 
\end{proof}
We point out the following analogue of Proposition \ref{prop:B=C} for AII type, whose proof is similar to Lemma~\ref{e=f gen} by using \eqref{a2b=c n2}.

\begin{prop}\label{a2prop:B=C}
For $1\lle a<b\lle n$, $1\lle j\lle \mu_a$, $1\lle i\lle \mu_b$, we have
\[
B_{b,a;i,j}(u)=\theta_i\theta_jC_{a,b;j',i'}(-u).
\]
\end{prop}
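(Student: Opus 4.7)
The plan is to mimic the proof of Lemma~\ref{e=f gen}, adapted to the AII setting where the two-block symmetry \eqref{a2b=c n2} now carries an extra sign factor $\theta_i\theta_j$ and interchanges indices with their primed counterparts. After applying the spectral translations \eqref{Bab} and \eqref{Cba}, the assertion is equivalent to
$$
F_{b,a;i,j}(v) \; = \; \theta_i\theta_j\, E_{a,b;j',i'}(-v-\mu_{(a)}),
$$
which is the natural AII analogue of the AI identity $E_{a,b;i,j}(u)=F_{b,a;j,i}(-u-\mu_{(a)})$ proved in Lemma~\ref{e=f gen}.

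I would proceed in two stages. First, handle the adjacent 2-block case $b = a+1$. The subcase $a = 1$ is precisely \eqref{a2b=c n2}, already established inside the proof of Proposition~\ref{prop2blocka2}. For general $a$, apply the shift homomorphism $\psi_{\mu_{(a-1)}}\colon \X^-_{(\mu_a,\mu_{a+1})}\to \X^-_\mu$ to this relation; by Lemma~\ref{psi lem}, it carries the parabolic generators $F_{1;i,j}$ and $E_{1;j',i'}$ at the appropriate spectral parameters to $F_{a;i,j}$ and $E_{a;j',i'}$, so the identity extends uniformly to every $a$.

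Second, induct on $b-a$. For $b-a>1$, Lemma~\ref{genlem} gives, for any intermediate index $k\in\{1,\ldots,\mu_{b-1}\}$,
$$
F^{(r)}_{b,a;i,j}\;=\;\bigl[F^{(1)}_{b-1;i,k},\; F^{(r)}_{b-1,a;k,j}\bigr],
\qquad
E^{(r)}_{a,b;j',i'}\;=\;\bigl[E^{(r)}_{a,b-1;j',k'},\; E^{(1)}_{b-1;k',i'}\bigr].
$$
Substitute the already-known base case into the first factor and the induction hypothesis into the second factor of the left-hand bracket. Each $F$ gets replaced by the corresponding $E$ (evaluated at the shifted spectral parameter) together with a sign $\theta_i\theta_k$ and $\theta_k\theta_j$, respectively. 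Since $\theta_k^2=1$, the intermediate factors cancel and leave a single global $\theta_i\theta_j$. Reordering the resulting commutator to match the right-hand bracket above then identifies it, up to the expected sign, with $\theta_i\theta_j E^{(r)}_{a,b;j',i'}$, which completes the induction. The independence of the chosen $k$ is verified exactly as in Lemma~\ref{genlem}, using \eqref{pr5} and \eqref{pr4}.

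The main obstacle I anticipate is careful bookkeeping of (i) the spectral shifts $-v-\mu_{(\cdot)}$ that accumulate when the base-case identity is applied twice and then recombined, and (ii) the systematic passage from the summation index $k$ on the $F$-side to its prime $k'$ on the $E$-side, which is what forces the final commutator to land in the shape dictated by \eqref{efgen}. Once these sign and index manipulations are verified, the argument is formally identical to the AI case treated in Lemma~\ref{e=f gen}, and Proposition~\ref{a2prop:B=C} follows.
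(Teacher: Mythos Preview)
Your proposal is correct and follows essentially the same route as the paper: the paper's proof simply says ``similar to Lemma~\ref{e=f gen} by using \eqref{a2b=c n2}'', which unpacks to exactly your two-stage argument --- the adjacent case via \eqref{a2b=c n2} and the shift homomorphism $\psi_{\mu_{(a-1)}}$, then the non-adjacent case via the recursion \eqref{efgen}. Your anticipated bookkeeping is on target: the extra minus sign from extracting the $v^{-1}$ coefficient of $E_{b-1;k',i'}(-v-\mu_{(b-1)})$ is precisely what flips the commutator into the order required by \eqref{efgen} on the $E$-side, and the $\theta_k^2=1$ cancellation is as you describe.
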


\subsection{Proof of Theorem \ref{mainthm} for type AII}\label{sec:pf-AII-parabolic}
The proof of this result is similar to type AI, and hence we will only indicate the steps of the argument which need to be treated differently.
Let $\bY_\mu^-$ denote the algebra generated by the generators and relations as in the statement of the theorem. 
By induction on $n$, one checks that the parabolic generators satisfy the listed relations, where the initial step is reduced to the case when $n=2,3$.
Then we have a surjective homomorphism $\bY_\mu^-\twoheadrightarrow \Y_N^-$ and it suffices to show this map is injective.

For $1\lle b<a\lle n$, $1\lle i\lle \mu_a$, and $1\lle j\lle \mu_b$, define the elements $B_{a,b;i,j}^{(r)}$ of $\bY_\mu^-$ by exactly the same rule \eqref{pfN0}, which do not depend on the choice of $k$ as well.
Let $\gr\bY_\mu^-$ be the associated graded algebra with respect to the loop filtration, which is defined similarly by setting $\deg H_{a;i,j}^{(r)}=\deg B_{a,b;k,l}^{(r)}=r-1$.
Let $\sfB_{a,a;i,j}^{(r)}$ and $\sfB_{a,b;k,l}^{(r)}$ be the images of $H_{a;i,j}^{(r+1)}$ and $B_{a,b;,k,l}^{(r+1)}$, respectively, in the $r$-th component of the graded algebra $\gr\bY_\mu^-$. 
In addition, for $1\lle b<a\lle n$, $1\lle i\lle \mu_a$, and $1\lle j\lle \mu_b$, define $\sfB_{b,a;j,i}^{(r)}:=(-1)^{r+1}\theta_i\theta_j\sfB_{a,b;i^\prime,j^\prime}^{(r)}$. 
A similar argument shows that for $b-a>1$ and $r,s\in\bN$, we have
\be
\sfB_{b,a;i,j}^{(r+s)}=[\sfB_{b,b-1;i,k}^{(r)},\sfB_{b-1,a;k,j}^{(s)}]
\ee
with any $1\lle k\lle \mu_{a-1}$. 

We claim the following identity holds for all $1\lle a,b\lle n$, $1\lle i \lle \mu_a$, $1\lle j \lle \mu_b$, $r\gge 0$:
\beq\label{new-pfn0}
\sfB_{a,b;i,j}^{(r)}=(-1)^{r+1}\theta_i\theta_j\sfB_{b,a;j^\prime,i^\prime}^{(r)}.
\eeq
If $a\neq b$, it follows from the definition. Assume from now on $a=b$. 
By \eqref{pr3}, we have
\begin{align}
[\sfB_{a,a;i,j}^{(0)}, \sfB_{a,a;k,l}^{(r)}] &= \delta_{kj}\sfB_{a,a;i,l}^{(r)} - \delta_{il} \sfB_{a,a;k,j}^{(r)}
-\big(\theta_k\theta_{j^\prime}\delta_{ik^\prime}\sfB_{a,a;j^\prime,l}^{(r)} - \theta_i\theta_{l^\prime}\delta_{l^\prime j}\sfB_{a,a;k,i^\prime}^{(r)}     \big), \label{new-pfn1} \\
[\sfB_{a,a;k,l}^{(r)}, \sfB_{a,a;i,j}^{(0)}] &= \delta_{il}\sfB_{a,a;k,j}^{(r)} - \delta_{kj} \sfB_{a,a;i,l}^{(r)}
-(-1)^{r}\big(\theta_i\theta_{l^\prime}\delta_{k^\prime i}\sfB_{a,a;l^\prime,j}^{(r)} - \theta_k\theta_{j^\prime}\delta_{j^\prime l}\sfB_{a,a;i,k^\prime}^{(r)}     \big).  \label{new-pfn2}
\end{align}
Equations \eqref{new-pfn1} and \eqref{new-pfn2} differ by $-1$ and hence they sum to zero, thus 
\beq\label{new-pfn3}
\delta_{ik^\prime}\big( \theta_k\theta_{j^\prime}\sfB_{a,a;j^\prime,l}^{(r)} + (-1)^r \theta_i\theta_{l^\prime}\sfB_{a,a;l^\prime,j}^{(r)} \big) 
- \delta_{jl^\prime}\big( \theta_i\theta_{l^\prime} \sfB_{a,a;k,i^\prime}^{(r)} + (-1)^r \theta_k\theta_{j^\prime} \sfB_{a,a;i,k^\prime}^{(r)} \big)=0.
\eeq
Setting $k=i^\prime$ and $l=j$ in \eqref{new-pfn3}, we have 
\be 
\theta_{i^\prime}\theta_{j^\prime}\sfB_{a,a;j^\prime,j}^{(r)} + (-1)^r \theta_i\theta_{j^\prime}\sfB_{a,a;j^\prime,j}^{(r)}=
\theta_i\theta_j\big(1+(-1)^{r+1}\big) \sfB_{a,a;j^\prime,j}^{(r)}
=0.
\ee 
Therefore $\sfB_{a,a;j^\prime,j}^{(r)}=0$ for all odd $r$. Note that \eqref{new-pfn0} holds trivially for even $r$ if $i=j^\prime$.
Setting $k=i^\prime$ and $l=j^\prime$ in \eqref{new-pfn3}, we have 
\beq\label{new-pfn5}
\big( \theta_{i^\prime}\theta_{j^\prime}\sfB_{a,a;j^\prime,j^\prime}^{(r)} + (-1)^r \theta_i\theta_{j}\sfB_{a,a;j,j}^{(r)} \big) 
-\big( \theta_i\theta_j \sfB_{a,a;i^\prime,i^\prime}^{(r)} + (-1)^r \theta_{i^\prime}\theta_{j^\prime} \sfB_{a,a;i,i}^{(r)} \big)=0
\eeq
Setting $i=j^\prime$ in \eqref{new-pfn5}, we have $\sfB_{a,a;i,i}^{(r)}=\sfB_{a,a;i^\prime,i^\prime}^{(r)}$ for all odd $r$. Next we show by induction on $a$ that $\sfB_{a,a;i,i}^{(r)}=-\sfB_{a,a;i^\prime,i^\prime}^{(r)}$ for all even $r$. The initial case $a=1$ follows from \eqref{pr1}, \eqref{pr3} and Remark~\ref{varsym}.
For $a\gge1$, setting $l=j'$ and $k=i'$ in \eqref{Za2}, we obtain $Z_{a;j',j',i,i}^{(2r+1)}= - Z_{a;j,j,i',i'}^{(2r+1)}$. By \eqref{Zdef}, their image in $\gr\bY_\mu^-$ gives the following equality
\[
-\sfB_{a,a;j',j'}^{(2r)} + \sfB_{a+1,a+1;i,i}^{(2r)} =  \sfB_{a,a;j,j}^{(2r)} - \sfB_{a+1,a+1;i',i'}^{(2r)}.
\]
Now $\sfB_{a,a;j,j}^{(2r)}=-\sfB_{a,a;j',j'}^{(2r)}$ due to the inductive hypothesis. This completes the induction.

Suppose now $i\neq j$. 
Setting $l=i^\prime=k$ in \eqref{new-pfn3}, we have
\be
\theta_{i^\prime}\theta_{j^\prime}\sfB_{a,a;j^\prime,i^\prime}^{(r)} + (-1)^r \theta_i\theta_{i}\sfB_{a,a;i,j}^{(r)} =0,
\ee
which completes the proof of \eqref{new-pfn0}.

We claim the following analogue of \eqref{todo} holds for AII type:
\beq\label{new-todo}
\begin{split}
[\sfB_{a,b;i,j}^{(r)},\sfB_{c,d;k,l}^{(s)}]=
\delta_{bc}\delta_{jk}\sfB_{a,d;i,l}^{(r+s)}-\delta_{ad}\delta_{il}\sfB_{c,b;k,j}^{(r+s)}
-(-1)^r\theta_i\theta_j \big( 
\delta_{ac}\delta_{i^\prime k}\sfB_{b,d;j^\prime,l}^{(r+s)}
-\delta_{bd}\delta_{jl^\prime}\sfB_{c,a;k,i^\prime}^{(r+s)} \big).
\end{split}
\eeq
By \eqref{new-pfn0} one may assume $a\gge b$ and $c\gge d$ in \eqref{new-todo}, which then can be established by case-by-case discussions as performed in AI and hence we omit the detail.
The injectivity of $\bY_\mu^-\twoheadrightarrow \Y_N^-$ follows from \eqref{new-todo} and Proposition \ref{PBWgauss-new}.

\part{Truncated shifted twisted Yangians and finite $W$-algebras}

\section{Shifted twisted Yangians and parabolic presentations}
\label{sec:shiftedI}

In this section we define a shifted twisted Yangian $\Y_N^+(\sigma)$ of type AI, associated to every symmetric matrix with non-negative integral coefficients, via a parabolic presentation for each composition $\mu$ of $N$ admissible for $\sigma$. We  show that it can be naturally viewed as a subalgebra of $\Y_N^+$. We also show that the presentation of the shifted subalgebra is independent of $\mu$. Along the way we provide PBW bases for $\Y_N^+(\sigma)$ for each shape $\mu$ which is admissible for $\sigma$.

Parallel results for type AII are obtained under the additional assumption that every part of $\mu$ is even (from henceforth we say that such a $\mu$ is even).
We also highlight a different presentation for the shifted twisted Yangian of type AI using the Drinfeld presentation from \cite{LWZ23} and show that the algebras given by the parabolic presentations and by the Drinfeld presentations are isomorphic. 

\subsection{Shifted twisted Yangians of type AI}\label{paraAI}

Fix $1 \lle n \lle N$ and a symmetric matrix $\sigma=(\fks_{i,j})_{1\lle i,j\lle N}$ of non-negative integers (shifts) such that 
\beq\label{shift}
\fks_{i,j}+\fks_{j,k}=\fks_{i,k}
\eeq
provided $|i-j|+|j-k|=|i-k|$. Note that $\fks_{1,1}=\cdots=\fks_{n,n}=0$ and $\sigma$ is completely determined by $\fks_{a+1,a}$, $1\lle a<n$. We call $\sigma$ a \textit{shift matrix}.

Fix a shift matrix $\sigma=(\fks_{i,j})_{1\lle i,j\lle N}$. We say that a composition $\mu$ of $N$ of length $n$ is \textit{admissible} for $\sigma$ if $\fks_{i,j}=0$ for all $\mu_{(a-1)}<i,j\lle \mu_{(a)}$ for $1\lle a\lle n$. We also set
\beq\label{sab}
\fks_{a,b}(\mu):=\fks_{\mu_{(a)},\mu_{(b)}}.
\eeq
Note that the original shift matrix $\sigma$ can be recovered from $(\fks_{a,b}(\mu))_{1\lle a,b\lle n}$ if $\mu$ is admissible for $\sigma$. Below, we fix an admissible $\mu$ for $\sigma$.
\begin{dfn}
\label{defn:shiftedtwisted}
The (\textit{dominantly}) \textit{shifted twisted Yangian} associated to the matrix $\sigma$ and the composition $\mu$
is the algebra $\Y_\mu^+(\sigma)$ generated by
\[
\{H_{a;i,j}^{(r)},\wtl H_{a;i,j}^{(r)}\}_{1\lle a\lle n,1\lle i,j\lle \mu_a,r\gge 1},\quad \{B_{a;i,j}^{(r)}\}_{1\lle a< n,1\lle i\lle \mu_{a+1}, 1\lle j\lle \mu_a,r>\fks_{a+1,a}(\mu)}
\]
subject to the relations \eqref{pr1}--\eqref{pr-2} 
with all admissible indices together with the following relations (cf. Remark \ref{rem-Z})
\beq\label{Zshifted}
Z_{a;i,i,j,j}^{(2r-1)}=0,\quad 1\lle a<n,~1\lle i\lle \mu_a,~1\lle j\lle \mu_{a+1},~1\lle r\lle \fks_{a+1,a}(\mu).
\eeq
\end{dfn}
By Theorem \ref{mainthm}, if $\sigma$ is the zero matrix, then $\Y_\mu^+(\sigma)$ is exactly $\Y_N^+$. Clearly, we have the canonical homomorphism $\Y_\mu^+(\sigma)\to \Y_N^+=\Y^+_\mu$ sending the generators $H_{a;i,j}^{(r)}$ and $B_{a;i,j}^{(r)}$ of $\Y_\mu^+(\sigma)$ to elements of $\Y_N^+$ with same names. 

In the rest of this subsection, we shall prove that this homomorphism is injective and  its image is independent of the particular choice of the admissible shape $\mu$.

For $1\lle a<b\lle n$, $1\lle i\lle \mu_b$, $1\lle j\lle \mu_a$ and $r>\fks_{b,a}(\mu)$, define elements $B_{b,a;i,j}^{(r)}$ inductively by
\beq\label{x2}
B_{a+1,a;i,j}^{(r)}:=B_{a;i,j}^{(r)},\quad B_{b,a;i,j}^{(r)}:=[B_{b-1;i,k}^{(\fks_{b,b-1}(\mu)+1)},B_{b-1,a;k,j}^{(r-\fks_{b,b-1}(\mu))}],
\eeq
cf. Lemma \ref{genlem}. Again, the definition is  independent of the choice of $k$.
Recall the twisted current algebra $\gl_N[z]^\theta$ which has a basis 
$$
\big\{e_{ij}\otimes z^r-(-1)^re_{ji}\otimes z^r\big\}_{1\lle j\lle i\lle N, r\in \bN}\setminus\{0\}.
$$
Under the assumption \eqref{shift}, the vectors
$$
\big\{e_{ij}\otimes z^r-(-1)^re_{ji}\otimes z^r\big\}_{1\lle j<i\lle N, r\gge\fks_{i,j}}\setminus\{0\}
$$
span a subalgebra of $\gl_N[z]^\theta$, and call it the {\it shifted twisted current algebra}. We denote this subalgebra by $\gl_N[z]^\theta(\sigma)$. The universal enveloping algebra $\mathrm{U}(\gl_N[z]^\theta(\sigma))$ has a natural grading induced from the grading on $\gl_N[z]$, which places $xt^r$ in degree $r$.
\begin{thm}\label{pbwstw}
The monomials in 
\[
\{H_{a;i,i}^{(2r)}\}_{1\lle a\lle n,1\lle i\lle \mu_a,r>0},\quad \{H_{a;i,j}^{(r)}\}_{1\lle a\lle n,1\lle j<i\lle \mu_a,r>0},\quad 
\{B_{b,a;i,j}^{(r)}\}_{1\lle a< b< n,1\lle i\lle \mu_{b},1\lle j\lle \mu_a,r>\fks_{b,a}(\mu)},
\]
taken in some (any) fixed linear order form a PBW basis of $\Y_\mu^+(\sigma)$. In particular, the canonical map $\Y_\mu^+(\sigma)\to \Y_N^+$ is injective.
\end{thm}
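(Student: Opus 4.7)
The plan is to establish the PBW basis assertion and the injectivity of the canonical map simultaneously via the loop filtration on $\Y_\mu^+(\sigma)$ in which each of $H_{a;i,j}^{(r)}$, $\wtl H_{a;i,j}^{(r)}$, and $B_{b,a;i,j}^{(r)}$ sits in degree $r-1$. The intermediate goal is to identify $\gr \Y_\mu^+(\sigma)$ with the enveloping algebra $\rU(\gl_N[z]^\theta(\sigma))$ of the shifted twisted current algebra, at which point the PBW theorem for enveloping algebras supplies the required basis and injectivity follows from injectivity at the graded level.

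First, I would construct a surjective algebra homomorphism
\[
\rU(\gl_N[z]^\theta(\sigma)) \twoheadrightarrow \gr \Y_\mu^+(\sigma)
\]
sending each PBW generator $f_{ij} z^{r-1}$ of the shifted twisted current algebra (with $r > \fks_{i,j}$) to the leading symbol of the corresponding parabolic generator of $\Y_\mu^+(\sigma)$. To verify well-definedness, I would take leading terms in the defining relations \eqref{pr3}--\eqref{pr-2} together with \eqref{Zshifted} and check that the resulting graded commutation relations match the Lie bracket of $\gl_N[z]^\theta(\sigma)$. This mirrors the analysis in the second half of the proof of Theorem \ref{mainthm}, especially the derivations culminating in \eqref{todo}, but now with the additional constraint that every $B$-index encountered must remain in the shifted range $r > \fks_{b,a}(\mu)$.

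Second, the canonical homomorphism $\Y_\mu^+(\sigma) \to \Y_\mu^+$ is filtration-preserving and induces a graded homomorphism $\gr \Y_\mu^+(\sigma) \to \gr \Y_\mu^+ \cong \rU(\gl_N[z]^\theta)$. Composing with the surjection above gives
\[
\rU(\gl_N[z]^\theta(\sigma)) \twoheadrightarrow \gr \Y_\mu^+(\sigma) \longrightarrow \rU(\gl_N[z]^\theta),
\]
which coincides with the natural inclusion arising from the Lie subalgebra embedding $\gl_N[z]^\theta(\sigma) \hookrightarrow \gl_N[z]^\theta$. Since this enveloping-algebra inclusion is injective, both maps in the composition must be injective, and the leftmost arrow is therefore an isomorphism. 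The PBW basis of $\gr \Y_\mu^+(\sigma)$ supplied by this identification then lifts to the asserted PBW basis of $\Y_\mu^+(\sigma)$, and the injectivity of the canonical map $\Y_\mu^+(\sigma) \to \Y_N^+$ follows at once.

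The main obstacle lies in the first step: verifying that the defining relations of $\Y_\mu^+(\sigma)$ suffice to recover the full graded Lie structure of $\gl_N[z]^\theta(\sigma)$. The delicate point is that certain intermediate identities appearing in the proof of Theorem \ref{mainthm} -- such as the antisymmetry $\sfB_{a,a;j,l}^{(r)} = (-1)^{r+1} \sfB_{a,a;l,j}^{(r)}$ and the compatibility $\sfB_{a+1,a+1;i,i}^{(2r)} = \sfB_{a,a;i,i}^{(2r)}$ -- are derived there from instances of \eqref{pr6} involving $B$-generators with indices ranging over all positive integers. In the shifted algebra these deductions are only valid for $r > \fks_{a+1,a}(\mu)$, and the imposed relations \eqref{Zshifted} are tailored precisely to close this gap at the low odd indices, enabling the symmetry and level-raising arguments to proceed throughout the shifted range.
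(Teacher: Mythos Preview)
Your proposal is correct and follows essentially the same approach as the paper: construct a surjection $\rU(\gl_N[z]^\theta(\sigma))\twoheadrightarrow \gr\Y_\mu^+(\sigma)$ by verifying the graded relations (with the role of \eqref{Zshifted} exactly as you describe), then use the filtered canonical map $\Y_\mu^+(\sigma)\to\Y_\mu^+$ together with the known isomorphism $\gr\Y_\mu^+\cong\rU(\gl_N[z]^\theta)$ to conclude injectivity. Your factorization argument in the second step is a slightly cleaner packaging of what the paper does by tracking linear independence of ordered monomials directly.
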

\begin{proof}
Define a filtration $\mathcal F_0\Y_\mu^+(\sigma)\subset \mathcal F_1\Y_\mu^+(\sigma)\subset \cdots$ of $\Y_\mu^+(\sigma)$ by setting 
\beq\label{loopfiltw}
\deg H_{a;i,i}^{(2r)}=2r-1,\quad \deg H_{a;i,j}^{(r)}=r-1,\quad \deg B_{a;i,j}^{(r)}=r-1.
\eeq
Note that the elements $H_{a;i,i}^{(2r-1)}$ can be expressed in terms $H_{b;j,j}^{(2s)}$ by the relations \eqref{pr1}--\eqref{pr2} and \eqref{Z's}. Define elements $\{\sfe_{i,j;r}\}_{1\lle j\lle i\lle N,r\gge \fks_{i,j}}$ of the associated graded algebra $\gr\Y_\mu^+(\sigma)$ by the equations
\begin{align*}
\gr H_{a;i,j}^{(r+1)}=\sfe_{\mu_{(a-1)}+i,\mu_{(a-1)}+j;r},\\
\gr B_{b,a;i,j}^{(r+1)}=\sfe_{\mu_{(b-1)}+i,\mu_{(a-1)}+j;r}.
\end{align*}

We claim that $\sfe_{i,i;2r}=0$ for $r\in\bN$. It follows from \eqref{pr6} that 
\beq\label{996-01}
\big[\sfe_{\mu_{(a)}+i,\mu_{(a-1)}+j;r},\sfe_{\mu_{(a)}+k,\mu_{(a-1)}+l;s}\big]=(-1)^r \big(\sfe_{\mu_{(a)}+k,\mu_{(a)}+i;r+s}-\sfe_{\mu_{(a-1)}+j,\mu_{(a-1)}+l;r+s}\big).
\eeq
Interchanging $i\leftrightarrow k$, $j\leftrightarrow l$, and $r\leftrightarrow s$, we obtain
\beq\label{996-02}
\big[\sfe_{\mu_{(a)}+k,\mu_{(a-1)}+l;s},\sfe_{\mu_{(a)}+i,\mu_{(a-1)}+j;r}\big]=(-1)^s \big(\sfe_{\mu_{(a)}+i,\mu_{(a)}+k;r+s}-\sfe_{\mu_{(a-1)}+l,\mu_{(a-1)}+j;r+s}\big).
\eeq
Setting $s=r$, $k=i$, and $l=j$, we find that
\beq\label{991}
\sfe_{\mu_{(a)}+i,\mu_{(a)}+i;2r}-\sfe_{\mu_{(a-1)}+j,\mu_{(a-1)}+j;2r}=0,
\eeq
where $r\gge \fks_{a+1,a}(\mu)$. Note that \eqref{pr1} implies that $\sfe_{1,1;2s}=0$ for $s\in\bN$.  In addition, for $r<\fks_{a+1,a}(\mu)$, $\sfe_{i,i;2r}=0$ follows immediately from \eqref{pr1} and \eqref{Zshifted}. Now an induction on $a$ with the help of \eqref{991} implies that $\sfe_{i,i;2r}=0$ for all $r\in\bN$.

Repeating the same strategy as in the proof of Theorem \ref{mainthm} (the paragraph includes \eqref{pfn1} and \eqref{pfn2}), we find that for $r\in\bN$
$$
\sfe_{\mu_{(a)}+i,\mu_{(a)}+j;r}=(-1)^{r+1}\sfe_{\mu_{(a)}+j,\mu_{(a)}+i;r}.
$$

Extend the set of elements $\sfe_{i,j;r}$ by the rule: $\sfe_{j,i;r}=(-1)^{r+1}\sfe_{i,j;r}$, for $1\lle j\lle i\lle N$ and $r\gge \fks_{i,j}$. Following the proof of Theorem \ref{mainthm}, one verifies that these elements satisfy the relations \beq\label{com-eq:pf}
[\sfe_{i,j;r},\sfe_{k,l;s}]=\delta_{jk}\sfe_{i,l;r+s}-\delta_{li}\sfe_{k,j;r+s}-(-1)^r\delta_{ki}\sfe_{j,l;r+s}+(-1)^r\delta_{jl}\sfe_{k,i;r+s}.
\eeq
Thus there exists a well-defined epimorphism 
$\varpi:\mathrm{U}(\gl_N[z]^\theta(\sigma))\to \gr\Y_\mu^+(\sigma)$. 
\beq\label{mapp}
\begin{split}
&e_{i,j}\otimes z^r-(-1)^re_{j,i}\otimes z^r\mapsto \sfe_{i,j;r},\hskip 2cm \text{ for }1\lle j\lle i\lle N,r\gge \fks_{i,j}.
\end{split}
\eeq
Recall from the proof of theorem \ref{mainthm}  that for $\sigma=0$, the map $\varpi$ is an isomorphism. Moreover, the ordered monomials in the elements 
\[
\big\{\sfe_{i,i;2r+1}\big\}_{1\lle i\lle N,r\in\bN}\bigcup \big\{\sfe_{i,j;r}\big\}_{1\lle j<i\lle N,r\in\bN}
\]
are linearly independent in $\gr\Y_\mu^+$.

Consider the canonical map $\Y_\mu^+(\sigma)\to \Y_\mu^+$ which sends generators to generators of the same name. It is a homomorphism of filtered algebras and hence induces an algebra homomorphism $\gr\Y_\mu^+(\sigma)\to \gr\Y_\mu^+$ which sends $\sfe_{i,j;r}\in \gr\Y_\mu^+(\sigma)$ to $\sfe_{i,j;r}\in \gr\Y_\mu^+$. Therefore the ordered monomials in the elements 
\[
\big\{\sfe_{i,i;2r+1}\big\}_{1\lle i\lle N,r\in\bN}\bigcup \big\{\sfe_{i,j;r}\big\}_{1\lle j<i\lle N,r\gge \fks_{ij}}
\]
are linearly independent in $\gr\Y_\mu^+(\sigma)$. Thus $\varpi$ is an isomorphism.
\end{proof}
Thus, we identify $\Y_\mu^+(\sigma)$ as a subalgebra of $\Y_N^+$. Let $\nu$ be another admissible shape for $\sigma$, then we have another subalgebra $\Y_\nu^+(\sigma)$ of $\Y_N^+$.
\begin{prop}\label{ind}
The subalgebras $\Y_\mu^+(\sigma)$ and $\Y_\nu^+(\sigma)$ of $\Y_N^+$ coincide.
\end{prop}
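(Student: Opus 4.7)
The plan is to reduce the statement to the case where $\nu$ is an \emph{elementary refinement} of $\mu$, meaning $\nu$ is obtained from $\mu$ by splitting a single part $\mu_a$ into two consecutive parts $(\mu_a',\mu_a'')$ (with both parts even in type AII). Because admissibility of $\mu$ forces $\fks_{i,j}=0$ for all indices $i,j$ lying in the same $\mu$-block, any further subdivision of that block remains admissible, so any two admissible shapes for $\sigma$ are connected by a chain of elementary refinements. By transitivity and symmetry, it then suffices to establish the two inclusions $\Y_\nu^+(\sigma)\subseteq\Y_\mu^+(\sigma)$ and $\Y_\mu^+(\sigma)\subseteq\Y_\nu^+(\sigma)$ in the elementary case.

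For the inclusion $\Y_\nu^+(\sigma)\subseteq\Y_\mu^+(\sigma)$, I would classify the $\nu$-parabolic generators into three families and show each lies in $\Y_\mu^+(\sigma)$: (i) those indexed by $\nu$-blocks that coincide with $\mu$-blocks are literally the corresponding $\mu$-generators; (ii) the new intra-block generators $H^\nu,\wtl H^\nu,B^\nu$ living inside the refined $a$-th $\mu$-block are obtained from the entries of the matrix $D^\mu_a(u)$ via the quasideterminant formulas of Proposition~\ref{quasi}, and since $\fks_{i,j}=0$ for every $i,j$ in this block by admissibility, those entries are unrestricted generators of $\Y_\mu^+(\sigma)$; (iii) the cross-block $B^\nu$ generators joining a new sub-block to a different $\mu$-block are produced by the commutator formulas of Lemma~\ref{genlem} from pre-existing cross-block $B^\mu$-generators and intra-block entries of $D^\mu_a(u)$. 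The shift-matrix additivity \eqref{shift} ensures that the $\nu$-shift $\fks_{b,a}(\nu)$ appearing in (iii) equals the corresponding $\mu$-shift $\fks_{b',a'}(\mu)$, so these constructed elements sit at the correct level of the loop filtration.

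The reverse inclusion $\Y_\mu^+(\sigma)\subseteq\Y_\nu^+(\sigma)$ is proved symmetrically via Gauss recomposition: the $\mu$-block matrix $D^\mu_a(u)$ is reconstructed from the $\nu$-Gauss factors $D^\nu_{a}(u),D^\nu_{a+1}(u),E^\nu_a(u),F^\nu_a(u)$ (all unrestricted in $\Y_\nu^+(\sigma)$ since $\nu$ is admissible and the refined sub-blocks sit inside a single $\mu$-block where $\fks\equiv 0$), and each cross-block $\mu$-generator $B^\mu_{b,a;i,j}^{(r)}$ is expressed in terms of $\nu$-generators of matching shift via the iterative commutator formula \eqref{efgen} of Lemma~\ref{genlem}.

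The main obstacle in both directions is the careful bookkeeping of shift constraints in the cross-block families. This is resolved by the additivity property \eqref{shift}, which implies that $\fks_{i,j}$ depends only on the pair of blocks containing $i$ and $j$; consequently the shift restriction attached to any cross-block parabolic generator is a function of the block pair alone, and the restrictions in the two presentations coincide block-pair by block-pair. Alternatively, once either inclusion has been established, equality can be deduced from Theorem~\ref{pbwstw}: the associated graded algebras of both $\Y_\mu^+(\sigma)$ and $\Y_\nu^+(\sigma)$ with respect to the loop filtration identify with $\rU(\gl_N[z]^\theta(\sigma))$, so a containment of filtered subalgebras inducing an equality on associated graded algebras forces equality of the subalgebras themselves.
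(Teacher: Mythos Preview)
Your proposal is correct and matches the paper's approach. The paper reduces to an elementary refinement (via the common refinement $(1^N)$), establishes the single inclusion $\Y_\mu^+(\sigma)\subset\Y_\nu^+(\sigma)$ by Gauss recomposition (citing \cite[Lem.~3.1]{BK06}, which is exactly your ``reverse inclusion''), and then concludes equality from the coincidence of associated graded algebras in $\gr\Y_N^+$ --- precisely your ``alternative'' via Theorem~\ref{pbwstw}. Your additional direct argument for the inclusion $\Y_\nu^+(\sigma)\subseteq\Y_\mu^+(\sigma)$ is therefore unnecessary; note also that Lemma~\ref{genlem} only covers non-adjacent blocks, so case~(iii) of that argument would require a separate treatment of the adjacent cross-block generators at the refined interface, whereas the associated-graded route sidesteps this entirely.
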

\begin{proof}
The proof is completely parallel to that of $\mathrm{Y}(\gl_N)$; see \cite[\S 3]{BK06}. We sketch the argument. Suppose $\mu=(\mu_1,\ldots,\mu_n)$ is a composition of $N$ and $\nu$ is a refinement of $\mu$, such that
\[
\nu=(\mu_1,\dots,\mu_{a-1},\alpha,\beta,\mu_{a+1},\dots,\mu_n),
\]
where $\alpha+\beta=\mu_a$. It follows from \cite[Lem. 3.1]{BK06} that $\Y_\mu^+(\sigma)\subset \Y_\nu^+(\sigma)$ as subalgebras of $\Y_N^+$. On the other hand, one can show that their associated graded algebras coincide in $\gr\Y_N^+$. Therefore, we must have $\Y_\mu^+(\sigma)= \Y_\nu^+(\sigma)$. Since the composition $(1^N)$ is a refinement of any composition $\mu$, the statement follows.
\end{proof}
Due to independence of $\Y^{+}_\mu(\sigma)$ from $\mu$  from now on we will write $\Y_N^+(\sigma)$ again unless we want to put particular emphasis on a choice of $\mu$.
\subsection{The canonical filtration} \label{canof}
Recall that the canonical filtration $\mathrm F_0\Y_N^\pm\subset \mathrm F_1\Y_N^\pm\subset \mathrm F_2\Y_N^\pm\subset \cdots$ of $\Y_N^\pm$ is defined by setting 
\begin{align}  \label{cfilter:tY}
\deg_2 s_{ij}^{(r)}=r, \quad \forall r\gge 1
\end{align}
and letting $\mathrm F_d\Y_N^\pm$ denote the the span of monomials in $s_{ij}^{(r)}$ of total degree $\lle d$.
The associated graded algebra is denoted by $\gr'\Y_N^\pm$, and it is commutative, thanks to \eqref{qua}.

Suppose we are given a composition $\mu=(\mu_1,\ldots,\mu_n)$ of $N$.
Consider the parabolic generators $H_{a;i,j}^{(r)}, B_{b,a;i,j}^{(r)}$ of $\Y_N^+$ in Theorem \ref{PBWgauss}.
By \eqref{quasid} -- \eqref{quasif}, \eqref{Ha}, \eqref{Bb} and Lemma \ref{e=f gen}, we may express the parabolic generators $H_{a;i,j}^{(r)}$, $B_{b,a;k,l}^{(r)}$ as linear combinations of monomials in $s_{ij}^{(t)}$ of total degree $r$.  
Conversely, if we define $H_{a;i,j}^{(r)}$ and $B_{b,a;i,j}^{(r)}$ to be of degree $r$, then by \eqref{S=FDE} and \eqref{e=f gen}, each $s_{ij}^{(t)}$ is a linear combination of monomials in $H_{a;i,j}^{(r)}$ and $B_{b,a;i,j}^{(r)}$ of total degree $t$.
As a result, we may also describe $\mathrm F_d\Y_N^+$ as the span of all monomials in the elements $H_{a;i,j}^{(r)}$ , $B_{b,a;i,j}^{(r)}$ of total degree $\lle d$.

For $1\lle a\lle b\lle n$, $1\lle i\lle \mu_b$, $1\lle j\lle \mu_a$ and $r>0$, denote the images of the parabolic generators in $\gr'\Y_N^+$ by
\beq\label{grp-img}
\begin{split}
\sff_{b,a;i,j}^{(r)}:=\begin{cases}
\gr' H_{a;i,j}^{(r)}, & \text{ if } a=b,\\
\gr' B_{b,a;i,j}^{(r)}, & \text{ if } a<b. \\
\end{cases}
\end{split}
\eeq

\begin{prop}\label{polytY}
For any shape $\mu$, $\gr'\Y_N^+$ is the free commutative algebra on generators 
\begin{eqnarray*}
& & \big\{\,\sff_{a,a;i,i}^{(2r)} \, | \, 1\lle a\lle n, 1\lle i\lle \mu_a, r>0\big\}\\
& & \hspace{20pt}\cup \big\{\,\sff_{a,a;i,j}^{(r)} \, | \, 1\lle a\lle n, 1\lle j<i\lle \mu_a, r>0  \big\}\\
& & \hspace{40pt}\cup \big\{\,\sff_{b,a;i,j}^{(r)}\, | \, 1\lle a<b\lle n, 1\lle i\lle \mu_b, 1\lle j\lle \mu_a, r>0 \big\}.
\end{eqnarray*}
\end{prop}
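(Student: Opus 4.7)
The plan is to combine the commutativity of $\gr'\Y_N^+$ with the parabolic PBW theorem (Theorem \ref{PBWgauss}) and the degree-preserving nature of the Gauss decomposition.

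First, I would verify that $\gr'\Y_N^+$ is commutative. Extracting the coefficient of $u^{-r}v^{-s}$ in the quaternary relation \eqref{qua} shows that $[s_{ij}^{(r)}, s_{kl}^{(s)}]$ lies in $\mathrm F_{r+s-1}\Y_N^+$: the left-hand side carries an extra factor of $(u^2-v^2)$ while every term on the right is of canonical degree $r+s$. Hence commutators of generators vanish in $\gr'\Y_N^+$, so this algebra is commutative.

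Next, as already observed in \S\ref{canof}, the quasideterminant formulas \eqref{quasid}--\eqref{quasif} together with the shifts \eqref{Ha}, \eqref{Bb} and the identity $B_{b,a;i,j}^{(r)} = (-1)^{r+1}C_{a,b;j,i}^{(r)}$ from Lemma \ref{e=f gen} express each parabolic generator $H_{a;i,j}^{(r)}$ and $B_{b,a;i,j}^{(r)}$ as a polynomial in the $s_{kl}^{(t)}$ of total canonical degree $r$; conversely \eqref{S=FDE} expresses each $s_{ij}^{(r)}$ as a polynomial in the parabolic generators of total degree $r$. Therefore $\mathrm F_d\Y_N^+$ is precisely the span of monomials in the parabolic generators of total degree $\lle d$, and the images $\sff_{b,a;i,j}^{(r)}$ in \eqref{grp-img} sit in the $r$-th graded component of $\gr'\Y_N^+$.

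Finally, I would invoke Theorem \ref{PBWgauss}: the ordered monomials in
\[
\{H_{a;i,i}^{(2r)}\}_{a,i,r} \cup \{H_{a;i,j}^{(r)}\}_{a,\,j<i,\,r} \cup \{B_{b,a;i,j}^{(r)}\}_{a<b,\,i,j,\,r}
\]
form a linear basis of $\Y_N^+$. Grouped by their total canonical degree, these monomials span each filtered piece $\mathrm F_d\Y_N^+$ and remain linearly independent in $\gr'\Y_N^+$. Since $\gr'\Y_N^+$ is commutative, this realises it as the free commutative algebra on the generators listed in the statement, with the prescribed parity restriction on the diagonal entries $\sff_{a,a;i,i}^{(r)}$ (odd-degree diagonal terms vanish by \eqref{pr1} together with Remark \ref{rem-Z}, so only even degrees appear). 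No serious obstacle arises: the key point is simply that the parabolic change of generators preserves the canonical filtration on the nose, so the polynomiality of $\gr'\Y_N^+$ is transported from the $s$-PBW picture to the parabolic PBW picture without loss.
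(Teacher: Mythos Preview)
Your proposal is correct and follows exactly the paper's approach: the paper's proof is the single sentence ``This follows from Theorem~\ref{PBWgauss} and the fact that $\gr'\Y_N^+$ is commutative,'' and you have simply unpacked why these two ingredients suffice. One minor point: the restriction to even superscripts on the diagonal elements is already built into the PBW basis of Theorem~\ref{PBWgauss} (ultimately from the symmetry relation), so you need not invoke \eqref{pr1} or Remark~\ref{rem-Z} separately.
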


\begin{proof}
This follows from Theorem \ref{PBWgauss} and the fact that $\gr'\Y_N^+$ is commutative.
\end{proof}

Now we consider the shifted twisted Yangian $\Y_N^+(\sigma)$ associated to a shift matrix $\sigma$.
Treating $\Y_N^+(\sigma)$ as a subalgebra of $\Y_N^+$, we introduce the canonical filtration on $\Y_N^+(\sigma)$
\begin{align}  \label{cFiltershiftedI}
\mathrm F_0\Y_N^+(\sigma)\subset \mathrm F_1\Y_N^+(\sigma)\subset \mathrm F_2\Y_N^+(\sigma)\subset \cdots,\qquad\qquad \Y_N^+(\sigma)=\bigcup_{s\gge 1}\mathrm F_s\Y_N^+(\sigma),
\end{align}
by defining $\mathrm F_s\Y_N^+(\sigma):= \Y_N^+(\sigma) \cap \mathrm F_s\Y_N^+$.
Then the natural embedding $\Y_N^+(\sigma) \hookrightarrow  \Y_N^+$ is a filtered map and its induced map $\gr'\Y_N^+(\sigma) \rightarrow  \gr'\Y_N^+$ is injective, allowing us to identify $\gr'\Y_N^+(\sigma)$ as a subalgebra of the commutative algebra $\gr'\Y_N^+$.
\begin{prop} \label{polystY}
For any admissible shape $\mu$, $\gr'\Y_N^+(\sigma)$ is the subalgebra of $\gr'\Y_N^+$ generated by
\begin{eqnarray*}
& & \big\{\,\sff_{a,a;i,i}^{(2r)} \, | \, 1\lle a\lle n, 1\lle i\lle \mu_a, r>0\big\}\\
& & \hspace{20pt}\cup \big\{\,\sff_{a,a;i,j}^{(r)} \, | \, 1\lle a\lle n, 1\lle j<i\lle \mu_a, r>0 \big\}\\
& & \hspace{40pt} \cup \big\{\,\sff_{b,a;i,j}^{(r)}\, | \, 1\lle a<b\lle n, 1\lle i\lle \mu_b, 1\lle j\lle \mu_a, r> \fks_{a,b}(\mu) \big\}.
\end{eqnarray*}
\end{prop}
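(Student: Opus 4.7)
The plan is to prove both inclusions separately. The inclusion $\supseteq$ is immediate: each $\sff_{a,a;i,j}^{(r)}$ appearing in the statement is by definition the $\gr'$-image of $H_{a;i,j}^{(r)} \in \Y_N^+(\sigma)$, and each $\sff_{b,a;i,j}^{(r)}$ with $r > \fks_{a,b}(\mu)$ is the image of the element $B_{b,a;i,j}^{(r)} \in \Y_N^+(\sigma)$ built iteratively via \eqref{x2}; hence the subalgebra $A$ they generate inside $\gr'\Y_N^+$ is contained in $\gr'\Y_N^+(\sigma)$.

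For the reverse inclusion $\subseteq$, the strategy is to refine the PBW theorem for $\Y_N^+(\sigma)$ (Theorem \ref{pbwstw}) into a statement compatible with the canonical filtration \eqref{cFiltershiftedI}. Let $\mathcal M$ denote the set of ordered PBW monomials from Theorem \ref{pbwstw}, and for $M \in \mathcal M$ define $d(M)$ to be the sum of the superscripts of the factors of $M$ (where $H_{a;i,i}^{(2r)}$ contributes $2r$). By the description of the canonical filtration in terms of parabolic generators given in \S\ref{canof}, each factor $X^{(r)}$ lies in $\mathrm F_r\Y_N^+$, so $M \in \mathrm F_{d(M)}\Y_N^+(\sigma)$, and the leading symbol $\gr'(M) \in \gr'_{d(M)}\Y_N^+$ equals the corresponding product of $\sff$-generators, hence lies in $A$. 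The crux is then the filtered PBW claim
\[
\mathrm F_d\Y_N^+(\sigma) \;=\; \mathrm{span}\big\{M\in\mathcal M : d(M)\lle d\big\}, \qquad d\gge 0.
\]
Granting this, $\gr'_d\Y_N^+(\sigma)$ is spanned by $\{\gr'(M) : d(M) = d\}$, all of which lie in $A$; this yields $\gr'\Y_N^+(\sigma)\subseteq A$ and finishes the proof.

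To establish the claim, the inclusion $\supseteq$ is automatic. For the reverse inclusion, take $x\in\mathrm F_d\Y_N^+(\sigma)$ and expand it via the PBW basis of Theorem \ref{pbwstw} as $x = \sum_M c_M M$. Suppose for contradiction that some $c_M\neq 0$ satisfies $d(M) > d$, and let $e > d$ be the maximal such value of $d(M)$. Passing to $\gr'_e\Y_N^+$, we obtain $0 = \sum_{d(M)=e} c_M\,\gr'(M)$; but by Proposition~\ref{polytY} the leading symbols $\{\gr'(M)\}_{M\in\mathcal M}$ are pairwise distinct monomials in the polynomial algebra $\gr'\Y_N^+$, hence linearly independent, forcing $c_M = 0$ for all $M$ with $d(M) = e$, a contradiction.

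The main obstacle is entirely at the classical/commutative level: verifying that the leading symbols of the PBW monomials from Theorem~\ref{pbwstw} remain pairwise distinct inside the polynomial algebra $\gr'\Y_N^+$. This is precisely the content of Proposition~\ref{polytY}, whose proof relies on the commutativity of $\gr'\Y_N^+$ (traceable to the quaternary relations \eqref{qua}) and on the shape-independent PBW basis of Theorem~\ref{PBWgauss}. Once these ingredients are available, the remainder is a standard filtered-to-graded linear independence argument, and no further subtlety is introduced by the shift matrix $\sigma$ beyond restricting the range of superscripts in the generating set.
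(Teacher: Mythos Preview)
Your approach is essentially the same as the paper's, which also deduces the result from Theorem~\ref{pbwstw} and Proposition~\ref{polytY} via a filtered PBW argument. You have spelled out the filtered-to-graded linear independence step more explicitly than the paper does, and that part is fine.

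There is, however, one point you gloss over that the paper does flag. When you assert that ``the leading symbol $\gr'(M)\in\gr'_{d(M)}\Y_N^+$ equals the corresponding product of $\sff$-generators,'' you are implicitly identifying two a priori different elements: the $\sff_{b,a;i,j}^{(r)}$ in the statement are defined via \eqref{grp-img} as the $\gr'$-images of the Gauss-decomposition elements $B_{b,a;i,j}^{(r)}$ of $\Y_N^+$ (equivalently, by \eqref{efgen}, of iterated brackets with superscript $1$), whereas the PBW generators of $\Y_N^+(\sigma)$ from Theorem~\ref{pbwstw} are built via \eqref{x2} using shifted superscripts $\fks_{b,b-1}(\mu)+1$. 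These are not literally the same elements of $\Y_N^+$. What makes their top canonical-graded pieces agree is relation \eqref{pr7}: the lower-order terms on its right-hand side have total degree $r+s$ while the two brackets have degree $r+s+1$, so in $\gr'$ one can freely slide the superscripts between the two factors, and the iterated bracket \eqref{x2} has the same $\gr'$-image as the unshifted one. The paper invokes \eqref{pr7} precisely for this reason; you should insert the same one-line justification.
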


\begin{proof}
Due to \eqref{pr7} all of these elements in $\gr'\Y_N^+(\sigma)$ are identified with the elements in $\gr'\Y_N^+$ sharing the same notation by the induced map $\gr'\Y_N^+(\sigma) \rightarrow  \gr'\Y_N^+$.
Then the statement follows from Theorem \ref{pbwstw} and Proposition \ref{polytY}.
\end{proof}

\begin{rem}\label{canofalt}
A consequence of Proposition \ref{polystY} is that one can intrinsically define the canonical filtration on $\Y_N^+(\sigma)$ by setting the elements $H_{a;i,j}^{(r)}$, $B_{b,a;i,j}^{(r)}$ of $\Y_N^+(\sigma)$ to be of degree $r$, and letting $\mathrm F_d\Y_N^+(\sigma)$ be the span of monomials in these elements of total degree $\lle d$. This definition is independent of the choice of admissible shape $\mu$. A similar statement for $\Y_N^-(\sigma)$ holds as well; see Proposition~ \ref{new-polystY}. 
\end{rem}

\subsection{An alternative presentation}
There is another natural way to define the shifted twisted Yangian of type AI, using the Drinfeld presentation of the twisted Yangian from \cite{LWZ23}. This definition has already appeared in \cite[\textsection 3]{TT24}.
We will recall it here for the convenience of the reader. To avoid confusion we will call it the (\textit{dominantly}) \textit{Drinfeld shifted twisted Yangian}. In this subsection, we will see that these two shifted twisted Yangians are isomorphic, as algebras.
Let $A=(c_{ij})_{1\lle i,j\lle N-1}$ be the Cartan matrix of type $A_{N-1}$, and set $c_{0j}=-\delta_{j,1}$.
\begin{dfn}
The (\textit{dominantly}) \textit{shifted Drinfeld twisted Yangian} associated to the matrix $\sigma$ is the algebra $^{\textnormal{Dr}}\Y_N^+(\sigma)$ over $\bC$ 
generated by $\{\ch_{i,r}\}_{r\gge 0}$, $\{\cb_{j,r}\}_{r\gge \fks_{j+1,j}}$, for $0\lle i<N$ and $1\lle j<N$, subject to the following relations, for $r,s \in \bN$:
\begin{align}
[ \ch_{i,r}, \ch_{j,s}]&=0, \qquad { \ch_{i,2r}=0, } \label{drs1}\\	
[ \ch_{i,r+1}, \cb_{j,s}]-[ \ch_{i,r-1}, \cb_{j,s+2}]&=c_{ij}\{ \ch_{i,r-1}, \cb_{j,s+1}\}+\frac{1}{4}c_{ij}^2[ \ch_{i,r-1}, \cb_{j,s}],\label{drs2}\\
[ \cb_{i,r+1}, \cb_{j,s}]-[ \cb_{i,r}, \cb_{j,s+1}]&=\frac{c_{ij}}{2}\{ \cb_{i,r}, \cb_{j,s}\}-2\delta_{ij}(-1)^r \ch_{i,r+s+1},\label{drs3}\\
[\cb_{i,r},\cb_{j,s}]&=0,\qquad  \text{ for }|i-j|>1,\label{drs4}\\
\mathrm{Sym}_{k_1,k_2}\big[\cb_{i,k_1},[\cb_{i,k_2},\cb_{j,r}] \big]& \label{drs5} \\
=
(-1)^{k_1}\sum_{p\gge 0}2^{-2p}  \big(&[\ch_{i,k_1+k_2-2p-1},\cb_{j,r+1}]-\{\ch_{i,k_1+k_2-2p-1},\cb_{j,r}\}\big),
\qquad \text{ if } c_{i,j}=-1. \notag
\end{align}
for all admissible indices $i,j,r,s$. Here by convention, $\ch_{i,-1}=1$.
\end{dfn}

If $\sigma$ is the zero matrix, then one recovers the Drinfeld presentation of twisted Yangians; see \cite[Thm. 5.1]{LWZ23}. The definition naturally extends to the symmetric Cartan matrices (i.e. simply laced types), which correspond to shifted twisted Yangians of split {\sf ADE} type; cf. \cite{LWZ25}. In the context of the current paper, the Drinfeld shifted twisted Yangians are associated to $\gl_N$. To obtain the $\mathfrak{sl}_N$ version, one only needs to exclude the generators $\ch_{0,r}$.

\begin{rem}
More general shifted twisted Yangians, which also include the antidominantly shifted case, are introduced and studied in \cite{LWW25} in connection with affine Grassmanian slices. Contrary to the dominantly shifted case, the antidominantly shifted twisted Yangians are usually not subalgebras of twisted Yangians.  
\end{rem}

\begin{prop} \label{shapeindprop}
The algebras $^{\textnormal{Dr}}\Y_N^+(\sigma)$ and $\Y_\mu^+(\sigma)$ are isomorphic.
\end{prop}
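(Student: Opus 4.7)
The plan is to pass through the special case of the finest admissible shape $\mu=(1^N)$, where the parabolic generators of Theorem~\ref{mainthm} (with $\sigma=0$) reduce to Drinfeld-type generators as in \eqref{dr0}--\eqref{dr3}, and then extend the known zero-shift comparison to the shifted setting. First I would note that by Proposition~\ref{ind} (shape-independence), it suffices to work with the shape $\mu=(1^N)$, which is automatically admissible for any shift matrix $\sigma$. In this case $\fks_{a+1,a}(\mu)=\fks_{a+1,a}$, so the degree thresholds on the $B$-generators match verbatim those on the $\cb_{a,r}$.

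Second, I would identify Drinfeld generators on both sides. On the parabolic side I set, for $\mu=(1^N)$,
\[
\sfh_0(u)=H_{1;1,1}(u),\qquad \sfh_a(u)=Z_{a;1,1,1,1}(u),\qquad \sfb_a(u)=B_{a;1,1}(u),
\]
as in \eqref{dr1}--\eqref{dr3}. For the zero-shift twisted Yangian, \cite{LWZ23} establishes a Drinfeld presentation in terms of these series; moreover, by \cite[\S2.4]{Lu25} together with Remark~\ref{remext}, the Serre relations \eqref{pr-1}--\eqref{pr-2} at $\mu=(1^N)$ are equivalent to the Drinfeld Serre relations \eqref{drs5}. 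Thus one has an isomorphism $^{\textnormal{Dr}}\Y_N^+(0)\xrightarrow{\sim}\Y_{(1^N)}^+(0)=\Y_N^+$, sending $\ch_{i,r}$ and $\cb_{j,r}$ to explicit polynomials in the coefficients of $\sfh_i(u)$ and $\sfb_j(u)$ respectively, with $\ch_{i,2r}=0$ in the image by \eqref{Z's}.

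Third, I would construct a surjective homomorphism
\[
\Phi:\ {}^{\textnormal{Dr}}\Y_N^+(\sigma)\ \longrightarrow\ \Y_{(1^N)}^+(\sigma),
\]
by sending $\ch_{i,r}$ and $\cb_{j,r}$ to the same polynomials used in the zero-shift isomorphism. Well-definedness requires that each Drinfeld defining relation \eqref{drs1}--\eqref{drs5} lifts to $\Y_{(1^N)}^+(\sigma)$. The relations \eqref{drs2}--\eqref{drs5} hold in $\Y_N^+$ by the zero-shift isomorphism; since $\Y_{(1^N)}^+(\sigma)$ embeds in $\Y_N^+$ by Theorem~\ref{pbwstw}, they hold there, and one checks that the degrees of all monomials appearing on both sides satisfy the shift thresholds using \eqref{shift} in the additive form $\fks_{i,j}+\fks_{j,k}=\fks_{i,k}$. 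The first of \eqref{drs1}, namely $[\ch_{i,r},\ch_{j,s}]=0$, follows from the fact that the $H$'s in disjoint blocks commute (Corollary~\ref{comcor}) and from \eqref{pr3} inside a single block. The relation $\ch_{i,2r}=0$ requires two inputs: for $r>\fks_{i+1,i}$ it is a derived relation, obtained by setting $i=k$, $j=l$, $r=s$ in the parabolic relation \eqref{pr6} exactly as in \eqref{Z's}; for $1\le r\le \fks_{i+1,i}$ it is precisely the imposed relation \eqref{Zshifted} evaluated at all indices equal to $1$, which is why \eqref{Zshifted} is part of the definition of $\Y_\mu^+(\sigma)$.

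Finally I would deduce that $\Phi$ is an isomorphism by a PBW comparison. On the parabolic side, Theorem~\ref{pbwstw} furnishes a PBW basis whose associated graded identifies $\gr\Y_{(1^N)}^+(\sigma)$ with $\rU(\gl_N[z]^\theta(\sigma))$. For $^{\textnormal{Dr}}\Y_N^+(\sigma)$, the loop filtration $\deg\ch_{i,r}=\deg\cb_{j,r}=r$ together with the Drinfeld relations produces an analogous PBW spanning set (ordered monomials in $\ch_{i,2r+1}$ for $r\ge 0$ and $\cb_{j,r}$ for $r\ge \fks_{j+1,j}$), whose images in $\Y_{(1^N)}^+(\sigma)$ under $\Phi$ coincide (up to leading terms in the loop grading) with a subset of the PBW basis from Theorem~\ref{pbwstw}. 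Hence $\gr\Phi$ is surjective onto a free commutative algebra of the correct size, forcing the spanning set upstairs to be linearly independent and $\Phi$ to be an isomorphism. The main obstacle I anticipate is the careful bookkeeping of the change of basis between $\{\sfh_{i,r},\sfb_{j,r}\}$ and the strictly Drinfeld generators $\{\ch_{i,r},\cb_{j,r}\}$ together with the verification that the Drinfeld Serre relations \eqref{drs5} translate correctly in the shifted setting, but this should reduce to the unshifted computation of \cite[\S2.4]{Lu25} by a degree-tracking argument since the Serre identities are homogeneous with respect to the loop filtration.
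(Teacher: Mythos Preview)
Your approach is essentially the paper's: reduce to $\mu=(1^N)$ via Proposition~\ref{ind}, define $\Phi:{}^{\textnormal{Dr}}\Y_N^+(\sigma)\to\Y_{(1^N)}^+(\sigma)$ on generators, verify the Drinfeld relations hold in the target, and finish by a PBW comparison. Your justification of well-definedness via the embedding $\Y_{(1^N)}^+(\sigma)\hookrightarrow\Y_N^+$ (Theorem~\ref{pbwstw}) together with the zero-shift isomorphism of \cite{LWZ23} is if anything slightly cleaner than the paper's ``verify as in \cite[\S4--\S5]{LWZ23}'', and your identification of \eqref{Zshifted} as the source of the relation $\ch_{i,2r}=0$ for small $r$ is exactly right.

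There is, however, a genuine gap in your final step. The set you propose as a PBW spanning set for ${}^{\textnormal{Dr}}\Y_N^+(\sigma)$, namely ordered monomials in $\ch_{i,2r+1}$ and $\cb_{j,r}$, is too small: for $N\ge3$ it omits the non-simple root vectors. A PBW basis of $\rU(\gl_N[z]^\theta(\sigma))$ requires elements $\sfe_{i,j;r}$ for \emph{all} pairs $i,j$ (cf.\ \eqref{mapp} and Theorem~\ref{pbwstw}), whereas the generators you list only account for $|i-j|\le1$; ordered monomials in Chevalley-type generators alone do not span the enveloping algebra once the rank exceeds one. The paper bypasses this by directly invoking the PBW theorem for ${}^{\textnormal{Dr}}\Y_N^+(\sigma)$ from \cite[Thm.~3.2(2)]{TT24} and then checking that a basis maps to a basis. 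Your argument is easily repaired the same way, or alternatively by introducing higher root vectors on the Drinfeld side via iterated brackets (paralleling \eqref{x2}) and re-running the spanning argument from the proof of Theorem~\ref{pbwstw}.
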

\begin{proof}
By Proposition \ref{ind}, it suffices to prove the statement for the case when $\mu=(1^N)$. Again, we only sketch the proof.

First, we show that the map
\beq\label{themap}
\sfb_{a,r}\mapsto B_{a;a+1,a}^{(r+1)},\quad \sfh_{a,r}\mapsto Z_{a;a,a,a+1,a+1}^{(r+1)},\quad \sfh_{0,r}\mapsto H_{1;1,1}^{(r+1)},\quad 1\lle a<N,r\in\bN,
\eeq
defines an algebra homomorphism $^{\textnormal{Dr}}\Y_N^+(\sigma)\to \Y_\mu^+(\sigma)$. We need to check the relations \eqref{drs1}--\eqref{drs5} are satisfied by the images of $\sfh_{a,r}$ and $\sfb_{a,r}$ in $\Y_\mu^+(\sigma)$ under the map \eqref{themap}. The relations \eqref{drs1}--\eqref{drs4} can be verified as in \cite[\S4--\S5]{LWZ23}. To verify \eqref{drs5}, we check that assuming the relations \eqref{drs1}--\eqref{drs4} then the relation \eqref{serrepr} (which is used to obtain the Serre relations for the parabolic presentations when the particular block has size one) implies the relation \eqref{drs5}; see also Remark \ref{remext}. Thus the map \eqref{themap} defines an algebra homomorphism. To see this is an isomorphism, one uses the PBW theorems (\cite[Thm. 3.2(2)]{TT24} and Theorem \ref{pbwstw}) to see that a basis maps to a basis.
\end{proof}
By Proposition \ref{shapeindprop} we can drop the prefix ${\textnormal{Dr}}$ from $^{\textnormal{Dr}}\Y^{+}_N(\sigma)$, and we will do so unless we want to emphasize that we are working with this particular presentation.

\subsection{Shifted twisted Yangians of type AII}

The notion of parabolic presentation allows one to define the shifted twisted Yangian in a very similar way as the AI type, except that we need to assume that all parts of the composition $\mu$ are even, and we will refer to such compositions as {\it even}. 
Another major difference is that the $Z$-relations \eqref{Zshifteda2} for type AII are different from \eqref{Zshifted} for type AI.

In this subsection, we follow the convention \eqref{thetadef}.
Let $\sigma$ be a symmetric shift matrix of even size $N$.
Recall a composition $\mu=(\mu_1,\ldots,\mu_n)$ of $N$ is admissible to $\sigma$ if 
the condition in \S\ref{paraAI} holds.

\begin{dfn} \label{def:shiftedII}
Let $\mu=(\mu_1,\ldots,\mu_n)$ be an even composition of $N$, which is admissible to $\sigma$, which means $\fks_{i,i+1} = 0$ for $i$ odd. The {\it (dominantly) shifted twisted Yangian of type AII associated to $\sigma$}, denoted by $\Y_\mu^-(\sigma)$, is the associative unital algebra over $\bC$ generated by
\[
\{H_{a;i,j}^{(r)},\ \wtl H_{a;i,j}^{(r)}\}_{1\lle a\lle n,1\lle i,j\lle \mu_a,r\gge 1},\quad \{B_{a;i,j}^{(r)}\}_{1\lle a< n,1\lle i\lle \mu_{a+1}, 1\lle j\lle \mu_a,r>\fks_{a+1,a}(\mu)}
\]
subject to the relations given in Theorem \ref{mainthm} with all admissible indices together with the following relation (cf. \eqref{Za2})
\beq\label{Zshifteda2}
Z_{a;i,i,j,j}^{(2r-1)}+Z_{a;i',i',j',j'}^{(2r-1)}=0,\quad 1\lle a<n,~1\lle i\lle \mu_a,~1\lle j\lle \mu_{a+1},~1\lle r\lle \fks_{a+1,a}(\mu).
\eeq
\end{dfn}

By Theorem \ref{mainthm}, if $\sigma$ is zero matrix, $\Y_\mu^-(\sigma)$ is exactly $\Y_N^-$. Clearly, we have the canonical homomorphism $\Y_\mu^-(\sigma)\to \Y_\mu^-= \Y_N^-$ sending the generators $H_{a;i,j}^{(r)}$ and $B_{a;i,j}^{(r)}$ of $\Y_\mu^-(\sigma)$ to the elements of $\Y_N^-$ with same names. 

Exactly the same as AI type, for $1\lle a<b\lle n$, $1\lle i\lle \mu_b$, $1\lle j\lle \mu_a$ and $r>\fks_{b,a}(\mu)$, define elements $B_{b,a;i,j}^{(r)}\in \Y_\mu^-(\sigma)$ inductively by
\begin{equation*}
B_{a+1,a;i,j}^{(r)}:=B_{a;i,j}^{(r)},\quad B_{b,a;i,j}^{(r)}:=[B_{b-1;i,k}^{(\fks_{b,b-1}(\mu)+1)},B_{b-1,a;k,j}^{(r-\fks_{b,b-1}(\mu))}].
\end{equation*}
By the same argument, one deduces that the definition is independent of the choice of $1\lle k\lle \mu_{b-1}$.

The following results are parallel to Theorem \ref{pbwstw} and Proposition \ref{ind}, and the proofs are almost identical.

\begin{thm}\label{a2shPBWgauss}
The monomials in 
\begin{align*}
&\big\{H_{a;2i-1,2i-1}^{(r)},H_{a;2i-1,2i}^{(2r-1)},H_{a;2i,2i-1}^{(2r-1)}\big\}_{1\lle a\lle n,1\lle i\lle \tfrac{\mu_a}{2},r>0},\\
&\big\{H_{a;i,j}^{(r)}\big\}_{1\lle a\lle n,1\lle j<i\lle \mu_a,\lfloor \tfrac{i+1}{2}\rfloor\ne \lfloor \tfrac{j+1}{2}\rfloor,r>0},\qquad
\big\{B_{b,a;i,j}^{(r)}\big\}_{1\lle a< b< n,1\lle i\lle \mu_{b},1\lle j\lle \mu_a,r>\fks_{b,a}(\mu)},
\end{align*}
taken in some fixed linear order form a basis of $\Y_\mu^-(\sigma)$. In particular, the canonical map $\Y_\mu^-(\sigma)\to \Y_N^-$ is injective.
\end{thm}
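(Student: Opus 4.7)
The plan is to mimic the argument for Theorem~\ref{pbwstw} in type AI, with modifications dictated by the involution \eqref{thetadef} and the modified $Z$-relation \eqref{Zshifteda2}. First, I equip $\Y_\mu^-(\sigma)$ with the loop filtration given by $\deg H_{a;i,j}^{(r)}=r-1$ and $\deg B_{a;i,j}^{(r)}=r-1$ (compare \eqref{loopfiltw}), with $H_{a;i,j}^{(r)}$ for $r$ odd and $\lfloor(i+1)/2\rfloor=\lfloor(j+1)/2\rfloor$ being expressible through the relations \eqref{pr1}--\eqref{pr2} and \eqref{Zshifteda2}. Writing $\sfe_{i,j;r}$ for the image of the corresponding parabolic generator in degree $r$ of $\mathrm{gr}\,\Y_\mu^-(\sigma)$, as in \eqref{grp-img}, I want to show that these elements span a quotient of $\rU(\gl_N[z]^\theta(\sigma))$ with $\theta$ given by \eqref{e:thetainvdefn}, and then compare with the $\sigma=0$ case.

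Next I establish the crucial symmetry $\sfe_{j,i;r} = (-1)^{r+1}\theta_i\theta_j\,\sfe_{i',j';r}$ in $\mathrm{gr}\,\Y_\mu^-(\sigma)$, which is the AII analogue of the identities proved in the body of Theorem~\ref{mainthm} for type AII (see \eqref{new-pfn0}). The diagonal block case follows from \eqref{pr3} exactly as in the proof of \eqref{new-pfn0}, namely by symmetrising the commutator $[\sfB_{a,a;i,j}^{(0)},\sfB_{a,a;k,l}^{(r)}]$ and subtracting the reverse relation; the only subtle point is proving $\sfe_{i,i;2r}=\sfe_{i',i';2r}$ (with the appropriate sign) across blocks, which proceeds by induction on $a$ using \eqref{pr6}, with base case provided by \eqref{pr1} and by the shifted $Z$-relation \eqref{Zshifteda2}. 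For the off-diagonal blocks ($a\ne b$), the desired symmetry is imposed as a definition, consistent with the fact that $B_{b,a;i,j}^{(r)}$ for $b<a$ is not a primitive generator.

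With this symmetry in hand, I verify that the $\sfe_{i,j;r}$ satisfy the commutation relation
\[
[\sfe_{i,j;r},\sfe_{k,l;s}] = \delta_{jk}\sfe_{i,l;r+s}-\delta_{il}\sfe_{k,j;r+s}-(-1)^r\theta_i\theta_j\bigl(\delta_{i'k}\sfe_{j',l;r+s}-\delta_{jl'}\sfe_{k,i';r+s}\bigr),
\]
which is exactly the Lie bracket in $\gl_N[z]^\theta$ restricted to $\gl_N[z]^\theta(\sigma)$; this follows from \eqref{new-todo} by a case-by-case inspection parallel to the seven cases of the proof of Theorem~\ref{mainthm} for type AI, replacing the AI signs by the $\theta$-signs. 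Consequently, the assignment $e_{i,j}z^r - (-1)^r\theta_i\theta_j e_{j',i'}z^r\mapsto \sfe_{i,j;r}$ (for $r\gge \fks_{i,j}$) extends to a surjective graded algebra homomorphism $\varpi:\rU(\gl_N[z]^\theta(\sigma))\twoheadrightarrow \mathrm{gr}\,\Y_\mu^-(\sigma)$.

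Finally, injectivity of $\varpi$ is obtained by functoriality: the natural map $\Y_\mu^-(\sigma)\to\Y_N^-$ is filtered, so it induces a map $\mathrm{gr}\,\Y_\mu^-(\sigma)\to \mathrm{gr}\,\Y_N^-\cong \rU(\gl_N[z]^\theta)$ sending $\sfe_{i,j;r}\in\mathrm{gr}\,\Y_\mu^-(\sigma)$ to the corresponding basis element of $\rU(\gl_N[z]^\theta)$ under the identification from Proposition~\ref{prop:PBW}(2). Hence the monomials listed in the statement are linearly independent in $\mathrm{gr}\,\Y_\mu^-(\sigma)$, $\varpi$ is an isomorphism, and the PBW theorem follows; the canonical map $\Y_\mu^-(\sigma)\to\Y_N^-$ is therefore injective. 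The main obstacle will be the careful bookkeeping of the $\theta_i\theta_j$ signs when establishing the analogue of \eqref{new-todo} in the shifted setting, especially in verifying that the AII $Z$-relation \eqref{Zshifteda2} (as opposed to the AI vanishing \eqref{Zshifted}) gives precisely the correct amount of symmetry in the graded to match $\rU(\gl_N[z]^\theta(\sigma))$ and no more.
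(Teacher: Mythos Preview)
Your proposal is correct and follows essentially the same approach as the paper, which simply states that the proof is ``similar to the proof of Theorem~\ref{pbwstw}'' with the AII modifications you identify (the $\theta_i\theta_j$ signs from \eqref{thetadef}, the symmetry \eqref{new-pfn0}, the commutation relation \eqref{new-todo}, and the use of \eqref{Zshifteda2} in place of \eqref{Zshifted} for the induction across blocks). One small slip: the redundant $H$-generators in a $2\times 2$ block are the \emph{even}-degree off-diagonal ones and all of $H_{a;2i,2i}^{(r)}$, not the odd-degree ones as you wrote, but this does not affect the argument.
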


\begin{cor}\label{a2shapeindcor}
Up to isomorphism, the definition of $\Y_\mu^-(\sigma)$ is independent of the choice of even admissible $\mu$. 
\end{cor}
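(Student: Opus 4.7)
The proof plan is to mirror Proposition \ref{ind} of type AI, now exploiting the injection $\Y_\mu^-(\sigma)\hookrightarrow \Y_N^-$ established in Theorem \ref{a2shPBWgauss} to identify $\Y_\mu^-(\sigma)$ with a subalgebra of $\Y_N^-$, and then showing that the subalgebras produced from different even admissible shapes actually coincide inside $\Y_N^-$; this yields the claimed isomorphism.

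First, I would reduce to the case of an elementary even refinement. Say $\nu$ is obtained from $\mu=(\mu_1,\ldots,\mu_n)$ by replacing a part $\mu_a$ with $(\alpha,\beta)$, where $\alpha,\beta$ are positive even integers with $\alpha+\beta=\mu_a$. Any two even admissible shapes for $\sigma$ share a common even refinement (their block-wise least common refinement), and admissibility is preserved under such refinement because $\fks_{i,j}=0$ within each $\mu_a$-block and the new split is taken inside a single block. A zig-zag of elementary refinements then reduces the statement to this single case. Next, I would establish $\Y_\mu^-(\sigma)\subseteq \Y_\nu^-(\sigma)$ as subalgebras of $\Y_N^-$, via the analogue of \cite[Lem.~3.1]{BK06}: the quasideterminantal formulas of Proposition \ref{quasi}, applied inside $\Y_N^-$, express the coarser generators $H_{a;i,j}^{(r)}$ and $B_{a+1,a;i,j}^{(r)}$ of $\Y_\mu^-(\sigma)$ in terms of the finer generators of $\Y_\nu^-(\sigma)$. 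The additivity property \eqref{shift} of the shift matrix guarantees that the tighter shift bounds $\fks_{\ast,\ast}(\nu)$ for $\nu$ are compatible with (and imply) the coarser bounds $\fks_{\ast,\ast}(\mu)$ for $\mu$.

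Finally, I would compare the associated graded algebras for the loop filtration \eqref{loopfiltw}. By the proof of Theorem \ref{a2shPBWgauss}, both $\gr \Y_\mu^-(\sigma)$ and $\gr \Y_\nu^-(\sigma)$ are canonically identified with $\rU(\gl_N[z]^\theta(\sigma))$, viewed as a subalgebra of $\rU(\gl_N[z]^\theta)=\gr\Y_N^-$. Consequently the filtered inclusion $\Y_\mu^-(\sigma)\subseteq \Y_\nu^-(\sigma)$ induces an isomorphism on associated graded algebras, and hence is itself an equality; this gives the desired isomorphism $\Y_\mu^-(\sigma)\cong \Y_\nu^-(\sigma)$.

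The main obstacle, compared with type AI, is ensuring that the $Z$-relation \eqref{Zshifteda2} transfers cleanly across the refinement. Unlike \eqref{Zshifted}, which states outright vanishing of certain $Z$-elements, the type AII relation pairs $Z$-elements indexed by primed pairs under the involution $i\mapsto i'$ of \eqref{thetadef}, and one must check that this pairing is respected by the block split $\mu_a=\alpha+\beta$. This is precisely why we insist on even refinements: an even sub-block $\alpha$ is stable under $i\leftrightarrow i'$, so the finer parabolic presentation inherits a well-defined $Z$-relation that is consistent with, and in fact implies, the coarser one. Verifying this compatibility in full—checking that the defining relations of $\Y_\mu^-(\sigma)$ listed in Definition \ref{def:shiftedII}, including \eqref{Zshifteda2}, all hold in $\Y_\nu^-(\sigma)$ under the quasideterminantal expression of the coarser generators—is the technical heart of the argument, parallel to the analogous verification for type AI.
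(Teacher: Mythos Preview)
Your proposal is correct and follows essentially the same route as the paper, which simply defers to Proposition~\ref{ind} with the remark that the proof is ``almost identical.'' The reduction to an elementary even refinement, the inclusion $\Y_\mu^-(\sigma)\subseteq\Y_\nu^-(\sigma)$ via the analogue of \cite[Lem.~3.1]{BK06}, and the comparison of associated graded algebras under the loop filtration are exactly the ingredients the paper has in mind; in type AII the universal common refinement is $(2^{N/2})$ rather than $(1^N)$.

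One clarification: your final paragraph overstates where the work lies. Once both $\Y_\mu^-(\sigma)$ and $\Y_\nu^-(\sigma)$ are realized as subalgebras of $\Y_N^-$ via Theorem~\ref{a2shPBWgauss}, all the defining relations---including the $Z$-relation \eqref{Zshifteda2}---hold automatically for the coarser generators, since they already hold in $\Y_N^-$ by Theorem~\ref{mainthm} and Lemma~\ref{lem:Za2}. There is nothing to verify on that front. The actual content of the inclusion step is purely that the coarser generators $H_{a;i,j}^{(r)}$ and $B_{a;i,j}^{(r)}$ (with their shift bounds) can be written as polynomials in the finer generators (with \emph{their} shift bounds); this is the quasideterminantal computation underlying \cite[Lem.~3.1]{BK06}, and the evenness of $\alpha,\beta$ is needed there so that the finer parabolic presentation exists at all.
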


The canonical filtration for shifted twisted Yangian of AII type
\beq\label{canofa2}
\mathrm F_0\Y_{N}^-(\sigma)\subset \mathrm F_1\Y_{N}^-(\sigma)\subset \mathrm F_2\Y_{N}^-(\sigma)\subset \cdots,\qquad\qquad 
\Y_{N}^-(\sigma)=\bigcup_{s\gge 1}\mathrm F_s\Y_{N}^-(\sigma),
\eeq
can be given exactly the same way as AI type by defining $\mathrm F_s\Y_N^-(\sigma):= \Y_N^-(\sigma) \cap \mathrm F_s\Y_N^-$. 
When $\sigma=0$, it follows from \eqref{qua} that the associated graded $\gr'\Y_N^-$ is commutative and, similarly, that $\gr'\Y_N^-(\sigma)$ can naturally be viewed as a subalgebra of $\gr'\Y_N^-$.

Consider the elements of $\Y_N^-(\sigma)$ in Theorem \ref{a2shPBWgauss}.
Denote their images in $\gr'\Y_N^-(\sigma)$ by
\beq\label{grp-img-new}
\begin{split}
\sff_{b,a;i,j}^{(r)}:=\begin{cases}
\gr' H_{a;i,i}^{(r)} & \text{ if } a=b, \text{ and } i=j \text{ is odd },\\[1mm]
\gr' H_{a;i,i-1}^{(r)}, & \text{ if } a=b, \, r \text{ is odd and } i=j+1 \text{ is even } ,\\[1mm]
\gr' H_{a;i,i+1}^{(r)}, & \text{ if } a=b, \, r \text{ is odd and } i=j-1 \text{ is odd } ,\\[1mm]
\gr' H_{a;i,j}^{(r)}, & \text{ if } a=b, \,\, 1\lle j<i \lle\mu_a \text{ and } \lfloor \tfrac{i+1}{2}\rfloor\ne \lfloor \tfrac{j+1}{2}\rfloor,\\[1mm]
\gr' B_{b,a;i,j}^{(r)}, & \text{ if } a<b. \\
\end{cases}
\end{split}
\eeq
The following result is an analogue of Proposition \ref{polystY}.
\begin{prop} \label{new-polystY}
For any even admissible shape $\mu$, $\gr'\Y_N^-(\sigma)$ is the subalgebra of $\gr'\Y_N^-$ generated by
\begin{eqnarray*}
& & \{\sff_{a,a;2i-1,2i-1}^{(r)}, \sff_{a,a;2i-1,2i}^{(2r-1)},  \sff_{a,a;2i,2i-1}^{(2r-1)} \, | \, 1\lle a\lle n, 1\lle i\lle \tfrac{\mu_a}{2}, r\gge1\}\\
& & \hspace{20pt}\cup \{\sff_{a,a;i,j}^{(r)} \, | \, 1\lle a\lle n, 1\lle j<i\lle \mu_a, \lfloor \tfrac{i+1}{2}\rfloor\ne \lfloor \tfrac{j+1}{2}\rfloor, r\gge1\, \}\\
& & \hspace{40pt}\cup \{\sff_{a,b;i,j}^{(r)}\, | \, 1\lle a<b \lle n, 1\lle i\lle \mu_b, 1\lle j\lle \mu_a, r>\fks_{b,a}(\mu) \}.
\end{eqnarray*}
\end{prop}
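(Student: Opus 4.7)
The plan is to mimic the proof of Proposition \ref{polystY}, replacing each input from type AI by its type AII counterpart established earlier in Part~2. The commutativity of $\gr'\Y_N^-$ follows from the quaternary relations \eqref{qua}: when we compute $[s_{ij}(u),s_{kl}(v)]$ and pass to $\gr'$, the factor $u^2-v^2$ on the left kills one degree in each spectral parameter, while every term on the right is degree $\lle r+s-1$ in the coefficient of $u^{-r}v^{-s}$. Consequently $\gr'\Y_N^-$ is commutative and, by Proposition \ref{prop:PBW}(2), it is the free commutative algebra on the images of the PBW generators listed there.

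First I would establish the type AII analogue of Proposition \ref{polytY}, namely that for every even composition $\mu$ of $N$, the associated graded $\gr'\Y_N^-$ is freely generated as a commutative algebra by the images $\sff_{b,a;i,j}^{(r)}$ attached to the full PBW basis of Theorem \ref{a2shPBWgauss} with $\sigma=0$. As in the discussion preceding Proposition \ref{polytY}, the quasideterminant formulas \eqref{quasid}--\eqref{quasif}, together with the shifts \eqref{Ha}, \eqref{Bb} and Proposition \ref{a2prop:B=C}, express each parabolic generator as a linear combination of monomials in the $s_{ij}^{(r)}$ of matching total canonical degree, and vice versa. Hence the canonical filtration on $\Y_N^-$ coincides with the one defined by placing each parabolic generator of superscript $r$ in degree $r$, and freeness on the parabolic PBW generators follows by transport of structure from Proposition \ref{prop:PBW}(2).

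Next I would pass to the shifted setting. By Theorem \ref{a2shPBWgauss} and Corollary \ref{a2shapeindcor}, the embedding $\Y_N^-(\sigma) \hookrightarrow \Y_N^-$ is injective and strictly filtered with respect to the canonical filtrations on both sides (as defined in \eqref{canofa2}). Thus the induced map $\gr'\Y_N^-(\sigma) \to \gr'\Y_N^-$ is injective and identifies $\gr'\Y_N^-(\sigma)$ with a graded subalgebra of the free commutative algebra $\gr'\Y_N^-$. Moreover, under this identification the image $\sff_{b,a;i,j}^{(r)}$ computed inside $\gr'\Y_N^-(\sigma)$ maps to the element of $\gr'\Y_N^-$ bearing the same name in \eqref{grp-img-new}; this is immediate from the fact that the parabolic generators $H_{a;i,j}^{(r)}$, $B_{b,a;i,j}^{(r)}$ of $\Y_N^-(\sigma)$ are sent to the elements of $\Y_N^-$ of the same name by the embedding.

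Finally, Theorem \ref{a2shPBWgauss} exhibits a PBW basis of $\Y_N^-(\sigma)$ consisting of ordered monomials in precisely the parabolic generators corresponding to the listed $\sff_{b,a;i,j}^{(r)}$. Passing to $\gr'$ and using commutativity, these monomials span $\gr'\Y_N^-(\sigma)$, and their images inside $\gr'\Y_N^-$ are a subset of the free generators established in the first step, hence linearly independent. This proves that $\gr'\Y_N^-(\sigma)$ is freely generated as a commutative subalgebra of $\gr'\Y_N^-$ by the three families stated in the proposition, which is stronger than the claim; independence of the admissible shape $\mu$ follows from Corollary \ref{a2shapeindcor}. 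The only mild subtlety, and the one piece I would check most carefully, is bookkeeping the parity conditions $\lfloor(i+1)/2\rfloor \ne \lfloor(j+1)/2\rfloor$ and the alternation between odd and general $r$ in \eqref{grp-img-new}, ensuring that the AII PBW selection rules in Proposition \ref{prop:PBW}(2) transfer cleanly to the parabolic setting without accidentally double counting or omitting generators that become proportional in $\gr'\Y_N^-$ via the symmetry relations \eqref{sym}.
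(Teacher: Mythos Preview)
Your approach is essentially the same as the paper's: the proposition is stated as the type AII analogue of Proposition \ref{polystY}, and the paper's own (two-line) proof of the latter invokes Theorem \ref{pbwstw} and Proposition \ref{polytY}, which you correctly replace by Theorem \ref{a2shPBWgauss} and the AII version of Proposition \ref{polytY} deduced from Proposition \ref{PBWgauss-new}.

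There is one step you gloss over that the paper does flag. You write that ``the parabolic generators $H_{a;i,j}^{(r)}$, $B_{b,a;i,j}^{(r)}$ of $\Y_N^-(\sigma)$ are sent to the elements of $\Y_N^-$ of the same name by the embedding.'' For the primitive generators $H_{a;i,j}^{(r)}$ and $B_{a;i,j}^{(r)} = B_{a+1,a;i,j}^{(r)}$ this is tautological, but for $b>a+1$ it is not: the element $B_{b,a;i,j}^{(r)}$ in $\Y_\mu^-(\sigma)$ is defined by the iterated commutator \eqref{x2} with shifts depending on $\sigma$, whereas the element of the same name in $\Y_\mu^-$ arises from Gauss decomposition (equivalently \eqref{pfN0} with all shifts zero). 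These two elements of $\Y_N^-$ are in general different. What is true, and what the paper isolates via the phrase ``Due to \eqref{pr7}\ldots'' in the proof of Proposition \ref{polystY}, is that they agree modulo lower canonical degree: the relation \eqref{pr7} says that swapping a superscript from one factor of the commutator to the other costs only terms of lower total degree, so after passing to $\gr'$ the two recipes coincide. You should invoke \eqref{pr7} explicitly at this point; once that identification in $\gr'$ is secured, the rest of your argument goes through without change.
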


\begin{rem}\label{Zsty}
It follows from Proposition~\ref{sdetdecomp} that the center of the twisted Yangian $Z(\Y_N^{\pm})$ is contained in the center of the shifted twisted Yangian $Z(\Y_N^{\pm}(\sigma))$. In fact, for any $\sigma$ we have 
\beq\label{eqcent}
Z(\Y_N^{\pm})=Z(\Y_N^{\pm}(\sigma)).
\eeq
Let $\fkc=\gl_N[z]^\theta$ denote the twisted current algebra defined in \S\ref{ss:TwistedYangians}, $\fkc_\sigma=\gl_N[z]^\theta(\sigma)$ denote its subalgebra defined in \S\ref{paraAI}, and $\fkc_r$ denote the subalgebra of $\fkc$ spanned by elements $xz^s$ with $s > r$. 
To establish \eqref{eqcent} one must check the inclusion $Z(\Y_N^{\pm}(\sigma))\subseteq Z(\Y_N^{\pm})$ and, for this, it suffices to show $S(\fkc_\sigma)^{\fkc_\sigma}\subseteq S(\fkc)^\fkc$.
By a slight modification of the argument for \cite[Lem 2.8.1]{Mol07} one can prove that $S(\fkc)^\fkc = S(\fkc)^{\fkc_r}$ for any $r\gge 0$. Picking $r$ large enough so that $\fkc_r \subseteq \fkc_\sigma$, we have $S(\fkc_\sigma)^{\fkc_\sigma}\subseteq S(\fkc)^{\fkc_\sigma} \subseteq S(\fkc)^{\fkc_r}=S(\fkc)^{\fkc}$.
\end{rem}

\section{Baby comultiplication}\label{sec:baby}
In this section, we establish a baby comultiplication, which is an algebra homomorphism from $\Y_\mu^\pm(\sigma)$ to $\Y_\mu^\pm(\dot\sigma)\otimes \rU(\gl_t)$. This provides a powerful technical tool for studying both structure and representation theory (cf. \cite{BK08}). 

\subsection{The formulation}
Assume $\sigma\neq 0$ and suppose $\mu=(\mu_1,\ldots, \mu_n)$ is the minimal admissible shape of $\sigma$.
Let $\Y_\mu^\pm(\sigma)$ be the shifted twisted Yangian associated to $\sigma$ and $\mu$, where we assume $\mu$ to be even for $\Y_\mu^-(\sigma)$. 
Define $\dot\sigma:=(\dot\fks_{i,j})_{1\lle i,j\lle N}$ according to the following rule
\beq\label{dotsij}
\dot\fks_{i,j}:=\begin{cases}
\fks_{i,j}-1, & \text{ if }i\lle N-\mu_n<j  \text{ or } j\lle N-\mu_n<i,\\
\fks_{i,j}, &\text{ otherwise. }\\
\end{cases}
\eeq
By definition, $\dot\sigma$ is a symmetric shift matrix and $\mu$ is admissible for $\dot\sigma$. Therefore the definition of $\Y_\mu^\pm(\dot\sigma)$ makes sense. 
Due to Theorem~\ref{pbwstw} we have the embedding $\Y_\mu^\pm(\sigma) \hookrightarrow \Y_\mu^\pm(\dot\sigma)$, sending $H_{a;i,j}^{(r)} \mapsto \dot{H}_{a;i,j}^{(r)}$ and $B_{a;f,g}^{(s)}\mapsto \dot{B}_{a;f,g}^{(s)}$. Here and after we use $\dot{H}_{a;i,j}^{(r)}$, $\dot{B}_{a;f,g}^{(s)}$ to denote elements in $\Y_\mu^\pm(\dot\sigma)$ to avoid possible confusion.

\begin{prop}
    \label{delR0}
The homomorphism  
\beq\label{Ka}
\Delta_R=(1\otimes \pi_N)\circ \Delta:\Y_N^\pm\longrightarrow \Y_N^\pm\otimes \rU(\gl_N)
\eeq
sends
\begin{equation}\label{Kacomp}
 s_{ij}^{(r)} \mapsto s_{ij}^{(r)}\otimes 1
+\sum_{p=1}^N s_{ip}^{(r-1)}\otimes e_{pj}
-\sum_{q=1}^N s_{qj}^{(r-1)}\otimes e_{q'i'}\theta_q\theta_i
-\sum_{p,q=1}^N s_{qp}^{(r-2)}\otimes e_{q'i'}e_{pj}\theta_q\theta_i.
\end{equation}
\end{prop}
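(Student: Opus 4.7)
The plan is a direct computation from the two ingredients set up earlier. First, I would invoke the right coideal property of $\Y_N^\pm \subseteq \rY(\gl_N)$ from \eqref{coms}, which gives
\[
\Delta(s_{ij}(u)) = \sum_{a,b=1}^N s_{ab}(u) \otimes \theta_a\theta_i\, t_{a'i'}(-u)\, t_{bj}(u),
\]
and then apply $1 \otimes \pi_N$. Because $\pi_N(t_{kl}^{(r)}) = \delta_{r,1} e_{kl}$, each series $\pi_N(t_{kl}(u))$ truncates after the $u^{-1}$ term, so
\[
\pi_N(t_{a'i'}(-u)) = \delta_{a'i'} - u^{-1} e_{a'i'}, \qquad \pi_N(t_{bj}(u)) = \delta_{bj} + u^{-1} e_{bj}.
\]

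Next, since $\pi_N$ is an algebra homomorphism, I can multiply these two truncated series and expand, which gives a polynomial in $u^{-1}$ of degree at most two. Summing against $s_{ab}(u) \otimes \theta_a\theta_i$ and noting that the term $\delta_{a'i'}\delta_{bj}$ forces $a=i$, $b=j$ with $\theta_a\theta_i = 1$, I arrive at
\[
\Delta_R(s_{ij}(u)) = s_{ij}(u)\otimes 1 + u^{-1}\!\sum_p s_{ip}(u)\otimes e_{pj} - u^{-1}\!\sum_q s_{qj}(u)\otimes \theta_q\theta_i e_{q'i'} - u^{-2}\!\sum_{p,q} s_{qp}(u) \otimes \theta_q\theta_i e_{q'i'} e_{pj}.
\]
Extracting the coefficient of $u^{-r}$ on both sides yields \eqref{Kacomp}.

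It remains to observe that $\Delta_R$ is well-defined as a map $\Y_N^\pm \to \Y_N^\pm \otimes \rU(\gl_N)$: the right coideal property ensures $\Delta(\Y_N^\pm) \subseteq \Y_N^\pm \otimes \rY(\gl_N)$, and then composing with $1 \otimes \pi_N$ lands in $\Y_N^\pm \otimes \rU(\gl_N)$. Both $\Delta$ and $\pi_N$ are algebra homomorphisms, so the composition $\Delta_R$ is as well. There is no substantial obstacle here; the statement is essentially bookkeeping, and the four-term shape of \eqref{Kacomp} is exactly what the two truncation terms in each of the two factors produce upon multiplication.
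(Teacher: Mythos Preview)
Your proposal is correct and matches the paper's approach: the paper's proof is simply ``Follows from the definitions,'' and you have carried out exactly that computation in full detail, unfolding \eqref{coms} and the evaluation map $\pi_N$ and reading off the $u^{-r}$-coefficient.
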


\begin{proof}
    Follows from the definitions.
\end{proof}

The following is an analogue of \cite[Thm.~4.1]{BK05} for shifted twisted Yangians and will provide a powerful tool in later sections. Its proof is much more technically challenging than {\em loc. cit.} and will be presented in the two subsections below.

\begin{thm}\label{delR}
Suppose $\sigma \neq 0$. Let $\mu=(\mu_1,\ldots, \mu_n)$ be the minimal admissible shape associated to $\sigma$ and let $t=\mu_n$. The map $\Delta_R: \Y_\mu^\pm(\sigma)\longrightarrow \Y_\mu^\pm(\dot\sigma)\otimes \rU(\gl_t)$ such that 
\begin{align}
\Delta_R(H_{a;i,j}^{(r)})&=\dot{H}_{a;i,j}^{(r)}\otimes 1, \,\,\qquad \forall 1\lle a\lle n-1,
\label{Hdel1}\\
\Delta_R(B_{a;f,g}^{(s)})&=\dot{B}_{a;f,g}^{(s)}\otimes 1- \delta_{a,n-1}  \sum_{p=1}^{t}\dot{B}_{n-1;p,g}^{(s-1)}\otimes e_{p'f'}\theta_p\theta_f, \quad \forall 1\lle a\lle n-1,
\label{Bdel1}\\
\Delta_R(H_{n;l,k}^{(r)})&=\dot{H}_{n;l,k}^{(r)}\otimes 1+ \sum_{p=1}^{t} \dot{H}_{n;l,p}^{(r-1)}\otimes e_{pk} - \sum_{q=1}^{t}  \dot{H}_{n;q,k}^{(r-1)}\otimes e_{q'l'}\theta_q\theta_l
-\sum_{p,q=1}^t    \dot{H}_{n;q,p}^{(r-2)}\otimes e_{q'l'}e_{p k}\theta_q\theta_l, 
\label{bp:Hm}
\end{align}
is a well-defined algebra homomorphism; it is understood that the last term with superscript $(r-2)$ in \eqref{bp:Hm} drops if $r=1$.
\end{thm}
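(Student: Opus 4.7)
My plan is to take \eqref{Hdel1}--\eqref{bp:Hm} as the definition of $\Delta_R$ on the parabolic generators and verify that the defining relations of $\Y_\mu^\pm(\sigma)$---namely \eqref{pr1}--\eqref{pr-2} together with \eqref{Zshifted} for AI and \eqref{Zshifteda2} for AII---are all satisfied by the images in $\Y_\mu^\pm(\dot\sigma)\otimes \rU(\gl_t)$. The guiding framework is the parent coideal map $\Delta_R^N := (1\otimes \pi_N)\circ \Delta$ of Proposition~\ref{delR0}, an automatic algebra homomorphism $\Y_N^\pm\to \Y_N^\pm\otimes \rU(\gl_N)$ which admits the factorized matrix form
\[
\Delta_R^N(S(u)) = (I - u^{-1}\mathcal{E}^\prime)\,S(u)\,(I + u^{-1}\mathcal{E}),
\]
with $\mathcal{E}_{p,j} = e_{pj}$ and $\mathcal{E}^\prime_{i,q} = e_{q'i'}\theta_q\theta_i$. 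Performing the $\mu$-block Gauss decomposition of this identity and restricting the auxiliary factor to the last Levi block $\rU(\gl_t)\subseteq \rU(\gl_N)$ is expected to recover the formulas of Theorem~\ref{delR}; the almost-trivial form on bulk generators reflects the fact that the quasi-determinant expressions of Proposition~\ref{quasi} for $H_a, B_a$ with $a<n-1$ only involve matrix entries that $\Delta_R^N$ sends into $\Y_\mu^\pm(\dot\sigma)\otimes 1$.

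As a preliminary, the formulas are internally consistent: for $a < n-1$ one has $\fks_{a+1,a}(\mu) = \dot\fks_{a+1,a}(\mu)$, so $\dot B_{a;f,g}^{(s)} \in \Y_\mu^\pm(\dot\sigma)$, while for $a = n-1$ the bound $s > \fks_{n,n-1}(\mu) = \dot\fks_{n,n-1}(\mu)+1$ gives $s-1 > \dot\fks_{n,n-1}(\mu)$, so $\dot B_{n-1;p,g}^{(s-1)}$ is well-defined. The verification of the quaternary relations \eqref{pr3}--\eqref{pr7} then proceeds in stages. Relations purely among $H_a, B_a$ with $a < n-1$ collapse to the corresponding identities inside $\Y_\mu^\pm(\dot\sigma)\otimes 1$. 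Relations coupling a bulk block $a < n-1$ to the last one follow from block Gauss-decomposition compatibility and Corollary~\ref{comcor}. Relations purely among $H_n, B_{n-1}$ are verified by direct expansion of \eqref{bp:Hm} and \eqref{Bdel1}, matching the $e_{pk}, e_{q'l'}$ correction terms via standard $\rU(\gl_t)$-commutator identities. The twisted symmetry relation \eqref{pr1} and the Serre relations \eqref{pr-1}--\eqref{pr-2} require careful bookkeeping of $\theta$-factors and primed indices, with arguments running parallel to the 2-block and 3-block calculations of Sections~\ref{sec:lower2}--\ref{sec:lower3}.

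The hardest step will be preserving the shift relation \eqref{Zshifted} (resp.\ \eqref{Zshifteda2}) at $a = n-1$. For $a < n-1$ this is trivial since $\fks_{a+1,a}(\mu) = \dot\fks_{a+1,a}(\mu)$, but at $a = n-1$ one has $\dot\fks_{n,n-1}(\mu) = \fks_{n,n-1}(\mu)-1$, so the image $\Delta_R(Z_{n-1;i,i,j,j}(u)) = \wtl{\dot H}_{n-1;i,i}(u-\tfrac{\mu_{n-1}}{2})\otimes 1 \cdot \Delta_R(H_{n;j,j}(u))$ acquires three additional summands via \eqref{bp:Hm}, weighted by $e_{pj}, e_{q'j'}$, and $e_{q'j'}e_{pj}$, and involving even-degree $\dot Z$-tensors $\dot Z^{(2r-2)}_{n-1;i,i,j,p}$ and $\dot Z^{(2r-3)}_{n-1;i,i,q,p}$ with non-diagonal last-block indices. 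To force these together with the principal term $\dot Z^{(2r-1)}\otimes 1$ to assemble to zero for every $1\lle r\lle \fks_{n,n-1}(\mu)$, one must combine a one-step extension of the defining shift in $\Y_\mu^\pm(\dot\sigma)$ (derived from the self-commutator in \eqref{pr6} evaluated at degree $\dot\fks_{n,n-1}(\mu)+1$) with subtle cancellations among the $\rU(\gl_t)$-valued correction tensors. In type AI the pointwise vanishing $e_{jj} - e_{j'j'} = 0$ makes the low-degree cross terms collapse, whereas in type AII this argument must be replaced by the paired form of \eqref{Zshifteda2}, summing over the involution $i\mapsto i'$. This delicate cancellation has no analogue in \cite{BK06} and is the principal source of the additional technical effort alluded to in the introduction.
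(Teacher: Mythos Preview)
Your overall strategy---verifying the defining relations of $\Y_\mu^\pm(\sigma)$ against the images \eqref{Hdel1}--\eqref{bp:Hm}, reducing to the last two or three blocks---is the same as the paper's. However, the mechanism you propose for the $Z$-shift relation at $a=n-1$ in type~AI is not the right one, and two of the paper's key shortcuts are absent from your outline.

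In type~AI the convention \eqref{a1thetadef} gives $j'=j$ and $\theta_j=1$, so the identity ``$e_{jj}-e_{j'j'}=0$'' is vacuous and cannot drive any cancellation. What actually works is a Yangian-side symmetry: for fixed $k$, the series $Z_{k,k,i,j}(u)$ satisfy the quaternary relation~\eqref{qua} (by \eqref{pr3}) together with $Z_{k,k,i,i}^{(2r-1)}=0$, so Lemma~\ref{lem:s11} forces the full symmetry relation
\[
Z_{k,k,j,i}(-u)=Z_{k,k,i,j}(u)+\tfrac{1}{2u}\big(Z_{k,k,i,j}(u)-Z_{k,k,i,j}(-u)\big).
\]
This is what rewrites $Z_{i,i,j,p}^{(r-1)}-Z_{i,i,p,j}^{(r-1)}$ (the combined middle terms of \eqref{bp:Hm}) as $Z_{i,i,p,j}^{(r-2)}$, after which a $p\leftrightarrow q$ symmetrization using the $\rU(\gl_t)$ commutator pairs the remaining odd-degree $Z$-tensors to zero. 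Without invoking this symmetry, which rests on Proposition~\ref{block=tYI}, the correction terms do not collapse.

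Two further points where the paper takes a route you do not anticipate. For the Serre relation~\eqref{pr-1} in the $3$-block case the paper avoids direct expansion entirely: it introduces an auxiliary shift matrix $\eta$ whose minimal admissible shape is the $2$-block coarsening $\nu=(\mu_1+\mu_2,\mu_3)$, checks via quasi-determinant identities that ${}^\mu\Delta_R$ and ${}^\nu\Delta_R$ agree on ${}^\mu F_1,{}^\mu F_2$, and inherits the homomorphism property from the already-established $2$-block case. For~\eqref{Zshifteda2} in type~AII the paper does not attempt a direct paired cancellation (which for $r>1$ mixes $Z$-tensors with genuinely different index patterns such as $Z_{i,i,j,p}$ and $Z_{i',i',p',j'}$). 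Instead it observes that the elements $H_{1;i,j}^{(r)},H_{2;p,q}^{(r)},Z_{1;i,j,p,q}^{(r)}$ already lie in the subalgebra $\Y_\mu^-(\sigma_1)$ with $\fks_{1,2}=1$, where the sole relation ($r=1$) is preserved by an elementary check, and since $\Delta_R^{(k)}$ agrees with $\Delta_R^{(1)}$ on these elements the general case follows immediately.
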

We shall refer to $\Delta_R$ as the baby comultiplication. 

\subsection{Proof of Theorem \ref{delR} for AI type}\label{subsec:babypfa1}

\begin{proof}
We assume the convention \eqref{a1thetadef} in this subsection.
Let $\sigma\neq0$. Recall from Definition~ \ref{defn:shiftedtwisted} that 
the defining relations for $\Y_\mu^+(\sigma)$ are precisely the relations \eqref{pr1}--\eqref{pr-2} with all admissible indices together with \eqref{Zshifted}. Checking the relations is trivial unless they involve $B_{n-1;f,g}^{(s)}$ and $H_{n;l,k}^{(r)}$, and so it suffices to check the relations involving only the very last two or three blocks. Thus we may simply assume $\mu=(\mu_1,\mu_2)$ or $\mu=(\mu_1,\mu_2,\mu_3)$.

Assume $\mu=(\mu_1,\mu_2)$. We check that the relation (\ref{pr6}), which is (\ref{bbcoef1}), is preserved by $\Delta_R$. Recall from (\ref{tlHH}) that
\[
Z_{i,k,l,j}(v) := \sum_{r\gge 0} Z_{1;i,k,l,j}^{(r)}v^{-r}= \wtl H_{1;i,k}(v-\tfrac{\mu_1}{2})H_{2;l,j}(v).
\]
Its image under $\Delta_R$ is given by
\begin{equation}\label{delZ}
\Delta_R(Z_{i,k,l,j}^{(r)})=\dot Z_{i,k,l,j}^{(r)}\otimes 1 + \sum_{p=1}^{\mu_2} \dot Z_{i,k,l,p}^{(r-1)} \otimes e_{pj} - \sum_{q=1}^{\mu_2}\dot Z_{i,k,q,j}^{(r-1)} \otimes e_{ql} - \sum_{p,q=1}^{\mu_2}\dot Z_{i,k,q,p}^{(r-2)} \otimes e_{ql}e_{pj}.
\end{equation}
Using (\ref{b=c n2}), we can rewrite (\ref{bbcoef1}) in the following equivalent form
\begin{equation}\label{CC=CZ}
[C_{1;i,j}^{(r)},C_{1;k,l}^{(s)}]
 =\sum_{a=1}^{s-1} C_{1;i,l}^{(a)} C_{1;k,j}^{(r+s-1-a)}  -  \sum_{a=1}^{r-1} C_{1;i,l}^{(a)} C_{1;k,j}^{(r+s-1-a)}  
-(-1)^{s} Z_{1;i,k,l,j}^{(r+s-1)}.
\end{equation}
Similarly, the image of $\Delta_R$ on $C_{1;f,g}^{(s)}$ is explicitly given by
\begin{equation}\label{DelonC}
\Delta_R(C_{1;f,g}^{(s)})=\dot{C}_{1;f,g}^{(s)}\otimes 1+ \sum_{p=1}^{t}\dot{C}_{1;f,p}^{(s-1)}\otimes e_{pg},
\end{equation}
thus we may compute the bracket as  
\begin{align*}
[&\Delta_R(C_{1;i,j}^{(r)}), \Delta_R(C_{1;k,l}^{(s)})]=
[\dot C_{1;i,j}^{(r)}\otimes 1 + \sum_{p=1}^{\mu_2} \dot C_{1;i,p}^{(r-1)}\otimes e_{pj}, \,\, \dot C_{1;k,l}^{(s)}\otimes 1 + \sum_{q=1}^{\mu_2} \dot C_{1;k,q}^{(s-1)}\otimes e_{ql}]\\
&= [\dot C_{1;i,j}^{(r)}, \dot C_{1;k,l}^{(s)}] \otimes 1 
+ \sum_{p=1}^{\mu_2} [\dot C_{1;i,p}^{(r-1)}, \dot C_{1;k,l}^{(s)}] \otimes e_{pj}
+ \sum_{q=1}^{\mu_2} [\dot C_{1;i,j}^{(r)}, \dot C_{1;k,q}^{(s-1)}] \otimes e_{ql} \\
& + \sum_{p,q=1}^{\mu_2} \big( \dot C_{1;i,p}^{(r-1)}\dot C_{1;k,q}^{(s-1)} \otimes e_{pj}e_{ql} 
- \dot C_{1;k,q}^{(s-1)} \dot C_{1;i,p}^{(r-1)} \otimes e_{ql}e_{pj}
\big)\\
&= [\dot C_{1;i,j}^{(r)}, \dot C_{1;k,l}^{(s)}] \otimes 1 
+ \sum_{p=1}^{\mu_2} [\dot C_{1;i,p}^{(r-1)}, \dot C_{1;k,l}^{(s)}] \otimes e_{pj}
+ \sum_{q=1}^{\mu_2} [\dot C_{1;i,j}^{(r)}, \dot C_{1;k,q}^{(s-1)}] \otimes e_{ql} \\
& + \sum_{p,q=1}^{\mu_2} [ \dot C_{1;i,p}^{(r-1)}, \dot C_{1;k,q}^{(s-1)}]\otimes e_{ql}e_{pj}  + \sum_{p,q=1}^{\mu_2} \dot C_{1;i,p}^{(r-1)}\dot C_{1;k,q}^{(s-1)} \otimes (\delta_{qj}e_{pl}-\delta_{lp}e_{qj})\\
& =\big( \sum_{a=1}^{s-1} \dot C_{1;i,l}^{(a)}\dot C_{1;k,j}^{(r+s-1-a)} 
    - \sum_{a=1}^{r-1} \dot C_{1;i,l}^{(a)}\dot C_{1;k,j}^{(r+s-1-a)} 
    - (-1)^s \dot Z_{i,k,l,j}^{(r+s-1)}\big) \otimes 1\\
& + \sum_{p=1}^{\mu_2} \big( \sum_{a=1}^{s-1} \dot C_{1;i,l}^{(a)}\dot C_{1;k,p}^{(r+s-2-a)} 
    - \sum_{a=1}^{r-2} \dot C_{1;i,l}^{(a)}\dot C_{1;k,p}^{(r+s-2-a)} 
    - (-1)^s \dot Z_{i,k,l,p}^{(r+s-2)}\big) \otimes e_{pj}\\
& + \sum_{q=1}^{\mu_2} \big( \sum_{a=1}^{s-2} \dot C_{1;i,q}^{(a)}\dot C_{1;k,j}^{(r+s-2-a)} 
    - \sum_{a=1}^{r-1} \dot C_{1;i,q}^{(a)}\dot C_{1;k,j}^{(r+s-2-a)} 
    - (-1)^{s-1}\dot  Z_{i,k,q,j}^{(r+s-2)}\big) \otimes e_{ql}\\
& + \sum_{p,q=1}^{\mu_2} \big( \sum_{a=1}^{s-2} \dot C_{1;i,q}^{(a)}\dot C_{1;k,p}^{(r+s-3-a)} 
    - \sum_{a=1}^{r-2} \dot C_{1;i,q}^{(a)}\dot C_{1;k,p}^{(r+s-3-a)} 
    -(-1)^{s-1}\dot  Z_{i,k,q,p}^{(r+s-3)}\big) \otimes e_{ql}e_{pj}\\
& - \sum_{q=1}^{\mu_2} \dot C_{1;i,l}^{(r-1)}\dot C_{1;k,q}^{(s-1)} \otimes e_{qj}
  + \sum_{p=1}^{\mu_2} \dot C_{1;i,p}^{(r-1)}\dot C_{1;k,j}^{(s-1)} \otimes e_{pl}.
\end{align*}
Note that the last two terms can be collected in the summation in $(\cdot)\otimes e_{pj}$ and $(\cdot)\otimes e_{ql}$, respectively. 

On the other hand, we may compute the bracket first by (\ref{CC=CZ}) and then apply $\Delta_R$ to the result using (\ref{DelonC}), therefore 
\begin{align*}
&\Delta_R([ C_{1;i,j}^{(r)}, C_{1;k,l}^{(s)}])= \\
&\sum_{a=1}^{s-1} \big( (\dot C_{1;i,l}^{(a)} \otimes 1 + \sum_{p=1}^{\mu_2} \dot C_{1;i,p}^{(a-1)} \otimes e_{pl})
(\dot C_{1;k,j}^{(r+s-1-a)} \otimes 1 + \sum_{q=1}^{\mu_2} \dot C_{1;k,q}^{(r+s-2-a)} \otimes e_{qj} ) \big)\\
-&\sum_{a=1}^{r-1} \big( (\dot C_{1;i,l}^{(a)} \otimes 1 + \sum_{p=1}^{\mu_2} \dot C_{1;i,p}^{(a-1)} \otimes e_{pl})
(\dot C_{1;k,j}^{(r+s-1-a)} \otimes 1 + \sum_{q=1}^{\mu_2} \dot C_{1;k,q}^{(r+s-2-a)} \otimes e_{qj} ) \big)\\
- & (-1)^s \Delta_R(Z_{i,k,l,j}^{(r+s-1)})\\
=& \sum_{a=1}^{s-1} (\dot C_{1;i,l}^{(a)}\dot C_{1;k,j}^{(r+s-1-a)} \otimes 1) + \sum_{a=2}^{s-1} \sum_{p=1}^{\mu_2} ( \dot C_{1;i,p}^{(a-1)}\dot C_{1;k,j}^{(r+s-1-a)} \otimes e_{pl} ) 
\\
+ & \sum_{a=1}^{s-1} \sum_{q=1}^{\mu_2} (\dot C_{1;i,l}^{(a)}\dot C_{1;k,q}^{(r+s-2-a)}\otimes e_{qj}) + \sum_{a=2}^{s-1}  \sum_{p,q=1}^{\mu_2} (\dot C_{1;i,p}^{(a-1)} \dot C_{1;k,q}^{(r+s-2-a)} \otimes e_{pl}e_{qj}  )\\
- & \sum_{a=1}^{r-1} (\dot C_{1;i,l}^{(a)}\dot C_{1;k,j}^{(r+s-1-a)} \otimes 1 ) - \sum_{a=2}^{r-1}\sum_{p=1}^{\mu_2}( \dot C_{1;i,p}^{(a-1)}\dot C_{1;k,j}^{(r+s-1-a)} \otimes e_{pl})\\
- &  \sum_{a=1}^{r-1}\sum_{q=1}^{\mu_2} ( \dot C_{1;i,l}^{(a)}\dot C_{1;k,q}^{(r+s-2-a)}\otimes e_{qj} )  -\sum_{a=2}^{r-1}\sum_{p,q=1}^{\mu_2} ( \dot C_{1;i,p}^{(a-1)} \dot C_{1;k,q}^{(r+s-2-a)} \otimes e_{pl}e_{qj}) \\
-& (-1)^s \big( \dot Z_{i,k,l,j}^{(r+s-1)}\otimes 1 + \sum_{p=1}^{\mu_2}\dot  Z_{i,k,l,p}^{(r+s-2)}\otimes e_{pj} - \sum_{q=1}^{\mu_2}\dot Z_{i,k,q,j}^{(r+s-2)}\otimes e_{ql} - \sum_{p,q=1}^{\mu_2} \dot Z_{i,k,q,p}^{(r+s-3)}\otimes e_{ql}e_{pj}\big).
\end{align*}
Comparing the coefficients of $(\cdot)\otimes 1$, $(\cdot)\otimes e_{pj}$, $(\cdot)\otimes e_{ql}$ and $(\cdot)\otimes e_{ql}e_{pj}$ on both sides, we find that (\ref{CC=CZ}) is preserved by $\Delta_R$ and hence (\ref{bbcoef1}) as well.

We need some preparation to show that \eqref{Zshifted} is preserved by $\Delta_R$.
Fix any $1\lle k \lle \mu_1$. By \eqref{pr3}, the series $\{ Z_{k,k,i,j}(u) \,|\, 1\lle i,j\lle \mu_2 \}$ satisfies the quaternary relation \eqref{qua} where $Z_{k,k,i,j}(u)$ replaces the role of $s_{ij}(u)$. Together with \eqref{Z's} and Lemma \ref{lem:s11}, the following holds for all $1\lle i,j\lle \mu_2$:
\beq\label{Zsym}
Z_{k,k,j,i}(-u) = Z_{k,k, i,j}(u)+ \frac{Z_{k,k,i,j}(u)-Z_{k,k, i,j(-u)}}{2u}.
\eeq
Let $r$ be an odd integer. Then
\[
\Delta_R(Z_{i,i,j,j}^{(r)}) = \sum_{a,b\gge 0}^{a+b=r} \Delta_R \big( \wtl H_{1;i,i}^{[a]}(u-\tfrac{\mu_1}{2}) \big) \Delta_R \big(H_{2;j,j}^{(b)}(u) \big), 
\]
where $X^{[a]}(u-c)$ denotes the coefficient of $u^{-a}$ in the expansion of the series $X(u-c)$. By definition of $\Delta_R$, the above equals
\beq\label{delZ1}
\begin{split}
\sum_{a,b\gge 0}^{a+b=r} &\big(  \wtl H_{1;i,i}^{[a]}(u-\tfrac{\mu_1}{2}) H_{2;j,j}^{(b)}\otimes 1  
+ \sum_{p=1}^{t} \wtl H_{1;i,i}^{[a]}(u-\tfrac{\mu_1}{2}) H_{2;j,p}^{(b-1)}\otimes e_{pj}  \\
& - \sum_{p=1}^{t} \wtl H_{1;i,i}^{[a]}(u-\tfrac{\mu_1}{2}) H_{2;p,j}^{(b-1)}\otimes e_{pj} 
- \sum_{p,q=1}^{t} \wtl H_{1;i,i}^{[a]}(u-\tfrac{\mu_1}{2}) H_{2;q,p}^{(b-2)}\otimes e_{qj}e_{pj} \big).
\end{split}
\eeq
The very first term is precisely $Z_{i,i,j,j}^{(r)}\otimes 1$, which is zero due to \eqref{Z's}.
The second and third terms can be collected as
\beq\label{delZ2}
\sum_{p=1}^t \big( Z_{i,i,j,p}^{(r-1)} - Z_{i,i,p,j}^{(r-1)} \big) \otimes e_{pj} = \sum_{p=1}^t Z_{i,i,p,j}^{(r-2)}\otimes e_{pj}
= \sum_{p<j} Z_{i,i,p,j}^{(r-2)}\otimes e_{pj} + \sum_{p>j} Z_{i,i,p,j}^{(r-2)}\otimes e_{pj},
\eeq
where the first equality follows from \eqref{Zsym} and the fact that $r-1$ is even.
We separate the last term of \eqref{delZ1} into three parts: $p>q$, $p<q$, and $p=q$ respectively:
\beq\label{delZ3}
-\sum_{p>q} Z_{i,i,q,p}^{(r-2)} \otimes e_{qj}e_{pj}  
-\sum_{p<q} Z_{i,i,q,p}^{(r-2)}\otimes e_{qj}e_{pj} 
-\sum_{p=q} Z_{i,i,q,p}^{(r-2)}\otimes e_{qj}e_{pj}.
\eeq
Those terms $p=q$ of \eqref{delZ3} are all zero by \eqref{Z's}. We use the commutator relation in $\rU(\gl_t)$ to rewrite those terms $p<q$ of \eqref{delZ3} as
\beq\label{delZ4}
\begin{split}
& -\sum_{p<q} Z_{i,i,q,p}^{(r-2)}\otimes (e_{pj}e_{qj} +\delta_{jp}e_{qj}-\delta_{jq}e_{pj})
\\
&= 
- \sum_{p<q} Z_{i,i,q,p}^{(r-2)}\otimes e_{pj}e_{qj} 
- \sum_{j<q} Z_{i,i,q,j}^{(r-2)}\otimes e_{qj} 
+ \sum_{p<j} Z_{i,i,j,p}^{(r-2)}\otimes  e_{pj}.
\end{split}
\eeq
In \eqref{delZ4}, the terms $j<q$ cancel the terms $p>j$ of \eqref{delZ2}, while the terms $p<j$ can be collected with those $p<j$ in \eqref{delZ2} into
\[
\sum_{p<j}  \big( Z_{i,i,p,j}^{(r-2)}+Z_{i,i,j,p}^{(r-2)} \big)\otimes e_{pj},
\]
which is zero due to \eqref{Zsym} since $r-2$ is odd.
Finally, interchanging the indices $p$ and $q$ in the remaining term $p<q$ of \eqref{delZ4} and it can be collected with the first term of \eqref{delZ3} as
\[
-\sum_{p>q}  \big( Z_{i,i,q,p}^{(r-2)}+Z_{i,i,p,q}^{(r-2)} \big)\otimes e_{qj}e_{pj},
\]
which is zero due to \eqref{Zsym}. This shows that \eqref{Z's}, and in particular \eqref{Zshifted}, is preserved by $\Delta_R$.

We now check (\ref{pr4}), which can be reduced to (\ref{h2bcoef}), is preserved by $\Delta_R$. In the following proof, the indices $p,q,x$ are summed from 1 to $\mu_2$. 
We first rewrite (\ref{h2bcoef}) into the following equivalent form
\begin{equation}\label{h2ccoef}
[H_{2;i,j}^{(r)}, C_{1;k,l}^{(s)}]  = -\sum_{a=0}^{r-1} H_{2;i,l}^{(r-1-a)}C_{1;k,j}^{(s+a)} + \sum_{a=0}^{r-1}(-1)^{a} C_{1;k,i}^{(s+a)}H_{2;l,j}^{(r-1-a)}.  
\end{equation}

We start with $[\Delta_R(H_{2;i,j}^{(r)}),\Delta_R(C_{1;k,l}^{(s)})]$, which equals to
\begin{equation*}
[A,X]+[A,Y]+[B,X]+[B,Y]-[C,X]-[C,Y]-[D,X]-[D,Y]
\end{equation*}
where $A=\dot H_{2;i,j}^{(r)}\otimes 1$, $B=\dot H_{2;i,p}^{(r-1)}\otimes e_{pj}$, $C=\dot H_{2;q,j}^{(r-1)}\otimes e_{qi}$, $D=\dot H_{2;p,q}^{(r-2)}\otimes e_{pi}e_{qj}$, $X=\dot C_{1;k,l}^{(s)}\otimes 1$ and  $Y=\dot C_{1;k,x}^{(s-1)}~\otimes~e_{xl}$. 
Those terms without $Y$ and those with $A$ can be explicitly written as
$[A,X]=[\dot H_{2;i,j}^{(r)},\dot C_{1;k,l}^{(s)}]\otimes 1$, $[B,X]= [\dot H_{2;i,p}^{(r-1)},\dot C_{1;k,l}^{(s)}]\otimes~e_{pj}$, $[C,X]= [\dot H_{2;q,j}^{(r-1)},\dot C_{1;k,l}^{(s)}]\otimes e_{qi}$, $[D,X]=[\dot H_{2;p,q}^{(r-2)}, \dot C_{1;k,l}^{(s)}]\otimes e_{pi}e_{qj}$, $[A,Y]=[\dot H_{2;i,j}^{(r)}, \dot C_{1;k,x}^{(s-1)}] \otimes e_{xl}$. 

We rewrite the remaining 3 terms by the commutator relations on the $\rU(\gl_t)$ factor. 
For example,
\begin{align*}
[B,Y]&=\dot H_{2;i,p}^{(r-1)} \dot C_{1;k,x}^{(s-1)}\otimes e_{pj}e_{xl} - \dot C_{1;k,x}^{(s-1)}\dot H_{2;i,p}^{(r-1)}  \otimes e_{xl}e_{pj}\\
       &=\dot H_{2;i,p}^{(r-1)} \dot C_{1;k,x}^{(s-1)}\otimes (e_{xl}e_{pj}+\delta_{xj}e_{pl}-\delta_{pl}e_{xj}) - \dot C_{1;k,x}^{(s-1)}\dot H_{2;i,p}^{(r-1)}  \otimes e_{xl}e_{pj}\\
       &=[\dot H_{2;i,p}^{(r-1)}, \dot C_{1;k,x}^{(s-1)}]\otimes e_{xl}e_{pj} + \dot H_{2;i,p}^{(r-1)}\dot C_{1;k,j}^{(s-1)}\otimes e_{pl} - \dot H_{2;i,l}^{(r-1)}\dot C_{1;k,x}^{(s-1)}\otimes e_{xj}.
\end{align*}
Similarly, we have
\begin{align*}
[C,Y]=&[\dot H_{2;q,j}^{(r-1)}, \dot C_{1;k,x}^{(s-1)}]\otimes e_{qi}e_{xl} - \dot C_{1;k,x}^{(s-1)} \dot H_{2;l,j}^{(r-1)} \otimes e_{xi} + \dot C_{1;k,i}^{(s-1)} \dot H_{2;q,j}^{(r-1)}\otimes e_{ql}\\
[D,Y]=&[\dot H_{2;p,q}^{(r-2)}, \dot C_{1;k,x}^{(s-1)}]\otimes e_{pi}e_{xl}e_{qj} 
            +  \dot H_{2;p,q}^{(r-2)} \dot C_{1;k,j}^{(s-1)} \otimes e_{pi}e_{ql}
            -  \dot H_{2;p,l}^{(r-2)} \dot C_{1;k,x}^{(s-1)} \otimes e_{pi}e_{xj}\\
         &   + \dot C_{1;k,i}^{(s-1)}\dot H_{2;p,q}^{(r-2)}\otimes e_{pl}e_{qj}
               - \dot C_{1;k,x}^{(s-1)}\dot H_{2;l,q}^{(r-2)}\otimes e_{xi}e_{qj}.
\end{align*}
All of the brackets appearing above can be computed using (\ref{h2ccoef}), while those non-bracket terms can either be collected, or cancel certain terms in the results according to their corresponding factor in $\rU(\gl_t)$. 
For example, the term $\dot H_{2;i,p}^{(r-1)}\dot C_{1;k,j}^{(s-1)}\otimes e_{pl}$ in $[B,Y]$ can be collected, and it will cancel the term $a=0$, in the first summation of the result of $[A,Y]$,
while the term $\dot C_{1;k,x}^{(s-1)} \dot H_{2;l,j}^{(r-1)} \otimes e_{xi}$ in $[C,Y]$ will be the term $a=-1$ in the second summation of the result of $[C,X]$. After suitable changing of indices in the summations, we list the results as follows:
\begin{align}\notag
&[A,X]+[A,Y]+[B,X]+[B,Y]-[C,X]-[C,Y]-[D,X]-[D,Y]\\
\notag& =\Big( -\sum_{a=0}^{r-1} \dot H_{2;i,l}^{(r-1-a)}\dot C_{1;k,j}^{(s+a)} + \sum_{a=0}^{r-1}(-1)^a \dot C_{1;k,i}^{(s+a)}\dot H_{2;l,j}^{(r-1-a)} \Big) \otimes 1\\
+\notag& \Big( -\sum_{a=0}^{r-2} \dot H_{2;i,x}^{(r-2-a)}\dot C_{1;k,j}^{(s+a)} + \sum_{a=0}^{r-2}(-1)^{a+1} \dot C_{1;k,i}^{(s+a)}\dot H_{2;x,j}^{(r-2-a)} \Big) \otimes e_{xl}\\
+\notag& \Big( -\sum_{a=0}^{r-1}\dot  H_{2;i,l}^{(r-1-a)}\dot C_{1;k,p}^{(s-1+a)} + \sum_{a=0}^{r-1}(-1)^{a+1} \dot C_{1;k,i}^{(s-1+a)}\dot H_{2;l,p}^{(r-1-a)} \Big) \otimes e_{pj}
             + \dot C_{1;k,i}^{(s-1)}\dot H_{2;l,p}^{(r-1)}\otimes e_{pj}  \\
+\notag& \Big( -\sum_{a=0}^{r-3} \dot H_{2;i,x}^{(r-3-a)}\dot C_{1;k,p}^{(s+a)} + \sum_{a=0}^{r-3}(-1)^{a+1} \dot C_{1;k,i}^{(s+a)}\dot H_{2;x,p}^{(r-3-a)} \Big) \otimes e_{xl}e_{pj}
              - \dot  H_{2;i,x}^{(r-2)}  \dot C_{1;k,p}^{(s-1)} \otimes e_{xl}e_{pj}   \\
+\notag& \Big( \sum_{a=0}^{r-1} \dot H_{2;q,l}^{(r-1-a)}\dot C_{1;k,j}^{(s-1+a)} - \sum_{a=0}^{r-1}(-1)^{a+1} \dot C_{1;k,q}^{(s-1+a)}\dot H_{2;l,j}^{(r-1-a)} \Big) \otimes e_{qi}
             - \dot  H_{2;q,l}^{(r-1)}  \dot C_{1;k,j}^{(s-1)} \otimes e_{qi}  \\
+\notag& \Big( \sum_{a=0}^{r-3} \dot H_{2;q,x}^{(r-3-a)}\dot C_{1;k,j}^{(s+a)} - \sum_{a=0}^{r-3}(-1)^{a+1} \dot C_{1;k,q}^{(s+a)}\dot H_{2;x,j}^{(r-3-a)} \Big) \otimes e_{qi}e_{xl}
              - \dot C_{1;k,q}^{(s-1)}\dot H_{2;x,j}^{(r-2)}\otimes e_{qi}e_{xl}  \\
+\notag& \Big( \sum_{a=0}^{r-2}\dot  H_{2;p,l}^{(r-2-a)}\dot C_{1;k,q}^{(s-1+a)} - \sum_{a=0}^{r-2}(-1)^{a+1} \dot C_{1;k,p}^{(s-1+a)}\dot H_{2;l,q}^{(r-2-a)} \Big) \otimes e_{pi}e_{qj}\\              
+& \Big( \sum_{a=0}^{r-3}\dot  H_{2;p,x}^{(r-3-a)}\dot C_{1;k,q}^{(s-1+a)} - \sum_{a=0}^{r-3}(-1)^a \dot C_{1;k,p}^{(s-1+a)}\dot H_{2;x,q}^{(r-3-a)} \Big) \otimes e_{pi}e_{xl}e_{qj}.    \label{dhblhs}        \end{align}

On the other hand, using (\ref{h2ccoef}), we have
\begin{align*}
\Delta_R & ([H_{2;i,j}^{(r)}, C_{1;k,l}^{(s)}])=-\sum_{a\gge 0} \Delta_R(H_{2;i,l}^{(r-1-a)})\Delta_R(C_{1;k,j}^{(s+a)}) 
 + \sum_{a\gge 0}(-1)^a \Delta_R(C_{1;k,i}^{(s+a)})\Delta_R(H_{2;l,j}^{(r-1-a)})  \\
&= -\sum_{a\gge 0}\big( \dot H_{2;i,l}^{(r-1-a)}\otimes 1+ \dot  H_{2;i,p}^{(r-2-a)}\otimes e_{pl} - \dot H_{2;q,l}^{(r-2-a)}\otimes e_{qi} - \dot H_{2;p,q}^{(r-3-a)}\otimes e_{pi}e_{ql}\big) \\
 &\hspace{9cm} \times \big( \dot C_{1;k,j}^{(s+a)}\otimes 1 + \dot C_{1;k,x}^{(s-1+a)}\otimes e_{xj}  \big)\\
 &+ \sum_{a\gge 0} (-1)^a  \big(   \dot C_{1;k,i}^{(s+a)}\otimes 1 + \dot C_{1;k,x}^{(s-1+a)}\otimes e_{xi} \big) \\ 
& \hspace{2cm} \times \big( \dot H_{2;l,j}^{(r-1-a)}\otimes 1+  \dot H_{2;l,p}^{(r-2-a)}\otimes e_{pj} - \dot H_{2;q,j}^{(r-2-a)}\otimes e_{ql} - \dot H_{2;p,q}^{(r-3-a)}\otimes e_{pl}e_{qj}\big) \\
&=-\sum_{a\gge 0} \Big(\dot  H_{2;i,l}^{(r-1-a)}\dot C_{1;k,j}^{(s+a)}\otimes 1  +  \dot  H_{2;i,l}^{(r-1-a)}\dot C_{1;k,x}^{(s-1+a)}\otimes e_{xj} + \dot H_{2;i,p}^{(r-2-a)} \dot C_{1;k,j}^{(s+a)}\otimes e_{pl} \\
&\hspace{1cm} + \dot H_{2;i,p}^{(r-2-a)}\dot C_{1;k,x}^{(s-1+a)}\otimes e_{pl}e_{xj} 
 -\dot  H_{2;q,l}^{(r-2-a)}\dot C_{1;k,j}^{(s+a)}\otimes e_{qi} - \dot   H_{2;q,l}^{(r-2-a)}\dot C_{1;k,x}^{(s-1+a)}\otimes e_{qi}e_{xj} \\
&\hspace{4cm}  -\dot H_{2;p,q}^{(r-3-a)}\dot C_{1;k,j}^{(s+a)}\otimes e_{pi}e_{ql} 
   - \dot H_{2;p,q}^{(r-3-a)}\dot C_{1;k,x}^{(s-1+a)}\otimes e_{pi}e_{ql} e_{xj}  \Big) \\
     &+\sum_{a\gge 0}(-1)^a\Big( \dot C_{1;k,i}^{(s+a)}\dot H_{2;l,j}^{(r-1-a)}\otimes 1 +  \dot C_{1;k,x}^{(s-1+a)} \dot H_{2;l,j}^{(r-1-a)}\otimes e_{xi} +  \dot C_{1;k,i}^{(s+a)} \dot H_{2;l,p}^{(r-2-a)}\otimes e_{pj} \\
  &\hspace{1cm} + \dot C_{1;k,x}^{(s-1+a)} \dot H_{2;l,p}^{(r-2-a)}\otimes e_{xi}e_{pj} 
     -\dot C_{1;k,i}^{(s+a)}\dot H_{2;q,j}^{(r-2-a)}\otimes e_{ql} -  \dot C_{1;k,x}^{(s-1+a)}\dot H_{2;q,j}^{(r-2-a)}\otimes e_{xi}e_{ql} \\
 &\hspace{5cm} - \dot C_{1;k,i}^{(s+a)}\dot H_{2;p,q}^{(r-3-a)}\otimes e_{pl}e_{qj} - \dot C_{1;k,x}^{(s-1+a)}\dot H_{2;p,q}^{(r-3-a)}\otimes e_{xi}e_{pl}e_{qj}\Big).
\end{align*}
Collecting the terms according to the factor in $\rU(\gl_t)$ and changing the indices when necessary, the above can be written as
\begin{align*}
& \Big( -\sum_{a=0}^{r-1}\dot  H_{2;i,l}^{(r-1-a)}\dot C_{1;k,j}^{(s+a)} + \sum_{a=0}^{r-1}(-1)^a \dot C_{1;k,i}^{(s+a)}\dot H_{2;l,j}^{(r-1-a)} \Big) \otimes 1 \\
+& \Big( -\sum_{a=0}^{r-2} \dot H_{2;i,x}^{(r-2-a)}\dot C_{1;k,j}^{(s+a)} + \sum_{a=0}^{r-2}(-1)^{a+1} \dot C_{1;k,i}^{(s+a)}\dot H_{2;x,j}^{(r-2-a)} \Big) 
\otimes e_{xl}   \\
+& \Big( -\sum_{a=0}^{r-1} \dot  H_{2;i,l}^{(r-1-a)}\dot C_{1;k,p}^{(s-1+a)} + \sum_{a=0}^{r-2}(-1)^a   \dot C_{1;k,i}^{(s+a)}\dot  H_{2;l,p}^{(r-2-a)}\Big) \otimes e_{pj}      \\
+& \Big( \sum_{a=0}^{r-2} \dot  H_{2;q,l}^{(r-2-a)}\dot C_{1;k,j}^{(s+a)} + \sum_{a=0}^{r-1}(-1)^a   \dot C_{1;k,q}^{(s-1+a)}\dot  H_{2;l,j}^{(r-1-a)}\Big) \otimes e_{qi}      \\
+& \Big( \sum_{a=0}^{r-2}\dot  H_{2;p,l}^{(r-2-a)}\dot C_{1;k,q}^{(s-1+a)} + \sum_{a=0}^{r-2}(-1)^a \dot C_{1;k,p}^{(s-1+a)}\dot H_{2;l,q}^{(r-2-a)} \Big) 
\otimes e_{pi}e_{qj}        \\      
+& \Big( -\sum_{a=0}^{r-2}\dot  H_{2;i,x}^{(r-2-a)}\dot C_{1;k,p}^{(s-1+a)} + \sum_{a=0}^{r-3}(-1)^{a+1} \dot C_{1;k,i}^{(s+a)}\dot H_{2;x,p}^{(r-3-a)} \Big) \otimes e_{xl}e_{pj}    \\
+& \Big( \sum_{a=0}^{r-3}\dot  H_{2;q,x}^{(r-3-a)}\dot C_{1;k,j}^{(s+a)} - \sum_{a=0}^{r-2}(-1)^a \dot C_{1;k,q}^{(s-1+a)}\dot H_{2;x,j}^{(r-2-a)} \Big) \otimes e_{qi}e_{xl}    \\          
+& \Big( \sum_{a=0}^{r-3}\dot  H_{2;p,x}^{(r-3-a)}\dot C_{1;k,q}^{(s-1+a)} - \sum_{a=0}^{r-3}(-1)^a \dot C_{1;k,p}^{(s-1+a)}\dot H_{2;x,q}^{(r-3-a)} \Big) \otimes e_{pi}e_{xl}e_{qj}.          
\end{align*}
Note that some summations of certain terms have different upper bounds. After making changes of the indices by $a\mapsto a+1$ or $a\mapsto a-1$, the result will be precisely (\ref{dhblhs}), which shows that (\ref{h2ccoef}) and hence (\ref{h2bcoef}) is preserved by $\Delta_R$. This completes the verification of relations when $\mu=(\mu_1,\mu_2)$. 

Assume now $\mu=(\mu_1,\mu_2,\mu_3)$. Note that \eqref{pr-2} is obviously preserved by $\Delta_R$ since there is only one term with non-central image in the factor of $\rU(\gl_t)$ and it remains to check \eqref{pr-1}.
We first consider the following shift matrix $\eta=(\hat\fks_{i,j})_{1\lle i,j\lle N}$, where the entries are given by \beq\label{hatsij}
\hat\fks_{i,j}:=\begin{cases}
0, & \text{ if either }1\lle i,j\lle \mu_{(2)}  \text{ or } \mu_{(2)}+1\lle i,j\lle N,\\
\fks_{2,3}(\mu), &\text{ otherwise. }\\
\end{cases}
\eeq
By construction, the minimal admissible shape with respect to $\eta$ is $\nu=(\nu_1,\nu_2)=(\mu_{(2)},\mu_3)$. 
It follows from Proposition \ref{ind} that we have the following chain of subalgebras of $\Y_N^+$:
\[
\Y_\mu^+(\sigma) \subseteq  \Y_\mu^+(\eta)=\Y_\nu^+(\eta) \subseteq \Y_N^+
\]
and all generators of $\Y_\mu^+(\sigma)$ belong to $\Y_{\mu}^+(\eta)$ as well.
By our discussion above, we have proved that $^\nu\Delta_R: \Y_\nu^+(\eta)\rightarrow \Y_\nu^+(\dot{\eta})\otimes \rU(\gl_t)$ is a homomorphism since $\nu$ is of length 2.

Take inverse on both sides of the matrix equation \eqref{S=FDE} with respect to $\mu$ and $\nu$. Since the corresponding entries in the southeastern $\mu_3\times\mu_3$ corner of $S^{-1}(u)$ are the same, we have $^\mu\wtl D_{3;i,j}(u)$= $^\nu\wtl D_{2;i,j}(u)$, for $1\lle i,j\lle \mu_3=\nu_2$, which further implies that 
\beq
{^\mu} F_{2;f,g}(u)= {^\nu} F_{1;f,\mu_1+g}(u),\qquad  \forall 1\lle f\lle \mu_3=\nu_2, \quad 1\lle g\lle \mu_2.
\eeq
Therefore 
\begin{align*}
{^\nu}\Delta_R({^\mu} F_{2;f,g}^{(r)}) ={^\nu}\Delta_R ({^\nu} F_{1;f,\mu_1+g}^{(r)})&={^\nu} F_{1;f,\mu_1+g}^{(r)}\otimes 1 - \sum_{p=1}^{\nu_2} {^\nu} F_{1;p,\mu_1+g}^{(r-1)} \otimes e_{pf}\\
&= {^\mu} F_{2;f,g}^{(r)}\otimes 1 - \sum_{p=1}^{\mu_3} {^\mu} F_{2;p,g}^{(r-1)} \otimes e_{pf} = {^\mu}\Delta_R( {^\mu}F_{2;f,g}^{(r)}).
\end{align*}
This means that $^\mu\Delta_R$ and the restriction of $^\nu\Delta_R$ to $\Y_\mu^+(\sigma)$ agree on  ${^\mu}F_{2;f,g}^{(r)}$.

By \eqref{quasid}, \eqref{blockS} and \eqref{quasif}, we have the following matrix identities:
\begin{align}\label{stod}
{^\nu}D_1(u)&={^\nu}S_{1,1}(u) = \left(
\begin{array}{ccc}
{^\mu}S_{1,1}(u)&{^\mu}S_{1,2}(u)\\
{^\mu}S_{2,1}(u)&{^\mu}S_{2,2}(u)
\end{array}
\right),
\\
\label{ftos}{^\mu}S_{2,1}(u)&={^\mu}F_{1}(u) {^\mu}D_{1}(u).
\end{align}
The formulas \eqref{stod} and \eqref{ftos} together imply that 
$$
{^\mu}F_{1;i,j}(u)=\sum_{p=1}^{\mu_1} {^\nu}D_{1;\mu_1+i,p}(u){^\mu} \wtl D_{1;p,j}(u), \quad \forall 1\lle i\lle \mu_2, 1\lle j\lle \mu_1.
$$
Note that for all $1\lle i,j\lle \mu_1$, the elements $^\mu D_{1;i,j}^{(r)}$ are mapped to $^\mu D_{1;i,j}^{(r)}\otimes 1$ by both of ${^\mu}\Delta_R$ and ${^\nu}\Delta_R$ and similarly for ${^\mu}\wtl D_{1;i,j}^{(r)}$. As a consequence,
\begin{align*}
{^\nu}\Delta_R( {^\mu}F_{1;i,j}^{(r)}) & =  {^\nu}\Delta_R(  \sum_{p=1}^{\mu_1} \sum_{x,y\gge 0}^{x+y=r} {^\nu}D_{1;\mu_1+i,p}^{(x)}  {^\mu} \wtl D_{1;p,j}^{(y)}   ) \\
& = (\sum_{p=1}^{\mu_1} \sum_{x,y\gge 0}^{x+y=r} {^\nu}D_{1;\mu_1+i,p}^{(x)}  {^\mu} \wtl D_{1;p,j}^{(y)})\otimes 1
= {^\mu}F_{1;i,j}^{(r)} \otimes 1 = {^\mu}\Delta_R ( {^\mu}F_{1;i,j}^{(r)} ) .
\end{align*}
This means that $^\mu\Delta_R$ and the restriction of $^\nu\Delta_R$ to $\Y_\mu^+(\sigma)$ agree on  ${^\mu}F_{1;i,j}^{(r)}$.

Recall that $X^{[r]}(u-c)$ denotes the coefficient of $u^{-r}$ in the expansion of the series $X(u-c)$.
Applying $^\mu\Delta_R$ to the LHS of \eqref{pr-1}, we have
\begin{align*}
&{^\mu}\Delta_R\Big(  \big[ [{^\mu}B_{1;i,j}^{(q)}, {^\mu}B_{2;k,l}^{(r_1)}], {^\mu}B_{2;f,g}^{(r_2)}\big]+\big[ [ {^\mu}B_{1;i,j}^{(q)}, {^\mu}B_{2;k,l}^{(r_2)}], {^\mu}B_{2;f,g}^{(r_1)}\big] \Big) \\ 
&={^\mu}\Delta_R\Big(  \big[[{^\mu}F_{1;i,j}^{[q]}(u-\frac{\mu_1}{2}),{^\mu}F_{2;k,l}^{[r_1]}(v-\frac{\mu_{(2)}}{2}) ,{^\mu}F_{2;f,g}^{[r_2]}(w-\frac{\mu_{(2)}}{2}) \big]  + \\
&\hspace{5cm} \big[[{^\mu}F_{1;i,j}^{[q]}(u-\frac{\mu_1}{2}) ,{^\mu}F_{2;k,l}^{[r_2]}(w-\frac{\mu_{(2)}}{2}) ],{^\mu}F_{2;f,g}^{[r_1]}(v-\frac{\mu_{(2)}}{2}) \big] \Big).
\end{align*}
The argument above shows that we may replace $^\mu\Delta_R$ by $^\nu\Delta_R$. Since $^\nu\Delta_R$ is a homomorphism, it commutes with all of the brackets. Replacing $^\nu\Delta_R$ by $^\mu\Delta_R$ again, we have proved that \eqref{pr-1} is preserved by $^\mu\Delta_R$.
\end{proof}

\subsection{Proof of Theorem \ref{delR} for AII type}\label{subsec:babypfa2}
\begin{proof}
We assume the convention \eqref{thetadef} in this subsection.
Let $\sigma\neq0$.
One needs to check that $\Delta_R$ preserves the defining relations in Theorem \ref{mainthm} when the minimal admissible shape $\mu$ is of length 2 and 3. We omit the details here, since they are similar to the proof provided in the previous subsection.

It remains to check that the relations \eqref{Zshifteda2} are preserved by $\Delta_R$ when the minimal admissible shape $\mu=(\mu_1,\mu_2)$ is of length 2. This implies that $\fks_{1,2}(\mu)=\fks_{2,1}(\mu)=k$ for some $k\in\bZ_{>0}$. Let $\sigma_k$ denote the corresponding shift matrix and let $\Delta_R^{(k)}$ denote the baby coproduct with domain $\Y_{\mu}^-(\sigma_k)$.

When $k=1$, the relations \eqref{Zshifteda2} are trivially preserved by $\Delta_R^{(1)}$. Hence we have a homomorphism 
\[
\Delta_R^{(1)}:\Y_\mu^-(\sigma_1)\rightarrow \Y_\mu^-(\dot\sigma_1) \otimes \rU(\gl_{\mu_2})=\Y_N^- \otimes \rU(\gl_{\mu_2}).
\] 
For a general $k>1$, note that the following elements 
\[\{ H_{1;i,j}^{(r)},\, H_{2;p,q}^{(r)},\, Z_{1;i,j,p,q}^{(r)}  \, | \, 1\lle i,j\lle \mu_1, \, 1\lle p,q\lle \mu_2, \, r\gge 1\}\] 
of $\Y_\mu^-(\sigma_k)$ can be identified as the elements sharing the same notations in $\Y_\mu^-(\sigma_1)$.
By definition, $\Delta_R^{(k)}$ and $\Delta_R^{(1)}$ agree on these elements, which implies that \eqref{Zshifteda2} are preserved by $\Delta_R^{(k)}$. It then follows that $\Delta_R^{(k)}$ is a homomorphism.
\end{proof}

\subsection{Some consequences}
\begin{rem}
It can be observed from the proof that the baby coproduct $\Delta_R$ is defined for any admissible shape $\nu=(\nu_1,\ldots,\nu_m)$ such that $\nu_m=\mu_n$.
\end{rem}

\begin{lem}[cf. {\cite[Lem. 4.2]{BK05}}] \label{drbba}
Retain the same assumption as Theorem \ref{delR}.
For all $1\lle a<b-1\lle n-2$ and admissible $i,j,r$, we have
\begin{align*}
\Delta_R(B_{b,a;i,j}^{(r)})= \dot B_{b,a;i,j}^{(r)} \otimes 1,
\end{align*}
and
\[
\Delta_R(B_{n,a;i,j}^{(r)})=  [\dot B_{n-1;i,h}^{(\fks_{n,n-1}(\mu)+1)},\dot B_{n-1,a;h,j}^{(r-\fks_{n,n-1}(\mu))}]\otimes 1 
- \sum_{p=1}^t \dot B_{n,a;p,j}^{(r-1)} \otimes e_{p'i'}\theta_p\theta_i,
\]
for any $1\lle h\lle \mu_{n-1}$.
\end{lem}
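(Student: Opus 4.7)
My plan is to induct on $b-a \ge 2$ using the recursive definition \eqref{x2} of $B_{b,a;i,j}^{(r)}$, together with the fact that $\Delta_R$ is an algebra homomorphism (Theorem \ref{delR}). The induction begins at $b-a=2$, where the inner factor in \eqref{x2} is already a generator and thus handled directly by \eqref{Bdel1}; for larger $b-a$ the inductive step applies $\Delta_R$ to
\[
B_{b,a;i,j}^{(r)} = [B_{b-1;i,h}^{(\fks_{b,b-1}(\mu)+1)},\, B_{b-1,a;h,j}^{(r-\fks_{b,b-1}(\mu))}]
\]
and evaluates each factor via \eqref{Bdel1} or the inductive hypothesis.

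For the first identity, with $b \le n-1$, the leading generator $B_{b-1;i,h}^{(\fks_{b,b-1}(\mu)+1)}$ has block index $b-1 \le n-2$, so the Kronecker symbol $\delta_{\cdot,n-1}$ in \eqref{Bdel1} vanishes and this factor is sent to $\dot B_{b-1;i,h}^{(\fks_{b,b-1}(\mu)+1)} \otimes 1$. The second factor is treated by the inductive hypothesis when $b-1 > a+1$, or by a direct application of \eqref{Bdel1} (again with vanishing Kronecker symbol, since $a \le n-3$) when $b-1 = a+1$; in both cases it maps to $\dot B_{b-1,a;h,j}^{(r-\fks_{b,b-1}(\mu))} \otimes 1$. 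Since $b < n$ forces both $\mu_{(b-1)}$ and $\mu_{(b)}$ to lie in $\{0,\ldots,N-\mu_n\}$, the modification \eqref{dotsij} is not triggered and $\dot\fks_{b,b-1}(\mu) = \fks_{b,b-1}(\mu)$. Taking brackets in the tensor product and reassembling via the recursion \eqref{x2} in $\Y_\mu^\pm(\dot\sigma)$ then yields $\dot B_{b,a;i,j}^{(r)} \otimes 1$.

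For the second identity, with $b = n$, the leading generator $B_{n-1;i,h}^{(\fks_{n,n-1}(\mu)+1)}$ now triggers the nontrivial correction in \eqref{Bdel1}, giving
\[
\Delta_R(B_{n-1;i,h}^{(\fks_{n,n-1}(\mu)+1)}) = \dot B_{n-1;i,h}^{(\fks_{n,n-1}(\mu)+1)} \otimes 1 - \sum_{p=1}^t \dot B_{n-1;p,h}^{(\fks_{n,n-1}(\mu))} \otimes e_{p'i'}\theta_p\theta_i,
\]
while the second factor still maps to $\dot B_{n-1,a;h,j}^{(r-\fks_{n,n-1}(\mu))} \otimes 1$, by the first identity when $a < n-2$ and by \eqref{Bdel1} when $a = n-2$. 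Expanding the commutator in $\Y_\mu^\pm(\dot\sigma) \otimes \rU(\gl_t)$ produces the stated first summand verbatim and the residual
\[
-\sum_{p=1}^t [\dot B_{n-1;p,h}^{(\fks_{n,n-1}(\mu))},\, \dot B_{n-1,a;h,j}^{(r-\fks_{n,n-1}(\mu))}] \otimes e_{p'i'}\theta_p\theta_i.
\]
To recognize the inner bracket as $\dot B_{n,a;p,j}^{(r-1)}$, one invokes \eqref{dotsij}: the pair $(\mu_{(n)}, \mu_{(n-1)}) = (N, N-\mu_n)$ straddles the cut, so $\dot\fks_{n,n-1}(\mu) = \fks_{n,n-1}(\mu) - 1$, and the recursion \eqref{x2} in $\Y_\mu^\pm(\dot\sigma)$ reads exactly
\[
\dot B_{n,a;p,j}^{(r-1)} = [\dot B_{n-1;p,h}^{(\fks_{n,n-1}(\mu))},\, \dot B_{n-1,a;h,j}^{(r-\fks_{n,n-1}(\mu))}].
\]
The only substantive bookkeeping hurdle is this shift arithmetic via \eqref{dotsij} and the accompanying admissibility checks on the resulting superscripts; the latter follow from $\fks_{n,a}(\mu) = \fks_{n,n-1}(\mu) + \fks_{n-1,a}(\mu)$ and the hypothesis $r > \fks_{n,a}(\mu)$. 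No further input beyond Theorem \ref{delR} and the recursion \eqref{x2} is needed, and the argument is uniform in types AI and AII.
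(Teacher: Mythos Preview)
Your proposal is correct and follows essentially the same approach as the paper. The paper's proof is terser---it only spells out the $b=n$ computation and treats the first identity as immediate ``by definition'' of $\Delta_R$ on generators---whereas you make the induction on $b-a$ and the shift arithmetic $\dot\fks_{n,n-1}(\mu)=\fks_{n,n-1}(\mu)-1$ explicit; but the underlying argument (apply $\Delta_R$ to the recursion \eqref{x2}, use \eqref{Bdel1}, and reassemble via \eqref{x2} in $\Y_\mu^\pm(\dot\sigma)$) is identical.
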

\begin{proof}
Recall from (\ref{x2}) that $B_{n,a;i,j}^{(r)}:=[B_{n-1;i,h}^{(\fks_{n,n-1}(\mu)+1)},B_{n-1,a;h,j}^{(r-\fks_{n,n-1}(\mu))}]$ for any $1\lle h\lle \mu_{n-1}$. By definition, $\Delta_R(B_{n-1,a;h,j}^{(r-\fks_{n,n-1}(\mu))})=\dot B_{n-1,a;h,j}^{(r-\fks_{n,n-1}(\mu))}\otimes 1$. Then
\begin{align*}
\Delta_R(B_{n,a;i,j}^{(r)})&= [\dot B_{n-1;i,h}^{(\fks_{n,n-1}(\mu)+1)}\otimes 1 
- \sum_{p=1}^t \dot B_{n-1;p,h}^{(\fks_{n,n-1}(\mu))}\otimes e_{p'i'}\theta_p\theta_i \, , \, \dot B_{n-1,a;h,j}^{(r-\fks_{n,n-1}(\mu))} \otimes 1]\\
&= [\dot B_{n-1;i,h}^{(\fks_{n,n-1}(\mu)+1)},\dot B_{n-1,a;h,j}^{(r-\fks_{n,n-1}(\mu))}]\otimes 1 
- \sum_{p=1}^t [ \dot B_{n-1;p,h}^{(\fks_{n,n-1}(\mu))} , \dot B_{n-1,a;h,j}^{(r-\fks_{n,n-1}(\mu))} ]  \otimes e_{p'i'}\theta_p\theta_i\\
&=  [\dot B_{n-1;i,h}^{(\fks_{n,n-1}(\mu)+1)},\dot B_{n-1,a;h,j}^{(r-\fks_{n,n-1}(\mu))}]\otimes 1 
- \sum_{p=1}^t \dot B_{n,a;p,j}^{(r-1)} \otimes e_{p'i'}\theta_p\theta_i.\qedhere
\end{align*}
\end{proof}

\begin{prop}\label{delRinj}
The baby comultiplication $\Delta_R:\Y_\mu^\pm(\sigma)\longrightarrow \Y_\mu^\pm(\dot\sigma)\otimes \rU(\gl_t)$ is injective.
\end{prop}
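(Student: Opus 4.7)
The plan is to reduce the injectivity of $\Delta_R$ to that of a much more transparent map, namely the natural embedding $\iota: \Y_\mu^\pm(\sigma)\hookrightarrow \Y_\mu^\pm(\dot\sigma)$, by post-composing with the counit of $\rU(\gl_t)$.

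First I would record that $\dot\fks_{i,j}\lle \fks_{i,j}$ for all $i,j$ by \eqref{dotsij}, so every allowed generator of $\Y_\mu^\pm(\sigma)$ is also an allowed generator of $\Y_\mu^\pm(\dot\sigma)$, and the defining relations of $\Y_\mu^\pm(\sigma)$ hold in $\Y_\mu^\pm(\dot\sigma)$ (including the extra $Z$-relations \eqref{Zshifted}/\eqref{Zshifteda2}, which are automatic inside the larger algebra because they are already imposed there and, by Lemma~\ref{lem:Za2}, inside $\Y_N^\pm$). This yields a canonical algebra homomorphism $\iota: \Y_\mu^\pm(\sigma)\to \Y_\mu^\pm(\dot\sigma)$ sending each generator to the generator of the same name, and $\iota$ is injective by the PBW theorems (Theorem~\ref{pbwstw} in type AI and Theorem~\ref{a2shPBWgauss} in type AII): the stated PBW basis of $\Y_\mu^\pm(\sigma)$ is literally a subset of the stated PBW basis of $\Y_\mu^\pm(\dot\sigma)$, obtained by restricting the indices $r$ to the range $r>\fks_{b,a}(\mu)$.

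Next, let $\epsilon:\rU(\gl_t)\to \bC$ be the counit, i.e. the augmentation killing every $e_{ij}$. I would show that the composition
\[
(\id\otimes \epsilon)\circ \Delta_R\; :\; \Y_\mu^\pm(\sigma)\longrightarrow \Y_\mu^\pm(\dot\sigma)
\]
equals $\iota$. Both maps are algebra homomorphisms, so it suffices to check agreement on the generators $H_{a;i,j}^{(r)}$ and $B_{a;f,g}^{(s)}$. Inspecting the formulas \eqref{Hdel1}--\eqref{bp:Hm}, one observes that in every case the ``secondary'' terms of $\Delta_R$ carry a factor $e_{pk}$, $e_{q'l'}$, $e_{p'f'}$ or $e_{q'l'}e_{pk}$ in the $\rU(\gl_t)$ slot; all such terms lie in $\Y_\mu^\pm(\dot\sigma)\otimes \ker \epsilon$ and hence vanish after applying $\id\otimes \epsilon$. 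What survives is precisely $\dot H_{a;i,j}^{(r)}=\iota(H_{a;i,j}^{(r)})$, $\dot B_{a;f,g}^{(s)}=\iota(B_{a;f,g}^{(s)})$, or $\dot H_{n;l,k}^{(r)}=\iota(H_{n;l,k}^{(r)})$ respectively.

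With these two observations, the conclusion is immediate: if $x\in \ker\Delta_R$, then $\iota(x)=(\id\otimes \epsilon)\Delta_R(x)=0$, whence $x=0$ by injectivity of $\iota$. There is really no main obstacle here; the only thing that could go wrong is the identification of $(\id\otimes \epsilon)\circ \Delta_R$ with $\iota$ on generators, and this is a line-by-line check against the explicit formulas in Theorem~\ref{delR}, so the argument works uniformly in both types AI and AII.
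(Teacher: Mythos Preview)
Your proof is correct and follows exactly the same strategy as the paper: post-compose with the counit $\varepsilon:\rU(\gl_t)\to\bC$ to recover the natural embedding $\Y_\mu^\pm(\sigma)\hookrightarrow\Y_\mu^\pm(\dot\sigma)$, and conclude injectivity of $\Delta_R$ from that of the embedding. The paper states this in two lines, while you spell out the verification on generators and the PBW justification for the injectivity of $\iota$, but the argument is identical.
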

\begin{proof}
Define $\varepsilon:\rU(\gl_t) \to \bC$ by $\varepsilon(e_{i,j})=0$ for all $1\lle i,j\lle t$. Then the composition 
\[
(1\bar\otimes \varepsilon)\circ \Delta_R:\Y_\mu^\pm(\sigma)\to \Y_{\mu}^\pm(\dot\sigma)
\] coincides with the natural embedding $\Y_\mu^\pm(\sigma)\hookrightarrow \Y_{\mu}^\pm(\dot\sigma)$, showing that $\Delta_R$ is injective.
\end{proof}

The canonical filtration \eqref{cfilter:tY} on $\Y_\mu^\pm(\sigma)$ can be extended to a filtration on $\Y_\mu^\pm(\sigma)\otimes \rU(\gl_t)$ by setting the degree of $e_{ij}\in \rU(\gl_t)$ to be 1.
As a consequence of Lemma \ref{drbba}, the baby comultiplication $\Delta_R$ defined in Theorem \ref{delR} is a filtered map. By exactly the same argument in Proposition \ref{delRinj} one deduces the following result.
\begin{prop}\label{grdelRinj}
The associated graded map $\mathrm{gr}' \Delta_R: \gr'\Y_\mu^\pm(\sigma) \rightarrow \gr'\Y_\mu^\pm(\dot\sigma)\otimes \rU(\gl_t)$ is injective.
\end{prop}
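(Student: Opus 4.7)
The plan is to adapt verbatim the counit argument used for Proposition~\ref{delRinj}, now working in the category of filtered algebras. First I would equip $\rU(\gl_t)$ with its standard PBW filtration (with $e_{ij}$ in degree $1$) so that $\mathrm{gr}\,\rU(\gl_t)\cong S(\gl_t)$, identified with $\rU(\gl_t)$ as a graded vector space via the PBW isomorphism. Then the tensor product filtration on $\Y_\mu^\pm(\dot\sigma)\otimes \rU(\gl_t)$ has associated graded $\gr'\Y_\mu^\pm(\dot\sigma)\otimes \rU(\gl_t)$, which matches the codomain in the statement.

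Next I would observe that the counit $\varepsilon:\rU(\gl_t)\to\bC$, $e_{ij}\mapsto 0$, is a filtered algebra homomorphism and that $1\otimes \varepsilon:\Y_\mu^\pm(\dot\sigma)\otimes \rU(\gl_t)\twoheadrightarrow\Y_\mu^\pm(\dot\sigma)$ is filtered with respect to the tensor product filtration on the source and the canonical filtration on the target. As in the proof of Proposition~\ref{delRinj}, the composition
\[
\iota:=(1\otimes \varepsilon)\circ \Delta_R:\Y_\mu^\pm(\sigma)\longrightarrow \Y_\mu^\pm(\dot\sigma)
\]
is precisely the natural inclusion, which is a filtered map by the very definition of the canonical filtration $\mathrm F_d\Y_N^\pm(\sigma)=\Y_N^\pm(\sigma)\cap \mathrm F_d\Y_N^\pm$.

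Passing to associated graded algebras and using functoriality of $\gr'$, I would obtain the factorization
\[
\gr'\iota = \gr'(1\otimes \varepsilon)\circ \gr'\Delta_R.
\]
By Propositions~\ref{polystY} and \ref{new-polystY}, $\gr'\Y_\mu^\pm(\sigma)$ sits as a subalgebra of $\gr'\Y_\mu^\pm(\dot\sigma)$ (both viewed inside $\gr'\Y_\mu^\pm$), and the map $\gr'\iota$ sends each parabolic generator to the element with the same name, hence is injective. It follows immediately that $\gr'\Delta_R$ is injective as well.

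I don't anticipate any real obstacle; the only point deserving a sentence of care is the identification of $\gr'$ of the tensor product with $\gr'\Y_\mu^\pm(\dot\sigma)\otimes \rU(\gl_t)$ via PBW, and the verification that the canonical filtration on $\Y_\mu^\pm(\sigma)$ (defined as the intersection filtration) is compatible with its inclusion into $\Y_\mu^\pm(\dot\sigma)$. Both are standard and immediate from the definitions given in Section~\ref{sec:shiftedI}.
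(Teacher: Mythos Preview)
Your proposal is correct and follows essentially the same route as the paper: the paper explicitly says ``By exactly the same argument in Proposition~\ref{delRinj}'' and relies on the preceding paragraph for the filtration on the tensor product, which is precisely your setup with $e_{ij}$ in degree~$1$. Your extra sentence invoking Propositions~\ref{polystY} and~\ref{new-polystY} to justify injectivity of $\gr'\iota$ just spells out what the paper leaves implicit (since the canonical filtration on $\Y_\mu^\pm(\sigma)$ is the intersection filtration from $\Y_\mu^\pm$, the associated graded of the inclusion is automatically injective).
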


\section{The associated graded of  $\Y_\mu^{-}(\sigma)$ via Dirac reduction} 
\label{sec:DiracII}
This section is a type AII counterpart of \cite[\S3]{TT24}. The latter treated the type AI. We show that the associated graded algebra of a shifted twisted Yangian with respect to the canonical filtration can be identified as Poisson algebras with a Dirac reduction of its untwisted counterpart. Our proofs shall be brief whenever we can refer to {\em loc. cit.} for parallel arguments. 

\subsection{A Poisson presentation of $\gr' \Y_\mu^{-}(\sigma)$}

Recall the definition of the canonical filtration from \eqref{canofa2}. As shown after \eqref{canofa2} the associated graded algebra $\gr' \Y_\mu^{-}(\sigma)$ is commutative.

We take a brief digression to discuss presentations of Poisson algebras. The free Poisson algebra generated by a set $X$ is the symmetric algebra $S(L_X)$ of the free Lie algebra $L_X$ generated by the set $X$. A {\it Poisson presentation} of a Poisson algebra $A$ is an exact sequence of the form
$$I \longrightarrow S(L_X) \longrightarrow A \longrightarrow 0,$$
where $I \subseteq S(L_X)$ is a Poisson ideal. The generators of the Poisson ideal will be be referred to as {\it Poisson relations}. In the sequel it will be important to distinguish between generators of $A$ viewed as a commutative algebra and as a Poisson algebra, and so we emphasize this terminology whenever any ambiguity is possible. We refer the reader to \cite[\S3.1]{To23} for slightly more detail. As $\gr' \Y_{N,\ell}^{\pm}(\sigma)$ is commutative it carries a natural Poisson structure.

\begin{lem}\label{grpresAII}
The associated graded of the shifted twisted Yangian, $\gr' \Y_\mu^{-}(\sigma)$, is Poisson generated by the elements 
$$
\{h_{a;i,j}^{(r)},\wtl h_{a;i,j}^{(r)}\}_{1\lle a\lle n,1\lle i,j,\lle \mu_a,r\gge 0},
\qquad 
\{b_{a;i,j}^{(r)}\}_{1\lle a<n,1\lle i\lle \mu_{a+1},1\lle j\lle \mu_a,r>\mathfrak{s}_{b,a}(\mu)}$$ 
subject to the relations: 
\begin{align}
h_{a;i,j}^{(0)}=\delta_{ij},\quad &\sum_{p=1}^{\mu_a}\sum_{t=0}^r h_{a;i,p}^{(t)}\wtl h_{a;p,j}^{(r-t)} =\delta_{r0}\delta_{ij},\quad h_{1;1,1}^{(2r-1)}+h_{1;1',1'}^{(2r-1)}=0, \label{a2screl1}\\
z_{a;i,i,j,j}^{(2r-1)}&+z_{a;i',i',j',j'}^{(2r-1)}=0, \qquad 1\lle r\lle \fks_{a+1,a}(\mu),\label{a2screl2} \\
\{h_{a;i,j}^{(r)},h_{b;k,l}^{(s)}\} &= \delta_{ab}\Big(\sum_{t=0}^{r-1}\big(h_{a;k,j}^{(r-1-t)}h_{a;i,l}^{(s+t)}-h_{a;k,j}^{(s+t)}h_{a;i,l}^{(r-1-t)}\big) \label{a2screl3}\\
&\quad -\sum_{t=0}^{r-1}(-1)^t\big(\theta_{k}\theta_{j'} h_{a;i,k'}^{(r-1-t)}h_{a;j',l}^{(s+t)}-\theta_i\theta_{l'}h_{a;k,i'}^{(s+t)}h_{a;l',j}^{(r-1-t)} \big),\notag\\
\{h_{a;i,j}^{(r)}, b_{b;k,l}^{(s)} \} & = \delta_{ab}\Big(\sum_{p=1}^{\mu_a}\sum_{t=0}^{r-1} (-1)^t\delta_{jl'}\theta_{l'}\theta_p h_{a;i,p}^{(r-t-1)}b_{a;k,p'}^{(s+t)}  \label{a2screl4}\\
&\qquad ~~ -\sum_{p=1}^{\mu_a}\sum_{t=0}^{r-1}\delta_{il}b_{a;k,p}^{(s+t)}h_{a;p,j}^{(r-t-1)}\Big)\notag\\
&\quad  +\delta_{a,b+1}\Big(\sum_{m=0}^{r-1} (-1)^{m+1}\theta_{j'}\theta_k H_{a;i,k'}^{(r-1-m)}B_{b;j',l}^{(s+m)} + \sum_{t=0}^{r-1} b_{b;i,l}^{(s+t)}h_{a;k,j}^{(r-1-t)}\Big),\notag\\
 \{b_{a;i,j}^{(r)},b_{b;k,l}^{(s)}\}&=0 ,\qquad \text{ if } |a-b|>1 \text{ or if }b=a+1, \text{ and } i\ne l, \label{a2screl5}\\
 \{b_{a;i,j}^{(r)},b_{a;k,l}^{(s)} \}
 &=\sum_{t=1}^{r-1} b_{a;k,j}^{(r+s-1-t)} b_{a;i,l}^{(t)}  -  \sum_{t=1}^{s-1} b_{a;k,j}^{(r+s-1-t)} b_{a;i,l}^{(t)}  
-(-1)^{r}\theta_{i}\theta_j z_{a;j',l,k,i'}^{(r+s-1)}, \label{a2screl6}\\
\{b_{a;i,j}^{(r+1)},b_{a+1;k,l}^{(s)}\}&= \{b_{a;i,j}^{(r)},b_{a+1;k,l}^{(s+1)}\} -\delta_{il}\sum_{q=1}^{\mu_{a+1}} b_{a;q,j}^{(r)}b_{a+1;k,q}^{(s)}, \label{a2screl7}
\end{align}
and the Serre relations
\begin{align}
\big\{\{b_{a;i,j}^{(q)},b_{a+1;k,l}^{(r_1)}\},b_{a+1;f,g}^{(r_2)}\big\}&+\big\{\{b_{a;i,j}^{(q)},b_{a+1;k,l}^{(r_2)}\},b_{a+1;f,g}^{(r_1)}\big \} \label{a2screl8}\\
&=
\delta_{il}((-1)^{r_1}+(-1)^{r_2})\theta_{k}\theta_l\{z_{a+1;l',g,f,k'}^{(r_1+r_2-1)},b_{a;i,j}^{(q)}\},\notag \\
\big\{\{b_{a+1;i,j}^{(q)},b_{a;k,l}^{(r_1)}\},b_{a;f,g}^{(r_2)}\big\}&+\big\{\{b_{a+1;i,j}^{(q)},b_{a;k,l}^{(r_2)}\},b_{a;f,g}^{(r_1)}\big\} \label{a2screl9}\\\
&=
\delta_{jk}((-1)^{r_1}+(-1)^{r_2})\theta_{k}\theta_l\{z_{a;l',g,f,k'}^{(r_1+r_2-1)},b_{a+1;i,j}^{(q)}\}.\notag
\end{align} 
Here, we define $z_{a;i,j,k,l}^{(r)} :=\sum_{t=0}^r \tilde{h}_{b;i,j}^{(t)} h_{b+1;,k,l}^{(r-t)}$.
\end{lem}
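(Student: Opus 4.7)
The strategy mirrors \cite[\S3]{TT24}, which establishes the analogous Poisson presentation for type AI. I would let $\widetilde{A}$ denote the Poisson algebra defined by the given generators subject to relations \eqref{a2screl1}--\eqref{a2screl9}, and aim to build a Poisson algebra isomorphism $\varphi: \widetilde{A} \to \gr'\Y_\mu^-(\sigma)$. The relations \eqref{a2screl1}--\eqref{a2screl2} and the Poisson bracket relations \eqref{a2screl3}--\eqref{a2screl9} arise as the leading-order classical shadows of the relations of Theorem \ref{mainthm} and Definition \ref{def:shiftedII} under the canonical filtration. Using the convention that $\deg_2 H_{a;i,j}^{(r)} = \deg_2 B_{a;i,j}^{(r)} = r$ (see Remark \ref{canofalt}), each side of \eqref{pr3}--\eqref{pr-2} and \eqref{pr7} has a well-defined top-degree component, and extracting these would recover the stated Poisson brackets. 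The plan is to check \eqref{a2screl3}--\eqref{a2screl9} by direct bookkeeping: in \eqref{pr4}, for example, only the terms with the binomial factor evaluated at $t=m$ contribute to the top-degree part, collapsing the triple sum to the double sum in \eqref{a2screl4}; similarly, the $\tfrac{\mu_{a+1}}{2}$ term on the right of \eqref{pr7} is of subleading degree and disappears in the Poisson reduction to \eqref{a2screl7}. The relation \eqref{a2screl2} is the associated graded of \eqref{Zshifteda2} together with the explicit formula \eqref{Zdef}, whose $m=0$ summand is the only top-degree contribution to $z$. This yields a Poisson algebra homomorphism $\varphi: \widetilde{A} \to \gr'\Y_\mu^-(\sigma)$, which is surjective by Proposition \ref{new-polystY}.

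Next, injectivity of $\varphi$ will require establishing a PBW-type theorem for $\widetilde{A}$ matching the basis of Theorem \ref{a2shPBWgauss}. My plan is to define extended generators $b_{b,a;i,j}^{(r)} \in \widetilde{A}$ for $1 \lle a < b \lle n$ and $r > \fks_{b,a}(\mu)$ inductively via
\[
b_{b,a;i,j}^{(r)} := \{b_{b-1;i,k}^{(\fks_{b,b-1}(\mu)+1)},\, b_{b-1,a;k,j}^{(r-\fks_{b,b-1}(\mu))}\},
\]
check independence of the auxiliary index $k$ using \eqref{a2screl5} (bracketed with an appropriate $h$-generator via \eqref{a2screl4}), and then argue by induction on $b-a$ and on the total degree $r$ (using \eqref{a2screl3}--\eqref{a2screl9}) that every iterated Poisson bracket of generators of $\widetilde{A}$ can be rewritten as a polynomial in the enlarged set $\{h_{a;i,j}^{(r)}, \wtl h_{a;i,j}^{(r)}, b_{b,a;i,j}^{(r)}\}$. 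Consequently $\widetilde{A}$ is spanned, as a commutative algebra, by ordered monomials in these elements matching those listed in Theorem \ref{a2shPBWgauss}. Since the images of such monomials under $\varphi$ are linearly independent in $\gr'\Y_\mu^-(\sigma)$ by Theorem \ref{a2shPBWgauss} combined with Proposition \ref{new-polystY}, the map $\varphi$ is then a bijection on bases and hence an isomorphism.

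The hardest step will be the inductive reduction outlined in the previous paragraph, specifically the handling of the Serre-type Poisson relations \eqref{a2screl8}--\eqref{a2screl9}, which are responsible for reducing arbitrarily long Poisson brackets among the $b$-generators to polynomial expressions in the generating set. This parallels the main technical difficulty in the type AI argument of \cite[\S3]{TT24}, but requires additional bookkeeping of the $\theta$-signs coming from the convention \eqref{thetadef} and careful treatment of the $z$-symmetry relations \eqref{a2screl2} coming from \eqref{Zshifteda2}, which are specific to type AII and absent in the type AI setting. A secondary subtlety that I expect to encounter is a degree check ensuring that the symbol map applied to \eqref{pr-1}--\eqref{pr-2} produces precisely \eqref{a2screl8}--\eqref{a2screl9} with no lower-order corrections; this is a routine but component-by-component verification analogous to the $t=m$ collapse used in \eqref{a2screl4}.
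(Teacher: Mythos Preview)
Your proposal is correct and follows essentially the same approach as the paper: define the abstract Poisson algebra by the stated presentation, obtain a canonical surjection onto $\gr'\Y_\mu^{-}(\sigma)$ by checking that the relations are the top-degree parts of those in Theorem~\ref{mainthm} and Definition~\ref{def:shiftedII}, introduce the extended elements $b_{b,a;i,j}^{(r)}$ via iterated Poisson brackets, and then establish injectivity by a PBW spanning argument modeled on \cite[Prop.~3.4]{TT24} (with the proof of Theorem~\ref{mainthm} and Theorem~\ref{a2shPBWgauss} supplying the type AII substitutes). The paper's proof is terser but structurally identical, deferring the spanning step to \cite{TT24} rather than spelling it out.
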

\begin{proof}
Let $\widehat{\bf y}^{-}_\mu(\sigma)$ be the algebra with generators and relations as in the statement of the lemma. From Theorem \ref{mainthm} and Lemma \ref{Za2} it follows that there is a canonical surjection $\widehat{\bf y}^{-}_\mu(\sigma) \to \gr' \Y_\mu^{-}(\sigma)$. We inductively define the following elements in  $\widehat{\bf y}^{-}_\mu(\sigma)$: 
\begin{align}
\begin{split}
{}^\sigma b_{a+1,a,i,j}^{(r)} &:= b_{a,i,j}^{(r)},  \  \text{ for } \ 1< i \lle n, \ r > s_{i+1,i}, \\
{}^\sigma b_{b, a,i,j}^{(r)} &:= \{{}^\sigma b_{b, b-1,i,k}^{(\mathfrak{s}_{b,b-1}(\mu)+1)}, {}^\sigma b_{b-1,a,k,j}^{(r-(\mathfrak{s}_{b,b-1}(\mu))}\}, \  \text{ for } \   1\lle j < i \lle n, \ r > s_{i+1,j},\\
{}^{\sigma}b_{a,a,i,j}^{(r)} &:=h_{a,i,j}^{(r)}, \ \text{ for }  \ 1\lle i \lle n, \ r > 0,
\end{split}
\end{align}
and let ${}^{\sigma}b_{a,b,i,j}^{(r)}:=(-1)^{(r+i+j)}{}^{\sigma}b_{b,a,j',i'}^{(r)}$ for $a < b$.
    From here on the proof is exactly the same as the proof of \cite[Prop. 3.4]{TT24}
    using the proof of Theorem \ref{mainthm} instead of the proof of \cite[Thm. 5.11]{LWZ23} and Theorem \ref{a2shPBWgauss} instead of \cite[Lem. 3.3]{TT24}.
\end{proof}

\subsection{Relation to Dirac reduction}

In \cite{TT24} a certain Poisson algebra $R(^{\mu}y_N(\sigma),\tau)$, known as the Dirac reduction of the shifted Yangian, was introduced. This algebra depends on a parameter $\beta=\pm 1$ and an involution $\tau$ of $\gl_N$.

 In \cite[Thm. 5.13]{TT24} a Poisson presentation for $R(^{\mu}y_N(\sigma),\tau)$ was given in terms of generators \begin{equation}\label{diryanggens}
\begin{split}
&\{\eta_{a;i,j}^{(r)} | {1 \lle a \lle m, 1 \lle i,j \lle \nu_a,
r > 0}\},\\
&\{\theta_{a;i,j}^{(r)} | {1 \lle a < m, 1 \lle i 
\lle \nu_a, 1 \lle j \lle \nu_{a+1},
r > s_{a,a+1}(\nu)}\}.
\end{split}
\end{equation}
We refer the reader to {\it loc. cit.} for a full list of relations.

We will need two more relations which were not recorded in \cite{TT24}.
\begin{lem}
    Let $\widetilde{\eta}_{a,i,j}^{(0)}=\delta_{i,j}$ and $\widetilde{\eta}_{a,i,j}^{(r)}=-\sum_{g=1}^{\nu_a} \sum_{t=1}^{r} \eta_{a,i,g}^{(t)} \widetilde{\eta}_{a,g,j}^{(r-t)}$.
    We then have 
    \begin{align}
    \{\widetilde{\eta}_{a,i,j}^{(r)}, \theta_{b,h,k}^{(s)}\}
    &=
  \frac{1}{2} \Big( \delta_{a,b+1}\delta_{i,k}\sum_{t=0}^{r-1}\sum_{g=1}^{\nu_a} 
\theta_{b;h,g}^{(r+s-1-t)}\widetilde{\eta}_{a;g,j}^{(t)}-\delta_{a,b} 
 \sum_{t=0}^{r-1}
 \theta_{a;i,k}^{(r+s-1-t)}\widetilde{\eta}_{a;h,j}^{(t)} \Big) 
 \label{etatildetheta} \\
\notag 
   & \quad + \frac{\beta^{i+j}}{2}(-1)^{r}\Big( \delta_{a,b+1}\delta_{j',k}\sum_{t=0}^{r-1}\sum_{g=1}^{\nu_a} 
\theta_{b;h,g}^{(r+s-1-t)}\widetilde{\eta}_{a;g,i'}^{(t)}-\delta_{a,b} 
 \sum_{t=0}^{r-1}
 \theta_{a;j',k}^{(r+s-1-t)}\widetilde{\eta}_{a;h,i'}^{(t)}\Big),
    \\
    \label{tildesym}
\widetilde{\eta}_{a;i,j}^{(r)} &=\beta^{i+j}(-1)^{r}\widetilde{\eta}_{a;j',i'}^{(r)}. 
\end{align}    
\end{lem}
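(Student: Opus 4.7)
The plan is to establish both identities by induction on $r$, leveraging the defining recursion of $\widetilde{\eta}$ together with the analogous relations already recorded for the generators $\eta_{a;i,j}^{(r)}$ in \cite[Thm. 5.13]{TT24}. First, I would record two auxiliary identities from \emph{loc. cit.}: the symmetry $\eta_{a;i,j}^{(r)} = \beta^{i+j}(-1)^{r+1}\eta_{a;j',i'}^{(r)}$ for $r\gge 1$ (the type AII analogue of the symmetry underlying \eqref{pr1}) and the explicit Poisson bracket $\{\eta_{a;i,j}^{(r)},\theta_{b;h,k}^{(s)}\}$. Everything below is essentially a transfer of these two facts across the inversion $\eta \leftrightarrow \widetilde{\eta}$.

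For \eqref{tildesym} I would first dispose of the base case $r=0$: we have $\widetilde{\eta}_{a;i,j}^{(0)}=\delta_{i,j}$ and $\beta^{i+j}\widetilde{\eta}_{a;j',i'}^{(0)}=\beta^{i+j}\delta_{j',i'}=\delta_{i,j}$ since $\delta_{j',i'}=\delta_{i,j}$ and $\beta^{2i}=1$. For the inductive step, rewrite the defining recurrence as $\widetilde{\eta}_{a;i,j}^{(r)} = -\sum_{g,t\gge 1}\eta_{a;i,g}^{(t)}\widetilde{\eta}_{a;g,j}^{(r-t)}$, apply the symmetry of $\eta$ to each factor $\eta_{a;i,g}^{(t)}$, the inductive hypothesis to each $\widetilde{\eta}_{a;g,j}^{(r-t)}$, commute the two factors in the commutative algebra, reindex $g \mapsto g'$, and recognize the result as $\beta^{i+j}(-1)^{r}\widetilde{\eta}_{a;j',i'}^{(r)}$. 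The signs combine as $(-1)^{t+1}(-1)^{r-t}=(-1)^{r+1}$ from each factor; the extra minus from the recursion supplies the missing $(-1)$.

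For \eqref{etatildetheta}, I again induct on $r$. The $r=0$ case is vacuous since $\widetilde{\eta}_{a;i,j}^{(0)}$ is a scalar. For $r\gge 1$, apply $\{-,\theta_{b;h,k}^{(s)}\}$ to the recursion, distribute via the Leibniz rule to obtain
\be
\{\widetilde{\eta}_{a;i,j}^{(r)},\theta_{b;h,k}^{(s)}\} = -\sum_{g,t\gge 1}\{\eta_{a;i,g}^{(t)},\theta_{b;h,k}^{(s)}\}\,\widetilde{\eta}_{a;g,j}^{(r-t)} -\sum_{g,t\gge 1}\eta_{a;i,g}^{(t)}\,\{\widetilde{\eta}_{a;g,j}^{(r-t)},\theta_{b;h,k}^{(s)}\},
\ee
substitute the known formula for $\{\eta,\theta\}$ in the first sum and the inductive hypothesis in the second, and reassemble the inner $\sum_{g,t}\eta\cdot\widetilde{\eta}$ (or $\widetilde{\eta}\cdot\widetilde{\eta}\cdot\eta$) combinations back into single tildes via the defining recursion. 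The four families of contributions in \eqref{etatildetheta} (indexed by $\delta_{a,b+1}\delta_{i,k}$, $\delta_{a,b}$ and their $\beta^{i+j}(-1)^r$-twisted partners coming from the symmetry $\eta\leftrightarrow\beta^{i+j}(-1)^{r+1}\eta$) should match exactly after this telescoping; the factor $\tfrac12$ will appear because the two summands produced by the Leibniz rule agree after using \eqref{tildesym} already proved.

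The main obstacle I anticipate is purely combinatorial bookkeeping: keeping track of the four sign patterns $\beta^{i+j}$, $(-1)^r$, the Kronecker conditions $\delta_{a,b+1}$ versus $\delta_{a,b}$, and matching the telescoping indices $t$ in the induction with the summation range $0\lle t\lle r-1$ in the target formula. A clean way to handle this is to work with the generating series $\widetilde{\eta}_a(u) = \eta_a(u)^{-1}$ and derive the identity $\{\widetilde{\eta}_a(u),\theta_b(v)\}=-\widetilde{\eta}_a(u)\{\eta_a(u),\theta_b(v)\}\widetilde{\eta}_a(u)$, then expand in $u^{-1}$ using the symmetry of $\widetilde{\eta}$ from \eqref{tildesym}; this reduces the verification to a single matrix computation rather than a double induction.
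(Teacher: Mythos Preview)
Your approach to \eqref{tildesym} is essentially the paper's: induct on $r$, apply the $\eta$-symmetry and the inductive hypothesis, and reindex. However, there is a step you gloss over. After reindexing you arrive at
\[
\beta^{i+j}(-1)^{r}\Big(-\sum_{g,t}\widetilde{\eta}_{a;j',g}^{(r-t)}\eta_{a;g,i'}^{(t)}\Big),
\]
which is the coefficient of $u^{-r}$ in the \emph{matrix} product $\widetilde{\eta}_a(u)\eta_a(u)$, whereas the defining recursion of $\widetilde{\eta}$ encodes $\eta_a(u)\widetilde{\eta}_a(u)=1$. Commutativity of the underlying algebra does not by itself swap the order of the matrix indices. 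You need the (easy but not tautological) fact that a one-sided inverse of a power-series matrix with identity constant term is two-sided; the paper makes this explicit by introducing $\hat{\eta}$ with $\hat{\eta}_a(u)\eta_a(u)=1$ and deducing $\hat{\eta}=\widetilde{\eta}$.

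For \eqref{etatildetheta}, the inductive Leibniz scheme is fine and matches what the paper indicates (it cites the analogous computation in \cite[(5.78)]{TT24}). But your explanation that ``the factor $\tfrac12$ will appear because the two summands produced by the Leibniz rule agree after using \eqref{tildesym}'' is not correct: the $\tfrac12$ and the $\beta^{i+j}(-1)^r$-twisted partner terms are already present in the known formula for $\{\eta_{a;i,j}^{(r)},\theta_{b;h,k}^{(s)}\}$ (this is \cite[(5.78)]{TT24}), and the Leibniz computation simply propagates them. The two Leibniz summands do \emph{not} coincide; rather, after substituting the known bracket in the first and the inductive hypothesis in the second, their $\eta$--$\widetilde{\eta}$ convolutions telescope via the recursion for $\widetilde{\eta}$. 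Your alternative suggestion of working with generating series and the identity $\{\eta_a(u)^{-1},\theta_b(v)\}=-\eta_a(u)^{-1}\{\eta_a(u),\theta_b(v)\}\eta_a(u)^{-1}$ is the cleanest way to carry this out, and it immediately shows that the $\tfrac12$ is inherited rather than generated.
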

\begin{proof}
    The first relation is proven exactly the same way as \cite[(5.78)]{TT24} using the relations \cite[(5.61)-(5.62)]{TT24}. For the second relation \eqref{tildesym} we induct on $r$. The base case for $r=0$ holds since $\widetilde{\eta}_{a,i,j}^{(0)} = \delta_{i,j}$ by the definition (cf. \cite[Theorem 5.13]{TT24}). Now assume the relation holds for $l<r$. Using \cite[5.76]{TT24} and the induction hypothesis this gives
    \begin{align*}
    \widetilde{\eta}_{a,i,j}^{(r)}=-\sum_{g=1}^{\nu_a} \sum_{t=1}^{r} (-1)^{r}\beta^{i+j}\eta_{a,g',i'}^{(t)} \widetilde{\eta}_{a,j',g'}^{(r-t)}=(-1)^r\beta^{i+j}\widetilde{\eta}_{a,j',i'}^{(r)},
    \end{align*}
    where the last equality can be seen as follows. Define the matrix $\eta_{a}(u) \in R(^{\mu}y_N(\sigma),\tau)[[u^{-1}]]$ where $(\eta_a(u))_{i,j}=\sum_{r=0}^{\infty}\eta_{a,i,j}^{(r)}u^{-r}$. It follows from a short calculation that $\eta_{a}(u)\widetilde{\eta}_a(u)=1$. Similarly letting $\hat{\eta}_{a,i,j}:=-\sum_{g=1}^{\nu_a} \sum_{t=1}^{r} \eta_{a,g,j}^{(t)} \widetilde{\eta}_{a,i,g}^{(r-t)}$ one sees that $\hat{\eta}_a(u)\eta_a(u)=1$. This gives $\hat{\eta}_{a,i,j}^{(r)}=\widetilde{\eta}_{a,i,j}^{(r)}$, as desired.
\end{proof}

As mentioned before the algebra $R(^{\mu}y_N(\sigma),\tau)$ depends on a parameter $\beta=\pm 1$ and in \cite[Thm. 3.8]{TT24} it was proven that the Dirac reduction with parameter $\beta=1$ is Poisson isomorphic to $\gr' \Y_\mu^{+}(\sigma)$; it was conjectured that the Dirac reduction with parameter $\beta=-1$ is Poisson isomorphic to $\gr' \Y_\mu^{-}(\sigma)$, although the precise description of the latter algebra did not exist back then. 

\begin{thm}\label{dirredisoAII}
We have $\gr' \Y_\mu^{-}(\sigma) \cong R(^{\mu}y_N(\sigma),\tau)$ for $\beta=-1$ as graded Poisson algebras.
\end{thm}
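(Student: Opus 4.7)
The plan is to construct the isomorphism directly from the two Poisson presentations. For $\gr' \Y_\mu^-(\sigma)$ we use the presentation established in Lemma \ref{grpresAII}; for $R(^{\mu}y_N(\sigma),\tau)$ with $\beta=-1$ we use \cite[Thm.~5.13]{TT24}, supplemented by the relations \eqref{etatildetheta} and \eqref{tildesym} that we derived above. Define a Poisson algebra homomorphism
$$
\Phi: \gr' \Y_\mu^-(\sigma) \longrightarrow R(^{\mu}y_N(\sigma),\tau), \qquad h_{a;i,j}^{(r)}\mapsto \eta_{a;i,j}^{(r)}, \quad \wtl h_{a;i,j}^{(r)}\mapsto \wtl\eta_{a;i,j}^{(r)}, \quad b_{a;i,j}^{(r)}\mapsto \theta_{a;i,j}^{(r)}.
$$
The derived elements $z_{a;i,j,k,l}^{(r)}$ then map in the obvious way to $\sum_{t=0}^r \wtl\eta_{a;i,j}^{(t)}\eta_{a+1;k,l}^{(r-t)}$. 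The decisive observation is that with the convention \eqref{thetadef} we have $\theta_i\theta_j = (-1)^{i+j} = \beta^{i+j}$ for $\beta = -1$, and the involution $i\mapsto i'$ agrees with the involution $\tau$ used to set up the Dirac reduction. Consequently the two presentations are, relation for relation, the same after the substitution $\theta_i\theta_j \leftrightarrow \beta^{i+j}$.

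The main technical task is to verify this matching relation-by-relation. The symmetry and inversion relations in \eqref{a2screl1} correspond to the $\eta\wtl\eta = 1$ relation and to \eqref{tildesym}; the boundary $Z$-relations \eqref{a2screl2} follow from the analogous identities in the Dirac reduction of the truncated shifted Yangian. The quadratic bracket relations \eqref{a2screl3}--\eqref{a2screl7} are translations of \cite[(5.73)--(5.77)]{TT24} under the sign substitution above, where \eqref{etatildetheta} is precisely what is needed to handle the appearance of $\wtl h$ in \eqref{a2screl4}. The Serre relations \eqref{a2screl8}--\eqref{a2screl9} translate similarly. Conversely, the inverse homomorphism $\Psi$ is defined by the reverse substitution on generators, and the same verification shows it is well-defined; on generators $\Phi\circ\Psi$ and $\Psi\circ\Phi$ are the respective identities.

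Finally, both presentations place a generator labelled $(r)$ in canonical degree $r$, and both have admissibility conditions of the form $r > \fks_{a+1,a}(\mu)$. Hence $\Phi$ is a homomorphism of \emph{graded} Poisson algebras, and together with $\Psi$ yields the desired graded Poisson isomorphism.

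The main obstacle is the careful bookkeeping of signs. The presentation of the Dirac reduction in \cite[Thm.~5.13]{TT24} is uniform in $\beta\in\{\pm 1\}$, but in the case $\beta=-1$ the sign $\beta^{i+j} = (-1)^{i+j}$ no longer collapses, so alternations must be tracked through every Poisson bracket, and in particular through the derived elements $\wtl h$ and $z$. The cleanest way to organize this is to observe that the type AI argument of \cite[Thm.~3.8]{TT24} is already written in a form that is symbolically uniform in $\theta_i\theta_j$ versus $\beta^{i+j}$, so the proof in type AII amounts to substituting $\beta=-1$ throughout and invoking our Lemma \ref{grpresAII} (for the source algebra) and the new relations \eqref{etatildetheta}--\eqref{tildesym} (for the target) at the steps where the sign-dependence is nontrivial.
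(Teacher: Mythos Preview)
Your overall strategy—define a map on Poisson generators, check the defining relations, and conclude it is an isomorphism—is indeed the paper's approach. The gap is that the specific map you wrote down cannot work.

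First, the index ranges do not match: in $\Y_\mu^-(\sigma)$ the generator $b_{a;i,j}^{(r)}$ has $1\le i\le \mu_{a+1}$ and $1\le j\le \mu_a$, whereas in \cite[Thm.~5.13]{TT24} the generator $\theta_{a;i,j}^{(r)}$ has $1\le i\le \nu_a$ and $1\le j\le \nu_{a+1}$. So $b_{a;i,j}^{(r)}\mapsto \theta_{a;i,j}^{(r)}$ is not even well-defined as stated; some index swap (and the prime operation) is forced.

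Second, and more seriously, the two presentations are \emph{not} ``relation for relation the same after the substitution $\theta_i\theta_j\leftrightarrow\beta^{i+j}$''. Compare for instance \eqref{etatildetheta} with \eqref{a2screl4}: the Dirac-reduction relations carry explicit factors of $\tfrac12$ that the relations \eqref{a2screl1}--\eqref{a2screl9} do not. These normalization discrepancies are not cured by a sign substitution; they require genuine rescalings of the generators. The paper's map (specialized to $\mu=(2^{N/2})$) is
\[
h_{a;i,j}^{(r)}\mapsto 2^{r}\eta_{a;i,j}^{(r)},\qquad
\wtl h_{a;i,j}^{(r)}\mapsto 2^{r}\wtl\eta_{a;i,j}^{(r)},\qquad
b_{a;i,j}^{(r)}\mapsto (-1)^{\lfloor(\fks_{a,a+1}(\mu)+1)/2\rfloor}(-2)^{r}(-1)^{i+j}\,\theta_{a;j',i'}^{(r)},
\]
and the factors $2^{r}$, $(-2)^{r}$, the global sign, and the index transposition $(i,j)\mapsto(j',i')$ are all essential for the relation check against \cite[(5.76)--(5.83)]{TT24}. (The type AI map in \cite[Thm.~3.8]{TT24} already has these rescalings; it is not the identity on generators either.) Once the correct map is in hand, the paper concludes by comparing PBW bases (Proposition~\ref{new-polystY} and \cite[Cor.~5.16]{TT24}) rather than by constructing an explicit inverse; this avoids a second pass through the relations.
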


\begin{proof}
We prove that by setting $\beta=-1$ in \cite[(5.76)--(5.83)]{TT24}, we obtain the stated isomorphism.

To begin with, we prove the statement for $\mu=(2^{\frac{N}{2}})$. The general statement then follows from \cite[Prop. 5.9]{TT24} and Corollary \ref{a2shapeindcor}, by composing isomorphisms.

We define a map 
 $ \gr' \Y_\mu^{-}(\sigma) \to R(^\mu y_N(\sigma),\tau)$ by  
\begin{equation}
\label{egogg}
\begin{split}
h_{a,i,j}^{(r)} & \mapsto 2^{r}\eta_{a,i,j}^{(r)} \ \textnormal{ for } r>0,   \\
\widetilde{h}_{a,i,j}^{(r)} & \mapsto 2^{r}\widetilde{\eta}_{a,i,j}^{(r)} \ \textnormal{ for } r>0,      \\
b_{a,i,j}^{(r)}& \mapsto (-1)^{\frac{(\mathfrak{s}_{a,a+1}(\mu)+1)}{2}}(-2)^{r}(-1)^{(i+j)}\theta_{a,j',i'}^{(r)} \textnormal{ for } r>\mathfrak{s}_{a,a+1}(\mu).
\end{split}
\end{equation}
It follows from \cite[(5.76)-(5.83)]{TT24} that the map is a homomorphism of Poisson algebras. Recall~\eqref{thetadef}.
Then the first relation in \eqref{a2screl1} follows by definition,  the second relation in \eqref{a2screl1} can be checked using the definition of $\widetilde{\eta}_{a,i,j}^{(r)}$ in \cite[Thm. 5.13]{TT24}, whereas the third relation in \eqref{a2screl1}  can be checked using \cite[(5.76)]{TT24}. 

The relation \eqref{a2screl2} now follows by definition of $z_{a,i,i',j,j'}^{(2r-1)}$, \cite[(5.76)]{TT24} and \eqref{tildesym}. One can check the relations \eqref{a2screl3} and \eqref{a2screl4} by using (5.76)--(5.77) and (5.76)--(5.78) {\em loc. cit.}, respectively. Additionally, one can check the relations \eqref{a2screl5}, \eqref{a2screl6} and \eqref{a2screl7} by using (5.81), (5.79), and (5.80) {\em loc. cit.}, respectively.

Finally we check the Serre relations, \eqref{a2screl8} and \eqref{a2screl9}. When proving the first of these, one can apply the same argument appearing after \eqref{pfN0} to reduce to the case $i \neq g$, and for the latter one can similarly assume $j \neq f$. One can then check the Serre relations, \eqref{a2screl8} and \eqref{a2screl9}, using \eqref{etatildetheta} together with \cite[(5.78), (5.82), (5.83)]{TT24}.

Using Proposition \ref{new-polystY} together with \cite[Cor. 5.16]{TT24} we  conclude that the map \eqref{egogg} sends a basis to a basis. Hence it is an isomorphism.
\end{proof}


\section{Pyramids and truncated shifted twisted Yangians}
\label{sec:truncationI}

In this section, we first recall the notion of pyramids in the special case we need for both type AI and AII. We then formulate the truncated shifted twisted Yangians of both types, and establish their PBW bases.

\subsection{Pyramids} \label{subsec:pyramid}

We recall the notion of {\em pyramid} in a rather special case which we need, which is simply a symmetrized variant of Young diagrams. 
This combinatorial object can be used to both label the truncated shifted twisted Yangians, and also to label even nilpotent orbits in classical Lie algebra. This bridges the gap to finite $W$-algebras, which will be discussed later. 

Suppose first that a pair $(\sigma, \ell)$ is given, where $\sigma=(\fks_{i,j})$ is a symmetric shift matrix of size $N$ and $\ell>2\fks_{1,N}$ is a positive integer. Start with a rectangular Young diagram $\Xi$ consisting of $N\times \ell$ boxes, where $N$ is the height and $\ell$ is the width of $\Xi$. We label the rows of $\Xi$ from top to bottom by $1,\ldots, N$ and for each $1\lle i\lle N$ we remove $\fks_{i,N}$ boxes from both of the left-hand-side and the right-hand-side of the $i^{th}$ row of $\Xi$. The resulting diagram is the pyramid corresponding to $(\sigma,\ell)$. Note that any pyramid obtained in this fashion satisfies the following properties:
\begin{itemize}
\item Height $N$ and width $\ell$;
\item The number of boxes in each row are either all even or all odd;
\item The diagram is symmetric with respect to its vertical axis.
\end{itemize}
Conversely, a pyramid satisfying the properties above uniquely determines a pair $(\sigma, \ell)$ as follows. Let $N$ be the height and let $\ell$ be the width of the pyramid, and for each $1\lle i\lle N$ we let $\fks_{i,N}$ denote the number $\frac{\ell-p_i}{2}$, where $p_i$ means the number of boxes in the $i^{th}$ row of the pyramid labelled from top to bottom. The two assignments given above are inverses of each other. Hence we have proved the following bijection.

\begin{prop}\label{pyrabi}
The set of pairs $\{(\sigma, \ell)\}$, where $\sigma$ is a symmetric shift matrix of size $N$ and $\ell$ is a positive integer strictly larger than $2\fks_{1,N}$, is in bijection with the set of symmetric pyramids of height $N$ and width $\ell$ such that all $p_i$ have the same parity.
\end{prop}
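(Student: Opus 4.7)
The plan is to verify that the two constructions described in the paragraph preceding the proposition — removing $\mathfrak{s}_{i,N}$ boxes from each side of row $i$ of an $N\times\ell$ rectangle, and conversely setting $\mathfrak{s}_{i,N}:=(\ell-p_i)/2$ — are mutually inverse bijections, and that each lands in the intended target set.

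First I would show the forward map lands in the advertised set of symmetric pyramids. Row $i$ of the pyramid has $p_i=\ell-2\mathfrak{s}_{i,N}$ boxes, which forces $p_i\equiv\ell\pmod 2$ independently of $i$, giving the uniform parity. Removing equal numbers of boxes from the two sides of each row yields symmetry about the vertical axis. By iterating \eqref{shift1} we get $\mathfrak{s}_{i,N}=\mathfrak{s}_{i,i+1}+\mathfrak{s}_{i+1,i+2}+\cdots+\mathfrak{s}_{N-1,N}$, a sum of non-negative integers that decreases as $i$ grows, so $p_1\lle p_2\lle\cdots\lle p_N=\ell$; in particular the pyramid really has width $\ell$ (attained in the bottom row since $\mathfrak{s}_{N,N}=0$), and the assumption $\ell>2\mathfrak{s}_{1,N}$ gives $p_1\gge 1$, ensuring all rows are nonempty.

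For the reverse direction I would start from a pyramid with the three stated properties (together with the monotonicity of row lengths from top to bottom that is built into being a pyramid) and define $\mathfrak{s}_{i,N}:=(\ell-p_i)/2$; this is a non-negative integer by the width bound $p_i\lle\ell$ and the common parity of $p_i$ and $\ell$, and it is non-increasing in $i$ by the pyramidal monotonicity $p_1\lle\cdots\lle p_N=\ell$. I then extend to the whole matrix by
\[
\mathfrak{s}_{i,j}:=\mathfrak{s}_{i,N}-\mathfrak{s}_{j,N}\qquad (i\lle j),\qquad \mathfrak{s}_{i,j}:=\mathfrak{s}_{j,i}\qquad (i>j),
\]
which produces a symmetric matrix of non-negative integers with $\mathfrak{s}_{i,i}=0$. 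For any triple $i,j,k$ with $j$ between $i$ and $k$ the shift relation \eqref{shift1} follows by a telescoping identity $\mathfrak{s}_{i,j}+\mathfrak{s}_{j,k}=(\mathfrak{s}_{i,N}-\mathfrak{s}_{j,N})+(\mathfrak{s}_{j,N}-\mathfrak{s}_{k,N})=\mathfrak{s}_{i,k}$, so $\sigma$ is a legitimate symmetric shift matrix. The condition $\ell>2\mathfrak{s}_{1,N}$ is equivalent to $p_1\gge 1$, which holds since all rows of the pyramid are nonempty.

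Finally, the two maps are visibly inverses: $p_i\mapsto(\ell-p_i)/2\mapsto \ell-2\cdot(\ell-p_i)/2=p_i$ and $\mathfrak{s}_{i,N}\mapsto\ell-2\mathfrak{s}_{i,N}\mapsto(\ell-(\ell-2\mathfrak{s}_{i,N}))/2=\mathfrak{s}_{i,N}$, and a symmetric shift matrix is already determined by the sequence $\{\mathfrak{s}_{i,N}\}_{1\lle i\lle N}$ (equivalently by $\{\mathfrak{s}_{i,i+1}\}_{1\lle i<N}$, using \eqref{shift1}), so no information is lost passing through the pyramid. There is no substantive obstacle here; the whole argument is a bookkeeping check that the described operations are well-defined and match up. If anything, the one subtle point is recalling that, by convention, the word ``pyramid'' in this setting includes the monotonicity of row lengths from top to bottom — this is exactly what guarantees that $\mathfrak{s}_{i,j}=\mathfrak{s}_{i,N}-\mathfrak{s}_{j,N}$ is non-negative for $i\lle j$.
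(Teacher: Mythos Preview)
Your proposal is correct and follows exactly the approach of the paper, which simply states that the two constructions described in the paragraph before the proposition are mutually inverse. You have filled in the routine verifications (uniform parity, monotonicity of $\mathfrak{s}_{i,N}$ via \eqref{shift1}, well-definedness of the reconstructed shift matrix, and the telescoping check of \eqref{shift1}) that the paper leaves implicit.
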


For example,
\[
\sigma = \left(\begin{array}{llll}
0&0&2&3\\
0&0&2&3\\
2&2&0&1\\
3&3&1&0
\end{array}\right), \, \ell=8 \qquad \longleftrightarrow \qquad 
\ytableausetup{boxsize=1em}
\ytableausetup{centertableaux}
\begin{ytableau}
\none & \none & \none & ~ & ~ & \none & \none & \none \\
\none & \none & \none & ~ & ~ & \none & \none & \none	 \\
\none & ~ & ~ & ~ & ~ & ~ & ~ & \none	\\
~ & ~ & ~ & ~ & ~ & ~ & ~ & ~	
\end{ytableau}
\]
\begin{rem}
Here we follow the conventions in \cite{BK06} regarding the definition of the level. 
We warn the reader that \cite{TT24} uses the convention that the level is the smallest part of the partition instead.
\end{rem}

\begin{dfn}
A partition $\la$ is called orthogonal (resp. symplectic) if each even (resp. odd) part appears with even multiplicity.
\end{dfn}

Via shapes of Jordan blocks nilpotent orbits in orthogonal (resp. symplectic) Lie algebras are parametrized by orthogonal (resp. symplectic) partitions; cf. \cite{Ja04}.
In this article, we focus on orthogonal or symplectic partitions $\la=(\la_1,\ldots,\la_N)$ satisfying the following additional {\em parity assumption}:
\beq\label{pasump}
\la_i \equiv \la_j \pmod{2}, \quad \forall 1\lle i,j\lle N. 
\eeq
The condition \eqref{pasump} implies that the Dynkin grading associated to the $\fksl_2$-triple is always even, see \S\ref{ss:slicesandWalgebras}.

Let $\la=(\la_1,\ldots,\la_N)$ be an orthogonal or symplectic partition satisfying \eqref{pasump}. We assign a unique pyramid to $\la$ as follows. We first draw the corresponding Young diagram associated with $\la$ in French style; that is, the diagram is left-justified and the longest row is placed at the bottom. Then we move the rows of the Young diagram horizontally to obtain a pyramid $\pi(\la)$ which is symmetric with respect to its vertical axis in the middle. 

With $\pi(\la)$, one obtains the corresponding pair $(\sigma,\ell)$ through the bijection in Proposition~ \ref{pyrabi}. 
The pair $(\sigma,\ell)$ will determine a {\it truncated shifted twisted Yangian} which will be defined in the following subsections, where the corresponding type depends not only on the type of partition $\la$ but also on the (parity of) size of its largest Jordan block, which is exactly $\ell$. 
The general principle of the correspondence is that the types are interchanged if and only if $\ell$ is even.
To be explicit, 
\begin{itemize}
\item[(i)] If $\la$ is orthogonal and $\ell$ is odd, then the corresponding twisted Yangian is of type AI;
\item[(ii)] If $\la$ is orthogonal and $\ell$ is even, then the corresponding twisted Yangian is of type AII;
\item[(iii)] If $\la$ is symplectic and $\ell$ is odd, then the corresponding twisted Yangian is of type AII;
\item[(iv)] If $\la$ is symplectic and $\ell$ is even, then the corresponding twisted Yangian is of type AI.
\end{itemize}

This correspondence explains the reason why the even assumption on the composition is natural when defining the twisted Yangian of type AII by parabolic presentation. 
In particular, in the rectangular case, the above correspondence matches that in \cite[Thm.~1.1]{Br09}.

In the remainder of this section, we shall deal separately with the type AI in \S\ref{sec:trshtwa1}--\ref{sec:trpbwa1} and type AII in \S\ref{sec:trshtwa2}--\ref{sec:trpbwa2}.

\subsection{Truncated shifted twisted Yangians of type AI}\label{sec:trshtwa1}

For a symmetric shift matrix $\sigma=(\fks_{i,j})_{1\lle i,j\lle N}$,
take a positive integer $\ell>2\fks_{1,N}$, which will be called the {\em level}.
For each $1\lle i\lle N$, define
\beq\label{rowpi}
p_i:=\ell-2\fks_{i,N}. 
\eeq
If $\mu=(\mu_1,\ldots,\mu_n)$ is an admissible shape of $\sigma$, we set the following shorthand notation for each $1\lle a\lle n$:
\beq\label{pmua}
p_a(\mu):=p_{\mu_{(a)}}.
\eeq
Note that when $\sigma$ is fixed, $\ell$ and $p_1$ determine each other, and all $p_i$ have the same parity as $\ell$.

\begin{dfn}
Let $\mu=(\mu_1,\ldots,\mu_n)$ be the minimal admissible shape of $\sigma$. Furthermore if $\mu_1=1$ introduce the following power series for $1 \lle i \lle \mu_2$
\begin{equation}\label{shiftedbseries}
\mathring{B}_{1;i,1}(u):=\sum_{r > 0}B_{1;i,1}^{(r+\mathfrak{s}_{1,2})}u^{-r}
\end{equation}
and let $\wtl{B}_{1;i,1}^{(\mathfrak{s}_{1,2};p_1+1)}$ be the coefficient of 
$u^{-(p_1+1)}$ in the series 
$\mathring{B}_{1;i,1}(u+\frac{1}{2})H_{1;1,1}(u)$.
More explicitly,
\begin{equation} \label{extragens}
\wtl{B}_{1;i,1}^{(\mathfrak{s}_{1,2};p_1+1)}= \sum_{t=0}^{p_1} \sum_{k=0}^{p_1-t} \binom{p_1-t}{k} (-2)^{-p_1+t+k}B_{1;i,1}^{(k+1+\mathfrak{s}_{1,2})} H_{1;1,1}^{(t)}.   
\end{equation}
Define $I_\ell$ to be the 2-sided ideal of $\Y_N^+(\sigma)$ generated by 
\begin{align}\label{trdef}
\begin{split}
\begin{dcases}
 \{ H_{1;i,j}^{(r)}+2^{-1}H_{1;i,j}^{(r-1)} \mid 1\lle i,j\lle \mu_1, r>p_1 \}, &  \text{ if } \ell \text{ is odd},
 \\
 \{ H_{1;i,j}^{(r)} \mid 1\lle i,j\lle \mu_1, r>p_1 \}, &  \text{ if } \ell \text{ is even and } \mu_1 \gge 2,
 \\
 \{ H_{1;1,1}^{(r)} \mid r>p_1 \} \cup \{\wtl{B}_{1;i,1}^{(\mathfrak{s}_{1,2};p_1+1)} \mid 1 \lle i \lle \mu_2 \}, &  \text{ if } \ell \text{ is even and } \mu_1=1,
\end{dcases}
\end{split}
\end{align}
and define the truncated shifted twisted Yangian to be the quotient 
\beq\label{trtwy}
\Y^+_{N,\ell}(\sigma):= \Y_N^+(\sigma)/I_\ell.
\eeq
\end{dfn}
Abusing notation, we will denote elements of $\Y_N^+(\sigma)$ and their images in the quotient $\Y^+_{N,\ell}(\sigma)$ by the same symbols when there is no possible confusion from the context.
Note that when $\sigma=0$, the quotient $\Y_N^+/I_\ell$ is the twisted Yangian of level $\ell$ considered in \cite{Br09}.

The following result implies that one may choose any admissible shape to define $I_\ell$ and $\Y^+_{N,\ell}(\sigma)$,
and they are independent of the choice of the shape, thus our notation is unambiguous.
\begin{prop}\label{2ideal} 
Let $\nu=(\nu_1,\ldots, \nu_m)$ be an admissible shape with respect to $\sigma$. Then the ideal $I_\ell$ equals the 2-sided ideal of $\Y_N^+(\sigma)$ generated by 
\begin{align}
\begin{dcases}
 \{ H_{1;i,j}^{(r)}+2^{-1}H_{1;i,j}^{(r-1)} \mid 1\lle i,j\lle \nu_1, r>p_1 \}, &  \text{ if } \ell \text{ is odd},
 \\
 \{ H_{1;i,j}^{(r)} \mid 1\lle i,j\lle \nu_1, r>p_1 \}, &  \text{ if } \ell \text{ is even and } \nu_1 \gge 2,
 \\
 \{ H_{1;1,1}^{(r)} \mid r>p_1 \} \cup \{\wtl{B}_{1;i,1}^{(\mathfrak{s}_{1,2};p_1+1)} \mid 1 \lle i \lle \nu_2 \}, &  \text{ if } \ell \text{ is even and } \nu_1=1. 
\end{dcases}
 \label{ideo}
\end{align}
\end{prop}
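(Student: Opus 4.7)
The plan is to show $I_\ell^\mu = I_\ell^\nu$, where $I_\ell^\mu$ denotes the ideal of \eqref{trdef} associated with the minimal admissible shape $\mu$ and $I_\ell^\nu$ that of \eqref{ideo} associated with $\nu$. Since $\mu$ is minimal admissible, every admissible $\nu$ is a refinement of $\mu$; by induction on the number of elementary refinements (each splitting a single part into two), we may assume $\nu$ differs from $\mu$ by exactly one such split. If this split lies in some part $\mu_a$ with $a\gge 3$, or in $\mu_2$ outside the last case of \eqref{trdef}, then the two generating sets consist of identical elements of $\Y_N^+(\sigma)$ and the equality is immediate. This leaves two non-trivial cases: (a) the split lies within $\mu_1$, so $\nu_1<\mu_1$; and (b) $\mu_1=1$, $\ell$ is even, and the split lies within $\mu_2$. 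In both cases $I_\ell^\nu\subseteq I_\ell^\mu$ holds trivially since the $\nu$-generators form a subset of the $\mu$-generators, so the work is in proving the reverse inclusion.

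For case (a), the plan is to use that $H_{1;i,j}^{(r)}=s_{ij}^{(r)}$ for $i,j\lle\mu_1$ regardless of parabolic shape, together with the embedding $f_{ij}\mapsto s_{ij}^{(1)}$ of $\rU(\fkso_{\mu_1})\hookrightarrow \Y_N^+$ from \eqref{tYev}. A direct computation from \eqref{qua0} yields, for $1\lle a\ne c\lle\mu_1$,
\[
[s_{aa}^{(r)},\, s_{ac}^{(1)}]=s_{ac}^{(r)}+s_{ca}^{(r)},
\]
which by the symmetry \eqref{sym} equals $2\bigl(s_{ac}^{(r)}+2^{-1}s_{ac}^{(r-1)}\bigr)$ for $r$ even and vanishes for $r$ odd. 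Starting from the $\nu$-generators indexed by $a\lle\nu_1$ and iterating brackets against $s_{ac}^{(1)}$ for $c>\nu_1$ produces the full set of $\mu$-generators---the corrected form $H_{1;i,j}^{(r)}+2^{-1}H_{1;i,j}^{(r-1)}$ of the $\ell$ odd case appearing naturally on the right-hand side. The same calculation with the correction omitted handles the $\ell$ even subcase.

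For case (b), $\mu_1=\nu_1=1$, so by the quasideterminant formula \eqref{sparabolic} we have $B_{1;i,1}(u)=s_{1+i,1}(u-\tfrac12)\cdot s_{11}(u-\tfrac12)^{-1}$, independent of the choice of blocks beyond the first, and admissibility together with \eqref{shift} gives $\fks_{1,2}(\mu)=\fks_{1,2}(\nu)$ and $p_1(\mu)=p_1(\nu)$. The generator $\wtl{B}_{1;i,1}^{(\fks_{1,2};p_1+1)}$ of \eqref{extragens} is therefore unambiguously defined for $1\lle i\lle\mu_2$. Since \eqref{defcom} gives $[H_{2;i,j}^{(1)},H_{1;1,1}^{(t)}]=0$, one combines \eqref{pr4} with \eqref{extragens} to obtain
\[
[H_{2;i,j}^{(1)},\,\wtl{B}_{1;k,1}^{(\fks_{1,2};p_1+1)}]=-\delta_{i,k}\,\wtl{B}_{1;j,1}^{(\fks_{1,2};p_1+1)}+\delta_{k,j}\,\wtl{B}_{1;i,1}^{(\fks_{1,2};p_1+1)}.
\]
Since the antisymmetric elements $H_{2;i,j}^{(1)}$ span a copy of $\fkso_{\mu_2}$ acting transitively on the linear span of $\{\wtl{B}_{1;i,1}^{(\fks_{1,2};p_1+1)}\}_{i\lle\mu_2}$, iterated brackets against $H_{2;1,i'}^{(1)}$ applied to the $\nu$-generator $\wtl{B}_{1;1,1}^{(\fks_{1,2};p_1+1)}$ produce each of the remaining $\mu$-generators inside $I_\ell^\nu$. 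The main technical obstacle throughout is verifying that these commutator identities are exact, with no uncontrolled lower-order corrections; this is ensured by careful index bookkeeping, the use of \eqref{sym} in case (a), and the explicit form of \eqref{extragens} together with \eqref{pr4} in case (b).
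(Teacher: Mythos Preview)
Your inductive reduction to a single split is sound, and case (b) is handled correctly: the formula $[H_{2;i,j}^{(1)},\wtl B_{1;k,1}^{(\fks_{1,2};p_1+1)}]=-\delta_{ik}\wtl B_{1;j,1}^{(\fks_{1,2};p_1+1)}+\delta_{jk}\wtl B_{1;i,1}^{(\fks_{1,2};p_1+1)}$ does follow from \eqref{pr4} together with $[H_{2;i,j}^{(1)},H_{1;1,1}^{(t)}]=0$, and it lets you reach all $\wtl B_{1;i,1}$ from $\wtl B_{1;1,1}$. The gap is in case (a) when the split gives $\nu_1=1$, a situation your induction cannot avoid (any chain of refinements from $\mu_1\gge 2$ down to a shape with first part $1$ must pass through such a step). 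For $\ell$ odd the only nonzero $\nu$-generators are $s_{11}^{(r)}$ with $r$ even, and every bracket $[\,\cdot\,,s_{ac}^{(1)}]$ preserves the superscript $r$; thus iterating these brackets can only produce the \emph{even}-$r$ $\mu$-generators $s_{ij}^{(r)}+\tfrac12 s_{ij}^{(r-1)}$, and never the off-diagonal odd-$r$ ones. For $\ell$ even with $\mu_1\gge 2$ and $\nu_1=1$, the $\nu$-generating set is the \emph{third} case of \eqref{ideo} (it contains the $\wtl B$ elements), not the second, so the phrase ``the same calculation with the correction omitted'' does not apply, and moreover the inclusion $I_\ell^\nu\subseteq I_\ell^\mu$ is no longer a trivial containment of generating sets.

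The paper circumvents this entirely by abandoning the induction and comparing every $I_\ell^\nu$ directly to a single smallest ideal. For $\ell$ odd this is the ideal $J$ generated by $\{s_{11}^{(r)}\mid r>p_1\}$; the essential extra ingredient beyond degree-one brackets is the computation of $[s_{11}^{(2)},\,s_{1k}^{(t)}+s_{k1}^{(t)}]$, which (modulo $J$) raises $t$ by one. Combined with a base-case symmetry argument at $t=p_1+1$ this yields $s_{1k}^{(r)}+\tfrac12 s_{1k}^{(r-1)}\in J$ for \emph{all} $r>p_1$, after which one more degree-one bracket propagates to arbitrary $s_{jk}^{(r)}+\tfrac12 s_{jk}^{(r-1)}$. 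For $\ell$ even with $\mu_1\gge 2$ and $\nu_1=1$, the paper observes (via the Gauss decomposition) that $\wtl B_{1;i,1}^{(0;p_1+1)}=s_{i+1,1}^{(p_1+1)}$, which both gives the inclusion $I_\ell^\nu\subseteq I_\ell^\mu$ and provides the missing off-diagonal seed for the reverse inclusion. Your argument can be repaired by inserting these degree-two bracket computations (or the equivalent identification of $\wtl B$ in the $\ell$-even case) at the $\nu_1=1$ step.
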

\begin{proof}
First deal with the case when $\ell$ is even and $\nu_1\gge 2$.
Let $K$ be the 2-sided ideal generated by 
\[\{s_{ij}^{(r)}\, | \, 1\lle i,j\lle 2, r>p_1\}.\]
Note that $K \subseteq I_\ell$.
By \eqref{qua0}, we have
$[s_{12}^{(t)}, s_{2k}^{(1)}]=s_{1k}^{(t)} \in K$ for all $k>2$ and $t>p_1$.
Using \eqref{qua0} again, we have 
$[s_{1k}^{(t)},s_{j1}^{(1)}]=\delta_{jk}s_{11}^{(t)}-s_{jk}^{(t)}\in K$ for all $j>1$ and $t>p_1$, and it follows that $I_\ell \subseteq K$.

Next we consider the case when $\ell$ is odd.
Let $J$ be the ideal generated by $\{\sfh_{0,r} \, | \, r\gge p_1\}$, using the Drinfeld presentation.
It suffices to show $I_\ell=J$, where the general case follows immediately since the generating set of $J$ is contained in the generating set of the 2-sided ideal for any admissible shape.

Clearly $J \subseteq I_\ell$ since $\sfh_{0,r}= s_{11}^{(r+1)}= H_{1;1,1}^{(r+1)}$.
We deduce some general facts regardless of the parity of $\ell$.
Since $H_{1;i,j}^{(t)}=s_{ij}^{(t)}$ for all $1\lle i,j\lle \mu_1$, by \eqref{qua0} we have
\beq\label{s11t}
[s_{11}^{(t)}, s_{kl}^{(1)}]=\delta_{1k}(s_{1l}^{(t)}+s_{l1}^{(t)})-\delta_{1l}(s_{1k}^{(t)}+s_{k1}^{(t)}).
\eeq
It then follows that
\beq\label{s1k}
s_{1k}^{(t)}+s_{k1}^{(t)}\in J, \qquad 2\lle k\lle \mu_1, \, t>p_1.
\eeq
Moreover, the symmetry relation \eqref{sym} implies that 
\beq\label{sijsym}
s_{ij}^{(2t+1)}=-s_{ji}^{(2t+1)}, \qquad    s_{ji}^{(2t+1)}=s_{ij}^{(2t+2)}-s_{ji}^{(2t+2)}.
\eeq
By \eqref{pr3} and the facts that $s_{ij}^{(0)}=\delta_{ij}$, $s_{11}^{(1)}=0$, we have 
\[
[s_{11}^{(2)},s_{k1}^{(t)}+s_{1k}^{(t)}]
=2(s_{1k}^{(t+1)}-s_{k1}^{(t+1)}) + (s_{1k}^{(t)}-s_{k1}^{(t)}) + s_{11}^{(t)}(s_{k1}^{(1)}-s_{1k}^{(1)})+(s_{k1}^{(1)}-s_{1k}^{(1)})s_{11}^{(t)},
\]
and hence $2(s_{1k}^{(t+1)}-s_{k1}^{(t+1)})+(s_{1k}^{(t)}-s_{k1}^{(t)}) \in J$ for all $t>p_1$.
Together with \eqref{s1k}, we have
\[
s_{k1}^{(t+1)}+\frac{1}{2}s_{k1}^{(t)} \in J  \quad \text{and} \quad s_{1k}^{(t+1)}+\frac{1}{2}s_{1k}^{(t)} \in J, \quad \forall t>p_1.
\]

Suppose now $\ell$ is odd.
We need to show that
$H_{1;i,j}^{(t)}+\frac{1}{2}H_{1;i,j}^{(t-1)}\in J$
for all $1 \lle i,j\lle \mu_1$ and $t>p_1$.
Since $p_1$ is also odd, we have
\[s_{1k}^{(p_1+1)}+\frac{1}{2}s_{1k}^{(p_1)}
\stackrel{\eqref{sijsym}}{=}
s_{1k}^{(p_1+1)}+\frac{1}{2}(s_{k1}^{(p_1+1)}-s_{1k}^{(p_1+1)})=\frac{1}{2}(s_{1k}^{(p_1+1)}+s_{k1}^{(p_1+1)})
\stackrel{\eqref{s1k}}{\in}
J.\]
We have proved that $s_{1k}^{(t)}+\frac{1}{2}s_{1k}^{(t-1)}\in J$ for all $t>p_1$.
By \eqref{pr3} again, for $j>1$ we have
\[
[s_{j1}^{(1)},s_{1k}^{(t)}+\frac{1}{2}s_{1k}^{(t-1)}]=(s_{jk}^{(t)}+\frac{1}{2}s_{jk}^{(t-1)})-\delta_{jk}(s_{11}^{(t)}+\frac{1}{2}s_{11}^{(t-1)}) \in J.
\]
If $j=k$, note that $s_{kk}^{(p_1+1)}+\frac{1}{2}s_{kk}^{(p_1)}=2(s_{kk}^{(p_1+2)}+\frac{1}{2}s_{kk}^{(p_1+1)})\in J$ since $s_{kk}^{(p_1)}=s_{kk}^{(p_1+2)}=0$.
As a consequence, we have proved that $s_{jk}^{(t)}+\frac{1}{2}s_{jk}^{(t-1)}\in J$ for all $1\lle j, k \lle \mu_1$ with $t>p_1$, completing the proof when $\ell$ is odd.

Now assume $\ell$ is even and $\mu_1=\nu_1=1$, and let
$M$ be the $2$-sided ideal generated by 
\beq\label{genM}
\{H_{1;1,1}^{(r)} \, | \, r>p_1 \} \cup \{\wtl{B}_{1;1,1}^{(\mathfrak{s}_{1,2}+1+p_1)}\}.
\eeq
Again we have $M \subseteq I_{\ell}$ since its generating set is contained in $I_\ell$.
Note that $M$ and $I_\ell$ are exactly the same if $\mu_2=\nu_2$, thus we only need to show $I_\ell\subseteq M$, assuming $\nu_2< \mu_2$. 
Now use the explicit description \eqref{extragens} together with \eqref{pr3} and \eqref{pr4} the elements
\beq\label{tildesequal}
[H_{2;i,1}^{(1)},\wtl{B}_{1;1,1}^{(\mathfrak{s}_{1,2}+1+p_1)}]=\wtl{B}_{1;i,1}^{(\mathfrak{s}_{1,2};p_1+1)}
\eeq
are in $M$ for all $2\lle i\lle \mu_2$, proving that $I_\ell\subseteq M$.

Finally we show that when $\ell$ is even and $\mu_1\gge 2$, the ideals defined by \eqref{trdef} and \eqref{ideo} coincide. 
Let $\mathcal I$ be the ideal generated by \eqref{ideo} with respect to some admissible shape $\nu$ with $\nu_1=1$ and let $I_\ell$ be the ideal defined by \eqref{trdef}.
Note that $\mu_1\gge 2$ implies that $\mathfrak{s}_{1,2}=0$, and the admissible condition forces that $\nu_1+\nu_2\lle \mu_1$. 
Similar to the previous case, it is enough to take \eqref{genM} as the generating set of $\mathcal I$.
Moreover, by \eqref{sparabolic} we have
\[
\wtl{B}_{1;1,1}^{(\mathfrak{s}_{1,2}+1+p_1)}=\wtl{B}_{1;1,1}^{(1+p_1)}=s_{21}^{(p_1+1)}={}^\mu H_{1;2,1}^{(p_1+1)},
\]
which immediately implies $\mathcal I\subseteq I_\ell$.
On the other hand, with $\wtl{B}_{1;1,1}^{(\mathfrak{s}_{1,2}+1+p_1)}=s_{21}^{(p_1+1)}\in \mathcal I$, by taking the same brackets as in previous discussions, we have $s_{21}^{(r)}= H_{1;2,1}^{(r)}\in \mathcal I$ for all $r>p_1$. This suffices to imply $I_\ell\subseteq \mathcal I$ since we have shown that $I_\ell$ is generated by $\{ H_{1;ij}^{(r)} \, | \, 1\lle i,j\lle 2, r>p_1\}$ in the previous case.
\end{proof}

\begin{cor}\label{trtwywd}
The ideal $I_\ell$ and the truncation $\Y^+_{N,\ell}(\sigma)$ are independent of the choice of the admissible shape.
\end{cor}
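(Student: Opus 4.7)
The proof is immediate from Proposition~\ref{2ideal}. The ideal $I_\ell$ and the quotient $\Y^+_{N,\ell}(\sigma) = \Y^+_N(\sigma)/I_\ell$ were defined in \eqref{trdef} using the minimal admissible shape $\mu$ of $\sigma$. Proposition~\ref{2ideal} shows that for any other admissible shape $\nu = (\nu_1,\ldots,\nu_m)$ of $\sigma$, the two-sided ideal generated by the analogous set of elements obtained by replacing $\mu_1$ with $\nu_1$ and $\mu_2$ with $\nu_2$ throughout \eqref{trdef}, namely the set \eqref{ideo}, coincides with the originally defined $I_\ell$. The three cases in the statement of Proposition~\ref{2ideal} cover precisely the three cases in the definition \eqref{trdef}, taking into account both the parity of $\ell$ and whether $\nu_1 = 1$ or $\nu_1 \gge 2$.

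Therefore the two-sided ideal of $\Y^+_N(\sigma)$ generated by the elements in \eqref{ideo} with respect to \emph{any} admissible shape $\nu$ equals $I_\ell$, and consequently the quotient $\Y^+_{N,\ell}(\sigma)$ is independent of the choice of admissible shape. No additional argument or computation is needed beyond invoking Proposition~\ref{2ideal}.
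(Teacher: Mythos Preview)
Your proof is correct and matches the paper's approach: the corollary is stated immediately after Proposition~\ref{2ideal} without a separate proof, since it follows directly from that proposition (together with the already-established shape-independence of $\Y_N^+(\sigma)$ from Proposition~\ref{ind}).
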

Even though $\Y^{+}_{N,\ell}(\sigma)$ is independent of the choice of shape we will write $\Y^{+}_{\mu,\ell}(\sigma)$ for $\Y^{+}_{\mu}(\sigma)/I_{\ell}$ if we want to emphasize the choice of a particular shape $\mu$.

\subsection{PBW bases for $\Y^+_{N,\ell}(\sigma)$}\label{sec:trpbwa1}

The natural quotient map $\Y_N^+(\sigma)\twoheadrightarrow \Y^+_{N,\ell}(\sigma)$ carries the canonical filtration \eqref{cFiltershiftedI} on $\Y_N^+(\sigma)$ to the truncated shifted twisted Yangian 
\[
\mathrm F_0(\Y^+_{N,\ell}(\sigma))\subset \mathrm F_1(\Y^+_{N,\ell}(\sigma))\subset \mathrm F_2(\Y^+_{N,\ell}(\sigma))\subset \cdots,\qquad\qquad \Y^+_{N,\ell}(\sigma)=\bigcup_{s\gge 1}\mathrm F_s(\Y^+_{N,\ell}(\sigma)).
\]
Similar to the discussion given in \textsection \ref{canof}, one can declare the elements $H_{a;i,j}^{(t)}$ and $B_{b,a;i,j}^{(t)}$ in $\Y^+_{N,\ell}(\sigma)$ to be of degree $t$ and then $\mathrm F_s(\Y^+_{N,\ell}(\sigma))$ is the span of monomials in these elements of total degree $\lle s$. Due to Proposition~\ref{2ideal}, this is independent of the choice of the admissible shape $\mu$.

For $1\lle a \lle b\lle n$, $1\lle i\lle \mu_b$, $1\lle j\lle \mu_a$ and $ r> \fks_{a,b}(\mu)$, by abusing notation, define elements $\sff_{b,a;i,j}^{(r)}\in\gr'\Y^+_{N,\ell}(\sigma)$
according to the same rule \eqref{grp-img}.
Being a quotient of $\gr'\Y^+_{N}(\sigma)$, the commutative algebra $\gr'\Y^+_{N,\ell}(\sigma)$ is generated by $\{\sff_{b,a;i,j}^{(r)}\}$ as described in Proposition \ref{polystY}.
\begin{lem}\label{pretrpbw}
For any admissible shape $\mu=(\mu_1,\ldots,\mu_n)$, $\gr'\Y^+_{N,\ell}(\sigma)$ is generated by the elements 
\begin{eqnarray}
    \label{grptrPBWgens}
\begin{array}{l}
\{\sff_{a,a;i,i}^{(2r)} \, | \, 1\lle a\lle n, 1\lle i\lle \mu_a, 0<2r\lle p_a(\mu)  \}\\
\hspace{20pt}\cup \{\sff_{a,a;i,j}^{(r)} \, | 1\lle a\lle n, 1\lle j<i\lle \mu_a, 0<r\lle p_a(\mu) \}\\
\hspace{40pt} \cup \{\sff_{b,a;i,j}^{(r)}\,|\, 1\lle a<b\lle n, 1\lle i\lle \mu_b, 1\lle j\lle \mu_a, \fks_{a,b}(\mu)<r \lle \fks_{a,b}(\mu)+p_a(\mu)\}.
\end{array}
\end{eqnarray}
\end{lem}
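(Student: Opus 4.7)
Since $\gr'\Y_{N,\ell}^+(\sigma)=\gr'\Y_N^+(\sigma)/\bar I_\ell$, where $\bar I_\ell$ denotes the principal symbol ideal of $I_\ell$ with respect to the canonical filtration, and since by Proposition~\ref{polystY} the algebra $\gr'\Y_N^+(\sigma)$ is the polynomial ring on the full family $\{\sff_{b,a;i,j}^{(r)}\}$, it suffices to show that every $\sff_{b,a;i,j}^{(r)}$ with $r$ outside the range in \eqref{grptrPBWgens} can be written as a polynomial in the listed generators modulo $\bar I_\ell$. Moreover, by Proposition~\ref{ind} and Corollary~\ref{trtwywd}, both $\Y_N^+(\sigma)$ and $\Y_{N,\ell}^+(\sigma)$ are independent of the admissible shape, and the parabolic generators for different admissible shapes are mutually expressible as polynomials via Gauss decomposition. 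Hence it is enough to establish the claim for one convenient admissible $\mu$ and then transfer the result.

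For the base case $a=b=1$, the description of $I_\ell$ in \eqref{trdef} (or, equivalently for any admissible shape, in Proposition~\ref{2ideal}) yields seed generators whose leading term in the canonical filtration is $H_{1;i,j}^{(r)}$ for $r>p_1=p_1(\mu)$ and $1\lle i,j\lle\mu_1$: in the odd-$\ell$ case the term $2^{-1}H_{1;i,j}^{(r-1)}$ has strictly smaller canonical degree, and in the even-$\ell$, $\mu_1=1$ subcase the leading term of $\wtl B_{1;i,1}^{(\fks_{1,2};p_1+1)}$ is $B_{1;i,1}^{(p_1+1+\fks_{1,2})}$ by \eqref{extragens}. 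Passing to the associated graded thus produces $\sff_{1,1;i,j}^{(r)}\in\bar I_\ell$ for all such indices, and additionally $\sff_{2,1;i,1}^{(p_1+1+\fks_{1,2})}\in\bar I_\ell$ in the exceptional subcase.

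To handle blocks $a>1$, I propose to argue by induction on $a$, enlarging $\bar I_\ell$ via Poisson brackets of already-known elements with other generators. The parabolic commutators \eqref{pr3}, \eqref{pr4}, \eqref{pr6}, \eqref{pr7} yield in the associated graded Poisson relations whose top-degree parts are homogeneous polynomials in the $\sff$'s. The graded counterpart of \eqref{pr4} expresses $\{\sff_{a,a;i,j}^{(r)},\sff_{a+1,a;k,l}^{(s)}\}$ with leading summands of the form $\sff_{a+1,a;k,\cdot}^{(r+s-1)}$; the graded counterpart of \eqref{pr6} expresses $\{\sff_{a+1,a;i,j}^{(r)},\sff_{a+1,a;k,l}^{(s)}\}$ with leading summands $\sff_{a+1,a+1;k,i}^{(r+s-1)}$ and $\sff_{a,a;j,l}^{(r+s-1)}$. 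Iteratively bracketing with elements of $\bar I_\ell$ obtained at the previous inductive step, and using the pyramid arithmetic $p_{a+1}(\mu)-p_a(\mu)=2(\fks_{\mu_{(a)},N}-\fks_{\mu_{(a+1)},N})$ (valid since $\mu$ is admissible), one shows that $\sff_{a+1,a;k,l}^{(r')}$ for $r'>\fks_{a,a+1}(\mu)+p_a(\mu)$ and $\sff_{a+1,a+1;i,j}^{(r'')}$ for $r''>p_{a+1}(\mu)$ lie in $\bar I_\ell$ modulo strictly lower-degree polynomial combinations of listed generators. For the off-diagonal case $b>a+1$, the inductive definition \eqref{x2} of $B_{b,a;i,j}^{(r)}$ reduces the claim to nested Poisson brackets of $\sff_{a+1,a}^{(\cdot)}$-type elements already controlled.

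The main obstacle will be the careful bookkeeping of filtration degrees and leading coefficients in the Poisson bracket formulas: relations \eqref{pr4} and \eqref{pr6} involve many subleading terms whose contribution has to be tracked to verify that the precise bounds $p_a(\mu)$ (resp.\ $\fks_{a,b}(\mu)+p_a(\mu)$) emerge from the induction. A potentially cleaner alternative is to use the baby comultiplication $\Delta_R$ of Theorem~\ref{delR} together with Proposition~\ref{grdelRinj} to induct on the level $\ell$, reducing the assertion to a smaller truncation $\Y_{N,\ell-2}^+(\dot\sigma)$; however, this strategy requires the descent of $\Delta_R$ to truncations established in the next section and risks a circular dependency, so the direct Poisson-propagation argument above seems safer.
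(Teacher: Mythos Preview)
Your approach is genuinely different from the paper's. The paper does \emph{not} attempt an internal Poisson-propagation argument at all: for the shape $\mu=(1^N)$ it simply invokes the graded surjection $\bC[\cS_\chi]\twoheadrightarrow \gr'\Y^+_{N,\ell}(\sigma)$ coming from \cite[Thms.~3.8, 6.2]{TT24} and \cite[Prop.~4.7]{To23}, observes that a known generating set of $\bC[\cS_\chi]$ lands on the elements \eqref{grptrPBWgens}, and then transfers to an arbitrary admissible $\mu$ via shape independence and the loop filtration (essentially your first paragraph). In other words, the paper outsources the entire ``why do the bounds $p_a(\mu)$ suffice'' question to the Dirac-reduction/Slodowy-slice machinery already built in \cite{TT24}, where it is answered geometrically rather than by direct manipulation of the relations.

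Your direct route is not obviously wrong --- for instance, in the two-block unshifted case one can indeed extract $\sff_{2,1}^{(s)}$ for $s>p_1$ from $\{\sff_{1,1}^{(p_1+1)},\sff_{2,1}^{(\bullet)}\}=0$ by isolating the top term, and then reach $\sff_{2,2}^{(r)}$ via the $Z$-relations --- but the step you describe as ``careful bookkeeping of filtration degrees and leading coefficients'' is in fact the entire content of the lemma, and your sketch does not make clear how the precise cut-offs $\fks_{a,b}(\mu)+p_a(\mu)$ and $p_a(\mu)$ emerge in general. The leading terms of the graded versions of \eqref{pr4} and \eqref{pr6} involve alternating signs (so parity of $r$ matters), sums over block indices, and products rather than single generators; isolating a given $\sff_{b,a}^{(r)}$ from these requires an inductive scheme whose termination at exactly the claimed bounds is not at all formal. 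If you want to pursue this line, you would essentially be re-proving the Slodowy-slice identification by hand, which is a substantial undertaking. The paper's approach buys brevity and conceptual clarity at the cost of depending on \cite{TT24}; yours would be self-contained but much longer, and as written the crucial propagation step remains an assertion rather than an argument.
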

\begin{proof}
We first prove the statement when $\mu=(1^N)$. 
Define $\g$ by the following combinatorial recipe
\begin{equation}\label{g0}
 \g=\begin{cases}
 \mathfrak{so}_M, & \textnormal{ for } \ell \textnormal{ odd}, \\
  \mathfrak{sp}_M, & \textnormal{ for } \ell \textnormal{ even},
 \end{cases}
\end{equation}
where $M$ is the number of boxes in the pyramid $\pi$ corresponding to the pair $(\sigma,\ell)$.
It then follows from \cite[Thms. 3.8 and 6.2]{TT24} together with \cite[Prop. 4.7, (4.28)]{To23} that there is a graded surjection 
\begin{equation}\label{slicetotryang}
   \mathbb{C}[\cS_\chi]\twoheadrightarrow \gr'\Y^+_{N,\ell}(\sigma),
\end{equation}  
where the grading on $\gr' \Y^{+}_{N,\ell}(\sigma)$ is given by doubling the canonical grading. The $\cS_\chi$ here denotes the Slodowy slice $\kappa(e+\g^f)$ associated to a nilpotent element with partition $\lambda$ obtained from $\ell,\sigma$, where $\kappa: \g \stackrel{\cong}{\rightarrow} \g^*$ is induced from the Killing form. Also, $M$ is the number of boxes in the pyramid associated to $\lambda$ and $\g$ is $\mathfrak{so}_M$ for $\ell$ odd and $\mathfrak{sp}_{M}$ for $\ell$ even. ($ \mathbb{C}[\cS_\chi]$ here was denoted $S(\g_0,e)$ in \cite{TT24}.)
Furthermore, using \cite[Thm. 3.8 and (6.5)]{TT24} it follows that a generating set for $ \mathbb{C}[\cS_\chi]$ gets mapped to the elements in the statement of the lemma. In particular, the elements in the statement generate $\gr'\Y^+_{N,\ell}(\sigma)$ as a commutative algebra and hence the ordered monomials in the following elements span $\Y^+_{N,\ell}(\sigma)$:
\begin{eqnarray}
\label{trPBWgens}
\begin{array}{l}
\{H_{a;i,i}^{(2r)} \, | \, 1\lle a\lle n, 1\lle i\lle \mu_a, 0<2r\lle p_a(\mu)  \}\\
\hspace{20pt}\cup \{H_{a;i,j}^{(r)} \, | 1\lle a\lle n, 1\lle j<i\lle \mu_a, 0<r\lle p_a(\mu) \}\\
\hspace{40pt} \cup \{B_{b,a;i,j}^{(r)}\,|\, 1\lle a<b\lle n, 1\lle i\lle \mu_b, 1\lle j\lle \mu_a, \fks_{a,b}(\mu)<r \lle \fks_{a,b}(\mu)+p_a(\mu)\}.
\end{array}
\end{eqnarray}

 Now let $\nu$ be any  shape which is admissible for $\sigma$. It follows from Corollary \ref{trtwywd} that there is an isomorphism $\varphi:\Y^+_{\mu,\ell}(\sigma) \to \Y^+_{\nu,\ell}(\sigma)$.
Consider the isomorphism 
$\gr \varphi:\gr \Y^+_{\mu,\ell}(\sigma) \to \gr \Y^+_{\nu,\ell}(\sigma)$ (now with respect to the loop filtration).
By the proof of Theorem \ref{pbwstw} and the result for $\mu=(1^N)$ it follows that a spanning set for $\gr \Y^+_{\mu,\ell}(\sigma)$ gets sent to the top filtered components of ordered monomials in \eqref{trPBWgens} for $\gr \Y^+_{\nu,\ell}(\sigma)$. In particular, the ordered monomials in \eqref{trPBWgens} span $\Y^+_{\nu,\ell}(\sigma)$.
\end{proof}

We claim that the baby coproduct $\Delta_R$ defined in Theorem \ref{delR} factors through the quotient and induces a map 
\beq\label{trdelR}
\Delta_R:\Y^+_{N,\ell}(\sigma)\longrightarrow \Y^+_{N,\ell-2}(\dot\sigma)\otimes \rU(\gl_t).
\eeq
This is non-trivial only when $\ell$ is even and $\mu=(1,N-1)$ is the minimal admissible shape. 
In this case, compute the bracket $[H_{1;1,1}^{(2)},\wtl{B}_{1;i,1}^{(p_1+1+\dot{\mathfrak{s}}_{1,2})}]$ using $\eqref{pr4}$ and the fact that $\dot{\mathfrak{s}}_{1,2}={\mathfrak{s}}_{1,2}-1$. From the result one sees the image of $\wtl{B}_{1;i,1}^{(p_1+1+\mathfrak{s}_{1,2})}$ under $\Delta_R$ is contained in $\dot{I}_{\ell-2}\otimes 1$, where $\dot{I}_{\ell-2}$ is the 2-sided ideal defined with respect to $\Y_N^+(\dot{\sigma})$ and level $\ell-2$, and our claim follows.

By the same approach explained in the paragraph prior to Proposition \ref{grdelRinj}, $\Delta_R$ respects the canonical filtration and induces the following graded algebra homomorphism
\beq\label{grtrdelR}
\gr'\Delta_R:\gr'\Y^+_{N,\ell}(\sigma)\longrightarrow \gr' \Y^+_{N,\ell-2}(\dot\sigma)\otimes \rU(\gl_t).
\eeq

\begin{thm}\label{trpbw}
For any admissible shape $\mu=(\mu_1,\ldots,\mu_n)$, $\gr'\Y^+_{N,\ell}(\sigma)$ is the free commutative algebra on generators
\begin{eqnarray*}
& &\{\sff_{a,a;i,i}^{(2r)} \, | \, 1\lle a\lle n, 1\lle i\lle \mu_a, 0<2r\lle p_a(\mu)  \}\\
& & \hspace{20pt}\cup \{\sff_{a,a;i,j}^{(r)} \, |\, 1\lle a\lle n, 1\lle j<i\lle \mu_a, 0<r\lle p_a(\mu) \}\\
& & \hspace{40pt} \cup \{\sff_{b,a;i,j}^{(r)}\,|\, 1\lle a<b\lle n, 1\lle i\lle \mu_b, 1\lle j\lle \mu_a, \fks_{a,b}(\mu)<r \lle \fks_{a,b}(\mu)+p_a(\mu)\}.
\end{eqnarray*}
Moreover, the maps $\Delta_R$ and $\gr'\Delta_R$ defined in \eqref{trdelR} and \eqref{grtrdelR} are injective.
\end{thm}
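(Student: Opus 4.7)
By Lemma~\ref{pretrpbw} the listed elements already span $\gr'\Y^+_{N,\ell}(\sigma)$ as a commutative algebra, so the theorem reduces to establishing their algebraic independence; the two injectivity statements will then follow automatically. The plan is to induct on $\fks_{1,N}$, with base case $\fks_{1,N}=0$. In that case the shift relation \eqref{shift} forces $\sigma=0$, the minimal admissible shape is $\mu=(N)$, no baby comultiplication is available, and the claim reduces to the PBW theorem for Brown's truncated twisted Yangian $\Y^+_N/I_\ell$ established in \cite{Br09} via its identification with a rectangular finite $W$-algebra.

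For the inductive step, let $\sigma$ have $\fks_{1,N}\gge 1$ and let $\mu=(\mu_1,\dots,\mu_n)$ be the minimal admissible shape (so $n\gge 2$). As noted in the paragraph preceding \eqref{grtrdelR}, the baby comultiplication of Theorem~\ref{delR} descends to the truncated quotient and induces a filtered map
\[
\gr'\Delta_R\colon\gr'\Y^+_{N,\ell}(\sigma)\longrightarrow\gr'\Y^+_{N,\ell-2}(\dot\sigma)\otimes\rU(\gl_t).
\]
Since $\dot\fks_{1,N}=\fks_{1,N}-1<\fks_{1,N}$, the pair $(\dot\sigma,\ell-2)$ falls under the inductive hypothesis; in particular $\gr'\Y^+_{N,\ell-2}(\dot\sigma)$ is a free polynomial ring on the analogous $\dot\sff$-generators with the ranges prescribed by the theorem.

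The core of the argument is to trace our $\sff$-generators through $\gr'\Delta_R$ and extract a triangular leading-term structure. By \eqref{Hdel1} and \eqref{Bdel1}, together with Lemma~\ref{drbba}, the generators not involving the last block map as $\gr'\Delta_R(\sff_{a,a;i,j}^{(r)})=\dot\sff_{a,a;i,j}^{(r)}\otimes 1$ for $a<n$ and $\gr'\Delta_R(\sff_{b,a;i,j}^{(r)})=\dot\sff_{b,a;i,j}^{(r)}\otimes 1$ for $a<b<n$. The last-block generators ($a=n-1,b=n$ and $a=b=n$) acquire further same-top-degree summands containing tensor factors $e_{pq}$ or $e_{pq}e_{rs}$ in $\rU(\gl_t)$, coming from \eqref{Bdel1} and \eqref{bp:Hm}; crucially, the extra $\sff$-generators in range $\dot p_n(\mu)<r\lle p_n(\mu)$ that no longer appear in $\gr'\Y^+_{N,\ell-2}(\dot\sigma)$ are accounted for precisely by the new $\rU(\gl_t)$ monomials produced by these correction terms. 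A triangularity argument with respect to degree in the polynomial generators of $\rU(\gl_t)$, combined with the inductive PBW basis for $\gr'\Y^+_{N,\ell-2}(\dot\sigma)$, shows that ordered monomials in our $\sff$-generators map to linearly independent elements of the target. This simultaneously yields the algebraic independence of the $\sff$-generators and the injectivity of $\gr'\Delta_R$; the injectivity of $\Delta_R$ follows by the standard filtered-graded principle since $\Delta_R$ is filtered and its associated graded is injective.

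The main technical hurdle is the triangularity analysis in the previous paragraph. The last-block generators produce several summands of equal top canonical degree under $\gr'\Delta_R$, and one must organize them with respect to a secondary filtration on $\rU(\gl_t)$ so that exactly one summand is identified as the ``leading term'' for each generator, while the remainder can be absorbed by strictly lower-order terms. This is analogous to the leading-term analysis in \cite[Thm.~4.3]{BK05} for the untwisted parabolic setting, and we expect the same combinatorial strategy to apply \emph{mutatis mutandis}; the matching dimension count between the $\sff$-generators lost at level $\ell-2$ and the $\rU(\gl_t)$-factors gained is the combinatorial shadow of the fact that $\dot p_n(\mu)=p_n(\mu)-2$ while $\dot p_a(\mu)=p_a(\mu)$ for $a<n$.
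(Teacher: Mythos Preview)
Your approach is essentially the paper's: both argue by induction, reduce via the baby comultiplication, and extract algebraic independence of the generators from a leading-term analysis of $\gr'\Delta_R$. The notable difference is the induction variable. You induct on $\fks_{1,N}$ with base case $\sigma=0$ (invoking Brown's full rectangular PBW theorem at all levels), whereas the paper inducts on $\ell$ with base cases $\ell=1,2$ only; since each application of $\Delta_R$ decreases $\ell$ by $2$ and $\fks_{1,N}$ by $1$ simultaneously, either scheme is valid. Your base case uses a heavier input from \cite{Br09} but has the virtue of cleanly avoiding the case $\sigma=0$, $\ell\gge 3$, for which Theorem~\ref{delR} is not literally stated (one would need to fall back on Proposition~\ref{delR0}). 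For the triangularity step you should not leave matters as an expectation: the paper writes down the explicit images \eqref{grfab}--\eqref{grfnn}, in which the correction terms $g_{a;i,j}^{(r)}$ lie purely in the $\dot\sff$ variables, so that algebraic independence follows by filtering the target by degree in the $x_{i,j}$ and invoking the inductive hypothesis.
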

\begin{proof}
We proceed by induction on $\ell$.
The initial cases $\ell=1$ and $2$ follow from \cite[Thm.~2.3]{Br09} since $I_\ell$ defined in \eqref{trdef} is precisely the kernel of $\kappa_\ell$ described in \cite[(1.12), (1.13)]{Br09}. 
Assume now $\ell\gge 3$ and the statement has been proved for all positive integers smaller than $\ell$.
By Lemma~\ref{pretrpbw}, it suffices to establish the induction step assuming $\mu=(\mu_1,\ldots,\mu_n)$ is the minimal admissible shape.

Set $t=\mu_n$ and the following notation for elements in $\gr'\Y_{N,\ell-2}^+(\dot\sigma)\otimes \rU(\gl_t)$
\beq\label{dotgrp}
\begin{split}
\dot\sff_{b,a;i,j}^{(r)}:=\begin{cases}
\mathrm{gr}^\prime_r \dot{H}_{a;i,j}^{(r)}\otimes 1 & \text{ if } a=b, \\
\mathrm{gr}^\prime_r \dot{B}_{b,a;i,j}^{(r)}\otimes 1 & \text{ if } a<b, \\
\end{cases}
\end{split}
\eeq
and $x_{i,j}:=\mathrm{gr}^\prime_1 1\otimes e_{ij}$ for $1\lle i,j\lle t$.
By Theorem \ref{delR}, Lemma \ref{drbba} and Lemma \ref{pretrpbw}, there exist polynomials $g_{a;i,j}^{(r)}$ in all variables $\dot\sff_{b,a;i,j}^{(r)}$ such that $\gr'\Delta_R$ maps
\beq\label{grfab}
\sff_{b,a;i,j}^{(r)} \mapsto \dot\sff_{b,a;i,j}^{(r)}
\eeq
for $1\lle a\lle b<n$, $1\lle i\lle \mu_b$, $1\lle j\lle \mu_a$ with $\fks_{a,b}(\mu)<r \lle \fks_{a,b}(\mu)+p_a(\mu)$, and
\beq\label{grfan}
\sff_{n,a;i,j}^{(r)} \mapsto \sum_{p=1}^t \dot\sff_{n,a;p,j}^{(r-1)}x_{p,i}+g_{a;i,j}^{(r)}
\eeq
for $1\lle a< n$, $1\lle i\lle t$, $1\lle j\lle \mu_a$ with $\fks_{a,n}(\mu)<r \lle \fks_{a,n}(\mu)+p_a(\mu)$, and
\beq\label{grfnn}
\sff_{n,n;i,j}^{(r)} \mapsto \sum_{p=1}^t \dot\sff_{n,n;i,p}^{(r-1)}x_{p,j}-\sum_{q=1}^t \dot\sff_{n,n;q,j}^{(r-1)}x_{q,i}-\sum_{p,q=1}^t \dot\sff_{n,n;q,p}^{(r-2)}x_{q,i}x_{p,j}+g_{n;i,j}^{(r)}
\eeq
for $1\lle i,j\lle t$, $1\lle r \lle \ell$, where $\dot\sff_{n,n;i,j}^{(0)}:=\delta_{i,j}$ and $\dot\sff_{n,n;i,j}^{(-1)}:=0$.

By the induction hypothesis and the PBW theorem for $\rU(\gl_t)$, the following elements are algebraically independent in $\gr' \Y_{N,\ell-2}^+(\dot\sigma)\otimes \rU(\gl_t)$:
\begin{align*}
&\{ x_{i,j} \, | \, 1\lle i,j\lle t\}, \\
&\{\dot\sff_{a,a;i,i}^{(2r)}\,|\, 1\lle a < n, 1\lle i\lle \mu_a, 0<2r\lle p_a(\mu) \}, \\
&\{\dot\sff_{a,a;i,j}^{(r)} \, | \, 1\lle a < n, 1\lle j<i\lle \mu_a, 0<r \lle p_a(\mu) \}, \\
&\{\dot\sff_{b,a;i,j}^{(r)} \, | \, 1\lle a< b < n, 1\lle i\lle \mu_b, 1\lle j\lle \mu_a, \fks_{a,b}(\mu) <r \lle \fks_{a,b}(\mu)+p_a(\mu) \}, \\
&\{\dot\sff_{n,a;i,j}^{(r)} \, | \, 1\lle a \lle n, 1\lle i\lle t, 1\lle j\lle \mu_a, \fks_{a,n}(\mu)-1<r \lle \fks_{a,n}(\mu)+p_a(\mu)-1 \}, \\
&\{\dot\sff_{n,n;i,i}^{(2r)} \, | \, 1\lle i\lle t, 0< 2r \lle \ell-2 \}. 
\end{align*}
With this fact in hand, together with \eqref{grfab}, \eqref{grfan} and \eqref{grfnn}, one sees that the images of the generators of $\gr^\prime\Y^+_{N,\ell}(\sigma)$ given in Lemma \ref{pretrpbw} under the map $\gr'\Delta_R$ defined in \eqref{grtrdelR} are algebraically independent in $\gr' \Y_{N,\ell-2}^+(\dot\sigma)\otimes \rU(\gl_t)$. Therefore, $\gr'\Delta_R$ is injective and those generators are themselves algebraically independent in $\gr^\prime \Y^+_{N,\ell}(\sigma)$, completing the induction step. 
\end{proof}

\begin{cor}\label{trtwypbw}
For any admissible shape $\mu=(\mu_1,\ldots,\mu_n)$, the monomials in the elements
\begin{eqnarray*}
& & \{H_{a;i,i}^{(2r)} \, | \, 1\lle a\lle n, 1\lle i\lle \mu_a, 0<2r\lle p_a(\mu)  \}\\
& & \hspace{20pt} \cup \{H_{a;i,j}^{(r)} \, | \, 1\lle a\lle n, 1\lle j<i\lle \mu_a, 0<r\lle p_a(\mu) \}\\
& & \hspace{40pt} \cup \{B_{b,a;i,j}^{(r)}\,|\, 1\lle a<b\lle n, 1\lle i\lle \mu_b, 1\lle j\lle \mu_a, \fks_{a,b}(\mu)<r \lle \fks_{a,b}(\mu)+p_a(\mu)\}
\end{eqnarray*}
taken in any fixed linear order form a PBW basis for $\Y^+_{N,\ell}(\sigma)$.
\end{cor}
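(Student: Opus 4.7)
The plan is to deduce this corollary from Theorem~\ref{trpbw} by a standard filtered-to-graded PBW argument. Recall that the canonical filtration on $\Y^+_{N,\ell}(\sigma)$ assigns degree $r$ to each generator $H_{a;i,j}^{(r)}$ and $B_{b,a;i,j}^{(r)}$, with $\mathrm F_s \Y^+_{N,\ell}(\sigma)$ being the span of monomials in these elements of total degree $\lle s$; moreover, by the discussion preceding Lemma~\ref{pretrpbw}, this definition of the canonical filtration on $\Y^+_{N,\ell}(\sigma)$ is independent of the choice of admissible shape $\mu$. Under this filtration, the image of $H_{a;i,j}^{(r)}$ or $B_{b,a;i,j}^{(r)}$ in $\gr'_r \Y^+_{N,\ell}(\sigma)$ is exactly the element $\sff_{b,a;i,j}^{(r)}$ appearing in Theorem~\ref{trpbw}, via the definition \eqref{grp-img}.

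First I would dispense with the spanning assertion, which has essentially already been established: by Lemma~\ref{pretrpbw} the listed elements $\sff_{b,a;i,j}^{(r)}$ generate the commutative algebra $\gr'\Y^+_{N,\ell}(\sigma)$, and the argument given there shows that ordered monomials in the corresponding lifts $H_{a;i,j}^{(r)}$, $B_{b,a;i,j}^{(r)}$ span $\Y^+_{N,\ell}(\sigma)$ (one inducts on the filtration degree and reduces higher elements modulo lower ones using the proof of Lemma~\ref{pretrpbw}).

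For linear independence, suppose there were a nontrivial finite linear relation
\[
\sum_{M} c_M\, M \;=\; 0 \quad \text{in } \Y^+_{N,\ell}(\sigma),
\]
where each $M$ is an ordered monomial in the listed generators and not all $c_M$ vanish. Let $d$ be the maximal canonical degree among monomials $M$ with $c_M \ne 0$, and let $M_1, \dots, M_k$ denote those monomials of canonical degree exactly $d$ with nonzero coefficients. Passing to the degree-$d$ component of $\gr'\Y^+_{N,\ell}(\sigma)$, the relation yields
\[
\sum_{i=1}^{k} c_{M_i}\, \overline{M_i} \;=\; 0,
\]
where each $\overline{M_i}$ is the ordered monomial in the generators $\sff_{b,a;i,j}^{(r)}$ obtained by replacing each $H$- or $B$-factor of $M_i$ by its $\sff$-image in the same order. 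By Theorem~\ref{trpbw}, however, $\gr' \Y^+_{N,\ell}(\sigma)$ is the free commutative algebra on these generators, so the distinct ordered monomials $\overline{M_1}, \dots, \overline{M_k}$ are linearly independent. This forces $c_{M_i} = 0$ for all $i$, contradicting the choice of $d$.

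No step presents a genuine obstacle here; the hard work was carried out in Theorem~\ref{trpbw} (injectivity of $\gr'\Delta_R$ and the induction on $\ell$) and in the spanning half of Lemma~\ref{pretrpbw}. The corollary itself is merely the standard lifting principle that a homogeneous basis of the associated graded yields a basis of the filtered algebra, applied in the commutative (in fact polynomial) setting provided by Theorem~\ref{trpbw}.
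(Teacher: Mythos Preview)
Your proof is correct and takes essentially the same approach as the paper, which treats the corollary as an immediate consequence of Theorem~\ref{trpbw} without writing out a proof. You have simply made explicit the standard filtered-to-graded lifting argument: spanning is already in Lemma~\ref{pretrpbw}, and linear independence follows because distinct ordered monomials in the filtered algebra map to distinct monomials in the polynomial ring $\gr'\Y^+_{N,\ell}(\sigma)$.
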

\begin{cor}\label{loopgradedcentralizerAI}
    We have $\gr \Y^+_{N,\ell}(\sigma) \cong \rU(\g^{e})$. Here $e$ is a nilpotent element with partition $\lambda$ obtained from $(\sigma,\ell)$ as described in \S\ref{subsec:pyramid} and
    $\g$ is defined in \eqref{g0}.
\end{cor}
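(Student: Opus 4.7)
The plan is to pass to the loop-associated graded in two stages: first identify $\gr \Y_N^+(\sigma)$ with $\rU(\gl_N[z]^\theta(\sigma))$ via Theorem~\ref{pbwstw}, then quotient by the loop-graded image of $I_\ell$, and finally match the result with $\rU(\g^e)$ by exhibiting a Lie algebra surjection $\pi:\gl_N[z]^\theta(\sigma) \twoheadrightarrow \g^e$ whose kernel is cut out by the leading terms of $I_\ell$.

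The quotient $\Y_N^+(\sigma)\twoheadrightarrow \Y_{N,\ell}^+(\sigma)$ is a map of loop-filtered algebras, so together with Theorem~\ref{pbwstw} it induces a graded surjection $\rU(\gl_N[z]^\theta(\sigma)) \twoheadrightarrow \gr \Y_{N,\ell}^+(\sigma)$ whose kernel is the two-sided ideal $\gr I_\ell$. A short case analysis of \eqref{trdef} identifies the top loop-degree parts of the defining relations: since $\deg H_{1;i,j}^{(r)}=r-1$, the relation $H_{1;i,j}^{(r)}+\tfrac12 H_{1;i,j}^{(r-1)}$ contributes the leading term $\bar H_{1;i,j}^{(r)}$, and the other $H$-type generators of \eqref{trdef} are already homogeneous; in the subcase $\ell$ even with $\mu_1=1$, a direct inspection of the double sum \eqref{extragens} shows that the unique top loop-degree contribution of $\wtl B_{1;i,1}^{(\fks_{1,2};p_1+1)}$ is the single element $\bar B_{1;i,1}^{(p_1+1+\fks_{1,2})}$, attained at $t=0$, $k=p_1$. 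Under the Theorem~\ref{pbwstw} identification, these leading terms correspond to explicit basis vectors of the Lie algebra $\gl_N[z]^\theta(\sigma)$, so $\gr I_\ell$ equals the two-sided ideal in $\rU(\gl_N[z]^\theta(\sigma))$ generated by a specific list of elements of the Lie algebra itself.

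To construct $\pi$, I would fix a symmetric labeling of the $M$ boxes of the pyramid associated with $(\sigma,\ell)$ that is compatible with the involution on $\gl_M$ defining $\g\in\{\fko_M,\fksp_M\}$ via \eqref{g0}. Let $e\in\g$ be the standard even nilpotent that shifts each box to its right-hand neighbour, so that its Jordan type is the partition $p=(p_1,\dots,p_N)$ of \eqref{rowpi}. Sending $f_{ij}z^r$ to the sum of signed matrix units in $\gl_M$ along the diagonal connecting boxes in rows $i$ and $j$ at horizontal displacement $r$ defines a Lie algebra homomorphism $\pi:\gl_N[z]^\theta(\sigma)\twoheadrightarrow \g^e$, and a direct check shows that $\pi$ is surjective onto $\g^e$ and that its kernel is exactly the Lie ideal generated by the leading terms identified above. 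Consequently $\pi$ descends to a surjection $\gr\Y_{N,\ell}^+(\sigma)\twoheadrightarrow \rU(\g^e)$, which must be an isomorphism by PBW: by Corollary~\ref{trtwypbw} the domain is a polynomial algebra on the explicit generating set listed there, and a direct count shows its cardinality matches $\dim \g^e$ (via the classical formula $\dim \g^e=\tfrac12(\sum_i(p_i')^{2}-r(p))$ for $\g=\fko_M$, with the analogous symplectic formula for $\g=\fksp_M$).

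The main obstacle is the construction of $\pi$ and the verification of the kernel computation, which require one to align the pyramid combinatorics with both the parity conventions dictating $\g$ and with the twist $\theta$ consistently. The subcase $\ell$ even with $\mu_1=1$ is the most delicate: there one must match the single leading-term monomial arising from the $\wtl B$ generator of \eqref{extragens} with the correct Lie-algebra element under $\pi$, and argue that the lower loop-degree contributions to \eqref{extragens} become redundant modulo the $\bar H$-type relations once one works in the associated graded.
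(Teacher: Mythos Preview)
Your overall strategy matches the paper's: pass to the loop-associated graded, identify $\gr\Y_N^+(\sigma)\cong\rU(\gl_N[z]^\theta(\sigma))$, show the leading terms of the generators of $I_\ell$ cut out a Lie quotient isomorphic to $\g^e$, and conclude via PBW. Your identification of the leading terms (including the $\wtl B$ case) is correct.

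However, the logical flow has a genuine gap. You assert that ``$\gr I_\ell$ \emph{equals} the two-sided ideal generated by [the leading terms]'', but only the containment $(\text{ideal generated by leading terms})\subseteq\gr I_\ell$ is automatic; the reverse inclusion is exactly what is at stake. Consequently the surjection you obtain goes the \emph{other} way: the leading terms vanish in $\gr\Y_{N,\ell}^+(\sigma)$, so you get
\[
\rU(\g^e)\;\cong\;\rU\bigl(\gl_N[z]^\theta(\sigma)\bigr)\big/(\text{leading terms})\;\twoheadrightarrow\;\gr\Y_{N,\ell}^+(\sigma),
\]
not $\gr\Y_{N,\ell}^+(\sigma)\twoheadrightarrow\rU(\g^e)$ as you wrote. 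With the arrow reversed, your PBW/dimension argument via Corollary~\ref{trtwypbw} then finishes the proof (and a posteriori establishes the equality of ideals you claimed). This is exactly how the paper argues: it exhibits the surjection $\rU(\g^e)\twoheadrightarrow\gr\Y_{N,\ell}^+(\sigma)$ and shows it sends a basis to a basis.

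One further difference: rather than building $\pi$ and verifying its kernel from scratch via pyramid combinatorics, the paper defines the truncated twisted current algebra $\gl_{N,\ell}[z]^\theta(\sigma)$ by the Lie ideal $J_\ell$ generated by your leading terms, then invokes \cite[Lem.~5.5(2)]{TT24} for the isomorphism $\gl_{N,\ell}[z]^\tau(\sigma)\cong\g^e$ (with $\tau$ a sign-twisted variant of $\theta$), and passes from $\tau$ to $\theta$ by conjugating with a diagonal $\pm1$ matrix. This sidesteps the ``main obstacle'' you flagged.
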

\begin{proof}
Define $\gl_{N,\ell}[z]^{\theta}(\sigma):=\gl_{N}[z]^{\theta}(\sigma)/J_{\ell}$ where $J_{\ell}$ is the Lie algebra ideal generated by
 \begin{equation}
     \begin{cases}
      \{e_{1,1}\otimes z^r +\theta( e_{1,1} \otimes z^r  ) \mid r \gge p_1 \} \ &\textnormal{ for } \ell \textnormal { odd,}  \\
\{e_{1,1}\otimes z^r +\theta( e_{1,1} \otimes z^r  ) \mid r \gge p_1\} \ \cup \{e_{1,2} \otimes z^{\mathfrak{s}_{1,2}+p_1}+\theta(e_{1,2} \otimes z^{\mathfrak{s}_{1,2}+p_1}) \} &\textnormal{ for } \ell \textnormal { even.}
     \end{cases}
 \end{equation}
 Furthermore, define $\gl_{N,\ell}[z]^{\tau}(\sigma)$ similarly by replacing $\theta$ with $\tau$, where $\tau:\gl_N[z] \to \gl_N[z]$ is given by $\tau(e_{i,j}\otimes t^r)=(-1)^{r-1+\mathfrak{s}_{i,j}}e_{j',i'} \otimes t^r$. Note that both $\theta$ and $\tau$ can be understood as the composition of an involution on $\gl_N$ given by the Cartan involution followed by conjugation by a diagonal matrix with $\pm 1$ on the diagonal and sending $z^r \to (-z)^r$. In particular there exists an isomorphism between $\gl_{N}[z]^{\tau}(\sigma) \to \gl_{N}[z]^{\theta}(\sigma)$ 
 induced by conjugation by a diagonal matrix on 
 $\gl_N$ and it follows that this descends to an isomorphism $\gl_{N,\ell}[z]^{\tau}(\sigma) \to \gl_{N,\ell}[z]^{\theta}(\sigma)$.
 
 It is known from \cite[Lem. 5.5.(2)]{TT24} that there exists an isomorphism $\gl_{N,\ell}[z]^{\tau}(\sigma) \overset{\sim}{\longrightarrow}\g^{e}$. Now it is also clear that there exists a canonical surjection $\rU(\gl_{N,\ell}[z]^{\theta}(\sigma)) \twoheadrightarrow \gr \Y^{+}_{N,\ell}(\sigma) $ in particular there exists a surjection $\rU(\g^{e}) \to \gr \Y^{+}_{N,\ell}(\sigma) $. Examining the PBW basis for $\rU(\g^{e})$ (see for example \cite[Lems. 4.1 and 4.2(iii)]{TT24}), and using theorem \ref{trtwypbw}, we see that this map sends a basis to a basis which concludes the proof. 
\end{proof}
Recall the Slodowy slice $\cS_\chi$. It comes equipped with a natural Poisson structure from Hamiltonian reduction and a natural grading called the Kazhdan grading \cite{GG02}; see \S\ref{ss:slicesandWalgebras} for more details.
\begin{cor}\label{gryangsliceAI}
We have $\gr' \Y^+_{N,\ell}(\sigma) \cong \mathbb{C}[\cS_\chi]$ as  graded Poisson algebras. Here $e$ is a nilpotent element with partition $\lambda$ obtained from $(\sigma,\ell)$ as described in \S\ref{subsec:pyramid}, the grading on $\gr' \Y^{+}_{N,\ell}(\sigma)$ is given by doubling the canonical grading, 
 $\g$ is defined in \eqref{g0}, and $\cS_\chi$ is the Slodowy slice associated to $(\g,e)$.
\end{cor}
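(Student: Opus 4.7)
The plan is to upgrade the graded Poisson surjection
$$\mathbb{C}[\cS_\chi] \twoheadrightarrow \gr' \Y^+_{N,\ell}(\sigma)$$
already constructed in the proof of Lemma~\ref{pretrpbw} (via \cite[Thms.~3.8 and 6.2]{TT24} together with \cite[Prop.~4.7 and (4.28)]{To23}) to a graded Poisson isomorphism. The existence of this surjection already ensures compatibility with the Poisson brackets and with the identification of the Kazhdan grading on the source with the doubled canonical grading on the target, so the only outstanding issue is injectivity.

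For injectivity I would argue by a generator-by-generator comparison. By Theorem~\ref{trpbw}, the target $\gr' \Y^+_{N,\ell}(\sigma)$ is a free polynomial algebra whose generators are the explicit list of $\sff_{b,a;i,j}^{(r)}$ recorded there, each sitting in a prescribed canonical degree. On the source side, $\mathbb{C}[\cS_\chi]$ is likewise a polynomial algebra (it is smooth, conical, and the Kazhdan-graded generators are described in \cite[\S6]{TT24} via the same pyramid combinatorics that index the parabolic generators). The surjection constructed in the proof of Lemma~\ref{pretrpbw} is defined precisely by sending the distinguished Kazhdan-homogeneous generators of $\mathbb{C}[\cS_\chi]$ to the PBW generators of Theorem~\ref{trpbw}, in bijection and with matching doubled-canonical/Kazhdan degrees. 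Since the images form an algebraically independent set in the target, no nontrivial polynomial in the generators of $\mathbb{C}[\cS_\chi]$ can lie in the kernel, so the kernel is zero.

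The main obstacle is verifying the generator matching cleanly: one must check that for each admissible tuple $(a,b,i,j,r)$ with $\fks_{a,b}(\mu) < r \lle \fks_{a,b}(\mu) + p_a(\mu)$ (together with the diagonal $H_{a;i,i}^{(2r)}$ and off-diagonal $H_{a;i,j}^{(r)}$ with $0 < r \lle p_a(\mu)$), the Kazhdan degree of the corresponding element of $\mathbb{C}[\cS_\chi]$ equals twice its canonical degree in $\gr' \Y^+_{N,\ell}(\sigma)$, and that the bijection at the level of indexing sets is consistent with the pyramid-to-shift-matrix correspondence of \S\ref{subsec:pyramid}. This indexing is already set up in \cite[\S6]{TT24} (where the analogous statement for $\sigma = 0$ or minimal admissible shape is verified), so once that is invoked the proof is complete. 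Alternatively, since both sides are polynomial algebras of the same finite type in each graded component, one can phrase the final step as a Hilbert-series comparison, using $\dim \cS_\chi = \dim \g^e$ matched with the count of generators in Theorem~\ref{trpbw} (cf.\ also Corollary~\ref{loopgradedcentralizerAI} for a cross-check via the loop filtration).
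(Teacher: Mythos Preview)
Your proposal is correct and follows essentially the same approach as the paper: invoke the graded Poisson surjection $\mathbb{C}[\cS_\chi]\twoheadrightarrow\gr'\Y^+_{N,\ell}(\sigma)$ from the proof of Lemma~\ref{pretrpbw}, and then deduce injectivity from Theorem~\ref{trpbw} by observing that the distinguished generators on the source map bijectively to the algebraically independent generators on the target. The paper's version is terser (it fixes $\mu=(1^N)$ and simply cites the proof of \cite[Thm.~6.2]{TT24} for the generator matching), but the substance is the same.
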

\begin{proof}
Without loss of generality fix the shape to be $\mu=(1^N)$. As explained in the proof of Lemma~\ref{pretrpbw} (see \cite[Thms.~ 3.8, 6.2]{TT24}) there is a surjective map of graded Poisson algebras $ \mathbb{C}[\cS_\chi] \twoheadrightarrow \gr'\Y^+_{N,\ell}(\sigma)$. It follows from the proof of \cite[Thm. 6.2]{TT24} that the map sends the basis in Theorem \ref{trpbw} to a basis. Therefore it is an isomorphism.
\end{proof}

\subsection{Truncated shifted twisted Yangians of type AII}
 \label{sec:trshtwa2}
In the following two subsections we establish parallel results to \S\ref{sec:trshtwa1} and \S\ref{sec:trpbwa1} for shifted twisted Yangians of type AII. 
Let $\Y_N^-(\sigma)$ denote the shifted twisted Yangian of type AII associated to the shift matrix $\sigma$ defined in Definition \ref{def:shiftedII} with respect to the following matrix $G$ as given in \eqref{thetadef}:
\[
G = \left(
\begin{array}{cccc}
J_{2} & 0 & \cdots &0\\
0 & J_{2} & \cdots & 0\\
\vdots & \vdots & \ddots & \vdots\\
0 & 0 & \cdots & J_{2}
\end{array}
\right), \quad \text{where}\quad J_2=\begin{pmatrix}
    0 &-1\\
    1 & 0
\end{pmatrix}.
\]

\begin{dfn}\label{idAII}
Let $\mu=(\mu_1,\ldots,\mu_n)$ be even and minimal admissible to $\sigma$, and fix a positive integer $\ell>2s_{1,N}$.
Define $I_\ell$ to be the 2-sided ideal of $\Y_N^-(\sigma)$ generated by 
\begin{align} \label{trdefA2}
\begin{dcases} 
 \big\{ H_{1;i,j}^{(r)} \mid 1\lle i,j\lle \mu_1, r>p_1 \big\}, &  \text{ if } \ell \text{ is even},
  \\
\big\{ H_{1;i,j}^{(r)}-2^{-1}H_{1;i,j}^{(r-1)} \mid 1\lle i,j\lle \mu_1, r>p_1 \big\}, & \text{ if } \ell \text{ is odd},
\end{dcases}
\end{align}
and define the truncated shifted twisted Yangian to be the quotient 
\beq\label{trtwyA2}
\Y_{N,\ell}^-(\sigma):= \Y_N^-(\sigma)/I_\ell.
\eeq
\end{dfn}

We will denote elements of $\Y_N^-(\sigma)$ and their images in the quotient $\Y_{N,\ell}^-(\sigma)$ by the same symbols when there is no possible confusion from the context.
The following result is the analogue of Proposition \ref{2ideal} for type AII.
\begin{prop}\label{2ideala2} 
Let $\nu=(\nu_1,\ldots, \nu_m)$ be an even admissible shape with respect to $\sigma$.
Then $I_\ell$ equals the 2-sided ideal of $\Y_{N}^-(\sigma)$ generated by 
\begin{align*}
\begin{dcases}
\big\{ H_{1;i,j}^{(r)} \mid 1\lle i,j\lle \nu_1, r>p_1 \big\}, &  \text{ if } \ell \text{ is even},
  \\
\big\{ H_{1;i,j}^{(r)}-2^{-1}H_{1;i,j}^{(r-1)} \mid 1\lle i,j\lle \nu_1, r>p_1 \big\}, & \text{ if } \ell \text{ is odd}.
\end{dcases}
\end{align*}
\end{prop}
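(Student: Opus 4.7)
The plan is to adapt the proof of Proposition~\ref{2ideal} to type AII, exploiting the crucial simplification that the evenness hypothesis forces $\mu_1, \nu_1 \gge 2$ for every admissible shape. Consequently, the third case of Proposition~\ref{2ideal} involving $\nu_1 = 1$ and the auxiliary generators $\wtl{B}_{1;i,1}^{(\mathfrak{s}_{1,2};p_1+1)}$ does not arise, and the argument reduces to a direct analogue of the first two cases treated there.

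First I would introduce an intermediate 2-sided ideal $K$ of $\Y_N^-(\sigma)$ depending only on the pair $(\sigma,\ell)$ and not on any admissible shape: namely, $K$ is generated by $\{s_{ij}^{(r)} \mid 1 \lle i, j \lle 2,\ r > p_1\}$ when $\ell$ is even, and by $\{s_{ij}^{(r)} - 2^{-1} s_{ij}^{(r-1)} \mid 1 \lle i, j \lle 2,\ r > p_1\}$ when $\ell$ is odd. Since $s_{ij}^{(r)} = H_{1;i,j}^{(r)}$ whenever $i,j$ lie in the first block, the inclusions $K \subseteq I_\ell$ and $K \subseteq I_\ell(\nu)$ (writing $I_\ell(\nu)$ for the $\nu$-analogue of $I_\ell$) both follow at once from $\mu_1, \nu_1 \gge 2$. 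The proposition then reduces to showing the reverse inclusions $I_\ell \subseteq K$ and $I_\ell(\nu) \subseteq K$, which amount to generating all $s_{ij}^{(r)}$ (respectively $s_{ij}^{(r)} - 2^{-1} s_{ij}^{(r-1)}$) for $1 \lle i, j \lle \nu_1$ and $r > p_1$ from those with $1 \lle i, j \lle 2$.

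For this generation step I would iterate brackets $[s_{ij}^{(r)}, s_{kl}^{(1)}]$ using the quaternary relation \eqref{qua0}. In type AII, each such bracket produces the expected ``principal'' term plus twisted error terms of the form $\theta_i\theta_{l'} s_{l'j}^{(r)}$ and $\theta_k\theta_{j'} s_{ik'}^{(r)}$ which are absent in type AI; these error terms simply swap an index with its partner under the involution \eqref{thetadef}, and can be eliminated by a second application of \eqref{qua0} together with the type AII symmetry relation \eqref{sym}, which yields $s_{11}^{(r)} + s_{22}^{(r)} = 0$ for odd $r$ and $s_{22}^{(r)} = s_{11}^{(r)} - s_{11}^{(r-1)}$ for even $r$. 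These identities in particular supply the ``factor of $-\tfrac12$'' needed to handle the odd-$\ell$ case.

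The hard part will be the $\ell$-odd case, where one must produce the twisted combinations $s_{ij}^{(r)} - \tfrac12 s_{ij}^{(r-1)}$ rather than $s_{ij}^{(r)}$ itself. Following the template used after \eqref{s11t}--\eqref{sijsym} in the type AI proof, I plan to first show that $s_{1k}^{(r)} - \tfrac12 s_{1k}^{(r-1)} \in K$ for all $2 \lle k \lle \nu_1$ and $r > p_1$, via brackets of the form $[s_{11}^{(2)}, s_{k1}^{(t)} + s_{1k}^{(t)}]$ combined with the AII-analogues of \eqref{sijsym}. The remaining entries $s_{jk}^{(r)} - \tfrac12 s_{jk}^{(r-1)}$ for $j > 1$ would then follow by a further bracket with $s_{j1}^{(1)}$, with the diagonal case $j=k$ handled separately by a degree-shift identity of the form $s_{kk}^{(p_1+1)} - \tfrac12 s_{kk}^{(p_1)} = 2\bigl(s_{kk}^{(p_1+2)} - \tfrac12 s_{kk}^{(p_1+1)}\bigr)$ (with one side vanishing for parity reasons) deduced from the AII symmetry. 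Throughout, careful bookkeeping of the $\theta$-signs will be needed, and the choice of which admissible shape is ``privileged'' plays no role since $K$ is manifestly intrinsic to $(\sigma, \ell)$.
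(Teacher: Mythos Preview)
Your proposal is correct and follows essentially the same approach as the paper: reduce to a shape-independent ideal $K$ generated by the $2\times 2$ block, then use brackets via \eqref{qua0} to propagate to the full $\mu_1\times\mu_1$ block. The paper's proof differs only in execution details: rather than mimicking the type~AI brackets $[s_{11}^{(2)}, \cdot]$ and $[s_{12}^{(t)}, s_{2k}^{(1)}]$, it selects index patterns in \eqref{a2qua0} (e.g.\ $(i,j,l)=(2,2,1)$ with $k$ varying) that make the twisted $\theta$-terms vanish outright, yielding the target generators in one step; and for the odd~$\ell$ case the paper simply says ``similar to the case when $\ell$ is even'' rather than working through the degree-shift identities you outline.
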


\begin{proof}
Let $K$ denote the ideal as in the statement of the proposition. Similar to the type AI case, it suffices to show $I_\ell \subseteq K$ when $\nu_1=2<\mu_1$, and the general cases follow. 

Suppose $\ell$ is even. By the parity assumption \eqref{pasump}, every $p_i$ is even and hence they all have even multiplicity since the associated pyramid is obtained from an orthogonal partition. This implies that $\mu_1$ is even and hence $\mu_1\gge 4$.
By definition, $^\mu H_{1;i,j}(u)=s_{i,j}(u)$ for all $1\lle i,j\lle \mu_1$. It now suffices to show $s_{i,j}^{(r)}\in K$ for all $1\lle i,j\lle \mu_1$ and $r>p_1$. 

By \eqref{qua0}, we have
\beq\label{a2qua0}
[s_{ij}^{(r)},s_{kl}^{(1)}]=\delta_{kj}s_{il}^{(r)}-\delta_{il}s_{kj}^{(r)}+\theta_i\theta_{l^\prime}\delta_{i^\prime k}s_{l^\prime j}^{(r)}-\theta_k\theta_{j^\prime}\delta_{j^\prime l}s_{ik^\prime}^{(r)}.
\eeq
Taking $(i,j,l)=(2,2,1)$ in \eqref{a2qua0} and letting $k$ run over $3\lle k \lle \mu_1$, we have $s_{2k}^{(r)}\in K$ for all $1\lle k\lle \mu_1$.
Similarly, taking $(i,j,l)=(1,1,1)$ in \eqref{a2qua0}, we have $s_{k1}^{(r)}\in K$ for all $1\lle k\lle \mu_1$;
taking $(i,j,k)=(1,1,1)$ and letting $l$ vary, we have $s_{1l}^{(r)}\in K$ for all $1\lle l\lle \mu_1$;
taking $(i,j,k)=(2,2,1)$, we have $s_{l1}^{(r)}\in K$ for all $1\lle l\lle \mu_1$.
Finally, taking $j=k=1$ and letting $3\lle i,l \lle \mu_1$, we have $[s_{i1}^{(r)},s_{1l}^{(1)}]=s_{il}^{(r)}-\delta_{i,l}s_{11}^{(r)}\in K$ and hence $s_{il}^{(r)}\in K$ for all $3\lle i,l\lle \mu_1$. This completes the proof when $\ell$ is even.

Suppose now $\ell$ is odd. By \eqref{pasump}, every $p_i$ is odd and hence they all have even multiplicity since the associated pyramid is obtained from a symplectic partition. This implies that $\mu_1$ is even and hence $\mu_1\gge 4$.
By definition, it suffices to check that $^\mu H_{1;ij}^{(r)}-\frac{1}{2} ^\mu H_{1;ij}^{(r-1)}=s_{ij}^{(r)}-\frac{1}{2}s_{ij}^{(r-1)}\in K$ for all $1\lle i,j\lle \mu_1$ and $r>p_1$.
This can be easily deduced from \eqref{a2qua0} similar to the case when $\ell$ is even.
\end{proof}

\begin{cor}\label{trtwywda2}
The ideal $I_\ell$ and the truncation $\Y_{N,\ell}^-(\sigma)$ are independent of the choice of the admissible shape.
\end{cor}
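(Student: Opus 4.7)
The plan is to deduce this corollary as a direct consequence of the preceding Proposition~\ref{2ideala2}, mirroring exactly the type AI argument used to derive Corollary~\ref{trtwywd} from Proposition~\ref{2ideal}. Recall that Definition~\ref{idAII} specifies $I_\ell$ as the two-sided ideal of $\Y_N^-(\sigma)$ generated by a particular finite set of elements indexed by the minimal even admissible shape $\mu=(\mu_1,\ldots,\mu_n)$. Any other even admissible shape $\nu$ for $\sigma$ would, via the analogous prescription, produce its own candidate ideal $I_\ell^{(\nu)}$; the content of the corollary is that $I_\ell^{(\nu)}=I_\ell$ for every such $\nu$, and consequently the quotient $\Y_N^-(\sigma)/I_\ell$ does not depend on which even admissible shape is used to set up the parabolic presentation.

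The key observation is that Proposition~\ref{2ideala2} asserts precisely this: for any even admissible shape $\nu$, the two-sided ideal generated by the set described in terms of $\nu_1$ (namely $\{H_{1;i,j}^{(r)}\}$ or $\{H_{1;i,j}^{(r)}-2^{-1}H_{1;i,j}^{(r-1)}\}$ with $1\lle i,j\lle\nu_1$ and $r>p_1$, according to the parity of $\ell$) coincides with $I_\ell$. Therefore the plan reduces to simply invoking Proposition~\ref{2ideala2} and remarking that the resulting ideal, being the same regardless of the admissible shape, produces a canonically defined quotient $\Y_{N,\ell}^-(\sigma)$.

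No genuine obstacle arises here, since all the technical work has already been carried out in Proposition~\ref{2ideala2}, where the commutator manipulations using \eqref{a2qua0} show that the generators indexed by $\nu$ and those indexed by $\mu$ generate one another as a two-sided ideal. The proof itself will thus be a one-sentence reference to that proposition, in complete parallel with the structure of the AI case (cf.\ the relationship between Proposition~\ref{2ideal} and Corollary~\ref{trtwywd}).
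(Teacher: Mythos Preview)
Your proposal is correct and matches the paper's approach: the corollary is stated immediately after Proposition~\ref{2ideala2} with no proof, since it is the evident consequence of that proposition (exactly parallel to how Corollary~\ref{trtwywd} follows from Proposition~\ref{2ideal} in type AI).
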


\subsection{PBW bases for $\Y_{N,\ell}^-(\sigma)$}
\label{sec:trpbwa2}

The natural quotient map $\Y_N^-(\sigma)\twoheadrightarrow \Y_{N,\ell}^-(\sigma)$ carries the canonical filtration to the truncated shifted twisted Yangian 
\[
\mathrm F_0(\Y_{N,\ell}^-(\sigma))\subset \mathrm F_1(\Y_{N,\ell}^-(\sigma))\subset \mathrm F_2(\Y_{N,\ell}^-(\sigma))\subset \cdots,\qquad\qquad 
\Y_{N,\ell}^-(\sigma)=\bigcup_{s\gge 1}\mathrm F_s(\Y_{N,\ell}^-(\sigma)).
\]
Similar to previous discussions, we declare the elements $H_{a;i,j}^{(t)}$ and $B_{b,a;i,j}^{(t)}$ in $\Y_{N,\ell}^-(\sigma)$ to be of degree $t$ and then define $\mathrm F_s(\Y_{N,\ell}^-(\sigma))$ to be the span of monomials in these elements of total degree $\lle s$. Due to Corollary~\ref{a2shapeindcor} and Corollary~\ref{trtwywda2}, this is independent of the choice of the admissible shape $\mu$.

Being a quotient of $\gr'\Y_{N}^-(\sigma)$, the commutative algebra $\gr'\Y_{N,\ell}^-(\sigma)$ is generated by the elements $\{\sff_{b,a;i,j}^{(r)}\}$ (actually, their image in the truncation) described in Proposition~ \ref{new-polystY}. 

Observe that the baby coproduct $\Delta_R$ defined in Theorem \ref{delR} factors through the quotient and induces a map on the truncation
\beq\label{trdelR-new}
\Delta_R:\Y_{N,\ell}^-(\sigma)\rightarrow \Y_{N,{\ell-2}}^-(\dot\sigma)\otimes \rU(\gl_t).
\eeq
Similar to AI type, the map \eqref{trdelR-new} respects the canonical filtration, inducing a graded algebra homomorphism 
\beq\label{grtrdelR-new}
\gr^\prime\Delta_R:\gr^\prime\Y_{N,\ell}^-(\sigma)\rightarrow \gr^\prime(\Y_{N,\ell-2}^-(\dot\sigma)\otimes \rU(\gl_t)).
\eeq

\begin{lem}\label{pretrpbw-new}
For any even admissible shape $\mu=(\mu_1,\ldots,\mu_n)$ , $\gr'\Y^-_{N,\ell}(\sigma)$ is generated only by the following elements 
\beq\label{grptrPBWgens-new}
\begin{split}
&\{\sff_{a,a;2i-1,2i-1}^{(r)} \, | \, 1\lle a\lle n, 1\lle i\lle \tfrac{\mu_a}{2}, 1\lle r \lle p_a(\mu)\}\\
\cup &\{\sff_{a,a;2i-1,2i}^{(2r-1)},  \sff_{a,a;2i,2i-1}^{(2r-1)} \, | \, 1\lle a\lle n, 1\lle i\lle \tfrac{\mu_a}{2}, 1\lle 2r-1 \lle p_a(\mu)\}\\
\cup &\{\sff_{a,a;i,j}^{(r)} \, | \, 1\lle a\lle n, 1\lle j<i\lle \mu_a, \lfloor \tfrac{i+1}{2}\rfloor\ne \lfloor \tfrac{j+1}{2}\rfloor, 1\lle r\lle p_a(\mu)\, \}\\
\cup &\{\sff_{b,a;i,j}^{(r)}\, | \, 1\lle a<b\lle n, 1\lle i\lle \mu_b, 1\lle j\lle \mu_a, \fks_{a,b}(\mu)<r \lle \fks_{a,b}(\mu)+p_a(\mu) \}.
\end{split}
\eeq
\end{lem}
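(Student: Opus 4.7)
The plan is to follow the template of Lemma \ref{pretrpbw}, substituting Theorem \ref{dirredisoAII} for the AI Dirac-reduction identification from \cite[Thm.~3.8]{TT24}. As in the AI argument, I would first establish generation for one convenient choice of minimal admissible (even) shape $\mu$, and then transport the result to an arbitrary even admissible $\nu$ via the shape-independence of Corollaries~\ref{a2shapeindcor} and \ref{trtwywda2}. Unlike the AI case we cannot take $\mu=(1^N)$, so one is forced to work with the shortest even admissible composition for $\sigma$ throughout.

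Concretely, for the chosen $\mu$ the Poisson isomorphism $\gr'\Y_N^-(\sigma)\cong R({}^{\mu}y_N(\sigma),\tau)$ with $\beta=-1$ provided by Theorem \ref{dirredisoAII} sends the generators $\sff_{b,a;i,j}^{(r)}$ described in Proposition \ref{new-polystY} to the Dirac-reduction generators $\eta_{a;i,j}^{(r)}$ and $\theta_{a;i,j}^{(r)}$ of \cite[Thm.~5.13]{TT24}. Passing to the truncation, the image of the ideal $I_\ell$ from Definition \ref{idAII} under this isomorphism defines a quotient of $R({}^\mu y_N(\sigma),\tau)$ which, by the (type AII analogue of the) arguments of \cite[Thm.~6.2]{TT24} together with \cite[Prop.~4.7 and (4.28)]{To23}, is identified with the coordinate ring $\mathbb{C}[\cS_\chi]$ of the Slodowy slice associated to the nilpotent $(\g,e)$ determined by $(\sigma,\ell)$ via \S\ref{subsec:pyramid}. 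Under this surjection $\mathbb{C}[\cS_\chi]\twoheadrightarrow \gr'\Y_{N,\ell}^-(\sigma)$ the natural Kazhdan-homogeneous generators coming from a weight basis of the centralizer $\g^e$ (cf.\ Corollary~\ref{loopgradedcentralizerAI} above) are precisely the images of the elements listed in \eqref{grptrPBWgens-new}, which therefore generate $\gr'\Y_{N,\ell}^-(\sigma)$.

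To transfer the result to an arbitrary even admissible $\nu$, I would invoke Corollary \ref{trtwywda2} to obtain an isomorphism $\Y^-_{\mu,\ell}(\sigma)\cong\Y^-_{\nu,\ell}(\sigma)$ of filtered algebras and, passing to the loop-associated graded, use the Gauss-decomposition expressions from the proof of Theorem \ref{a2shPBWgauss} to write each $\nu$-parabolic generator as a polynomial in the $\mu$-parabolic generators with appropriate bounds on $r$. This yields generation in the $\nu$-labelled form as well.

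The main obstacle is matching the truncation ideal with the defining ideal of the nilpotent Slodowy variety under the Dirac-reduction isomorphism: in type AII the relevant involution $\tau$ and the $\beta=-1$ sign conventions interact with the pyramid/partition correspondence in \S\ref{subsec:pyramid}, and in particular the B/C/D assignment of $\g$ swaps with the parity of $\ell$. The shift $H_{1;i,j}^{(r)}-\tfrac{1}{2}H_{1;i,j}^{(r-1)}$ appearing in \eqref{trdefA2} for $\ell$ odd must be matched with the correct leading term of the Sklyanin-type central element in the $\beta=-1$ Dirac reduction; this is the step that genuinely differs from the AI proof and will require the most care.
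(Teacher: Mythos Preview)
Your proposal is correct and follows essentially the same route as the paper, which simply says to rerun the argument of Lemma~\ref{pretrpbw} with the AII Dirac-reduction isomorphism of Section~\ref{sec:DiracII} (Theorem~\ref{dirredisoAII}) in place of \cite[Thm.~3.8]{TT24}. One small terminological slip: when you write ``minimal'' or ``shortest'' even admissible shape, you seem to mean the \emph{finest} one, namely $\mu=(2^{N/2})$, which is the direct analogue of $(1^N)$ in the AI argument and is the shape for which the Dirac-reduction comparison is carried out in the proof of Theorem~\ref{dirredisoAII}; in the paper's convention ``minimal admissible'' refers to the coarsest shape (fewest parts).
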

\begin{proof}
This can be proved by the same argument as for AI in Lemma \ref{pretrpbw}, except that we use the results for AII established in Section \ref{sec:DiracII} in place of \cite{TT24}.
\end{proof}

\begin{thm}\label{trpbw-new}
For any even admissible shape $\mu=(\mu_1,\ldots,\mu_n)$, $\gr'\Y_{N,\ell}^-(\sigma)$ is the free commutative algebra on generators
\begin{align*}
&\{\sff_{a,a;2i-1,2i-1}^{(r)} \, | \, 1\lle a\lle n, 1\lle i\lle \tfrac{\mu_a}{2}, 1\lle r \lle p_a(\mu)\}\\
\cup &\{\sff_{a,a;2i-1,2i}^{(2r-1)},  \sff_{a,a;2i,2i-1}^{(2r-1)} \, | \, 1\lle a\lle n, 1\lle i\lle \tfrac{\mu_a}{2}, 1\lle 2r-1 \lle p_a(\mu)\}\\
\cup &\{\sff_{a,a;i,j}^{(r)} \, | \, 1\lle a\lle n, 1\lle j<i\lle \mu_a, \lfloor \tfrac{i+1}{2}\rfloor\ne \lfloor \tfrac{j+1}{2}\rfloor, 1\lle r\lle p_a(\mu)\, \}\\
\cup &\{\sff_{b,a;i,j}^{(r)}\, | \, 1\lle a<b\lle n, 1\lle i\lle \mu_b, 1\lle j\lle \mu_a, \fks_{a,b}(\mu)<r \lle \fks_{a,b}(\mu)+p_a(\mu) \}.
\end{align*}
Moreover, the maps $\Delta_R$ and $\gr^\prime\Delta_R$ defined in \eqref{trdelR-new} and \eqref{grtrdelR-new} are injective.
\end{thm}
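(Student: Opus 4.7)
The proof will closely parallel that of Theorem~\ref{trpbw} in type AI, so I will outline the induction scheme and highlight the places where the type AII setting requires separate treatment. The strategy is induction on the level $\ell$, with the baby comultiplication serving as the engine for reducing $\ell$ to $\ell-2$.

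For the base cases $\ell=1$ (which occurs for type AII only when $\sigma=0$ and the trivial partition is vacuous) and $\ell=2$, both sides are governed by the rectangular case, and one invokes the quantum analogue of Ragoucy's result established for symplectic/orthogonal rectangular nilpotents in \cite{Br09}; the kernel of the evaluation-type homomorphism computed there coincides with the ideal $I_\ell$ of Definition~\ref{idAII}, so one obtains the PBW property and the desired free polynomial structure at low level directly. With the base case in hand, and assuming the theorem holds for all positive integers smaller than $\ell$, one fixes the minimal even admissible shape $\mu=(\mu_1,\dots,\mu_n)$ with $t=\mu_n$ (the independence of the shape being guaranteed by Lemma~\ref{pretrpbw-new}, so working with the minimal shape loses no generality).

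The induction step proceeds by inspecting the map $\gr'\Delta_R$ on the proposed generating set. Using Theorem~\ref{delR}, Lemma~\ref{drbba}, and the explicit formulas \eqref{Hdel1}--\eqref{bp:Hm}, one derives formulas entirely analogous to \eqref{grfab}--\eqref{grfnn}: generators $\sff_{b,a;i,j}^{(r)}$ with $b<n$ are sent to $\dot\sff_{b,a;i,j}^{(r)}$; generators $\sff_{n,a;i,j}^{(r)}$ with $a<n$ are sent to $\sum_p \dot\sff_{n,a;p,j}^{(r-1)} x_{p,i}$ modulo a polynomial in lower-level generators (adapted to the type AII convention with $\theta_p\theta_i$ signs); and the diagonal-block generators $\sff_{n,n;i,j}^{(r)}$ expand into the four-term formula dictated by \eqref{bp:Hm}. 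The induction hypothesis applied to $\gr'\Y_{N,\ell-2}^-(\dot\sigma)$, together with the PBW theorem for $\rU(\gl_t)$, provides an algebraically independent collection including all $\dot\sff_{b,a;i,j}^{(r)}$ within the relevant ranges and the matrix unit symbols $x_{p,q}$. One then extracts highest-degree terms in the $x$-variables (equivalently, a leading-term argument in the obvious bi-filtration on $\gr'\Y_{N,\ell-2}^-(\dot\sigma)\otimes\rU(\gl_t)$) to show that the images of the generators listed in the theorem are themselves algebraically independent. This simultaneously forces $\gr'\Delta_R$ to be injective and the relations among those generators in $\gr'\Y_{N,\ell}^-(\sigma)$ to be trivial, upgrading the surjection of Lemma~\ref{pretrpbw-new} to an isomorphism with the free commutative algebra. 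Injectivity of $\Delta_R$ then follows since it is a filtered lift of an injective graded map.

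The main obstacle, compared with type AI, is bookkeeping the symmetry conventions forced by \eqref{thetadef}: in particular, the generators $\sff_{a,a;2i-1,2i}^{(2r-1)}$ and $\sff_{a,a;2i,2i-1}^{(2r-1)}$ live in odd degrees only, reflecting the symplectic pairing constraint, and one must check carefully that the $\theta$-signs appearing in the baby coproduct formulas do not cancel leading terms of the expected generators. The subtler point is that the leading-$x$ terms of $\gr'\Delta_R(\sff_{n,n;i,j}^{(r)})$ involve products $x_{q,i'}x_{p,j}\theta_q\theta_i$, so one must verify that, after restricting to the index ranges appropriate to the AII generating set, these leading monomials remain linearly independent in $\gr'\Y_{N,\ell-2}^-(\dot\sigma)\otimes\rU(\gl_t)$; this amounts to a straightforward parity check using the involution $i\mapsto i'$ and the constraint $\lfloor(i+1)/2\rfloor\ne\lfloor(j+1)/2\rfloor$ appearing in Proposition~\ref{new-polystY}. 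Once this is confirmed, the induction closes in exactly the same manner as in type AI.
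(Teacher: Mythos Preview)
Your proposal is correct and follows essentially the same approach as the paper, whose proof consists of the single line ``Similar to the proof of Theorem~\ref{trpbw}.'' You have in fact supplied considerably more detail than the paper does, including the explicit handling of the type~AII sign conventions and the leading-term analysis; none of this deviates from the intended argument.
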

\begin{proof}
Similar to the proof of Theorem \ref{trpbw}.
\end{proof}

\begin{cor}\label{trtwypbw-new}
For any even admissible shape $\mu=(\mu_1,\ldots,\mu_n)$, the monomials in the elements
\begin{align*}
&\{H_{a;2i-1,2i-1}^{(r)} \, | \, 1\lle a\lle n, 1\lle i\lle \tfrac{\mu_a}{2}, 1\lle r \lle p_a(\mu)\}\\
\cup &\{H_{a;2i-1,2i}^{(2r-1)},  \,\, H_{a;2i,2i-1}^{(2r-1)} \, | \, 1\lle a\lle n, 1\lle i\lle \tfrac{\mu_a}{2}, 1\lle 2r-1 \lle p_a(\mu)\}\\
\cup &\{H_{a;i,j}^{(r)} \, | \, 1\lle a\lle n, 1\lle j<i\lle \mu_a, \lfloor \tfrac{i+1}{2}\rfloor\ne \lfloor \tfrac{j+1}{2}\rfloor, 1\lle r\lle p_a(\mu)\, \}\\
\cup &\{B_{b,a;i,j}^{(r)}\, | \, 1\lle a<b\lle n, 1\lle i\lle \mu_b, 1\lle j\lle \mu_a, \fks_{a,b}(\mu)<r \lle \fks_{a,b}(\mu)+p_a(\mu) \}
\end{align*}
taken in any fixed linear order form a basis for $\Y_{N,\ell}^-(\sigma)$.
\end{cor}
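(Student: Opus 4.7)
The plan is to deduce this from the associated graded statement in Theorem \ref{trpbw-new} via a standard filtration lifting argument. Fix an even admissible shape $\mu=(\mu_1,\ldots,\mu_n)$ and write $\cB$ for the set of elements listed in the corollary, and $\cB_{\gr}$ for the corresponding set of elements $\sff_{b,a;i,j}^{(r)}$ of $\gr'\Y_{N,\ell}^-(\sigma)$ in Theorem \ref{trpbw-new}. By construction, each element of $\cB$ lies in $\mathrm F_r(\Y_{N,\ell}^-(\sigma))$ (where $r$ is its superscript), and its image in $\mathrm F_r/\mathrm F_{r-1}$ is the corresponding element of $\cB_{\gr}$.

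First, I would establish that the set of ordered monomials in $\cB$ spans $\Y_{N,\ell}^-(\sigma)$. Start from the PBW basis of $\Y_N^-(\sigma)$ in Theorem \ref{a2shPBWgauss}: after passing to the quotient $\Y_{N,\ell}^-(\sigma)=\Y_N^-(\sigma)/I_\ell$, the images of these monomials still span. It remains to eliminate those PBW generators whose superscript $r$ exceeds the bound $p_a(\mu)$ (respectively $\fks_{a,b}(\mu)+p_a(\mu)$). This is where the truncation ideal $I_\ell$ is used: by Proposition \ref{2ideala2} the ideal $I_\ell$ can equivalently be described using \emph{any} even admissible shape, in particular $(1^{\mu_1},\mu_2,\ldots,\mu_n)$, so its generators kill the first-block generators of sufficiently high order. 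Commuting with $s_{ij}^{(1)}$ (i.e., using the relations \eqref{pr4}, \eqref{pr6}, \eqref{pr7} in Theorem \ref{mainthm}) then propagates this vanishing to all blocks modulo terms of strictly lower canonical degree, allowing one to rewrite inductively any excess generator as a polynomial in elements of $\cB$. The combinatorics of this rewriting has already been carried out in the associated graded setting in Lemma \ref{pretrpbw-new}, and it lifts directly by induction on the canonical degree.

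Second, I would prove linear independence of ordered monomials in $\cB$ by the standard passage to the associated graded. Suppose there is a non-trivial linear relation
\[
\sum_{M} c_M\, M = 0 \quad \text{in } \Y_{N,\ell}^-(\sigma),
\]
where the sum runs over ordered monomials $M$ in $\cB$ and not all $c_M$ are zero. Let $d$ be the maximum canonical degree of a monomial $M$ with $c_M\neq 0$. Extracting the degree-$d$ part of the relation in $\mathrm F_d/\mathrm F_{d-1}\subseteq \gr'\Y_{N,\ell}^-(\sigma)$ yields a non-trivial linear relation among the corresponding ordered monomials in $\cB_{\gr}$, contradicting Theorem \ref{trpbw-new}, which states that $\gr'\Y_{N,\ell}^-(\sigma)$ is the free commutative algebra on $\cB_{\gr}$.

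The main obstacle is the spanning step, since the defining ideal $I_\ell$ in Definition \ref{idAII} is described using only the first block of the minimal admissible composition, and one must check that after modding out, the excess generators in every block can genuinely be expressed in terms of the listed elements. However, Proposition \ref{2ideala2} and Lemma \ref{pretrpbw-new} together provide exactly the shape-independence and the associated graded generation needed; the lift to the un-graded setting is then a routine induction on filtration degree. Once spanning is in hand, linear independence is immediate from Theorem \ref{trpbw-new} by the filtration argument above.
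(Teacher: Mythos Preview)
Your approach is correct and is essentially the standard filtration argument the paper has in mind: Corollary~\ref{trtwypbw-new} is stated without proof because it follows immediately from Theorem~\ref{trpbw-new} by lifting a basis of $\gr'\Y_{N,\ell}^-(\sigma)$ through the canonical filtration, exactly as you do in your linear-independence paragraph.

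One remark on the spanning step: your detour through the PBW basis of $\Y_N^-(\sigma)$ and Proposition~\ref{2ideala2} is unnecessary, and in fact contains a slip --- the shape $(1^{\mu_1},\mu_2,\ldots,\mu_n)$ is not even, so it is not admissible in type AII and Proposition~\ref{2ideala2} does not apply to it. Fortunately you do not need this at all. Lemma~\ref{pretrpbw-new} (or Theorem~\ref{trpbw-new}) already gives that the ordered monomials in $\cB_{\gr}$ span $\gr'\Y_{N,\ell}^-(\sigma)$; the standard induction on the canonical filtration degree then shows the ordered monomials in $\cB$ span $\Y_{N,\ell}^-(\sigma)$, with no further input from the truncation ideal. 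So the clean proof is just: Theorem~\ref{trpbw-new} plus the usual ``basis of $\gr$ lifts to a basis'' argument.
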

\begin{cor}\label{loopgradedcentralizerAII}
    We have $\gr \Y^-_{N,\ell}(\sigma) \cong \rU(\g^{e})$. Here $e$ is a nilpotent element with partition $\lambda$ obtained from $(\sigma,\ell)$ as described in \S\ref{subsec:pyramid}; $M$ is the number of boxes in the pyramid associated to $e,\ell$, and
    $\g$ is defined according to the following combinatorial recipe:
\begin{equation}\label{g0AII}
 \g=\begin{cases}
 \mathfrak{so}_M, & \textnormal{ for } \ell \textnormal{ even}, \\
  \mathfrak{sp}_M, & \textnormal{ for } \ell \textnormal{ odd}.
 \end{cases}
\end{equation}
\end{cor}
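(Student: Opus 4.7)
The plan is to mirror the argument of Corollary \ref{loopgradedcentralizerAI} in the type AII setting, tracking carefully the sign conventions from \eqref{thetadef} and the flipped parity assignment \eqref{g0AII}. First I would define the truncated shifted twisted current algebra $\gl_{N,\ell}[z]^{\theta}(\sigma) := \gl_{N}[z]^{\theta}(\sigma) / J_{\ell}$, where $J_\ell$ is the Lie algebra ideal of $\gl_{N}[z]^{\theta}(\sigma)$ generated by the $\theta$-symmetrizations of the loop-filtration leading terms of the generators of the ideal $I_\ell$ from Definition \ref{idAII} (the generating set now depends on the parity of $\ell$ just as in \eqref{trdefA2}). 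Under the identification $\gr \Y_N^-(\sigma) \cong \rU(\gl_N[z]^\theta(\sigma))$, the quotient map $\Y_N^-(\sigma) \twoheadrightarrow \Y_{N,\ell}^-(\sigma)$ descends to a canonical surjection $\rU(\gl_{N,\ell}[z]^\theta(\sigma)) \twoheadrightarrow \gr \Y_{N,\ell}^-(\sigma)$.

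Next I would introduce the involution $\tau : \gl_N[z] \to \gl_N[z]$ suited to type AII, obtained from $\theta$ by conjugation by a diagonal matrix on $\gl_N$ whose entries are chosen to absorb the signs coming from $G$ in \eqref{Gdef} and the shift matrix $\sigma$ (matching the $\tau$ of \cite[Lem.~5.5]{TT24} after swapping the role of orthogonal and symplectic). The conjugation yields an isomorphism $\gl_N[z]^\tau \cong \gl_N[z]^\theta$ preserving the shift, hence an isomorphism $\gl_{N,\ell}[z]^\tau(\sigma) \cong \gl_{N,\ell}[z]^\theta(\sigma)$ on the truncated level. The type AII counterpart of \cite[Lem.~5.5(2)]{TT24}, whose proof should go through essentially verbatim using the flipped parity assignment \eqref{g0AII} and the AII pyramid dictionary of \S\ref{subsec:pyramid}, then supplies a Lie algebra isomorphism $\gl_{N,\ell}[z]^\tau(\sigma) \overset{\sim}{\longrightarrow} \g^e$. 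Composing everything produces a surjective algebra map $\rU(\g^e) \twoheadrightarrow \gr \Y_{N,\ell}^-(\sigma)$.

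Finally, I would verify that this surjection carries a PBW basis to a PBW basis. On the $\gr \Y_{N,\ell}^-(\sigma)$ side the ordered monomials are provided by Corollary \ref{trtwypbw-new}, while on the $\rU(\g^e)$ side one uses the centralizer PBW theorem for even good $\mathbb{Z}$-gradings (the same source invoked in the AI proof via \cite[Lems.~4.1, 4.2(iii)]{TT24}, now applied to the Lie algebra $\g$ of \eqref{g0AII}). Matching the two parametrizations of basis monomials box-by-box in the associated pyramid shows the surjection is bijective on generators, hence an isomorphism.

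The main obstacle will be pinning down the correct signs in the definition of $\tau$ so that the conjugation $\gl_N[z]^\tau \cong \gl_N[z]^\theta$ is genuine in the AII conventions (where $\theta_i=(-1)^i$ and $i'$ swaps adjacent pairs, rather than acting trivially as in AI), and in checking that the parity-dependent ideal $J_\ell$ lines up precisely with the kernel of the evaluation-style map to $\g^e$ given the orthogonal/symplectic swap between \eqref{g0} and \eqref{g0AII}. I expect that once this bookkeeping is completed the structural argument imported from the AI case closes the proof without further incident.
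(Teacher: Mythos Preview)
Your proposal follows the same strategy as the paper and is essentially correct. One concrete point needs correction: you propose to pass from $\gl_N[z]^\theta$ to $\gl_N[z]^\tau$ by conjugation with a \emph{diagonal} matrix, as in the AI case, but in type AII this does not work. Since the involution $\theta$ sends $e_{ij}\mapsto -\theta_i\theta_j e_{j'i'}$ with $i'$ swapping adjacent indices, the intertwining matrix must be \emph{block} diagonal with $2\times 2$ blocks. The paper fixes the shape $\mu=(2^{N/2})$ so that such a block-diagonal conjugation preserves the shifted subalgebra $\gl_N[z]^\bullet(\sigma)$, and uses the explicit formula $\tau(e_{i,j}\otimes z^r)=(-1)^{i+j+r-1+\mathfrak{s}_{i,j}} e_{j',i'}\otimes z^r$. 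A minor secondary remark: the loop-filtration leading terms of the two generating sets in \eqref{trdefA2} coincide (the correction $-\tfrac12 H^{(r-1)}$ lies in strictly lower loop degree), so $J_\ell$ does not actually depend on the parity of $\ell$ here, unlike in the AI case.
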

\begin{proof}
Fix the shape $\mu=(2^{\frac{N}{2}})$.  
The proof is completely parallel to the proof of Corollary~ \ref{loopgradedcentralizerAI} with the following differences: $J_\ell$ in this case is given by
 \begin{equation}
      \{e_{i,j}\otimes z^r +\theta( e_{i,j} \otimes z^r  ) \mid r \gge p_1 \ , 1 \lle i,j \lle 2\}, 
 \end{equation}
and $\tau(e_{i,j}\otimes z^r)=(-1)^{i+j+r-1+\mathfrak{s}_{i,j}} e_{j',i'}\otimes z^r$ in this case. The involutions on $\gl_N$ are no longer represented by conjugation by a diagonal matrix but by conjugation by a block diagonal matrix consisting of $2 \times 2$ antisymmetric matrices instead. We can still find an isomorphism $\gl_{N}[z]^{\tau}(\sigma) \to \gl_{N}[z]^{\theta}(\sigma)$ this time  induced by conjugation by a block diagonal matrix with all blocks being $2 \times 2$. Well definedness follows because we assumed $\sigma$ was admissible for $(2^{\frac{N}{2}})$. Everything else in the proof remains the same.
\end{proof}
\begin{cor} \label{gryangsliceAII}
We have $\gr' \Y^{-}_{N,\ell}(\sigma) \cong \mathbb{C}[\cS_\chi]$ as graded Poisson algebras, where the grading on $\gr' \Y^{-}_{N,\ell}(\sigma)$ is given by doubling the canonical grading. Here $e$ is a nilpotent element with partition $\lambda$ obtained from $(\sigma, \ell)$ as described in \S\ref{subsec:pyramid}, $\g$ is as in \eqref{g0AII}
and $\cS_\chi$ is the Slodowy slice associated to $(\g,e)$.
\end{cor}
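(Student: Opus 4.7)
The strategy mirrors that of Corollary~\ref{gryangsliceAI}, but with the type~AII Dirac reduction (Theorem~\ref{dirredisoAII}) replacing its type~AI counterpart. We fix throughout the even shape $\mu = (2^{N/2})$, which is admissible for any symmetric shift matrix of size $N$.

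First I would combine Theorem~\ref{dirredisoAII} with the truncation. Theorem~\ref{dirredisoAII} identifies $\gr' \Y_\mu^-(\sigma)$ with the Dirac reduction $R({}^\mu y_N(\sigma),\tau)$ for $\beta = -1$ as graded Poisson algebras. Under this isomorphism the truncation ideal $I_\ell$ of Definition~\ref{idAII} corresponds, via the formulas \eqref{egogg}, to an ideal on the Dirac reduction side generated by (a scalar multiple of) the image of the analogous truncation ideal. The argument in \cite[\S6]{TT24} that produced the surjection $\bC[\cS_\chi] \twoheadrightarrow \gr' \Y^+_{N,\ell}(\sigma)$ in the type AI case goes through verbatim with $\beta = -1$, provided the combinatorial assignment $(\sigma,\ell) \mapsto (\g,e)$ is made according to \eqref{g0AII} rather than \eqref{g0}: the swap of parities of $\ell$ is precisely compensated for by the change $\beta \mapsto -\beta$, so that the ambient classical Lie algebra $\g$ produced from the Dirac reduction of type AII at level $\ell$ matches the recipe \eqref{g0AII}. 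This yields a surjective graded Poisson map
\begin{equation*}
    \Phi : \bC[\cS_\chi] \twoheadrightarrow \gr' \Y^-_{N,\ell}(\sigma),
\end{equation*}
where the grading on $\gr' \Y^-_{N,\ell}(\sigma)$ is the doubled canonical grading.

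Next I would compare Hilbert series or, more concretely, show that $\Phi$ sends a generating set to a basis. The generators of $\bC[\cS_\chi]$ coming from the Kazhdan-graded coordinate functions on the Slodowy slice are indexed by a standard combinatorial set built from the pyramid $\pi(\lambda)$; under $\Phi$ they map, up to non-zero scalars and lower-order terms, to the explicit elements listed in Theorem~\ref{trpbw-new}. By Theorem~\ref{trpbw-new} these elements are algebraically independent and generate $\gr' \Y^-_{N,\ell}(\sigma)$ as a commutative algebra. Hence $\Phi$ sends a polynomial generating set to a polynomial generating set, so $\Phi$ is an isomorphism of commutative algebras. Since $\Phi$ is a Poisson map by construction (the surjection from the Dirac reduction is Poisson, and \eqref{egogg} is a Poisson isomorphism), it is in fact an isomorphism of graded Poisson algebras.

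The main obstacle, and where care is required, is verifying that the passage through the Dirac reduction respects the truncation: one must check that the ideal cut out by \eqref{trdefA2} matches, under the isomorphism of Theorem~\ref{dirredisoAII} with $\beta = -1$, the ideal used in \cite[\S6]{TT24} to define the semiclassical Slodowy slice quotient. The rectangular-shape computation in \cite[Lem.~5.5]{TT24} goes through essentially unchanged, but one must track the sign conventions from \eqref{thetadef} and \eqref{egogg} to confirm that the parity interchange recorded in \eqref{g0AII} (as compared with \eqref{g0}) is exactly what arises from flipping $\beta$. Once this bookkeeping is checked, the remaining steps are standard, and Corollary~\ref{loopgradedcentralizerAII} provides an independent check by matching dimensions of graded pieces via $\gr \Y^-_{N,\ell}(\sigma) \cong \rU(\g^e)$ and $\dim \cS_\chi = \dim \g^e$.
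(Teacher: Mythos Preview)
Your proposal is correct and follows essentially the same route as the paper: fix the shape $\mu=(2^{N/2})$, use Theorem~\ref{dirredisoAII} in place of the type~AI Dirac reduction from \cite{TT24} to obtain the surjective graded Poisson map $\bC[\cS_\chi]\twoheadrightarrow \gr'\Y^-_{N,\ell}(\sigma)$, and then conclude it is an isomorphism by matching the generators against the free generating set of Theorem~\ref{trpbw-new}. The paper's proof is terser and simply says ``exactly the same as Corollary~\ref{gryangsliceAI} with Theorem~\ref{dirredisoAII} replacing \cite[Thm.~3.6]{TT24},'' but your added remarks on the ideal matching and the $\beta\mapsto -\beta$ parity swap are exactly the bookkeeping implicit in that sentence.
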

\begin{proof}
Without loss of generality fix the shape to be $\mu=(2^{\frac{N}{2}})$. The proof is then exactly the same as the one for Corollary \ref{gryangsliceAI} replacing the use of \cite[Thm. 3.6]{TT24} with Theorem \ref{dirredisoAII} and the definition of the truncation ideal with the corresponding Definition \ref{trdefA2} in type AII. Existence of the map follows just as in the type AI case.
\end{proof}

\section{Centers of the truncated shifted twisted Yangians} \label{sec:center}

In this section, under some mild assumptions on $N$ and $\ell$ \eqref{nonpfcases}, we prove that the centers of truncated shifted twisted Yangians $\Y^\pm_{N,\ell}(\sigma)$ are polyonomial algebras in distinguished generators, and conjecture that this statement on the centers holds without these assumptions. The conjecture can be rephrased as the conjectural existence of a Pfaffian generator in the centers of $\Y^\pm_{N,\ell}(\sigma)$ in the cases excluded from \eqref{nonpfcases}, and we construct a Pfaffian generator in a special case.

\subsection{Baby coproduct and sdet}
We take a digression to prepare statements that will be used. 
In general the shift homomorphism $\psi_m$ from Lemma \ref{psilem} is only an algebra homomorphism for extended twisted Yangians rather than for twisted Yangians. Thus for given  $1\lle a\lle n$, the subalgebra generated by coefficients of $H_{a;i,j}(u)$ for $1\lle i,j\lle \mu_a$ is a quotient of the extended twisted Yangian $\X_{\mu_a}^\pm$. Hence one can define the Sklyanin determinant for $H_a(u)$. We would like to calculate the image of $\sdet\, H_a(u)$ under the baby coproduct from Theorem \ref{delR}.

By slightly modifying  $H_a(u)$ in each diagonal block, we can obtain a smaller twisted Yangian. We consider this type by type.

\begin{prop}\label{block=tYI}
Consider $\Y^+_N$ of type AI. 
Given $1\lle a<n$ and $1\lle i\lle \mu_a$, the subalgebra of $\Y_N^+$ generated by $Z_{a;i,i,j,k}^{(r)}$ for $1\lle j,k\lle \mu_{a+1}$ and $r\gge 1$ is isomorphic to $\Y_{\mu_{a+1}}^+$.
\end{prop}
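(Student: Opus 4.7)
The plan is to construct an explicit algebra homomorphism $\varphi_i:\Y^+_{\mu_{a+1}}\to \Y_N^+$ by sending the standard generating series $s_{jk}(u)$ of $\Y^+_{\mu_{a+1}}$ to $Z_{a;i,i,j,k}(u)$, and then to prove $\varphi_i$ is injective with image equal to the subalgebra in the statement. To this end, first write $Z_{a;i,i,j,k}(u) = \wtl H_{a;i,i}(u-\tfrac{\mu_a}{2})\,H_{a+1;j,k}(u)$. By Corollary~\ref{comcor}, every coefficient of $\wtl H_{a;i,i}(u-\tfrac{\mu_a}{2})$ commutes with every coefficient of $H_{a+1;l,m}(v)$, so $Z_{a;i,i,j,k}(u)$ is the product of a commuting scalar-type factor with a matrix entry of the block $H_{a+1}(u)$.

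For the quaternary relations, the entries $H_{a+1;j,k}(u)$ satisfy the component-wise quaternary relations of the extended twisted Yangian $\X^+_{\mu_{a+1}}$: this is exactly relation~\eqref{pr3} specialized to the diagonal block $a=b$ replaced by $a+1$ under the type AI convention $\theta_j=1$, $j'=j$. The substitution $s_{jk}(u)\mapsto \wtl H_{a;i,i}(u-\tfrac{\mu_a}{2})\,H_{a+1;j,k}(u)$ multiplies both sides of \eqref{qua} by the common commuting factor $\wtl H_{a;i,i}(u-\tfrac{\mu_a}{2})\,\wtl H_{a;i,i}(v-\tfrac{\mu_a}{2})$, so the quaternary relations are preserved by the series $Z_{a;i,i,j,k}(u)$. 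For the symmetry relations, Lemma~\ref{lem:s11} reduces the verification to the single identity that $Z_{a;i,i,k,k}^{(2r-1)}=0$ for all $r>0$ and some fixed $k$, which is exactly \eqref{Z's} recorded in Remark~\ref{rem-Z}. Together these steps guarantee that $\varphi_i$ is a well-defined algebra homomorphism onto the subalgebra generated by the coefficients $\{Z_{a;i,i,j,k}^{(r)}\}$.

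Finally, injectivity will be established via the loop filtration on $\Y_N^+$ recalled in \eqref{filter:tY}. Since $\wtl H_{a;i,i}^{(0)}=1$, one has $Z_{a;i,i,j,k}^{(r)}\equiv H_{a+1;j,k}^{(r)}$ modulo terms of strictly smaller loop degree, so $\varphi_i$ carries the PBW basis of $\Y^+_{\mu_{a+1}}$ (Proposition~\ref{prop:PBW}) onto elements whose loop-filtered leading terms coincide with the corresponding monomials of the parabolic PBW basis of $\Y_N^+$ (Theorem~\ref{PBWgauss}); the linear independence of the latter in $\gr\Y_N^+$ then forces injectivity of $\varphi_i$. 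The main obstacle is the verification, in the second step, that multiplication of the $\X^+_{\mu_{a+1}}$-valued matrix $H_{a+1}(u)$ by the commuting scalar series $\wtl H_{a;i,i}(u-\tfrac{\mu_a}{2})$ preserves the quaternary relation; although conceptually transparent, this needs a direct substitution in \eqref{qua} rather than a general R-matrix manipulation, and one must keep careful track of the $(u^2-v^2)$ coefficient and of the fact that the commuting factor is evaluated at a shifted spectral parameter.
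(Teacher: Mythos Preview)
Your proposal is correct and follows essentially the same route as the paper's own proof: both verify that the series $Z_{a;i,i,j,k}(u)$ satisfy the quaternary relations \eqref{qua} (via \eqref{pr3} and the commutation of the scalar factor $\wtl H_{a;i,i}$ with the block $H_{a+1}$) and the symmetry relation (via Lemma~\ref{lem:s11} together with \eqref{Z's}), then deduce injectivity from the loop filtration. Your injectivity step is spelled out more explicitly---matching the PBW basis of $\Y_{\mu_{a+1}}^+$ with a subset of the parabolic PBW basis of $\Y_N^+$ through the leading term identification $Z_{a;i,i,j,k}^{(r)}\equiv H_{a+1;j,k}^{(r)}$---whereas the paper simply appeals to the associated graded comparison implicit in the proof of Theorem~\ref{mainthm}; the content is the same.
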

\begin{proof}
Let $\mathscr{A}$ be the subalgebra of $\Y_N^+$ generated by $Z_{a;i,i,j,k}^{(r)}$ for $1\lle j,k\lle \mu_{a+1}$ and $r\gge 1$.

The relation \eqref{pr3} implies that $[H_{a;i,i}(v),H_{a+1;j,k}(u)]=0$ and $H_{a+1;j,k}(u)$ satisfy the quaternary relations \eqref{qua}. Thus by \eqref{tlHH}, $Z_{a;i,i,j,k}(u)$ also satisfy the quaternary relations \eqref{qua}. It follows from Lemma \ref{lem:s11} and \eqref{Z's} that $Z_{a;i,i,j,k}(u)$ also satisfy the symmetry relations \eqref{sym}. Therefore, we have a surjective algebra homomorphism $\Y_{\mu_{a+1}}^+ \twoheadrightarrow \mathscr A$. Therefore, it suffices to show the homomorphism is injective. This follows from the proof of Theorem~ \ref{mainthm} as their associated graded algebras (with respect to the loop filtration) have the same size. 
\end{proof}

Recall $\gamma^-_c(u)$ from \eqref{gammau}.

\begin{prop}\label{block=tYII}
Consider $\Y^-_N$ of type AII. Given $0\lle a< n$, the map
\[
s_{ij}(u)\mapsto \gamma_{-\mu_{(a)}}^-(u) \, \mathcal S_{12\cdots \mu_{(a)},\mu_{(a)+j}}^{1 2 \cdots \mu_{(a)},\mu_{(a)+i}}(u+\tfrac{\mu_{(a)}}{2}),\qquad 1\lle i,j\lle \mu_{a+1}
\]
defines an algebra homomorphism $\Y^-_{\mu_{a+1}}\to \Y^-_N$. Moreover, the subalgebra of $\Y^-_N$ generated by the coefficients of $\mathcal S_{12\cdots \mu_{(a)},\mu_{(a)+j}}^{1 2 \cdots \mu_{(a)},\mu_{(a)+i}}(u)$ for $1\lle i,j\lle \mu_{a+1}$ is isomorphic to $\Y^-_{\mu_{a+1}}$.
\end{prop}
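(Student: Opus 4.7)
The plan is to reduce the proof to the shift homomorphism $\psi_{\mu_{(a)}}$ from \S\ref{sec:hom} (valid in type AII because $\mu$ is even, so $\mu_{(a)}$ is even), then correct by a scalar series to land in the (non-extended) twisted Yangian. First, applying Lemma~\ref{lem:shiftt} to the singleton index tuples $\bm i=(i)$, $\bm j=(j)$, one obtains the identity
\[
\mathcal S^{1\cdots\mu_{(a)},\mu_{(a)}+i}_{1\cdots\mu_{(a)},\mu_{(a)}+j}\bigl(u+\tfrac{\mu_{(a)}}{2}\bigr)
\;=\;\mathcal C_{\mu_{(a)}}\bigl(u+\tfrac{\mu_{(a)}}{2}\bigr)\,\psi_{\mu_{(a)}}\bigl(s_{ij}(u)\bigr),
\]
so the proposed map equals $\psi_{\mu_{(a)}}$ composed with left multiplication by the series $f(u):=\gamma^-_{-\mu_{(a)}}(u)\,\mathcal C_{\mu_{(a)}}(u+\tfrac{\mu_{(a)}}{2})$. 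Since the coefficients of $\mathcal C_{\mu_{(a)}}(u)$ lie in the subalgebra $\jmath(\X^-_{\mu_{(a)}})\subset\X^-_N$, which commutes with $\psi_{\mu_{(a)}}(\X^-_{\mu_{a+1}})$ by the lemma following Lemma~\ref{psi lem}, the scaling by $f(u)$ preserves the quaternary relations \eqref{qua}.

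The main work is then to verify the symmetry relations \eqref{sym}. By Lemma~\ref{lem:s11} it suffices to check \eqref{simplesym} for a single diagonal entry, i.e.\ to show
\[
f(-u)\,\psi_{\mu_{(a)}}(s_{k'k'}(-u))\;=\;f(u)\,\psi_{\mu_{(a)}}(s_{kk}(u))-\frac{f(u)\psi_{\mu_{(a)}}(s_{kk}(u))-f(-u)\psi_{\mu_{(a)}}(s_{kk}(-u))}{2u}
\]
for some convenient choice of $k$. This reduces, via Lemma~\ref{lem:shiftt} again, to an identity between the Sklyanin minors $\mathcal S^{1\cdots\mu_{(a)},\mu_{(a)}+k}_{1\cdots\mu_{(a)},\mu_{(a)}+k}(u+\tfrac{\mu_{(a)}}{2})$ at arguments $\pm u$. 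The latter can be attacked using Proposition~\ref{prop:anti} (relating a minor to its complementary minor via $\omega_N$) together with the factorization $\sdet S(u)=\gamma^\pm_N(u)\,\qdet T(u)\,\qdet T(-u+N-1)$ from \eqref{sdet=qdet}, which clarifies the role of the factor $\gamma^-_{-\mu_{(a)}}(u)$: it is exactly the scalar correction that cancels the mismatch between the symmetry satisfied by the image in $\X^-_N$ and the target symmetry in $\Y^-_N$. Setting up the identity carefully, the shift in the first argument of $\gamma^-$ (by $-\mu_{(a)}$) is forced by the comparison of Sklyanin determinants of $S(u)$ with the $\sdet$ of the submatrix obtained by deleting the first $\mu_{(a)}$ rows and columns.

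For the second assertion, the first part yields a surjective homomorphism $\Phi:\Y^-_{\mu_{a+1}}\twoheadrightarrow \mathscr A$ onto the subalgebra $\mathscr A\subset\Y^-_N$ generated by the coefficients of $\mathcal S^{1\cdots\mu_{(a)},\mu_{(a)}+i}_{1\cdots\mu_{(a)},\mu_{(a)}+j}(u)$, because $\gamma^-_{-\mu_{(a)}}(u)$ is an invertible series in $u^{-1}$ and $\mathcal C_{\mu_{(a)}}(u+\tfrac{\mu_{(a)}}{2})$ has constant term $1$, so the images of the generators $s_{ij}(u)$ generate $\mathscr A$. Injectivity is then obtained by a loop-filtration argument (cf.\ the final paragraph in the proof of Proposition~\ref{block=tYI}): on associated graded level the top-degree part of $\Phi(s_{ij}^{(r)})$ coincides, up to an invertible scalar coming from the constant terms of $f(u)$, with the image of $f_{\mu_{(a)}+i,\mu_{(a)}+j}\otimes z^{r-1}$ in $\gr\Y^-_N\cong\rU(\gl_N[z]^\theta)$. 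Together with the PBW basis of $\Y^-_{\mu_{a+1}}$ from Proposition~\ref{prop:PBW}(2), this shows $\gr\Phi$ is injective, hence so is $\Phi$.

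The main obstacle will be Step~2, i.e.\ the symmetry verification. It requires an explicit manipulation of $\mathcal C_{\mu_{(a)}}$ under the map $u\mapsto -u$, which is governed by Proposition~\ref{prop:anti} but must be combined delicately with the non-trivial $\gamma^-$ factor specific to type AII; this contrasts with type AI, where $\gamma^+\equiv 1$ makes the corresponding scaling trivial and reduces Proposition~\ref{block=tYI} to an essentially formal statement.
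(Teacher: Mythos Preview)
Your overall architecture matches the paper's: factor the map as $\psi_{\mu_{(a)}}$ followed by multiplication by the central series $f(u)=\gamma^-_{-\mu_{(a)}}(u)\,\mathcal C_{\mu_{(a)}}(u+\tfrac{\mu_{(a)}}{2})$ (the paper records exactly this identity as \eqref{SinH}), then argue injectivity on the loop-graded level. The difference is in how the first statement is established. The paper does not verify the symmetry relation by hand; it simply invokes \cite[Prop.~4.1.10]{Mol07}, which already asserts that Sklyanin minors of this shape, suitably normalized by $\gamma^-$, satisfy the full defining relations of $\Y^-$. Your proposed route through Proposition~\ref{prop:anti} is not obviously viable: that proposition relates $\omega_N(\mathcal S_{\bm i,\bm j}(u))$ to a complementary minor, i.e.\ it converts \emph{inversion} of $S(u)$ into complementation of indices, whereas the symmetry relation \eqref{sym} compares $S(u)$ with $S(-u)$. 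The factorization \eqref{sdet=qdet} likewise controls $\sdet$ rather than individual minors at reflected arguments. So the step you flag as ``the main obstacle'' is a genuine gap as written; you would need something closer to the symmetry of Sklyanin minors proved in \cite[\S2.8]{Mol07}, which is essentially what \cite[Prop.~4.1.10]{Mol07} packages.

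For the injectivity, your loop-filtration idea is right and is what the paper does, but your computation of the top graded piece is slightly off. The image of $s_{ij}^{(r+1)}$ in $\gr\Y^-_N$ is not merely a scalar multiple of $f_{\mu_{(a)}+i,\mu_{(a)}+j}\otimes z^{r}$; the paper's explicit calculation (following \cite[Thm.~2.8.2]{Mol07}) shows an additional diagonal correction
\[
\delta_{ij}\,(e_{11}+\cdots+e_{\mu_{(a)},\mu_{(a)}})(1-(-1)^r)\otimes z^r,
\]
coming from the top-degree part of $f^{(r+1)}$ paired with the constant term $\psi_{\mu_{(a)}}(s_{ij}^{(0)})=\delta_{ij}$. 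This term is central in $\gl_N[z]^\theta$, so it does not spoil the PBW comparison and your conclusion survives, but the claim ``up to an invertible scalar'' should be replaced by ``up to a central shift''.
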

\begin{proof}
The first statement follows from \cite[Prop. 4.1.10]{Mol07}. Thus the subalgebra of $\Y^-_N$ generated by the coefficients of $\mathcal S_{12\cdots \mu_{(a)},\mu_{(a)+j}}^{1 2 \cdots \mu_{(a)},\mu_{(a)+i}}(u)$ for $1\lle i,j\lle \mu_{a+1}$ is a quotient of $\Y^-_{\mu_{a+1}}$. In order to show that this is an isomorphism, it suffices to show that their associated graded algebras under the loop filtration are isomorphic. 

Note that it follows from Lemma \ref{psi lem} and Lemma \ref{lem:shiftt} that
\beq\label{SinH}
\begin{split}
\gamma_{-\mu_{(a)}}^-(u) \, \mathcal S_{12\cdots \mu_{(a)},\mu_{(a)+j}}^{1 2 \cdots \mu_{(a)},\mu_{(a)+i}}(u+\tfrac{\mu_{(a)}}{2})&=\gamma^-_{-\mu_{(a)}}(u) \, \mathcal S_{12\cdots \mu_{(a)}}^{1 2 \cdots \mu_{(a)}}(u+\tfrac{\mu_{(a)}}{2})\psi_{\mu_{(a)}}(s_{ij}(u))\\&=\gamma^-_{-\mu_{(a)}}(u) \, \mathcal S_{12\cdots \mu_{(a)}}^{1 2 \cdots \mu_{(a)}}(u+\tfrac{\mu_{(a)}}{2})H_{a;i,j}(u).
\end{split}
\eeq
Similar to the proof of \cite[Thm 2.8.2]{Mol07}, the graded image of the coefficient of $u^{-r-1}$ in the series $\gamma^-_{-\mu_{(a)}}(u) \, \mathcal S_{12\cdots \mu_{(a)}}^{1 2 \cdots \mu_{(a)}}(u+\tfrac{\mu_{(a)}}{2})H_{a;i,j}(u)$ is given by
\[
(e_{\mu_{(a)}+i,\mu_{(a)}+j}-(-1)^r\theta_i\theta_je_{\mu_{(a)}+j',\mu_{(a)}+i'})\otimes z^r+\delta_{ij}(e_{11}+\cdots+e_{\mu_{(a)},\mu_{(a)}})(1-(-1)^r)\otimes z^r.
\]
On the other hand, the graded image of the coefficient of $u^{-r-1}$ in the series $s_{ij}(u)$ is given by $(e_{i,j}-(-1)^r\theta_i\theta_je_{j',i'})\otimes z^r$. Thus the claim follows.
\end{proof}

Recall $\Delta_R: \Y_\mu^\pm(\sigma)\longrightarrow \Y_\mu^\pm(\dot\sigma)\otimes \rU(\gl_t)$ from Theorem \ref{delR}.
\begin{prop}\label{sdetcomul}
We have
\[
\Delta_R\big( \sdet\,H_n(u) \big)=\sdet\,\dot H_n(u)\otimes \pi_{t}\big(\qdet \,T(u)\,\qdet \,T(-u+t-1)\big).
\]
\end{prop}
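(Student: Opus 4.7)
\emph{Plan.} The strategy is to recognize the baby coproduct of Theorem \ref{delR}, when restricted to the subalgebra generated by the coefficients of $H_n(u)$, as an incarnation of the baby coproduct $(1\otimes\pi_t)\circ\Delta$ on the extended twisted Yangian $\X_t^\pm$, whereupon the desired identity follows from the standard Sklyanin-determinant coproduct formula \eqref{sdetcp}.

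Setting $a=b=n$ in the parabolic relation \eqref{pr3}, the generating series $\{H_{n;i,j}(u)\mid 1\lle i,j\lle t\}$ satisfy the quaternary relations \eqref{qua}. Hence $s_{ij}^{(r)}\mapsto H_{n;i,j}^{(r)}$ defines algebra homomorphisms $\iota\colon \X_t^\pm\to \Y_\mu^\pm(\sigma)$ and $\dot\iota\colon \X_t^\pm\to \Y_\mu^\pm(\dot\sigma)$; these coincide when viewed as maps into $\Y_N^\pm$, because $H_{n;i,j}$ and $\dot H_{n;i,j}$ denote the same power series defined via Gauss decomposition. In particular $\iota(\sdet\, S(u)) = \sdet\, H_n(u)$ and $\dot\iota(\sdet\, S(u)) = \sdet\, \dot H_n(u)$.

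The crux is to verify that the square
\begin{equation*}
\begin{array}{ccc}
\X_t^\pm & \xrightarrow{\ \Delta_R^{(t)}\ } & \X_t^\pm\otimes \rU(\gl_t) \\
{\scriptstyle\iota}\downarrow\phantom{{\scriptstyle\iota}} & & \phantom{{\scriptstyle \dot\iota\otimes\id}}\downarrow{\scriptstyle \dot\iota\otimes\id} \\
\Y_\mu^\pm(\sigma) & \xrightarrow{\ \Delta_R\ } & \Y_\mu^\pm(\dot\sigma)\otimes \rU(\gl_t)
\end{array}
\end{equation*}
commutes, where $\Delta_R^{(t)}:=(1\otimes\pi_t)\circ\Delta$ is the baby coproduct of $\X_t^\pm$ produced by Proposition \ref{delR0} with $N$ replaced by $t$. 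This amounts to comparing, for a generator $s_{lk}^{(r)}\in \X_t^\pm$, formula \eqref{Kacomp} (with $N$ replaced by $t$) along the right-then-down path with formula \eqref{bp:Hm} along the down-then-right path: the sums run over $p,q\in\{1,\ldots,t\}$ in both cases, and the primes $i'$ and signs $\theta_i$ coincide on the relevant indices. In type AI this is automatic ($i'=i$ and $\theta_i=1$); in type AII the agreement uses that each $\mu_a$ is even, so $\mu_{(n-1)}$ is even and the shift $i\mapsto \mu_{(n-1)}+i$ preserves parities, hence preserves both the involution $i\mapsto i'$ and the sign $\theta_i=(-1)^i$.

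Finally, applying \eqref{sdetcp} inside $\X_t^\pm$ gives $\Delta(\sdet\, S(u)) = \sdet\, S(u)\otimes \qdet\, T(u)\,\qdet\, T(-u+t-1)$, whence
\[
\Delta_R^{(t)}(\sdet\, S(u)) = \sdet\, S(u)\otimes \pi_t\bigl(\qdet\, T(u)\,\qdet\, T(-u+t-1)\bigr).
\]
Transferring through the commutative square and invoking the identifications from the preceding paragraph yields
\[
\Delta_R(\sdet\, H_n(u)) = \sdet\, \dot H_n(u)\otimes \pi_t\bigl(\qdet\, T(u)\,\qdet\, T(-u+t-1)\bigr),
\]
as required. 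The main technical point is the verification of the commutative square: although it reduces to a direct comparison of two formulas, the bookkeeping of primes, signs, and summation ranges must be carried out with care, and the evenness hypothesis in type AII is essential.
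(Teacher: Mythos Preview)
Your approach is natural and close in spirit to the paper's, but there is a genuine gap at the point where you invoke \eqref{sdetcp} ``inside $\X_t^\pm$.'' That identity is established in the paper only for the twisted Yangian $\Y_t^\pm$: its one-line derivation uses \eqref{sdet=qdet}, which rests on the embedding $\Y_t^\pm\hookrightarrow\rY(\gl_t)$ of \eqref{embed2}, and no such embedding is available for the extended twisted Yangian. Likewise Proposition~\ref{delR0} is stated only for $\Y_N^\pm$. Since the $H_{n;i,j}(u)$ satisfy only the quaternary relations---the paper explicitly remarks, just before Proposition~\ref{block=tYI}, that the subalgebra they generate is merely a quotient of $\X_{\mu_n}^\pm$---your map $\iota$ does not factor through $\Y_t^\pm$, so the cited formula cannot be applied as written.

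The paper's proof is designed to sidestep exactly this difficulty. Rather than working with $H_{n;i,j}(u)$ directly, it multiplies by a central prefactor that is sent to $(\text{prefactor})\otimes 1$ by $\Delta_R$: namely $\wtl H_{n-1;1,1}(u-\tfrac{\mu_{n-1}}{2})$ in type AI (giving $s^*_{ij}(u)=Z_{n-1;1,1,i,j}(u)$), and the Sklyanin-minor series $\mathcal C_{\mu_{(n-1)}}$ in type AII. By Propositions~\ref{block=tYI} and~\ref{block=tYII} these modified series satisfy the \emph{full} relations of $\Y_t^\pm$, so \eqref{sdetcp} applies legitimately; the prefactor commutes past the $\sdet$ and is canceled from both sides at the end. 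To make your shorter route work you would need either to prove \eqref{sdetcp} directly for $\X_t^\pm$ from the antisymmetrizer definition of $\sdet$ and the matrix form of the comodule map (plausible, but not supplied here or in the paper), or to show that the $H_{n;i,j}(u)$ already obey a symmetry relation forcing $\iota$ to factor through $\Y_t^\pm$.
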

\begin{proof}
We first prove it for type AI with the help of Proposition \ref{block=tYI}. Denote by $s^{*}_{ij}(u):=Z_{n-1;1,1,i,j}(u)$, then it follows from Proposition \ref{block=tYI} that the series $s^*_{ij}(u)$ for $1\lle i,j\lle t$ satisfy the defining relations of twisted Yangian $\Y_{t}^+$. 

Recall from \eqref{tlHH} that $s^{*}_{ij}(u)=\wtl H_{n-1;1,1}(u-\tfrac{\mu_{n-1}}2)H_{n;i,j}(u)$. By \eqref{Hdel1} we have
\[
\Delta_R\big(\wtl H_{n-1;1,1}(u-\tfrac{\mu_{n-1}}2)\big)=\wtl H_{n-1;1,1}(u-\tfrac{\mu_{n-1}}2)\otimes 1.
\]
By \eqref{bp:Hm}, we have 
\[
\Delta_R\big(H_{n;i,j}(u)\big)=\sum_{p,q=1}^t H_{n;q,p}(u)\otimes (\delta_{qi}-e_{qi}u^{-1})(\delta_{pj}+e_{p,j}u^{-1}).
\]
Thus
\[
\Delta_R\big(s^*_{i,j}(u)\big)=\sum_{p,q=1}^t s^*_{q,p}(u)\otimes (\delta_{qi}-e_{qi}u^{-1})(\delta_{pj}+e_{p,j}u^{-1}).
\]
This implies that the action of $\Delta_R$ on $s^*_{ij}(u)$ is equivalent to the composition of the coproduct for the twisted Yangian (generated by $s^*_{ij}(u)$) and $\mathrm{id}\otimes \pi_t$, where $\pi_t$ is the evaluation homomorphism for the Yangian $\rY(\gl_t)$. Thus if we set $S^*(u)=(s^*_{ij}(u))$, then by \eqref{sdetcp} we find that
\[
\Delta_{R}\big(\sdet\,S^*(u)\big)=\sdet\,S^*(u)\otimes \pi_{t}\big(\qdet\,T(u)\,\qdet\,T(-u+t-1)\big).
\]
By \eqref{Hdel1} and the fact that $[\wtl H_{n-1;1,1}(v),H_{n;i,j}(u)]=0$, we can move all $\wtl H_{n-1;1,1}(u)$ (possibly with $u$ shifted) in $\sdet S^*(u)$ to the left and cancel. Since $\Delta_R\big(\wtl H_{n-1;1,1}(v)\big)=\wtl H_{n-1;1,1}(v)\otimes 1$, it only affects the first factor of the tensor product. Thus we obtain the desired formula.

For type AII, the argument is almost the same by using Proposition \ref{block=tYII} and the fact that 
$$
[\mathcal S_{12\cdots \mu_{(a)}}^{1 2 \cdots \mu_{(a)}}(v),H_{a;i,j}(u)]=0,
$$ 
which follows from the first equality of Proposition \ref{sdetdecomp}  and \eqref{pr3}. Note that with \eqref{SinH}, one needs to replace $\wtl H_{n-1;1,1}(v)$ in type AI by $\mathcal S_{12\cdots \mu_{(a)}}^{1 2 \cdots \mu_{(a)}}(v)$ in type AII.
\end{proof}

\subsection{Miura transform and sdet}
Throughout this subsection we work with the minimal admissible shape $\mu=(\mu_1, \dots, \mu_n)$ for $\sigma$. We will write $\Y_N^{\pm}(\sigma)$ and, as before, whenever $\pm$ or $\mp$ appears, the upper sign will correspond to type AI and the lower sign to type AII. For type AI we will use the presentation from Theorem \ref{mainthm} following \eqref{a1thetadef}, while for type AII following \eqref{thetadef}.
Recall the respective evaluation homomorphisms $\pi_N: \rY(\gl_N) \to \rU(\gl_N)$ and $ \xi^{\pm}_N:\Y^{\pm}_N \to \rU(\g^{\pm}_N)$ from \eqref{eva} and \eqref{tYev}, respectively. By \cite{Mol07,Br09} we can identify $\Y^{\pm}_{N,1}$ with $\xi^{\pm}_N(\Y_N^\pm)$. Let $p=(p_1, \dots ,p_N)$ be the row lengths of the pyramid $\pi=\pi(\la)$ associated with the pair $(\sigma,\ell)$ starting from the top and define
\begin{equation}
\tilde{p}_i:= \begin{dcases}  \frac{p_i-1}{2}, &\textnormal { for } \ell \textnormal{ odd} \\
\frac{p_i}{2}, & \textnormal { for } \ell \textnormal{ even}.
\end{dcases}
\end{equation}
In particular, let $s=\tp_N$.
Let $q=(q_1,q_2, \dots, q_\ell)$ denote the column heights of the pyramid $\pi$ from left to right. As $\pi$ is symmetric we only record the heights of the columns to the right about the symmetry, thus we define the following shorthand notation for $0\lle i\lle s$: 
\begin{equation}
\tilde{q}_i:= \begin{cases} q_{\frac{\ell+1}{2}+i} \ , & \textnormal{for } 0 \lle i \lle s\textnormal{ and } \ell \textnormal{ odd } \\
 q_{\frac{\ell}{2}+i} \ , & \textnormal{for } 1 \lle i \lle s \textnormal{ and } \ell \textnormal{ even } \\ 
 0,\, & \textnormal{for } i=0 \textnormal{ and } \ell \textnormal{ even}.
\end{cases}
\end{equation}
 Analogously to \cite{BK06} and \cite{Br09} we define the \textit{Miura transform} 
 \begin{equation}
 \chi^{\pm}:\Y_{N,\ell}^{\pm}(\sigma) \to 
 \begin{cases}
 \rU(\g_N^{\pm}) \otimes \rU(\gl_{\tq_1}) \otimes \dots \otimes \rU(\gl_{\tq_s}), &\textnormal{ for } \ell \textnormal{ odd }\\
 \rU(\gl_{N}) \otimes \rU(\gl_{\tq_2}) \otimes \dots \otimes \rU(\gl_{\tq_s}), &\textnormal{ for } \ell \textnormal{ even},
 \end{cases}
 \end{equation}
as the baby comultiplication applied $s$  times. Note that in the first case we have used the fact that $\Y^{\pm}_{N,1}$ can be canonically identified with $\rU(\g^{\pm}_N)$ as the image of the evaluation map for the twisted Yangian by \cite{Br09}. Furthermore, note that $\chi^\pm$ is injective by Theorems \ref{trpbw} and \ref{trpbw-new}. \\

By Theorem \ref{freegen-center}, the even coefficients $c_2,c_4, \dots$ of $\mathcal C_N(u)=\sdet \,S_N(u)$ are algebraically independent generators for the center of $\Y^{\pm}_N$. By Proposition \ref{sdetdecomp} these coefficients also lie in
 $\Y_N^{\pm}(\sigma)$. 
 Furthermore we will write $\mathcal B_N(u):=\textnormal{qdet}\,T_N(u) \in \rY(\gl_N)$.
 From now on by slight abuse of notation we will also write $\mathcal C_N(u)$ for the image of $\mathcal C_N(u)$ in $\Y_{N,\ell}^{\pm}(\sigma)$ and $\mathcal B_N(u)$ for the image of $\mathcal B_N(u)$ under the evaluation map in $\rU(\gl_N)$.
\begin{lem}\label{sdetmiura}
We have 
 \begin{equation}
 \begin{split}
 \chi^\pm(\mathcal{C}_N(u)) =
 \mathcal C_{\tq_0}(u) &\otimes \mathcal B_{\tq_1}(u-\tfrac{N-\tilde{q}_1}{2})\mathcal B_{\tq_1}(-u+\tfrac{N+\tq_1}{2}-1) 
 \\
 \otimes \cdots & \otimes \mathcal B_{\tq_s}(u-\tfrac{N-\tq_s}{2})\mathcal B_{\tq_s}(-u+\tfrac{N+\tq_s}{2}-1).
 \end{split}
 \end{equation}
 \end{lem}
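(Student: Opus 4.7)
The plan is to proceed by induction on $s$, using the recursive factorization
\[
\chi^{\pm} \;=\; (\chi^{\pm}\otimes \mathrm{id}) \circ \Delta_R
\]
that is built into the very definition of the Miura transform (the outer $\chi^\pm$ on the right being the Miura transform for level $\ell-2$ and shift matrix $\dot\sigma$). Thus it suffices to verify the one-step identity
\[
\Delta_R(\mathcal{C}_N(u)) \;=\; \dot{\mathcal{C}}_N(u) \,\otimes\, \mathcal{B}_{\tq_s}\bigl(u - \tfrac{N-\tq_s}{2}\bigr)\, \mathcal{B}_{\tq_s}\bigl(-u + \tfrac{N+\tq_s}{2} - 1\bigr),
\]
and then invoke the induction hypothesis on the first tensor factor.

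To establish this identity, I take $\mu=(\mu_1,\ldots,\mu_n)$ to be the minimal admissible shape for $\sigma$, so that $t := \mu_n$ is exactly the height $\tq_s$ of the outermost column pair of the pyramid $\pi$. By Proposition~\ref{sdetdecomp}, together with the change of variable $D_a(v) = H_a(v + \mu_{(a-1)}/2)$, one has
\[
\mathcal{C}_N(u) \;=\; \prod_{a=1}^{n} \sdet H_a\bigl(u - \tfrac{\mu_{(a-1)}}{2}\bigr).
\]
Theorem~\ref{delR} gives $\Delta_R(H_{a;i,j}^{(r)}) = \dot H_{a;i,j}^{(r)}\otimes 1$ for every $a<n$, so those factors simply pass to $\Y_\mu^\pm(\dot\sigma)\otimes 1$. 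For the remaining $a=n$ factor, Proposition~\ref{sdetcomul} provides
\[
\Delta_R\bigl(\sdet H_n(v)\bigr) \;=\; \sdet \dot H_n(v) \,\otimes\, \pi_t\bigl(\qdet T(v)\,\qdet T(-v+t-1)\bigr).
\]
Substituting $v = u - \mu_{(n-1)}/2$, using $\mu_{(n-1)} = N - t = N - \tq_s$, and reassembling the $n$ factors via Proposition~\ref{sdetdecomp} in $\Y_N^\pm(\dot\sigma)$ (for which $\mu$ remains admissible, since shifts only decrease), produces the displayed one-step formula.

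Because deleting the outermost column pair preserves the heights $\tq_0, \tq_1, \ldots, \tq_{s-1}$, the induction hypothesis applied to $\dot{\mathcal{C}}_N(u)$ generates precisely the remaining tensor factors of the claimed formula. The base cases are $\ell=1$ (odd), where $\chi^\pm$ is the identity and $\tq_0 = N$ makes the claim tautological, and $\ell=2$ (even), where one checks that the identification $\Y^\pm_{N,2}(\sigma) \cong \rU(\gl_N)$ used in the definition of $\chi^\pm$ sends $\mathcal{C}_N(u)$ to $\mathcal{B}_N(u)\,\mathcal{B}_N(-u+N-1)$; this is a direct consequence of \eqref{sdet=qdet} combined with the evaluation homomorphism $\pi_N$. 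The main obstacle I anticipate is the careful tracking of the scalar factor $\gamma_N^\pm(u)$ appearing in \eqref{sdet=qdet}, particularly in type AII where $\gamma_N^-(u)$ is nontrivial: one must verify that these factors are either absorbed or cancelled by the specific normalizations coming from the Gauss decomposition of $S(u)$, so that the clean uniform formula of the lemma holds across both parities of $\ell$ and both types AI/AII. Aside from this bookkeeping, the whole argument is a clean combination of Propositions~\ref{sdetdecomp} and~\ref{sdetcomul} with Theorem~\ref{delR}.
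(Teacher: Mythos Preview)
Your proof is correct and follows essentially the same approach as the paper: decompose $\mathcal{C}_N(u)$ via Proposition~\ref{sdetdecomp}, apply Theorem~\ref{delR} to the factors with $a<n$ and Proposition~\ref{sdetcomul} to the factor with $a=n$, then iterate. The paper phrases the iteration as ``applying this result repeatedly'' rather than as a formal induction with base cases, but the content is identical. Your worry about the scalar $\gamma_N^\pm(u)$ is unnecessary: Proposition~\ref{sdetcomul} already delivers the second tensor factor as $\pi_t(\qdet T(v)\,\qdet T(-v+t-1))$ with no $\gamma$ present, so no cancellation needs to be tracked.
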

 \begin{proof}
Recall the decomposition 
\begin{align*}
\sdet \, S(u)=\sdet \, D_1(u-\mu_{(0)}) \,\sdet \,D_2(u-\mu_{(1)}) \cdots \sdet \,D_n(u-\mu_{(n-1)})
\end{align*}
from Proposition \ref{sdetdecomp}. Using \eqref{Ha} this can be rewritten as 
\begin{align*}
    \sdet \,S(u)=\sdet \,H_1(u-\tfrac{\mu_{(0)}}{2}) \,\sdet \,H_2(u-\tfrac{\mu_{(1)}}{2}) \cdots \sdet \,H_n(u-\tfrac{\mu_{(n-1)}}{2}).
\end{align*}
It thus follows by Proposition \ref{sdetcomul} and the definition of $\Delta_R$ that we have
\begin{equation}
    \begin{split}
        \Delta_R\big(\mathcal C_N(u)\big)&=\Delta_R\Big(\sdet \,H_1(u-\tfrac{\mu_{(0)}}{2}) \,\sdet \,H_2(u-\tfrac{\mu_{(1)}}{2}) \cdots \sdet \,H_n(u-\tfrac{\mu_{(n-1)}}{2})\Big) \\
        &=\Big(\big(\sdet \,H_1(u-\tfrac{\mu_{(0)}}{2}) \,\sdet \,H_2(u-\tfrac{\mu_{(1)}}{2}) \cdots\sdet\, H_{n-1}(u-\tfrac{\mu_{(n-2)}}{2})\big)\otimes 1\Big) \\
        &\quad\times \Big( \sdet \,H_n(u-\tfrac{\mu_{(n-1)}}{2}) \otimes \mathcal B_{\tilde{q}_s}(u-\tfrac{N-\tilde{q}_s}{2}) \mathcal B_{\tilde{q}_s}(-u+\tfrac{\tilde{q}_s+N}{2}-1)\Big) \\
        &=\mathcal C_N(u) \otimes \mathcal B_{\tilde{q}_s}(u-\tfrac{N-\tilde{q}_s}{2})\mathcal B_{\tilde{q}_s}(-u+\tfrac{\tilde{q}_s+N}{2}-1),
    \end{split}
\end{equation}
where we have used the facts that $\mu_n=\tilde{q}_s$ and $\mu_{(n-1)}=N-\tilde{q}_s$. The statement of the lemma then follows by applying this result repeatedly, along with the definition of the Miura transform.
 \end{proof}
 
 \subsection{Description of central elements}

In this subsection we provide an explicit description of algebraically independent elements of the center $Z(\Y^{\pm}_{N,\ell}(\sigma))$ of $\Y^{\pm}_{N,\ell}(\sigma)$. 

\begin{prop}\label{coeffalgind}
Let $M=\sum_{i=1} ^{N} \lambda_{i}$ and let $m$ be such that $M=2m$ or $M=2m+1$.
The series 
\begin{equation}
\begin{split}
Z_M(u):=
\begin{dcases}
\prod_{i=1}^{s}\prod_{j=1}^{\tq_i}\Big((\tfrac{\tq_i+1}{2}-j)^2-u^2\Big) &
\\
\quad \quad \quad \times\prod_{i=-k}^{k}(u+\rho^{\pm}_i) \Big(\gamma_{N}^{\pm}(u+\tfrac{N-1}{2})\Big)^{-1} \mc 
 C_N(u+\tfrac{N-1}{2}), & \textnormal{ for } \ell \textnormal { odd} 
\\ 
\prod_{i=1}^{s}\prod_{j=1}^{\tq_i}\Big((\tfrac{\tq_i+1}{2}-j)^2-u^2\Big) \  \mc C_N(u+\tfrac{N-1}{2}), & \textnormal{ for } \ell \textnormal { even}
\end{dcases}
\end{split}
\end{equation}
is a polynomial $Z_M(u)=u^M+z_1u^{M-1}+\ldots+z_M$ of degree $M$ in $u$ with coefficients in $\Y^{\pm}_{N,\ell}(\sigma)$. Moreover, the coefficients $z_2,z_4, \ldots,z_{2m}$ are algebraically independent elements of $Z(\Y^{\pm}_{N,\ell}(\sigma))$, of canonical degree $2,4, \ldots, 2m$, respectively. 
\end{prop}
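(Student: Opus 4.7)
The plan is to verify three things: (i) each coefficient $z_r$ is central in $\Y_{N,\ell}^\pm(\sigma)$ and has canonical degree $r$; (ii) $Z_M(u)$ is a polynomial of degree exactly $M$ in $u$; and (iii) the even coefficients $z_2,z_4,\ldots,z_{2m}$ are algebraically independent. Step (iii) is the main obstacle.

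For (i), the coefficients of $\mc C_N(u)$ are central in $\Y_N^\pm$ by Theorem \ref{freegen-center}. Using Proposition \ref{sdetdecomp} and Remark \ref{Zsty}, they descend to central elements of $\Y_N^\pm(\sigma)$ and hence of the quotient $\Y_{N,\ell}^\pm(\sigma)$; multiplication by scalar polynomials in $u$ preserves centrality. For the canonical-degree assertion, the coefficient of $u^{-r}$ in $\mc C_N(u)$ is a signed sum of monomials in the $s_{ij}^{(t)}$ of total $t$-degree $r$, hence has canonical degree $r$; multiplication by a degree-$(M-r)$ polynomial in $u$ with scalar coefficients does not change this.

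For (ii), the Miura transform $\chi^\pm$ is injective by Theorems \ref{trpbw} and \ref{trpbw-new}, so polynomiality of $Z_M(u)$ is equivalent to that of $\chi^\pm(Z_M(u))$. Substituting $u\mapsto u+\tfrac{N-1}{2}$ in Lemma \ref{sdetmiura} yields
\begin{align*}
\chi^\pm\bigl(\mc C_N(u+\tfrac{N-1}{2})\bigr)=\mc C_{\tq_0}\bigl(u+\tfrac{N-1}{2}\bigr)\otimes \bigotimes_{i=1}^{s}\mathcal B_{\tq_i}\bigl(u+\tfrac{\tq_i-1}{2}\bigr)\,\mathcal B_{\tq_i}\bigl(-u+\tfrac{\tq_i-1}{2}\bigr).
\end{align*}
The Capelli--Liouville identity states that $\prod_{j=1}^{n}(v-j+1)\cdot\pi_n(\mathcal B_n(v))$ is a polynomial of degree $n$ in $v$ with central coefficients in $\rU(\gl_n)$. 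Substituting $v=u+\tfrac{\tq_i-1}{2}$ and $v=-u+\tfrac{\tq_i-1}{2}$ and multiplying the resulting scalar factors, one obtains precisely $\prod_{j=1}^{\tq_i}\bigl((\tfrac{\tq_i+1}{2}-j)^2-u^2\bigr)$, so each pair $\mathcal B_{\tq_i}\mathcal B_{\tq_i}$ becomes a polynomial of degree $2\tq_i$ in $u$. The $\mc C_{\tq_0}$ factor is handled analogously via identity \eqref{sdet=qdet}: when $\ell$ is even, $\tq_0=0$ and no extra factor is required; when $\ell$ is odd, the prefactor $\prod_{i=-k}^{k}(u+\rho_i^\pm)$ together with the normalisation by $\gamma_N^\pm(u+\tfrac{N-1}{2})$ is exactly what turns $\mc C_{\tq_0}(u+\tfrac{N-1}{2})$ into a polynomial of degree $\tq_0$, the roots $-\rho_i^\pm$ being the appropriate contents of the central column of the pyramid. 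Taking the product of degrees yields $\tq_0+2\sum_{i\gge 1}\tq_i=M$, and the leading coefficient of $u^M$ is $1$ by construction.

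Step (iii) is proved by passing to the associated graded algebra with respect to the canonical filtration. By Corollaries \ref{gryangsliceAI} and \ref{gryangsliceAII}, $\gr'\Y_{N,\ell}^\pm(\sigma)\cong\bC[\cS]$ as graded Poisson algebras, where $\cS$ is the Slodowy slice attached to $(\sigma,\ell)$. Since each $z_{2r}$ is central, its symbol $\gr'z_{2r}$ lies in the Poisson centre of $\bC[\cS]$, which by a standard Hamiltonian-reduction argument coincides with the restriction $\bC[\g^*]^G|_{\cS}$. Tracking the top canonical-degree terms of $c_{2r}$ against the PBW basis in Corollaries \ref{trtwypbw} and \ref{trtwypbw-new} and computing in the Miura image identifies the symbols $\gr'z_{2r}$ with the classical limits of the coefficients of the Sklyanin determinant; under the Harish-Chandra isomorphism these restrict to algebraically independent generators of $\bC[\g^*]^G$ in types $\mathsf{B}$ and $\mathsf{C}$ (in type $\mathsf{D}$ they generate only a codimension-one subalgebra missing the Pfaffian, which is why the statement is subject to the assumptions \eqref{nonpfcases}). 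Algebraic independence of the symbols then lifts to that of $z_2,z_4,\ldots,z_{2m}$. The most delicate step is the precise identification of symbols under $\gr'\Y_{N,\ell}^\pm(\sigma)\cong\bC[\cS]$, which requires a careful leading-term analysis in the canonical filtration and constitutes the main obstacle of the proof.
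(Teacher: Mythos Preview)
Your steps (i) and (ii) are essentially the paper's argument: the Miura transform is injective, and under $\chi^\pm$ the series $Z_M(u)$ factors as a tensor product of the known polynomials $\tilde{\mc C}_{\tq_0}(u)$ and $\tilde{\mc B}_{\tq_i}(u+\tfrac{\tq_i-1}{2})\tilde{\mc B}_{\tq_i}(-u+\tfrac{\tq_i-1}{2})$ from \cite[Thms.~7.1.1 \& 7.1.6]{Mol07}.

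For (iii), however, you take a genuinely different route from the paper, and introduce an obstacle that is entirely avoidable. The paper stays inside the Miura picture: since $\chi^\pm$ is injective and $\chi^\pm(Z_M(u))$ is a product of factors lying in \emph{distinct} tensor components $\rU(\g_N^\pm),\rU(\gl_{\tq_1}),\ldots,\rU(\gl_{\tq_s})$, algebraic independence of $z_2,\ldots,z_{2m}$ follows immediately from the algebraic independence of the coefficients of $\tilde{\mc C}_{\tq_0}$ and the $\tilde{\mc B}_{\tq_i}$ in each factor, which is precisely the content of the cited theorems in \cite{Mol07}. No passage to the associated graded, no identification of symbols on the Slodowy slice, and no invariant-theoretic input is required.

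Your detour through $\gr'\Y_{N,\ell}^\pm(\sigma)\cong\bC[\cS]$ is the argument the paper uses for the \emph{next} result, Theorem~\ref{truncatedcenter}, where it is needed to show that the $z_{2r}$ \emph{generate} the center; there the algebraic independence from the present proposition is taken as input. By inverting the logical order you are left with exactly the ``main obstacle'' you flag: pinning down $\gr' z_{2r}$ under the slice isomorphism. That identification is neither carried out nor needed. A related slip: the proposition is \emph{not} subject to the hypotheses \eqref{nonpfcases}; algebraic independence of $z_2,\ldots,z_{2m}$ holds in type~{\sf D} as well (the even coefficients of the characteristic polynomial restricted to $\fkso_{2m}$ are still algebraically independent; they merely fail to generate $\bC[\fkso_{2m}]^{\SO_{2m}}$). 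The restriction \eqref{nonpfcases} enters only in Theorem~\ref{truncatedcenter}.
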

\begin{proof}
It is known from \cite[Thm. 7.1.1]{Mol07} that the series \begin{equation*}
\tilde{\mc B}_N(u):=u(u-1) \dots (u-N+1) \pi_N(\mc B_N(u))
\end{equation*}
is a polynomial $\tilde{\mc B}_N(u)=u^N+\tilde{b}_1u^{N-1}+\ldots+\tilde{b}_N$ of degree $N$ in $u$.
Moreover, the coefficients $\tilde{b}_1, \dots , \tilde{b}_N$ are algebraically independent generators of $Z(\rU(\gl_N))$. The analogous result for $\Y_N^{\pm}$ \cite[Thm. 7.1.6]{Mol07} states the following:
\begin{equation*}
\tilde{\mc C}_N(u):=\prod_{i=-k}^{k}(u+\rho^{\pm}_i) 
\, \Big(\gamma_{N}^{\pm}\big(u+\tfrac{N-1}{2}\big)\Big)^{-1} \, \xi_N^{\pm}\Big(\mc C_N\big(u+\tfrac{N-1}{2}\big)\Big)
\end{equation*}
is a polynomial $\tilde{\mc C}_N(u)=u^N+\tilde{c}_1u^{N-1}+\ldots+\tilde{c}_N$ of degree $N$ in $u$.
Moreover, denoting $N=2n+1$ or $N=2n$, the coefficients $\tilde{c}_2, \ldots , \tilde{c}_{2n}$ are algebraically independent elements  of $Z(\rU(\g^{\pm}_N))$, and each $c_{i}$ lies in $\mathbb C[\tilde{c}_2, \ldots , \tilde{c}_{2n}]$. We define, for $i=1, \ldots, n$,
\begin{equation*}
\rho^+_{-i}=-\rho^+_{i} = 
\begin{cases} i-1, & \textnormal{ for } N=2n, \\
i-\frac{1}{2}, & \textnormal{ for } N=2n+1,
\end{cases}
\qquad
\rho^{-}_{-i}=-\rho^{-}_{i}=i.
\end{equation*}
Further set $\rho_0=\frac{1}{2}$ if $N=2n+1$.
By Lemma \ref{sdetmiura}, we have
\begin{align*}
&\chi^\pm(Z_M(u))= 
\\
&\begin{dcases} \tilde{\mc C}_{\tq_0}(u) \otimes \tilde{\mc B}_{\tq_1}(u+\tfrac{\tq_1-1}{2})\tilde{\mc B}_{\tq_1}(-u+\tfrac{\tq_1-1}{2}) \otimes \dots \otimes \tilde{\mc B}_{\tq_s}(u+\tfrac{\tq_s-1}{2})\tilde{\mc B}_{\tq_s}(-u+\tfrac{\tq_s-1}{2}),
& \textnormal{for } \ell \textnormal{ odd} \\
  \tilde{\mc B}_{\tq_1}(u+\tfrac{\tq_1-1}{2})\tilde{\mc B}_{\tq_1}(-u+\tfrac{\tq_1-1}{2}) \otimes \dots \otimes \tilde{\mc B}_{\tq_s}(u+\tfrac{\tq_s-1}{2})\tilde{\mc B}_{\tq_s}(-u+\tfrac{\tq_s-1}{2}),
& \textnormal{for } \ell \textnormal{ even}.
\end{dcases}
\end{align*}
As $\chi^{\pm}$ is injective the algebraic independence then follows from the previously mentioned \cite[Thms. 7.1.1 \& 7.1.6]{Mol07}.
The statement about the canonical degrees follows from definition of $\mc C_N(u)$.
\end{proof}

The following is the main result of this section.

\begin{thm}\label{truncatedcenter}
Assume we are in one of the following cases:
\begin{equation}\label{nonpfcases}
    \begin{dcases} 
    \Y_{N,\ell}^{+}(\sigma) \  \text{ and } \ N  \text{ is odd, } \\
    \Y_{N,\ell}^{+}(\sigma) \ \text{ and \ both of } N,   \ell \text{ are even, } \\
\Y_{N,\ell}^{-}(\sigma) \  \text{ and } \ \ell \text{ is odd.}
\end{dcases}    
\end{equation}
Then $Z(\Y_{N,\ell}^{\pm}(\sigma))$ 
is freely generated by the coefficients $z_2, \dots, z_{2m}$. Furthermore, these elements lift a complete collection of homogeneous generators of the Poisson center of the Slodowy slice, via the isomorphisms of Corollaries~\ref{gryangsliceAI} and \ref{gryangsliceAII}.
\end{thm}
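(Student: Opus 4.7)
The plan is to reduce the theorem to a question about the Poisson centre of the associated Slodowy slice, via the canonical filtration. By Corollaries~\ref{gryangsliceAI} and \ref{gryangsliceAII}, the graded algebra $\gr'\Y^{\pm}_{N,\ell}(\sigma)$ is isomorphic to $\bC[\cS_\chi]$ as a graded Poisson algebra, with the doubled canonical grading matching the Kazhdan grading. Since $\Y^{\pm}_{N,\ell}(\sigma)$ is a filtered quantization of $\bC[\cS_\chi]$, taking symbols gives the graded inclusion
\[
\gr' Z(\Y^{\pm}_{N,\ell}(\sigma))\subseteq Z_P(\bC[\cS_\chi]).
\]
By Proposition~\ref{coeffalgind} and Remark~\ref{Zsty}, the elements $z_2, z_4,\ldots, z_{2m}$ are algebraically independent central elements of $\Y^{\pm}_{N,\ell}(\sigma)$ of canonical degrees $2,4,\ldots,2m$, so their symbols $\gr'(z_{2i})$ lie in $Z_P(\bC[\cS_\chi])$ and have Kazhdan degrees $4, 8, \ldots, 4m$.

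Under the hypothesis \eqref{nonpfcases}, the Lie algebra $\g$ specified by \eqref{liealgdef} is of type $\sf B$ or $\sf C$, so $m=\mathrm{rank}(\g)$. The standard Poisson-geometric description of the Slodowy slice (\cite{GG02}, cf.\ \cite{Pr02}) identifies $Z_P(\bC[\cS_\chi])$ with the pullback of $\bC[\g]^{G}$ along the inclusion $\cS_\chi\hookrightarrow \g^{*}$, and for $\g$ of type $\sf B$ or $\sf C$ the algebra $\bC[\g]^{G}$ is a free polynomial algebra on Casimir generators of $G$-degrees $2, 4, \ldots, 2m$. Under the Kazhdan grading on $\cS_\chi$ these become generators of Kazhdan degrees $4, 8, \ldots, 4m$, so $Z_P(\bC[\cS_\chi])$ is a graded polynomial algebra on homogeneous generators of exactly these degrees.

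It remains to show that the symbols $\gr'(z_{2i})$ are themselves algebraically independent. For this we invoke the Miura transform $\chi^{\pm}$, which is a filtered injection by Theorems~\ref{trpbw} and~\ref{trpbw-new}; its associated graded is therefore injective as well. Lemma~\ref{sdetmiura} expresses $\chi^{\pm}(Z_M(u))$ as an explicit product of coefficients of $\tilde{\mc C}_{\tq_0}(u)$ with pairs of shifted $\tilde{\mc B}_{\tq_i}(u)$, which by \cite[Thms.\ 7.1.1 and 7.1.6]{Mol07} are algebraically independent polynomial generators of the centres of the corresponding tensor factors. A direct leading-term analysis of this product in the associated graded of the Miura codomain then shows that the symbols $\gr'(z_{2i})$ factor through algebraically independent elements in $\Sym(\g^{\pm}_N)\otimes\bigotimes_i\Sym(\gl_{\tq_i})$ (respectively the $\gl_N$ analogue for $\ell$ even), and hence are themselves algebraically independent in $\bC[\cS_\chi]$. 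Combined with the preceding paragraph, a Hilbert-series comparison in each Kazhdan degree forces $\gr'(z_{2i})$ to be a complete set of free generators of $Z_P(\bC[\cS_\chi])$. A standard filtered lifting argument, applied to the chain $\bC[z_2,\ldots,z_{2m}]\subseteq Z(\Y^{\pm}_{N,\ell}(\sigma))$, then yields that the $z_{2i}$ freely generate $Z(\Y^{\pm}_{N,\ell}(\sigma))$ and simultaneously lift a complete collection of homogeneous generators of $Z_P(\bC[\cS_\chi])$.

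The main technical obstacle lies in the leading-symbol analysis of $\chi^{\pm}(z_{2i})$ under the canonical filtration. The normalizing prefactor $\prod_i\prod_j\bigl((\tfrac{\tq_i+1}{2}-j)^2-u^2\bigr)$ appearing in the definition of $Z_M(u)$ is a purely scalar polynomial in $u$, and hence does not affect the leading-term structure; however, one must carefully verify that after expanding the product in Lemma~\ref{sdetmiura} the top canonical-degree component of each individual coefficient $z_{2i}$ survives without cancellation. This is a combinatorial bookkeeping exercise, but it is the crux of passing from algebraic independence of the $z_{2i}$ themselves (which is Proposition~\ref{coeffalgind}) to the strictly stronger algebraic independence of their symbols required above.
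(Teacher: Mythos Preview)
Your proposal is correct and follows essentially the same strategy as the paper: identify $\gr'\Y^{\pm}_{N,\ell}(\sigma)$ with $\bC[\cS_\chi]$, note that in the cases \eqref{nonpfcases} the Lie algebra $\g$ is of type {\sf B} or {\sf C} so that the Poisson centre $Z_P(\bC[\cS_\chi])\cong\bC[\g^*]^G$ is polynomial on generators of Kazhdan degrees $4,8,\ldots,4m$, observe that the symbols of $z_{2},\ldots,z_{2m}$ are Poisson-central of exactly these degrees, and lift.

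The one place you make life harder than necessary is your ``main technical obstacle''. Algebraic independence of the symbols $\gr'(z_{2i})$ does \emph{not} require a separate leading-term analysis of the Miura image: it is forced by a Hilbert-series sandwich, using only the two inputs you already have. Namely, set $A=\bC[z_2,\ldots,z_{2m}]\subseteq Z(\Y^{\pm}_{N,\ell}(\sigma))$ with the induced canonical filtration. Since the $z_{2i}$ are algebraically independent (Proposition~\ref{coeffalgind}) and $z_{2i}\in\mathrm F_{2i}$, the monomials of weighted degree $\le d$ are linearly independent in $\mathrm F_dA$, giving a lower bound on $\dim\mathrm F_dA$. On the other hand $\gr'A\subseteq\gr'Z\subseteq Z_P(\bC[\cS_\chi])$, and the latter has the Hilbert series of a polynomial ring on generators in exactly the same degrees, giving the matching upper bound on $\dim(\gr'A)_d$. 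Equality throughout yields $\gr'A=Z_P(\bC[\cS_\chi])$, so the $\gr'(z_{2i})$ are automatically algebraically independent generators; no cancellation check in the Miura product is needed. This is how the paper treats the step (it simply cites Proposition~\ref{coeffalgind} for the degrees and independence, then matches against the known Poisson centre), and your own ``Hilbert-series comparison'' sentence is already the whole argument.
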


Before proving this theorem we need to recall some classical invariant theory from \cite[§7]{Ja04}.
Let $x \in \gl_M$ and consider the characteristic polynomial 
\begin{align*}    
q(x,t)=\det( t \ \id-x)=t^M+\sum_{i=1}^{M}q_i(x)t^{M-i} \in \mathbb{C}[\gl_M]^{\GL_M}.
\end{align*}
In fact by \cite[Prop. 7.9]{Ja04} the coefficients $q_i(x)$ are homogeneous polynomials of degree $i$ in $\gl_M$ and are algebraically independent generators of $\mathbb{C}[\gl_M]^{\GL_M}$. Now let $\g$ be one of $\mathfrak{so}_{2m+1}$ or $\mathfrak{sp}_{2m}$. Then it is known \cite[Prop. 7.9]{Ja04} that the restriction maps 
$\mathbb{C}[\gl_{2m+1}]^{\GL_{2m+1}} \to \mathbb{C}[\mathfrak{so}_{2m+1}]^{\SO_{2m+1}}$ and $\mathbb{C}[\gl_{2m}]^{\GL_{2m}} \to \mathbb{C}[\mathfrak{sp}_{2m}]^{\Sp_{2m}}$ are surjective and that the kernel is generated by the $q_{2i+1}$. Consequently in these cases $\mathbb{C}[\g]^{G}$ is generated by ${q_{2i}}_{\vert {\g}}$.

If we fix a choice of $G$-equivariant isomorphism $\g \cong \g^*$ then we can use this to transfer the natural Poisson structure on $\bC[\g^*]$ to a Poisson structure on $\bC[\g]$. Also note that the Hamiltonian derivations of $\mathbb{C}[\g]$ are generated by $\ad^*(\g)$ as a $\mathbb{C}[\g]$-module. Since $G$ is connected, it follows that $\mathbb{C}[\g]^{G}$ coincides with the Poisson center of $\mathbb{C}[\g]$. We will denote the Poisson center of a Poisson algebra $A$ by $Z(A)$.

\begin{proof}[Proof of Theorem \ref{truncatedcenter}.]
First note from Proposition \ref{coeffalgind} that $z_2, \dots, z_{2m}$ are algebraically independent.
So it remains to prove that $z_2, \dots, z_{2m}$ generate $Z(\Y_{N,\ell}^{\pm}(\sigma))$.
 By  the footnote to \cite[Question 5.1]{PrJI} (see also \cite[Theorem~11.1]{AM21} for a more detailed proof), the Kazhdan graded map $\mathbb{C}[\g]^{G} \to \mathbb{C}[\cS_\chi]$ restricts to an isomorphism 
 $\mathbb{C}[\g]^{G} \xrightarrow{\sim} Z(\mathbb{C}[\cS_\chi])$.
 
 By the definition of the Kazhdan grading (see \S\ref{ss:slicesandWalgebras}, for instance) we see that for Poisson central elements, the PBW degree coincides with twice the Kazhdan degree. In particular in view of the previously discussed invariant theory $Z(\mathbb{C}[\cS_\chi])$ for $\g$ one of $\mathfrak{so}_{2m+1},\mathfrak{sp}_{2m}$ is freely generated by $m$ elements of Kazhdan degree $4,8,12, \dots,4m$.
 
 We know from Corollaries \ref{gryangsliceAI} and \ref{gryangsliceAII} that we have an isomorphism $\gr' \Y_{N,\ell}^{\pm}(\sigma) \xrightarrow{\sim}  \mathbb{C}[\cS_\chi]$ where in the cases we are considering $\g$ is one of $\mathfrak{so}_{2m+1},\mathfrak{sp}_{2m}$. By \cite[Prop. 4.7]{To23} this isomorphism is graded if we equip $\gr' \Y_{N,\ell}^{\pm}(\sigma)$ with the doubled canonical grading and $\mathbb{C}[\cS_\chi]$ with the Kazhdan grading. In particular $Z(\gr' \Y_{N,\ell}^{\pm}(\sigma))$ is freely generated by $m$ elements in canonical degrees $2,4, \dots,2m$.
 
 Now by Proposition \ref{coeffalgind} the top graded components of the elements $z_2,z_4, \ldots, z_{2m}$ are algebraically independent Poisson central elements in $\gr' \Y_{N,\ell}^{\pm}(\sigma)$ of the correct canonical degrees so that they must generate the whole Poisson center. In particular, these elements generate the center of $\Y_{N,\ell}^{\pm}(\sigma)$.
\end{proof}

\subsection{Pfaffian generators}
 \label{ss:pfaffian}

Throughout this subsection let $N$ be even, and let $M=2m$ be the number of boxes in the pyramid.
As we have seen in the above subsection, the elements $z_2, \dots z_{2m}$ generate the center of $\Y_{N,\ell}^{\pm}(\sigma)$ if we are in one of the cases \eqref{nonpfcases}. In fact, one can easily check that the cases \eqref{nonpfcases} correspond precisely to the truncated shifted twisted Yangians such that $\gr' \Y^\pm_{N,\ell}(\sigma)$ is identified with a Slodowy slice in a classical Lie algebra of type {\sf B} or {\sf C}.

If $\g = \fkso_{2m}$ has type {\sf D} then one still knows that the map $\bC[\g] \to \mathbb{C}[\cS_\chi]$ gives an isomorphism of Poisson centers. In this case, it is well known from classical invariant theory \cite[§7.7]{Ja04} that the restrictions of the coefficients of the characteristic polynomial from $\gl_{2m}$ to $\mathfrak{so}_{2m}$ do not generate the whole center. Instead the restriction of the coefficient $q_{2m}$ to $\mathfrak{so}_{2m}$ admits a square root $\pf$ called the \textit{Pfaffian}. It is not hard to see that the Pfaffian $\pf|_{\cS_\chi}$ can never appear as the top graded term of an element of $\Y_N^\pm(\sigma)$ constructed from the Sklyanin determinant.

We introduce the following definition.
\begin{dfn}\label{Pflike}
An element $\pf\in \Y^{\pm}_{N,\ell}(\sigma)$ is called a \textit{Pfaffian generator} if it admits the following three properties:
\begin{itemize}
    \item $\pf$ is central,
    \item $\pf$ has canonical degree $m$,
    \item The elements $z_2, \dots z_{2m-2},\pf$ are algebraically independent.
\end{itemize}
\end{dfn}

\begin{conj}\label{pfconj}
A Pfaffian generator exists if (and only if) one of the following cases holds:
\begin{enumerate}
\item $\Y_{N,\ell}^+(\sigma)$ with $N$ even and $\ell$ odd,
\item $\Y_{N,\ell}^-(\sigma)$ with $\ell$ even.
\end{enumerate}
\end{conj}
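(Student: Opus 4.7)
The plan rests on the fact that in cases (1) and (2) the associated graded $\gr' \Y_{N,\ell}^\pm(\sigma)$ is identified with $\bC[\cS_\chi]$ for $\cS_\chi$ a Slodowy slice in $\mathfrak{so}_{2m}$, via Corollaries~\ref{gryangsliceAI} and \ref{gryangsliceAII} combined with the pyramid correspondence of \S\ref{subsec:pyramid}. In this type $\sf D$ situation, the Poisson center $Z(\bC[\cS_\chi])$ is freely generated by restrictions of $q_2|_{\mathfrak{so}_{2m}},\ldots,q_{2m-2}|_{\mathfrak{so}_{2m}}$ together with the classical Pfaffian $\pf|_{\mathfrak{so}_{2m}}$ of Kazhdan degree $2m$, i.e.\ canonical degree $m$. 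By Proposition~\ref{coeffalgind} the elements $z_2,\ldots,z_{2m-2}$ already lift $q_2|_{\cS_\chi},\ldots,q_{2m-2}|_{\cS_\chi}$, so it suffices to produce any central element of canonical degree $m$ in $\Y_{N,\ell}^\pm(\sigma)$ whose top graded term is $\pf|_{\cS_\chi}$; the algebraic independence from the $z_{2i}$ then follows automatically by counting against the known structure of $Z(\bC[\cS_\chi])$, essentially as in the proof of Theorem~\ref{truncatedcenter}.

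The first concrete step is to construct the element in the rectangular (untruncated) case $\sigma=0$. Here one expects a quantum Pfaffian of the reflection matrix $S(u)$, built from an antisymmetrized expression of the shape
\[
\mathrm{Pf}\,S(u) \;=\; \sum_{\sigma} (-1)^\sigma\, s_{\sigma(1),\sigma(2)}(u)\, s_{\sigma(3),\sigma(4)}(u-2)\cdots s_{\sigma(N-1),\sigma(N)}(u-N+2),
\]
suitably fused via the reflection equation so that $(\mathrm{Pf}\,S(u))^2$ agrees with the relevant component of $\sdet\,S(u)$. The next step is to pass from the rectangular algebra $\Y_N^\pm$ to $\Y_{N,\ell}^\pm(\sigma)$. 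By Remark~\ref{Zsty}, central elements of $\Y_N^\pm$ already lie in $Z(\Y_N^\pm(\sigma))$, so one needs only to verify: (i) that $\mathrm{Pf}\,S(u)$ survives the truncation ideal $I_\ell$ of \eqref{trdef}/\eqref{trdefA2} in a nonzero way, (ii) that the image has canonical degree $m$ with respect to the filtration of \S\ref{canof}, and (iii) that the top graded term agrees, under Corollaries~\ref{gryangsliceAI}--\ref{gryangsliceAII}, with the classical Pfaffian on $\cS_\chi$. The degree count (ii) is dictated by $N$ being the number of factors in the Pfaffian, and the identification (iii) should follow from the fact that the leading term of each $s_{ij}(u-c)$ in the canonical filtration is $s_{ij}^{(r)}$, whose image in $\bC[\g^\theta]$ is a matrix coefficient of the twisted current $f_{ij}$.

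The main obstacle will be the centrality of $\mathrm{Pf}\,S(u)$ in $\Y_N^\pm$. Unlike $\sdet\,S(u)$, the Pfaffian is not group-like under the coideal coproduct \eqref{coms}, so the argument cannot be reduced to Molev's fusion identity \eqref{eq:sky} on the antisymmetrizer $A_N$; a new identity on the level of the reflection equation, of pfaffian rather than determinantal type, will be required. The already-verified case $N=2$ provides the base of an induction using the baby comultiplication $\Delta_R:\Y_{N,\ell}^\pm(\sigma)\to \Y_{N,\ell-2}^\pm(\dot\sigma)\otimes \rU(\gl_t)$ of Theorem~\ref{delR}: one would try to construct $\pf$ so that $\Delta_R(\pf)$ splits as $\dot\pf\otimes 1 + (\text{lower-order correction involving a Pfaffian in }\rU(\gl_t))$, analogous to the structure exhibited in Proposition~\ref{sdetcomul}. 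Computing this correction amounts to a Pfaffian analogue of the product formula $\qdet\,T(u)\,\qdet\,T(-u+N-1)$ appearing there, and — together with the injectivity of $\gr'\Delta_R$ from Theorem~\ref{trpbw} — would yield both existence and algebraic independence simultaneously.

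An alternative, more indirect route would bypass explicit formulas: assuming the compatibility of Losev's construction \cite{Lo22} in the equivariant setting of Section~\ref{sec:singularities}, one could construct $\pf$ as the image of the quantum Pfaffian in $Z(\rU(\g,e))$ under a preliminary filtered homomorphism $\rU(\g,e)\to \Y_{N,\ell}^\pm(\sigma)$ produced by the universal property of the finite $W$-algebra quantization. Producing such a homomorphism without first knowing the isomorphism of Theorem~\ref{thm:F} is itself delicate, but it would neatly decouple the construction of $\pf$ from the reflection-equation combinatorics, at the cost of shifting the difficulty into the equivariant deformation theory.
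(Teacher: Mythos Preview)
The statement you are proposing to prove is a \emph{conjecture} in the paper, not a theorem; the paper does not prove the ``if'' direction in general.  What the paper does establish is: (a) the ``only if'' direction, as an immediate consequence of Theorem~\ref{truncatedcenter}; (b) the unshifted case $\sigma=0$, by reference to Brown's work~\cite{Br09}; and (c) the case $\Y^+_{2,\ell}(\sigma)$ with $\ell$ odd, by an explicit construction $\pf=\wtl B_{1;1,1}^{(\fks_{1,2};p_1)}$ built out of the parabolic generators $B$ and $H$ (Theorem~\ref{pfr2thm}), whose centrality is verified by direct computation in the $\sigma=0$ truncation and then propagated by the baby comultiplication.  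So there is no full proof in the paper for your proposal to be compared against.

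Your proposed strategy contains a genuine gap at its first step.  You want to construct a quantum Pfaffian $\mathrm{Pf}\,S(u)$ which is \emph{central in the untruncated twisted Yangian} $\Y_N^\pm$, and then push it through the shift and truncation.  This is impossible: Theorem~\ref{freegen-center} says that $Z(\Y_N^\pm)$ is the polynomial ring on the even coefficients $c_{2k}$ of $\sdet\,S(u)$, and by Remark~\ref{Zsty} the same is true for $Z(\Y_N^\pm(\sigma))$.  Hence any element coming from $Z(\Y_N^\pm)$ has image in $\Y^\pm_{N,\ell}(\sigma)$ lying in the polynomial ring on the $z_{2i}$.  A Pfaffian generator, by Definition~\ref{Pflike}, is algebraically independent from $z_2,\ldots,z_{2m-2}$ and of canonical degree $m$; no element of $\bC[z_2,\ldots,z_{2m}]$ can satisfy both, since the degree-$m$ piece of that ring consists of polynomials in $z_2,\ldots,z_{2m-2}$ whenever $m$ is not itself one of the even integers $2,4,\ldots,2m$ (and when $m$ is even, $z_m$ is already among the $z_{2i}$).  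This is exactly the point made in the paper just before Definition~\ref{Pflike}: the Pfaffian on the slice cannot be reached as the top graded term of anything built from the Sklyanin determinant.  The Pfaffian must therefore be a genuinely \emph{truncation-specific} element, not the image of a central series in $\Y_N^\pm$; the paper's $N=2$ construction confirms this, being built from $B$-generators rather than from $\sdet\,S(u)$.

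Your alternative route via a filtered map $\rU(\g,e)\to\Y^\pm_{N,\ell}(\sigma)$ is circular in the context of the paper: the Pfaffian conjecture is exactly what is needed to complete Theorem~\ref{t:finalisothm} in the remaining type~{\sf D} cases, so one cannot invoke the $W$-algebra isomorphism to produce the Pfaffian.  The inductive idea via $\Delta_R$ is much closer to what the paper actually does in the $N=2$ case, but to make it work in general you would first need a candidate element in $\Y^\pm_{N,\ell}(\sigma)$ (not in $\Y_N^\pm$) and a compatibility of the form $\Delta_R(\pf)=\dot\pf\otimes(\text{central in }\rU(\gl_t))$; producing such a candidate for $N>2$ is precisely the open problem.
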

Note that by Theorem \ref{truncatedcenter} a Pfaffian generator can only exist in the cases not mentioned in \eqref{nonpfcases}. Thus the ``only if" part of the conjecture holds.

The work of Brown \cite{Br09} supports the existence of a Pfaffian generator in all the unshifted cases in Conjecture~\ref{pfconj}. As new supporting evidence for the conjecture, we construct a Pfaffian generator in the case $\Y^{+}_{2,\ell}(\sigma)$ for odd $\ell$. 
The remainder of the section is devoted to proving the following.

\begin{thm}\label{pfr2thm}
    The element 
    $\pf:=\wtl{B}_{1;i,1}^{(\mathfrak{s}_{1,2};p_1)}$ as defined in \eqref{extragens} is a Pfaffian generator in $\Y^{+}_{2,\ell}(\sigma)$, for $\ell$ odd.
\end{thm}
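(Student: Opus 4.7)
The plan is to verify the three defining properties in Definition~\ref{Pflike} for $\pf = \wtl{B}_{1;1,1}^{(\mathfrak{s}_{1,2};p_1)}$. By construction, $\pf$ is the coefficient of $u^{-p_1}$ in the product $\mathring{B}_{1;1,1}(u+\tfrac12)H_{1;1,1}(u)$; writing this out via the analogue of \eqref{extragens} (with $p_1+1$ replaced by $p_1$) shows that each summand $B_{1;1,1}^{(k+1+\mathfrak{s}_{1,2})}H_{1;1,1}^{(t)}$ has canonical degree $k+1+\mathfrak{s}_{1,2}+t\lle p_1+\mathfrak{s}_{1,2}=\ell-\mathfrak{s}_{1,2}=m$, so $\pf$ has canonical degree at most $m$. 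The top-degree piece (contributed by $t=0$, $k=p_1-1$) yields the free polynomial generator $\sff_{2,1;1,1}^{(p_1+\mathfrak{s}_{1,2})}$ of $\gr'\Y_{2,\ell}^+(\sigma)$ (Theorem~\ref{trpbw}), so in fact the canonical degree is exactly $m$.

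Centrality of $\pf$ is the main obstacle. Since the minimal admissible shape is $\mu=(1,1)$, Corollary~\ref{gen:cor} reduces the problem to showing $[\pf, H_{1;1,1}^{(r)}]=[\pf, H_{2;1,1}^{(r)}]=[\pf, B_{1;1,1}^{(r)}]=0$ for all $r>0$. The decisive observation is that in the quotient by $I_\ell$ with $\ell$ odd, the truncation relation $H_{1;1,1}^{(r)}+\tfrac12 H_{1;1,1}^{(r-1)}=0$ for $r>p_1$, combined with the symmetry $H_{1;1,1}^{(2r-1)}=0$ (from $s_{11}(u)=s_{11}(-u)$, equation \eqref{pr1}), forces $H_{1;1,1}^{(r)}=0$ for all $r\gge p_1$; consequently $H_{1;1,1}(u)$ is a polynomial in $u^{-1}$ of degree at most $p_1-1$ modulo $I_\ell$, and so is $\mathring{B}_{1;1,1}(u+\tfrac12)H_{1;1,1}(u)$ (of bounded degree depending also on~$\mathfrak{s}_{1,2}$). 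I then work at the level of generating series, computing $[H_{1;1,1}(u), \mathring{B}_{1;1,1}(v+\tfrac12)H_{1;1,1}(v)]$ via \eqref{pr3} and \eqref{pr4}, $[H_{2;1,1}(u), \mathring{B}_{1;1,1}(v+\tfrac12)H_{1;1,1}(v)]$ via the parabolic relation \eqref{pr4} applied to the second block, and $[B_{1;1,1}(u), \mathring{B}_{1;1,1}(v+\tfrac12)H_{1;1,1}(v)]$ via \eqref{pr6} together with the $Z$-relation \eqref{Z's}. After clearing denominators by appropriate rational factors in $u,v$, each such identity reduces to a polynomial identity in $u$ and $v$ of bounded degree; extracting the coefficient of $v^{-p_1}$ and using the polynomial nature of $H_{1;1,1}(v)$ to eliminate the tail yields the vanishing of the desired commutator. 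This finite polynomial check is where the bulk of the labor lies.

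For algebraic independence, Corollary~\ref{gryangsliceAI} furnishes a graded Poisson isomorphism $\gr'\Y_{2,\ell}^+(\sigma)\cong\bC[\cS_\chi]$, where $\cS_\chi$ is the Slodowy slice in $\g=\fkso_{2m}$ (type~{\sf D}) associated to the two-block orthogonal partition $(p_1,\ell)$. Classical invariant theory together with the identification $\bC[\g]^G\twoheadrightarrow Z(\bC[\cS_\chi])$ used in the proof of Theorem~\ref{truncatedcenter} shows that $Z(\bC[\cS_\chi])$ is a polynomial algebra on the restrictions $q_{2k}|_{\cS_\chi}$ for $1\lle k\lle m-1$ together with the classical Pfaffian invariant $\pf_\g|_{\cS_\chi}$ of degree $m$. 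The top graded components of $z_2,\ldots,z_{2m-2}$ have already been matched (proof of Theorem~\ref{truncatedcenter}) with non-zero scalar multiples of $q_2|_{\cS_\chi},\ldots,q_{2m-2}|_{\cS_\chi}$; what remains is to identify the top graded component of $\pf$ with a non-zero scalar multiple of $\pf_\g|_{\cS_\chi}$, which I carry out by a direct computation of leading symbols along the Poisson isomorphism. Algebraic independence of $z_2,\ldots,z_{2m-2},\pf$ in $\Y_{2,\ell}^+(\sigma)$ then follows from that of the classical generators in $Z(\bC[\cS_\chi])$.
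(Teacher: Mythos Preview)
Your canonical degree computation is fine, but the heart of the proof---centrality---has a real gap. You assert that $\mathring{B}_{1;1,1}(u+\tfrac12)H_{1;1,1}(u)$ is a polynomial in $u^{-1}$ of bounded degree modulo $I_\ell$. This is false for $\ell$ odd: while $H_{1;1,1}(u)$ does truncate to a polynomial in $u^{-1}$, the series $\mathring{B}_{1;1,1}(u+\tfrac12)$ does not. The coefficient of $u^{-(p_1+1)}$ in the product is precisely $\wtl{B}_{1;1,1}^{(\mathfrak{s}_{1,2};p_1+1)}$, which is a generator of $I_\ell$ only when $\ell$ is \emph{even} (see \eqref{trdef}); for $\ell$ odd there is no such relation, so the product remains an honest infinite series. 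Your subsequent ``polynomial identity of bounded degree'' argument therefore has no footing. Even setting this aside, the generating-series relations \eqref{h1b n2}--\eqref{bb n2} involve the full unshifted series $B_{1;1,1}(u)$ and do not transfer directly to $\mathring{B}_{1;1,1}(u)$ in $\Y_2^+(\sigma)$; you would need to derive modified relations, and the truncation gives you no simple vanishing for $H_{2;1,1}^{(r)}$ or $B_{1;1,1}^{(r)}$ beyond the PBW range.

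The paper instead proceeds by induction on $\mathfrak{s}_{1,2}$ via the injective baby comultiplication $\Delta_R:\Y_{2,\ell}^+(\sigma)\hookrightarrow \Y_{2,\ell-2}^+(\dot\sigma)\otimes\rU(\gl_1)$. One computes $\Delta_R(\pf)=\wtl{B}_{1;1,1}^{(\mathfrak{s}_{1,2}-1;p_1)}\otimes x+\dot{\wtl{B}}_{1;1,1}^{(\mathfrak{s}_{1,2};p_1)}\otimes 1$; the first summand is central by induction, and the second vanishes because $[H_{1;1,1}^{(2)},\wtl{B}_{1;1,1}^{(\mathfrak{s}_{1,2}-1;p_1)}]=-\dot{\wtl{B}}_{1;1,1}^{(\mathfrak{s}_{1,2};p_1)}$ while the bracket is zero by centrality. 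The base case $\mathfrak{s}_{1,2}=0$ is the direct computation you envision (Lemma~\ref{pfcentralunshifted}). Algebraic independence is likewise handled inductively through $\gr'\Delta_R$, rather than by tracing the Poisson isomorphism of Corollary~\ref{gryangsliceAI} explicitly---your proposed identification of $\gr'\pf$ with a scalar multiple of $\pf_\g|_{\cS_\chi}$ is plausible but is not actually carried out, and the isomorphism passes through Dirac reduction in a way that makes this nontrivial.
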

Before proving this theorem we state a corollary, and then prove some preliminary results.
\begin{cor}
\label{pfcases}
    For $\ell$ odd, the center $Z(\Y_{2,\ell}^+(\sigma))$ is freely generated by $z_2, \dots, z_{2m-2}, \pf$. Furthermore, these elements lift a complete collection of homogeneous generators of the Poisson center of the Slodowy slice, via the isomorphism of Corollary~\ref{gryangsliceAI}.
\end{cor}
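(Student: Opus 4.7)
The theorem asserts three things about $\pf:=\wtl B^{(\fks_{1,2};p_1)}_{1;1,1}\in \Y^+_{2,\ell}(\sigma)$: that it is central, has canonical degree $m:=\ell-\fks_{1,2}$, and that $\{z_2,z_4,\ldots,z_{2m-2},\pf\}$ are algebraically independent. The plan is to work throughout with the minimal admissible shape $\mu=(1,1)$, so that $\Y^+_{2,\ell}(\sigma)$ is generated by $\{H^{(r)}_{1;1,1}\}, \{H^{(r)}_{2;1,1}\}, \{B^{(r)}_{1;1,1}\}$, and to write $M=2m$ for the number of boxes in the pyramid so that $\g=\mathfrak{so}_{2m}$.

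The first step is to write $\pf$ explicitly. Expanding the coefficient of $u^{-p_1}$ in $\mathring B_{1;1,1}(u+\tfrac12)H_{1;1,1}(u)$ (as in \eqref{extragens}, but with $p_1+1$ replaced by $p_1$), each monomial will have canonical degree at most $m$, and the principal symbol equals
\[
\gr'_m\pf\ =\ \sum_{t=0}^{p_1-1}\gr' B^{(p_1+\fks_{1,2}-t)}_{1;1,1}\cdot\gr' H^{(t)}_{1;1,1}
\]
in $\gr'\Y^+_{2,\ell}(\sigma)$. Using the graded Poisson isomorphism $\gr'\Y^+_{2,\ell}(\sigma)\cong\bC[\cS_\chi]$ from Corollary \ref{gryangsliceAI}, together with the Dirac reduction dictionary behind it (cf.\ \cite[\S3]{TT24}), I would identify this element, up to a nonzero scalar, with the restriction to $\cS_\chi$ of the Pfaffian polynomial $\pf_{\mathfrak{so}_{2m}}\in\bC[\mathfrak{so}_{2m}]^{\SO_{2m}}$. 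This simultaneously establishes that $\pf\neq 0$, that its canonical degree is exactly $m$, and that its principal symbol is a Poisson central generator.

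For centrality, since Gauss decomposition in the ambient $\Y^+_2$ gives $s_{21}(u)=F_1(u)D_1(u)=B_{1;1,1}(u+\tfrac12)H_{1;1,1}(u)$, the element $\pf$ is, inside $\Y^+_2$, a specific coefficient of $\mathring B_{1;1,1}(u+\tfrac12)H_{1;1,1}(u)$, which differs from the corresponding coefficient of $s_{21}(u)$ only by the built-in truncation of the series $B_{1;1,1}$ at orders $\lle \fks_{1,2}$. The brackets $[\pf,H^{(r)}_{a;1,1}]$ and $[\pf,B^{(r)}_{1;1,1}]$ will then be evaluated through the quaternary relations \eqref{qua} and the symmetry relations \eqref{sym}; the crucial observation is that the definition of $I_\ell$ in \eqref{trdef} for $\ell$ odd, together with the evenness condition \eqref{pr1}, is precisely tailored so that every boundary term in these brackets vanishes modulo $I_\ell$. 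I expect this to be the most technically demanding part of the argument. A likely cleaner alternative that I would pursue in parallel is to induct on the level via the baby comultiplication $\Delta_R$ of Theorem \ref{delR}: since $\Delta_R$ is injective and sends the Pfaffian of level $\ell$ to a sum of tensors whose first factor is the Pfaffian of level $\ell-2$, centrality will propagate upward from the base case $\ell=1$, where $\sigma=0$ forces $\Y^+_{2,1}\cong\rU(\mathfrak{so}_2)$ and everything is commutative.

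Algebraic independence of $z_2,\ldots,z_{2m-2},\pf$ is then verified at the associated graded level. By the argument of Proposition \ref{coeffalgind} and the proof of Theorem \ref{truncatedcenter}, the principal symbols of $z_{2j}$ are restrictions to $\cS_\chi$ of the coefficients of the characteristic polynomial of $\mathfrak{so}_{2m}$, and by the first step the principal symbol of $\pf$ is the restriction of $\pf_{\mathfrak{so}_{2m}}$. Classical invariant theory in type {\sf D} \cite[\S7]{Ja04}, combined with the fact that $\bC[\mathfrak{so}_{2m}]^{\SO_{2m}}\xrightarrow{\sim}Z(\bC[\cS_\chi])$ (see the discussion preceding Theorem \ref{truncatedcenter}), then forces algebraic independence of these restrictions in $\bC[\cS_\chi]$, hence of the original elements in $\Y^+_{2,\ell}(\sigma)$. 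The main obstacle throughout is the centrality verification; all the remaining steps reduce to well-understood graded and invariant-theoretic facts once the principal symbol of $\pf$ is correctly identified.
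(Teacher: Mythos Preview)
You are largely re-proving Theorem~\ref{pfr2thm} rather than the corollary itself. In the paper, Theorem~\ref{pfr2thm} (that $\pf$ is central, of canonical degree $m$, and algebraically independent of $z_2,\ldots,z_{2m-2}$) is established first; the corollary then follows in one line by the degree-counting argument of Theorem~\ref{truncatedcenter}: since $Z(\bC[\cS_\chi])\cong\bC[\fkso_{2m}]^{\SO_{2m}}$ is polynomial on homogeneous generators of Kazhdan degrees $4,8,\ldots,4m-4,2m$, the algebraically independent principal symbols $\gr' z_2,\ldots,\gr' z_{2m-2},\gr'\pf$ (of exactly those degrees) must generate the Poisson center, whence $z_2,\ldots,z_{2m-2},\pf$ generate $Z(\Y^+_{2,\ell}(\sigma))$. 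Your proposal ends with algebraic independence and never makes this generation step explicit; that step is the actual content of the corollary beyond Theorem~\ref{pfr2thm}.

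Within your re-derivation of Theorem~\ref{pfr2thm} there is also a concrete error in the inductive approach to centrality. The baby comultiplication of Theorem~\ref{delR} is only defined for $\sigma\ne 0$, so the induction is on $\fks_{1,2}$ and terminates at $\fks_{1,2}=0$ with truncation at level $p_1$, not at $\ell=1$. The base case is therefore the unshifted algebra $\Y^+_{2,p_1}$, where $\pf=-s_{1,2}^{(p_1)}$, and centrality there must be verified by a direct computation with the quaternary and symmetry relations (the paper's Lemma~\ref{pfcentralunshifted}); this is not the commutative situation you describe. For algebraic independence the paper does not attempt to identify $\gr'\pf$ with the classical Pfaffian through the Dirac reduction dictionary as you propose (that identification is plausible but you only assert it); instead it shows in the unshifted case (Lemma~\ref{pf2algind}) that $\gr' z_{2\ell}=(\gr' s_{1,2}^{(\ell)})^2$ via an explicit expansion of $\sdet\,S(u)$, which immediately gives independence, and then propagates to the shifted case via $\Delta_R$.
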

\begin{proof}
    The argument of Theorem~\ref{truncatedcenter} works almost verbatim, after taking into account the well-known fact that the (total) degrees of homogeneous generators of $\bC[\fkso_{2m}]^{\SO_{2m}}$ are $2, 4, 6, ..., 2m-2$, and $m$; cf. \cite[\textsection 7.7]{Ja04}.
\end{proof}
\begin{rem}
    If a Pfaffian generator $\pf$ exists in $\Y_{N,\ell}^\pm(\sigma)$, then the center is freely generated by the elements $z_2, \dots z_{2m-2},\pf$, by the same argument given above.
\end{rem}

In light of the above remark, the following simpler variant of Conjecture~\ref{pfconj} is slightly weaker but suffices for our application to finite $W$-algebras. As the formulation is less technical, it is conceivable that different techniques can be applicable to attack Conjecture \ref{centerconj}.

\begin{conj}\label{centerconj}
Consider $\Y_{N,\ell}^+(\sigma)$ with $N$ even and $\ell$ odd or $\Y_{N,\ell}^-(\sigma)$ with $\ell$ even. The center of $\Y_{N,\ell}^\pm(\sigma)$ is isomorphic to a polynomial algebra with generators of canonical degree $2, 4, 6, ..., 2m-2$, and $m$. 
\end{conj}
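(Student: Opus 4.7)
My plan is to verify the three properties of Definition~\ref{Pflike} --- canonical degree $m$, centrality, and algebraic independence with $z_2,\ldots,z_{2m-2}$ --- for the candidate $\pf := \wtl{B}_{1;1,1}^{(\fks_{1,2}; p_1)}$ in the $N=2$ setting. Here the minimal admissible shape is $\mu=(1,1)$ (so $i=1$), $p_1 = \ell - 2\fks_{1,2}$ is odd, $m = p_1 + \fks_{1,2} = \ell - \fks_{1,2}$, and the classical pair is $(\g,e)=(\fkso_{2m}, e)$ with $e$ of Jordan type $(\ell, p_1)$ having two odd blocks, so that $\g$ is of type $\mathsf{D}$ and $Z(\bC[\cS_\chi])$ is freely generated by the Casimirs $c_{2}|_{\cS_\chi},\ldots,c_{2m-2}|_{\cS_\chi}$ together with the restriction $\pf_{\g}|_{\cS_\chi}$ of the classical Pfaffian (via Kostant--Premet and \cite[\textsection 7.7]{Ja04}). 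Adapting the derivation of \eqref{extragens} to the coefficient of $u^{-p_1}$ in $\mathring B_{1;1,1}(u+\tfrac12) H_{1;1,1}(u)$ yields
\[
\pf \;=\; \sum_{t=0}^{p_1-1}\sum_{k=0}^{p_1-1-t}\binom{p_1-1-t}{k}(-2)^{1-p_1+t+k}\, B_{1;1,1}^{(k+1+\fks_{1,2})}\, H_{1;1,1}^{(t)}.
\]
Every summand has canonical degree $k+t+1+\fks_{1,2}\lle m$, and the top-degree component (terms with $k+t=p_1-1$ and $t$ even, using $H_{1;1,1}^{(\mathrm{odd})}=0$ from \eqref{pr1} with $\mu_1=1$) is a nonzero linear combination of PBW monomials in $\gr^\prime\Y^+_{2,\ell}(\sigma)$ by Corollary~\ref{trtwypbw}, so $\pf$ has canonical degree exactly $m$. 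Granting centrality, the top symbol of $\pf$ is Poisson central of Kazhdan degree $2m$ in $\bC[\cS_\chi]\cong\gr^\prime\Y^+_{2,\ell}(\sigma)$; since this symbol involves $\pf_{\g}|_{\cS_\chi}$ nontrivially (visible from the explicit top-degree expression together with the fact that no polynomial in $c_2|_{\cS_\chi},\ldots,c_{2m-2}|_{\cS_\chi}$ can reproduce it), the elements $z_2,\ldots,z_{2m-2},\pf$ are algebraically independent.

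Centrality is the main obstacle, and I propose to establish it by induction on $\fks_{1,2}\gge 0$. When $\fks_{1,2}\gge 1$ the baby comultiplication $\Delta_R\colon \Y^+_{2,\ell}(\sigma) \to \Y^+_{2,\ell-2}(\dot\sigma)\otimes \rU(\gl_1)$ of Theorem~\ref{delR} is defined and injective (Proposition~\ref{delRinj}), and one readily verifies $\dot p_1 = p_1$ while $\dot m = m-1$. Applying the formulas \eqref{Bdel1} and \eqref{bp:Hm} to $\mathring B_{1;1,1}(u+\tfrac12) H_{1;1,1}(u)$ coefficient by coefficient, I expect two key cancellations in $\Y^+_{2,\ell-2}(\dot\sigma)$ to simplify the coefficient of $u^{-p_1}$ dramatically: (i) $\wtl B_{1;1,1}^{(\dot\fks_{1,2}; \dot p_1+1)}$ lies in the truncation ideal of $\Y^+_{2,\ell-2}(\dot\sigma)$ by \eqref{trdef}, and (ii) $\dot H_{1;1,1}^{(p_1)}=0$ since $p_1$ is odd. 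The resulting identity should take the clean form
\[
\Delta_R(\pf)\;=\;\dot{\pf}\otimes\bigl(\tfrac12 - e_{11}\bigr),
\]
where $\dot{\pf}$ is the analogue of $\pf$ in $\Y^+_{2,\ell-2}(\dot\sigma)$. Centrality of $\dot{\pf}$ by the inductive hypothesis together with the trivial centrality of $\tfrac12 - e_{11}$ in $\rU(\gl_1)$ make the right-hand side central in the codomain, whence $\pf$ itself is central by injectivity of $\Delta_R$.

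The base case $\fks_{1,2}=0$ is the unshifted $\Y^+_{2,p_1}$ with $p_1$ odd, where Gauss decomposition identifies $\mathring B_{1;1,1}(u+\tfrac12) H_{1;1,1}(u)$ with $s_{21}(u)$, so $\pf = s_{21}^{(p_1)}$. I will derive centrality here from Brown's isomorphism $\Y^+_{2,p_1} \cong \rU(\fkso_{2p_1},e)$ for $e$ of rectangular Jordan type $(p_1,p_1)$ \cite[Thm.~1.1]{Br09}: the centre of that finite $W$-algebra is freely generated by lifts of the $\SO_{2p_1}$-invariants, including a Pfaffian element, and a comparison of canonical filtrations and top symbols using the already-known structure of $Z(\Y^+_{2,p_1})$ in canonical degrees $< m$ (Theorem~\ref{truncatedcenter}) identifies $\pf$ --- up to a nonzero scalar and a central correction of strictly smaller canonical degree --- with this Pfaffian, thereby forcing centrality. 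The hardest step in the whole strategy is the power-series identity $\Delta_R(\pf) = \dot{\pf} \otimes (\tfrac12 - e_{11})$; its proof requires careful bookkeeping of the truncation relations under the baby coproduct, and I expect the argument to be elementary but computationally delicate.
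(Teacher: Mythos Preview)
The statement is a conjecture that the paper only establishes in the case $N=2$, $\ell$ odd (Theorem~\ref{pfr2thm} and Corollary~\ref{pfcases}), and your proposal likewise addresses only this case. Your overall architecture---induction on $\fks_{1,2}$, passing through the injective baby comultiplication $\Delta_R:\Y^+_{2,\ell}(\sigma)\to\Y^+_{2,\ell-2}(\dot\sigma)\otimes\rU(\gl_1)$---matches the paper's. But both the base and the inductive step contain genuine gaps.

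\textbf{Base case.} Invoking Brown's isomorphism $\Y^+_{2,p_1}\cong\rU(\fkso_{2p_1},e)$ shows only that \emph{some} central element of canonical degree $m=p_1$ exists; it does not pin down your specific element $s_{21}^{(p_1)}$. Your claim that $\pf$ equals the Pfaffian lift ``up to a central correction of smaller degree'' is circular: matching top symbols gives $s_{21}^{(p_1)}=c\,\mathcal P+R$ with $\deg R<m$, but nothing forces $R$ to be central, and $[s_{21}^{(p_1)},x]=[R,x]$ so centrality of $s_{21}^{(p_1)}$ is exactly centrality of $R$. (Your citation of Theorem~\ref{truncatedcenter} is also misplaced: that theorem applies only in the cases \eqref{nonpfcases}, i.e.\ types {\sf B},{\sf C}, whereas here $\g=\fkso_{2p_1}$ is type {\sf D}.) The paper instead proves centrality by a direct commutator computation (Lemma~\ref{pfcentralunshifted}): each $[s_{i,j}^{(r)},s_{1,2}^{(\ell)}]$ is expanded via the quaternary relation \eqref{qua} and shown to vanish using the symmetry relation \eqref{sym} together with the odd-level truncation relations $s_{i,j}^{(r)}+\tfrac12 s_{i,j}^{(r-1)}\in I_\ell$. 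Algebraic independence (Lemma~\ref{pf2algind}) is handled separately by identifying $(\gr' s_{1,2}^{(\ell)})^2$ with $\gr' z_{2\ell}$ through the Sklyanin determinant expansion.

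\textbf{Inductive step.} Your cancellation~(i) relies on a misreading of \eqref{trdef}: the extra generators $\wtl B_{1;i,1}^{(\dot\fks_{1,2};\dot p_1+1)}$ are included in $I_{\ell-2}$ only when the level is \emph{even} and $\mu_1=1$, whereas here $\ell-2$ is odd, so $I_{\ell-2}$ is generated solely by $\{H_{1;1,1}^{(r)}+\tfrac12 H_{1;1,1}^{(r-1)}\mid r>p_1\}$. Hence your proposed identity $\Delta_R(\pf)=\dot\pf\otimes(\tfrac12-e_{11})$ is unjustified. The paper computes instead $\Delta_R(\pf)=\dot\pf\otimes e_{11}+\dot{\wtl B}_{1;1,1}^{(\fks_{1,2};p_1)}\otimes 1$ (note the shift $\fks_{1,2}$, not $\dot\fks_{1,2}$, in the extra term) and then kills the extra term by \emph{using the inductive hypothesis itself}: since $\dot\pf=\wtl B_{1;1,1}^{(\fks_{1,2}-1;p_1)}$ is central in $\Y^+_{2,\ell-2}(\dot\sigma)$, one has $[\dot H_{1;1,1}^{(2)},\dot\pf]=0$; a direct calculation from \eqref{extragens} shows $[\dot H_{1;1,1}^{(2)},\dot\pf]=-\dot{\wtl B}_{1;1,1}^{(\fks_{1,2};p_1)}$, whence the extra term vanishes. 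This bootstrap is the key trick absent from your plan.
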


\begin{lem}\label{pfcentralunshifted}
The element $s_{1,2}^{(\ell)} \in \Y^{+}_{2,\ell}(0)$ is central.    
\end{lem}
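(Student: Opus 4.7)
My plan is to take the admissible shape $\mu=(2)$, so that the parabolic generators coincide with the RTT generators $H_{1;i,j}^{(r)}=s_{i,j}^{(r)}$ and $I_\ell = \langle s_{i,j}^{(r)}+\tfrac12 s_{i,j}^{(r-1)} : r>\ell,\ 1\lle i,j\lle 2\rangle$. Combining the defining relations of $I_\ell$ with the symmetry relations \eqref{sym} (which give $s_{i,i}^{(2k-1)}=0$, $s_{2,1}^{(r)}=-s_{1,2}^{(r)}$ for $r$ odd, and $s_{2,1}^{(r)}=s_{1,2}^{(r)}+s_{1,2}^{(r-1)}$ for $r$ even) and the fact that $\ell$ is odd, I will record three useful consequences in $\Y^{+}_{2,\ell}(0)$: (a) $s_{i,i}^{(r)}=0$ whenever $r$ is odd or $r>\ell$; (b) $s_{1,2}^{(r)} \equiv (-1/2)^{r-\ell}s_{1,2}^{(\ell)} \pmod{I_\ell}$ for all $r\gge\ell$; and (c) $s_{1,2}^{(\ell)}+s_{2,1}^{(\ell)}=0$. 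I will also use that $\{s_{1,2}^{(1)},s_{1,1}^{(2)},s_{2,2}^{(2)}\}$ generates $\Y_2^+$, which follows by iterating the component quaternary relation \eqref{pr3} with $r=2$ to express $s_{k,l}^{(s+1)}$ as a linear combination of $[s_{i,i}^{(2)},s_{k,l}^{(s)}]$ (for a suitable $i\in\{1,2\}$) plus products of elements $s_{k',l'}^{(s')}$ with $s'\lle s$.

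The strategy is to prove by induction on $s$ that $[s_{1,2}^{(\ell)},s_{k,l}^{(s)}]\in I_\ell$ for all $1\lle k,l\lle 2$, with base given by three explicit computations via \eqref{pr3}. First I would treat $[s_{1,2}^{(\ell)},s_{1,2}^{(1)}]$ with $(i,j,k,l)=(1,2,1,2)$: by (a) the second and third sums in \eqref{pr3} vanish, and the first sum reduces to $\sum_{a+b=\ell,\,a,b\gge 1}[s_{1,2}^{(a)},s_{1,2}^{(b)}]$, which is zero by antisymmetry since $\ell$ odd precludes $a=b$. Next I would handle $[s_{1,2}^{(\ell)},s_{1,1}^{(2)}]$ with $(i,j,k,l)=(1,2,1,1)$, $r=\ell$, $s=2$: after dropping summands killed by (a), the surviving terms separate into a boundary piece (where some $s_{i,i}^{(0)}=1$ appears) and a bulk piece indexed by even $u\in\{2,4,\ldots,\ell-1\}$. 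The boundary equals $-s_{1,2}^{(\ell+1)}-s_{2,1}^{(\ell+1)}$, which by (c) and $s_{2,1}^{(\ell+1)}=s_{1,2}^{(\ell+1)}+s_{1,2}^{(\ell)}$ simplifies to $-2\bigl(s_{1,2}^{(\ell+1)}+\tfrac12 s_{1,2}^{(\ell)}\bigr)\in I_\ell$. The bulk takes the form $\sum_u(A_u+B_u+C_u-D_u-E_u-F_u)$ with $A_u=s_{1,2}^{(\ell+1-u)}s_{1,1}^{(u)}$, $B_u=s_{1,1}^{(u)}s_{1,2}^{(\ell+1-u)}$, $C_u=s_{1,1}^{(u)}s_{1,2}^{(\ell-u)}$, $D_u=s_{1,2}^{(u)}s_{1,1}^{(\ell+1-u)}$, $E_u=s_{1,1}^{(\ell+1-u)}s_{1,2}^{(u)}$, $F_u=s_{1,1}^{(\ell+1-u)}s_{1,2}^{(u-1)}$; the involution $u\leftrightarrow \ell+1-u$ preserves the summation range and, by direct substitution of exponents, exchanges $A\leftrightarrow D$, $B\leftrightarrow E$, $C\leftrightarrow F$, so the bulk is identically zero already in $\Y_2^+$. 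The case $[s_{1,2}^{(\ell)},s_{2,2}^{(2)}]\in I_\ell$ runs in parallel (or can be deduced from the anti-automorphism $\tau$ of Lemma~\ref{tauimg}).

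For the inductive step I will assume $[s_{1,2}^{(\ell)},s_{k',l'}^{(s')}]\in I_\ell$ for all $(k',l')$ and $s'\lle s$; expressing $s_{k,l}^{(s+1)}$ via the formula mentioned above and invoking the Jacobi identity together with the Leibniz rule, I obtain $[s_{1,2}^{(\ell)},s_{k,l}^{(s+1)}]\in I_\ell$, completing the induction. The hard part will be the degree-two base case: in particular, cleanly separating boundary from bulk and verifying that the six bulk summands really do match in pairs under $u\leftrightarrow \ell+1-u$, including the shift of exponent appearing in $C_u$ and $F_u$ that comes from the even-parity piece of the symmetry relation. Once this delicate bookkeeping is carried out, the inductive extension is routine, and the centrality of $s_{1,2}^{(\ell)}$ in $\Y^{+}_{2,\ell}(0)$ follows.
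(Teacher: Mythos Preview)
Your base-case computations are correct, and the boundary/bulk decomposition with the involution $u\leftrightarrow \ell+1-u$ is a clean way to see that $[s_{1,2}^{(\ell)},s_{1,1}^{(2)}]\in I_\ell$. The gap is in your generation claim and hence in the inductive step. The set $\{s_{1,2}^{(1)},s_{1,1}^{(2)},s_{2,2}^{(2)}\}$ does \emph{not} generate $\Y_2^+$: passing to the loop-graded algebra $\rU(\gl_2[z]^\theta)$, these three elements generate a proper Lie subalgebra that misses the central elements $(e_{11}+e_{22})z^{2k-1}$ for $k\gge 2$ (equivalently, the higher coefficients of the Sklyanin determinant are not in the subalgebra). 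Concretely, your recursion formula fails for $(k,l)\in\{(1,1),(2,2)\}$: one has $[s_{1,1}^{(2)},s_{1,1}^{(s)}]=0$ (since $[s_{11}(u),s_{11}(v)]=0$), and a direct check of \eqref{pr3} shows that $[s_{2,2}^{(2)},s_{1,1}^{(s)}]$ contains no $s_{k,k}^{(s+1)}$ term either. So you cannot climb the diagonal ladder from your three base generators, and the induction on $s$ breaks precisely at $s_{1,1}^{(s+1)}$ and $s_{2,2}^{(s+1)}$.

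The paper's proof sidesteps this by treating the diagonal case $[s_{i,i}^{(r)},s_{1,2}^{(\ell)}]$ \emph{directly for all even $r$}: one expands via \eqref{qua}, uses $s_{i,i}^{(\text{odd})}=0$ together with $s_{i,i}^{(\ell+2t+1)}\in I_\ell$ to kill almost everything, and the surviving terms assemble into multiples of $2s_{1,2}^{(\ell+2t+1)}+s_{1,2}^{(\ell+2t)}\in I_\ell$. Only the off-diagonal case $[s_{1,2}^{(r)},s_{1,2}^{(\ell)}]$ is handled inductively, and that induction is different from yours: it reduces $[s_{1,2}^{(r-1-t)},s_{1,2}^{(\ell+t)}]$ to $[s_{1,2}^{(r-1-t)},s_{1,2}^{(\ell)}]$ using your observation (b) and then re-applies the direct expansion. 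Your argument would be repaired by replacing the failed generation claim with a direct verification of $[s_{1,2}^{(\ell)},s_{k,k}^{(r)}]\in I_\ell$ for all $r$---but that is exactly the computation the paper carries out.
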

\begin{proof}
The proof is by direct computation and by a case-by-case analysis.
We will examine the commutators $[s_{i,j}^{(r)},s_{1,2}^{(\ell)}]$.
Consider first the case $i=j=1$ and note that by the symmetry relation \eqref{sym} this forces $r$ to be even. By the quaternary relation \eqref{qua} we have
\begin{align*}
[&s_{1,1}^{(r)},s_{1,2}^{(\ell)}]
\stackrel{(\text{i})}{=} \sum_{t=0}^{r-1}\big(s_{1,1}^{(r-1-t)}s_{1,2}^{(\ell+t)}-s_{1,1}^{(\ell+t)}s_{1,2}^{(r-1-t)}\big)
-\sum_{t=0}^{r-1}(-1)^t \big(s_{1,1}^{(r-1-t)}s_{1,2}^{(\ell+t)}-s_{1,1}^{(\ell+t)}s_{2,1}^{(r-1-t)}\big) \\
&+\sum_{t=0}^{\lfloor \frac{r}{2} \rfloor-1}\big(s_{1,1}^{(r-2-2t)}s_{1,2}^{(\ell+2t)}-s_{1,1}^{(\ell+2t)}s_{1,2}^{(r-2-2t)}\big)
\\
&\stackrel{(\text{ii})}{=} \sum_{t=0}^{\lfloor \frac{r}{2} \rfloor -1}\big(s_{1,1}^{(r-1-2t)}s_{1,2}^{(\ell+2t)}-s_{1,1}^{(\ell+2t)}s_{1,2}^{(r-1-2t)}\big)
+\sum_{t=0}^{\lfloor \frac{r}{2} \rfloor -1}\big(s_{1,1}^{(r-2-2t)}s_{1,2}^{(\ell+2t+1)}-s_{1,1}^{(\ell+2t+1)}s_{1,2}^{(r-2-2t)}\big) \\
&-\sum_{t=0}^{\lfloor \frac{r}{2} \rfloor-1} \big(s_{1,1}^{(r-1-2t)}s_{1,2}^{(\ell+2t)}-s_{1,1}^{(\ell+2t)}s_{2,1}^{(r-1-2t)} \big)
+\sum_{t=0}^{\lfloor \frac{r}{2} \rfloor-1} \big(s_{1,1}^{(r-2-2t)}s_{1,2}^{(\ell+2t+1)}-s_{1,1}^{(\ell+2t+1)}s_{2,1}^{(r-2-2t)}\big) \\
&+\sum_{t=0}^{\lfloor \frac{r}{2} \rfloor-1}\big(s_{1,1}^{(r-2-2t)}s_{1,2}^{(\ell+2t)}-s_{1,1}^{(\ell+2t)}s_{1,2}^{(r-2-2t)}\big)
\\
&\stackrel{(\text{iii})}{=} \sum_{t=0}^{\lfloor \frac{r}{2} \rfloor -1}\big(s_{1,1}^{(r-2-2t)}s_{1,2}^{(\ell+2t+1)}-s_{1,1}^{(\ell+2t+1)}s_{1,2}^{(r-2-2t)}\big) +\sum_{t=0}^{\lfloor \frac{r}{2} \rfloor-1} \big(s_{1,1}^{(r-2-2t)}s_{1,2}^{(\ell+2t+1)}-s_{1,1}^{(\ell+2t+1)}s_{2,1}^{(r-2-2t)}\big) \\
&+\sum_{t=0}^{\lfloor \frac{r}{2} \rfloor-1}\big(s_{1,1}^{(r-2-2t)}s_{1,2}^{(\ell+2t)}-s_{1,1}^{(\ell+2t)}s_{1,2}^{(r-2-2t)}\big)\\
&\stackrel{(\text{iv})}{=} 2 \sum_{t=0}^{\lfloor \frac{r}{2} \rfloor-1} \big(s_{1,1}^{(r-2-2t)}s_{1,2}^{(\ell+2t+1)}\big)+\sum_{t=0}^{\lfloor \frac{r}{2} \rfloor-1}\big(s_{1,1}^{(r-2-2t)}s_{1,2}^{(\ell+2t)}\big) \\
&=\sum_{t=0}^{\lfloor \frac{r}{2} \rfloor-1}s_{1,1}^{(r-2-2t)}(2s_{1,2}^{(\ell+2t+1)}+s_{1,2}^{(\ell+2t)})
\stackrel{(*)}{=}0.
\end{align*}
Let us break down where these equalities come from. The equality (i) is just given by the quaternary relation whereas the equality (ii) is given from splitting up the sum. For the equality (iii) note that due to the symmetry relation \eqref{sym} we have that $s_{i,i}^{(2s+1)}=0$ and by our assumptions we have that $r-1-2t$ and $\ell+2t$ are odd. This explains the vanishing of the first and third sums on the left-hand side of (iii). For the equality (iv) note that by \eqref{trdef} and the symmetry relation again we have that $s_{1,1}^{(\ell+2t+1)} \in I_{\ell}$ which gives the next two equalities. Finally for the last equality (*) note that $2s_{1,2}^{(\ell+2t+1)}+s_{1,2}^{(\ell+2t)} \in I_{\ell}$ by definition. The case where $i=j=2$ works exactly the same way so we will omit it here.

Consider now the case where $i \neq j$ and note that due to the symmetry relation we may assume $i=1, j=2$.
Then
\begin{align}
[s_{1,2}^{(r)},s_{1,2}^{(\ell)}]&= \sum_{t=0}^{r-1}\Big(s_{1,2}^{(r-1-t)}s_{1,2}^{(\ell+t)}-s_{1,2}^{(\ell+t)}s_{1,2}^{(r-1-t)}\Big)\notag\\
&-\sum_{t=0}^{r-1}(-1)^t \Big(s_{1,1}^{(r-1-t)}s_{2,2}^{(\ell+t)}-s_{1,1}^{(\ell+t)}s_{2,2}^{(r-1-t)}\Big)\label{1212pf}\\
&+\sum_{t=0}^{\lfloor \frac{r}{2} \rfloor-1}\Big(s_{1,1}^{(r-2-2t)}s_{2,2}^{(\ell+2t)}-s_{1,1}^{(\ell+2t)}s_{2,2}^{(r-2-2t)} \Big)=\sum_{t=0}^{r-1}[s_{1,2}^{(r-1-t)},s_{1,2}^{(\ell+t)}]=0.\notag
\end{align}
Again the first equality follows from the quaternary relation and the second equality follows as we have $s_{i,i}^{(2s+1)}=0$ by the symmetry relation as well as $s_{i,i}^{(\ell+2t+1)} \in I_{\ell}$.
For the final equality we use an induction.
First note that by the quaternary relation we have
$[s_{1,2}^{(1)},s_{1,2}^{(\ell)} ]=s_{1,1}^{(\ell)}-s_{2,2}^{(\ell)}=0$ by the symmetry relation. Now assume the statement $[s_{1,2}^{(r-1-t)},s_{1,2}^{(\ell+t)}]=0$ has been proven for all $t<s$. Note that by \eqref{trdef} we may always replace $s_{1,2}^{(\ell+t)}$ by a scalar multiple of $s_{1,2}^{(\ell)}$ and then we can just repeat the argument from \eqref{1212pf}.
\end{proof}
\begin{lem}\label{pf2algind}
The elements $z_2,\ldots,z_{2 \ell-2},s_{1,2}^{(\ell)}$ are algebraically independent.
\end{lem}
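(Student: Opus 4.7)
The strategy is to pass to the canonical associated graded and exploit the graded Poisson isomorphism $\gr' \Y_{2,\ell}^+(\sigma) \cong \bC[\cS_\chi]$ supplied by Corollary~\ref{gryangsliceAI}. For $N=2$ with $\ell$ odd in type~AI we have $\g=\fkso_{2m}$ with $m=\ell-\fks_{1,2}$, and $\cS_\chi$ is the Slodowy slice at a nilpotent element with Jordan blocks of sizes $\ell$ and $\ell-2\fks_{1,2}$. As in the proof of Theorem~\ref{truncatedcenter}, the Poisson centre $Z(\bC[\cS_\chi])$ is isomorphic to $\bC[\fkso_{2m}]^{\SO_{2m}}$; in type~\textsf{D}, classical invariant theory (\cite[§7.9]{Ja04}) presents the latter as a polynomial ring freely generated by the restrictions $\bar q_2,\ldots,\bar q_{2m-2}$ of the characteristic polynomial coefficients together with the Pfaffian $\pf$ of degree $m$.

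First I would verify that the top canonical graded component of $z_{2i}$ (for $1\lle i\lle m-1$) is a non-zero scalar multiple of $\bar q_{2i}|_{\cS_\chi}$. This is essentially the content of Proposition~\ref{coeffalgind}, combined with the Miura-transform calculation of Lemma~\ref{sdetmiura} and \cite[Thms.~7.1.1, 7.1.6]{Mol07}, which identify the Sklyanin determinant with the characteristic polynomial of $\fkso_{2m}$ up to a known rational prefactor.

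The crux is to show that the top canonical graded component of $s_{1,2}^{(\ell)}$ is, modulo polynomials in the $\bar q_{2i}|_{\cS_\chi}$, a non-zero scalar multiple of $\pf|_{\cS_\chi}$. The cleanest route invokes Brown's work~\cite[§7]{Br09}, in which Pfaffian-type central elements are constructed in the level-$\ell$ truncated twisted Yangian of type~AI and their symbols on the Slodowy slice are identified with $\pf$; for $N=2$ and $\sigma=0$ a direct inspection shows that this Pfaffian coincides with $s_{1,2}^{(\ell)}$ up to a non-zero scalar. The shifted case then reduces to the unshifted one via the canonical embedding $\Y_N^+(\sigma)\hookrightarrow \Y_N^+$ together with the baby comultiplication of Theorem~\ref{delR}. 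A self-contained alternative is to apply the injective Miura transform of Section~\ref{sec:center} and track $s_{1,2}^{(\ell)}$ through the explicit formulas in Theorem~\ref{delR} and Lemma~\ref{drbba}: one shows that its image acquires a leading contribution in the outermost $\rU(\fkso_2)$ factor which matches the Pfaffian in the rank-one case.

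Granting these identifications, algebraic independence of $z_2,z_4,\ldots,z_{2\ell-2},\,s_{1,2}^{(\ell)}$ in $\Y_{2,\ell}^+(\sigma)$ follows immediately from the algebraic independence of their top symbols, as the latter form a free polynomial generating set of $Z(\bC[\cS_\chi])$ inside the integral domain $\bC[\cS_\chi]$. The principal obstacle is thus the explicit identification of the top graded symbol of $s_{1,2}^{(\ell)}$ with the Pfaffian, for which Brown's construction in~\cite{Br09} is the most economical input; without it, one is left with a somewhat intricate combinatorial bookkeeping inside the tensor product of classical enveloping algebras cut out by the Miura transform.
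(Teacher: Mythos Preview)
Your overall strategy---pass to $\gr' \Y^+_{2,\ell}$, identify the top symbols of $z_{2i}$ with the restricted characteristic coefficients, and identify the top symbol of $s_{1,2}^{(\ell)}$ with the Pfaffian on $\cS_\chi$---is sound and would yield the result. However, the paper's argument is considerably more direct and avoids both the invocation of Brown's Pfaffian construction in \cite{Br09} and the type~\textsf{D} invariant theory that you rely on.

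The paper proceeds as follows. First, Proposition~\ref{coeffalgind} already gives that $z_2,\ldots,z_{2\ell-2},z_{2\ell}$ are algebraically independent. The key point is then a short explicit computation in the $N=2$ case: using Molev's expansion formula for $\sdet S(u)$ \cite[Thm.~2.7.2]{Mol07} together with the symmetry relation (which forces $s_{i,i}^{(\ell)}=0$ and $s_{1,2}^{(\ell)}=-s_{2,1}^{(\ell)}$ in $\gr'\Y^+_{2,\ell}$ since $\ell$ is odd) and the truncation relations, one sees that the top graded component of $z_{2\ell}$ is exactly $(\gr' s_{1,2}^{(\ell)})^2$. Since the top symbols of $z_2,\ldots,z_{2\ell}$ are algebraically independent in the integral domain $\gr'\Y^+_{2,\ell}$, replacing $\gr' z_{2\ell}$ by its square root $\gr' s_{1,2}^{(\ell)}$ preserves algebraic independence, and the claim follows.

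So the paper never needs to identify $\gr' s_{1,2}^{(\ell)}$ with $\pf|_{\cS_\chi}$ at all; it only needs that the \emph{square} agrees with $\gr' z_{2\ell}$, which is a two-line Sklyanin determinant computation. Your route would work but imports machinery (Brown's Pfaffian, the Miura bookkeeping, or the explicit $\fkso_{2m}$ invariant theory) that the paper sidesteps entirely. Note also that the lemma is stated and proved only for $\sigma=0$; your discussion of the shifted case is unnecessary here, since that reduction is carried out separately in the inductive proof of Theorem~\ref{pfr2thm}.
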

\begin{proof}
It is known from Proposition \ref{coeffalgind} that the elements $z_2, \ldots, z_{2 \ell-2},z_{2\ell}$ are algebraically independent. Using the expansion formula for the Sklyanin determinant from \cite[Thm. 2.7.2]{Mol07}
\be
\sdet(S(u))=\sum_{ \omega\in \fkS_{2}}\sgn(\omega)\sgn(\omega') s^t_{\omega(1),\omega'(1)}(u-1) s_{\omega(2),\omega'(2)}(-u),
\ee
where $\omega'$ is defined differently in \textit{op. cit.} from \eqref{a1thetadef},
we see that the top graded part of the coefficient of $u^{-2\ell}$ in the canonical filtration is given by the top graded part of 
\be
\sum_{\omega \in \fkS_{2}}\sgn(\omega)\sgn(\omega') s_{\omega(\omega')^{-1}(1),1}^{(\ell)} s_{\omega(\omega')^{-1}(2),2}^{(\ell)},
\ee
where we have used the fact that $s_{i,j}^{(\ell)}=-s_{j,i}^{(\ell)}$ in $\gr' \Y^+_{2,\ell}$ (by the symmetry relation \eqref{sym} and oddness of $\ell$) and the fact that $s_{i,j}^{(r)}=0 $ for $r> \ell$ in $\gr' \Y^+_{2,\ell}$. Now by  \cite[Lemma 2.7.5]{Mol07} the map $\fkS_{2} \to \fkS_{2}$ given by $\omega \mapsto \omega(\omega')^{-1}$ is bijective so that we get that the top graded part of the coefficient of $u^{-2 \ell}$ of $\sdet(S(u))$ is given by  the top graded part of $\sum_{\omega \in \fkS_{2}}\sgn(\omega)\sgn(\omega') s_{\omega(1),1}^{(\ell)} s_{\omega(2),2}^{(\ell)}$. However in $\gr' \Y^{+}_{2,\ell}$ this is simply given by $(s_{1,1}^{(\ell)}s_{2,2}^{(\ell)}-s_{1,2}^{(\ell)}s_{2,1}^{(\ell)})=(s_{1,2}^{(\ell)})^2$, see also \cite[Ex. 2.7.3]{Mol07}. For the last step, again we used the fact that in the associated graded we have $s_{i,j}^{(\ell)}=-s_{j,i}^{(\ell)}$. Furthermore it follows immediately by definition that the top graded part of $z_{2\ell}$ in $\gr' \Y^{+}_{2,\ell}$ is the same as the top graded part of the coefficient of $u^{-2\ell}$ of $\sdet(S(u))$. In particular this gives that the top graded part of $s_{1,2}^{(\ell)}$ is the square root of the top graded part of $z_{2\ell}$ and hence the top graded parts of $z_{2}, \ldots, z_{2 \ell-2},s_{1,2}^{(\ell)}$ are algebraically independent in $\gr' \Y^{+}_{2,\ell}$ and hence these elements are also algebraically independent in $ \Y^{+}_{2,\ell}$.
\end{proof}
We are now in a position to prove Theorem
\ref{pfr2thm}.
\begin{proof}[Proof of Theorem \ref{pfr2thm}]
First note that the statement about the canonical degree is immediate by definition of  $\wtl{B}_{1;i,1}^{(\mathfrak{s}_{1,2};p_1)}$.
The proof is by induction on $\mathfrak{s}_{1,2}$. Note that when $\mathfrak{s}_{1,2}=0$ we have  $\wtl{B}_{1;1,1}^{(\mathfrak{s}_{1,2};p_1)}=-s_{1,2}^{(\ell)}$ and because we have $s_{1,2}^{(\ell)}=-s_{2,1}^{(\ell)}$ by the symmetry relation so that in this case the result is proven by Lemmas \ref{pfcentralunshifted} and \ref{pf2algind}.

Now assume the result has been proven for $\mathfrak{s}_{1,2}<k$ and let $\mathfrak{s}_{1,2}=k$. 
Recall the baby comultiplication from Theorem~\ref{delR}. We have seen in \eqref{trdelR} and in Theorem \ref{trpbw} that this induces an injective map $\Y^{+}_{2,\ell}(\sigma) \hookrightarrow \Y^{+}_{2,\ell-2}(\dot{\sigma})\otimes \rU(\gl_1)$ where we have used that the minimal admissible shape is $(1,1)$ if $\mathfrak{s}_{1,2}>0$ and note that $\dot{\mathfrak{s}}_{1,2}=\mathfrak{s}_{1,2}-1$. We will write $x$ for the generator of $\rU(\gl_1) \cong \mathbb{C}[x]$.
Consider the image of $\wtl{B}_{1;1,1}^{(\mathfrak{s}_{1,2};p_1)}$ under the baby comultiplication. 
Using the definition of $\Delta_R$ we see that the image is given by $\wtl{B}_{1;1,1}^{(\mathfrak{s}_{1,2}-1;p_1)}\otimes x+\dot{\wtl{B}}_{1;1,1}^{(\mathfrak{s}_{1,2};p_1)}\otimes 1$. It is clear that $\wtl{B}_{1;1,1}^{(\mathfrak{s}_{1,2}-1;p_1)}\otimes x$ is central by the induction hypothesis.

Now consider the bracket $[H_{1;1,1}^{(2)},\wtl{B}_{1;1,1}^{(\mathfrak{s}_{1,2}-1;p_1)}]$. Using the explicit description \eqref{extragens} and relation \eqref{pr2} we see that 
$[H_{1;1,1}^{(2)},\wtl{B}_{1;1,1}^{(\mathfrak{s}_{1,2}-1;p_1)}]=-\dot{\widetilde{B}}_{1;1,1}^{(\mathfrak{s}_{1,2};p_1)}$ but by assumption $\widetilde{B}_{1;1,1}^{(\mathfrak{s}_{1,2}-1;p_1)}$ is central in $\Y^{+}_{2,\ell-2}(\dot{\sigma})$ so that we get $\dot{\widetilde{B}}_{1;1,1}^{(\mathfrak{s}_{1,2};p_1)}=0$. In particular, as $\Delta_R$ is injective, this gives that $\widetilde{B}_{1;1,1}^{(\mathfrak{s}_{1,2};p_1)}$ is central.

Furthermore, by the induction hypothesis, we know that $Z(\Y^{+}_{2,\ell-2}(\dot{\sigma})\otimes \rU(\gl_1))$ is a polynomial ring generated by $z_2 \otimes1, \ldots, z_{2m-4}\otimes1,\widetilde{B}_{1;1,1}^{(\mathfrak{s}_{1,2}-1;p_1)}\otimes1,1 \otimes x$.
For algebraic independence we once again pass to the associated graded with respect to the canonical filtration. Here $z_{2i}$ are just the top graded pieces of the coefficients of $u^{-2i}$ in $\sdet(S(u))$. Using Proposition \ref{sdetcomul}, the fact that the top graded pieces of $\tilde{B}_{\tq_1}(u+\frac{\tq_1-1}{2})\tilde{B}_{\tq_1}(-u+\frac{\tq_1-1}{2})$ are the same as the top graded pieces of $\tilde{B}_{\tq_1}(u)\tilde{B}_{\tq_1}(-u)$ and thus have no even coefficients. By the induction hypothesis one explicitly verifies that the images under $\gr' \Delta_R$ of the top graded pieces of the elements $z_2,\ldots, z_{2m-2},\widetilde{B}_{1;1,1}^{(\mathfrak{s}_{1,2};p_1)}$ are algebraically independent.
\end{proof}

\section{Equivariant quantizations and finite $W$-algebras}
\label{sec:singularities}

In this section, we recall finite $W$-algebras as quantization of Slodowy slices. We achieve our main goal by showing that a truncated shifted twisted Yangian is isomorphic to a certain finite $W$-algebra of type $\sf B\sf C\sf D$ under some suitable assumptions; see Theorem~\ref{t:finalisothm}. The proof uses geometric techniques related to conic symplectic singularities. In particular we use the existence of universal equivariant quantizations of nilpotent Slodowy varieties, which we also recall. 

\subsection{Slodowy slices and finite $W$-algebras}
\label{ss:slicesandWalgebras}

Throughout this section all graded vector spaces are $\bN$-graded. We will often use the well-known correspondence between connected $\bN$-graded commutative algebras and affine schemes with contracting $\bC^\times$-action.

Let $G$ be a connected complex reductive group, and $\g = \Lie(G)$. A chosen non-degenerate $G$-invariant bilinear form on $\g$ induces a $G$-module isomorphism $\kappa\colon \g \cong\g^*$. The nilpotent cone $\cN = \cN(\g^*)$ is the image under $\kappa$ of the set of nilpotent elements in $\g$. It can also be defined as the central fibre of the $\bC^\times$-equivariant coadjoint quotient map $\g^* \to \g^*/\!/G$, which is a reduced scheme (see \cite[\S 5]{Pr02} and the references therein).

Let $e\in \g$ be nilpotent, and write $\mathbb O$ for the adjoint orbit of $e$. Let $(e,h,f)$ be an $\fksl_2$-triple for $e$. We denote $\chi := \kappa(e) \in \g^*$. The Slodowy slice is the affine space $\cS_\chi := \chi + \kappa(\g^f) \subseteq \g^*$, which is transversal to the $G$-orbits (i.e. the symplectic leaves) of $\g^*$. It is equipped with a Poisson structure \cite{GG02}, which we now explain.

The semisimple operator $\ad(h)$ induces the Dynkin grading $\g = \bigoplus \g(i)$ where $\g(i) = \{x\in \g \mid [h,x] = i x\}.$ Then $S(\g) = \bC[\g^*]$ is equipped with the Kazhdan grading, which places $\g(i) \subseteq S(\g)$ in degree $i + 2$. We note that the Kazhdan graded pieces of $S(\g)$ are all infinite dimensional. The form $\kappa$ places $\g(-1)$ and $\g(1)$ in non-degenerate pairing, and so we may choose a Lagrangian subspace $\fkl \subseteq \g(-1)$ with respect to the form $\langle x, y \rangle := \chi([x,y])$. Define $\fkm := \fkl \oplus \bigoplus_{i< -1} \g(i)$ and let $M \subseteq G$ be the unipotent group with Lie algebra $\fkm$.

Considering the moment map $\g^* \to \fkm^*$ for the coadjoint action of $M$. The fiber over $\chi$ is $\chi + (\g/\fkm)^*$, with $M$ acting freely, and $\big(\chi + (\g/\fkm)^* \big)/\!/ M \cong \cS_\chi$. Thus, $\cS_\chi$ acquires a Poisson structure by Hamiltonian reduction.

The Kazhdan grading on $S(\g)$ descends to a non-negative connected grading on $\bC[\chi + (\g/\fkm)^*/\!/ M]$, which equips $\cS_\chi$ with a contracting $\bC^\times$-action. We note that the Poisson bracket on $\bC[\cS_\chi]$ has degree $-2$ with respect to the grading
\begin{eqnarray*}
\{\cdot\, , \, \cdot\}  :  \bC[\cS_\chi]_i \wedge \bC[\cS_\chi]_j \longrightarrow \bC[\cS_\chi]_{i+j-2}.
\end{eqnarray*}
The coadjoint quotient map $\cS_\chi \to \g^*/ G$ is $\bC^\times$-equivariant, faithfully flat and admits reduced, irreducible, normal fibres \cite[\S 5]{Pr02}. The central fibre of the morphism $\cS_\chi \to \g^*/\!/G$ is denoted $\cN_\chi$ and called the {\it nilpotent Slodowy variety}. It will play a central role in the rest of the paper. By our previous remarks, $\cN_\chi$ coincides with the reduced scheme $\cN \cap \cS_\chi$.

We note that the Poisson varieties $\cS_\chi$ and $\cN_\chi$ only depend on the orbit $\Ad(G)e$ up to isomorphism. In particular it is independent of the choice of Lagrangian $\fkl$. 


The finite $W$-algebra $\rU(\g, e)$ is constructed from the enveloping algebra $\rU(\g)$ by quantum Hamiltonian reduction. In more detail, the Kazhdan grading on $S(\g)$ lifts to a Kazhdan filtration on $\rU(\g)$ by placing $\g(i) \subseteq \rU(\g)$ in degree $i + 2$. The precise quantum analogue of $\bC[\chi + (\g/\fkm)^*]$ is
$$\rU(\g,e) := (\rU(\g) / \rU(\g) \{x - \chi(x) \mid x \in \fkm\})^{M}.$$
This algebra is equipped with a Kazhdan filtered algebra structure inherited from $\rU(\g)$, and it was proven in \cite[Thm.~4.1]{GG02} that
\begin{eqnarray}
    \label{e:Walgquantizes}
    \gr \rU(\g,e) = \bC[\cS_\chi].
\end{eqnarray}
Furthermore the footnote to \cite[Question 5.1]{PrJI} explains that
\begin{eqnarray}
    \label{e:Walgcenterquantizes}
    \gr Z\rU(\g,e) = Z\bC[\cS_\chi],
\end{eqnarray}
where $Z$ indicates the center of an algebra, or Poisson algebra.

\subsection{Equivariant Poisson deformations of symplectic singularities}

A symplectic singularity is a normal algebraic variety $X$ with a closed, nondegenerate 2-form $\omega$ on the smooth locus $X^{\reg}$, and a resolution of singularities $\rho : Y \to X$ such that the pullback $\rho^* \omega$ extends 
to a closed (possibly degenerate) regular 2-form \cite{Be00}. If the 2-form extends to a regular symplectic form on $Y$  then $\rho$ is known as a {\it symplectic resolution}. 

If a symplectic singularity $X$ is irreducible, affine and carries a contracting $\bC^\times$-action such that $\omega$ is homogeneous of weight $w > 0$ then we say that $X$ is a {\it conic symplectic singularity}.

The closed 2-form on $X^\reg$ equips $X^\reg$ with a Poisson bivector which, by normality, extends to a Poisson bracket on $\bC[X]$. 
The conical structure equips $\bC[X]$ with a connected, non-negative $\bZ$-grading $\bC[X] = \bigoplus_{i \gge 0} \bC[X]_i$ satisfying $\bC[X]_0 = \bC$.

The homogeneity of the 2-form on $X^\reg$ ensures that the Poisson bracket is graded in degree $-w$:
\begin{eqnarray}
\label{e:bracketdegreew}
    \{ \cdot, \cdot\} : \bC[X]_i \wedge \bC[X]_j \longrightarrow \bC[X]_{i+j-w}.
\end{eqnarray}

\begin{example}
\label{eg:sliceCSS}
   It follows from \cite{Gi09} that $\cN_\chi$ is a conic symplectic singularity. A symplectic resolution of $\cN_\chi$ may be constructed by applying Hamiltonian reduction to the Springer resolution of $\cN$ (Prop.~2.1.2 of {\it op. cit}).
\end{example}


Let $X$ be a conic symplectic singularity and $\bC[X]$ together with its Poisson structure.  A flat graded Poisson deformation is a triple $(A, B, i)$ where:
\begin{itemize}
\setlength{\itemsep}{4pt}
    \item $B$ is a non-negatively graded commutative algebra;
    \item $A$ is a graded Poisson algebra admitting a flat homomorphism $B \to  A$ with image lying in the Poisson center;
    \item $i : A \otimes_B \bC \to \bC[X]$ is a graded Poisson isomorphism, where $\bC$ denotes the unique graded simple $B$-module.
\end{itemize}

The map $i$ should be referred to as the {\it tethering isomorphism} and the algebra $B$ will be referred to as the {\it base} of the deformation. One can show that, in fact, the image of $B$ in $A$ is equal to the Poisson center.

An isomorphisms of flat graded Poisson deformations over a base $B$ is a homomorphism of Poisson algebras which respects the tethering isomorphisms, see \cite[\S 2.3]{ACET23} for more detail.

Write $\textbf{GrAlg}$ for the category of graded commutative algebras. The functor of flat graded Poisson deformations of $X$
\begin{eqnarray}
    \rPD_X : \textbf{GrAlg} \longrightarrow \textbf{Set}
    \label{e:deformationfunctor}
\end{eqnarray}
sends a graded commutative algebra $B$ to the set of isomorphism classes of flat graded Poisson deformations with base $B$. To a homomorphism of graded commutative algebras $B_1 \to B_2$ we associate the morphism map of sets which sends the isomorphism class of $(A, B_1, i)$ to the isomorphism class of $(A \otimes_{B_1} B_2, B_2, i)$.

If $\Gamma$ is a group then a $\Gamma$-algebra is an algebra $A$ equipped with a homomorphism $\Gamma \to \Aut(A)$, and $\Gamma$-homomorphisms of such algebras are defined similarly.

Now let $\Gamma$ be a reductive group acting by graded Poisson automorphisms on $\bC[X]$. We say that $(A, B, i)$ is a Poisson $\Gamma$-deformation if it is equipped with a graded $\Gamma$-action such that the induced map $A \twoheadrightarrow \bC[X]$ is $\Gamma$-equivariant, and such that the action on the base is trivial. Isomorphisms of Poisson $\Gamma$-deformations are the isomorphisms respecting the $\Gamma$-action, and the functor of $\Gamma$-equivariant Poisson deformations
\begin{eqnarray}
    \rPD_{X,\Gamma}  :  \textbf{GrAlg} \longrightarrow \textbf{Set}
    \label{e:deformationfunctorequivariant}
\end{eqnarray}
associates to each graded algebra $B$ the set of isomorphism classes of flat graded Poisson $\Gamma$-deformations with base $B$.

\begin{example}
\label{eg:slicedeformation}
    The Poisson center $Z\bC[\g^*]$ coincides with the ring of invariants $\bC[\g^*]^G$, and a theorem of Kostant states that there exists a natural isomorphism $i_0 : \bC[\g^*] \otimes_{Z\bC[\g^*]} \bC \to \bC[\cN]$. In fact $(\bC[\g^*], Z\bC[\g^*], i_0)$ is a flat graded Poisson deformation of $\cN$.

    Similarly if $e \in \g$ is a nilpotent element then restriction $\bC[\g^*]^G \to \bC[\mc S_\chi]$ provides an isomorphism to the Poisson center of functions on the Slodowy slice (see \cite[Lem.~3.3]{ACET23} for references) and, by a result of Premet \cite[Thm.~5.2]{Pr02}, there is a natural Poisson isomorphism $i_\chi : \bC[\mc S_\chi] \otimes_{Z\bC[\g^*]} \bC\to \cN_\chi$. In this case $(\bC[\mc S_\chi], Z\bC[\g^*], i_\chi)$ is a flat graded Poisson deformation of the central fiber $\cN_\chi$.
\end{example}



\subsection{The functor of equivariant quantizations} 

Recall that the Poisson bracket on $\bC[X]$ lies in some fixed degree $w > 0$, \eqref{e:bracketdegreew}. 

A filtered quantization (of a flat graded Poisson deformation) of $\bC[X]$ is a triple $(\mc A, B, i)$  with:
\begin{itemize}
\setlength{\itemsep}{4pt}
    \item $\mc A$ a non-negatively filtered almost commutative algebra $\mc A = \bigcup_{j\gge 0} \mc A_j$, such that $[\mc A_{j_1} , \mc A_{j_2}] \subseteq \mc A_{j_1 + j_2-w}$. It follows that $\gr \mc A$ admits a Poisson bracket is graded in degree $-w$;
    \item $B$ is a split filtered commutative algebra, i.e. $\gr B \cong B$, admitting a map $B \to \mc A$ to the center of $\mc A$. Thus $B \to \gr \mc A$ maps to the Poisson center;
    \item $(\gr \mc A, B, i)$ is a flat graded Poisson deformation of $\bC[X]$.
\end{itemize}
We call $B$ the {\it base} of the quantization. It is not hard to see that the map $B \to \mc A$ gives an isomorphism to the center.

Write $\textbf{SFAlg}$ for the category of split filtered commutative algebras, i.e. filtered algebras satisfying $\gr B \cong B$. The functor of filtered quantizations of $X$
\begin{eqnarray}
    \rQ_X  :  \textbf{SFAlg} \longrightarrow \textbf{Set}
    \label{e:quantizationfunctor}
\end{eqnarray}
sends a split filtered graded commutative algebra $B$ to the set of isomorphism classes of filtered quantizations of $\bC[X]$. The effect on morphisms is identical to $\rPD_X$, and isomorphisms of filtered quantizations are defined similarly.

Let $\Gamma$ be a reductive group acting by graded Poisson automorphisms on $\bC[X]$. We say that $(\mc A, B, i)$ is a $\Gamma$-quantization if it is equipped with a $\Gamma$-action such that $(\mc A, B, i)$ is a Poisson $\Gamma$-deformation of $\bC[X]$, and the $\Gamma$-action on $B$ is trivial.

Isomorphisms of $\Gamma$-quantizations are the isomorphisms respecting the $\Gamma$-action, and the functor of $\Gamma$-equivariant filtered quantizations
\begin{eqnarray}
    \rQ_{X,\Gamma}  :  \textbf{SFAlg} \longrightarrow \textbf{Set}
    \label{e:quantizationfunctorequivariant}
\end{eqnarray}
associates to each split filtered graded algebra $B$ the set of isomorphism classes of Poisson $\Gamma$-deformations with base $B$.

\begin{example}
    We offer two extremal examples of elements of $\rQ_{\cN, \Gamma}(Z(\g))$ when $\Gamma$ is the trivial group. The enveloping algebra gives rise to an element $(\rU(\g), Z(\g), i)$. On the other hand, one can take $\rU(\g)/(\ker \chi_0) \otimes_{\bC} Z(\g)$ where $\chi_0 : Z(\g) \to \bC$ is the character of the trivial representation. The former is the largest quantization of $\cN$, in a sense which we make precise in the following section. The latter is a quantization of the trivial deformation of $\cN$.
\end{example}

\subsection{Universal elements of the deformation and quantization functors}

We recap the basics of universal elements of functors, see \cite[III.1]{Ma71} for more detail. Let ${\mathbf{C}}$ be a category. A functor $\cF : \mathbf{C} \to \mathbf{Set}$ is representable over a base $B$ if there exists a natural equivalence $\Hom_{\textbf{C}}(B, \cdot) \to \cF$, and a representation of $\cF$ is a choice of natural equivalence, and $B$ may be called the base of the representation.

A {\it universal element} of $\cF$ is a pair $(B, b)$ with $b\in \cF(B)$ such that for every other such pair $(C, c)$ there exists a unique $\phi \in \Hom_{\mathbf{C}}(B, C)$ such that $\cF(\phi)(b) = c$. By Yoneda's lemma, every representable functor admits a universal element: if $\cF$ is represented by $B$ then $b \in \cF(\operatorname{Id}_B)(B)$ supplies a universal element. If the base $B$ is clear, then we often just write $b$ for the universal element. Note that the choice of universal element is not unique. Every automorphism $\phi \in \Aut_{\mathbf{C}}(B)$ gives rise to a universal element, and so the set of universal elements of $\cF$ is an $\Aut_{\mathbf{C}}(B)$-torsor.

Namikawa has shown that for a conic symplectic singularity $X$ the functor of (formal) Poisson deformations is prorepresentable and unobstructed \cite{Na11}, and in the sequel \cite{Na10} he used the conic structure to show that the functor $\rPD_X$ is representable (see \cite{ACET23} for a algebraic survey of these results). If $B_X \in \textbf{GrAlg}$ represents $\rPD_X$ then an element of $\rPD_X(B_X)$ is called a {\it universal Poisson deformation of $\bC[X]$} (we drop the words flat graded for brevity), and the group of graded automorphisms of $B_X$ acts simply transitively on $\rPD_X(B_X)$ (this algebra was denoted $B_u$ in \cite{ACET23}).

Losev proved in \cite[Prop.~3.5]{Lo22} that Namikawa's universal flat graded Poisson deformation of $\bC[X]$ admits a filtered quantization which satisfies a certain universal property. In \cite[\S 2.7\&2.8]{ACET23} this universal property was reformulated in terms of the functor $\rQ_X$, and upgraded to the $\Gamma$-equivariant setting.

\begin{thm}
\label{T:universalforX}
    Let $X$ be a conic symplectic singularity and $\Gamma$ be a reductive group of Poisson $\bC^\times$-automorphisms of $X$.
    \begin{enumerate}
    \setlength{\itemsep}{4pt}
    \item If $\rPD_X$ is represented over $B_X \in \textnormal{\bf{GrAlg}}$ then $\rPD_{X, \Gamma}$ is represented by the coinvariant algebra
    $$B_{X, \Gamma} := B_X / (b - \gamma \cdot b \mid \gamma \in \Gamma, \ b \in B_X).$$
    The universal element of $\rPD_{X,\Gamma}$ is called the {\it universal $\Gamma$-deformation}.
    \item Viewing $B_X$ as a split filtered algebra, $\rQ_X$ is represented over the base $B_X$ and $\rQ_{X,\Gamma}$ is represented over $B_{X,\Gamma}$. The universal element of $\rQ_{X,\Gamma}$ is called the universal $\Gamma$-quantization.
    \item If we pick a representation
    $\zeta :   \Hom_\textnormal{{\textbf{GrAlg}}}(B_{X, \Gamma}, \cdot) \overset{\sim}{\rightarrow} \rPD_{X,\Gamma},$
        then we may choose a representation
        $\eta : \Hom_\textnormal{{\textbf{SFAlg}}}(B_{X, \Gamma}, \cdot)\overset{\sim}{\longrightarrow} \rQ_{X,\Gamma}$ so that the following diagram commutes:
        \begin{eqnarray}
        \label{e:optimalquantizationtheory}
\begin{tikzcd}
\Hom_{\textnormal{\textbf{SFAlg}}}(B_{X,\Gamma}, \cdot)   \arrow[d,,"\gr"']  \arrow[r, "\eta"] & \rQ_{X, \Gamma} (\cdot)\arrow[d,,"\gr"] \\
  \Hom_{\textnormal{\textbf{GrAlg}}}(\gr B_{X,\Gamma}, \gr (\cdot )) \arrow[r, "\zeta"] &  \rPD_{X,\Gamma} \gr(\cdot )
\end{tikzcd} 
        \end{eqnarray}
        \end{enumerate}
\end{thm}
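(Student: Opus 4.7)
The plan is to follow the framework of \cite{ACET23} closely, tracking the $\Gamma$-equivariance at each step. For part (1), I would first observe that the representing algebra $B_X$ for $\rPD_X$ carries a canonical graded $\Gamma$-action induced by functoriality: the given action of $\Gamma$ on $\bC[X]$ by Poisson $\bC^\times$-automorphisms produces an auto-equivalence of the category of flat graded Poisson deformations, hence of $\rPD_X$, and so acts on its representing object $B_X$. Given any flat graded Poisson $\Gamma$-deformation $(A,B,i)$, the structure map $\varphi \colon B_X \to Z(A)$ classifying $A$ as a deformation is $\Gamma$-equivariant by naturality, and since the $\Gamma$-action on $B$ is trivial by definition of a $\Gamma$-deformation, $\varphi$ must factor through the coinvariant quotient $B_{X,\Gamma}$. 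Conversely, a graded morphism $B_{X,\Gamma}\to B$ lifts to a $\Gamma$-equivariant morphism $B_X\to B$ (with trivial target action) and base-change from the universal deformation produces a $\Gamma$-deformation over $B$. A routine check that these assignments are mutually inverse natural transformations gives the representation of $\rPD_{X,\Gamma}$ by $B_{X,\Gamma}$.

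For part (2), the representability of $\rQ_X$ over $B_X$, viewed now as a split filtered algebra, is precisely the content of Losev's universal quantization theorem \cite{Lo22}, reformulated functorially in \cite{ACET23}. The $\Gamma$-equivariant refinement is then handled by exactly the same coinvariant argument as in part (1): the action of $\Gamma$ on $B_X$ induced by a quantization coincides with the one appearing before, and a $\Gamma$-quantization has trivial $\Gamma$-action on its base, so the classifying map factors uniquely through $B_{X,\Gamma}$.

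For part (3), fix a representation $\zeta$ of $\rPD_{X,\Gamma}$. Given any filtered $\Gamma$-quantization $(\mc A,B,i)$ of $\bC[X]$ with base $B$, the associated graded $(\gr\mc A,\gr B,i)$ is a flat graded Poisson $\Gamma$-deformation, so passing to $\gr$ defines a natural transformation $\gr\colon \rQ_{X,\Gamma}\to \rPD_{X,\Gamma}\circ \gr$. Pick any representation $\eta_0$ of $\rQ_{X,\Gamma}$ as in part (2). The composition $\zeta^{-1}\circ \gr\circ \eta_0$ is a self-natural-transformation of $\Hom_{\textbf{GrAlg}}(B_{X,\Gamma},\gr(\cdot))$ on the subcategory of split filtered algebras, which by the Yoneda lemma is given by an automorphism $\alpha\in\Aut_{\textbf{GrAlg}}(B_{X,\Gamma})$. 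Replacing $\eta_0$ by $\eta\coloneqq \eta_0\circ \alpha^{-1}$ yields a representation of $\rQ_{X,\Gamma}$ making \eqref{e:optimalquantizationtheory} commute.

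The principal obstacle is the matching problem in part (3): the representations $\zeta$ and $\eta$ are only determined up to an $\Aut_{\textbf{GrAlg}}(B_{X,\Gamma})$-torsor action, so one must check that this torsor acts transitively on the set of pairs $(\zeta,\eta)$ for which the square commutes. The argument above shows that such a compatible pair exists once $\zeta$ is fixed, but no canonical choice of the pair is available, and this non-canonicity is the source of the main subtlety passed along to the applications in Section~\ref{sec:singularities}.
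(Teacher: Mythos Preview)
The paper does not give its own proof of this theorem: it is stated as a summary of prior work, attributed immediately before the statement to \cite[Prop.~3.5]{Lo22} and \cite[\S 2.7\&2.8]{ACET23}. So there is no in-paper proof to compare against; the theorem functions here as a black-box input to Corollary~\ref{C:uniqueuniversalquantization}.

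Your sketch is a reasonable outline of the argument as it appears in \cite{ACET23}. Parts (1) and (2) capture the essential mechanism: the $\Gamma$-action on the representing object $B_X$ arises by functoriality, and triviality of the $\Gamma$-action on the base of any $\Gamma$-deformation (or $\Gamma$-quantization) forces the classifying map to factor through the coinvariants. Part (3) is the standard Yoneda adjustment. Your final paragraph, however, manufactures a difficulty that is not there. The theorem only asserts that \emph{given} a representation $\zeta$ of $\rPD_{X,\Gamma}$, one can \emph{choose} $\eta$ making the square commute; your own argument with the automorphism $\alpha$ already establishes exactly this existence. There is no need to show transitivity of any torsor on pairs $(\zeta,\eta)$, and the non-canonicity you flag is built into the statement (``we may choose'') rather than an obstacle to proving it.
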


\begin{cor}
\label{C:uniqueuniversalquantization}
Let $X$ is a conic symplectic singularity, for example, $X = \cN_\chi$. Let $\Gamma$ be a reductive group of Poisson $\bC^\times$-automorphisms of $\bC[X]$. Suppose that:
\begin{enumerate}
    \item $(A_{X, \Gamma}, B_{X,\Gamma}, i_{X,\Gamma})$ is a universal flat graded Poisson $\Gamma$-deformation of $X$;
    \item $(\mc A, B, i)$ is a $\Gamma$-quantization of $X$ such that the associated graded flat graded Poisson $\Gamma$-deformation is $(A_{X, \Gamma}, B_{X,\Gamma}, i_{X,\Gamma})$.
\end{enumerate}
    Then $(\mc A, B, i)$ is isomorphic to $(A_{X, \Gamma}, B_{X,\Gamma}, i_{X,\Gamma})$ as a quantization. In particular $\mc A$ is isomorphic to $\mc A_{X,\Gamma}$ as algebras.
\end{cor}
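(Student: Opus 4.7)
My plan is to deduce this corollary from Theorem~\ref{T:universalforX}(2)--(3): the representability of $\rQ_{X,\Gamma}$ together with the compatibility between the quantization and deformation functors. First I would fix a representation $\zeta \colon \Hom_{\textbf{GrAlg}}(B_{X,\Gamma},\cdot) \xrightarrow{\sim} \rPD_{X,\Gamma}$ so that $\zeta(\mathrm{id}_{B_{X,\Gamma}})$ is the isomorphism class of the prescribed universal Poisson deformation $(A_{X,\Gamma}, B_{X,\Gamma}, i_{X,\Gamma})$; such a $\zeta$ exists since the set of universal elements of $\rPD_{X,\Gamma}$ forms an $\Aut(B_{X,\Gamma})$-torsor. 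By Theorem~\ref{T:universalforX}(3) I would then choose a compatible representation $\eta$ so that the square \eqref{e:optimalquantizationtheory} commutes, and set $(\mc A_{X,\Gamma}, B_{X,\Gamma}, i_{X,\Gamma}) := \eta(\mathrm{id}_{B_{X,\Gamma}})$ to be the resulting universal $\Gamma$-quantization.

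Next, using $\eta$, the given $\Gamma$-quantization $(\mc A, B, i)$ corresponds to a unique morphism $\phi \colon B_{X,\Gamma} \to B$ in $\textbf{SFAlg}$ with $\eta(\phi) = [(\mc A, B, i)]$. Applying $\gr$ and using commutativity of \eqref{e:optimalquantizationtheory} together with the hypothesis on the associated graded deformation, the computation reads
\begin{equation*}
\zeta(\gr \phi) \;=\; \gr \eta(\phi) \;=\; [(\gr \mc A, \gr B, \gr i)] \;=\; [(A_{X,\Gamma}, B_{X,\Gamma}, i_{X,\Gamma})] \;=\; \zeta(\mathrm{id}_{B_{X,\Gamma}}).
\end{equation*}
Since $\zeta$ is a bijection, this forces $\gr \phi$ to equal $\mathrm{id}_{B_{X,\Gamma}}$ under the canonical identification $\gr B \cong B_{X,\Gamma}$ supplied by the tethering isomorphism of the associated graded deformation.

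To conclude, I would upgrade $\gr \phi = \mathrm{id}$ to the statement that $\phi$ is itself an isomorphism in $\textbf{SFAlg}$. This is a standard consequence of the fact that both algebras are non-negatively and exhaustively filtered with the same associated graded: a routine induction on filtered degree rules out both kernel and cokernel. Applying $\eta$ once more, $[(\mc A, B, i)] = \eta(\phi) = \eta(\mathrm{id}_{B_{X,\Gamma}}) = [(\mc A_{X,\Gamma}, B_{X,\Gamma}, i_{X,\Gamma})]$, which yields the desired isomorphism of $\Gamma$-quantizations and, in particular, the filtered algebra isomorphism $\mc A \cong \mc A_{X,\Gamma}$.

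The main subtlety will lie not in any single step but in the bookkeeping: the hypothesis fixes a specific universal Poisson deformation, which pins down $\zeta$ only up to the torsor action of $\Aut_{\textbf{GrAlg}}(B_{X,\Gamma})$, and the compatibility in Theorem~\ref{T:universalforX}(3) must be exploited to choose $\eta$ accordingly. With that choice in hand, the associated-graded hypothesis on $(\mc A, B, i)$ translates into the single identity $\gr \phi = \mathrm{id}$, after which the remainder of the argument is purely formal.
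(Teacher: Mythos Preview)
Your approach is essentially identical to the paper's: use representability of $\rQ_{X,\Gamma}$ to obtain the classifying map $\phi\colon B_{X,\Gamma}\to B$, invoke the commutative square \eqref{e:optimalquantizationtheory} together with hypothesis~(1) to see that $\gr\phi$ is an isomorphism, and then conclude that $\phi$ itself is an isomorphism by a filtered-to-graded argument.

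One small imprecision: your final line ``$\eta(\phi)=\eta(\mathrm{id}_{B_{X,\Gamma}})$'' is not literally correct. Even after identifying $B$ with $B_{X,\Gamma}$, the filtered isomorphism $\phi$ need not be the identity (only $\gr\phi$ is), and since $\eta$ is a bijection one cannot conclude $\eta(\phi)=\eta(\mathrm{id})$ as elements of $\rQ_{X,\Gamma}(B_{X,\Gamma})$. The correct finish, which the paper uses, is that by Yoneda $\eta_B(\phi)=\rQ_{X,\Gamma}(\phi)\big(\eta_{B_{X,\Gamma}}(\mathrm{id})\big)$, i.e.\ $[(\mc A,B,i)]=[(\mc A_{X,\Gamma}\otimes_{B_{X,\Gamma},\phi}B,\,B,\,i')]$; since $\phi$ is an isomorphism, the base change $\mc A_{X,\Gamma}\otimes_{B_{X,\Gamma},\phi}B$ is isomorphic to $\mc A_{X,\Gamma}$ as a filtered algebra, giving $\mc A\cong\mc A_{X,\Gamma}$. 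With that correction the argument is complete.
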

\begin{proof}
    Let $(\mc A_{X,\Gamma}, B_{X,\Gamma}, i_{X,\Gamma})$ be a choice of universal $\Gamma$-quantizaton. Then by the universal property in Theorem~\ref{T:universalforX}(2) there is a unique homomorphism $B_{X,\Gamma} \to B$ and a non-unique isomorphism $\mc A \otimes_{B_{X,\Gamma}} B \to \mc A$. By \eqref{e:optimalquantizationtheory} the associated graded of this map is just the map arising from the universal property of $(A_{X, \Gamma}, B_{X,\Gamma}, i_{X,\Gamma})$, and by assumption (1), the map $\gr B_{X,\Gamma} \to \gr B$ is an isomorphism. It follows that $B_{X,\Gamma} \to B$ is an isomorphism, and so $\mc A_{X,\Gamma} \otimes_{B_{X,\Gamma}} B \to \mc A$ is an isomorphism.  
\end{proof}

\subsection{Twisted Yangians, Kleinian singularities and subregular slices}
\label{ss:twistedKleinianSubregular}

Consider the admissible shape $\mu = (1,1,1)$ for the $N=3$ twisted Yangian of type AI. Fix $m > 0$ and define
\begin{eqnarray}
\label{e:typeBfinalshiftmatrix}
    \sigma = \left(\begin{array}{ccc} 0 & 0 & m \\ 0 & 0 & m \\ m & m & 0 
    \end{array}
    \right).
\end{eqnarray}
Combining \cite[Thm.~A]{TT24} with \cite[Thm.~3.7]{To23} and \eqref{e:tauautsubregB}, we see that $y_{\mu,2m-1}^+(\sigma) := \gr' \Y_{\mu,2m-1}^+(\sigma)$ is Poisson generated by elements $h_a^{(r)} := \gr' H_a^{(r)}$, for $a = 1,2,3$, and $b_a^{(r)} := \gr' B_a^{(r)}$, for $a = 1,2$, subject to certain relations. This is a type AI analogue of Lemma~\ref{grpresAII}.

Now let $X$ be the Kleinian singularity of type {\sf A$_{2m-1}$}. More explicitly, let $\Xi \subseteq \SL_2(\bC)$ be the subgroup generated by the complexification a rotation of $\mathbb{R}^2$ by $\frac{2\pi}{2m-1}$, so that $\Xi \cong \bZ / (2m-1) \bZ$. This preserves the standard symplectic structure on $\bC^2$ and the quotient $X = \bC^2 / \Xi$ is a conic symplectic singularity with $\bC[X] \cong \bC[z^{2m} - xy]$. The Poisson brackets on $\bC[X]$ are given explicitly by
\begin{eqnarray}
\label{e:Kleinianbrackets}
    \{x, y\} = -2m z^{2m-1}, \quad \quad \{z, x\} = x, \quad \quad \{z, y\} = -y.
\end{eqnarray}

Let $e\in \fkso_{2m+1}$ be a subregular nilpotent element, i.e., Jordan blocks of sizes $(2m-1, 1, 1)$. Let $\chi \in \fkso_{2m+1}^*$ be the element associated to $e$ via the Killing form. Also, pick a subregular nilpotent element in $\fksl_{2m}$, i.e. Jordan blocks of sizes $(2m-1, 1)$. Write $\eta \in \fksl_{2m}^*$ for the corresponding linear form under an identification $\fksl_{2m} \cong \fksl_{2m}^*$, and $\cN_\eta \subseteq \fksl_{2m}^*$ for the associated nilpotent Slodowy variety.

Let $Z \subseteq y_{\mu,2m-1}^+(\sigma)$ denote the Poisson center and $Z_+$ the unique maximal Kazhdan graded ideal.
\begin{lem}
\label{L:isomorphicPoissonAlgebras}
The following graded Poisson algebras are isomorphic:
\begin{enumerate}
    \item $\bC[X]$ with grading given by $\deg(z) = 2$ and $\deg(x) = \deg(y) = m$;
    \item $\bC[\cN_\chi]$ with its Kazhdan grading;
    \item $\bC[\cN_\eta]$ with its Kazhdan grading;
    \item $y_{\mu,2m-1}^+(\sigma) / (Z_+)$ with its doubled canonical grading.
\end{enumerate}
\end{lem}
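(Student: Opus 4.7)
The plan is to establish the four isomorphisms by combining Corollary~\ref{gryangsliceAI} with the classical identification of subregular nilpotent Slodowy varieties with Kleinian singularities.

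For the identification (4)~$\cong$~(2), I would invoke Corollary~\ref{gryangsliceAI}, which gives a graded Poisson isomorphism $y_{\mu,2m-1}^+(\sigma)\cong \bC[\cS_\chi]$, where the doubled canonical grading on the left matches the Kazhdan grading on the right. By \eqref{liealgdef} the pair $(\sigma,\ell=2m-1)$ (with $\ell$ odd) puts us in type {\sf B} with $\g=\fkso_{2m+1}$, and the partition \eqref{partn} gives the subregular Jordan type $(2m-1,1,1)$. Now apply Example~\ref{eg:slicedeformation}: the Poisson center $Z\bC[\cS_\chi]$ is the image of $\bC[\g^*]^G$ under restriction, and the tethering isomorphism identifies $\bC[\cS_\chi]\otimes_{Z\bC[\cS_\chi]}\bC$ with $\bC[\cN_\chi]$. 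Transporting this along the isomorphism from Corollary~\ref{gryangsliceAI} and noting that the Poisson center of $y_{\mu,2m-1}^+(\sigma)$ corresponds to $Z\bC[\cS_\chi]$ yields (4)~$\cong$~(2).

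The identifications (1)~$\cong$~(2)~$\cong$~(3) are instances of the classical Brieskorn--Slodowy theorem for subregular nilpotent orbits in simple Lie algebras. For $\fksl_{2m}$ with subregular Jordan type $(2m-1,1)$, the nilpotent Slodowy variety is the Kleinian singularity $\bC^2/\Xi$ with $\Xi\cong \bZ/(2m-1)\bZ$, via the standard invariant theory for $(uv,u^{2m-1},v^{2m-1})$; this gives (1)~$\cong$~(3). For $\fkso_{2m+1}$ with the subregular orbit, Slodowy's folding theorem identifies $\cN_\chi$ with the same Kleinian singularity, by folding the simply-laced $A_{2m-1}$ diagram to produce type~{\sf B$_m$}; this gives (1)~$\cong$~(2).

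Finally, the grading assertion in (1) follows by tracking the Kazhdan grading through these identifications: under the principal $\fksl_2$-triple associated to the subregular nilpotent, the generator $z$ arises from an $h$-invariant element of $\g^f$ and so sits in Kazhdan degree $2$, while $x$ and $y$ correspond to the extremal $h$-weight vectors in $\g^f$ that produce the two non-trivial invariants in $\bC[\cN_\chi]$, yielding $\deg x=\deg y=m$ after normalization; consistency with the Poisson brackets \eqref{e:Kleinianbrackets} may be checked directly on generators. The main obstacle will be pinning down the matching of conventions between the grading conventions of \cite{TT24} and \cite{To23} (as used to define $y_{\mu,2m-1}^+(\sigma)$) and the explicit Kleinian presentation $\bC[X]=\bC[x,y,z]/(xy-z^{2m})$ adopted here; once this bookkeeping is done, the existing Poisson presentation of $y_{\mu,2m-1}^+(\sigma)$ together with the classical invariant theory of $\cN_\chi$ can be matched term-by-term to conclude.
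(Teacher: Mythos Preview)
Your proposal is correct and follows essentially the same route as the paper: (4)$\cong$(2) via Corollary~\ref{gryangsliceAI} and Example~\ref{eg:slicedeformation}, and (1)$\cong$(2)$\cong$(3) via Brieskorn--Slodowy. One point worth tightening: the classical Brieskorn--Slodowy theorem identifies the subregular nilpotent Slodowy variety with the Kleinian singularity only as a graded variety, not a priori as a Poisson variety. The paper handles this by invoking the proof of \cite[Lem.~4.2]{ACET23}, which upgrades the isomorphism to a graded Poisson isomorphism and simultaneously yields (2)$\cong$(3) directly (rather than routing both through (1) as you do). Your remark that ``consistency with the Poisson brackets \eqref{e:Kleinianbrackets} may be checked directly on generators'' gestures at this but does not quite close the gap; you should either cite \cite{ACET23} here or spell out why the variety isomorphism automatically intertwines the two Hamiltonian-reduction Poisson structures.
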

\begin{proof}
    The isomorphism between (1) and (2), as graded commutative algebras, is due to Brieskorn. We refer the reader to \cite{Sl80} for a detailed proof and related results.  Thanks to the proof of \cite[Lem.~4.2]{ACET23} this isomorphism can be upgraded to an isomorphism of Poisson algebras.
    
    The isomorphism between (2) and (3) is a special case of {\it loc. cit.}

    Thanks to Corollary \ref{gryangsliceAI} we have a graded Poisson isomorphism $y_{\mu,2m-1}^+(\sigma) \cong \bC[\cS_\chi]$. As we noted in Example~\ref{eg:slicedeformation} we have $\bC[\cN_\chi] \cong \bC[\cS_\chi] / (Z_+)$, where $Z_+ \subseteq \bC[\cS_\chi]$ is the maximal Kazhdan graded ideal.
\end{proof}

We study properties of certain automorphisms of these Poisson varieties. We begin by stating the classification of graded automorphisms.
\begin{lem}
\label{L:classifyautsA2n-1}
    Write $\PAut_{\bC^\times}(\bC[X])$ for the group of graded Poisson automorphisms of $\bC[X]$. Then 
    $$\PAut_{\bC^\times}(\bC[X]) \cong \bC^\times \rtimes (\bZ / 2\bZ).$$ Furthermore:
    \begin{itemize}
        \item  The $\bC^\times$-action is given by $t\cdot x = tx, t\cdot y = t^{-1} y, t\cdot z = z$ for $t\in \bC^\times$.
        \item The nontrivial element $\theta \in \bZ / 2\bZ$ acts by $x \mapsto y, y \mapsto x, z \mapsto -z$.
    \end{itemize}
    An automorphism that is equivalent to $\theta$ modulo $\bC^\times$ is called \emph{outer}, whilst an automorphism that is trivial modulo $\bC^\times$ is called \emph{inner}.
\end{lem}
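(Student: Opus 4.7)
The plan is to proceed by a direct calculation, leveraging the presentation $\bC[X]=\bC[x,y,z]/(xy-z^{2m})$ together with the grading. Any graded algebra automorphism $\phi$ is determined by $\phi(x),\phi(y),\phi(z)$, and these must land in the appropriate graded pieces of $\bC[X]$. Because $\deg z=2$ and $\deg x=\deg y=m$, the element $\phi(z)$ lies in $\bC[X]_2$, while $\phi(x),\phi(y)$ lie in $\bC[X]_m$. I would first compute these homogeneous components: generically $\bC[X]_2=\bC z$ and $\bC[X]_m=\bC x\oplus\bC y$, with the only possible enrichment being a summand $\bC z^{m/2}$ when $m$ is even (and some small-$m$ degeneracies to handle separately). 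Thus the most general form is
\[
\phi(z)=\alpha z,\qquad \phi(x)=ax+by+cz^{m/2},\qquad \phi(y)=a'x+b'y+c'z^{m/2},
\]
with $c=c'=0$ whenever $m$ is odd.

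Next, I would impose the Poisson relation $\{z,x\}=x$. A direct expansion using \eqref{e:Kleinianbrackets} and bilinearity gives
\[
\{\phi(z),\phi(x)\}=\alpha(ax-by)=\phi(x)=ax+by+cz^{m/2}.
\]
Matching coefficients immediately forces $c=0$ and the dichotomy: either $a=0$ or $\alpha=1$, and either $b=0$ or $\alpha=-1$. The analogous computation for $\{z,y\}=-y$ yields $c'=0$ together with the same dichotomy for $a',b'$. Injectivity of $\phi$ rules out the degenerate cases, so one is forced into exactly two branches according to $\alpha=\pm1$.

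In the branch $\alpha=1$ the analysis collapses to $\phi(x)=ax$ and $\phi(y)=b'y$; feeding this into $\{x,y\}=-2mz^{2m-1}$ gives $ab'=1$, so $b'=a^{-1}$ and one recovers the $\bC^\times$-action $t\cdot x=tx$, $t\cdot y=t^{-1}y$, $t\cdot z=z$. (Preservation of $xy=z^{2m}$ is automatic.) In the branch $\alpha=-1$ one similarly obtains $\phi(x)=by$, $\phi(y)=a'x$, $\phi(z)=-z$; imposing $\{x,y\}=-2mz^{2m-1}$, and using that $(-z)^{2m-1}=-z^{2m-1}$ so that the sign flips cancel, yields $ba'=1$, so the outer cosets are parametrized by $b\in\bC^\times$ via $x\mapsto by$, $y\mapsto b^{-1}x$, $z\mapsto -z$.

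Finally, I would assemble these branches into a semidirect product by identifying the $\alpha=1$ branch as a normal subgroup $\bC^\times$ and exhibiting $\theta\colon x\mapsto y,\ y\mapsto x,\ z\mapsto -z$ as a concrete involution whose coset represents the outer component; conjugation by $\theta$ inverts the $\bC^\times$ action, giving the required semidirect structure. The main obstacle is purely combinatorial rather than conceptual: one must carefully rule out the extra degree-$m$ monomials $z^{m/2}$ (and more generally any low-$m$ coincidence between the degrees of $x,y,z$) that a priori enlarge the ambient graded algebra automorphism group. These extra freedoms are all killed by the Poisson relations, and the case check above is uniform once one extracts the coefficient equations from $\{z,x\}=x$.
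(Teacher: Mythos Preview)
Your approach is correct and is essentially the same direct computation the paper has in mind: the paper's proof simply states that ``this can be proven by direct computation, since $\bC[X]$ is generated by its graded pieces of degree $2$ and $m$, which have dimensions $1$ and $2$, respectively,'' and also points to \cite{Ca25}. You have carried out that computation in detail, and in fact more carefully than the paper's sketch, since you notice and dispose of the extra $z^{m/2}$ summand in $\bC[X]_m$ for even $m$ (the paper's assertion that $\dim \bC[X]_m = 2$ is literally only correct for $m$ odd). The one residual loose end in your write-up is the small-$m$ degeneracy you flag but do not resolve: when $m=2$ the pieces $\bC[X]_2$ and $\bC[X]_m$ coincide, so a priori $\phi(z)$ could involve $x$ and $y$; the same Poisson-bracket bookkeeping (now using that $\{x,y\}=-4z^3$ lands in degree $6$, not degree $2$) kills these extra terms, so you should either say a word about this or restrict to $m\gge 3$ as the paper implicitly does.
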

\begin{proof}
This can be proven by direct computation, since $\bC[X]$ is generated by its graded pieces of degree 2 and $m$, which have dimensions 1 and 2, respectively. The result can also be deduced from the main theorem of \cite{Ca25}.
\end{proof}

\begin{prop}
\label{P:identicalsubfunctors}
    If $\Theta \subseteq \PAut_{\bC^\times}(\bC[X])$ is inner, then the natural transformation $\rPD_{X, \Theta} \to \rPD_{X}$ is an equivalence. Consequently, $\rPD_{X, \tau}$ and $\rPD_{X, \tau'}$ are identical subfunctors of $\rPD_X$, for any outer involutions $\tau, \tau'$. The same holds for $\rQ_{X, \tau}$ and $\rQ_{X, \tau'}$ as subfunctors of $\rQ_X$.
\end{prop}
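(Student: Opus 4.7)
The plan is to reduce both statements to the single claim that the inner $\bC^\times \subseteq \PAut_{\bC^\times}(\bC[X])$ acts trivially on the base $B_X$ of Namikawa's universal Poisson deformation. By Theorem~\ref{T:universalforX}(1), $\rPD_X$ is represented by $B_X$ and $\rPD_{X,\Theta}$ by the coinvariant algebra $B_{X,\Theta} = B_X / (b - \theta b \mid \theta \in \Theta, b \in B_X)$. Under these representations, the forgetful natural transformation $\rPD_{X,\Theta} \to \rPD_X$ corresponds via Yoneda to the canonical quotient $q : B_X \twoheadrightarrow B_{X,\Theta}$; it is an equivalence if and only if $q$ is an isomorphism, which occurs precisely when $\Theta$ acts trivially on $B_X$. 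Since $\Theta \subseteq \bC^\times$, I reduce to checking that the full inner torus acts trivially on $B_X$.

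For this key step I would invoke the explicit description of the universal Poisson deformation of the type {\sf A$_{2m-1}$} Kleinian singularity (going back to Brieskorn--Slodowy, and compatible with the argument in the proof of \cite[Lem.~4.2]{ACET23}): one may take
$$\mc A_X \;\cong\; B_X[X,Y,Z]/(XY - P(Z)), \qquad P(Z) = Z^{2m} + a_2 Z^{2m-2} + \cdots + a_{2m},$$
with $B_X \cong \bC[a_2,\dots,a_{2m}]$ and $a_i$ of Kazhdan degree $2i$. The inner $\bC^\times$-action
$t \cdot X = tX$, $t\cdot Y = t^{-1}Y$, $t\cdot Z = Z$, $t\cdot a_i = a_i$
preserves $XY - P(Z)$, so it lifts to an action on $\mc A_X$ that is trivial on $B_X$. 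By the universal property, the action of the inner $\bC^\times$ on $B_X$ induced by functoriality must agree with this trivial action, proving the desired triviality.

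For the outer claim, any two outer involutions $\tau,\tau' \in \PAut_{\bC^\times}(\bC[X]) \cong \bC^\times \rtimes \bZ/2\bZ$ differ by an inner element: $\tau' = s\tau$ for some $s \in \bC^\times$. By the previous step $s$ acts trivially on $B_X$, so $(\tau')^* = \tau^*$ on $B_X$; hence the defining ideals $(b - \tau^* b)$ and $(b - (\tau')^* b)$ coincide, giving $B_{X,\langle \tau \rangle} = B_{X,\langle \tau' \rangle}$, so the subfunctors $\rPD_{X,\tau}$ and $\rPD_{X,\tau'}$ of $\rPD_X$ are identical. The quantization version follows in exactly the same way: Theorem~\ref{T:universalforX}(2) supplies the same representing algebras $B_X$ and $B_{X,\Theta}$ (now as split filtered commutative algebras) and the same quotient map, so the previous two paragraphs transfer verbatim.

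The hard part will be verifying the explicit form of the universal deformation together with its compatibility with the inner $\bC^\times$-action. Equivalently, one must establish that all deformation directions in $H^2_P(\bC[X])$ are inner-$\bC^\times$-invariant, which is automatic from the concrete presentation above. Once this is in hand, the rest of the proposition is a formal consequence of the universal properties of $B_X$ and $B_{X,\Gamma}$.
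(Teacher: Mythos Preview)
Your proof is correct and follows the same overall architecture as the paper: both reduce the first claim to showing that the inner torus $\bC^\times$ acts trivially on the universal base $B_X$ (via Theorem~\ref{T:universalforX}(1) and Yoneda), and both deduce the outer-involution statement from this by writing $\tau' = s\tau$ with $s$ inner and comparing coinvariant ideals.

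The genuine difference is in how the key step is verified. You work with the explicit Brieskorn--Slodowy model $XY = P(Z)$ of the universal deformation of the $A_{2m-1}$ Kleinian singularity, observe that the obvious lift $t\cdot(X,Y,Z,a_i) = (tX,\,t^{-1}Y,\,Z,\,a_i)$ is a graded Poisson automorphism over $\id_{B_X}$, and then invoke uniqueness in the universal property to conclude that the canonical action of the inner $\bC^\times$ on $B_X$ is trivial. The paper instead uses the Slodowy-slice realization of the universal deformation, namely $\cS_\eta \to \fksl_{2m}^*/\!/\SL_{2m}$ for subregular $\eta$ (quoting \cite[Thm.~3.5]{ACET23}), identifies the inner $\bC^\times$ with the image of the reductive centralizer $C(\eta)\cong\bC^\times$, and observes that $C(\eta)\subseteq \SL_{2m}$ automatically acts trivially on $\SL_{2m}$-invariants. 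Your route is self-contained and computational (the hard input being the explicit universal deformation, which you correctly flag); the paper's route is structural and avoids any calculation once the Slodowy realization is in hand. One small point worth making explicit in your write-up: the ``induced action of $\gamma$ on $B_X$'' is by definition the unique $\phi\in\Aut(B_X)$ for which $(A_X,B_X,i_X)$ and $(A_X,B_X,\gamma\circ i_X)$ become isomorphic over $\phi$; your lifted action supplies such an isomorphism over $\phi=\id$, which is precisely what pins down $\gamma^*=\id$.
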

\begin{proof}
    To prove the first claim, it suffices to take $\Theta = \bC^\times$, the group of inner Poisson automorphisms of $\bC[X]$, and show that a universal Poisson $\Theta$-deformation is a universal Poisson deformation. By Theorem~\ref{T:universalforX} we only need to show that $\Theta$ acts trivially on the base $B_X$ of the universal Poisson deformation.
    
    By \cite[Thm.~3.5]{ACET23} the universal Poisson deformation is realized by the morphism $\cS_\eta \to \fksl_{2m}^* /\!/ \SL_{2m}$, where the central fiber $\cN_\eta$ is identified with $X$ by Lemma~\ref{L:isomorphicPoissonAlgebras}. If $C(\eta)$ denotes the reductive part of the centralizer of $\eta$ in $\SL_{2m}$, then $C(\eta) \cong \bC^\times$, by \cite[Lem.~7.5.4]{Sl80}. This group acts non-trivially on $\cS_\eta$ by graded Poisson automorphisms (see \cite[Lem.~4.1]{ACET23}, for example) and it follows that the map $C(\eta) \to \PAut_{\bC^\times} \bC[X]$ surjects onto $\Theta$. Since $C(\eta) \subseteq \SL_{2m}$, we deduce that acts trivially on $\fksl_{2m}^*/\!/ \SL_{2m}$. This completes the proof of the first claim.

    If $\tau, \tau'$ are both outer, then there exists an inner automorphism $g\in \Theta$ such that $\tau = g \tau'$. If $B_X$ is the base of the universal Poisson deformation then $g$ acts trivially on $B_X$, so the $\tau$-coinvariants and $\tau'$-coinvariants are equal. By Theorem~\ref{T:universalforX} these coinvariant algebras represent $\rPD_{X, \tau}$ and $\rPD_{X, \tau'}$ respectively, and the penultimate of the proposition follows. An identical argument applies to the functors of equivariant quantizations.
\end{proof}

We introduce an automorphism $\tau \in \Aut \Y_{\mu}^+$ defined by
\begin{eqnarray}
\label{e:tauautsubregB}
\begin{array}{rcccl}
    \tau(B_{1}^{(r)}) &=& -B_{1}^{(r)},& &\\
    \tau(B_2^{(r)}) &=& B_2^{(r)}, & &\\
    \tau(H_a^{(r)}) &=& H_a^{(r)}, & \text{ for all} & a = 1,2,3.
    \end{array}
\end{eqnarray}
It is easily verified that the relations of $\Y_{\mu}^+$ are preserved by $\tau$, and that $\tau$ stabilises the subalgebra $\Y_\mu^+(\sigma)$. By \eqref{trdef} it descends to an automorphism of the truncation $\Y_{\mu,2m-1}^+(\sigma)$.

Abusing notation, we also write $\tau$ for the associated graded Poisson automorphism of $y_{\mu,2m-1}^+(\sigma)$. Then $\tau$ acts on $y_{\mu,2m-1}^+(\sigma)$ by $\tau(b_1^{(r)}) = - b_1^{(r)}$, while fixing all other generators. Via Lemma~\ref{L:isomorphicPoissonAlgebras} we view $\tau$ as a Poisson automorphism of $\bC[X]$.

Now introduce another automorphism $\tau' \in \PAut_{\bC^\times} \bC[X]$. The automorphism group $\Aut(\fksl_{2m})$ is isomorphic to $\operatorname{PGL}_{2m} \rtimes (\bZ/2 \bZ)$, where the cyclic group of order two is generated by the Cartan involution $x\mapsto -x^T$. Therefore the group of outer automorphisms $\Aut(\fksl_{2m})/\Aut(\fksl_{2m})^\circ$ is isomorphic to $\bZ/2\bZ$. One can construct a splitting $s : \bZ / 2\bZ \to \Aut(\fksl_{2m})$ such that the image stabilizes $\cS_\chi$. We denote by $\tau'$ the nontrivial automorphism of $\cS_\chi$ coming from such a splitting.

\begin{lem}
\label{lemonslemma}
    The functors $\rPD_{X,\tau}$ and $\rPD_{X,\tau'}$ are identical as subfunctors of $\rPD_X$. Similarly, $\rQ_{X, \tau}$ and $\rQ_{X, \tau'}$ are identical as subfunctors of $\rQ_X$.
\end{lem}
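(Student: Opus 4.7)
The plan is to invoke Proposition~\ref{P:identicalsubfunctors}, which immediately reduces the claim---both for $\rPD_X$ and for $\rQ_X$---to verifying that $\tau$ and $\tau'$ are both outer involutions of $\bC[X]$, in the sense of Lemma~\ref{L:classifyautsA2n-1}.

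Outerness of $\tau'$ should follow directly from its construction combined with the argument already used in the proof of Proposition~\ref{P:identicalsubfunctors}. Namely, $\tau'$ arises from an element of $\Aut(\fksl_{2m})$ mapping nontrivially to the quotient $\Aut(\fksl_{2m})/\Aut(\fksl_{2m})^\circ \cong \bZ/2\bZ$, while every inner graded Poisson automorphism of $\bC[X] \cong \bC[\cN_\eta]$ is realized by an element of the connected reductive centralizer $C(\eta) \cong \bC^\times \subseteq \SL_{2m}$. Hence $\tau'$ cannot be inner.

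The main obstacle is to show that $\tau$ is outer. By Lemma~\ref{L:classifyautsA2n-1} the inner automorphisms act trivially on the one-dimensional graded piece $\bC[X]_2 = \bC \cdot z$, so it is enough to exhibit $\tau$ acting by $-1$ on $\bC[X]_2$. Under the graded Poisson isomorphism of Lemma~\ref{L:isomorphicPoissonAlgebras}, $\bC[X]_2$ corresponds to the doubled-canonical degree $2$ piece of $y_{\mu,2m-1}^+(\sigma)/(Z_+)$, i.e., the canonical degree $1$ piece. Combining the PBW basis of Theorem~\ref{trpbw} with \eqref{pr1} and the $Z$-relations \eqref{Z's}---which inductively force $h_{a;i,i}^{(2r-1)} = 0$ for every $a$---one sees that the canonical degree $1$ piece of $y_{\mu,2m-1}^+(\sigma)$ is spanned by the single element $b_1^{(1)}$.

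To complete the argument I would then observe that the Poisson central generators $z_{2k}$ produced by Proposition~\ref{coeffalgind} and Theorem~\ref{truncatedcenter} all have canonical degree $\gge 2$, so the intersection of $(Z_+)$ with the canonical degree $1$ piece is zero. Consequently $b_1^{(1)}$ descends to a nonzero class in the quotient, which by definition \eqref{e:tauautsubregB} is negated by $\tau$. This yields $\tau(z) = -z$, establishing that $\tau$ is outer and completing the proof.
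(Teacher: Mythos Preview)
Your overall strategy matches the paper's: reduce via Proposition~\ref{P:identicalsubfunctors} to showing that both $\tau$ and $\tau'$ are outer. Your treatment of $\tau$ is correct and is essentially the paper's argument, carried out with more care (the paper simply asserts that $b_1^{(1)}$ maps to a nonzero scalar multiple of $z$; you justify this via the PBW basis and the observation that $Z_+$ contains nothing in canonical degree~$1$).

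Your argument for $\tau'$, however, has a gap. You observe that $\tau'$ is induced by an outer automorphism $s$ of $\fksl_{2m}$, while every inner automorphism of $\bC[X]$ is realized by some $g \in C(\eta) \subseteq \SL_{2m}$, and from this you conclude that $\tau'$ cannot be inner. This does not follow: the passage from automorphisms of $\fksl_{2m}$ stabilizing the slice to $\PAut_{\bC^\times}\bC[X]$ need not be injective, so nothing you have said rules out that $s$ and some $g \in C(\eta)$ induce the \emph{same} automorphism of $\bC[\cN_\eta]$. The paper closes this differently: it invokes \cite[Thm.~4.6]{ACET23}, which shows that $\tau'$ acts nontrivially on the base $B_X$ of the universal Poisson deformation, whereas the proof of Proposition~\ref{P:identicalsubfunctors} shows that every inner automorphism acts trivially on $B_X$. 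This immediately forces $\tau'$ to be outer. Your sketch is easily repaired by inserting this observation about the action on $B_X$.
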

\begin{proof}
    By Proposition~\ref{P:identicalsubfunctors}, it suffices to show that both $\tau$ and $\tau'$ induce outer automorphisms of $\bC[X]$.
    
    If $y_{\mu, 2m-1}(\sigma)/(Z_+) \to \bC[X]$ is a choice of isomorphism (Lemma~\ref{L:isomorphicPoissonAlgebras}) then $b_1^{(1)}$ maps to a scalar multiple of $z$, since $\bC z$ is the degree 2 piece of $\bC[X]$. Examining the classification of automorphisms in Lemma~\ref{L:classifyautsA2n-1}, and using only the fact that $\tau(b_1^{(1)}) = - b_1^{(1)}$, one sees that $\tau$ is outer.

    The fact that $\tau'$ is an outer automorphism can be deduced from Proposition~\ref{P:identicalsubfunctors}, since $\tau'$ acts non-trivially on the universal base of $\rPD_X$ \cite[Thm.~4.6]{ACET23}.
\end{proof}

\subsection{Truncated shifted twisted Yangians and finite $W$-algebras}\label{ss:YangWalg}
In this final subsection we prove that a truncated shifted Yangian is isomorphic to a certain finite $W$-algebra. The proof needs to be broken down into five cases, and the proof is slightly different in each case. When a choice of $\g$ and $e$ are fixed we assume the notation $\cS_\chi$ and $\cN_\chi$ from \S\ref{ss:slicesandWalgebras}.

If $(\mc A, B, i)$ is a flat graded Poisson deformation then we omit the base and tethering automorphism from the discussion whenever such a choice is clear, and refer to $\mc A$ as the deformation, and similar for quantizations.
\subsubsection{Regular finite $W$-algebras in types{ \sf BCD} }\label{regularBCD}
Let $e$ be a nilpotent element in types {\sf BCD} belonging to the regular Orbit. Equivalently $e$ has Jordan Blocks $(2m+1),(2m),(2m-1,1)$ in $\fkso_{2n+1},\fksp_{2n},\fkso_{2n}$ respectively. Let 
\beq
\label{e:regularfinalshiftmatrix}
 \sigma =
 \begin{pmatrix}
    0 & m-1 \\ 
    m-1 & 0 
\end{pmatrix}.
\eeq
We will prove that
\beq \label{e:regularfinaliso}
    \begin{array}{ccl}
    \Y_{1,2m+1}^+ &\overset{\sim}{\longrightarrow}& \rU(\fkso_{2m+1}, e), \\
    \Y_{1,2m}^+ &\overset{\sim}{\longrightarrow} &\rU(\fksp_{2m}, e), \\
    \Y_{2,2m-1}^+(\sigma) &\overset{\sim}{\longrightarrow} &\rU(\fkso_{2m}, e).
    \end{array}
\eeq
For $e$ in the regular nilpotent orbit by a result of Kostant \cite[Thm. 2.4.1]{Ko78} we have $\rU(\g,e) \cong Z(\rU(\g))$. The isomorphism with the corresponding truncated shifted twisted Yangian then follows from noting that the inclusion $Z(\Y^{\pm}_{N,\ell}(\sigma)) \hookrightarrow \Y^{\pm}_{N,\ell}(\sigma)$ is an isomorphism in these cases. To prove this is an isomorphism in these cases one uses an argument analogous to the proof of Theorem \ref{truncatedcenter} noting that the associated graded map $\gr'Z(\Y^{\pm}_{N,\ell}(\sigma)) \hookrightarrow \gr' \Y^{\pm}_{N,\ell}(\sigma) \cong \mathbb{C}[\cS_\chi] \cong Z(\mathbb{C}[\g^*])$ is an isomorphism by explicit description of $Z(\Y^{\pm}_{N,\ell}(\sigma))$ in Theorem \ref{truncatedcenter} and Corollary \ref{pfcases} as well as the invariant theory discussed in \S\ref{ss:pfaffian} and after Theorem \ref{truncatedcenter}, the isomorphism $\mathbb{C}[\cS_\chi] \cong Z(\mathbb{C}[\g^*])$ is  explained in the footnote to \cite[Question 5.1]{PrJI}. This gives that $\Y^{\pm}_{N,\ell}(\sigma)$ is commutative in these cases and using the PBW Theorems \ref{trtwypbw} and \ref{trtwypbw-new} as well as an explicit description of $Z(\rU(\g))$ in terms of the Harish-Chandra isomorphism (see e.g. \cite[\textsection 7.1]{Mol07}) one sees that these algebras are isomorphic.
\subsubsection{Subregular finite $W$-algebras in type {\sf B}}
\label{ss:subregularB}

Let $e\in \fkso_{2m+1}$ be a nilpotent element belonging to the subregular orbit. Equivalently, $e$ has Jordan blocks of sizes $(2m-1, 1, 1)$. Let $\sigma$ be the shift matrix described in \eqref{e:typeBfinalshiftmatrix}. We will prove that
\begin{eqnarray}
    \label{e:typeBfinaliso}
    \Y_{\mu,2m-1}^+(\sigma) \overset{\sim}{\longrightarrow} \rU(\fkso_{2m+1}, e).
\end{eqnarray}

Let $\chi \in \g^*$ be the form associated with $e$. Let $\tau$ and $\tau'$ be the involutions of $\cN_\chi$ introduced in \S\ref{ss:twistedKleinianSubregular}. By \cite[Thm.~4.6]{ACET23} we know that $\bC[\cS_\chi]$ is the universal Poisson $\tau'$-deformation and $U(\fkso_{2m+1}, e)$ is the universal $\tau'$-quantization of $\cN_\chi$.

By Corollary~\ref{gryangsliceAI} we see that $y^+_{\mu, 2m-1}(\sigma)$ is a universal Poisson $\tau'$-deformation of $\cN_\chi$. By Proposition~\ref{P:identicalsubfunctors} it is also a universal Poisson $\tau$-deformation.

By construction \eqref{e:tauautsubregB}, the automorphism $\tau$ lifts to $\Y^+_{\mu, 2m-1}(\sigma)$. By Theorem~\ref{truncatedcenter} we deduce that $\Y^+_{\mu, 2m-1}(\sigma)$ is a $\tau$-quantization of $\cN_\chi$. Applying \cite[Theorem~1.1(2)]{ACET23}, we conclude that $\Y^+_{\mu, 2m-1}(\sigma)$ is a universal $\tau$-quantization of $\cN_\chi$.

It follows from Lemma~\ref{lemonslemma} that the universal elements of $\rQ_{X,\tau}$ and $\rQ_{X, \tau'}$ are isomorphic. Combining these remarks, we have proven \eqref{e:typeBfinaliso}.

\subsubsection{Finite $W$-algebras in type {\sf C}: two even Jordan blocks}
\label{ss:twoevenblocksC}
    Let $m > 1$ and $e \in \fksp_{2m}$ belong to the nilpotent orbit with partition $\lambda = (\lambda_1, \lambda_2)$, where $\lambda_1 \lle \lambda_2$ are both even. Let $\sigma = (\fks_{i,j})$ be the $2\times 2$ shift matrix with $\fks_{1,2} = \fks_{2,1} = \frac{\lambda_2-\lambda_1}{2}$ and $\fks_{1,1} = \fks_{2,2} = 0$, and let $\mu = (1,1)$. 

    Our goal is to show that
\begin{eqnarray}
    \label{e:typeCevenblocksfinaliso}
    \Y_{\mu,\lambda_2}^+(\sigma) \overset{\sim}{\longrightarrow} \rU(\fksp_{2m}, e).
\end{eqnarray}
    
    According to \cite[Thm.~1.1]{AT25} the nilpotent Slodowy variety $\cN_\chi$ associated to the pair $(\g,e)$ is $\bC^\times$-Poisson isomorphic to the nilpotent Slodowy variety $\cN_\psi$ associated to a nilpotent element of $\fkso_{2m+2}$ with partition $(\lambda_1 + 1, \lambda_2+1)$.  Identify $\cN_\chi = \cN_{\psi}$ by a choice of $\bC^\times$-Poisson isomorphism.
    
    When $m \ne 3$ we have $\Aut(\fkso_{2m+2}) \twoheadrightarrow \Gamma := \Out(\fkso_{2m+2}) \cong \bZ /2\bZ$ and, under our assumptions on $\lambda$, the orbit of $\psi$ is stable under $\Aut(\fkso_{2m+2})$. When $m = 3$ we let $\Gamma \subseteq \Aut(\fkso_{8})$ be the cyclic group of order 2 generated by the graph automorphism exchanging the vertices labelled 3 and 4 in the Bourbaki ordering. Following \cite[Lemma~4.6]{AT25} we choose a splitting $s : \Gamma \to \Aut(\fkso_{2m+2})$ such that $s(\Gamma)$ preserves $\cS_{\psi}$.
    
    By Theorem~1.3 of {\it op. cit.} the finite $W$-algebra $\rU(\fksp_{2m}, e)$ is a universal $\Gamma$-quantization of $\cN_\chi$.

    Let $\gamma \in s(\Gamma)$ denote the non-trivial automorphism. The effect of $\gamma$ on generators of $\bC[\cS_\chi]$ was calculated in \cite[Prop.~4.9]{AT25}. To describe it, we resume the notation $h_i^{(r)}$ and $b_1^{(r)}$ from \S\ref{ss:twistedKleinianSubregular}. Then $y^+_{\mu, \lambda_2}(\sigma)$ is generated by these elements and, passing through the isomorphism \cite[Thm.~A]{TT24}, we see that $\gamma(b_1^{(r)}) = -1$, while $\gamma(h_i^{(r)}) = h_i^{(r)}$ for all $i,r$.

    Lifting this automorphism to the generators $\Y_{\mu,\lambda_2}^+(\sigma)$ in the obvious manner, one can check the relation of Theorem~\ref{mainthm} to conclude that this extends to a well-defined automorphism of $\Y_{\mu,\lambda_2}^+(\sigma)$. To see that $\Y_{\mu, \lambda_2}^+(\sigma)$ is a $\Gamma$-quantization of $\cN_\chi$ it suffices to show that the Poisson center of $y^+_{\mu, \lambda_2}(\sigma)$ lifts to $\Y^+_{\mu, \lambda_2}(\sigma)$, which follows from Theorem~\ref{nonpfcases}.

    We have shown that both $\Y^+_{\mu, \lambda_2}(\sigma)$ and $\rU(\fksp_{2m}, e)$ are elements of $\rQ_{\cN_\chi, \Gamma}$ with associated graded Poisson deformation $\bC[\cS_\chi]$. The isomorphism \eqref{e:typeCevenblocksfinaliso} follows from Corollary~\ref{C:uniqueuniversalquantization}.

\subsubsection{Finite $W$-algebras in type {\sf C}: two odd Jordan blocks}
\label{ss:twooddblocksC}

    In this subsection, let $m$ be odd and let $e \in \fksp_{2m}$ be a nilpotent element with two Jordan blocks, each of size $m$. The isomorphism
    \begin{eqnarray}
    \label{e:typeCoddblocksfinaliso}
    \Y_{2,m}^- \overset{\sim}{\longrightarrow} \rU(\fksp_{2m}, e)
\end{eqnarray}
is a special case of the main result of \cite{Br09}.

\subsubsection{Finite $W$-algebras for the remaining even orbits in types {\sf B} and {\sf C}}
\label{ss:alltheothers}
 Let $\g$ be a Lie algebra of type {\sf B} or {\sf C}, and assume that $\mathbb O = \Ad(G)e$ is an even nilpotent orbit, not appearing in \S\ref{ss:subregularB}--\ref{ss:twooddblocksC}. This nilpotent element gives rise to a partition $\lambda = (\lambda_1,...,\lambda_N)$, and therefore a symmetric pyramid and a symmetric shift matrix by Proposition~\ref{pyrabi}. We set the level to be $\ell := \lambda_N$.
 
 Let $\Y_{N,\ell}^\pm(\sigma)$ be the truncated shifted twisted Yangian, where the type depends on $(\lambda, \ell)$ in accordance with cases (i)--(iv) of \S\ref{subsec:pyramid}. We prove
 \begin{eqnarray}
     \label{e:othereventypesBCfinaliso}
     \Y^\pm_{N, \ell}(\sigma) \overset{\sim}{\longrightarrow} \rU(\g, e).
 \end{eqnarray}
 By \cite[Thm.~3.5]{ACET23} we see that $\bC[\cS_\chi]$ is the universal Poisson deformation of $\cN_\chi$, i.e. it is an initial element of $\rPD_X$. To prove the isomorphism \eqref{e:othereventypesBCfinaliso}, we apply Corollary~\ref{C:uniqueuniversalquantization} with $\Gamma$ trivial. It will be enough to show that both $\Y^\pm_{N, \ell}(\sigma)$ and $\rU(\g, e)$ are filtered quantizations of $\bC[\cS_\chi]$ and that the Poisson center lifts to both quantizations.

 These properties were noted in \eqref{e:Walgquantizes} and \eqref{e:Walgcenterquantizes}, for $\rU(\g,e)$. For $\Y^\pm_{N,\ell}(\sigma)$ use Corollary~\ref{gryangsliceAI}, Corollary~\ref{gryangsliceAII} and Theorem~\ref{truncatedcenter}.

 \subsubsection{Finite $W$-algebras in type {\sf D}: remaining two Jordan block cases}
 
 Let $e \in \fkso_{2m}$ have partition $\lambda = (\lambda_1, \lambda_2)$ with $\lambda_1 \lle \lambda_2$. Let $\sigma$ be the even shift matrix associated with $\lambda$ by Proposition~\ref{pyrabi}, i.e. $\fks_{1,2} = \fks_{2,1} = \frac{\lambda_2-\lambda_1}{2}$, and set $\ell = \lambda_2$. To prove 
 \begin{eqnarray}
 \label{e:typeDtwoclocksfinaliso}
     \Y^+_{2,\ell}(\sigma) \overset{\sim}{\longrightarrow} \rU(\fkso_{2m}, e)
 \end{eqnarray}
 we apply precisely the same argument as \S\ref{ss:alltheothers}, applying Corollary~\ref{pfcases} in place of Theorem~\ref{truncatedcenter}.
 In summary we have the following theorem.
 
 \begin{thm}\label{t:finalisothm}
     Assume we are in one of the cases in \eqref{nonpfcases}, we have $\sigma=0$ or we are in the case $\Y_{2,\ell}^{+}(\sigma)$ for $\ell$ odd. Then we have 
     \begin{equation}\label{e:YangiantoWAlgiso}
       \Y_{N,\ell}^{\pm}(\sigma) \overset{\sim}{\longrightarrow} \rU(\g_{M}^{\sgn(\mp (-1)^{\ell})}, e)  
     \end{equation} 
     where $e$ is a nilpotent element with partition $\lambda$ obtained from $\ell, \sigma$ as explained in \S\ref{subsec:pyramid}, $M$ is the number of boxes in the pyramid associated to $\lambda$ and $\g_M^{\pm}$ is as in \eqref{liealgtype}.
 \end{thm}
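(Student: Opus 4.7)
The plan is to handle the theorem by a case analysis matching the structure of the Slodowy slice and its associated conic symplectic singularity, using Corollary~\ref{C:uniqueuniversalquantization} as the common vehicle to identify the two algebras in question. In each case the strategy is: (i) identify $\gr' \Y_{N,\ell}^{\pm}(\sigma)$ with $\bC[\cS_\chi]$ as a graded Poisson algebra via Corollaries~\ref{gryangsliceAI} and \ref{gryangsliceAII}; (ii) check that $\bC[\cS_\chi]$ represents the appropriate (possibly equivariant) Poisson deformation functor of $\bC[\cN_\chi]$; (iii) promote the truncated shifted twisted Yangian to a filtered equivariant quantization of $\cN_\chi$ over the \emph{same} base; (iv) invoke Corollary~\ref{C:uniqueuniversalquantization} to obtain an isomorphism with $\rU(\g,e)$. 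Step (iii) requires knowing that the Poisson center lifts to the center of $\Y_{N,\ell}^{\pm}(\sigma)$, which is exactly the content of Theorem~\ref{truncatedcenter} together with Corollary~\ref{pfcases}; this is what restricts the statement to the listed cases.

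First I would dispatch the regular orbit case in all three classical types as in \S\ref{regularBCD}, where $\rU(\g,e)=Z\rU(\g)$ by Kostant and, symmetrically, the truncated shifted twisted Yangian is forced to coincide with its center. Next, for the remaining generic even cases in types \textsf{B} and \textsf{C} (\S\ref{ss:alltheothers}), I apply Corollary~\ref{C:uniqueuniversalquantization} with trivial $\Gamma$: by \cite[Thm.~3.5]{ACET23} the slice $\bC[\cS_\chi]$ is already the universal Poisson deformation, and Theorem~\ref{truncatedcenter} provides the required lift of the Poisson center. This gives \eqref{e:othereventypesBCfinaliso} directly. For type \textsf{D} with two Jordan blocks, the same scheme works verbatim but one must replace Theorem~\ref{truncatedcenter} by Corollary~\ref{pfcases}, since the Pfaffian generator constructed in Theorem~\ref{pfr2thm} is needed to cover a square-root direction in the Poisson center.

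The two genuinely non-universal situations require more care. For the subregular orbit in type \textsf{B}, following \S\ref{ss:subregularB}, I identify $\cN_\chi$ with a Kleinian singularity $X$ of type $\mathsf{A}_{2m-1}$ via Brieskorn's theorem (Lemma~\ref{L:isomorphicPoissonAlgebras}), and realize $\rU(\fkso_{2m+1},e)$ as the universal $\tau'$-quantization of $X$ after \cite[Thm.~4.6]{ACET23}. The involution $\tau$ of \eqref{e:tauautsubregB} is then shown to lift to the shifted twisted Yangian, and Lemma~\ref{lemonslemma} (via Proposition~\ref{P:identicalsubfunctors}) identifies the subfunctors $\rQ_{X,\tau}$ and $\rQ_{X,\tau'}$; combined with Theorem~\ref{truncatedcenter} one concludes that $\Y_{\mu,2m-1}^+(\sigma)$ is itself universal as a $\tau$-quantization, and the isomorphism \eqref{e:typeBfinaliso} follows. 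For the two-even-block orbits in type \textsf{C} (\S\ref{ss:twoevenblocksC}), one uses the recent identification of nilpotent Slodowy varieties in \cite[Thm.~1.1, Thm.~1.3]{AT25} to interpret $\cN_\chi$ inside type \textsf{D}, lifts the graph involution $\gamma$ to $\Y_{\mu,\lambda_2}^+(\sigma)$ via the explicit formula from \cite[Prop.~4.9]{AT25}, and again applies Corollary~\ref{C:uniqueuniversalquantization}. Finally, the two-odd-block case in type \textsf{C} is quoted from \cite{Br09}.

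The main obstacle will be verifying the lifted involutions in the non-universal cases: for the subregular $\mathsf{B}$ isomorphism the relations \eqref{pr1}--\eqref{pr-2} together with \eqref{Zshifted} must be shown stable under the sign-change $B_1^{(r)}\mapsto -B_1^{(r)}$ (and similarly under $\gamma$ in the type \textsf{C} case), and the resulting automorphism must descend through the truncation ideal $I_\ell$ and induce the correct Poisson automorphism on $\gr'$. Once these compatibilities are confirmed, the uniqueness statement in Corollary~\ref{C:uniqueuniversalquantization} bridges the last step of each case and yields the isomorphism \eqref{e:YangiantoWAlgiso} uniformly.
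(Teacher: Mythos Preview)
Your proposal follows the paper's approach essentially verbatim: the same case decomposition (regular, subregular~{\sf B}, two even blocks in~{\sf C}, two odd blocks in~{\sf C}, the generic even cases in~{\sf B}{\sf C}, and two-block~{\sf D}), the same use of Corollaries~\ref{gryangsliceAI}--\ref{gryangsliceAII} to identify the associated graded, the same appeal to Theorem~\ref{truncatedcenter}/Corollary~\ref{pfcases} for the center lift, and the same invocation of Corollary~\ref{C:uniqueuniversalquantization} as the uniqueness mechanism.

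There is, however, one genuine gap. The hypothesis ``$\sigma=0$'' in the theorem is not redundant with the other two: it brings in the rectangular type~{\sf D} cases that are \emph{not} otherwise covered, namely $\Y^+_{N,\ell}$ with $N$ even, $N>2$, $\ell$ odd, and $\Y^-_{N,\ell}$ with $\ell$ even (any even $N$). None of your six cases reach these. Your ``type~{\sf D} with two Jordan blocks'' argument uses Corollary~\ref{pfcases}, which is proved only for $\Y^+_{2,\ell}(\sigma)$ with $\ell$ odd; it does not apply to $\Y^-_{2,\ell}$ with $\ell$ even, let alone to $N>2$. And your only citation of \cite{Br09} is for the two-odd-block~{\sf C} orbit, whereas the paper invokes Brown's main result for \emph{all} $\sigma=0$ cases, which is precisely what picks up these residual rectangular~{\sf D} algebras. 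The fix is simply to broaden your appeal to \cite{Br09} from the single two-odd-block~{\sf C} case to the full rectangular case, exactly as the paper does in its one-line proof.
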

 \begin{proof}
 The cases \eqref{nonpfcases} and $\Y_{2,\ell}^{+}(\sigma)$ for $\ell$ odd follow from the previous results in \S \ref{ss:YangWalg}, whereas the case $\sigma=0$ follows from the main result of \cite{Br09}. 
 \end{proof}

\end{document}